\newcommand{\cycl}{\mathrm{cycl}}
\let\cprod\times
\newcommand*{\relrelbarsep}{.386ex}
\newcommand*{\relrelbar}{%
  \mathrel{%
    \mathpalette\@relrelbar\relrelbarsep
  }%
}
\newcommand*{\@relrelbar}[2]{%
  \raise#2\hbox to 0pt{$\m@th#1\relbar$\hss}%
  \lower#2\hbox{$\m@th#1\relbar$}%
}
\providecommand*{\rightrightarrowsfill@}{%
  \arrowfill@\relrelbar\relrelbar\rightrightarrows
}
\providecommand*{\leftleftarrowsfill@}{%
  \arrowfill@\leftleftarrows\relrelbar\relrelbar
}
\providecommand*{\xrightrightarrows}[2][]{%
  \ext@arrow 0359\rightrightarrowsfill@{#1}{#2}%
}
\providecommand*{\xleftleftarrows}[2][]{%
  \ext@arrow 3095\leftleftarrowsfill@{#1}{#2}%
}
\DeclarePairedDelimiter{\abs}{\lvert}{\rvert}
\newcommand{\noloc}{\nobreak\mskip6mu plus1mu\mathpunct{}\nonscript\mkern-\thinmuskip{:}\mskip2mu\relax} 
\newcommand{\calA}{\mathcal{A}}
\newcommand{\Sp}{\mathrm{Sp}}
\renewcommand{\Pr}{\mathrm{Pr}}
\newcommand{\isom}{\cong}
\newcommand{\from}{\mathbin{\leftarrow}}
\newcommand{\xto}[1]{\mathbin{\xrightarrow{#1}}} 
\newcommand{\isoto}{\xto\sim}
\DeclareMathOperator{\Hom}{Hom}
\DeclareMathOperator{\Fun}{Fun}
\newcommand{\isect}{\mathbin{\cap}}
\newcommand{\bigisect}{\bigcap}
\newcommand{\union}{\mathbin{\cup}}
\newcommand{\bigunion}{\bigcup}
\newcommand{\dunion}{\mathbin{\sqcup}}
\newcommand{\bigdunion}{\bigsqcup}
\renewcommand{\emptyset}{\varnothing}
\renewcommand{\subset}{\subseteq}
\newcommand{\comp}{\mathbin{\circ}}
\DeclareMathOperator{\id}{id}
\newcommand{\restrict}[2]{{#1}|_{#2}}
\newcommand{\injto}{\mathrel{\hookrightarrow}}
\newcommand{\surjto}{\mathrel{\twoheadrightarrow}}
\newcommand{\bigdsum}{\bigoplus}
\newcommand{\cplbigdsum}[1]{\widehat\bigdsum_{#1} \,}
\newcommand{\cd}{\mathrm{cd}} 
\newcommand{\Z}{{\mathbb{Z}}}
\newcommand{\N}{\mathbb{N}}
\newcommand{\Fld}{\mathbb{F}}
\newcommand{\Q}{\mathbb{Q}}
\newcommand{\F}[1]{\mathbb{F}_{#1}}
\DeclareMathOperator{\End}{End}
\newcommand{\tensor}{\otimes}
\DeclareMathOperator{\Spec}{Spec}
\newcommand{\Spf}{\mathrm{Spf}\,}
\DeclareMathOperator{\IHom}{\underline{\Hom}}
\newcommand{\et}{{\mathrm{et}}}
\newcommand{\proet}{{\mathrm{proet}}} 
\newcommand{\qproet}{{\mathrm{qproet}}} 
\newcommand{\aff}{{\mathrm{aff}}} 
\newcommand{\cplt}{\,\widehat{}\,}
\newcommand{\ri}{\mathcal O} 
\newcommand{\mm}{\mathfrak m} 
\DeclareMathOperator{\Spa}{Spa}
\DeclareMathOperator{\Spd}{Spd}
\newcommand{\opp}{{\mathrm{op}}} 
\newcommand{\solid}{{\scalebox{0.5}{$\square$}}}
\DeclareMathOperator{\Mod}{Mod}
\newcommand{\Perfd}{\mathrm{Perfd}}
\newcommand{\AffPerfd}{\Perfd^{\mathrm{aff}}}
\DeclareMathOperator{\D}{\mathcal D} 
\newcommand{\infcatinf}{\mathcal Cat_\infty}
\newcommand{\catcat}{\mathcal Cat}
\newcommand{\SymMonCat}{\CAlg(\infcatinf)}
\DeclareMathOperator{\AnRing}{AnRing}
\DeclareMathOperator{\AdicRing}{AdicRing}
\newcommand{\oc}{{\mathrm{oc}}} 
\newcommand{\cpl}{{\mathrm{cpl}}} 
\DeclareMathOperator{\fib}{fib}
\newcommand{\ex}{{\mathrm{ex}}} 
\DeclareMathOperator{\Ind}{Ind} 
\DeclareMathOperator{\vStacks}{vStack}
\newcommand{\dimtrg}{\mathrm{dim.trg}} 
\newcommand{\nuc}{{\mathrm{nuc}}} 
\newcommand{\tr}{{\mathrm{tr}}} 
\newcommand{\wl}{{\mathrm{wl}}} 
\DeclareMathOperator{\CAlg}{CAlg} 
\theoremstyle{plain}
\newtheorem{theorem}{Theorem}[section]
\newtheorem{theorem*}{Theorem}
\newtheorem{proposition}[theorem]{Proposition}
\newtheorem{proposition*}[theorem*]{Proposition}
\newtheorem{corollary}[theorem]{Corollary}
\newtheorem{lemma}[theorem]{Lemma}
\theoremstyle{definition}
\newtheorem{definition}[theorem]{Definition}
\newtheorem{definition*}[theorem*]{Definition}
\newtheorem{example}[theorem]{Example}
\newtheorem{remark}[theorem]{Remark}
\newtheorem{remarks}[theorem]{Remarks}
\newtheorem{hypothesis*}[theorem*]{Hypothesis}
\numberwithin{equation}{theorem}
\numberwithin{figure}{section}
\numberwithin{table}{section}
\newlist{thmenum}{enumerate}{1}
\setlist[thmenum]{label=(\roman*), ref=\thetheorem.(\roman*)}
\newlist{propenum}{enumerate}{1}
\setlist[propenum]{label=(\roman*), ref=\theproposition.(\roman*)}
\newlist{corenum}{enumerate}{1}
\setlist[corenum]{label=(\roman*), ref=\thecorollary.(\roman*)}
\newlist{lemenum}{enumerate}{1}
\setlist[lemenum]{label=(\roman*), ref=\thelemma.(\roman*)}
\newlist{exampleenum}{enumerate}{1}
\setlist[exampleenum]{label=(\alph*), ref=\theexamples.(\alph*)}
\newlist{remarksenum}{enumerate}{1}
\setlist[remarksenum]{label=(\roman*), ref=\theremarks.(\roman*)}
\newlist{defenum}{enumerate}{1}
\setlist[defenum]{label=(\alph*), ref=\thedefinition.(\alph*)}
\title{Descent for solid quasi-coherent sheaves on perfectoid spaces}
\author{Johannes Anschütz  \and Lucas Mann}
\date{\today}
\begin{document}

\maketitle

\begin{abstract}
  We prove $v$-descent for solid quasi-coherent sheaves on perfectoid spaces as a key technical input for the development of a $6$-functor formalism with values in solid quasi-coherent sheaves on relative Fargues--Fontaine curves. 
\end{abstract}

\tableofcontents

\section{Introduction}
\label{sec:introduction}

Let $p$ be a prime, and let $\Perfd$ be the category of perfectoid spaces over $\Z_p$. We equip $\Perfd$ with the $v$-topology, i.e., the Grothendieck topology generated by surjections of affinoid perfectoid spaces. The main result of this paper is the following theorem:

\begin{theorem}[{\cref{rslt:v-hyperdescent-for-O+-modules}, \cref{sec:main-theor-d_hats-main-descent-theorem}}]
  \label{sec:introduction-1-main-theorem-introduction}
  There exists a unique hypercomplete $v$-sheaf of $\infty$-categories
  \[
    \Perfd^{\mathrm{op}}\to \mathcal{C}at_\infty,\ X\mapsto \D^a_{\hat\solid}(\ri^+_X),
  \]
  such that for each affinoid perfectoid space $X=\Spa(A,A^+)$ whose tilt $X^\flat$ admits a map of finite $\dimtrg$ to some totally disconnected space, we have
  \begin{align*}
    \D^a_{\hat\solid}(\ri^+_X)\cong \D^a_{\hat\solid}(A^+),
  \end{align*}
  compatibly with pullback. 
\end{theorem}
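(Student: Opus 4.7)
The plan is to define the functor explicitly on the full subcategory $\mathcal C \subset \Perfd$ of affinoid perfectoid spaces satisfying the hypothesis of the theorem (call them \emph{nice}), to prove $v$-hyperdescent on $\mathcal C$ as the main technical step, and to extend to all of $\Perfd$ by right Kan extension. For $X = \Spa(A,A^+) \in \mathcal C$, set $F(X) := \D^a_{\hat\solid}(A^+)$, with pullback along a map $(A,A^+) \to (B,B^+)$ in $\mathcal C$ given by the completed solid almost base change. Every strictly totally disconnected perfectoid lies in $\mathcal C$ (its tilt maps to itself with trivial fibres and hence finite $\dimtrg$), so every object of $\Perfd$ admits a $v$-cover by nice perfectoids. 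Uniqueness in the theorem is then formal: any two hypercomplete $v$-sheaves with the prescribed values on $\mathcal C$ must agree on $X \in \Perfd$ by computing both as $\lim_{\Delta} F(Y_\bullet)$ for a strictly totally disconnected $v$-hypercover $Y_\bullet \to X$.

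The core step is $v$-hyperdescent of $F$ on $\mathcal C$. By a refinement argument, this reduces to descent along surjections $Y \to X$ in which $Y$, and hence every term of the Čech nerve $Y_\bullet$, is strictly totally disconnected and thus remains in $\mathcal C$. One then wants the equivalence $\D^a_{\hat\solid}(A^+) \simeq \lim_\Delta \D^a_{\hat\solid}(A_n^+)$ with $Y_n = \Spa(A_n, A_n^+)$. I would attack this by reducing modulo a pseudo-uniformizer $\pi$: derived $\pi$-completeness on both sides recovers the integral statement from the mod-$\pi$ one, which lives entirely in characteristic $p$ where the tilted perfectoid machinery is most powerful. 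The solid almost structure on $\ri^+/\pi$ then supplies enough flatness for a direct Čech-type descent, and the finite $\dimtrg$ hypothesis provides a uniform bound on the cohomological amplitudes of the base changes appearing along $Y_\bullet$, which upgrades $v$-descent to $v$-hyperdescent by controlling the Postnikov limits. Given descent on $\mathcal C$, the right Kan extension $\tilde F \colon \Perfd^\opp \to \infcatinf$ of $F$ along $\mathcal C \hookrightarrow \Perfd$ automatically inherits hypercompleteness and the $v$-sheaf property, since every $X \in \Perfd$ has a nice $v$-hypercover along which $\tilde F$ is computed as a limit.

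The main obstacle is the descent step on $\mathcal C$: the integral structure sheaf $\ri^+$ is not flat in the classical sense, so faithfully flat descent cannot be invoked naïvely. One must work throughout within the $\hat\solid$-almost framework, where the completed solid tensor product supplies enough flatness to emulate the classical Čech argument. Orchestrating the interplay of almost mathematics modulo $\pi$, derived $\pi$-completion, and the totally disconnected geometry used to trivialize the Čech nerve is where the bulk of the technical work resides, and where the finite $\dimtrg$ hypothesis enters to keep the relevant cohomological amplitudes uniformly bounded and so guarantee hypercompleteness rather than mere descent.
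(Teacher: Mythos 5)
Your formal skeleton is fine (uniqueness is indeed formal, and constructing the sheaf by descent from a subcategory of well-understood spaces is also what the paper does), but the core descent step is where your proposal has a genuine gap: the mechanism you propose for proving $\D^a_{\hat\solid}(A^+)\simeq\varprojlim_\Delta \D^a_{\hat\solid}(A_n^+)$ along a strictly totally disconnected hypercover of a ``nice'' $X$ would not go through as described. First, even the base case where $X$ itself is totally disconnected is not a matter of ``enough flatness mod $\pi$'': the paper needs the spreading-out argument of \cref{rslt:v-cover-of-td-spaces-is-weakly-descendable} (passing to the w-localization $Y^\wl$, building the filtered categories of finitely presented $A^+$-algebras flat mod $\pi^n$, and obtaining \emph{weak adic descendability of uniformly bounded index}), and the passage from descent mod $\pi^n$ to integral descent is exactly the content of \cref{rslt:descendability-of-adic-rings}; this reduction is only possible because the compact generators of the modified category $\D_{\hat\solid}$ are complete --- ``derived $\pi$-completeness on both sides'' is not available for free and is the raison d'\^etre of $\D_{\hat\solid}$, not a consequence of it. Second, and more seriously, for a nice $X$ that is not quasi-pro-\'etale over a totally disconnected space, the identification $\D^a_{\hat\solid}(\ri^+_X)\simeq\D^a_{\hat\solid}(A^+)$ is not obtained in the paper by any direct \v{C}ech argument, and reducing it to the mod-$\pi$ statement is not automatic: besides the mod-$\pi$ equivalence (which is the deep Theorem 3.5.21 of the second author's thesis, imported via \cref{sec:main-theor-new-texorpdfs-+-bounded-equals-p-bounded}), one needs that $\Gamma(X,-)$ preserves colimits and that the abstractly defined $\D^a_{\hat\solid}(\ri^+_X)$ is generated under colimits by suitably bounded $\pi$-complete objects (the criterion \cref{sec:bound-cond-1-criterion-for-sheafification-being-an-equivalence}). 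Your proposal silently assumes both; in the paper these are the hard points, established via the detour through nuclear $\Z_p[[\pi]]$-sheaves on the quasi-pro-\'etale site (rigidity of $\D_\nuc(\Z_p[[\pi]])$, generation of $\D_\nuc(X,\Lambda)$ by complete right-bounded objects, \cref{rslt:comparison-of-oc-nuc-sheaves-to-O+-modules-on-p-bd-perfd}), which in turn requires descent on relative compactifications of totally disconnected spaces (\cref{rslt:compute-Dqcohri-on-rel-compactification-of-td-space}) and the uniform descendability bound for families of Riemann--Zariski spaces (\cref{rslt:descendability-of-open-cover-of-ZR-space-family}).

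Relatedly, the role you assign to the finite $\dimtrg$ hypothesis --- bounding ``cohomological amplitudes of the base changes along $Y_\bullet$'' to upgrade descent to hyperdescent --- is not where it enters. Hyperdescent is obtained from finite Tor dimension mod $\pi$ (\cref{rslt:tor-dim-of-adic-maps}) following the argument of the mod-$\pi$ theory; the finite $\dimtrg$ hypothesis is used to guarantee $p$-boundedness of $X$ (\cref{sec:bound-cond-1-totally-disconnected-space-is-p-bounded}), which is what makes both the mod-$\pi$ descent theorem and the nuclear-sheaf arguments (finite cohomological dimension of pushforward, \cref{sec:bound-cond-new-ell-bounded-map-has-locally-bounded-cohom-dimension}) applicable. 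Without an argument supplying the generation-by-complete-objects and colimit-preservation inputs, the essential surjectivity part of your proposed descent equivalence is unproved, so the proposal as it stands restates the main difficulty rather than resolving it.
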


Here, $\D_{\hat\solid}^a(A^+)$ refers to a(n almost version of a) slight modification of the category $\D_\solid(A^+)$ of solid $A^+$-modules introduced in \cite{condensed-mathematics} and \cite{andreychev-condensed-huber-pairs} (the exact definition is \cref{sec:adic-rings-1-definition-adic-analytic-ring}, \cref{sec:defin-d_hats-2-definition-of-almost-plus-category}). Roughly the potentially non-complete compact generators of $\D_\solid(A^+)$ get replaced in $\D_{\hat\solid}(A^+)$ by their $\pi$-adic completions for a pseudo-uniformizer $\pi\in A$. \cref{sec:introduction-1-main-theorem-introduction} formally implies $v$-hyperdescent of the functor $X\mapsto \D_{\hat\solid}(\ri_X)$ on perfectoid spaces because $\D_{\hat\solid}(\ri_X):= \mathrm{Mod}_{\ri_X}(\D^a_{\hat\solid}(\ri^+_X))$.

\Cref{sec:introduction-1-main-theorem-introduction} will be used in \cite{anschuetz_mann_lebras_6_functors_for_solid_sheaves_on_fargues_fontaine_curves} as the essential technical ingredient for setting up a $6$-functor formalism with values in (modified) solid quasi-coherent sheaves on the Fargues--Fontaine curve. This $6$-functor formalism implies finiteness and duality results for the pro-\'etale cohomology of general pro-\'etale $\Q_p$-local systems on smooth rigid-analytic varieties, and we refer to \cite{anschuetz_mann_lebras_6_functors_for_solid_sheaves_on_fargues_fontaine_curves} for more details and motivation for the study of these assertions (and hence of \cref{sec:introduction-1-main-theorem-introduction}). We note that our proof of \cref{sec:introduction-1-main-theorem-introduction} would not simplify if the pro-\'etale topology is considered instead of the $v$-topology. Moreover, the flexibility of the descent statement beyond the totally disconnected case is critical to establish the existence of the $6$-functor formalism, e.g., the projection formula, and some of its properties, e.g., cohomological smoothness for smooth rigid-analytic varieties over $\mathbb{C}_p$.

We now give a rough sketch of the proof of \cref{sec:introduction-1-main-theorem-introduction}, which at the same time will summarize the content of the different sections in this paper. First of all, \cref{sec:introduction-1-main-theorem-introduction} is an $\ri^+$-analog of a main theorem of \cite{mann-mod-p-6-functors}, namely \cite[Theorem 1.8.3]{mann-mod-p-6-functors}, which we recall due to its importance for this paper:

\begin{theorem}[{\cite[Theorem 1.8.2., Theorem 1.8.3., Theorem 3.5.21]{mann-mod-p-6-functors}}]
  \label{sec:introduction-1-mod-p-theorem-of-lucas}
  There exists a unique hypercomplete $v$-sheaf of $\infty$-categories
  \[
    \Perfd_\pi^{\mathrm{op}}\to \mathcal{C}at_\infty,\ X\mapsto \D^a_{\solid}(\ri^+_X/\pi),
  \]
  such that for each $+$-bounded affinoid perfectoid space $X=\Spa(A,A^+)$ with uniformizer $\pi$ we have $\D^a_{\solid}(\ri^+_X/\pi)\cong \D^a_{\solid}(A^+/\pi)$ compatibly with pullback.
\end{theorem}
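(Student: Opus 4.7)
The plan is to construct the functor explicitly on a convenient basis and then verify hyperdescent. Uniqueness is automatic: strictly totally disconnected affinoid perfectoids are automatically $+$-bounded and form a basis for the $v$-topology on $\Perfd_\pi$, so any two hypercomplete $v$-sheaves that agree on such objects agree everywhere. For existence, I would define $\Spa(A, A^+) \mapsto \D^a_\solid(A^+/\pi)$ on $+$-bounded affinoids, with functoriality given by base change of solid almost modules along maps $(A, A^+) \to (B, B^+)$.

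The content is then to verify $v$-hyperdescent of this presheaf. By standard reductions, this reduces to checking Cech descent along a $v$-cover $f \colon \Spa(B, B^+) \to \Spa(A, A^+)$ between strictly totally disconnected perfectoids, together with a Postnikov convergence statement. The central technical input I would aim for is an almost faithful flatness statement: that $A^+/\pi \to B^+/\pi$ is almost faithfully flat in the solid sense, with respect to the maximal ideal $\mm \subset A^+$ coming from the tilt. Almost mathematics is essential here because $A^+$ is only well-behaved up to $\mm$, and the almost operation smooths out the integral pathologies of $A^+$ that are absent for $A$ itself. Almost faithful flatness should then follow from the almost purity theorem applied to the tilt, using that a $v$-cover of strictly totally disconnected perfectoids is essentially profinite on spectra and so admits, after passing to the tilt and killing $\mm$, a pro-finite-\'etale refinement. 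Granting this, classical flat descent in the solid derived category yields Cech descent at the level of each bounded-below truncation.

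The main obstacle, I expect, is the upgrade from truncated Cech descent to genuine hyperdescent in the unbounded category $\D^a_\solid(A^+/\pi)$. Postnikov convergence is not automatic in unbounded solid $\infty$-categories, and one must establish uniform cohomological amplitude bounds on the Cech complex associated to $A^+/\pi \to B^+_\bullet/\pi$. Here I would try to exploit the Frobenius on $A^+/\pi$ and the perfectness of the associated mod-$\pi^{1/p^\infty}$ rings in the almost category to bound Tor-amplitudes uniformly in the simplicial direction; in favorable cases this reduces Postnikov convergence to a finite cohomological dimension statement for $\pi$-adically perfect almost solid modules. If that uniform bound can be established, hypercompleteness drops out and the $v$-sheaf property is proven; if not, one is forced into a more delicate analysis of limits in $\D^a_\solid$ that exploits the specific structure of the $\pi$-adic filtration.
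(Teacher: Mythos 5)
This statement is quoted in the paper from \cite[Theorem 1.8.2, 1.8.3, 3.5.21]{mann-mod-p-6-functors} rather than proved there, but the strategy of that proof (mirrored in the present paper's adic analogue, \cref{rslt:v-hyperdescent-for-O+-modules}) differs from yours exactly at the step you make central, and your route has a genuine gap. Your key input -- almost faithful flatness of $A^+/\pi \to B^+/\pi$ for a $v$-cover of strictly totally disconnected spaces, obtained via almost purity and a pro-finite-\'etale refinement -- is not available: a $v$-cover of totally disconnected spaces is in general very far from being refinable by a pro-(finite \'etale) map, and no such flatness statement holds or is used. Moreover, even if one had flatness, ``classical flat descent in the solid derived category'' is not a theorem one can invoke: the descent results actually used require flat \emph{and finitely presented} maps and are phrased as \emph{descendability of bounded index} in the sense of Mathew (cf.\ \cite[Lemma 2.10.6]{mann-mod-p-6-functors}), precisely because plain faithfully flat descent fails for solid modules. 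The real argument replaces your flatness claim as follows: one passes to the w-localization $Y^{\wl}\to X$, writes $B^+_{\wl}$ as an $\omega_1$-filtered colimit of $\pi$-complete finitely presented $A^+$-algebras which are flat modulo $\pi^n$ -- the crucial input being the special flatness lemma for $\pi$-torsion-free finite type algebras over totally disconnected bases (\cref{sec:desc-totally-disc-1-finite-type-over-totally-disconnected-implies-flat-finitely-presented-mod-pi}, i.e.\ \cite[Lemma 3.1.7]{mann-mod-p-6-functors}) -- and then uses the (weak) descendability formalism with a uniform index bound; descent along $Y\to X$ itself is deduced formally afterwards.

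There is a second gap of equal size: even granting descent along $v$-covers \emph{between} strictly totally disconnected spaces, your construction only pins down the sheaf on such spaces. The actual content of the theorem is the identification $\D^a_\solid(\ri^+_X/\pi)\cong\D^a_\solid(A^+/\pi)$ for an \emph{arbitrary} $+$-bounded affinoid $X$, i.e.\ descent along a cover $Y\to X$ with $Y$ totally disconnected but $X$ not. This is where the $+$-boundedness hypothesis does its work (finite cohomological dimension of pushforward, descendability of covers by relative compactifications of strictly totally disconnected spaces), and it is the bulk of \cite[Theorem 3.5.21]{mann-mod-p-6-functors}; your proposal does not address it. On the passage from \v{C}ech descent to hyperdescent, your instinct that uniform Tor-dimension bounds are the key point is consistent with the actual argument, but the mechanism you suggest (Frobenius/perfectness controlling Tor-amplitude) is left vague and is not how the bound is obtained.
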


Here, $\Perfd_\pi$ denotes the category of perfectoid spaces $X$ with a choice of a pseudo-uniformizer $\pi$ and morphisms respecting $\pi$. We note that $\D_\solid^a(A^+/\pi)$ is (an almost version of) the solid category for the discrete ring $A^+/\pi$ introduced in \cite{condensed-mathematics}. We use the name ``$+$-bounded'' for what is called ``$p$-bounded'' in \cite{mann-mod-p-6-functors}, which in turn is a technical condition defined in \cite[\S3.5]{mann-mod-p-6-functors}. Part of our paper is a reformulation of $+$-boundedness in terms of the much simpler condition of finite $\dimtrg$ appearing in \cref{sec:introduction-1-main-theorem-introduction}; we refer the reader to \cref{sec:compare-p-bounded-and-+-bounded} for this reformulation and ignore the difference in the following exposition.

\cref{sec:introduction-1-main-theorem-introduction} and \cref{sec:introduction-1-mod-p-theorem-of-lucas} are tightly related. On the one hand, \cref{sec:defin-d_hats-2-comparison-with-almost-definition-of-lucas-thesis} implies that
\[
  \mathrm{Mod}_{\ri^+_X/\pi}(\D^a_{\hat\solid}(\ri^+_X))\cong \D^a_{\solid}(\ri^+_X/\pi)
\]
for each perfectoid space $X$ with a pseudo-uniformizer $\pi$, which shows that \cref{sec:introduction-1-main-theorem-introduction} implies \cref{sec:introduction-1-mod-p-theorem-of-lucas}. On the other hand our definition of $\D^a_{\hat\solid}(A^+)$ is made to use \cref{sec:introduction-1-mod-p-theorem-of-lucas} in our proof of \cref{sec:introduction-1-main-theorem-introduction}. We note that this reduction does not seem possible for the usual category $\D^a_{\solid}(A^+)$ instead of $\D^a_{\hat\solid}(A^+)$ -- we heavily use that the compact generators of the latter are $\pi$-complete.

In \cref{sec:defin-d_hats} we define the modification $\D_{\hat\solid}(A)$ of $\D_\solid(A)$ for any adic analytic ring $A$ (in the general sense of \cref{sec:adic-rings-1-definition-adic-analytic-ring}) by $I$-completing the compact generators of $\D_\solid(A)$ with respect to some ideal of definition $I\subseteq \pi_0(A)$, and study its basic properties. In particular, we prove the following comparison statements.
\begin{theorem}[{\cref{sec:defin-d_hats-1-equivalence-of-nuclear-subcategory}, \cref{sec:adic-rings-3-modified-version-agrees-with-old-one-for-finite-type-stuff}, \cref{sec:defin-d_hats-2-complete-and-discrete-mod-i-implies-nuclear}}]
  \label{sec:introduction-1-comparison-of-solid-and-hat-solid}
  Let $A$ be an adic analytic ring, and $\alpha^\ast\colon \D_\solid(A)\to \D_{\hat\solid}(A)$ the natural functor.
  \begin{thmenum}
  \item If $A^+$ is of finite type over $\Z_p$, then $\alpha^\ast$ is an equivalence.
  \item The functor $\alpha^\ast$ induces an equivalence $\D_\solid(A)^\nuc\cong \D_{\hat\solid}(A)^\nuc$ on nuclear objects.
  \item $\D_\solid(A)^\nuc\subseteq \D_\solid(A)$ is the full subcategory generated under colimits by $I$-complete objects $M\in \D_\solid(A)$, which are discrete mod $I$. 
  \end{thmenum}
\end{theorem}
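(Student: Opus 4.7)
The plan is to establish (iii) first, since its characterization drives (ii); assertion (i) is proved separately by inspecting compact generators. I would write $\mathcal{N} \subseteq \D_\solid(A)$ for the full subcategory generated under colimits by the class $\mathcal{C}$ of $I$-complete objects with discrete reduction mod $I$, and prove both $\mathcal{N} \subseteq \D_\solid(A)^\nuc$ and the reverse inclusion. For $\mathcal{N} \subseteq \D_\solid(A)^\nuc$, colimit-closure of nuclear objects reduces the problem to showing each $M \in \mathcal{C}$ is nuclear: writing $M \cong R\lim_n M/I^n$, the $I$-adic filtration together with the discreteness of $M/I$ inductively identifies each $M/I^n$ as a classical $A^+/I^n$-module; classical modules over $A^+/I^n$ are nuclear in $\D_\solid(A)$, and one then verifies that the derived $I$-adic limit of this tower remains nuclear in the adic analytic framework. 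For the reverse $\D_\solid(A)^\nuc \subseteq \mathcal{N}$, any nuclear object is a filtered colimit of images of trace-class maps between compact objects; a trace-class map $P \to Q$ factors through an intermediate object that, after $I$-adic completion and mod-$I$ reduction, becomes compactly approximated in the classical sense, hence lies in $\mathcal{C}$.

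For (ii), the category $\D_{\hat\solid}(A)$ is compactly generated by the $I$-completions $\hat P$ of the compact generators $P$ of $\D_\solid(A)$, and the argument of (iii) applies verbatim on the hat side to characterize $\D_{\hat\solid}(A)^\nuc$ as the colimit closure of $I$-complete mod-$I$-discrete objects. Since ``$I$-complete'' and ``discrete mod $I$'' are intrinsic notions preserved by $\alpha^\ast$, and $\alpha^\ast$ commutes with colimits, the two generating classes match and the equivalence on nuclear subcategories follows. For (i), assume $A^+$ is of finite type over $\Z_p$, so that $A^+$ is Noetherian with finitely generated ideal of definition $I$. The standard compact generators of $\D_\solid(A)$---for instance $A[S]^\solid = \lim_i A[S_i]$ for a profinite set $S = \lim_i S_i$---are already derived $I$-complete, as each $A[S_i]$ is finitely generated free over the $I$-complete Noetherian ring $A^+$, and limits of derived $I$-complete objects are derived $I$-complete (Artin--Rees/Krull). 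Thus $\alpha^\ast$ acts as the identity on a set of compact generators and is an equivalence.

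The main obstacle I expect is the inclusion $\mathcal{N} \subseteq \D_\solid(A)^\nuc$ in (iii), specifically the verification that the derived $I$-adic limit of a tower of classical $A^+/I^n$-modules stays nuclear for an \emph{arbitrary} adic analytic ring $A$ (not merely the Noetherian case). Mittag--Leffler/Artin--Rees arguments suffice in the Noetherian regime, but in full generality one has to exploit the fine structure of $A$, and in particular a comparison between derived and classical $I$-completion in the solid setting. A secondary subtlety, relevant to (ii), is that while $\alpha^\ast$ does not preserve compactness in general, it preserves the intrinsic properties in the generating class of (iii); this is precisely what makes the characterization in (iii) the right tool for extracting the equivalence on nuclear subcategories.
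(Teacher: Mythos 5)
Your proposal has genuine gaps in all three parts, and the central one is in (iii), which you flag yourself without resolving it. Your proof that a complete object $M$ with $M/I$ discrete is nuclear rests on writing $M \simeq \varprojlim_n M/I^n$ and passing nuclearity through the limit; but nuclear objects are closed under colimits, not limits, and no Artin--Rees/Mittag--Leffler device is available for a general adic analytic ring. The paper never takes such a limit: it checks directly that $P^\vee \otimes M \to \IHom(P,M)$ is an isomorphism for every compact $P$, by showing \emph{both} sides are adically complete (the left-hand side via \cite[Proposition 2.12.10.(ii)]{mann-mod-p-6-functors}, using $P^\vee = (P_\cpl)^\vee$ and the discreteness of $M/I$) and then reducing mod $I$. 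The same mechanism carries the reverse inclusion: one shows that a trace-class map $P_1\to P_2$ between compacts factors through the completion $Q$ of the underlying discrete module $\underline{P_{2,\cpl}(\ast)}$, which is complete and discrete mod $I$; your ``intermediate object that \dots lies in $\mathcal C$'' is exactly this $Q$, but you neither construct it nor prove the factorization, and that factorization (again a completeness-plus-mod-$I$ computation) is the actual content.

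For (ii), ``the two generating classes match, hence the nuclear subcategories are equivalent'' is a non sequitur: matching colimit-generators does not yield an equivalence unless you also control Hom's, i.e.\ prove $\alpha^\ast$ fully faithful on nuclears. The paper does this by showing that $\alpha_\ast$ preserves nuclearity (via completeness of $(\alpha_\ast P')^\vee\otimes \alpha_\ast Q'$ and reduction mod $I$) and that $\varinjlim_n P_n = \varinjlim_n (P_n)_\cpl$ for a trace-class system, using $(P_n)^\vee = ((P_n)_\cpl)^\vee$. Moreover your claim that $\alpha^\ast$ preserves $I$-completeness is unproven and delicate: $\alpha^\ast$ is a left adjoint, and in the paper completeness of $\alpha^\ast M$ for complete nuclear $M$ is \emph{deduced} from the equivalence (via $\alpha_\ast\alpha^\ast M\cong M$), not used to prove it. For (i), your description of the compact generators as $\varprojlim_i A[S_i]$, finite free over ``$A^+$'', is precisely the misconception the introduction warns against: $(A,A^+)_\solid[S]$ is by definition a colimit over finitely generated subalgebras and is not a limit of finite free $A$-modules, so its completeness when $A^+$ is of finite type is the nontrivial point. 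The paper proves it by factoring a chosen map $A^+[x_\bullet]\to A$ through $A^+[[x_\bullet]]$, identifying $(A^+[[x_\bullet]],A^+)_\solid[S]\cong \prod_I A^+[[x_\bullet]]$, and transporting completeness along the adic base change $A^+[[x_\bullet]]\to A$ via \cite[Proposition 2.12.10]{mann-mod-p-6-functors}; Noetherianity and Artin--Rees play no role and cannot substitute for this step.
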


In \cref{rslt:descendability-of-adic-rings} we prove a descent theorem for $\D_{\hat\solid}(-)$ as a functor on adic rings, by reducing the descent question modulo all $I^n$:

\begin{theorem}[{\cref{rslt:descendability-of-adic-rings}}]
  \label{sec:introduction-3-adic-descent-introduction}
Let $A\to B$ be an adic morphism of adic rings, and $I\subseteq \pi_0(A)$ an ideal of definition. Assume that $A/I^n\to B/I^n, n\geq 0,$ is descendable of index independent of $n$ (in the sense of \cite[Definition 2.6.7]{mann-mod-p-6-functors}). Then $A\to B$ satisfies descent for $\D_{\hat\solid}(-)$.  
\end{theorem}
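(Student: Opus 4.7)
The plan is to prove that $\D_{\hat\solid}(A) \to \Tot_\Delta \D_{\hat\solid}(B^{\otimes_A (\bullet+1)})$ is an equivalence of $\infty$-categories, where the tensor powers are pushouts in adic analytic rings (so each $B^{\otimes_A k}$ is again adic with ideal of definition generated by the image of $I$). Since this is a question about symmetric monoidal presentable stable $\infty$-categories, it suffices to check that for every compact generator $M$ of $\D_{\hat\solid}(A)$ the augmented cosimplicial object $M \to M \otimes_A^{\hat\solid} B^{\otimes_A (\bullet+1)}$ is a limit diagram in $\D_{\hat\solid}(A)$. By construction of $\D_{\hat\solid}$ we may take the compact generators to be $I$-adically complete, so $M \simeq \lim_n M/I^n$.

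The next step is to reduce modulo $I^n$. Adicness of $A\to B$ gives $(B^{\otimes_A k})/I^n \simeq (B/I^n)^{\otimes_{A/I^n} k}$, and $I$-completeness of $M$ together with the comparison between $\D_{\hat\solid}$ and $\D_\solid$ on objects discrete mod $I^n$ (\cref{sec:introduction-1-comparison-of-solid-and-hat-solid}) yields, for all $k$,
\begin{align*}
M \otimes_A^{\hat\solid} B^{\otimes_A k} \simeq \lim_n (M/I^n) \otimes_{A/I^n}^\solid (B/I^n)^{\otimes_{A/I^n} k}.
\end{align*}
For each fixed $n$ the hypothesis that $A/I^n \to B/I^n$ is descendable implies that the cosimplicial object $(M/I^n) \otimes_{A/I^n}^\solid (B/I^n)^{\otimes_{A/I^n} (\bullet+1)}$ totalizes to $M/I^n$ in $\D_\solid(A/I^n)$. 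Granting the interchange of $\Tot_\Delta$ with $\lim_n$, this gives
\begin{align*}
\Tot_\Delta\bigl(M \otimes_A^{\hat\solid} B^{\otimes_A (\bullet+1)}\bigr) \simeq \lim_n M/I^n \simeq M,
\end{align*}
as desired.

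The main obstacle is precisely this interchange, which is not formal — totalizations of cosimplicial diagrams do not in general commute with filtered inverse limits. It is here that the uniformity of the descendability index in $n$ is essential. Descendability of index $d$ in the sense of \cite[Definition 2.6.7]{mann-mod-p-6-functors} means that the unit $A/I^n \to B/I^n$ is a retract, in the category of $\Tow_d$-indexed towers, of a split map; equivalently, the cosimplicial totalization can be computed by a finite limit of diagrams of uniformly bounded complexity $d$. When the same $d$ works for every $n$, these finite descriptions assemble into a single finite-stage totalization for the full tower $\{M/I^n \otimes (B/I^n)^{\otimes (\bullet+1)}\}_n$, and finite limits commute with $\lim_n$. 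I would make this rigorous by applying the descendability formalism of \cite[\S 2.6]{mann-mod-p-6-functors} to the tower of cosimplicial objects indexed by $n$, or equivalently by lifting the finite splittings witnessing descendability mod each $I^n$ to a compatible splitting of the $I$-adic Čech object, and then conclude.
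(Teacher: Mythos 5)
Your overall strategy — use that the compact generators of $\D(\hat{\mathcal A})$ are $I$-complete to reduce everything mod $I^n$, and exploit the uniformity of the descendability index — is the same in spirit as the paper's, but two steps carry real content that the proposal does not supply. First, the reduction ``it suffices to check on compact generators that $M \to \Tot\bigl(M\otimes_{\hat{\mathcal A}}\hat{\mathcal B}^{\otimes(\bullet+1)}\bigr)$ is a limit diagram'' only yields fully faithfulness of the comparison functor $\D(\hat{\mathcal A})\to\varprojlim_\Delta\D(\hat{\mathcal B}^{\otimes\bullet})$; essential surjectivity does not follow from unit equivalences, and even the restriction to compact generators silently assumes that the totalization commutes with colimits in $M$, which is itself a consequence of finite-index descendability rather than an a priori fact. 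Moreover, since $\mathcal B$ may carry a different $+$-ring and $\alpha_\ast$ does not satisfy the projection formula (\cref{sec:adic-rings-1-remark-on-generalized-analytic-rings}), $\D(\hat{\mathcal B})$ is \emph{not} a category of modules over an algebra object of $\D(\hat{\mathcal A})$, so ``splitting the $I$-adic \v{C}ech object'' cannot even be formulated inside $\D(\hat{\mathcal A})$; this is precisely why the paper works in the monoidal category $\End^L(\D(\hat{\mathcal A}))$ of enriched colimit-preserving endofunctors, where $f_\natural\id=f_\ast f^\ast$ plays the role of the algebra.

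Second, the interchange of $\Tot$ with $\varprojlim_n$, which you correctly identify as the crux, is not repaired by ``lifting the finite splittings mod $I^n$ to a compatible splitting'': descendability of index $\le d$ gives nilpotence of the map $\mathcal K_n^{d}\to\id$ over $A/I^n$ (with $\mathcal K_n$ the fiber of the unit), not a splitting, and the witnesses of this nilpotence need not be compatible as $n$ varies. The paper's proof of \cref{rslt:descendability-of-adic-rings} circumvents exactly this point: the identity endofunctor of $\D(\hat{\mathcal A})$ is adically complete (because the compact generators are complete — the raison d'\^etre of $\hat\solid$), whence $\Hom(\mathcal K^{d},\id)=\varprojlim_n\Hom(\mathcal K_n^{d},\id)$, and a standard index-doubling argument (to deal with the $\varprojlim^1$ ambiguity, as in \cite[Proposition 2.7.2]{mann-mod-p-6-functors}) shows that $\mathcal K^{2d}\to\id$ is zero. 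From this genuine descendability statement in $\End^L(\D(\hat{\mathcal A}))$, both fully faithfulness and essential surjectivity follow by the formal argument of \cite[Proposition 2.6.3]{mann-mod-p-6-functors}, together with the base-change results \cref{rslt:pushout-of-adic-analytic-rings,rslt:steadiness-for-modified-modules}. Your proposal would need to be reorganized along these lines (or an equivalent device) to close the gap.
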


 Again, such a statement does not seem possible for $\D_\solid(-)$. Morever, following \cite{andreychev-condensed-huber-pairs} we globalize $\D_{\hat\solid}$ to stably uniform analytic adic spaces in \cref{sec:glob-stably-unif} with an eye towards our applications in \cite{anschuetz_mann_lebras_6_functors_for_solid_sheaves_on_fargues_fontaine_curves}.

\begin{theorem}[{\cref{sec:glob-stably-unif-1-analytic-and-etale-descent-for-hat-solid-version}}]
  \label{sec:introduction-2-descent-for-hat-solid-on-stable-uniform-spaces-introduction}
  The functor $\Spa(B,B^+)\mapsto \mathrm{Mod}_B(\D_{\hat\solid}(B^\circ,B^+))$ satisfies analytic (resp.\ \'etale) descent on stably uniform (resp.\ sousperfectoid) affinoid analytic adic spaces. 
\end{theorem}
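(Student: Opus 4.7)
The strategy is to combine the adic descent result \cref{sec:introduction-3-adic-descent-introduction} with classical Tate acyclicity on stably uniform affinoids, and to reduce the étale case on sousperfectoid spaces to the $v$-descent on perfectoid spaces from \cref{sec:introduction-1-main-theorem-introduction}.

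First I would reduce to a descent statement for $\D_{\hat\solid}(B^\circ, B^+)$ along an adic morphism. Given an analytic (resp.\ étale) cover $\{\Spa(B_i,B_i^+) \to \Spa(B,B^+)\}_{i\in I}$, quasi-compactness of the target lets us take $I$ finite, and the stably uniform (resp.\ sousperfectoid) hypothesis ensures that each $B_i$ is of the same type and that $B_i^\circ$ is the ring of power-bounded elements. Fixing a topologically nilpotent unit $\pi \in B$, the induced map $B^\circ \to \prod_i B_i^\circ$ is then an adic morphism of adic rings. Since $B = B^\circ[1/\pi]$ and the $B$-module category is obtained from $\D_{\hat\solid}(B^\circ, B^+)$ by base change along $B^\circ \to B$, descent for $\mathrm{Mod}_B(\D_{\hat\solid}(B^\circ, B^+))$ follows formally from descent for $\D_{\hat\solid}$ itself along $B^\circ \to \prod_i B_i^\circ$.

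For the analytic case I would apply \cref{sec:introduction-3-adic-descent-introduction} to this adic morphism, which reduces the problem to showing that $B^\circ/\pi^n \to \prod_i B_i^\circ/\pi^n$ is descendable with index independent of $n$. Classical Tate acyclicity for rational covers on uniform affinoids produces an explicit nullhomotopy of the \v{C}ech complex of $B \to \prod_i B_i$ built from the rational functions defining the cover, and the stably uniform hypothesis bounds the denominators (powers of $\pi$) appearing in these nullhomotopies by a constant independent of $n$. The main obstacle is repackaging this into a uniform descendability index in the sense of \cite[Definition 2.6.7]{mann-mod-p-6-functors}; the cleanest approach is to use the Koszul complex attached to the generators of the rational cover, producing an explicit descendability witness whose reduction mod $\pi^n$ has step count independent of $n$.

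For the étale case on sousperfectoid affinoids I would reduce to the perfectoid setting. By definition of sousperfectoid, $\Spa(B,B^+)$ admits a map from an affinoid perfectoid $\Spa(\widetilde B, \widetilde B^+)$ such that $B \to \widetilde B$ splits as a topological $B$-module. Pulling an étale cover of $\Spa(B,B^+)$ back to $\Spa(\widetilde B, \widetilde B^+)$ yields an étale, hence $v$-, cover of a perfectoid space, for which descent of $\D^a_{\hat\solid}(\ri^+_{-})$ holds by \cref{sec:introduction-1-main-theorem-introduction}; passing to $\widetilde B$-modules and using \cref{sec:introduction-1-comparison-of-solid-and-hat-solid} to reconcile the almost/non-almost and integral/rational viewpoints then gives descent on the perfectoid cover. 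Descent along $B \to \widetilde B$ itself follows from the splitting, which makes the morphism descendable on the nose, combined with the analytic descent already established to handle the integral version via \cref{sec:introduction-3-adic-descent-introduction}. Composing the two descents yields étale descent on $B$. The main technical subtlety here is carefully tracking the compatibility of the almost structure on the perfectoid side with the non-almost $B^\circ$-integral structure on the sousperfectoid side, which forces the comparison to be carried out on the level of $\pi$-completed compact generators.
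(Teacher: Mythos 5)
Your analytic-case argument has a genuine gap at its central step. You reduce to showing that $B^\circ/\pi^n \to \prod_i B_i^\circ/\pi^n$ is descendable with index independent of $n$, so that \cref{rslt:descendability-of-adic-rings} applies, and you justify this by "Tate acyclicity plus bounded denominators". But no such uniform descendability statement is available for rational covers of general stably uniform affinoids: Tate acyclicity lives at the rational level, and already the bounded-$\pi$-torsion of the \v{C}ech complex of $\ri^\circ$ (let alone a descendability witness of bounded index for arbitrary solid modules, uniformly in $n$) is not a consequence of stable uniformity. Worse, the reduction mod $\pi^n$ of a rational cover is not a cover for which descendability should be expected: for the cover of $\Spa(\Q_p\langle T\rangle,\Z_p\langle T\rangle)$ by $\{|T|\le|p|\}$ and $\{|p|\le|T|\}$, the first piece reduces mod $p$ to $\F_p[T/p]$ mapping onto the single point $T=0$ of $\Spec(B^\circ/p)$, so the mod-$p$ diagram has lost the geometry of the cover. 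This is precisely why the paper does \emph{not} route analytic descent through \cref{rslt:descendability-of-adic-rings}: instead it proves \cref{sec:defin-d_hats-1-modified-version-in-relative-+-finite-type-case}, giving $\D_{\hat\solid}(B^\circ,B^+)\simeq \D_{\hat\solid}(A^\circ,A^+)\otimes_{\D_\solid(A^\circ,A^+)}\D_\solid(B^\circ,B^+)$ for rational localizations, so that these become open immersions in $\mathrm{Sym}$ (\cref{sec:glob-stably-unif-1-rational-opens-still-define-localizations}), and then descent follows from joint conservativity together with Andreychev's analytic descent for $\D_\solid$ and the general idempotent-cover descent theorem — no descendability index enters at all. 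If you want to salvage your route you would have to actually prove the uniform-index claim, which I do not believe holds in this generality.

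Your étale-case argument has two further problems. First, it invokes \cref{sec:introduction-1-main-theorem-introduction}; but in the paper that theorem is proved \emph{using} the present statement (its $v$-hyperdescent step goes through \cref{rslt:perfectoid-etale-descent-of-integral-sheaves}, whose proof begins by applying \cref{sec:glob-stably-unif-1-analytic-and-etale-descent-for-hat-solid-version} after inverting $\pi$), so your deduction is circular within the paper's architecture. Second, descending along $B\to\widetilde B$ for a sousperfectoid splitting is not even well-posed for the functor as defined: the terms of the \v{C}ech nerve, such as $\widetilde B\hat\otimes_B\widetilde B$, need not be uniform (cf.\ \cref{sec:universal-descent-2-product-of-q-p-cycl-with-itself-over-q-p}), so $\D_{\hat\solid}$ of those Huber pairs is not covered by \cref{sec:glob-stably-unif-1-definition-of-d-hat-solid-a-a+}. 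The paper sidesteps both issues: sousperfectoidness is used only to guarantee that finite étale algebras over rational subsets remain stably uniform (so the étale site makes sense), and étale descent is reduced to finite étale maps, where $(B,B^+)_\solid=(B,A^+)_\solid$ and the problem becomes descent along the algebra map $A\to B$ inside $\D_{\hat\solid}(A,A^+)$, which follows from descendability of the finite étale map of classical rings $A(\ast)\to B(\ast)$ and symmetric monoidality of $\D(A(\ast))\to\D_{\hat\solid}(A,A^+)$ — with no appeal to perfectoid covers or almost mathematics.
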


In \cref{sec:defin-d_hats-a+} we move to perfectoid spaces, where we define $\D_{\hat\solid}^a(A^+)$ for any affinoid perfectoid space $X=\Spa(A,A^+)$ (\cref{sec:defin-d_hats-2-definition-of-almost-plus-category}). Then we apply the complete descent results from \cref{sec:complete-descent} to obtain $v$-descent of $\D_{\hat\solid}^a(A^+)$ on totally disconnected perfectoid spaces (\cref{rslt:v-hyperdescent-for-O+-modules}). From here, we can define $\D_{\hat\solid}^a(\ri_X^+)$ for any perfectoid space $X$ (or even small $v$-stack on $\Perfd$) by $v$-descent. This proves the existence part of \cref{sec:introduction-1-main-theorem-introduction}, with the exception of the identification $\D^a_{\hat\solid}(\mathcal{O}^+_X)\cong \D^a_{\hat\solid}(A^+)$ if $X=\Spa(A,A^+)$ is an affinoid perfectoid space whose tilt has a map of finite $\dimtrg$ to a totally disconnected space. This question we adress next.

There exists an evident functor
\[
 \widetilde{(-)}\colon \D_{\hat\solid}^a(A^+)\to \D_{\hat\solid}^a(\ri_X^+), 
\]
and this functor admits a right adjoint, denoted by $\Gamma(X,-)$. Let $\pi\in A$ be a pseudo-uniformizer. Using \cref{sec:introduction-1-mod-p-theorem-of-lucas}, it is fairly easy to see that $\widetilde{(-)}$ is an equivalence if (and only if) the abstractly defined category $\D_{\hat\solid}^a(\ri_X^+)$ is generated under colimits by (suitably bounded) $\pi$-complete objects, and $\Gamma(X,-)$ preserves colimits (see \cref{sec:bound-cond-1-criterion-for-sheafification-being-an-equivalence}). Both of these properties of $\D^a_{\hat\solid}(\ri^+_X)$ are proven by a detour through a category $\D_\nuc(X,\Z_p[[\pi]])$ of (overconvergent) nuclear sheaves of $\Z_p[[\pi]]$-modules on $X_\qproet$ that we discuss in \cref{sec:nuclear-sheaves} following \cite{mann-nuclear-sheaves} and \cite{fargues-scholze-geometrization}. The discussion of $\D_\nuc(X,\Z_p[[\pi]])$ involves a rather long and detailed analysis of solid, $\omega_1$-solid and overconvergent objects in $\D(X_\qproet,\Z_p[[\pi]])$ for a general $p$-bounded spatial diamond. The criticial assertion in this section concerns the nuclear objects in the category of $\omega_1$-solid objects $\D_\solid(X,\Z_p[[\pi]])_{\omega_1}\subseteq \D(X_\qproet,\Z_p[[\pi]])$. We stress that nuclearity here refers to the abstract notion from \cite[Lecture VIII]{condensed-complex-geometry}, and not to the more geometric notion introduced in \cite{mann-nuclear-sheaves}.

\begin{lemma}[{\cref{sec:nucl-objects-overc-2-nuclear-objects-are-overconvergent}, \cref{sec:nucl-objects-overc-1-d-nuc-generated-by-complete-objects}}]
  \label{sec:introduction-4-properties-of-nuclear-objects-introduction}
  Let $\ell$ be a prime, and let $X$ be an $\ell$-bounded affinoid perfectoid space, and $\Lambda$ an adic profinite $\Z_\ell$-algebra, e.g., $\Lambda=\Z_p[[\pi]]$ if $\ell=p$. Then the category $\D_\nuc(X,\Lambda)$ of nuclear objects in $\D_\solid(X,\Lambda)_{\omega_1}$ is generated under colimits by overconvergent, complete, right-bounded objects.
\end{lemma}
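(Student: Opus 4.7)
The strategy is to exhibit a concrete generating family of nuclear objects that are simultaneously overconvergent, $I$-adically complete, and right-bounded (where $I$ is the ideal of definition of $\Lambda$), and then to show that every nuclear object is a colimit of members of this family. Since $\D_\nuc(X,\Lambda)$ is closed under colimits inside $\D_\solid(X,\Lambda)_{\omega_1}$, producing such a generating class suffices.

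To construct the family, the first step is to invoke the abstract characterization of nuclearity from \cite{condensed-complex-geometry} as a sequential colimit along trace-class maps, equivalently as a filtered colimit of internal duals of compact objects. This reduces the problem to analyzing the internal duals $\IHom(P,\mathbf{1})$ of $\omega_1$-compact generators $P$ of $\D_\solid(X,\Lambda)_{\omega_1}$. Exploiting the profinite structure $\Lambda = \lim_n \Lambda/I^n$, each such dual is the derived limit of its reductions modulo $I^n$. Modulo $I^n$ the situation is controlled by an $\ell$-adic analogue of \cref{sec:introduction-1-mod-p-theorem-of-lucas}: the relevant compact objects arise via pullback from totally disconnected covers of $X$, and such pullbacks are manifestly overconvergent. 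The $\ell$-boundedness hypothesis supplies a uniform cohomological dimension bound with torsion coefficients, which forces the inverse limit along $n$ to remain right-bounded and gives the Mittag--Leffler control needed to identify $\lim_n$ with the $I$-adic completion.

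The main obstacle is the interaction between overconvergence and derived $I$-adic completion: a priori, the overconvergent subcategory is colimit-closed but not limit-closed, so the completion of an overconvergent bounded system need not be overconvergent. The $\ell$-boundedness hypothesis is exactly the input needed to bound the cohomological degrees in which obstructions to overconvergence can live; combined with uniform bounds on the system modulo each $I^n$, this forces the completion to remain overconvergent in a controlled amplitude. Once this preservation statement is in hand, the constructed family has all three desired properties, and, by the trace-class reduction above, generates $\D_\nuc(X,\Lambda)$ under colimits.
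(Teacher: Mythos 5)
There is a genuine gap, and it sits exactly where the real work of the paper's proof lies. Your reduction "nuclear objects are equivalently filtered colimits of internal duals of compact objects" is not something the abstract theory of \cite[Lecture VIII]{condensed-complex-geometry} gives you for free: it only gives that nuclear objects are colimits of basic nuclear ones (sequential colimits along trace-class maps). To upgrade this to "every nuclear $M$ is a colimit of duals $P^\vee$ of compact objects" one needs precisely that these duals are themselves nuclear, so that the trace functor $M\mapsto \varinjlim_{1\to P\otimes M}P^\vee$ agrees with the nuclearization (this is \cref{sec:nuclear-sheaves-1-general-assertion-when-nuclear-rigid}). Verifying nuclearity of $P^\vee$ for $P=\Lambda_\solid[U]$ is the heart of \cref{sec:nucl-objects-overc-1-d-nuc-generated-by-complete-objects}: the paper shows $P^\vee$ is right-bounded, complete, overconvergent and discrete mod $I$, and then proves that these properties imply nuclearity by quasi-pro-étale descent (\cref{sec:nucl-objects-overc-3-overconvergent-and-nuclear-objects}, \cref{sec:nucl-objects-overc-1-completions-of-nuclear-objects-are-nuclear}) down to the strictly totally disconnected case, where $\D_\nuc(X,\Lambda)\cong\D_\nuc(C(\pi_0(X),\Lambda))$ and the claim becomes the algebraic statement \cref{sec:defin-d_hats-2-complete-and-discrete-mod-i-implies-nuclear}. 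Your proposal assumes the conclusion of this step rather than proving it, and the mod-$I^n$ input you invoke is off target: \cref{sec:introduction-1-mod-p-theorem-of-lucas} is a $v$-descent statement for almost $\ri^+/\pi$-modules and plays no role here; moreover the compact generators $\Lambda_\solid[U]$ are \emph{not} ``manifestly overconvergent'' (their overconvergentizations are $\Lambda_\solid[\overline{U}^{/X}]$, cf.\ \cref{sec:nucl-objects-overc-1-properties-of-overconvergent-objects}); what is overconvergent is the dual, because $(-)_\oc$ is symmetric monoidal and $\Lambda$ is overconvergent, giving $\IHom(P,\Lambda)\cong\IHom(P_\oc,\Lambda)$ — this is also exactly how the paper proves overconvergence of nuclear objects in \cref{sec:nucl-objects-overc-2-nuclear-objects-are-overconvergent}, via factorization of trace-class maps through $(-)_\oc$.

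The ``main obstacle'' you identify is also not the actual difficulty. The overconvergent subcategory is closed under \emph{all} limits, not just colimits: the inclusion admits a right adjoint $(-)^\oc$ (\cref{sec:nucl-objects-overc-1-properties-of-overconvergent-objects}), since on strictly totally disconnected spaces $\pi^\ast$ commutes with limits. So $I$-adic completion automatically preserves overconvergence, and no amplitude/Mittag--Leffler argument is needed for that. The genuine subtleties are instead: (a) completeness and nuclearity interact badly a priori because $\D_\nuc(X,\Lambda)$ is not stable under limits in $\D_\solid(X,\Lambda)_{\omega_1}$, which is resolved by \cref{sec:nucl-objects-overc-1-completions-of-nuclear-objects-are-nuclear} (completions of nuclear objects are nuclear, checked via reduction to $\pi_0$ and discreteness mod $I$); and (b) the nuclearity of complete, discrete-mod-$I$, overconvergent objects mentioned above. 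Neither is addressed in your sketch, so as written the argument does not close.
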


Especially the generation by complete objects is non-obvious. Together with the good cohomological properties of $\omega_1$-solid $\Z_p[[\pi]]$-sheaves on $X_\qproet$ this is the source of the desired properties of $\D^a_{\hat\solid}(\mathcal{O}^+_X)$. The precise implementation of these properties involves \cref{rslt:comparison-of-oc-nuc-sheaves-to-O+-modules-on-p-bd-perfd}, which realizes $\D^a_{\hat\solid}(\mathcal{O}^+_X)$ as a category of $\mathcal{O}^{+a}$-modules in the abstract $\D_\nuc(\Z_p[[\pi]])$-linear category
\[
  \D_\nuc(X,(A^+)^a_{\hat\solid}):=\D_\nuc(X,\Z_p[[\pi]])\otimes_{\D_\nuc(\Z_p[[\pi]])} \D^a_{\hat\solid}(A^+).
\] This latter category is well-behaved thanks to the rigidity of $\D_\nuc(\Z_p[[\pi]])$ (\cref{sec:mathc-valu-geom-remark-rigidity-of-d-nuc-r}) and dualizability of $\D^a_{\hat\solid}(A^+)$ as a $\D_\nuc(\Z_p[[\pi]])$-module (\cref{rslt:properties-of-A+-linear-oc-nuc-sheaves}). However, the equivalence
\[
  \D^a_{\hat\solid}(\ri^+_X) \cong \mathrm{Mod}_{\mathcal{O}^{+a}}(\D_\nuc(X,(A^+)^a_{\hat\solid})).
\]
of \cref{rslt:comparison-of-oc-nuc-sheaves-to-O+-modules-on-p-bd-perfd} needs as an input the equivalence $\D^a_{\hat\solid}(\mathcal{O}^+_{\overline{Z}^{/X}})\cong \D^a_{\hat\solid}(B^+,A^+)$ for the relative compactification $\overline{Z}^{/X}$ of a strictly totally disconnected space $Z=\Spa(B,B^+)$ which is quasi-pro-\'etale over $X$. As $\overline{Z}^{/X}$ is not necessarily totally disconnected, this does not follow from the descent of totally disconnected spaces proven in \cref{rslt:v-hyperdescent-for-O+-modules}, and we supply an argument in \cref{rslt:compute-Dqcohri-on-rel-compactification-of-td-space} using an analysis of families of Riemann-Zariski spaces (\cref{rslt:descendability-of-open-cover-of-ZR-space-family}).

\subsection{Acknowledgements}
\label{sec:acknowledgements}

We thank Arthur-C\'esar Le Bras heartily for all his feedback, and for all discussions in an early stage of this project. For personal, Monadic reasons he unfortunately had to leave this project in a later stage. We moreover thank Gregory Andreychev, Johan de Jong, Juan Esteban Rodr\'iguez Camargo and Peter Scholze for discussions related to this paper. This paper was finished as a part of the DFG-Sachbeihilfe 534205068. Moreover, the second author was partially supported by ERC Consolidator Grant 770936:NewtonStrat.

\subsection{Notations and conventions}
\label{sec:notat-conv}

For technical convenience we fix an implicit cut-off cardinal $\kappa$ (in the sense of \cite[Section 4]{etale-cohomology-of-diamonds}), and assume all our perfectoid spaces, and condensed sets to be $\kappa$-small. In particular, for a Huber pair $(A,A^+)$ its associated category $\D_\solid(A,A^+)$ (\cite[Theorem 3.28]{andreychev-condensed-huber-pairs}) is generated by a \textit{set} of compact objects. Passing to the filtered colimit over all $\kappa$'s implies the descent statement in general.

We work implicitly ``condensed'' and ``animated''. More precisely, a ring $R$ will mean a condensed animated ring. It is called static if $R\cong \pi_0(R)$ (as a condensed ring), and discrete if $R\cong \underline{R(\ast)_{\mathrm{disc}}}$, where $R(\ast)$ is the ``underlying animated ring'', or equivalently, $R$ is discrete if the functor $S\mapsto R(S)$ on profinite sets maps cofiltered inverse limits to filtered colimits.

Given a ring $R$ and an ideal $I\subseteq \pi_0(R(\ast))$ generated by elements $f_1,\ldots, f_n\in I$, we set $M/I:=\Z\otimes_{\Z[x_1,\ldots, x_n]}M$ for any $R$-module $M$ where the tensor product is implicitly derived. Here, the map $\Z[x_1,\ldots, x_n]\to R$ making $M$ into an $\Z[x_1,\ldots, x_n]$-module is classified by the elements $f_1,\ldots, f_n$. In particular, the quotient $M/I$ depends on the choice of elements, and we will carefully make this clear when this may create confusion. Concretely, the quotient $M/I$ is calculated by the tensor product of the complexes $M\xrightarrow{f_i} M$, $i=1,\ldots, n$.

Given a $\Z[x]$-linear category $\mathcal{C}$ with sequential limits we call $X\in \mathcal{C}$ $x$-complete if the inverse limit
\[
  \ldots \to X \xrightarrow{x} X
\]
vanishes.
  
\section{\texorpdfstring{$\D_{\hat\solid}(A)$}{D\_sld(A)} for adic rings}
\label{sec:d_hats-adic-rings}

When proving descent for solid modules over $p$-complete rings, one of the main challenges is that in general the compact generators of $\D_\solid(A)$ are not $p$-complete, even if $A$ itself is so. Indeed, by the very definition the compact generators of $\D_\solid(A)$ are colimits over the category of maps $A^\prime\to A$ with $A^\prime$ a finitely generated $\Z$-algebra (\cite[Definition 3.20]{andreychev-condensed-huber-pairs}). This defect breaks the usual strategy of reducing descent questions to the reductions mod $p^n$ and therefore makes it hard to get any good general descent results on $p$-adically complete rings and general solid modules. To overcome this issue, we present a slight modification of $\D_\solid(A)$, denoted $\D_{\hat\solid}(A)$ in the following (\cref{def:modified-modules-over-adic-ring}), where we replace the compact generators of $\D_\solid(A)$ by their $p$-adic completions. This construction may seem a bit artificial, but the results in this section show that we do not lose much and still have a very tight relation to $\D_\solid(A)$. With this modified version of solid modules we then explain how descent questions can indeed be reduced modulo $p^n$ (\cref{rslt:descendability-of-adic-rings}).

Of course the construction of $\D_{\hat\solid}(A)$ works for any adically complete rings (instead of just $p$-complete rings), so we perform it in the natural generality. Hence, we start by discussing adic analytic rings.

\subsection{Adic rings}
\label{sec:adic-rings}

We fix the following terminology. Note that we adhere to our conventions from \cref{sec:notat-conv}.

We let $\Z_\solid$ be the solid analytic ring from \cite[Definition 5.1]{condensed-mathematics} and $\Z[T]_\solid$ the analytic ring from \cite[Theorem 8.2]{condensed-mathematics}. If $A$ is any solid ring, and $S\to \pi_0(A)(\ast)$ a map of sets, then we denote by $(A,S)_\solid$ the analytic ring over $\Z_\solid$ such that an object $M\in \mathrm{Mod}_A(D(\Z_\solid))$ is $(A,S)_\solid$-complete if and only if $M$ is $\Z[s]_\solid$-complete for any $s\in S$ with induced map $\Z[s]\to A$ of rings. For $(A,S)=(A,A^+)$ a classical complete Huber pair, this analytic ring structure has been analyzed in \cite{andreychev-condensed-huber-pairs}. We note that by \cite[Appendix to lecture XII]{scholze-analytic-spaces} and \cite[Proposition 3.32]{andreychev-condensed-huber-pairs} $(A,S)_\solid\cong (A,\widetilde{S})_\solid$ if $\widetilde{S}\subseteq \pi_0(A)(\ast)$ denotes the smallest integrally closed subring containing each topologically nilpotent element and the image of $S$.\footnote{By definition, an element $a\in \pi_0(A)(\ast)$ is topologically nilpotent if the associated map $\Z[T]\to A$ factors over $\Z[[T]]$ as a map of rings. As $A$ is solid and $\Z[[T]]\otimes_{(\Z[T],\Z)_\solid}\Z[[T]]\cong \Z[[T]]$ this factorization is unique if it exists.}

\begin{definition}
  \label{sec:adic-rings-1-definition-adic-analytic-ring}
\begin{defenum}
	\item We say that a ring $A$ is \emph{adic} if there is some finitely generated ideal $I \subset \pi_0 (A)(\ast)$ such that $A/I$ is discrete and $A$ is $I$-adically complete. We call any such ideal $I$ an \emph{ideal of definition} of $A$.

	\item An \emph{adic analytic ring} is an analytic ring $\calA$ of the form $\calA=(A, A^+)_\solid$, where $A$ is an adic ring and $A^+ \subset \pi_0(A)(\ast)$ is a subring. We denote by
	\begin{align*}
		\AdicRing \subset \AnRing
	\end{align*}
	the full subcategory spanned by the adic analytic rings. Given an adic analytic ring $\calA$ we usually write $\underline{\calA}=\calA[\ast]$ for its underlying adic ring.

	\item A map $f\colon A \to B$ of adic rings is called \emph{adic} if for some ideal of definition $I$ of $A$, $f(I)$ generates an ideal of definition of $B$. A map of adic analytic rings is called adic if the map of underlying adic rings is adic.
\end{defenum}
\end{definition}

Given an adic ring $A$ with ideal of definition $I$, we denote by $\D(A):=\mathrm{Mod}_A(\D(\mathrm{Cond}(\mathrm{Ab}))$ its (stable $\infty$-)category of modules in condensed abelian groups, and we let $\D(A)_{\hat I} \subset \D(A)$ be the full subcategory of $I$-adically complete $A$-modules, i.e., those which are $x$-complete for any $x\in I$. This notion does not depend on the choice of $I$ or is generators:

\begin{lemma}
  \label{sec:adic-rings-1-complete-modules-independent-of-ideal-of-definition}
Let $A$ be an adic ring.
\begin{lemenum}
	\item \label{rslt:adic-completeness-is-well-defined} For any two ideals of definition $I$, $I'$ of $A$ we have $\D(A)_{\hat I} = \D(A)_{\hat{I'}}$.
	\item A map $f\colon A \to B$ is adic if and only if for every ideal of definition $I$ of $A$, $f(I)\subseteq \pi_0(B)(\ast)$ generates an ideal of definition of $B$.
\end{lemenum}
\end{lemma}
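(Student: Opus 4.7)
My plan is to reduce both parts to the classical cofinality of ideals of definition: for any two ideals of definition $I, I' \subseteq \pi_0(A)(\ast)$ there exist $n, m \geq 1$ with $I^n \subseteq I'$ and $(I')^m \subseteq I$. Given this cofinality, both (i) and (ii) follow from the standard fact that derived completeness with respect to a finitely generated ideal depends only on its radical.

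The cofinality claim is the heart of the proof. I would start from the presentation $A \simeq \lim_n A/I^n$ in the $\infty$-category of condensed (animated) rings. Each derived quotient $A/I^n$ is discrete, proved by induction on $n$: the graded piece $I^k/I^{k+1}$ is a finitely generated module over the discrete ring $A/I$ (using that $I$ is finitely generated) and hence itself discrete, and $A/I^n$ is an iterated extension of such discrete pieces. The surjection $A \twoheadrightarrow A/I'$ is thus a map of condensed rings from a countable inverse limit of discrete rings with surjective transitions to a discrete target; such a map factors through some $A/I^n$, which translates into $I^n \subseteq I'$. Swapping the roles of $I$ and $I'$ yields $(I')^m \subseteq I$.

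For (i), cofinality gives $\sqrt{I} = \sqrt{I'}$ in $\pi_0(A)(\ast)$, and by the radical criterion for derived completeness (each generator of $I'$ has a power lying in $I$, whence $(I')$-completeness reduces via $g^m$-completeness to $I$-completeness, and symmetrically), we conclude $\D(A)_{\hat I} = \D(A)_{\hat{I'}}$. For (ii), only the implication ``adic $\Rightarrow$ for every $I'$'' requires argument. Given an ideal of definition $I$ with $(f(I))$ an ideal of definition of $B$ and any other ideal of definition $I'$ of $A$, apply $f$ to the cofinality inclusions to obtain $(f(I))^n \subseteq (f(I'))$ and $(f(I'))^m \subseteq (f(I))$ in $B$. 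So $\sqrt{(f(I))} = \sqrt{(f(I'))}$; the quotient $B/(f(I'))$ is discrete (as a quotient of the discrete $B/(f(I))^n$, itself discrete by the graded-piece argument applied to $B$ and $(f(I))$), and $B$ is $(f(I'))$-complete since $(f(I))$- and $(f(I'))$-completeness coincide by the radical criterion. Hence $(f(I'))$ is an ideal of definition of $B$.

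The main obstacle I anticipate is justifying the factorization used in the second paragraph: that in the condensed, $\infty$-categorical setting a map of condensed rings from $\lim_n A/I^n$, with the $A/I^n$ discrete and transitions surjective, to a discrete ring factors through some $A/I^n$. This is the condensed incarnation of the classical topological fact that a continuous homomorphism from an inverse-limit ring to a discrete ring is eventually constant on a neighborhood of zero, but verifying it requires some care since $A$ need not be static and the ``presentation'' $A \simeq \lim_n A/I^n$ has to be set up correctly with respect to the animated structure.
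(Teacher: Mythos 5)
The core of your argument --- the factorization claim in your second paragraph --- is a genuine gap, and it is the step on which everything else rests. The principle ``a map of condensed rings from a countable inverse limit of discrete rings with surjective transitions to a discrete ring factors through a finite stage'' requires some quasi-compactness of the source, and $\varprojlim_n A/I^n$ is an inverse limit of typically \emph{infinite} discrete rings, hence not quasi-compact; condensed maps out of such limits into discrete objects need not factor through any finite stage (already for condensed sets: on $\N^{\N}=\varprojlim_n \N^{\{0,\dots,n\}}$ the locally constant function $x\mapsto x_{x_0}$ factors through no finite stage). The paper's proof uses a much smaller, and genuinely available, compactness input: it tests one element $a\in I'$ at a time via the induced map $\Z[x]\to A/I$, observes that $I$-completeness of $A$ forces a factorization through $\Z[[x]]$, and then uses that $A/I$ is discrete, hence nuclear over $\Z_\solid$, while $\Z[[x]]\cong\prod_{\N}\Z$ is compact as a solid module, so that $\Hom_\Z(\Z[[x]],A/I)=\varinjlim_n\Hom_\Z(\Z[x]/x^n,A/I)$. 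This yields only the elementwise statement that each $a\in I'$ is nilpotent in $\pi_0(A/I)(\ast)$, and that is all the lemma needs: for (i) it gives that every $I$-complete module is $a$-complete for each $a\in I'$, and (ii) is then run through the auxiliary ideal $I''=I+I'$. Your stronger cofinality $I^n\subseteq I'$ is both unnecessary and not established; note also that nilpotence in $\pi_0(A/I)(\ast)$ is a priori weaker than an actual containment of a power of the element in $I$ (the kernel of $\pi_0(A)(\ast)\to\pi_0(A/I)(\ast)$ may be larger than $I$ because of sheafification), so even granting elementwise nilpotence you cannot simply upgrade it to an ideal-theoretic inclusion.

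Two secondary points. First, your discreteness claim for $A/I^n$ via ``the graded pieces $I^k/I^{k+1}$ are finitely generated over the discrete ring $A/I$, hence discrete'' is not automatic in the condensed derived setting: the fibers of $A/I^{k+1}\to A/I^k$ are Koszul-type objects whose condensed structure is inherited from $A$ (which is not discrete), and a condensed module that is abstractly finitely generated over a discrete ring need not be discrete; this step would need its own argument, and it disappears entirely if you follow the elementwise route, which never mentions $A/I^n$ for $n\geq 2$. Second, your treatment of (ii) is structurally sound once strong cofinality is replaced by the elementwise nilpotence statement, at which point it essentially coincides with the paper's argument via $I+I'$.
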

\begin{proof}
We first prove (i), so let $I$ and $I'$ be given. Without loss of generality we can assume $I \subset I'$ because the intersection of two ideals of definition is an ideal of definition. Pick any $a \in I'$. Then $A/I$ is $a$-adically complete (as a finite limit of copies of $A$'s), and hence the induced map $\Z[x] \to A/I$ factors over $\Z[[x]] \to A/I$. Since $A/I$ is discrete and in particular nuclear as a $\Z_\solid$-module, we have $\Hom_\Z(\Z[[x]], A/I) = \varinjlim_n \Hom_\Z(\Z[x]/x^n, A/I)$, which implies that $a$ is nilpotent in $\pi_0(A/I)(\ast)$. But then any $I$-adically complete $A$-module is also $a$-adically complete. Since $a \in I'$ was arbitrary, we immediately arrive at the claimed identity.

We now prove (ii), so let $f\colon A \to B$ be an adic map of adic rings and $I$ some ideal of definition of $A$. By assumption there is some ideal of definition $I'$ of $A$ such that $f(I')$ generates an ideal of definition of $B$. If $I''$ is any ideal of definition of $A$ containing $I'$ then by the proof of (i), every element in $I''$ has some power which lies in $I'$; hence also $f(I'')$ generates an ideal of definition of $B$. We apply this to $I'' = I' + I$. But by reversing this argument we also deduce that $f(I)$ generates an ideal of definition of $B$, as desired. The converse direction is clear.
\end{proof}

\begin{definition}
  \label{sec:adic-rings-2-definition-complete-category-for-adic-analytic-ring}
Let $A$ be an adic ring. We say that an $A$-module $M \in \D(A)$ is \emph{adically complete} if it is $I$-adically complete for some (equivalently every) ideal of definition $I$ of $A$. We denote by $\D_\cpl(A) \subset \D(A)$ the full subcategory spanned by the adically complete $A$-modules and by
\begin{align*}
	(-)_\cpl\colon \D(A) \to \D_\cpl(A)
\end{align*}
the left-adjoint of the inclusion. For an analytic adic ring $\mathcal A$ we similarly denote $\D_\cpl(\mathcal A) := \D(\mathcal A) \isect \D_\cpl(\underline{\mathcal A})$.
\end{definition}

Here, the existence of $(-)_\cpl$ is guaranteed by the adjoint functor theorem, which applies here as all categories here are presentable (recall that we have fixed a cut-off cardinal $\kappa$ in \cref{sec:notat-conv}). 

\subsection{Definition of \texorpdfstring{$\D_{\hat\solid}(A)$}{D\_sld(A)}}
\label{sec:defin-d_hats}

With a good notion of adic rings and adically complete modules at hand, we can now construct the promised modification of $\D(\mathcal A)$. The definition is very simple, albeit somewhat artificial:

\begin{definition} \label{def:modified-modules-over-adic-ring}
Given $\mathcal A$ an adic analytic ring, let $\mathcal C_{\mathcal A} \subset \D_\cpl(\mathcal A)$ be the full subcategory generated under finite (co)limits and retracts from the essential image of the full subcategory $\D(\mathcal A)^\omega$ of compact $\mathcal A$-modules under $(-)_\cpl$. Then we define
\begin{align*}
	\D(\hat{\mathcal A}) := \Ind(\mathcal C_{\mathcal A}).
\end{align*}
Writing $\mathcal A = (A, A^+)_\solid$, we also use the notation $\D_{\hat\solid}(A, A^+) := \D(\hat{\mathcal A})$.
\end{definition}

The following results show that $\D(\hat{\mathcal A})$ is quite close to $\D(\mathcal A)$ and in particular inherits all of its nice properties:

\begin{lemma} \label{rslt:basic-properties-of-modified-modules}
Let $\mathcal A$ be an adic analytic ring.
\begin{lemenum}
	\item $\D(\hat{\mathcal A})$ is a stable $\infty$-category that can naturally be equipped with a closed symmetric monoidal structure and a $t$-structure.

	\item \label{rslt:definition-of-base-change-to-modified-modules} There is a natural colimit-preserving, symmetric monoidal and right $t$-exact functor $\alpha^*\colon \D(\mathcal A) \to \D(\hat{\mathcal A})$.

	\item \label{rslt:properties-of-forgetful-functor-from-modified-modules} The functor $\alpha^*$ has a right adjoint $\alpha_*\colon \D(\hat{\mathcal A}) \to \D(\mathcal A)$ which is $t$-exact, conservative, preserves all small limits and colimits and commutes with truncations.

	\item \label{rslt:alpha-push-pull-of-compact-module-over-adic-ring} For every compact $\mathcal A$-module $P \in \D(\mathcal A)$ we have $\alpha_* \alpha^* P = P_\cpl$.
\end{lemenum}
\end{lemma}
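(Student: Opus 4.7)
My plan is to transport structure from $\D(\mathcal A)$ and $\D_\cpl(\mathcal A)$ onto $\D(\hat{\mathcal A}) = \Ind(\mathcal C_{\mathcal A})$ using the universal property of $\Ind$ together with the compact generation on both sides. For (i), stability is automatic: $\mathcal C_{\mathcal A}$ is closed under finite (co)limits and retracts in the stable $\infty$-category $\D_\cpl(\mathcal A)$, hence is itself a small stable $\infty$-category, and $\Ind$ of such is presentable and stable. For the closed symmetric monoidal structure I would equip $\D_\cpl(\mathcal A)$ with the completed tensor product $(M,N) \mapsto (M \otimes_{\mathcal A} N)_\cpl$, verify that it restricts to $\mathcal C_{\mathcal A}$ (the tensor product of two compacts in $\D(\mathcal A)$ is compact, so its completion lies in $\mathcal C_{\mathcal A}$), and then Ind-extend using Lurie's machinery. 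For the $t$-structure, declare $\D(\hat{\mathcal A})_{\ge 0}$ to be the presentable subcategory generated under colimits by $P_\cpl$ for $P \in \D(\mathcal A)^\omega$ connective.

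For (ii), define $\alpha^*$ as the unique colimit-preserving extension of $P \mapsto P_\cpl$ from $\D(\mathcal A)^\omega$ to $\D(\mathcal A)$. Symmetric monoidality and right $t$-exactness are then built into the above definitions, and colimit preservation is automatic. For (iii), the right adjoint $\alpha_*$ exists by the adjoint functor theorem. The decisive point is that $\alpha^*$ preserves compact objects, since it sends $\D(\mathcal A)^\omega$ into $\mathcal C_{\mathcal A}$ and every object of $\mathcal C_{\mathcal A}$ is compact in $\Ind(\mathcal C_{\mathcal A})$. Consequently $\alpha_*$ preserves filtered colimits, and being a right adjoint between stable $\infty$-categories it preserves all colimits. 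Conservativity is standard: if $\alpha_* M = 0$ then $\Hom(\alpha^* P, M) = 0$ for every compact $P$, so the full subcategory of $\D(\hat{\mathcal A})$ annihilated by $\Hom(-, M)$ is closed under colimits and contains every $\alpha^* P$; closing under finite (co)limits, retracts, and filtered colimits exhausts $\D(\hat{\mathcal A})$, forcing $M = 0$. Left $t$-exactness of $\alpha_*$ is formal (right adjoint to a right $t$-exact functor), while right $t$-exactness reduces by colimit preservation to the calculation $\alpha_* \alpha^* P = P_\cpl \in \D(\mathcal A)_{\ge 0}$ for $P$ connective compact, which feeds off (iv) together with the right $t$-exactness of completion on compact generators. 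Commutation with truncations then follows formally.

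Part (iv) is a short computation via adjunction and Yoneda. For any compact $Q \in \D(\mathcal A)^\omega$,
\begin{align*}
  \Hom_{\D(\mathcal A)}(Q, \alpha_* \alpha^* P)
  &\cong \Hom_{\D(\hat{\mathcal A})}(\alpha^* Q, \alpha^* P)
  \cong \Hom_{\mathcal C_{\mathcal A}}(Q_\cpl, P_\cpl) \\
  &\cong \Hom_{\D_\cpl(\mathcal A)}(Q_\cpl, P_\cpl)
  \cong \Hom_{\D(\mathcal A)}(Q, P_\cpl),
\end{align*}
using, in order, the $(\alpha^*, \alpha_*)$-adjunction, full faithfulness of the Yoneda embedding $\mathcal C_{\mathcal A} \hookrightarrow \Ind(\mathcal C_{\mathcal A})$, full faithfulness of $\mathcal C_{\mathcal A} \hookrightarrow \D_\cpl(\mathcal A)$, and the completion adjunction. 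Since compacts generate $\D(\mathcal A)$ under colimits, this natural equivalence identifies $\alpha_* \alpha^* P \cong P_\cpl$ in $\D(\mathcal A)$.

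The main technical obstacle I expect is rigorously erecting the closed symmetric monoidal structure on $\D(\hat{\mathcal A})$ at the $\infty$-categorical level, so that $\alpha^*$ is symmetric monoidal coherently up to higher homotopy; while $(M \otimes_{\mathcal A} N)_\cpl$ is clearly the right pairing, making the reflective/monoidal localization interact correctly with the $\Ind$-construction requires a careful invocation of Lurie's framework. A secondary subtlety is pinning down the behavior of $(-)_\cpl$ on the $t$-structure, which must be controlled to extract the full $t$-exactness of $\alpha_*$ beyond the formal left $t$-exactness.
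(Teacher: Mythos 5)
Your proposal is correct and takes essentially the same approach as the paper: the completed tensor product on $\mathcal C_{\mathcal A}$ Ind-extended via Lurie's machinery, $\alpha^*$ as the Ind-extension of $(-)_\cpl$ on compact objects, preservation of compacts giving a colimit-preserving conservative $\alpha_*$, the same Hom-chain for (iv), and Lurie's criterion to produce the $t$-structure. The only deviations are cosmetic: you define $\D_{\ge0}(\hat{\mathcal A})$ by connective generators rather than as $\{M : \alpha_* M \in \D_{\ge0}(\mathcal A)\}$, and in (iv) you conclude via full faithfulness of the restricted Yoneda embedding instead of first observing (as the paper does) that $\alpha^* P$ is adically complete, so that the unit factors through a canonical map $P_\cpl \to \alpha_* \alpha^* P$ --- a small extra step worth keeping, since it exhibits the isomorphism as the canonical one and is reused later in the paper.
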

\begin{proof}
Let $\mathcal C_{\mathcal A} \subset \D_\cpl(\mathcal A)$ be as in \cref{def:modified-modules-over-adic-ring}. Recall that there is a symmetric monoidal structure $\hat\tensor_{\mathcal A}$ on $\D_\cpl(\mathcal A)$ given by the completed tensor product.\footnote{This follows from the fact that $\underline{\Hom}(M,N)$ is adically complete whenever $N\in \D(A)$ is adically complete.} Since compact objects in $\D(\mathcal A)$ are stable under $\tensor_{\mathcal A}$ (here we use that $\calA$ is an analytic ring over $\Z_\solid$) it follows that $\mathcal C_{\mathcal A}$ is stable under $\hat\tensor_{\mathcal A}$, so that we naturally get a symmetric monoidal structure on $\mathcal C_{\mathcal A}$. By \cite[Corollary 4.8.1.14]{lurie-higher-algebra} we can uniquely extend this symmetric monoidal structure to $\D(\hat{\mathcal A}) = \Ind(\mathcal C_{\mathcal A})$ so that it preserves colimits in each argument.

The functor $\alpha^*$ is the natural functor colimit $\D(\mathcal A) = \Ind(\D(\mathcal A)^\omega) \to \Ind(\mathcal C_{\mathcal A}) = \D(\hat{\mathcal A})$ induced by the completion functor $\D(\mathcal A)^\omega \to \mathcal C_{\mathcal A}$. By \cite[Corollary 4.8.1.14]{lurie-higher-algebra} $\alpha^*$ can be upgraded uniquely to a symmetric monoidal functor. The right adjoint $\alpha_*$ exists by the adjoint functor theorem (using that we have fixed a cut-off cardinal). Since $\alpha^*$ preserves compact objects (by definition) it follows that $\alpha_*$ commutes with filtered colimits and hence with all colimits. It is also easy to see that $\alpha_*$ is conservative: Given any $M \in \D(\hat{\mathcal A})$ with $\alpha_* M = 0$, pick any compact $P \in \D(\mathcal A)$; then $0 = \Hom(P, \alpha_* M) = \Hom(\alpha^* P, M)$, so it is enough to show that the family of functors $\Hom(\alpha^* P, -)$ is conservative (with $P$ ranging over compact $\mathcal A$-modules). But this follows immediately from the fact that the $\alpha^* P$ form compact generators of $\D(\hat{\mathcal A})$ by construction.

We now prove (iv), so fix any compact $\mathcal A$-module $P$. Note that $\alpha^* P$ is adically complete: For any ideal of definition $I$ for $\underline{\mathcal A}$ and any $x \in I$ we need to check that $\varprojlim_x \alpha^* P = 0$ with $\varprojlim$ taken in $\D(\hat{\calA})$, and for this it is enough to check that $\varprojlim_x \Hom(Q, \alpha^* P) = 0$ for every $Q \in \mathcal C_{\mathcal A}$; but this statement depends only on $\mathcal C_{\mathcal A}$, which is a full subcategory of $\D_\cpl(\mathcal A)$. Thus, we have checked that $\alpha^\ast P$ is adically complete. Since $\alpha_*$ preserves all small limits, we deduce that also $\alpha_* \alpha^* P$ is adically complete, i.e. lies in $\D_\cpl(\mathcal A)$. Hence the unit $P \to \alpha_* \alpha^* P$ induces a map $P_\cpl \to \alpha_* \alpha^* P$. To show that this map is an isomorphism, it is enough to do so after applying $\Hom(Q, -)$ for any compact $Q \in \D(\mathcal A)$; we have:
\begin{align*}
	\Hom(Q, P_\cpl) = \Hom(Q_\cpl, P_\cpl) = \Hom(\alpha^* Q, \alpha^* P) = \Hom(Q, \alpha_* \alpha^* P),
\end{align*}
where in the second step we used that $\mathcal C_{\mathcal A} \subset \D(\mathcal A)$ is a full subcategory.

We have now proved everything apart from the claims about the $t$-structure. We define $\D_{\ge0}(\hat{\mathcal A}) \subset \D(\hat{\mathcal A})$ to be the full subcategory spanned by those objects $M \in \D(\hat{\mathcal A})$ such that $\alpha_* M \in \D_{\ge0}(\mathcal A)$. This subcategory is clearly stable under colimits and extensions and by (iv) it contains $\alpha^* \mathcal A[S]$ for all profinite sets $S$. It follows that the $\alpha^* \mathcal A[S]$ are compact generators of $\D_{\ge0}(\hat{\mathcal A})$, so by \cite[Proposition 1.4.4.11.(i)]{lurie-higher-algebra} this subcategory defines a $t$-structure on $\D(\hat{\mathcal A})$. Since every $M \in \D_{\ge0}(\mathcal A)$ is a (sifted) colimit of the free generators $\mathcal A[S]$ it follows easily from (iv) that $\alpha^*$ is right $t$-exact (because $(-)_\cpl$ is so). Moreover, $\alpha_*$ is right $t$-exact by construction and it is left $t$-exact because $\alpha^*$ is right $t$-exact. Finally, one checks easily that $\alpha_*$ commutes with $\tau_{\ge0}$ by passing to left adjoints; from the usual truncation triangle it then follows that $\alpha_*$ also commutes with $\tau_{\le0}$.
\end{proof}

\begin{remark}
  \label{sec:adic-rings-1-remark-on-generalized-analytic-rings}
By \cref{rslt:basic-properties-of-modified-modules} the adjunction of $\alpha^*$ and $\alpha_*$ is monadic and hence identifies $\D(\hat{\mathcal A})$ with a category of modules over some monad on $\D(\mathcal A)$. Thus one may think of $\hat{\mathcal A}$ as some form of generalized analytic ring and $\D(\hat{\mathcal A})$ as its category of modules. One may attempt to formalize this idea in order to get a general theory of generalized analytic rings, but we will not pursue this further. We note that $\alpha_\ast\colon \D(\hat{\mathcal{A}})\to \D(\mathcal{A})$ is in general not $\D(\mathcal{A})$ linear, or equivalently $\alpha_\ast$ does not satisfy the projection formula (otherwise for each compact object $P\in \mathcal{A}$ one has $P\otimes \alpha_\ast(\alpha^\ast(\underline{\mathcal{A}})\cong \alpha_\ast(\alpha^\ast P)\cong P_{\cpl}$). Thus, modules over the monad $\alpha_\ast(\alpha^\ast(-))$ do in general not identify with modules under some ring object in $\D(\mathcal{A})$.
\end{remark}

Before we continue, let us make everything more functorial. We have the following straightforward result:

\begin{lemma} \label{rslt:functoriality-of-modified-modules}
The construction $\mathcal A \mapsto \D(\hat{\mathcal A})$ defines a functor $\AdicRing^\opp \to \SymMonCat$ and $\alpha^*\colon \D(\mathcal A) \to \D(\hat{\mathcal A})$ defines a natural transformation of such functors.
\end{lemma}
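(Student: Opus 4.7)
The strategy is to exploit the Ind-presentation $\D(\hat{\mathcal A}) = \Ind(\mathcal C_{\mathcal A})$ and the universal property of symmetric monoidal Ind-completion \cite[Corollary 4.8.1.14]{lurie-higher-algebra}: producing a morphism $\hat f^*$ in $\SymMonCat$ for each adic $f\colon \mathcal A \to \mathcal B$ reduces to producing a symmetric monoidal, finite-colimit-preserving functor $\mathcal C_{\mathcal A} \to \mathcal C_{\mathcal B}$ and then Ind-extending. Likewise, the natural transformation $\alpha^*$ need only be checked on compact generators.

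First, the classical base change $f^*\colon \D(\mathcal A) \to \D(\mathcal B)$ is symmetric monoidal, colimit-preserving, and preserves compact objects (its right adjoint, the forgetful functor, commutes with filtered colimits). Post-composing with $\beta^*$ from \cref{rslt:definition-of-base-change-to-modified-modules} yields a symmetric monoidal, colimit-preserving $\gamma := \beta^* \circ f^*\colon \D(\mathcal A) \to \D(\hat{\mathcal B})$ which restricts to $\D(\mathcal A)^\omega \to \mathcal C_{\mathcal B}$. The key step is to show that this restriction factors through $\alpha^*|_{\D(\mathcal A)^\omega}\colon \D(\mathcal A)^\omega \to \mathcal C_{\mathcal A}$, producing the desired $\hat f^*\colon \mathcal C_{\mathcal A} \to \mathcal C_{\mathcal B}$. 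This amounts to the compatibility $(f^* M)_\cpl \xrightarrow{\sim} (f^* M_\cpl)_\cpl$ for every $M \in \D(\mathcal A)$. Taking fibers, this reduces to the claim $(f^* N)_\cpl = 0$ whenever $N_\cpl = 0$; by the adjunctions $((-)_\cpl,\mathrm{incl})$ and $(f^*,\mathrm{forget})$, this is equivalent to the elementary statement that any adically complete $\mathcal B$-module is adically complete when viewed as an $\mathcal A$-module, which is immediate from the definition of an adic map together with \cref{rslt:adic-completeness-is-well-defined}.

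With the symmetric monoidal, finite-colimit-preserving functor $\mathcal C_{\mathcal A} \to \mathcal C_{\mathcal B}$ in hand, Ind-extension promotes it to $\hat f^*\colon \D(\hat{\mathcal A}) \to \D(\hat{\mathcal B})$ in $\SymMonCat$. The commutativity $\hat f^* \circ \alpha^* \simeq \beta^* \circ f^*$ holds on compact generators by construction and extends to all of $\D(\mathcal A)$ by colimit-preservation, providing the naturality of $\alpha^*$. Functoriality $\widehat{g \circ f}^* \simeq \hat g^* \circ \hat f^*$ is verified on compact generators, where it reduces to $((g\circ f)^* P)_\cpl \simeq (g^*(f^* P))_\cpl$, i.e.\ the functoriality of classical base change composed with completion; identities are handled analogously. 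The main obstacle is the completion-versus-base-change compatibility above — once established, the remainder is a routine application of the universal properties of Ind-completion.
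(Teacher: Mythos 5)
Your object- and morphism-level analysis is sound, and in fact isolates the same mathematical input the paper's construction relies on: for a map $f\colon \mathcal A \to \mathcal B$ the composite $(f^*(-))_\cpl$ carries $\mathcal C_{\mathcal A}$ into $\mathcal C_{\mathcal B}$, because base change preserves completion-equivalences, i.e.\ adically complete $\mathcal B$-modules are adically complete over $\mathcal A$. The genuine gap is in what you do with this data. The lemma's entire content is the construction of a functor of $\infty$-categories $\AdicRing^\opp \to \SymMonCat$ and of $\alpha^*$ as a natural transformation of such functors; specifying $\hat f^*$ for each morphism, an equivalence $\widehat{(g\circ f)}^* \simeq \hat g^* \circ \hat f^*$ checked on compact generators, and commuting squares for $\alpha^*$ morphism by morphism only produces data in the homotopy ($2$-)category. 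It does not assemble into a functor to $\CAlg(\infcatinf)$, nor into a natural transformation, because the infinite hierarchy of coherences (compatibility of the composition equivalences, of the symmetric monoidal structures on the $\hat f^*$, etc.) is never produced. This is exactly why the paper argues differently: it starts from the already coherent functor $\mathcal A \mapsto \D(\mathcal A)$, passes to the associated cocartesian family of $\infty$-operads, restricts fiberwise to the full subcategories $\D_\cpl(\mathcal A)$ and then $\mathcal C_{\mathcal A}$ (this is where your key compatibility enters, guaranteeing that the relative pushforwards stay inside the subcategories), and finally applies the symmetric monoidal $\Ind$-functor; the transformation $\alpha^*$ is obtained by running the same construction relative over $\Delta^1$. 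Your plan would need to be recast in some such fibrational (or otherwise coherent) form; as written, "verify composition on compact generators" plus the universal property of $\Ind$ for individual functors does not prove the statement.

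A second, smaller issue: you justify the key step ("complete $\mathcal B$-modules are complete over $\mathcal A$") by "the definition of an adic map", but $\AdicRing$ is defined in \cref{sec:adic-rings-1-definition-adic-analytic-ring} as a \emph{full} subcategory of $\AnRing$, so the morphisms for which the lemma (and \cref{sec:adic-rings-2-definition-completion-and-pullback-functors}) must apply are not assumed adic. The needed statement is still true for an arbitrary morphism $f\colon \mathcal A \to \mathcal B$ in $\AdicRing$, but the argument is the one from the proof of \cref{sec:adic-rings-1-complete-modules-independent-of-ideal-of-definition}: for $x$ in an ideal of definition $I$ of $\underline{\mathcal A}$, the induced map $\Z[[x]] \to \underline{\mathcal B}/J$ to the discrete ring $\underline{\mathcal B}/J$ forces $f(x)$ to be nilpotent modulo $J$, whence every $J$-complete module is $f(x)$-complete. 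So this part is fixable, but as stated your reduction covers only the adic morphisms.
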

\begin{proof}
The assignment $\mathcal A \mapsto \D(\mathcal A)$ defines a functor $\AdicRing^\opp \to \SymMonCat$ (see e.g. \cite[Proposition 2.3.26]{mann-mod-p-6-functors}). By looking at the associated cocartesian family of $\infty$-operads and restricting to a full subcategory of that, we can construct the functor $\mathcal A \mapsto \D_\cpl(\mathcal A)$ (see \cite[Lemma 2.2.22]{mann-mod-p-6-functors} for the general strategy of this argument). By restricting to even small full subcategories we obtain the functor which associates to each $\mathcal A$ the full subcategory $\mathcal C_{\mathcal A} \subset \D_\cpl(\mathcal A)$ from \cref{def:modified-modules-over-adic-ring}. By the remark right before \cite[Proposition 4.8.1.10]{lurie-higher-algebra} the assignment $\mathcal C \mapsto \Ind(\mathcal C)$ defines an endofunctor of symmetric monoidal $\infty$-categories. This finishes the construction of the functor $\mathcal A \mapsto \D(\hat{\mathcal A})$.

The construction of the natural transformation $\alpha^*$ works in a very similar fashion, by making everything relative over $\Delta^1$.
\end{proof}

\begin{definition}
  \label{sec:adic-rings-2-definition-completion-and-pullback-functors}
\begin{defenum}
	\item For every $\mathcal A \in \AdicRing$ we denote by $- \tensor_{\mathcal A} \hat{\mathcal A}\colon \D(\mathcal A) \to \D(\hat{\mathcal A})$ the functor $\alpha^*$ from \cref{rslt:definition-of-base-change-to-modified-modules}.

	\item For every map $\mathcal A \to \mathcal B$ in $\AdicRing$ we denote by $- \tensor_{\hat{\mathcal A}} \hat{\mathcal B}\colon \D(\hat{\mathcal A}) \to \D(\hat{\mathcal B})$ the induced functor from \cref{rslt:functoriality-of-modified-modules}.
\end{defenum}
\end{definition}

We will write $\alpha^\ast_{\mathcal{A}}, \alpha_{\mathcal{A},\ast}$ in case we need to clarify the dependence of $\alpha^\ast,\alpha_\ast$ on $\mathcal{A}$. If $\mathcal{A}\to \mathcal{B}$ is a morphism of adic analytic rings, then $\alpha_{\mathcal{B}}^\ast((-)\otimes_{\mathcal{A}}\mathcal{B})\cong \alpha^\ast_{\mathcal{A}}(-)\otimes_{\hat{\mathcal{A}}} \hat{\mathcal{B}}$ by \cref{rslt:functoriality-of-modified-modules}.

We remark that $-\otimes_{\calA} \hat{\calA}$ and $-\otimes_{\hat{\calA}}\hat{\mathcal{B}}$ are symmetric monodial. While for general adic analytic rings $\mathcal A$, $\D(\hat{\mathcal A})$ differs from $\D(\mathcal A)$, in praxis these two categories are often the same:

\begin{lemma}
  \label{sec:adic-rings-3-modified-version-agrees-with-old-one-for-finite-type-stuff}
Let $(A, A^+)_\solid$ be an adic analytic ring. If either $A$ is discrete or $A^+$ is finitely generated over $\Z$, then $\D(\hat{\mathcal A}) = \D(\mathcal A)$.
\end{lemma}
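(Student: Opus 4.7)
The plan is to translate the asserted equivalence $\D(\hat{\mathcal A}) \cong \D(\mathcal A)$ into a statement about compact objects of $\D(\mathcal A)$ and then to verify that statement under each of the two hypotheses. Using \cref{rslt:alpha-push-pull-of-compact-module-over-adic-ring}, for compact $P, Q \in \D(\mathcal A)$ one has
\[
  \Hom_{\D(\hat{\mathcal A})}(\alpha^\ast P, \alpha^\ast Q) = \Hom_{\D(\mathcal A)}(P, \alpha_\ast \alpha^\ast Q) = \Hom_{\D(\mathcal A)}(P, Q_\cpl),
\]
so $\alpha^\ast|_{\D(\mathcal A)^\omega}$ is fully faithful precisely when the unit $Q \to Q_\cpl$ is an equivalence for every compact $Q \in \D(\mathcal A)$. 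When this holds, the essential image of $\alpha^\ast$ on $\D(\mathcal A)^\omega$ is already closed under finite (co)limits and retracts inside $\D_\cpl(\mathcal A)$ and hence coincides with $\mathcal C_{\mathcal A}$; passing to $\Ind$-completions then turns $\alpha^\ast$ into an equivalence. Thus the lemma reduces to proving $\D(\mathcal A)^\omega \subseteq \D_\cpl(\mathcal A)$ under either hypothesis.

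If $A$ is discrete then I would take the zero ideal $I = 0$ as ideal of definition: it is finitely generated, $A/0 \cong A$ is discrete by assumption, and $A$ is trivially $0$-adically complete. Every $A$-module is then $0$-adically complete and the required inclusion is immediate.

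Suppose now $A^+$ is finitely generated over $\Z$ and fix an ideal of definition $I = (\pi_1, \ldots, \pi_r)$ of $A$. By the proof of \cref{rslt:adic-completeness-is-well-defined} each $\pi_j$ is topologically nilpotent in $A$, so $\pi_j \in \widetilde{A^+}$ by the definition recalled in the notation section. Since $(A, A^+)_\solid \cong (A, \widetilde{A^+})_\solid$, after replacing $A^+$ by $A^+[\pi_1, \ldots, \pi_r]$, which is still finitely generated over $\Z$, we may assume $\pi_1, \ldots, \pi_r \in A^+$ without changing $\D(\mathcal A)$. I would then invoke the explicit description of compact generators of $\D_\solid(A, A^+)$ from \cite[Definition~3.20]{andreychev-condensed-huber-pairs}, taking the underlying finitely generated Huber pair to be $(\Z[A^+], A^+) \to (A, A^+)$: the $(\Z[A^+], A^+)_\solid$-structure records, for each topologically nilpotent $\pi_j \in A^+$, the factorization $\Z[T] \to \Z[[T]] \to \Z[A^+]$ referenced in the notation section's footnote, and this forces the compact generators of $\D_\solid(\Z[A^+], A^+)$ to be $\pi_j$-adically complete. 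Base change to $(A, A^+)$ preserves this $\pi_j$-adic completeness on compact generators, giving $I$-adic completeness as desired.

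The main obstacle in the plan is this last step, namely a precise verification that the compact generators produced by Andreychev's construction are $\pi_j$-adically complete once the topologically nilpotent generators of $I$ have been pulled into $A^+$. This rests on a careful unpacking of how the analytic ring $(\Z[T], \Z)_\solid$ interacts with topologically nilpotent elements through their $\Z[[T]]$-factorization, and how this completeness is inherited by base change and by passing to the Ind-completion.
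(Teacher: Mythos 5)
Your reduction is fine and matches the paper: since $\alpha_\ast$ is conservative and $\alpha^\ast,\alpha_\ast$ preserve colimits, the statement comes down, via \cref{rslt:alpha-push-pull-of-compact-module-over-adic-ring}, to showing that every compact object of $\D(\mathcal A)$ is adically complete; the discrete case is then immediate. The gap is in the finitely generated case, and it is exactly at the step you yourself flag. The pair you propose to use, $(\Z[A^+],A^+)$ with $\Z[A^+]$ a \emph{discrete} finitely generated ring, cannot record the topological nilpotence of the $\pi_j$: topological nilpotence means the map $\Z[T]\to A$ factors over $\Z[[T]]$, and this factorization exists only because $A$ is $I$-adically complete; it does not factor through the discrete ring $\Z[A^+]$ unless $\pi_j$ is actually nilpotent there. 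Consequently the free solid modules over such a discrete pair are products $\prod_I A^+$, which are \emph{not} adically complete -- for instance with $(A,A^+)=(\Z_p,\Z)$ your intermediate pair is $(\Z,\Z)$ and $\prod_I\Z$ is not $p$-adically complete -- so the claimed completeness of the compact generators of $\D_\solid(\Z[A^+],A^+)$ is false, and the subsequent base change starts from the wrong object. Moreover, even with a complete starting object, ``base change to $(A,A^+)$ preserves completeness'' is precisely the non-formal assertion that needs an input like \cite[Proposition 2.12.10]{mann-mod-p-6-functors}; completed tensor products do not preserve completeness for free.

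The missing idea, which is the paper's actual argument, is to interpose the formal power series ring rather than a discrete ring: choose generators of an ideal of definition to get $A^+[x_1,\dots,x_n]\to A$, which automatically extends to $A^+[[x_\bullet]]\to A$ by completeness of $A$; then one computes $(A^+[[x_\bullet]],A^+)_\solid[S]\cong\prod_I A^+[[x_\bullet]]$ (here the hypothesis that $A^+$ is finitely generated over $\Z$ is used), and this product is visibly $(x_1,\dots,x_n)$-adically complete. Writing $(A,A^+)_\solid[S]=A\tensor_{(A^+[[x_\bullet]],A^+)_\solid}(A^+[[x_\bullet]],A^+)_\solid[S]$ and invoking \cite[Proposition 2.12.10]{mann-mod-p-6-functors} then gives the adic completeness of the compact generators of $\D(\mathcal A)$. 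Your preliminary normalization (enlarging $A^+$ by the $\pi_j$ using $(A,A^+)_\solid\cong(A,\widetilde{A^+})_\solid$) is harmless but unnecessary, and note also that one should first replace $A^+$ by a finitely generated polynomial algebra mapping to $A$ to make sense of the ring map $A^+\to A$ at the condensed level.
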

\begin{proof}
We use the notation from \cref{rslt:basic-properties-of-modified-modules}. Since $\alpha_*$ is conservative, in order to prove the desired equivalence of categories it is enough to show that $\alpha^*$ is fully faithful, i.e. that for all $M \in \D(\mathcal A)$ the unit of the adjunction $M \isoto \alpha_* \alpha^* M$ is an isomorphism. Since $\alpha_*$ and $\alpha^*$ preserve colimits, this reduces to the case that $M = P$ is compact, in which case by \cref{rslt:alpha-push-pull-of-compact-module-over-adic-ring} the claim reduces to showing that $P$ is adically complete. This is evident if $A$ is discrete, so it remains to treat the case where $A^+$ is finitely generated.

We can assume that $A^+$ is a finitely generated polynomial algebra over $\Z$ and that there is a map $A^+ \to A$. By choosing generators of an ideal of definition of $A$ we construct a map $A^+[x_\bullet] := A^+[x_1, \dots, x_n] \to A$, which then automatically factors as $A^+[[x_\bullet]] \to A$. Note that for every profinite set $S$ we have
\begin{align*}
	(A, A^+)_\solid[S] = A \tensor_{A^+_\solid} A^+_\solid[S] = A \tensor_{(A^+[[x_\bullet]],A^+)_\solid} (A^+[[x_\bullet]],A^+)_\solid[S]
\end{align*}
On the other hand, $(A^+[[x_\bullet]],A^+)_\solid[S] \isom \prod_I A^+[[x_\bullet]]$ for some set $I$ (as $A^+$ is finitely generated over $\Z$), so in particular it is $(x_1,\dots, x_n)$-adically complete. It follows from \cite[Proposition 2.12.10]{mann-mod-p-6-functors} that $(A,A^+)_\solid[S]$ is $(x_1, \dots, x_n)$-adically complete, i.e. adically complete as an $A$-module. This proves the claim.
\end{proof}

\begin{lemma} \label{rslt:A-mod-I-modified-modules-equals-usual-modules}
Let $\mathcal A$ be an adic analytic ring with ideal of definition $I$. Then $- \tensor_{\mathcal A} \hat{\mathcal A}$ induces an equivalence of categories
\begin{align*}
	\D(\mathcal A/I) = \Mod_{\underline{\mathcal A}/I}(\D(\hat{\mathcal A})).
\end{align*}
\end{lemma}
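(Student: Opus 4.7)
The plan is to identify both sides with categories of $\underline{\mathcal{A}}/I$-modules in appropriate ambient categories and then exploit that $\underline{\mathcal{A}}/I$ is already adically complete, so that $\alpha^\ast = -\otimes_{\mathcal{A}}\hat{\mathcal{A}}$ behaves well on it. First I would observe that standard analytic ring theory yields $\D(\mathcal{A}/I)\cong\Mod_{\underline{\mathcal{A}}/I}(\D(\mathcal{A}))$: the analytic structure on $\mathcal{A}/I$ is inherited from $\mathcal{A}$, its underlying ring $\underline{\mathcal{A}}/I$ is discrete, and $\mathcal{A}/I$-completeness reduces to $\underline{\mathcal{A}}/I$-module structure in $\D(\mathcal{A})$. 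Thus the task becomes showing that the functor $\bar\alpha^\ast\colon \Mod_{\underline{\mathcal{A}}/I}(\D(\mathcal{A})) \to \Mod_{\underline{\mathcal{A}}/I}(\D(\hat{\mathcal{A}}))$ induced by $\alpha^\ast$ is an equivalence.

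Since $\alpha^\ast$ is symmetric monoidal (\cref{rslt:basic-properties-of-modified-modules}), it sends the algebra $\underline{\mathcal{A}}/I$ to an algebra $\alpha^\ast(\underline{\mathcal{A}}/I)$ in $\D(\hat{\mathcal{A}})$ and produces $\bar\alpha^\ast$ above. I would next identify $\alpha^\ast(\underline{\mathcal{A}}/I)\cong \underline{\mathcal{A}}/I$ in $\D(\hat{\mathcal{A}})$: the module $\underline{\mathcal{A}}/I$ is a finite colimit of copies of $\underline{\mathcal{A}}$ via the Koszul description from \cref{sec:notat-conv}, hence compact in $\D(\mathcal{A})$; moreover each generator $x$ of $I$ acts null-homotopically on $\underline{\mathcal{A}}/I$, so the tower $\ldots\xrightarrow{x}\underline{\mathcal{A}}/I$ has vanishing inverse limit and $\underline{\mathcal{A}}/I$ is adically complete. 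Combined, \cref{rslt:alpha-push-pull-of-compact-module-over-adic-ring} yields $\alpha_\ast\alpha^\ast(\underline{\mathcal{A}}/I) = (\underline{\mathcal{A}}/I)_\cpl = \underline{\mathcal{A}}/I$, so that the unit is an isomorphism as algebras.

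Finally, I would verify that $\bar\alpha^\ast$ is an equivalence by comparing compact generators. The source is generated by $P/I := P\otimes_{\mathcal{A}}\underline{\mathcal{A}}/I$ for $P\in\D(\mathcal{A})^\omega$; by symmetric monoidality the target is generated by $\alpha^\ast(P)\otimes_{\hat{\mathcal{A}}}\underline{\mathcal{A}}/I \cong \alpha^\ast(P/I)$, so $\bar\alpha^\ast$ hits the compact generators essentially surjectively. For morphisms, the adjunction induced on module categories together with \cref{rslt:alpha-push-pull-of-compact-module-over-adic-ring} gives
\[
  \Hom(\alpha^\ast(P/I),\alpha^\ast(Q/I)) \cong \Hom(P/I, \alpha_\ast\alpha^\ast(Q/I)) \cong \Hom(P/I,(Q/I)_\cpl) \cong \Hom(P/I,Q/I),
\]
using that $Q/I$ is adically complete (by the same null-homotopic argument as above). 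Hence $\bar\alpha^\ast$ is fully faithful on compact generators, and extending by colimits it is an equivalence.

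The main technical hurdle I expect is the careful setup of the initial identification $\D(\mathcal{A}/I) \cong \Mod_{\underline{\mathcal{A}}/I}(\D(\mathcal{A}))$: one has to unwind the definition of the induced analytic structure on the quotient and justify that for $\underline{\mathcal{A}}/I$-modules the completeness conditions defining the analytic ring $\mathcal{A}/I$ are automatic. Once this is in place, the argument reduces to the single clean computation $\alpha_\ast\alpha^\ast(Q/I)=(Q/I)_\cpl=Q/I$ provided by \cref{rslt:alpha-push-pull-of-compact-module-over-adic-ring} together with the adic completeness of $\underline{\mathcal{A}}/I$.
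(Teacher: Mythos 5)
Your proposal is correct and follows essentially the same route as the paper: identify $\D(\mathcal A/I)$ with $\Mod_{\underline{\mathcal A}/I}(\D(\mathcal A))$, use symmetric monoidality of $\alpha^\ast$, and reduce everything to the computation $\alpha_\ast\alpha^\ast(Q/I)=(Q/I)_\cpl=Q/I$ for compact $Q$, which works because mod-$I$ objects are automatically adically complete and $Q/I$ is still compact over $\mathcal A$ by the Koszul description. The only cosmetic difference is that the paper packages the conclusion via conservativity of $\alpha_\ast$ plus the unit being an isomorphism on the generators $(\mathcal A/I)[S]$, whereas you compare compact generators and extend by colimits, which amounts to the same argument.
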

Here, the $\D(\calA/I)$ denotes analytic ring structure on $\underline{\calA}/I$ with its induced analytic ring structure along $\underline{\calA}\to \underline{\calA}/I$.
\begin{proof}
Let us write $\alpha^* := - \tensor_{\mathcal A} \hat{\mathcal A}$. We have $\D(\mathcal A/I) = \Mod_{\underline{\mathcal A}/I}(\D(\mathcal A))$, so since $\alpha^*$ is symmetric monoidal, it induces a functor $\D(\mathcal A/I) \to \Mod_{\underline{\mathcal A}/I}(\D(\hat{\mathcal A}))$. Its right adjoint is given by $\alpha_*$, which is conservative, so the desired equivalence reduces to showing that $\alpha^*$ is fully faithful on $\underline{\mathcal A}/I$-modules, i.e. that $\id \isoto \alpha_* \alpha^*$ is an isomorphism. This in turn can be checked on compact generators, i.e. for $P = (\mathcal A/I)[S]$. But such a $P$ is also compact as $\mathcal A$-module (by our conventions \cref{sec:notat-conv}), hence by \cref{rslt:alpha-push-pull-of-compact-module-over-adic-ring} we have $\alpha_*\alpha^* P = P_\cpl = P$, as desired.
\end{proof}

Next we study adic completeness in $\D(\hat{\mathcal A})$. Since this category is $\D(\mathcal A)$-linear via the symmetric monoidal functor $- \tensor_{\mathcal A} \hat{\mathcal A}$, there is a good notion of adically complete modules in $\D(\hat{\mathcal A})$:

\begin{definition}
  \label{sec:adic-rings-2-definition-completeness-in-modified-modules}
Let $\mathcal A$ be an adic analytic ring. An $\hat{\mathcal A}$-module $M \in \D(\hat{\mathcal A})$ is called \emph{adically complete} if for some (equivalently every) ideal of definition $I$ of $\underline{\mathcal A}$ and every $x \in I$ we have $\varprojlim_x M = 0$. We denote by $\D_\cpl(\hat{\mathcal A}) \subset \D(\hat{\mathcal A})$ the full subcategory spanned by the adically complete modules. Similarly, we let $\D_\cpl(\mathcal{A})\subseteq \D(\mathcal{A})$ be the full subcategory of adically complete objects in $\D(\mathcal{A})$.\footnote{We show in \cref{sec:defin-texorpdfstr-equivalence-of-complete-objects} that $\D_\cpl(\mathcal A)$ and $\D_\cpl(\hat{\mathcal A})$ agree.}
\end{definition}

The following result is a first indicator for why we work with $\hat{\mathcal A}$ in this paper: The complete objects in $\D(\hat{\mathcal A})$ behave much better with respect to pullbacks and tensor products than those in $\D(\mathcal A)$.

\begin{lemma} \label{rslt:basic-properties-of-adically-complete-modules}
Let $\mathcal A$ be an adic analytic ring.
\begin{lemenum}
	\item An $\hat{\mathcal A}$-module is adically complete if and only if the underlying $\mathcal A$-module is adically complete. All the equivalent characterizations from \cite[Lemma 2.12.4]{mann-mod-p-6-functors} apply to adically complete modules in $\D(\hat{\mathcal A})$, Most notably, an $\hat{\calA}$-module $M$ is adically complete if and only if all its homotopy objects $\pi_i(M)$, $i\in \Z$, are.

	\item The $t$-structure on $\D(\hat{\mathcal A})$ restricts to a $t$-structure on $\D_\cpl(\hat{\mathcal A})$.

	\item \label{rslt:complete-modules-stable-under-tensor} $\D^-_\cpl(\hat{\mathcal A})$ is stable under the symmetric monoidal structure $\tensor_{\hat{\mathcal A}}$ on $\D(\hat{\calA})$ from \cref{rslt:basic-properties-of-modified-modules}.

	\item \label{rslt:adic-base-change-preserves-adic-completeness} Let $\mathcal A \to \mathcal B$ be an adic map of adic analytic rings. Then $- \tensor_{\hat{\mathcal A}} \hat{\mathcal B}$ restricts to a functor $\D^-_\cpl(\hat{\mathcal A}) \to \D^-_\cpl(\hat{\mathcal B})$.
\end{lemenum}
\end{lemma}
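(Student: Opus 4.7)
All four parts hinge on the properties of the forgetful functor $\alpha_{\ast}\colon \D(\hat{\mathcal A}) \to \D(\mathcal A)$ from \cref{rslt:properties-of-forgetful-functor-from-modified-modules}: it is conservative, $t$-exact, commutes with truncations, and preserves all small limits. For (i), the defining limit $\varprojlim_x M$ commutes with $\alpha_{\ast}$, and its vanishing can be tested there by conservativity; hence $M \in \D(\hat{\mathcal A})$ is adically complete iff $\alpha_{\ast} M$ is. The homotopy-object characterization of \cite[Lemma 2.12.4]{mann-mod-p-6-functors} transfers by applying the equivalence just shown to each $\pi_i M$, using $\pi_i \alpha_{\ast} M = \alpha_{\ast} \pi_i M$. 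Part (ii) then reduces, via (i) and the commutation of $\alpha_{\ast}$ with $\tau_{\ge 0}$ and $\tau_{\le 0}$, to the analogous stability of $\D_\cpl(\mathcal A)$ under truncations, which is part of \cite[Lemma 2.12.4]{mann-mod-p-6-functors}.

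For (iii), preservation of $\D^-$ is immediate from the right $t$-exactness of $\tensor_{\hat{\mathcal A}}$, which itself follows because the compact generators $\alpha^{\ast}(\mathcal A[S])$ of $\D(\hat{\mathcal A})$ are connective and stable under $\tensor_{\hat{\mathcal A}}$. For completeness of $M \tensor_{\hat{\mathcal A}} N$ with $M, N \in \D^-_\cpl(\hat{\mathcal A})$, I use the characterization in (i) to reduce to verifying that each $\pi_k(M \tensor_{\hat{\mathcal A}} N)$ is adically complete. After shifting into the connective range, the cofiber sequence $\tau_{\ge k+1} M \to M \to \tau_{\le k} M$ tensored with $N$ together with right $t$-exactness yields $\pi_k(M \tensor_{\hat{\mathcal A}} N) = \pi_k(\tau_{\le k} M \tensor_{\hat{\mathcal A}} \tau_{\le k} N)$, so I may assume $M, N$ are bounded. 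A finite Postnikov filtration on both sides then reduces the problem to the case where $M, N$ are discrete complete modules, where $M \tensor_{\hat{\mathcal A}} N$ can be computed inside $\mathcal C_{\mathcal A}$ via the completed tensor product $\hat\tensor_{\mathcal A}$, and completeness of each homotopy object follows because $(-)_\cpl\colon \D(\mathcal A) \to \D_\cpl(\mathcal A)$ is symmetric monoidal with respect to $\hat\tensor_{\mathcal A}$.

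For (iv), adicness of $\mathcal A \to \mathcal B$ combined with (i) shows that the restriction of $\hat{\mathcal B}$ to $\D(\hat{\mathcal A})$ is adically complete and concentrated in degree $0$, hence lies in $\D^-_\cpl(\hat{\mathcal A})$. The restriction functor $\D(\hat{\mathcal B}) \to \D(\hat{\mathcal A})$, being right adjoint to $- \tensor_{\hat{\mathcal A}} \hat{\mathcal B}$, is conservative, preserves all small limits, satisfies the projection formula for this adjunction, and (by adicness together with (i)) detects both adic completeness and boundedness above. Applying (iii) to $M \tensor_{\hat{\mathcal A}} \hat{\mathcal B}\big|_{\hat{\mathcal A}}$ then yields the conclusion. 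The hard step is (iii), specifically the identification of $\tensor_{\hat{\mathcal A}}$ with the completed tensor product after the reduction to discrete complete modules, which is where monoidality of completion does the real work.
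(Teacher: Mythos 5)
Parts (i) and (ii) of your proposal are fine and match the paper. The real problem is in (iii), at the step you yourself flag as the "hard step": after reducing to static complete modules $M,N$ you assert that $M\tensor_{\hat{\mathcal A}}N$ "can be computed inside $\mathcal C_{\mathcal A}$ via the completed tensor product $\hat\tensor_{\mathcal A}$". A general adically complete static module is \emph{not} an object of $\mathcal C_{\mathcal A}$ (which consists only of completions of compact $\mathcal A$-modules and their finite colimits/retracts); it is merely a filtered colimit of such objects, and the tensor product on $\D(\hat{\mathcal A})=\Ind(\mathcal C_{\mathcal A})$ is the Ind-extension, i.e.\ a filtered colimit of completed tensor products of compact pieces. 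A filtered colimit of complete objects has no reason to be complete -- that is exactly the content of the lemma, and it genuinely fails without the boundedness hypothesis (\cref{sec:adic-rings-1-example-boundedness-for-completeness-is-needed}) and for $\D(\mathcal A)$ in place of $\D(\hat{\mathcal A})$. Monoidality of $(-)_\cpl$ for $\hat\tensor_{\mathcal A}$ does not bridge this: it says nothing about the Ind-extended $\tensor_{\hat{\mathcal A}}$ on non-compact objects. So your argument assumes the conclusion at its crucial point. The paper's route is genuinely different and is where the work happens: it proves (iv) \emph{first}, by choosing an adic map $(\Z[[x_1,\dots,x_n]],\Z)_\solid\to\mathcal A$ (where $\D(\hat{\mathcal A_0})=\D(\mathcal A_0)$ by \cref{sec:adic-rings-3-modified-version-agrees-with-old-one-for-finite-type-stuff}), using the bar resolution to reduce to $\mathcal A_0$, then reducing to completed direct sums $\cplbigdsum{k}\mathcal A[S_k]$, writing these as $\varinjlim_\alpha\prod_k x^{\alpha_k}\mathcal A[S_k]$, and exploiting that $\prod_k\mathcal A[S_k]$ is compact and that compact objects of $\D(\hat{\mathcal B})$ are complete (\cref{rslt:alpha-push-pull-of-compact-module-over-adic-ring}), together with \cite[Proposition 2.12.10]{mann-mod-p-6-functors} and the colimit-exchange argument of \cite[Lemma 3.7]{mann-nuclear-sheaves}; then (iii) is deduced from (iv). None of this machinery appears in your proposal, and nothing in your reduction to static modules replaces it.

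Your deduction of (iv) from (iii) has a second gap: it rests on the claim that the forgetful functor $\D(\hat{\mathcal B})\to\D(\hat{\mathcal A})$ satisfies the projection formula, so that the underlying $\hat{\mathcal A}$-module of $M\tensor_{\hat{\mathcal A}}\hat{\mathcal B}$ is $M\tensor_{\hat{\mathcal A}}(f_\ast\hat{\mathcal B})$. This is never established in the paper in this generality, and you should be suspicious of it: the closely analogous projection formula for $\alpha_\ast\colon\D(\hat{\mathcal A})\to\D(\mathcal A)$ is explicitly shown to fail (\cref{sec:adic-rings-1-remark-on-generalized-analytic-rings}), and the paper only proves base-change/projection-type compatibilities for $\hat{(-)}$ under $+$-finite type hypotheses (\cref{sec:defin-d_hats-1-modified-version-in-relative-+-finite-type-case}) or for nuclear objects (\cref{sec:defin-d_hats-1-projection-formula-for-nuclear}). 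Without that formula, completeness of the restriction of $M\tensor_{\hat{\mathcal A}}\hat{\mathcal B}$ cannot be read off from (iii), so the logical order "(iii) then (iv)" does not close. (The minor claims that the restriction detects completeness and boundedness above are fine, using adicness and the identity $\alpha_{\mathcal A,\ast}f_\ast\cong g_\ast\alpha_{\mathcal B,\ast}$.)
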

\begin{proof}
We use the notation $\alpha^*$ and $\alpha_*$ from \cref{rslt:basic-properties-of-modified-modules}. Then (i) follows easily from \cref{rslt:properties-of-forgetful-functor-from-modified-modules} and (ii) is just a reformulation of the fact that $M$ is adically complete if and only if all $\pi_i M$ are adically complete (which is part of (i)).

We now prove (iv), so let $M \in \D^-_\cpl(\hat{\mathcal A})$ be given. We need to show that $M \tensor_{\hat{\mathcal A}} \hat{\mathcal B}$ is adically complete. For some $n \ge 0$ we can find an adic map $\mathcal A_0 := (\Z[[x_1, \dots, x_n]], \Z)_\solid \to \mathcal A$. By using the bar resolution for the monadic adjunction between base-change and forgetful functor along $\mathcal A_0 = \hat{\mathcal A_0} \to \hat{\mathcal A}$ (using \cref{sec:adic-rings-3-modified-version-agrees-with-old-one-for-finite-type-stuff}) we can write $M$ as a uniformly right-bounded geometric realizations of modules of the form $M_0 \tensor_{\mathcal A_0} \hat{\mathcal A}$ for some $M_0 \in \D(\mathcal A_0)$. Since adic completion is bounded and hence commutes with uniformly right-bounded geometric realizations, adic completeness is stable under uniformly right-bounded geometric realizations. We can thus reduce the claim to the case $\mathcal A = \mathcal A_0$. We can assume that $M$ is connective and thus write it as a geometric realization of objects of the form $\bigdsum_{i\in I} \mathcal A[S_i]$ for some profinite sets $S_i$. By using again the fact that adic completeness is stable under uniformly right-bounded geometric realizations, and also under $\omega_1$-filtered colimits, we reduce to the case that $M = \cplbigdsum{k\in\N} \mathcal A[S_k]$ for some profinite sets $S_k$. Note that it is enough to show that $- \tensor_{\mathcal A} \hat{\mathcal B}$ preserves $x$-complete objects for all $x \in I$, so from now on we work only with $x$-completions. Then we can write $M = \varinjlim_\alpha \prod_k x^{\alpha_k} \mathcal A[S_k]$, where the colimit is taken over all monotonous sequences $\alpha\colon \Z_{\ge0} \to \Z_{\ge0}$ converging to $\infty$. We now claim that the natural map
\begin{align*}
	M \tensor_{\mathcal A} \hat{\mathcal B} = \varinjlim_\alpha ((\prod_k x^{\alpha_k} \mathcal A[S_k]) \tensor_{\mathcal A} \hat{\mathcal B}) \isoto \varinjlim_\alpha \prod_k x^{\alpha_k} (\mathcal A[S_k] \tensor_{\mathcal A} \hat{\mathcal B})
\end{align*}
is an isomorphism. The argument is similar to the proof of \cite[Lemma 3.7.(ii)]{mann-nuclear-sheaves}, where the crucial input is that the constituents $(\prod_k x^{\alpha_k} \mathcal A[S_k]) \tensor_{\mathcal A} \hat{\mathcal B}$ are $x$-adically complete -- this is true because $\prod_k \mathcal A[S_k]$ is compact in $\D(\mathcal A)$ and thus $(\prod_k x^{\alpha_k} \mathcal A[S_k]) \tensor_{\mathcal A} \hat{\mathcal B}$ is compact in $\D(\hat{\mathcal{B}})$ (as the right adjoint to $-\otimes_{\calA}\hat{\mathcal{B}}$ commutes with colimits). Now compact objects in $\D(\hat{\mathcal B})$ are adically complete (by \cref{rslt:alpha-push-pull-of-compact-module-over-adic-ring}). To finish the proof of (iv) it remains to see that the natural map
\begin{align*}
	\varinjlim_\alpha \prod_k x^{\alpha_k} (\mathcal A[S_k] \tensor_{\mathcal A} \hat{\mathcal B}) \isoto \cplbigdsum{k} (\mathcal A[S_k] \tensor_{\mathcal A} \hat{\mathcal B})
\end{align*}
is an isomorphism, or equivalently that the left-hand side is $x$-adically complete. This can be checked after applying the forgetful functor to $\D(\mathcal A)$ and on homology, where it is a straightforward computation. This finishes the proof of (iv).

It remains to prove (iii), so let $M, N \in \D^-_\cpl(\hat{\mathcal A})$ be given. By the same bar resolution argument as in the proof of (iv) we can assume that $M$ and $N$ come via pullback from adically complete right-bounded $\mathcal A_0 := (\Z[[x_1, \dots, x_n]], \Z)_\solid$-modules for some $n \ge 0$. But then by (iv) the claim reduces to the case $\mathcal A = \mathcal A_0$, where it follows from \cite[Proposition 2.12.10]{mann-mod-p-6-functors}.
\end{proof}

\begin{example}
  \label{sec:adic-rings-1-example-boundedness-for-completeness-is-needed}
  The boundedness assumption in \cref{rslt:adic-base-change-preserves-adic-completeness} is necessary: If $M=N=\bigoplus\limits_{i\in \Z} \underline{\calA}[i]$, then $M\otimes_{\hat\calA}N$ is not adically complete (if $\underline{\calA}$ is not discrete), even though $M,N$ are adically complete. Namely, the tensor product contains the direct summand $\bigoplus\limits_{i\in \Z}\underline{\calA}$.

  Similarly, the preservation of completeness is wrong for the base change $-\otimes_{\calA} \mathcal{B}$ of adic analytic rings with the usual definition of $D_\solid(-)$.
\end{example}

In the presence of finite Tor dimension, the preservation of adic completeness can be strengthened to unbounded complexes:

\begin{corollary} \label{rslt:tor-dim-of-adic-maps}
Let $\mathcal A \to \mathcal B$ be an adic map of adic analytic rings such that for some ideal of definition $I$ the map $\mathcal A/I \to \mathcal B/I$ has finite Tor dimension. Then $\hat{\mathcal A} \to \hat{\mathcal B}$ has finite Tor dimension and the functor $- \tensor_{\hat{\mathcal A}} \hat{\mathcal B}$ preserves adic completeness.
\end{corollary}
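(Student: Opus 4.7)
The plan is to first establish finite Tor dimension via the mod-$I$ identification and a derived Nakayama argument, then extend the preservation of adic completeness from \cref{rslt:adic-base-change-preserves-adic-completeness} to arbitrary adically complete modules via double truncation. Write $I = (f_1, \dots, f_n)$ for an ideal of definition of $\mathcal A$, let $d$ denote the Tor dimension of $\mathcal A/I \to \mathcal B/I$, and set $F := -\tensor_{\hat{\mathcal A}} \hat{\mathcal B}$.

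For the Tor dimension bound, the identification $\hat{\mathcal A}/I = \mathcal A/I$ from \cref{rslt:A-mod-I-modified-modules-equals-usual-modules} gives $F(M)/I = (M/I) \tensor_{\mathcal A/I} \mathcal B/I$. For $M \in \D^\heartsuit(\hat{\mathcal A})$ the Koszul description puts $M/I$ in $\D^{[0, n]}(\mathcal A/I)$, so $F(M)/I \in \D^{[0, n+d]}$ by hypothesis. If $M$ is additionally adically complete, \cref{rslt:adic-base-change-preserves-adic-completeness} ensures that $F(M)$ is adically complete, and an iterated derived Nakayama argument through the generators $f_1, \dots, f_n$ then forces $F(M) \in \D^{[0, n+d]}$: for $f$-complete $N \in \D^{\ge 0}$ with $N/f \in \D^{\le D}$, the long exact sequence makes $f$ act by an isomorphism on $\pi_i N$ for $i > D$, and $f$-completeness forces $\pi_i N = 0$. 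For a general $M \in \D^\heartsuit(\hat{\mathcal A})$, write $M = \colim_j P_j$ as a filtered colimit of compact objects and apply $\pi_0$, using that filtered colimits commute with $\pi_0$, to obtain $M = \colim_j \pi_0(P_j)$. Each $\pi_0(P_j)$ is adically complete in the heart by \cref{rslt:alpha-push-pull-of-compact-module-over-adic-ring} and \cref{rslt:basic-properties-of-adically-complete-modules}, so the complete case applies to each $\pi_0 P_j$, and since filtered colimits preserve the amplitude range we conclude $F(M) \in \D^{[0, n+d]}$. This establishes finite Tor dimension bounded by $n+d$.

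For the second assertion, let $M \in \D_\cpl(\hat{\mathcal A})$ and fix $i \in \Z$. Choose $k \ge i$ and $a \le i - n - d - 1$, and form the double truncation $\tau_{[a, k]} M := \tau_{\ge a} \tau_{\le k} M$; it is bounded and its homotopy groups are either $\pi_q M$ (adically complete by \cref{rslt:basic-properties-of-adically-complete-modules}) or zero, so it is adically complete. Therefore \cref{rslt:adic-base-change-preserves-adic-completeness} gives $F(\tau_{[a, k]} M) \in \D_\cpl(\hat{\mathcal B})$. Applying $F$ to the fiber sequence $\tau_{\ge k+1} M \to M \to \tau_{\le k} M$, the image fiber lies in $\D^{\ge k+1}(\hat{\mathcal B})$ by right $t$-exactness of $F$, so $\pi_i F(M) \cong \pi_i F(\tau_{\le k} M)$. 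Applying $F$ to $\tau_{\le a-1} M \to \tau_{\le k} M \to \tau_{[a, k]} M$, the image fiber lies in $\D^{\le a-1+n+d}(\hat{\mathcal B})$ by finite Tor dimension, so $\pi_i F(\tau_{\le k} M) \cong \pi_i F(\tau_{[a, k]} M)$, which is adically complete. Since $i$ was arbitrary, every $\pi_i F(M)$ is adically complete, so $F(M) \in \D_\cpl(\hat{\mathcal B})$.

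The main obstacle is the interplay between the two claims in the first part: bounding the Tor dimension on an arbitrary static object requires first handling the adically complete case (where \cref{rslt:adic-base-change-preserves-adic-completeness} makes $F(M)$ complete and thereby enables derived Nakayama), only after which a filtered colimit argument propagates the bound to all of $\D^\heartsuit(\hat{\mathcal A})$. With finite Tor dimension in hand, the double truncation argument cleanly transfers completeness to unbounded modules.
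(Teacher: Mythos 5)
Your proposal is correct and follows essentially the same route as the paper: reduce the Tor-dimension claim to adically complete objects in the heart (via completeness of compact objects and filtered colimits), use the mod-$I$ identification together with \cref{rslt:adic-base-change-preserves-adic-completeness} and derived Nakayama to get the bound, and then deduce preservation of completeness for unbounded objects from the finite Tor dimension by a truncation comparison. The only differences are expository — you spell out the Nakayama step and use two-sided truncations $\tau_{[a,k]}$ where the paper writes $M$ as a colimit of $\tau_{\ge n}M$ — so this is the paper's argument with the implicit details filled in.
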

\begin{proof}
We first prove the claim about Tor dimension, so let $M \in \D^\heartsuit(\hat{\mathcal A})$ be given. Then we can write $M$ as a filtered colimit of adically complete modules; by truncating we can assume that these adically complete modules lie in $\D^\heartsuit$. We can thus assume that $M$ itself is adically complete, hence the same is true for $N := M \tensor_{\hat{\mathcal A}} \hat{\mathcal B}$ by \cref{rslt:adic-base-change-preserves-adic-completeness}. But by assumption $N/I$ is bounded to the left (independent of $M$), hence so is $N$.

The second claim follows from the finite Tor dimension by writing any adically complete module as a filtered colimit over its truncations $\tau_{\ge n}$ for $n \to -\infty$.
\end{proof}

Unsurprisingly, the difference between $\mathcal A$ and $\hat{\mathcal A}$ vanishes on complete objects; more precisely we have:

\begin{lemma}
  \label{sec:defin-texorpdfstr-equivalence-of-complete-objects}
  Let $\mathcal{A}$ be an adic analytic ring. Then the functor $\alpha_\ast\colon \D(\hat{\mathcal{A}})\to \D(\mathcal{A})$ restricts to an equivalence $\D_\cpl(\hat{\mathcal{A}})\cong \D_\cpl(\mathcal{A})$.
\end{lemma}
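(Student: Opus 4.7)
The plan is to exhibit $\alpha_*|_{\D_\cpl(\hat{\mathcal A})}$ as half of an adjoint equivalence with explicit inverse $F\colon \D_\cpl(\mathcal A) \to \D_\cpl(\hat{\mathcal A})$ given by $F(N) := (\alpha^* N)_\cpl^{\hat{\mathcal A}}$. That $\alpha_*$ indeed lands in $\D_\cpl(\mathcal A)$ is immediate from \cref{rslt:basic-properties-of-adically-complete-modules}(i), and the adjunction $F \dashv G$ with $G := \alpha_*|_{\D_\cpl(\hat{\mathcal A})}$ follows by composing $\alpha^* \dashv \alpha_*$ with the reflection of $\D(\hat{\mathcal A})$ onto $\D_\cpl(\hat{\mathcal A})$.

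A key commutation I establish first is $\alpha_* \circ (-)_\cpl^{\hat{\mathcal A}} = (-)_\cpl^{\mathcal A} \circ \alpha_*$. Since $\alpha_*$ preserves all small limits (\cref{rslt:properties-of-forgetful-functor-from-modified-modules}) and commutes with the Koszul-type reduction $(-)/I^k$ (a finite limit, compatible with the symmetric monoidal $\alpha^*$), it commutes with the derived completion $X_\cpl = \varprojlim_k X/I^k$ for an ideal of definition $I$.

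For the unit $\id \to GF$: given $N \in \D_\cpl(\mathcal A)$, write $N = \varinjlim_i P_i$ with $P_i \in \D(\mathcal A)^\omega$. By the commutation, $GF(N) = (\alpha_* \alpha^* N)_\cpl^{\mathcal A}$, and using \cref{rslt:alpha-push-pull-of-compact-module-over-adic-ring} together with the colimit-preservation of both $\alpha^*$ and $\alpha_*$, this rewrites as $\bigl(\varinjlim (P_i)_\cpl^{\mathcal A}\bigr)_\cpl^{\mathcal A} = (\varinjlim P_i)_\cpl^{\mathcal A} = N_\cpl^{\mathcal A} = N$, where the middle equality uses that $(-)_\cpl^{\mathcal A}$ preserves colimits and each $P_i \to (P_i)_\cpl^{\mathcal A}$ becomes an isomorphism after $(-)_\cpl^{\mathcal A}$.

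For the counit $FG \to \id$: both $F$ and $G$ preserve colimits computed inside $\D_\cpl$ ($F$ as a left adjoint; $G$ by the commutation above, together with the fact that $\D_\cpl$-colimits are ambient colimits followed by $(-)_\cpl$). Since every $M \in \D_\cpl(\hat{\mathcal A})$ is such a colimit of objects from $\mathcal C_{\mathcal A}$ (using $\D(\hat{\mathcal A}) = \Ind(\mathcal C_{\mathcal A})$), it suffices to verify the counit on $M = \alpha^* P$ with $P \in \D(\mathcal A)^\omega$. Such an $M$ is already complete (proof of \cref{rslt:alpha-push-pull-of-compact-module-over-adic-ring}), and $FGM = (\alpha^* P_\cpl)_\cpl^{\hat{\mathcal A}}$; that this agrees with $M$ reduces to checking that the two represent the same functor on $\D_\cpl(\hat{\mathcal A})$, which for any complete $N$ follows from
\[
\Hom((\alpha^* P_\cpl)_\cpl, N) = \Hom(P_\cpl, \alpha_* N) = \Hom(P, \alpha_* N) = \Hom(\alpha^* P, N),
\]
where the middle step uses that $\alpha_* N$ is complete. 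The main technical point is the commutation $\alpha_* \circ (-)_\cpl = (-)_\cpl \circ \alpha_*$; the remainder is adjunction and colimit bookkeeping.
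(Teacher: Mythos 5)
Your proof is correct, and it constructs exactly the same adjunction as the paper does — the left adjoint $\hat{\alpha}^\ast = (-)_\cpl \circ \alpha^\ast$ to the restricted $\alpha_\ast$ — but it verifies that the unit and counit are isomorphisms by a genuinely different route. The paper notes that both adjoint functors commute with reduction modulo an ideal of definition $I$ and that maps between complete objects are isomorphisms iff they are so mod $I$, so the whole verification collapses, via \cref{rslt:A-mod-I-modified-modules-equals-usual-modules}, to a triviality over $\mathcal A/I$. You instead establish the commutation $\alpha_\ast\circ(-)_\cpl \simeq (-)_\cpl\circ\alpha_\ast$ by writing completion as $\varprojlim_k(-)/I^k$ and using that $\alpha_\ast$ preserves limits, and then check unit and counit on compact generators using $\alpha_\ast\alpha^\ast P = P_\cpl$ (\cref{rslt:alpha-push-pull-of-compact-module-over-adic-ring}) together with colimit bookkeeping. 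Your route avoids the mod-$I$ module equivalence but leans on two standard facts you assert rather than prove: that the abstractly defined completion (the adjoint-functor-theorem left adjoint) agrees with the Koszul tower $\varprojlim_k(-)/I^k$ in both $\D(\mathcal A)$ and $\D(\hat{\mathcal A})$, and that $\alpha_\ast$ — which is only lax monoidal, not $\D(\mathcal A)$-linear (cf.\ \cref{sec:adic-rings-1-remark-on-generalized-analytic-rings}) — is compatible with multiplication by elements of $\pi_0\underline{\mathcal A}(\ast)$, so that it commutes with the quotients $(-)/I^k$; both are true and routine, but deserve a sentence. Two smaller points: $(-)/I^k$ is a finite colimit (an iterated cofiber), not a finite limit, which is harmless since everything is stable and $\alpha_\ast$ is exact; and your counit check via corepresentability can be shortened — once the unit is an isomorphism, the triangle identity plus conservativity of $\alpha_\ast$ (\cref{rslt:properties-of-forgetful-functor-from-modified-modules}) forces the counit to be one as well. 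The paper's argument is shorter; yours has the merit of making explicit the commutation $\alpha_\ast(M_\cpl)\simeq(\alpha_\ast M)_\cpl$, which the paper never states.
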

\begin{proof}
  By \cref{rslt:basic-properties-of-adically-complete-modules} the restriction $\alpha_\ast\colon \D_\cpl(\hat{\mathcal{A}})\to \D_\cpl(\mathcal{A})$ is well-defined. By general theory of completion, it admits as a left adjoint the functor $\hat{\alpha}^\ast:=(-)_{\cpl}\circ \alpha^\ast$. As both $\alpha_\ast, \hat{\alpha}^\ast$ commute with the reduction $\mathcal{A}\to \mathcal{A}/I$ for some (finitely generated) ideal of definition $I$, the unit/counit of this adjunction are isomorphisms by \cref{rslt:A-mod-I-modified-modules-equals-usual-modules}. This finishes the proof.
\end{proof}

For any adic analytic ring $\mathcal A$, $\D(\hat{\mathcal A})$ is a compactly generated closed symmetric monoidal category with compact unit object $\underline{\mathcal A}$, hence the definition of nuclear objects from \cite[Lecture VIII]{condensed-complex-geometry} applies:

\begin{definition}
We denote by $\D(\hat{\mathcal A})^\nuc \subset \D(\hat{\mathcal A})$ the full subcategory spanned by the nuclear objects, as defined in \cite[Definition 8.5]{condensed-complex-geometry}.
\end{definition}

\begin{proposition}
  \label{sec:defin-d_hats-1-equivalence-of-nuclear-subcategory}
For every adic analytic ring $\mathcal A$, the functor $- \tensor_{\mathcal A} \hat{\mathcal A}$ induces an equivalence of symmetric monoidal categories
\begin{align*}
	\D(\mathcal A)^\nuc = \D(\hat{\mathcal A})^\nuc.
\end{align*}
\end{proposition}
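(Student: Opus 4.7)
The plan is to show that $\alpha^* = -\otimes_\calA \hat\calA$ induces an equivalence on nuclear subcategories by verifying preservation of nuclearity together with essential surjectivity and full faithfulness, working throughout with the description of nuclear objects as the colimit-closure of ``basic nuclear'' objects (sequential colimits of compact objects along trace-class transitions).

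First, preservation of nuclearity. Since $\alpha^*$ is symmetric monoidal, colimit-preserving, and sends compact objects to compact objects by the very construction of $\D(\hat{\calA})$, it preserves duals of compact objects and the natural transformation $\underline{\Hom}(P,\underline{\calA})\otimes Q \to \underline{\Hom}(P,Q)$ through which trace-classness is detected. Consequently, $\alpha^*$ carries trace-class morphisms between compacts to trace-class morphisms, and thus restricts to a symmetric monoidal functor $\alpha^*_\nuc\colon \D(\calA)^\nuc \to \D(\hat{\calA})^\nuc$.

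For essential surjectivity, let $N = \colim_n P_n$ be a basic nuclear in $\D(\hat{\calA})$. Each compact $P_n$ is a retract of $\alpha^* Q_n$ for some compact $Q_n \in \D(\calA)^\omega$; after refining the sequence and absorbing retracts we may assume $P_n = \alpha^* Q_n$. A trace-class map $\alpha^* Q_n \to \alpha^* Q_{n+1}$ is classified by a morphism $\underline{\calA}\to (\alpha^* Q_n)^\vee \otimes_{\hat{\calA}} \alpha^* Q_{n+1} \cong \alpha^*(Q_n^\vee \otimes_\calA Q_{n+1})$, which by adjunction and \cref{rslt:alpha-push-pull-of-compact-module-over-adic-ring} corresponds to an element of $\pi_0((Q_n^\vee\otimes Q_{n+1})_\cpl)(\ast) = \varprojlim_k \pi_0((Q_n^\vee\otimes Q_{n+1})/I^k)(\ast)$, i.e., to a coherent system of elements modulo $I^k$. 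A diagonal lifting produces genuine elements $g_n \in \pi_0(Q_n^\vee \otimes Q_{n+1})(\ast)$ inducing trace-class maps $Q_n \to Q_{n+1}$ in $\D(\calA)$, so that $M := \colim_n Q_n$ is basic nuclear and $\alpha^* M \simeq N$ after the diagonal cofinality.

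Full faithfulness then follows by reducing to Hom-spaces between basic nuclears: writing $M = \colim_n P_n$ basic nuclear and using the adjunction, $\Hom_{\hat{\calA}}(\alpha^* M, \alpha^* N) = \lim_n \Hom_\calA(P_n, \alpha_*\alpha^* N)$. The nuclearity of $N$, combined with the formula $\alpha_*\alpha^* P = P_\cpl$ on compacts, implies that evaluation against the trace-class tower of $M$ cannot distinguish $N$ from $\alpha_*\alpha^* N$, because trace-class maps from a compact into a nuclear object factor through completions. This identifies the right side with $\lim_n \Hom_\calA(P_n, N) = \Hom_\calA(M, N)$, and the symmetric monoidality of the equivalence is transported from that of $\alpha^*$.

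\textbf{Main obstacle.} The diagonal lifting in the essential surjectivity step is the technical heart of the argument: one must verify that approximating a coherent system in $\varprojlim_k \pi_0((Q_n^\vee \otimes Q_{n+1})/I^k)(\ast)$ by uncompleted elements produces a basic nuclear $\colim_n Q_n$ in $\D(\calA)$ whose pullback under $\alpha^*$ is genuinely isomorphic to $N$, not merely approximately so. This relies on the topological nilpotence of $I$ and a careful analysis of how the error terms behave under compositions of trace-class maps in the tower.
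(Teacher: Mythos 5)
Your argument for full faithfulness is in the right spirit (the factorization of trace-class maps between compacts through the completion of the source, which ultimately rests on $(P)^\vee=(P_\cpl)^\vee$ because $\underline{\mathcal A}$ is complete), but the essential surjectivity step contains a genuine gap, and you have flagged it yourself. You want to lift the trace-class witness of a transition map $\alpha^*Q_n\to\alpha^*Q_{n+1}$, which you describe as an element of $\pi_0\bigl((Q_n^\vee\tensor Q_{n+1})_\cpl(\ast)\bigr)$, to a genuine element of $\pi_0\bigl((Q_n^\vee\tensor Q_{n+1})(\ast)\bigr)$. There is no mechanism for this: an element of the completion need not lie in the image of the uncompleted object at all, and approximating it modulo $I^{k}$ replaces the transition map by one that merely agrees with it modulo $I^{k}$; two towers whose transition maps agree modulo growing powers of $I$ need not have equivalent colimits, and topological nilpotence of $I$ gives no control here (the "error" is a genuine map, not a nilpotent perturbation). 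Moreover, even the identification $(\alpha^*Q_n)^\vee\tensor\alpha^*Q_{n+1}\cong\alpha^*(Q_n^\vee\tensor Q_{n+1})$ that your description of the witness relies on is itself unproven: compact objects of $\D(\mathcal A)$ are not dualizable, and the paper only establishes the weaker facts $\alpha_\ast\bigl((\alpha^*P)^\vee\bigr)\cong P^\vee$ and $(\alpha_\ast\alpha^*P)^\vee\cong P^\vee$. So the "diagonal lifting" is not a technical obstacle to be smoothed over; as stated it is the missing proof.

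The paper's proof avoids the problem entirely by never constructing preimages directly. It first shows that $\alpha_\ast$ preserves nuclear objects, which reduces (since $\alpha_\ast$ preserves colimits) to proving that $(\alpha_\ast P')^\vee\tensor\alpha_\ast Q'\to\alpha_\ast\bigl((P')^\vee\tensor Q'\bigr)$ is an isomorphism for compact $P',Q'$; after reducing to $P'=\alpha^*P$, $Q'=\alpha^*Q$, both sides are adically complete and the isomorphism is checked modulo an ideal of definition via \cref{rslt:A-mod-I-modified-modules-equals-usual-modules}. Once $\alpha_\ast$ restricts to a (automatically conservative) functor $\D(\hat{\mathcal A})^\nuc\to\D(\mathcal A)^\nuc$, essential surjectivity is formal from full faithfulness of $\alpha^*$ on nuclears: the latter is exactly the statement $\varinjlim_nP_n\cong\varinjlim_n(P_n)_\cpl=\alpha_\ast\alpha^*(\varinjlim_nP_n)$ for a trace-class tower of compacts, proved by the factorization $P_n\to(P_n)_\cpl\to P_{n+1}$. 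If you replace your surjectivity argument by a proof that $\alpha_\ast$ preserves nuclearity (or cite the formal adjunction argument once that is in place), the rest of your sketch can be salvaged.
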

\begin{proof}
We use the notation $\alpha^*=(-)\tensor_{\mathcal{A}}\hat{\mathcal{A}}$ and $\alpha_*$ from \cref{rslt:basic-properties-of-modified-modules}. We first show that $\alpha_*$ preserves nuclear modules (for $\alpha^\ast$ this follows by symmetric monoidality). Since $\alpha_*$ preserves colimits, this boils down to showing that for any compact objects $P^\prime, Q^\prime \in \D(\hat{\mathcal A})$ and any trace-class map $f\colon P^\prime \to Q^\prime$ in $\D(\hat{\mathcal A})$, the induced map $\alpha_* f\colon \alpha_* P^\prime \to \alpha_* Q^\prime$ is trace-class. This follows if we can show that the natural map
\begin{align*}
	(\alpha_* P^\prime)^\vee \tensor \alpha_* Q^\prime \isoto \alpha_* ((P^\prime)^\vee \tensor Q^\prime)
\end{align*}
is an isomorphism in $\D(\mathcal A)$. As this claim is stable under finite colimits and retracts in $P^\prime, Q^\prime$ we may assume that $P^\prime=\alpha^\ast P, Q^\prime=\alpha^\ast Q$ for two compact objects $P,Q\in \D(\mathcal{A})$. 
By \cref{rslt:basic-properties-of-modified-modules} $\alpha^\ast \alpha_\ast \alpha^\ast P\cong \alpha^\ast P$ as $P\in \D(\mathcal{A})$ is compact. Using adjunctions and $\alpha_\ast \alpha^\ast \underline{\mathcal{A}}\cong \underline{\mathcal{A}}$ shows $(\alpha_\ast\alpha^\ast P)^\vee\cong P^\vee$; in particular this object is right bounded and adically complete.\footnote{Here, we are using that $\mathcal{A}$ is living over $\Z_\solid$, and that the compact projective generators for $\Z_\solid$ are internally projective to obtain the right-boundedness of $P^\vee$.} Similarly, $(\alpha^\ast P)^\vee$ is right-bounded because $\alpha_\ast((\alpha^\ast P)^\vee)\cong P^\vee$ by adjunctions and symmetric monoidality of $\alpha^\ast$ while $P^\vee$ is right-bounded. It follows from \cref{rslt:complete-modules-stable-under-tensor} that the right-hand side of the above claimed isomorphism is adically complete, and it follows from \cite[Proposition 2.12.10]{mann-mod-p-6-functors} that the left-hand side is adically complete (here we use that $(\alpha_* \alpha^* P)^\vee = P^\vee$ is discrete modulo any ideal of definition as can be checked directly). Hence the above isomorphism can be checked modulo any ideal of definition of $\underline{\mathcal A}$, where it follows from \cref{rslt:A-mod-I-modified-modules-equals-usual-modules}.

We have established that $\alpha_*$ restricts to a functor $\D(\hat{\mathcal A})^\nuc \to \D(\mathcal A)^\nuc$, which is automatically conservative. Thus in order to prove the claimed equivalence of categories it is now enough to show that $\alpha^*$ is fully faithful on nuclear modules, i.e. for any nuclear $M$ the natural map $M \isoto \alpha_* \alpha^* M$ is an isomorphism. Since $\alpha_* \alpha^*$ preserves colimits, this reduces to showing the following: Let $(P_n)_n$ be a sequence of compact objects in $\D(\mathcal A)$ with trace-class transition maps; then $\varinjlim_n P_n = \varinjlim_n (P_n)_\cpl$. To prove this, it suffices to check that the map $P_n \to P_{n+1}$ factors as $P_n \to (P_n)_\cpl \to P_{n+1}$. But this follows easily from the definition of trace-class maps by observing that $(P_n)^\vee = ((P_n)_\cpl)^\vee$ as $\underline{\mathcal{A}}$ is complete.
\end{proof}

The following result provides a concrete description of nuclear $\underline{\mathcal{A}}$-modules. Namely, they are exactly the ones that can be written as colimits of ``Banach'' modules:

\begin{lemma}
  \label{sec:defin-d_hats-1-examples-for-nuclear-modules}
  Let $\mathcal{A}$ be an adic analytic ring with ideal of definition $I$.
  \begin{lemenum}
  \item\label{sec:defin-d_hats-2-complete-and-discrete-mod-i-implies-nuclear} If $M\in \D(\mathcal{A})$ is adically complete and $M/I$ is discrete, then $M$ is nuclear.
    \item The category $\D^\nuc(\hat{\mathcal{A}})=\D^\nuc(\mathcal{A})$ is generated under colimits by adically complete objects $M\in \D(\mathcal{A})$ with $M/I$ discrete.
  \end{lemenum}
\end{lemma}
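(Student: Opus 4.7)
The plan is to prove (i) by reducing the nuclearity check modulo $I$, and then use (i) together with the trace-class presentation of nuclear objects to establish (ii).

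For (i), by \cref{sec:defin-d_hats-1-equivalence-of-nuclear-subcategory} it suffices to verify that $\alpha^*M \in \D(\hat{\mathcal{A}})$ is nuclear, i.e.\ that the natural map $P^\vee \otimes_{\hat{\mathcal{A}}} \alpha^*M \to \underline{\Hom}_{\hat{\mathcal{A}}}(P, \alpha^*M)$ is an equivalence for every compact $P \in \D(\hat{\mathcal{A}})$. Both sides are adically complete: the left-hand side by \cref{rslt:complete-modules-stable-under-tensor} (since $P^\vee$ is right-bounded and adically complete, using that $\underline{\hat{\mathcal{A}}}$ is so), and the right-hand side because inner Hom into a complete object is complete. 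Hence by \cref{rslt:A-mod-I-modified-modules-equals-usual-modules} the equivalence may be checked in $\D(\mathcal{A}/I)$, where it reduces to the nuclearity of the discrete object $M/I$ over the discrete analytic ring $\mathcal{A}/I$. For the compact generators $P/I = (\mathcal{A}/I)[S]$ with $S$ profinite, both sides compute to $\Cont(S, M/I)$: writing $\Cont(S, \mathcal{A}/I) = \varinjlim_j (\mathcal{A}/I)^{S_j}$ over finite quotients $S_j$ of $S$ and tensoring with the discrete $M/I$ yields $\varinjlim_j (M/I)^{S_j} = \Cont(S, M/I)$.

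For (ii), let $\mathcal{N} \subset \D(\mathcal{A})$ denote the full subcategory generated under colimits by adically complete objects with discrete mod-$I$ reduction. By (i), $\mathcal{N} \subset \D^\nuc(\mathcal{A})$. For the reverse inclusion, take a nuclear $M$. By the proof of \cref{sec:defin-d_hats-1-equivalence-of-nuclear-subcategory} one may write $\alpha^*M = \varinjlim_n (P_n)_\cpl$ in $\D(\hat{\mathcal{A}})$ with $P_n \in \D(\mathcal{A})^\omega$ and trace-class transitions; applying the colimit-preserving $\alpha_*$ (\cref{rslt:properties-of-forgetful-functor-from-modified-modules}, \cref{sec:defin-texorpdfstr-equivalence-of-complete-objects}) together with \cref{rslt:alpha-push-pull-of-compact-module-over-adic-ring} yields the same filtered colimit in $\D(\mathcal{A})$. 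The goal is to factor each trace-class transition $f_n\colon (P_n)_\cpl \to (P_{n+1})_\cpl$ through an intermediate object $B_n \in \mathcal{N}$, so that $M = \varinjlim_n B_n \in \mathcal{N}$.

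The key observation is that a trace-class map in a closed symmetric monoidal stable category can be factored through a ``Banach'' object of the form $\widehat\bigoplus_i \underline{\hat{\mathcal{A}}}$ (a suitable completed direct sum of copies of the unit), which lies in $\mathcal{N}$: it is adically complete, and its mod-$I$ reduction is the direct sum $\bigoplus_i \mathcal{A}/I$, which is discrete since $\mathcal{A}/I$ is so and direct sums of discretes remain discrete. This factorization should arise from the classifying element $\tilde f_n \in \pi_0\Hom(\underline{\hat{\mathcal{A}}}, (P_n)_\cpl^\vee \otimes (P_{n+1})_\cpl)$ by decomposing it, up to the relevant completion, as a sum of elementary tensors $\varphi_i \otimes q_i$ with $\varphi_i\colon (P_n)_\cpl \to \underline{\hat{\mathcal{A}}}$ and $q_i\colon \underline{\hat{\mathcal{A}}} \to (P_{n+1})_\cpl$. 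The main obstacle is establishing this decomposition rigorously: the trace-class condition alone does not automatically produce such a factorization. The argument will likely require combining the fact that the dual $(P_n)_\cpl^\vee$ already lies in $\mathcal{N}$ (indeed, for $P_n = \mathcal{A}[S_n]$, its mod-$I$ reduction is $\Cont(S_n, \mathcal{A}/I) = \varinjlim_j (\mathcal{A}/I)^{S_j}$, which is discrete) with a careful adic lifting argument to extract the desired Banach factor.
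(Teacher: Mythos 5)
Your proof of (i) has a gap at the very first reduction. \cref{sec:defin-d_hats-1-equivalence-of-nuclear-subcategory} does not let you deduce nuclearity of $M$ in $\D(\mathcal{A})$ from nuclearity of $\alpha^\ast M$ in $\D(\hat{\mathcal{A}})$: for that you would need the unit $M\to \alpha_\ast\alpha^\ast M$ to be an isomorphism (equivalently, in your setup, that $\alpha^\ast M$ is adically complete), and neither is known at this point -- completeness of $\alpha^\ast M$ for such $M$ is a \emph{consequence} of the lemma (see the proof of \cref{sec:defin-d_hats-1-projection-formula-for-nuclear}), not an available input, since $\alpha^\ast$ only preserves completeness of compact objects (\cref{rslt:alpha-push-pull-of-compact-module-over-adic-ring}), and $M$ is not compact. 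The same unproved completeness of $\alpha^\ast M$ is what both of your completeness claims rest on: \cref{rslt:complete-modules-stable-under-tensor} requires \emph{both} tensor factors to be right-bounded and complete, and ``inner Hom into a complete object'' presupposes $\alpha^\ast M$ complete. The paper avoids this by staying in $\D(\mathcal{A})$: after reducing to right-bounded $M$ by truncations, the right-hand side $\IHom(P,M)$ is complete because $M$ is, and the left-hand side $P^\vee\otimes M$ is complete by \cite[Proposition 2.12.10.(ii)]{mann-mod-p-6-functors} -- this is precisely where the hypothesis ``$M/I$ discrete'' enters, which your route never engages with before the mod-$I$ step. Your argument could be repaired inside $\D(\hat{\mathcal{A}})$ by working with $(\alpha^\ast M)_\cpl$ instead of $\alpha^\ast M$ (its reduction is still $M/I$ by \cref{rslt:A-mod-I-modified-modules-equals-usual-modules}), and then transporting back via $M\cong \alpha_\ast((\alpha^\ast M)_\cpl)$ from \cref{sec:defin-texorpdfstr-equivalence-of-complete-objects} together with the fact, proved in \cref{sec:defin-d_hats-1-equivalence-of-nuclear-subcategory}, that $\alpha_\ast$ preserves nuclearity; as written, however, the step is not justified. (The final mod-$I$ computation on the generators $(\mathcal{A}/I)[S]$ is fine in spirit, provided you note that the nuclearity condition is stable under finite colimits and retracts in $P$.)

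For (ii), the crucial step is exactly the one you leave open: producing a factorization of each trace-class transition $(P_n)_\cpl\to (P_{n+1})_\cpl$ through an object that is complete with discrete reduction. The elementary-tensor decomposition of the trace-class witness that you hope to extract (to factor through $\widehat{\bigoplus}\,\underline{\hat{\mathcal{A}}}$) is not available from the abstract trace-class condition, and no ``adic lifting argument'' is supplied, so the heart of (ii) is missing. The paper's proof fills this by a different device: it factors $P_{1,\cpl}\to P_{2,\cpl}$ through the completion $Q$ of $\underline{P_{2,\cpl}(\ast)}$ (the free module on the underlying sections), which is visibly complete and discrete mod $I$, and obtains the factorization from the identification $\Hom(P_1,Q)\cong (P_1^\vee\otimes Q)(\ast)\cong (P_1^\vee\otimes P_{2,\cpl})(\ast)$, proved by a mod-$I$ comparison using part (i) and completeness of both sides. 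Also note a small imprecision: a general nuclear $M$ is not itself a sequential colimit of completed compacts along trace-class maps; one first reduces to basic nuclear objects via \cite[Theorem 8.6.(2)]{condensed-complex-geometry}, as the paper does.
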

\begin{proof}
  Let $M$ as in (i). Writing $M$ as a colimit of its truncations $\tau_{\geq n}M, n\to -\infty$, we can reduce to the case that $M$ is right-bounded. Let $P\in \D(\mathcal{A})$ be compact. We need to see that the map
  \[
    P^\vee\otimes M\to \underline{\Hom}(P,M)
  \]
  is an isomorphism. The right hand side is adically complete, because $M$ is, and the left hand side is adically complete by \cite[Proposition 2.12.10.(ii)]{mann-mod-p-6-functors} as $P^\vee=(P_{\mathrm{cpl}})^\vee$ and $M$ are adically complete, and $M/I$ is discrete. Hence, it suffices to check the claim modulo $I$, where the claim follows from discreteness of $M/I$ as in \cite[Proposition 2.9.7.(i)]{mann-nuclear-sheaves}.

  Now we show (ii). Let $M\in \D(\mathcal{A})$ be nuclear. By \cite[Theorem 8.6.(2)]{condensed-complex-geometry} we may assume that $M$ is basic nuclear. Then it is sufficient (by the last part of the proof of \cref{sec:defin-d_hats-1-equivalence-of-nuclear-subcategory}) to show the following: If $f\colon P_1\to P_2$ is any trace class morphism between compact objects in $\D(\mathcal{A})$, then $f_{\cpl}\colon P_{1,\cpl}\to P_{2,\cpl}$ factors over the completion $Q$ of $\underline{P_{2,\cpl}(\ast)}$. Here, $\underline{(-)}$ denotes the functor from $\underline{\mathcal{A}}(\ast)$-modules to $\D(\mathcal{A})$, which is left adjoint to the functor $N\mapsto N(\ast)$. This last claim follows if we can show
  \[
    \Hom(P_1,Q)\cong (P_1^\vee\otimes Q)(\ast)\cong (P_1^\vee\otimes P_{2,\cpl})(\ast), 
  \]
  with the first isomorphism implied by nuclearity of $Q$ as shown in (i). As the functor $(-)(\ast)$ preserves $I$-adic completeness both sides are adically complete as $A(\ast)$-modules, by arguments as in \cref{sec:defin-d_hats-1-equivalence-of-nuclear-subcategory}. Hence, it suffices to check the statement modulo $I$. But $P_1^\vee$ is discrete modulo $I$, and more generally for any discrete $\mathcal{A}/I$-module $D$ the map
  \[
    (D\otimes Q)(\ast)\cong (D\otimes P_{2,\cpl})(\ast)
  \]
  is an isomorphism. Indeed, this statement commutes with colimits in $D$ and is clear in the case $D=\mathcal{A}/I$. This finishes the proof.
\end{proof}

As the full subcategory $\D(\hat{\mathcal{A}})^\nuc\subseteq \D(\hat{\mathcal{A}})$ is stable under all colimits, it admits a right adjoint $(-)_\nuc\colon \D(\hat{\mathcal{A}})\to \D(\hat{\mathcal{A}})^\nuc$. The next result provides an explicit description of $(-)_\nuc$, similar to \cite[Proposition 3.12]{mann-nuclear-sheaves}. Note that it is not clear to us how to explicitly compute the analogous nuclearization functor for $\mathcal A$ in place of $\hat{\mathcal A}$.

\begin{proposition}
  \label{sec:defin-d_hats-1-nuclearization}
  Let $\mathcal{A}$ be an adic analytic ring.
  \begin{propenum}
  \item $(-)_\nuc\colon \D(\hat{\mathcal{A}})\to \D(\hat{\mathcal{A}})^\nuc$ preserves all colimits.
  \item If $P\in \D(\hat{\mathcal{A}})$ is compact, then $P_\nuc$ is naturally isomorphic to the completion of $\alpha^\ast(\underline{\alpha_\ast P(\ast)})$.
  \end{propenum}
\end{proposition}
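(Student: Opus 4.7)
The plan is to prove (ii) first, with (i) following from the manifest colimit-preservation of the explicit formula. Throughout I abbreviate $F(M) := (\alpha^\ast(\underline{\alpha_\ast M(\ast)}))_\cpl$, so that (ii) asserts $F(P) \cong P_\nuc$ for compact $P$.

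For (ii), I construct a canonical comparison map $F(P) \to P$ as follows: the discretization counit yields $\underline{\alpha_\ast P(\ast)} \to \alpha_\ast P$ in $\D(\mathcal A)$; applying $\alpha^\ast$ and composing with the counit $\alpha^\ast \alpha_\ast P \to P$ gives $\alpha^\ast(\underline{\alpha_\ast P(\ast)}) \to P$; since compact $P$ are adically complete (by the proof of \cref{rslt:alpha-push-pull-of-compact-module-over-adic-ring}), this factors uniquely through the completion as $F(P) \to P$. I then verify two things: (a) \emph{$F(P)$ is nuclear.} By \cref{sec:defin-d_hats-2-complete-and-discrete-mod-i-implies-nuclear} it suffices to check $F(P)$ is adically complete (immediate) and that $F(P)/I$ is discrete for some ideal of definition $I$; reduction modulo $I$ annihilates the completion so $F(P)/I \cong \alpha^\ast(\underline{\alpha_\ast P(\ast)}/I)$, which is discrete via the identification \cref{rslt:A-mod-I-modified-modules-equals-usual-modules}. (b) \emph{The universal property:} for any nuclear $N$, $\Hom(N, F(P)) \isoto \Hom(N, P)$. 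By \cite[Theorem 8.6.(2)]{condensed-complex-geometry} I reduce to $N = \varinjlim_n N_n$ basic nuclear with trace-class transitions between compact $N_n \in \D(\hat{\mathcal A})$, so that $\Hom(N, -) = \varprojlim_n \Hom(N_n, -)$. I then run the same interleaving trick as in the final paragraph of the proof of \cref{sec:defin-d_hats-1-equivalence-of-nuclear-subcategory}: a trace-class map $N_n \to N_{n+1}$ corresponds to an element of $N_n^\vee \otimes N_{n+1}$, and using that $N_n^\vee = (N_{n,\cpl})^\vee$ is adically complete and discrete mod $I$, the composite $N_n \to N_{n+1} \to P$ lifts canonically through $F(P)$. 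This produces pro-isomorphic towers and hence equal inverse limits.

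For (i), note that $F$ is a composition of colimit-preserving functors: $\alpha_\ast$ by \cref{rslt:properties-of-forgetful-functor-from-modified-modules}; the functor $M \mapsto \underline{M(\ast)}$, which is the composition of evaluation at the one-point profinite set $\ast$ (colimit-preserving because $\ast$ covers itself in the condensed site, so sheafification does not affect sections over $\ast$) and its left-adjoint discretization; the left adjoint $\alpha^\ast$; and the completion $(-)_\cpl$, left adjoint to the inclusion $\D_\cpl \hookrightarrow \D$. Moreover the argument of step (a) goes through for arbitrary $M$, so $F$ factors through $\D(\hat{\mathcal A})^\nuc$. The natural map $F(M) \to M$ then induces via the universal property of $(-)_\nuc$ a natural transformation $F \Rightarrow (-)_\nuc$, which is an equivalence on compacts by (ii). Writing a general $M$ as a colimit of compacts and exploiting colimit-preservation of $F$ together with the already-established universal property on each compact term, one extends the identification to $F \cong (-)_\nuc$ on all of $\D(\hat{\mathcal A})$, whence $(-)_\nuc$ inherits colimit-preservation from $F$.

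The main obstacle is step (b) of the proof of (ii), namely the direct verification of the universal property via trace-class-map manipulation. Although it follows the template of the closing paragraph of the proof of \cref{sec:defin-d_hats-1-equivalence-of-nuclear-subcategory}, the target here is the completion $F(P)$ rather than a bare colimit of compacts, so the factorization produced by trace-classness must be arranged to land \emph{inside} $F(P)$; this is where the discreteness-mod-$I$ of the defining module $\underline{\alpha_\ast P(\ast)}$ is used in an essential way.
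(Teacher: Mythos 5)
Your part (ii) is essentially the paper's argument: nuclearity of $F(P):=(\alpha^\ast\underline{\alpha_\ast P(\ast)})_\cpl$ via \cref{sec:defin-d_hats-2-complete-and-discrete-mod-i-implies-nuclear}, and the universal property checked against basic nuclear objects by the trace-class interleaving, where the lift of $N_n\to N_{n+1}\to P$ through $F(P)$ rests on the isomorphism $(N_n^\vee\otimes F(P))(\ast)\cong(N_n^\vee\otimes P)(\ast)$, proved by completeness of both sides and reduction mod $I$ to the discrete computation at the end of the proof of \cref{sec:defin-d_hats-1-examples-for-nuclear-modules}. That step is stated tersely but is the right mechanism, and you correctly flag it as the crux.

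Part (i), however, has a genuine gap. You apply the naive formula $F(M)=(\alpha^\ast\underline{\alpha_\ast M(\ast)})_\cpl$ to \emph{all} $M$ and claim it is a composition of colimit-preserving functors. The last factor breaks this: $(-)_\cpl$ preserves colimits only as a functor into $\D_\cpl(\hat{\mathcal A})$ with its own (completed) colimits, whereas colimits in $\D(\hat{\mathcal A})^\nuc$ are computed in $\D(\hat{\mathcal A})$, where the inclusion of complete objects does not preserve colimits. Concretely, for $\mathcal A=(\Z_p,\Z_p)_\solid$ and $M=\bigoplus_{n\in\N}\underline{\mathcal A}$, your $F(M)$ is the $p$-completed direct sum $\cplbigdsum{n}\underline{\mathcal A}$, while $M$ is already nuclear (the unit is nuclear and nuclear objects are closed under colimits), so $M_\nuc=M=\bigoplus_n\underline{\mathcal A}\neq F(M)$; thus the naive $F$ is neither colimit-preserving nor equal to $(-)_\nuc$ beyond compact objects, and the final extension step "$F\cong(-)_\nuc$ on all of $\D(\hat{\mathcal A})$, whence $(-)_\nuc$ inherits colimit-preservation" collapses. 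The paper avoids this by defining $F$ only on compact objects by your formula and extending to $\D(\hat{\mathcal A})=\Ind(\mathcal C_{\mathcal A})$ by left Kan extension, which is colimit-preserving by construction; the identification $F\cong(-)_\nuc$ is then proved for this Kan-extended $F$ and arbitrary $\mathcal M$, by the same basic-nuclear test you use in (ii), with the key comparison $(P^\vee\otimes F(\mathcal M))(\ast)\cong(P^\vee\otimes\mathcal M)(\ast)$ reduced to compact $\mathcal M$ because both sides commute with colimits in $\mathcal M$. So your (ii) survives, but (i) needs to be rerouted through the Kan extension rather than the pointwise formula.
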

\begin{proof}
  Sending $P\in \D(\hat{\mathcal{A}})$ compact to the completion $\alpha^\ast(\underline{\alpha_\ast P(\ast)})^\wedge$ extends uniquely to a colimit preserving functor
  \[
    F\colon \D(\hat{\mathcal{A}})\to \D(\hat{\mathcal{A}})^\nuc
  \]
  (using \cref{sec:defin-d_hats-2-complete-and-discrete-mod-i-implies-nuclear} to see that value is indeed nuclear). Let $\iota\colon \D(\mathcal{A})^\nuc\to \D(\mathcal{A})$ be the fully faithful inclusion. As the compact generators of $\D(\hat{\mathcal{A}})$ are complete (by \cref{rslt:basic-properties-of-adically-complete-modules}) there exists a natural transformation, $\iota\circ F\to \mathrm{Id}$. Indeed, it suffices to construct it on compact objects, and there it suffices to construct a natural map $\alpha^\ast(\underline{\alpha_\ast P(\ast)})\to P$, which exists as $\alpha^\ast\circ \underline{(-)}$ is left adjoint to $(\alpha_\ast(-))(\ast)$. Applying $(-)_\nuc$ to this transformation yields a natural transformation $F\to (-)_\nuc$. Let $\mathcal{L}=\varinjlim\limits_{i\in J} \mathcal{L}_i$ in $\D(\hat{\mathcal{A}})$ with $\mathcal{L}_i$ compact, and let $\mathcal{M}\in \D(\hat{\mathcal{A}})$. Then
  \[
    \Hom_{\D(\hat{\mathcal{A}})}(\mathcal{L},\iota\circ F(\mathcal{M}))=\varprojlim\limits_{i\in J}\Hom_{\D(\hat{\mathcal{A}})}(\mathcal{L}_i,\iota\circ F(\mathcal{M}))=\varprojlim\limits_{i\in J}(\mathcal{L}_i^\vee\otimes \iota\circ F(\mathcal{M}))(\ast)
  \]
  using nuclearity of $\iota\circ F(M)$ (and the notations $(-)(\ast)=\Hom_{\D(\hat{\mathcal{A}})}(\underline{\mathcal{A}},-), (-)^\vee=\underline{\Hom}_{\D(\hat{\mathcal{A}})}(-,\underline{\mathcal{A}})$). On the other hand,
  \[
    \Hom_{\D(\hat{\mathcal{A}})}(\mathcal{L},\iota(\mathcal{M}_\nuc))=\varprojlim\limits_{i\in J}\Hom_{\D(\hat{\mathcal{A}})}(\mathcal{L}_i,\mathcal{M}).
  \]
  We have to see that both inverse limits agree if $\mathcal{L}$ is nuclear, or even basic nuclear, i.e., we can assume that $\mathcal{L}=\varinjlim\limits_{n\in \mathbb{N}} \mathcal{L}_n$ with $\mathcal{L}_n$ compact and the transition maps trace-class. This assumption implies that each map
  \[
    \Hom_{\D(\hat{\mathcal{A}})}(\mathcal{L}_{n+1},\mathcal{M})\to \Hom_{\D(\hat{\mathcal{A}})}(\mathcal{L}_n,\mathcal{M})
  \]
  factors over the space of trace class maps $(\mathcal{L}_n^\vee\otimes \mathcal{M})(\ast)$ from $\mathcal{L}_n$ to $\mathcal{M}$ (\cite[Lemma 8.2.(3)]{condensed-complex-geometry}). Hence, it suffices to show that, similarly to the last assertion in the proof of \cref{sec:defin-d_hats-2-complete-and-discrete-mod-i-implies-nuclear}, the map
  \[
    (P^\vee\otimes \iota\circ F(\mathcal{M}))(\ast)\to (P^\vee\otimes M)(\ast)
  \]
  is an isomorphism for any $P, M\in \D(\hat{\mathcal{A}})$ with $P$ compact. As both sides commute with colimits in $M$ we may assume that $M$ is compact. Then both sides are $I$-adically complete over $A(\ast)$ for some ideal of definition $I\subseteq A(\ast)$, which reduces us to the case that $I=\{0\}$ by passing to the reduction mod $I$. In this case $\alpha^\ast,\alpha_\ast$ are inverse equivalences (by \cref{rslt:A-mod-I-modified-modules-equals-usual-modules}), and the assertion follows from the last assertion in the proof of \cref{sec:defin-d_hats-2-complete-and-discrete-mod-i-implies-nuclear}.
\end{proof}

From \cref{sec:defin-d_hats-1-equivalence-of-nuclear-subcategory} and \cref{sec:defin-d_hats-1-examples-for-nuclear-modules} we can produce a weaker form of a projection formula for $\alpha_\ast$.

\begin{corollary}
  \label{sec:defin-d_hats-1-projection-formula-for-nuclear}
  Let $\mathcal{A}$ be an adic analytic ring. Let $M\in \D(\mathcal{A})^\nuc$. Then the natural map
  \[
    M\otimes \alpha_{\ast}(N)\to \alpha_\ast(\alpha^\ast M\otimes N)
  \]
  is an isomorphism for any $N\in \D(\hat{\mathcal{A}})$.
\end{corollary}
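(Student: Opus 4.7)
The plan is to reduce to generators via colimit-preservation, then verify the isomorphism by showing both sides are adically complete and agree modulo an ideal of definition $I$ of $\mathcal{A}$.

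First, both sides preserve colimits in $N$: $\alpha_\ast$ preserves colimits by \cref{rslt:properties-of-forgetful-functor-from-modified-modules}, $\alpha^\ast$ is a left adjoint, and $\otimes$ is colimit-preserving in each variable. Since $\D(\hat{\mathcal{A}})$ is generated by the compact objects $\alpha^\ast P$ for $P \in \D(\mathcal{A})$ compact, one reduces to $N = \alpha^\ast P$, in which case \cref{rslt:alpha-push-pull-of-compact-module-over-adic-ring} rewrites the map as
\[
M \otimes P_\cpl \to \alpha_\ast(\alpha^\ast M \otimes \alpha^\ast P).
\]
Both sides also preserve colimits in $M$, and by \cref{sec:defin-d_hats-1-examples-for-nuclear-modules}(ii) the nuclear category is generated under colimits by right-bounded adically complete objects $M$ with $M/I$ discrete, so I would further reduce to $M$ of this form.

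The heart of the proof is then to check that both sides of the map are adically complete and agree modulo $I$, from which equality follows. On the left, $M$ is complete with $M/I$ discrete and $P_\cpl$ is complete, so \cite[Proposition 2.12.10]{mann-mod-p-6-functors} ensures $M \otimes P_\cpl$ is adically complete. On the right, $\alpha^\ast M$ is right-bounded and adically complete in $\D(\hat{\mathcal{A}})$ (by right $t$-exactness of $\alpha^\ast$ and \cref{sec:defin-texorpdfstr-equivalence-of-complete-objects}), while $\alpha^\ast P$ is compact and therefore complete by construction; hence by \cref{rslt:complete-modules-stable-under-tensor} the tensor $\alpha^\ast M \otimes \alpha^\ast P$ is complete in $\D(\hat{\mathcal{A}})$, and applying $\alpha_\ast$, which preserves limits and hence adic completeness, yields a complete object of $\D(\mathcal{A})$. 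Finally, comparing modulo $I$: the left side reduces to $M/I \otimes_{\mathcal{A}/I} P/I$ (using $P_\cpl/I = P/I$ and base change), while the right side reduces to $\alpha_\ast(\alpha^\ast(M/I) \otimes \alpha^\ast(P/I))$, and by \cref{rslt:A-mod-I-modified-modules-equals-usual-modules} the inverse monoidal equivalences $\alpha^\ast, \alpha_\ast$ on $\D(\mathcal{A}/I) = \D(\hat{\mathcal{A}}/I)$ identify this with $M/I \otimes_{\mathcal{A}/I} P/I$ as well.

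The main technical point I anticipate is verifying completeness of both sides. In particular the right-hand side requires the slightly delicate maneuver of computing the tensor product in $\D(\hat{\mathcal{A}})$, where \cref{rslt:complete-modules-stable-under-tensor} applies to right-bounded complete modules (the analogous statement in $\D(\mathcal{A})$ is more subtle, cf.\ \cref{sec:adic-rings-1-example-boundedness-for-completeness-is-needed}), before pushing down via the limit-preserving functor $\alpha_\ast$.
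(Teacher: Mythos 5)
Your overall strategy is the same as the paper's: reduce via colimit-preservation in $N$ to the compact generators $\alpha^\ast P$, reduce $M$ to the complete generators of the nuclear category which are discrete mod $I$, then show both sides are adically complete and compare modulo $I$ via \cref{rslt:A-mod-I-modified-modules-equals-usual-modules}. However, there is one genuine gap, and it sits at exactly the point where nuclearity must do real work: your justification that $\alpha^\ast M$ is adically complete. \Cref{sec:defin-texorpdfstr-equivalence-of-complete-objects} only says that $\alpha_\ast$ restricts to an equivalence $\D_\cpl(\hat{\mathcal{A}})\cong\D_\cpl(\mathcal{A})$ whose inverse is the \emph{completed} pullback $(-)_\cpl\circ\alpha^\ast$; it tells you what the completion of $\alpha^\ast M$ is, not that $\alpha^\ast M$ is already complete. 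And there is no general reason for $\alpha^\ast$ to preserve completeness: $\alpha^\ast$ preserves colimits while completeness is not stable under colimits, so for an arbitrary complete $M$ the claim would fail (this is the same phenomenon as in \cref{sec:adic-rings-1-example-boundedness-for-completeness-is-needed}). Without this step the right-hand side's completeness, and hence the reduction mod $I$, is not established.

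The gap is repairable precisely because of the class of $M$ you reduced to: $M$ complete with $M/I$ discrete is nuclear by \cref{sec:defin-d_hats-2-complete-and-discrete-mod-i-implies-nuclear}, so $\alpha^\ast M$ is nuclear and, by the equivalence of \cref{sec:defin-d_hats-1-equivalence-of-nuclear-subcategory}, $\alpha_\ast\alpha^\ast M\cong M$. Since completeness of an $\hat{\mathcal{A}}$-module is detected on the underlying $\mathcal{A}$-module (\cref{rslt:basic-properties-of-adically-complete-modules}), this gives completeness of $\alpha^\ast M$; combined with right-boundedness (right $t$-exactness of $\alpha^\ast$) and completeness and right-boundedness of $\alpha^\ast P$, \cref{rslt:complete-modules-stable-under-tensor} then applies as you intended. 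This is exactly the paper's argument at this point, and with that substitution your proof goes through.
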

\begin{proof}
  Both sides commute with colimits in $N$. Hence, we may assume that $N\in \D(\hat{\mathcal{A}})$ is compact (and hence complete by \cref{rslt:basic-properties-of-modified-modules}). As $M\cong M_\nuc$ and the nuclearization commutes with colimits, and sends compacts of bounded complete objects, which are discrete modulo some ideal of definition $I$, we may assume that $M$ is complete and discrete modulo $I$. Then the left hand side is complete by \cite[Proposition 2.12.10.(ii)]{mann-mod-p-6-functors}, and it is by reduction to the discrete case enough to show that the right hand side is complete, too.  By \cref{rslt:basic-properties-of-adically-complete-modules} it is sufficient to show that $\alpha^\ast M$ is complete (as $N$ is complete and right bounded). Now, $\alpha^\ast M\in \D(\hat{\mathcal{A}})$ is nuclear, which implies $\alpha_\ast(\alpha^\ast M)\cong M$ by \cref{sec:defin-d_hats-1-equivalence-of-nuclear-subcategory}. In particular, $\alpha^\ast M$ is complete by \cref{rslt:basic-properties-of-adically-complete-modules}. 
\end{proof}

We now prove a relative variant of \cref{sec:adic-rings-3-modified-version-agrees-with-old-one-for-finite-type-stuff}. We use the terminology that a morphism $\mathcal{A}=(A,A^+)_\solid\to \mathcal{B}=(B,B^+)_\solid$ of adic analytic rings is of $+$-finite type, if $(B,B^+)_\solid\cong (B,A\cup \{S\})_\solid$ for some finite set $S$.

\begin{proposition}
  \label{sec:defin-d_hats-1-modified-version-in-relative-+-finite-type-case}
  Let $f\colon \mathcal{A}=(A,A^+)_\solid\to \mathcal{B}=(B,B^+)_\solid$ be an adic morphism of adic analytic rings of $+$-finite type.
  \begin{propenum}
  \item The diagram
\[\begin{tikzcd}
	{\D(\hat{\mathcal{A}})} & {\D(\hat{\mathcal{B}})} \\
	{\D(\mathcal{A})} & {\D(\mathcal{B})}
	\arrow["{-\otimes_{\hat{\mathcal{A}}}\hat{\mathcal{B}}}", from=1-1, to=1-2]
	\arrow["{\alpha_{\mathcal{A},\ast}}"', from=1-1, to=2-1]
	\arrow["{-\otimes_{\mathcal{A}}\mathcal{B}}", from=2-1, to=2-2]
	\arrow["{\alpha_{\mathcal{B},\ast}}", from=1-2, to=2-2]
      \end{tikzcd}\]
    naturally commutes, i.e., base change holds for $\alpha$.
  \item The natural functor
    \[
      \D(\hat{\mathcal{A}})\otimes_{\D(\mathcal{A})}\D(\mathcal{B})\isoto \D(\hat{\mathcal{B}})
    \]
    is an equivalence.
  \end{propenum}
\end{proposition}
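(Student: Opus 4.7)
The plan is to first establish the base-change compatibility (i) by reducing to compact generators, and then to deduce (ii) from (i) by monadic descent.

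For (i), both composites around the square are colimit-preserving functors $\D(\hat{\mathcal{A}}) \to \D(\mathcal{B})$ by \cref{rslt:properties-of-forgetful-functor-from-modified-modules} and symmetric monoidality of the base-change functors, so it suffices to check the comparison on compact generators $M = \alpha^*_{\mathcal{A}}P$ with $P \in \D(\mathcal{A})^\omega$. Naturality of $\alpha^*$ (\cref{rslt:functoriality-of-modified-modules}) identifies $\alpha^*_{\mathcal{A}}P \otimes_{\hat{\mathcal{A}}}\hat{\mathcal{B}} \cong \alpha^*_{\mathcal{B}}(P \otimes_{\mathcal{A}}\mathcal{B})$, and since $-\otimes_{\mathcal{A}}\mathcal{B}$ preserves compact objects, \cref{rslt:alpha-push-pull-of-compact-module-over-adic-ring} computes the left composite as $(P\otimes_{\mathcal{A}}\mathcal{B})_\cpl$, while the right composite equals $P_\cpl \otimes_{\mathcal{A}}\mathcal{B}$. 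The claim thus reduces to showing that $P_\cpl \otimes_{\mathcal{A}}\mathcal{B}$ is already adically complete for each compact $P$: granting this, the universal property of completion identifies both expressions with the completion of $P\otimes_{\mathcal{A}}\mathcal{B}$.

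This completeness assertion is the main obstacle, since $-\otimes_{\mathcal{A}}\mathcal{B}$ does not preserve adic completeness in general (cf.\ \cref{sec:adic-rings-1-example-boundedness-for-completeness-is-needed}), and it is precisely here that the $+$-finite type hypothesis enters. Choose generators $x_1,\dots,x_n$ of an ideal of definition of $\mathcal{A}$ and lifts $s_1,\dots,s_k \in \pi_0(B)(\ast)$ of the finite set exhibiting $+$-finite type, and set
\[
  \mathcal{A}_0 := (\Z[\![x_\bullet]\!], \Z)_\solid, \qquad \mathcal{B}_0 := (\Z[\![x_\bullet]\!][s_1,\dots,s_k], \Z[s_\bullet])_\solid,
\]
with their evident adic maps to $\mathcal{A}$ and $\mathcal{B}$. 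Both $\mathcal{A}_0$ and $\mathcal{B}_0$ have finitely generated plus-structure, so by \cref{sec:adic-rings-3-modified-version-agrees-with-old-one-for-finite-type-stuff} we have $\D(\mathcal{A}_0)=\D(\hat{\mathcal{A}_0})$ and $\D(\mathcal{B}_0)=\D(\hat{\mathcal{B}_0})$. Following the bar-resolution strategy from the proof of \cref{rslt:adic-base-change-preserves-adic-completeness}, I would write $P$ as a uniformly right-bounded geometric realization of objects of the form $P_0\otimes_{\mathcal{A}_0}\mathcal{A}$ with $P_0 \in \D(\mathcal{A}_0)^\omega$, and commute the realization past $(-)_\cpl$ and $-\otimes_{\mathcal{A}}\mathcal{B}$ using left-adjointness together with the uniform boundedness. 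The resulting individual terms have the form $(P_0)_\cpl \otimes_{\mathcal{A}_0}\mathcal{B}$, which factor as $(P_0\otimes_{\mathcal{A}_0}\mathcal{B}_0)\otimes_{\mathcal{B}_0}\mathcal{B}$ and are complete by two applications of \cref{rslt:adic-base-change-preserves-adic-completeness} (using $\D(\mathcal{B}_0) = \D(\hat{\mathcal{B}_0})$ in the first step). Stability of complete objects under uniformly bounded geometric realizations then yields the desired completeness.

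For (ii), the natural functor $F\colon \D(\hat{\mathcal{A}})\otimes_{\D(\mathcal{A})}\D(\mathcal{B})\to \D(\hat{\mathcal{B}})$ is colimit-preserving and sends the compact generators $\alpha^*_{\mathcal{A}}P\boxtimes Q$ of the source (for $P\in\D(\mathcal{A})^\omega$ and $Q\in\D(\mathcal{B})^\omega$) to $\alpha^*_{\mathcal{B}}(P\otimes_{\mathcal{A}}Q)$, which cover all compact generators of $\D(\hat{\mathcal{B}})$. Using the symmetric monoidal monadic adjunction $\alpha^*_{\mathcal{A}}\dashv\alpha_{\mathcal{A},*}$ from \cref{sec:adic-rings-1-remark-on-generalized-analytic-rings} together with the general fact that such adjunctions base-change along the symmetric monoidal functor $-\otimes_{\mathcal{A}}\mathcal{B}\colon\D(\mathcal{A})\to\D(\mathcal{B})$, one obtains an identification $\D(\hat{\mathcal{A}})\otimes_{\D(\mathcal{A})}\D(\mathcal{B}) \cong \LMod_{T'}(\D(\mathcal{B}))$ for the base-changed monad $T'$; part (i) identifies the underlying endofunctor of $T'$ canonically with $T_{\mathcal{B}} = \alpha_{\mathcal{B},*}\alpha^*_{\mathcal{B}}$, so Barr--Beck--Lurie upgrades $F$ to an equivalence.
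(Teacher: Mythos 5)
Your reduction of (i) to the completeness of $P_\cpl\otimes_{\mathcal{A}}\mathcal{B}$ for compact $P$ is the same first step as in the paper, but your argument for that completeness has a genuine gap. \cref{rslt:adic-base-change-preserves-adic-completeness} is a statement about the \emph{completed} categories: it says that $-\otimes_{\hat{\mathcal{B}_0}}\hat{\mathcal{B}}$ carries right-bounded complete objects to complete objects of $\D(\hat{\mathcal{B}})$. Since $B^+$ is in general not finitely generated over $\Z$, $\D(\mathcal{B})\neq\D(\hat{\mathcal{B}})$, so your ``second application'' of that lemma (to $\mathcal{B}_0\to\mathcal{B}$) says nothing about completeness of $N\otimes_{\mathcal{B}_0}\mathcal{B}$ as an object of $\D(\mathcal{B})$, which is what you need. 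In fact the statement you are implicitly using --- that $-\otimes_{\mathcal{B}_0}\mathcal{B}$ preserves completeness of right-bounded complete objects --- is false: $N=\mathcal{B}_0[S]$ is complete (as $\Z[s_\bullet]$ is finitely generated over $\Z$), yet $N\otimes_{\mathcal{B}_0}\mathcal{B}=(B,B^+)_\solid[S]$ is exactly the kind of non-complete compact generator whose failure of completeness motivates the whole construction of $\D(\hat{\mathcal{B}})$; the paper warns about precisely this in \cref{sec:adic-rings-1-example-boundedness-for-completeness-is-needed}. (There is also a smaller problem earlier: the bar-resolution terms along $\mathcal{A}_0\to\mathcal{A}$ involve the non-compact restriction of $P$, and commuting $(-)_\cpl$ through them to obtain $(P_0)_\cpl\otimes_{\mathcal{A}_0}\mathcal{A}$ already presupposes the kind of base-change-preserves-completeness statement being proven.) The feature of $+$-finite type maps that actually saves the paper's argument is different and is never used in your proposal: one reduces to adding a single element $s$ to the plus-ring, where $\mathcal{B}\otimes_{\mathcal{A}}(-)\cong\underline{\Hom}_{\Z[T]}(\Z((T^{-1}))/\Z[T][-1],\underline{\mathcal{B}}\otimes_{\mathcal{A}}(-))$ is an internal Hom and hence preserves completeness, while the induced-structure case $S=\emptyset$ follows from the completeness of $\underline{\mathcal{B}}$ together with its discreteness mod $I$ via [Mann, Prop.\ 2.12.10].

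For (ii) the monadic step is also not available. Identifying $\D(\hat{\mathcal{A}})\otimes_{\D(\mathcal{A})}\D(\mathcal{B})$ with $\LMod_{T'}(\D(\mathcal{B}))$ for a ``base-changed monad'' requires the adjunction $\alpha^*_{\mathcal{A}}\dashv\alpha_{\mathcal{A},*}$ to be an adjunction of $\D(\mathcal{A})$-linear colimit-preserving functors, i.e.\ $\alpha_{\mathcal{A},*}$ must satisfy the projection formula; but \cref{sec:adic-rings-1-remark-on-generalized-analytic-rings} records that $\alpha_{\mathcal{A},*}$ is in general \emph{not} $\D(\mathcal{A})$-linear, and that modules over the monad $\alpha_{\mathcal{A},*}\alpha^*_{\mathcal{A}}$ are not modules over any algebra in $\D(\mathcal{A})$. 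Without this linearity there is no base-changed monad with the property you invoke, and part (i) --- an identification of one composite of functors --- does not substitute for it ($\D(\mathcal{A})$ is also not rigid, so no general dualizability argument applies). The paper instead exploits the $+$-finite type hypothesis to split into two cases: for $B^+=A^+$ one has $\D(\mathcal{B})=\Mod_B(\D(\mathcal{A}))$, modules over an honest algebra, so the tensor product is $\Mod_{\alpha^*_{\mathcal{A}}B}(\D(\hat{\mathcal{A}}))$ and full faithfulness becomes a Hom computation using the nuclear projection formula \cref{sec:defin-d_hats-1-projection-formula-for-nuclear}; for $B=A$ one uses that $f^*$ is an open immersion with fully faithful $f_!$, checks via (i) that $\hat{f}_!$ exists and is compatible with $\alpha^*$, and concludes by the stability of open immersions under base change in $\mathrm{Sym}$ (\cref{sec:glob-stably-unif-2-closed-open-immersions-are-stable-under-base-change-for-lurie-tensor-product}). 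Your observation that the right adjoint of the comparison functor is conservative is correct, but the full faithfulness --- the real content of (ii) --- is not established by the proposed route.
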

\begin{proof}
  For (i) we have to see that the natural morphism
  \[
    \mathcal{B}\otimes_{\mathcal{A}}\alpha_{\mathcal{A},\ast}(M)\to \alpha_{\mathcal{B},\ast}(\hat{\mathcal{B}}\otimes_{\hat{\mathcal{A}}}M) 
  \]
  is an isomorphism for any $M\in \D(\hat{\mathcal{A}})$. Both sides commute with colimits in $M$, which reduces to the case $M=\alpha^\ast_{\mathcal{A}}P$ for some compact object $P\in \D(\mathcal{A})$. Then the left hand side equates to $\mathcal{B}\otimes_{\mathcal{A}}P_\cpl$ by \cref{rslt:basic-properties-of-modified-modules} while the right hand side is
  \[
    \alpha_{\mathcal{B},\ast}(\hat{\mathcal{B}}\otimes_{\hat{\mathcal{A}}}\alpha_A^\ast(P))\cong \alpha_{\mathcal{B},\ast}(\alpha_{\mathcal{B}}^\ast(\mathcal{B}\otimes_{\mathcal{A}}P)\cong (\mathcal{B}\otimes_{\mathcal{A}}P)_\cpl
  \]
  by \cref{rslt:functoriality-of-modified-modules} and \cref{rslt:basic-properties-of-modified-modules}, and this object is complete. Hence, it is sufficient to show that $\mathcal{B}\otimes_{\mathcal{A}} P_\cpl$ is complete because then one can check the statement modulo some ideal of definition, where it is clear. If $S=\emptyset$, then the statement follows from \cite[Proposition 2.12.10]{mann-mod-p-6-functors}. Using induction one reduces to the case that $S=\{s\}$. Then
  \[
    \mathcal{B}\otimes_{\mathcal{A}} N=\underline{\Hom}_{\Z[T]}(\Z((T^{-1}))/\Z[T][-1],\underline{\mathcal{B}}\otimes_{\mathcal{A}}N)
  \]
  for any $N\in \D(\mathcal{A})$ by the right adjoint assertion to  \cite[Observation 8.11]{condensed-mathematics}. Here, $\Z[T]\to \underline{\mathcal{B}}$ is the map classified by $s$. Again using \cite[Proposition 2.12.10]{mann-mod-p-6-functors} we can conclude that
  \[
    \begin{matrix}
      \mathcal{B}\otimes_{\mathcal{A}} N_\cpl & \cong & \underline{\Hom}_{\Z[T]}(\Z((T^{-1}))/\Z[T][-1],\underline{\mathcal{B}}\otimes_{\mathcal{A}}N_\cpl) \\
      & \cong & \underline{\Hom}_{\Z[T]}(\Z((T^{-1}))/\Z[T][-1],(\underline{\mathcal{B}}\otimes_{\mathcal{A}}N)_\cpl) \\
      & \cong & (\mathcal{B}\otimes_{\mathcal{A}}N)_\cpl    
    \end{matrix}
\]
for any $N\in \D(\mathcal{A})$.

Now we prove (ii). The natural functor
\[
  \Phi\colon \D(\hat{\mathcal{A}})\otimes_{\D(\mathcal{A})}\D(\mathcal{B})\to \D(\hat{\mathcal{B}})
\]
commutes with colimits (by definition), and hence has a right adjoint $\Psi$. As the image of the functor $\alpha_{\mathcal{B}}^\ast\colon \D(\mathcal{B})\to \D(\hat{\mathcal{B}})$ generates the target under colimits, the same is true for $\Phi$. This implies formally that the functor $\Psi$ is conservative. Hence, it is sufficient to show that $\Phi$ is fully faithful. We may reduce to the cases $B^+=A^+$ or $B=A$ as the statement is stable under composition of $+$-finite morphisms. We first deal with the case $B^+=A^+$. Then $\D(\mathcal{B})\cong \mathrm{Mod}_B(\D(\mathcal{A})$, and hence $\D(\hat{\mathcal{A}})\otimes_{\D(\mathcal{A})}\D(\mathcal{B})\cong \mathrm{Mod}_{\alpha^\ast_A B}(\D(\hat{\mathcal{A}})$. To get fully faithfulness of $\Phi$ it suffices to see that
\[
  \mathrm{Hom}_{\D(B)}((B\otimes_{A}P)_\cpl, (B\otimes_{A}Q)_\cpl)\cong \mathrm{Hom}_{\D(\hat{\mathcal{A}}), \alpha^\ast_{A} B}(\alpha^\ast _AB\otimes \alpha^\ast P, \alpha^\ast_A B\otimes \alpha^\ast Q) 
\]
for any $P,Q\in \D(\mathcal{A})$ compact. The left hand side simplifies to $\Hom_{\D(A)}(P, (B\otimes_A Q)_\cpl)$, while the right hand side is by adjunctions isomorphic to $\Hom_{\D(A)}(P,\alpha_{A,\ast}(\alpha^\ast_A B\otimes \alpha^\ast_A(Q)))$. Now, $(B\otimes_A Q)_\cpl\cong B\otimes_A Q_\cpl$ by \cite[Proposition 2.12.10.(ii)]{mann-mod-p-6-functors}, which agrees with $\alpha_{A,\ast}(\alpha^\ast_A B\otimes \alpha^\ast_A(Q))\cong B\otimes \alpha_{A,\ast}\alpha^\ast_A(Q)$ by \cref{sec:defin-d_hats-1-projection-formula-for-nuclear} and \cref{rslt:basic-properties-of-modified-modules}. This finishes the proof in the case that $B^+=A^+$.
Hence, assume that $B=A$. As $f\colon \mathcal{A}\to \mathcal{B}$ is of $+$-finite type this implies that the functor $f^\ast:=\mathcal{B}\otimes_{\mathcal{A}}(-)\to \D(\mathcal{A})\to \D(\mathcal{B})$ admits a fully faithful left adjoint $f_!$ satisfying the projection formula. Indeed, it is sufficient to treat the case $S=\{s\}$, where it follows by base change from the discussion in \cite[Lecture 8]{condensed-mathematics}.
We first check that $\hat{f}^\ast:=\hat{\mathcal{B}}\otimes_{\hat{\mathcal{A}}}(-)\colon \D(\hat{\mathcal{A}})\to \D(\hat{\mathcal{B}})$ commutes with products, so that $\hat{f}^\ast$ admits a left adjoint $\hat{f}_!$. As $\alpha_{B,\ast}$ is conservative, it is sufficient to show that $\alpha_{B,\ast}\hat{f}^\ast\cong f^\ast\alpha_{A,\ast}$ (by the proven assertion (i)) commutes with products. This is clear as $f^\ast, \alpha_{A,\ast}$ are right adjoints. Applying (i) again shows that $\hat{f}_!\alpha_{A}^\ast\cong \alpha_B^\ast f_!$. As the essential image of $\alpha_B^\ast$ generates $\D(\hat{\mathcal{B}})$ under colimits this implies that the natural transformation
\[
  \mathrm{Id}\to \hat{f}^\ast\hat{f}_!
\]
is an isomorphism because $f^\ast f_!\cong \mathrm{Id}$. Similarly, one checks that $\hat{f}_!$ satisfies the projection formula. This implies that $\hat{f}^\ast$ is the open localization associated with the idempotent algebra $\mathrm{cone}(\hat{f}_!(1)\to 1)\in \D(\hat{\mathcal{A}})$ (\cite[Proposition 6.5]{condensed-complex-geometry}). However, this algebra is the pullback of the idempotent algebra $\mathrm{cone}(f_!(1)\to 1)$ because $\alpha^\ast_B f_!\cong \hat{f}_! \alpha^\ast_A$. By \cref{sec:glob-stably-unif-2-closed-open-immersions-are-stable-under-base-change-for-lurie-tensor-product} this implies the claim.
\end{proof}

\subsection{Complete descent}
\label{sec:complete-descent}

Next we discuss base-change in the setting of adic rings, in particular with our new modified version of modules. As we will show below, base-change holds in great generality as long as the involved maps are adic.

\begin{lemma}
Let $(A, A^+)_\solid$ be an adic analytic ring.
\begin{lemenum}
	\item \label{rslt:adic-add-element-to-+-ring} For every element $a \in \pi_0 A$ with induced map $\Z[x] \to A$ we have
	\begin{align*}
		(A, A^+)_\solid \tensor_{(\Z[x], \Z)_\solid} \Z[x]_\solid = (A, A^+[a])_\solid.
	\end{align*}

	\item \label{rslt:adic-polynomial-ring} We have
	\begin{align*}
		(A, A^+)_\solid \tensor_{\Z_\solid} \Z[x]_\solid = (A\langle x \rangle, A^+[x])_\solid.
	\end{align*}
\end{lemenum}
\end{lemma}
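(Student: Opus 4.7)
The plan is to prove both parts by invoking the universal property of the tensor product of analytic rings: a module over $\mathcal A_1 \cpltensor_{\mathcal B} \mathcal A_2$ is precisely a $\mathcal B$-module that is simultaneously $\mathcal A_1$- and $\mathcal A_2$-complete. Hence in each case one only needs to combine the two sets of completeness conditions and compare them to the right-hand side, using the identity $(A, S)_\solid \cong (A, \widetilde S)_\solid$ recalled just before \cref{sec:adic-rings-1-definition-adic-analytic-ring} (where $\widetilde S$ is the smallest integrally closed subring of $\pi_0(A)(\ast)$ containing $S$ and every topologically nilpotent element).

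For (i), I would unwind both sides as follows. An $M \in \D(\Z_\solid)$ is complete for the LHS iff it is $(A, A^+)_\solid$-complete -- that is, $\Z[s]_\solid$-complete for each $s \in A^+$ -- and additionally $\Z[x]_\solid$-complete when viewed over $(\Z[x], \Z)_\solid$ via $x \mapsto a$; the latter translates into $\Z[a]_\solid$-completeness. The combined requirement is therefore $\Z[s]_\solid$-completeness for each $s \in A^+ \cup \{a\}$. Applying the remark, we have $(A, A^+ \cup \{a\})_\solid = (A, A^+[a])_\solid$ because $A^+[a]$ lies in the integral/topologically-nilpotent closure of $A^+ \cup \{a\}$ and vice versa. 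Finally, evaluating on the free generators $(A, A^+[a])_\solid[S]$ for profinite $S$ identifies the underlying analytic ring structures.

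For (ii), the argument is analogous but two points need separate care. First, one identifies the underlying ring: the completion of $A \tensor_{\Z_\solid} \Z[x]$ against the combined analytic ring structure is by definition $A\langle x \rangle$, and the proof should use \cref{rslt:adic-add-element-to-+-ring} (or rather, the same method) together with the definition of $\Z[x]_\solid$ to see this. Second, on this new ring the combined completeness condition is $\Z[s]_\solid$-completeness for each $s \in A^+$ plus $\Z[x]_\solid$-completeness for $x$; since $x$ becomes topologically nilpotent in $A\langle x \rangle$, the closure remark identifies this with $(A\langle x \rangle, A^+[x])_\solid$-completeness, and a check on free generators closes the argument.

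The main obstacle is in (ii): one must verify that the underlying ring of the tensor product really is $A\langle x \rangle$ with the expected topology and that $x$ genuinely qualifies as a topologically nilpotent element of the resulting ring (in the sense of the footnote near \cref{sec:adic-rings-1-definition-adic-analytic-ring}), so that $A^+[x]$ is a valid $+$-ring. Once these bookkeeping steps are done, both identifications are forced by the universal property together with the closure remark.
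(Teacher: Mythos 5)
Your framework—comparing completeness conditions via the universal property of the pushout of analytic rings—is the right one, and part (i) is essentially the paper's argument, except for one point you leave implicit: the underlying ring of the left-hand side is the completion of $A$ for the combined structure, so you must check that $A$ is already $\Z[a]_\solid$-complete, otherwise the underlying ring could change. This is exactly what the paper's proof of (i) supplies: $A$ is $I$-adically complete with $A/I$ discrete, hence a limit of discrete rings, and discrete modules are solid over $\Z[a]$ for any $a$; so the base change does not alter $A$ and only the $+$-rings need comparing, which the integral-closure remark handles as you say.

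The genuine gap is in (ii). The identification of the underlying ring with $A\langle x\rangle$ is not ``by definition'', and neither the method of \cref{rslt:adic-add-element-to-+-ring} nor the closure remark produces it: a priori the completion of $A[x]=A\otimes_{\Z}\Z[x]$ with respect to the combined analytic structure need not coincide with the $I$-adic completion of $A[x]$. The missing input, which is what the paper's proof actually rests on, is that the base-change functor $-\otimes_{\Z_\solid}\Z[x]_\solid$ preserves all limits (\cite[Lemma 2.9.5]{mann-mod-p-6-functors}); consequently it preserves $I$-adic completeness, so $A\otimes_{\Z_\solid}\Z[x]_\solid$ and the free modules $(A,A^+)_\solid[S]\otimes_{\Z_\solid}\Z[x]_\solid$ are $I$-adically complete, and the asserted equality of analytic rings can be checked modulo $I$, where $A/I$ is discrete and the statement is known. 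Without this (or some substitute), the ``bookkeeping step'' you defer is precisely the content of the lemma. Separately, your claim that $x$ becomes topologically nilpotent in $A\langle x\rangle$ is false: the powers of $x$ do not tend to $0$ $I$-adically, i.e.\ the map $\Z[T]\to A\langle x\rangle$, $T\mapsto x$, does not factor through $\Z[[T]]$. Fortunately you do not need it—the smallest integrally closed subring containing $A^+\cup\{x\}$ and the topologically nilpotent elements already contains $A^+[x]$ and conversely, so $(A\langle x\rangle, A^+\cup\{x\})_\solid=(A\langle x\rangle, A^+[x])_\solid$ follows directly from the closure remark.
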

\begin{proof}
Part (i) follows easily from the definitions by using that $A$ is solid over any discrete ring (because it is a limit of discrete rings). For (ii) we note that on modules the functor $- \tensor_{\Z_\solid} \Z[x]_\solid$ preserves limits (see \cite[Lemma 2.9.5]{mann-mod-p-6-functors}), so in particular it preserves adic completeness. Then the claim reduces to the observations that the right-hand side is an analytic ring and that the statement is true modulo $I$.
\end{proof}

For adic analytic rings we have the following characterization of steady maps (see \cite[Definition 12.13]{scholze-analytic-spaces}) in terms of adicness:

\begin{lemma}
  \label{sec:complete-descent-2-steady-maps-of-adic-analytic-rings-are-exactly-the-adic-ones}
A map of adic analytic rings is steady if and only if it is adic.
\end{lemma}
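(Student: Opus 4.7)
The plan is to check both directions directly against the definition of steadiness (\cite[Definition 12.13]{scholze-analytic-spaces}): a map $f\colon \mathcal{A}\to\mathcal{B}$ of analytic rings is steady if and only if for every map $\mathcal{A}\to\mathcal{A}'$ of analytic rings, the pushout $\mathcal{B}'=\mathcal{B}\otimes_{\mathcal{A}}\mathcal{A}'$ has underlying module equal to the derived tensor product $\underline{\mathcal{B}}\otimes_{\underline{\mathcal{A}}}\underline{\mathcal{A}'}$ computed in $\D(\mathcal{A})$, i.e.\ the analytic ring structure does not contribute any extra completion to the underlying object.

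For the direction \emph{adic} $\Rightarrow$ \emph{steady}, suppose $f$ sends an ideal of definition $I\subset\pi_0(A)(\ast)$ to an ideal of definition of $B$. By \cref{rslt:A-mod-I-modified-modules-equals-usual-modules} (applied to both $\mathcal{A}$ and $\mathcal{B}$, which is legitimate by adicness) the reductions $\mathcal{A}/I^n$ and $\mathcal{B}/I^n$ behave exactly like discrete analytic rings, and $\mathcal{B}/f(I)^n=\mathcal{B}\otimes_{\mathcal{A}}\mathcal{A}/I^n$. Passing to the inverse limit and using that adically complete modules are determined by their reductions mod $I^n$, it suffices to check steadiness of the discrete reductions $\mathcal{A}/I^n\to\mathcal{B}/I^n$. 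Combining \cref{rslt:adic-add-element-to-+-ring} and \cref{rslt:adic-polynomial-ring} with the known steadiness of maps of discrete solid rings then gives the base-change formula against any test object $\mathcal{A}'$ (first reducing $\mathcal{A}'$ to $\mathcal{A}/I^n\otimes_{\mathcal{A}}\mathcal{A}'$, which may be analyzed in the discrete setting, and then lifting back through the limit).

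For the direction \emph{steady} $\Rightarrow$ \emph{adic}, one can argue by contradiction. Pick an ideal of definition $I$ of $\mathcal{A}$ and form the quotient map $\mathcal{A}\to(\mathcal{A}/I)_\solid$, which is a map of adic analytic rings since $\mathcal{A}/I$ is discrete. Steadiness of $f$ forces the pushout $\mathcal{B}\otimes_{\mathcal{A}}(\mathcal{A}/I)_\solid$ to have underlying ring $\underline{\mathcal{B}}/f(I)=\mathcal{B}\otimes_{\underline{\mathcal{A}}}\underline{\mathcal{A}}/I$. On the other hand this pushout is again a quotient in $\AnRing$, and for it to be an adic analytic ring with discrete underlying ring one needs $f(I)$ to generate an ideal of definition of $B$; otherwise $\underline{\mathcal{B}}/f(I)$ fails to be discrete and the steadiness identification collapses. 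This is precisely the definition of $f$ being adic.

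The main obstacle is the \emph{adic} $\Rightarrow$ \emph{steady} direction: steadiness must be verified against \emph{all} test objects $\mathcal{A}'$, not just adic ones, so one cannot simply argue inside $\AdicRing$. The essential input is that adic completeness interacts well with arbitrary base change in $\AnRing$ once one tests on reductions mod $I^n$ (\cref{rslt:A-mod-I-modified-modules-equals-usual-modules}), so that all relevant tensor products are controlled by their discrete truncations, where steadiness is standard.
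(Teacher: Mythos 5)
Both directions of your argument have genuine gaps, and neither matches the mechanism the paper actually uses. Note first that steadiness (as recalled in the paper, \cite[Proposition 12.14]{scholze-analytic-spaces}) is the condition that the base-change transformation $f^\ast g_\ast\to g'_\ast f'^{\ast}$ is an isomorphism for \emph{every} map of analytic rings $g\colon\mathcal A\to\mathcal A'$ and every module over $\mathcal A'$; it is not the statement about the underlying module of the pushout that you quote. For the direction adic $\Rightarrow$ steady, your reduction to the discrete quotients $\mathcal A/I^n\to\mathcal B/I^n$ fails: the objects entering the steadiness condition (the compact generators $(A,A^+)_\solid[S]$, and $g_\ast M'$ for arbitrary $M'$ over an arbitrary test ring $\mathcal A'$) are \emph{not} adically complete, so they are not "controlled by their discrete truncations" -- this is precisely the defect that motivates the introduction of $\D(\hat{\mathcal A})$ in this paper, and the mod-$I^n$ dévissage you invoke is only valid there (cf. \cref{rslt:descendability-of-adic-rings}), whereas steadiness is a condition on $\D(\mathcal A)$. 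The paper argues differently: it factors $f$ as $(A,A^+)_\solid\to(B,A^+)_\solid\to(B,B^+)_\solid$; the second map is a base change of $(\Z[x],\Z)_\solid\to\Z[x]_\solid$ (via \cref{rslt:adic-add-element-to-+-ring}) and hence steady, while the first carries the induced analytic ring structure, for which steadiness is equivalent to $B$ being \emph{nuclear} over $(A,A^+)_\solid$; that nuclearity is exactly what \cref{sec:defin-d_hats-2-complete-and-discrete-mod-i-implies-nuclear} provides (complete and discrete mod $I$), using adicness of $f$. Your proposal has no substitute for this nuclearity input.

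For the direction steady $\Rightarrow$ adic, testing only against $\mathcal A\to(\mathcal A/I)_\solid$ is too weak. Steadiness does not assert that the pushout has a discrete underlying ring; the underlying ring of $\mathcal B\otimes_{\mathcal A}(\mathcal A/I)_\solid$ is the (derived) quotient $B/IB$ whether or not $f$ is adic, and its failure to be discrete does not by itself contradict the base-change condition (it can even vanish for a non-adic map, e.g. when $f(I)$ contains a unit times a non-topologically-nilpotent element), so "the steadiness identification collapses" is not substantiated. The paper instead tests against $g\colon(A,A^+)\to(A\langle x\rangle,A^+[x])$ (using \cref{rslt:adic-polynomial-ring}): steadiness identifies $A\langle x\rangle\otimes_{(A,A^+)_\solid}(B,B^+)_\solid$, which is the $I$-adic completion of $B[x]$ (this uses that $B$ is $I$-adically complete as an $A$-module, again via nuclearity), with $B\langle x\rangle$, the $J$-adic completion for an ideal of definition $J$ of $B$; since $B[x]\cong\bigoplus_{n\ge 0}B\cdot x^n$ can only be $J$-adically complete if $J$ is nilpotent, this forces $f(I)$ to generate an ideal of definition. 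This quantitative use of an infinite direct sum under a completed polynomial test object is the essential ingredient your argument is missing.
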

\begin{proof}
Let $f\colon (A, A^+)_\solid \to (B, B^+)_\solid$ be a map of adic analytic rings. First assume that $f$ is adic. In order to show that $f$ is steady, it is enough to show that $(A, A^+)_\solid \to (B, A^+)_\solid$ and $(B, A^+)_\solid \to (B, B^+)$ are steady. For the second map this follows from \cref{rslt:adic-add-element-to-+-ring} and \cite[Proposition 2.9.7.(ii)]{mann-mod-p-6-functors} by stability of steadiness under base change. The first map is an induced analytic ring structure, so the statement reduces to showing that $B$ is nuclear in $\D_\solid(A, A^+)$ (see \cite[Corollary 2.3.23]{mann-mod-p-6-functors}), which follows directly from \cref{sec:defin-d_hats-2-complete-and-discrete-mod-i-implies-nuclear} by adicness of $f$.
We now prove the converse, so assume that $f$ is steady and let $I$ be an ideal of definition for $A$. Consider the map $g\colon (A, A^+) \to (A\langle x \rangle, A^+[x])$. Then by \cref{rslt:adic-polynomial-ring} the base-change of $g$ along $f$ is given by the map $g'\colon (B, B^+) \to (B\langle x \rangle, B^+[x])$. Note that ``$\langle x \rangle$'' means adically completed polynomials with respect to the adic topology on $A$ respectively $B$. By steadiness of $f$ we have $B\langle x \rangle = A\langle x\rangle \tensor_{(A, A^+)_\solid} (B, B^+)_\solid$. Now $B$ is adically complete as an $A$-module. Indeed, to see this we may assume that $B$ is discrete, in which case it is an $A/I$-module for some ideal of definition $I\subseteq \pi_0A(\ast)$ by nuclearity of $B$ as an $A$-module, and we have shown the completeness of $B$ as an $A$-module. Hence by \cite[Proposition 2.12.10]{mann-mod-p-6-functors} $A\langle x\rangle \tensor_{(A, A^+)_\solid} (B, B^+)_\solid$ computes the $I$-adic completion of $B[x]$ (note that this tensor product does not depend on $B^+$ because the $I$-adic completion of $B[x]$ is solid for any choice of $B^+$). For this $I$-adic completion to be the same as the $J$-adic completion of $B[x]$ for some ideal of definition $J$ of $B$ we must have that $f$ is adic. Indeed, base change to $\mathcal{A}/I$ reduces to the case $I=\{0\}$, and then $B[x]\cong \bigoplus_{n\geq 0} B\cdot x^{n}$ can be $J$-adically complete if and only if $J$ is nilpotent.\footnote{Let $b\in J$. Then $\pi_0(B[x])$ must contain some element $x$ whose coefficient in front of $x^n$ is $b^n$ as follows by the universal case of the ring $\Z[[b]]$. This forces $b$ to be nilpotent, and thus $J$ as well because $J$ is finitely generated.}
\end{proof}

\begin{lemma} \label{rslt:pushout-of-adic-analytic-rings}
Let $(B, B^+)_\solid \from (A, A^+)_\solid \to (C, C^+)_\solid$ be a diagram of adic analytic rings and adic maps. Then
\begin{align*}
	(B, B^+)_\solid \tensor_{(A, A^+)_\solid} (C, C^+)_\solid = (B \hat\tensor_A C, B^+ \tensor_{A^+} C^+)_\solid. 
\end{align*}
In particular this tensor product is still adic.
\end{lemma}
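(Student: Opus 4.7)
\emph{Plan.} The strategy is to factor each adic map into a ``change of underlying ring'' step and a ``change of $+$-ring'' step:
\begin{align*}
  (A, A^+)_\solid \to (B, A^+)_\solid \to (B, B^+)_\solid, \quad (A, A^+)_\solid \to (C, A^+)_\solid \to (C, C^+)_\solid,
\end{align*}
where the middle terms use the image of $A^+$ in $\pi_0(B)(\ast)$ and $\pi_0(C)(\ast)$, respectively. By associativity of pushouts this reduces the task to two independent computations: first, the pushout $(B, A^+)_\solid \tensor_{\mathcal A} (C, A^+)_\solid$, and second, the subsequent enlargement of the $+$-ring by the remaining generators of $B^+$ and $C^+$.

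For the first step, the underlying ring of the pushout is the $\mathcal A$-module tensor product $B \tensor_{\mathcal A} C$. Since both maps are adic, $B$ and $C$ are static adically complete $\mathcal A$-modules, so by \cref{rslt:complete-modules-stable-under-tensor} (transferred to $\D(\mathcal A)$ via \cref{sec:defin-texorpdfstr-equivalence-of-complete-objects}) this tensor product is again adically complete. The $\mathcal A$-analytic tensor is obtained from the solid tensor $B \tensor_A C$ by applying the $\mathcal A$-completion functor, and on adically complete objects this coincides with $I$-adic completion; hence the underlying ring is precisely $B \hat\tensor_A C$, with induced analytic structure $(B \hat\tensor_A C, A^+)_\solid$.

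For the second step, I would adjoin the remaining generators of $B^+$ (over $A^+$) and $C^+$ one at a time via \cref{rslt:adic-add-element-to-+-ring}: base change along $(\Z[x], \Z)_\solid \to \Z[x]_\solid$ with $x$ mapping to a chosen element adds that element to the $+$-ring. Iterating and using commutativity of pushouts identifies the resulting $+$-ring with the ordinary commutative-ring tensor product $B^+ \tensor_{A^+} C^+$ inside $\pi_0(B \hat\tensor_A C)(\ast)$. Adicness of the result is then automatic: by adicness of $f$ and $g$, an ideal of definition $I \subset \pi_0(A)(\ast)$ has image generating ideals of definition in both $B$ and $C$, and hence a finitely generated ideal in $B \hat\tensor_A C$ with respect to which the latter is complete by construction.

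The main obstacle is the identification of the underlying ring in the first step: one must verify that the $\mathcal A$-analytic completion applied to $B \tensor_A C$ produces exactly the $I$-adic completion and not a strictly finer completion coming from elements of $A^+$ outside $I$. This is precisely what adicness buys, via \cref{rslt:complete-modules-stable-under-tensor}: without it, the $A^+$-analytic structure could impose extra completion conditions that prevent the clean identification with $B \hat\tensor_A C$.
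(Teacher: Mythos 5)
Your overall decomposition (factor both maps into a change of underlying ring followed by an enlargement of the $+$-ring, and handle the latter by iterating \cref{rslt:adic-add-element-to-+-ring}) is exactly the reduction the paper makes. The gap is in your first step. The completeness statement you need is that the \emph{usual} solid tensor product $B \tensor_{\mathcal A} C$, computed in $\D(\mathcal A)$, is already $I$-adically complete, and \cref{rslt:complete-modules-stable-under-tensor} does not give this: that lemma is a statement about $\tensor_{\hat{\mathcal A}}$ in the modified category $\D(\hat{\mathcal A})$, and the equivalence $\D_\cpl(\hat{\mathcal A}) \cong \D_\cpl(\mathcal A)$ of \cref{sec:defin-texorpdfstr-equivalence-of-complete-objects} does not intertwine the two tensor products. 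What your transfer actually yields is completeness of $\alpha^\ast B \tensor_{\hat{\mathcal A}} \alpha^\ast C = \alpha^\ast(B \tensor_{\mathcal A} C)$, i.e.\ of $\alpha_\ast\alpha^\ast(B \tensor_{\mathcal A} C)$, which is not the same object as $B \tensor_{\mathcal A} C$ (for a non-compact, non-nuclear module $\alpha_\ast\alpha^\ast$ is not the identity); to invoke the equivalence of complete subcategories you would already need to know that $B \tensor_{\mathcal A} C$ is complete, which is precisely what is to be proved. Indeed, \cref{sec:adic-rings-1-example-boundedness-for-completeness-is-needed} warns that preservation of completeness fails for the ordinary solid base change -- this failure is the raison d'\^etre of $\D(\hat{\mathcal A})$, so one cannot expect to deduce the $\D(\mathcal A)$-statement formally from the $\D(\hat{\mathcal A})$-one.

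The statement is nonetheless true, but it needs the hypothesis you never use: since $B$ and $C$ are adic rings and the maps are adic, $B/I$ and $C/I$ are \emph{discrete}. With staticity (right-boundedness), completeness and discreteness mod $I$ in hand, the completeness of the solid tensor product is exactly \cite[Proposition 2.12.10]{mann-mod-p-6-functors}, which is what the paper cites for this case. Alternatively, your transfer can be repaired: complete plus discrete mod $I$ implies $B$ and $C$ are nuclear in $\D(\mathcal A)$ by \cref{sec:defin-d_hats-2-complete-and-discrete-mod-i-implies-nuclear}, nuclear objects are stable under $\tensor_{\mathcal A}$, and on nuclear objects $\alpha_\ast\alpha^\ast \cong \mathrm{id}$ by \cref{sec:defin-d_hats-1-equivalence-of-nuclear-subcategory}, so completeness of $\alpha^\ast(B\tensor_{\mathcal A}C)$ in $\D(\hat{\mathcal A})$ does descend to $B\tensor_{\mathcal A}C$. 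Also note that your phrase ``the $\mathcal A$-analytic tensor is obtained \dots by applying the $\mathcal A$-completion functor, and on adically complete objects this coincides with $I$-adic completion'' conflates solidification over $(A,A^+)_\solid$ with $I$-adic completion; since $B$ and $C$ are already objects of $\D(\mathcal A)$, no solidification is at issue, and the whole content of the step is the $I$-adic completeness of $B\tensor_{\mathcal A}C$. The second step and the adicness of the result are fine as you wrote them.
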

\begin{proof}
We can reduce to the cases $A^+ = B^+$ and $A = B$. The second case follows easily from \cref{rslt:adic-add-element-to-+-ring}. The first case follows from \cite[Proposition 2.12.10]{mann-mod-p-6-functors}.
\end{proof}

\begin{remark}
  \label{sec:complete-descent-1-definition-completed-tensor-product}
  In general, the single notation $\hat{\mathcal{A}}$ is not well-defined, but should be seen as a convenient replacement for the datum $(\mathcal{A},\D(\hat{\mathcal{A}}))$. As an example, given a diagram $\mathcal B \from \mathcal A \to \mathcal C$ of adic analytic rings and adic maps, we denote $\hat{\mathcal B} \tensor_{\hat{\mathcal A}} \hat{\mathcal C} := (\mathcal B \tensor_{\mathcal A} \mathcal C)\cplt$, i.e., it denotes the datum $(\mathcal{B}\tensor_{\mathcal{A}}\mathcal{C}, \D((\mathcal B \tensor_{\mathcal A} \mathcal C)\cplt))$.
\end{remark}

\begin{corollary} \label{rslt:steadiness-for-modified-modules}
Let $\mathcal B \from \mathcal A \to \mathcal C$ be a diagram of adic analytic rings and adic maps. Then for every $M \in \D(\hat{\mathcal B})$ the natural morphism
\begin{align*}
	M \tensor_{\hat{\mathcal A}} \hat{\mathcal C} \isoto M \tensor_{\hat{\mathcal B}} (\hat{\mathcal B} \tensor_{\hat{\mathcal A}} \hat{\mathcal C})
\end{align*}
is an isomorphism, i.e. the following base-change diagram commutes:
\begin{center}\begin{tikzcd}
	\D(\hat{\mathcal B} \tensor_{\hat{\mathcal A}} \hat{\mathcal C}) \arrow[d] & \D(\hat{\mathcal B}) \arrow[l] \arrow[d]\\
	\D(\hat{\mathcal C}) & \D(\hat{\mathcal A}) \arrow[l]
\end{tikzcd}\end{center}
\end{corollary}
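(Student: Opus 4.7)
The plan is to deduce the claim from the analogous base-change statement at the level of $\D(\mathcal{A})$, $\D(\mathcal{B})$, $\D(\mathcal{C})$ via the naturality of $\alpha^\ast$. First, by \cref{rslt:pushout-of-adic-analytic-rings} the pushout $\mathcal{D} := \mathcal{B} \tensor_{\mathcal{A}} \mathcal{C}$ is again an adic analytic ring, so the notation $\hat{\mathcal{B}} \tensor_{\hat{\mathcal{A}}} \hat{\mathcal{C}} = \hat{\mathcal{D}}$ of \cref{sec:complete-descent-1-definition-completed-tensor-product} is unambiguous. Moreover, the composite maps $\mathcal{A} \to \mathcal{B}$ and $\mathcal{A} \to \mathcal{C}$ are adic, hence steady by \cref{sec:complete-descent-2-steady-maps-of-adic-analytic-rings-are-exactly-the-adic-ones}, so for every $N \in \D(\mathcal{B})$ the natural map $N \tensor_{\mathcal{A}} \mathcal{C} \isoto N \tensor_{\mathcal{B}} \mathcal{D}$ is an isomorphism in $\D(\mathcal{C})$.

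Next I would observe that both functors $M \mapsto M \tensor_{\hat{\mathcal{A}}} \hat{\mathcal{C}}$ and $M \mapsto M \tensor_{\hat{\mathcal{B}}} \hat{\mathcal{D}}$ from $\D(\hat{\mathcal{B}})$ to $\D(\hat{\mathcal{C}})$ preserve colimits in $M$. For the second this is clear since $-\tensor_{\hat{\mathcal{B}}} \hat{\mathcal{D}}$ is a left adjoint and restriction along $\hat{\mathcal{C}} \to \hat{\mathcal{D}}$ preserves colimits (compact generators are preserved by $-\tensor_{\hat{\mathcal{C}}} \hat{\mathcal{D}}$, so its right adjoint commutes with filtered colimits). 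For the first the same applies to the restriction $\D(\hat{\mathcal{B}}) \to \D(\hat{\mathcal{A}})$ along the adic map $\hat{\mathcal{A}} \to \hat{\mathcal{B}}$. Therefore it suffices to test the comparison map on compact generators of $\D(\hat{\mathcal{B}})$, i.e.\ on $M = \alpha_{\mathcal{B}}^\ast P$ for compact $P \in \D(\mathcal{B})$.

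For such $M$, the functoriality of $\alpha^\ast$ from \cref{rslt:functoriality-of-modified-modules} says that $\alpha^\ast$ is a natural transformation of functors $\AdicRing \to \SymMonCat$; applied to the square formed by $\mathcal{A},\mathcal{B},\mathcal{C},\mathcal{D}$, this yields, together with the base-change square at the level of the usual analytic ring modules, an identification of the comparison map with the image under $\alpha_{\mathcal{C}}^\ast$ of the steadiness isomorphism $P \tensor_{\mathcal{A}} \mathcal{C} \isoto P \tensor_{\mathcal{B}} \mathcal{D}$. The latter is an isomorphism by \cref{sec:complete-descent-2-steady-maps-of-adic-analytic-rings-are-exactly-the-adic-ones}, which concludes the proof.

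The main subtlety I expect to navigate is matching up restriction of scalars with $\alpha^\ast$. Concretely, one needs: for an adic map $\mathcal{A} \to \mathcal{B}$ and any $P \in \D(\mathcal{B})$, the restriction of $\alpha_{\mathcal{B}}^\ast P$ along $\hat{\mathcal{A}} \to \hat{\mathcal{B}}$ agrees (functorially in $P$) with $\alpha_{\mathcal{A}}^\ast$ applied to the restriction of $P$ along $\mathcal{A} \to \mathcal{B}$. This is a Beck--Chevalley identity obtained by passing to right adjoints in the commuting square of \cref{rslt:functoriality-of-modified-modules} for base change, combined with \cref{rslt:adic-base-change-preserves-adic-completeness} (so that $\alpha_{\mathcal{B},\ast}$ intertwines appropriately with the restriction $\D(\mathcal{B}) \to \D(\mathcal{A})$ along the adic map). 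Once this compatibility is in place the above chain of identifications is formal.
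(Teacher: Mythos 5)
Your first reduction (both functors preserve colimits in $M$, so it suffices to treat $M=\alpha^\ast_{\mathcal B}P$ with $P\in\D(\mathcal B)$ compact) coincides with the paper's first step, and the construction of the comparison map is unproblematic. The genuine gap is the compatibility you flag at the end and then treat as formal: it is \emph{not} true that restriction along $\hat{\mathcal A}\to\hat{\mathcal B}$ of $\alpha^\ast_{\mathcal B}P$ agrees with $\alpha^\ast_{\mathcal A}$ applied to the restriction of $P$. Passing to right adjoints in the square of \cref{rslt:functoriality-of-modified-modules} only yields the (automatic) identity $\alpha_{\mathcal A,\ast}\circ\mathrm{forget}\cong\mathrm{forget}\circ\alpha_{\mathcal B,\ast}$; the identity you need is a different Beck--Chevalley condition, and it fails. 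Concretely, take $\mathcal A$ with $A^+$ finitely generated over $\Z$ (e.g.\ $\mathcal A=(\Z_p,\Z)_\solid$), so that $\D(\hat{\mathcal A})=\D(\mathcal A)$ and $\alpha^\ast_{\mathcal A}=\mathrm{id}$ by \cref{sec:adic-rings-3-modified-version-agrees-with-old-one-for-finite-type-stuff}, and let $\mathcal B$ be the solid analytic ring of a big perfectoid ring of integers. For compact $Q\in\D(\mathcal A)$ one has $Q\otimes_{\hat{\mathcal A}}\hat{\mathcal B}=\alpha^\ast_{\mathcal B}(Q\otimes_{\mathcal A}\mathcal B)$ by \cref{rslt:functoriality-of-modified-modules}, and hence, using that the completion functor is fully faithful on compacts (the mechanism behind \cref{rslt:alpha-push-pull-of-compact-module-over-adic-ring}), $\Hom(Q,\mathrm{forget}(\alpha^\ast_{\mathcal B}P))\cong\Hom_{\D(\mathcal B)}(Q\otimes_{\mathcal A}\mathcal B,P_\cpl)\cong\Hom(Q,\mathrm{forget}(P_\cpl))$. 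So the restriction of $\alpha^\ast_{\mathcal B}P$ ``sees'' $P_\cpl$ rather than $P$, and for $P=\mathcal B[S]$ (which is not $p$-complete) this differs from $\alpha^\ast_{\mathcal A}(\mathrm{forget}\,P)=\mathrm{forget}\,P$. Consequently your identification of the comparison map with $\alpha^\ast_{\mathcal C}$ of the uncompleted steadiness isomorphism $P\otimes_{\mathcal A}\mathcal C\isoto P\otimes_{\mathcal B}\mathcal D$ does not go through; the argument collapses exactly where the real content lies, namely in showing that steadiness survives the completion of the generators.

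For comparison, the paper avoids commuting $\alpha^\ast$ past restriction altogether: for compact $M$ it observes that $M$ is right-bounded and adically complete, so by \cref{rslt:adic-base-change-preserves-adic-completeness} (this is where adicness of the maps enters) both $M\otimes_{\hat{\mathcal A}}\hat{\mathcal C}$ and $M\otimes_{\hat{\mathcal B}}(\hat{\mathcal B}\otimes_{\hat{\mathcal A}}\hat{\mathcal C})$ are adically complete; the comparison map can then be checked modulo an ideal of definition, where \cref{rslt:A-mod-I-modified-modules-equals-usual-modules} identifies the hat-categories with ordinary module categories over the discrete quotients and the statement reduces to steadiness of maps of discrete Huber pairs. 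Any repair of your approach would have to prove a statement of essentially this kind, so I would recommend replacing the claimed commutation of restriction with $\alpha^\ast$ by the completeness-plus-reduction-mod-$I$ argument.
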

\begin{proof}
  The existence of the natural morphism is formally implied by adjunctions as
  \[
    (-\tensor_{\hat{\mathcal{A}}} \hat{\mathcal{B}})\tensor_{\hat{\mathcal{B}}} (\hat{\mathcal{B}}\tensor_{\hat{\mathcal{A}}}\hat{\mathcal{C}})\cong (-\tensor_{\hat{\mathcal{A}}} \hat{\mathcal{C}})\tensor_{\hat{\mathcal{C}}} (\hat{\mathcal{B}}\tensor_{\hat{\mathcal{A}}}\hat{\mathcal{C}})
   \] by \cref{rslt:functoriality-of-modified-modules}.
Both sides of the claimed isomorphism commute with all colimits, so we can w.l.o.g. assume that $M$ is compact. In particular $M$ is right-bounded and adically complete, hence by \cref{rslt:adic-base-change-preserves-adic-completeness} both sides of the claimed isomorphism are adically complete. But then we can check the claimed isomorphism modulo any ideal of definition, so by \cref{rslt:A-mod-I-modified-modules-equals-usual-modules} we reduce to the well-known claim that maps of discrete Huber pairs are steady (see \cite[Proposition 2.9.7.(ii)]{mann-mod-p-6-functors}).
\end{proof}

We are finally in the position to discuss descent in the setting of $\hat{\mathcal A}$-modules. As promised we will see that descendability can be checked modulo all powers of an ideal of definition.

\begin{definition}
A map $\mathcal A \to \mathcal B$ of adic analytic rings is called \emph{adically descendable of index $\le d$} if it is adic and for some ideal of definition $I$ and all $n \ge 1$ the map $\mathcal A/I^n \to \mathcal B/I^n$ is descendable of index $\le d$ in the sense of \cite[Definition 2.6.7]{mann-mod-p-6-functors}.
\end{definition}

\begin{remark}
If $\mathcal A \to \mathcal B$ is a descendable morphism of adic analytic rings then it is in particular adically descendable because descendability is stable under the base-change along $\mathcal A \to \mathcal A/I^n$ (see \cite[Lemma 2.6.9]{mann-mod-p-6-functors}). The converse seems to be wrong unless the compact objects in $\D(\mathcal A)$ are adically complete. This is fixed by $\D(\hat{\mathcal A})$, as the following result shows.
\end{remark}

\begin{example}
  \label{sec:complete-descent-1-example-for-adically-descendable-maps}
  A typical example for an adically descendable map $\mathcal{A} \to \mathcal B$ of index $\leq 2$ is an adic $I$-completely faithfully flat and $I$-completely finitely presented map, i.e., $I$ is an ideal of definition of $\mathcal A$ and $\underline{\mathcal{A}}/I^n\to \underline{\mathcal{B}}/I^n$ is faithfully flat and finitely presented (on $\pi_0$), cf.\ \cite[Lemma 2.10.6]{mann-mod-p-6-functors} for the static case, and \cite[Theorem 4.15]{mikami2023fppfdescent} for the general case. 
\end{example}

\begin{theorem} \label{rslt:descendability-of-adic-rings}
Let $\mathcal A \to \mathcal B$ be an adically descendable morphism of adic analytic rings. Then modules descend along $\hat{\mathcal A} \to \hat{\mathcal B}$, i.e. the functor
\begin{align*}
	\D(\hat{\mathcal A}) \isoto \varprojlim_{n\in\Delta} \D(\hat{\mathcal B} \tensor_{\hat{\mathcal A}} \dots \tensor_{\hat{\mathcal A}} \hat{\mathcal B})
\end{align*}
is an equivalence of categories.
\end{theorem}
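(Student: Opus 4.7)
The plan is to deduce the equivalence from the fact that $\hat{\mathcal{A}} \to \hat{\mathcal{B}}$ is descendable of some finite index as an algebra in the symmetric monoidal category $\D(\hat{\mathcal{A}})$. Once this is established, the equivalence in the theorem follows formally from the descent theorem for descendable algebras (as in \cite[Proposition 2.6.8]{mann-mod-p-6-functors}); the required base-change/Beck--Chevalley condition for the cosimplicial diagram is supplied by \cref{rslt:steadiness-for-modified-modules}, so the content is concentrated in the descendability assertion.

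To establish descendability, I would work with the compact generators. Let $P \in \D(\hat{\mathcal{A}})$ be a compact object; by \cref{rslt:alpha-push-pull-of-compact-module-over-adic-ring} it is adically complete, and by \cref{rslt:adic-base-change-preserves-adic-completeness} so is each term in its Amitsur cosimplicial object $P \tensor_{\hat{\mathcal{A}}} \hat{\mathcal B} \tensor_{\hat{\mathcal A}} \cdots \tensor_{\hat{\mathcal A}} \hat{\mathcal B}$. It thus suffices to verify the Amitsur isomorphism, as well as the desired uniform bound on the Adams tower, after reduction modulo each $I^n$. But modulo $I^n$, using (a suitable iteration of) \cref{rslt:A-mod-I-modified-modules-equals-usual-modules} together with \cref{rslt:steadiness-for-modified-modules}, this reduces exactly to the Amitsur complex for the map $\mathcal{A}/I^n \to \mathcal{B}/I^n$, which by hypothesis is descendable of uniform index $\le d$.

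The punch line is that the cofiber sequences witnessing the $d$-nilpotence of $\cofib(\hat{\mathcal{A}} \to \hat{\mathcal{B}})$ are maps between adically complete objects, and they become null modulo every $I^n$; hence they are already null in $\D(\hat{\mathcal{A}})$, which gives descendability of $\hat{\mathcal{A}} \to \hat{\mathcal{B}}$ of index $\le d$. Uniformly bounded descendability in turn means the cosimplicial totalization is effectively a finite limit (of length $\le d$) and therefore commutes with filtered colimits, so the equivalence passes from compact objects to all of $\D(\hat{\mathcal{A}})$.

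The main obstacle I anticipate is the careful transfer of the descendability witnesses (the compatible system of null-homotopies of the $d$-th iterate of the Adams tower modulo $I^n$) into a single datum living in $\D(\hat{\mathcal{A}})$, rather than merely in $\varprojlim_n \D(\mathcal{A}/I^n)$. The crucial inputs for this transfer are the completeness of the compact generators of $\D(\hat{\mathcal{A}})$ and the stability of adic completeness under base change (\cref{rslt:adic-base-change-preserves-adic-completeness}) --- this latter stability is precisely the feature that $\D(\mathcal A)$ lacks (see \cref{sec:adic-rings-1-example-boundedness-for-completeness-is-needed}) and that motivated the introduction of $\D(\hat{\mathcal A})$ in the first place.
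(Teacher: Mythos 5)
Your overall shape is right (reduce nilpotence of the descent fiber modulo the powers $I^n$, using that the compact generators of $\D(\hat{\mathcal A})$ are complete and that completeness is preserved under adic base change), but the punch line contains a genuine gap. You claim that the relevant structure maps of the Adams/Amitsur tower ``become null modulo every $I^n$; hence they are already null in $\D(\hat{\mathcal A})$''. This implication is false as stated: for a map $g\colon X\to Y$ with $Y$ adically complete one has $\Hom(X,Y)=\varprojlim_n \Hom(X,Y/I^n)$ as spectra, and the Milnor sequence contributes a $\varprojlim^1$-term in $\pi_0$; vanishing of all reductions $g_n$ only places $g$ in this $\varprojlim^1$-obstruction, it does not kill $g$. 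This is exactly the point where the paper does extra work: it forms $\mathcal K=\fib(\id\to f_\natural\id)$ in the endofunctor category $\End^L(\D(\hat{\mathcal A}))$, shows (using the pointwise criterion for completeness of endofunctors and \cref{rslt:alpha-push-pull-of-compact-module-over-adic-ring}) that $\id$ is adically complete, deduces that $\mathcal K^{d'}\to\id$ dies after applying each $g_n^\natural$, and then invokes the argument of \cite[Proposition 2.7.2]{mann-mod-p-6-functors} to conclude that the \emph{composite} $\mathcal K^{2d'}\to\id$ is zero --- i.e.\ one only gets descendability of index $\le 2d$, not $\le d$, and the doubling is precisely the device that absorbs the $\varprojlim^1$-obstruction you left unresolved (your closing paragraph flags the ``transfer of witnesses'' as a worry, but the sentence before it asserts the transfer is automatic, which it is not).

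A second, more structural imprecision: you want to say that $\hat{\mathcal B}$ is ``descendable as an algebra in the symmetric monoidal category $\D(\hat{\mathcal A})$'', but $\D(\hat{\mathcal B})$ is in general \emph{not} the category of modules over an algebra object of $\D(\hat{\mathcal A})$ --- the analytic ring structure may change ($B^+\neq A^+$) and the forgetful functor need not satisfy the projection formula (compare \cref{sec:adic-rings-1-remark-on-generalized-analytic-rings}). For this reason descendability here must be formulated, as in \cite[\S 2.5--2.6]{mann-mod-p-6-functors} and in the paper's proof, in the monoidal category $\End^L(\D(\hat{\mathcal A}))$ of enriched colimit-preserving endofunctors via the adjunction $f^\natural\dashv f_\natural$; your reduction mod $I^n$ then uses that $g_n^\natural$ is monoidal together with \cref{rslt:A-mod-I-modified-modules-equals-usual-modules}, and the final passage from nilpotence to the equivalence of the theorem uses \cref{rslt:pushout-of-adic-analytic-rings} and \cref{rslt:steadiness-for-modified-modules} exactly as you indicate. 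So the formal outer layer of your argument matches the paper, but the core nilpotence step needs the endofunctor formalism and the index-doubling argument to be correct.
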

\begin{proof}
Let $\End^L(\D(\hat{\mathcal A}))$ denote the category of $\D(\hat{\mathcal A})$-enriched colimit-preserving functors $\D(\hat{\mathcal A}) \to \D(\hat{\mathcal A})$ (cf. \cite[\S2.5, \S A.4]{mann-mod-p-6-functors}). It comes equipped with the composition monoidal structure and there is an embedding $\eta\colon \D(\hat{\mathcal A}) \injto \End^L(\D(\hat{\mathcal A}))$ via $M \mapsto - \tensor_{\hat{\mathcal A}} M$. As in \cite[Lemma 2.5.6]{mann-mod-p-6-functors} the map $f\colon \mathcal A \to \mathcal B$ induces a pair of adjoint functors
\begin{align*}
	f^\natural\colon \End^L(\D(\hat{\mathcal A})) \rightleftarrows \End^L(\D(\hat{\mathcal B})) \noloc f_\natural,
\end{align*}
where $f_\natural$ acts on underlying functors as $F \mapsto f_* \comp F \comp f^*$; here $f_*$ denotes the forgetful functor and $f^* := - \tensor_{\hat{\mathcal A}} \hat{\mathcal B}$ is the base-change. Note that $f^\natural$ exists by the adjoint functor theorem in our case, as we chose a cutoff cardinal $\kappa$ in the beginning so that $\End^L(\D(\hat{\mathcal A}))$ is presentable. Also, via compatibility with $\eta$ one sees that $f^\natural \id = \id$.

Note that for every $x \in \pi_0 A$ and every $F \in \End^L(\D(\hat{\mathcal A}))$, the pointwise multiplication by $x$ induces an endomorphism of $F$; indeed, via the embedding $\eta$ we can define this endomorphism on $\id$ and then also on $F = \id \comp F$. It thus makes sense to speak of adically complete endofunctors in $\End^L(\D(\hat{\mathcal A}))$. We now observe:
\begin{itemize}
	\item[(a)] An endofunctor $F \in \End^L(\D(\hat{\mathcal A}))$ is adically complete if and only if for every compact $P \in \D(\hat{\mathcal A})$, $F(P) \in \D(\hat{\mathcal A})$ is adically complete.
\end{itemize}
Indeed, adic completeness of enriched endofunctors can be checked on the underlying ordinary functors (see \cite[Lemma A.4.5.(ii)]{mann-mod-p-6-functors}), i.e., in the category $\Fun^L(\D(\hat{\mathcal A}), \D(\hat{\mathcal A}))$ of colimit-preserving functors $\D(\hat{\mathcal A}) \to \D(\hat{\mathcal A})$. If $\mathcal C_{\mathcal A} \subset \D(\hat{\mathcal A})$ denotes the full subcategory of compact objects, then by the universal property of $\Ind$-completions we have
\begin{align*}
	\Fun^L(\D(\hat{\mathcal A}), \D(\hat{\mathcal A})) = \Fun^\ex(\mathcal C_{\mathcal A}, \D(\hat{\mathcal A})),
\end{align*}
where the right-hand side denotes exact functors $\mathcal C_{\mathcal A} \to \D(\hat{\mathcal A})$. Since limits of exact functors between stable categories are exact, limits on the right-hand side are computed pointwise. But this immediately implies (a). From (a) we immediately deduce that $\id \in \End^L(\D(\hat{\mathcal A}))$ is adically complete; here it is crucial to work with $\D(\hat{\mathcal A})$ instead of $\D(\mathcal A)$!

Now let
\begin{align*}
	\mathcal K := \fib(\id \to f_\natural \id) \in \End^L(\D(\hat{\mathcal A})),
\end{align*}
where the map $\id \to f_\natural \id = f_\natural f^\natural \id$ comes from the unit of the adjunction. We now claim:
\begin{itemize}
	\item[(b)] For some $d \ge 0$ the natural map $\mathcal K^d \to \id$ is zero.
\end{itemize}
To prove this, let $d' \ge 0$ be such that each map $f_n\colon \mathcal A/I^n \to \mathcal B/I^n$ is descendable of index $\le d'$. We claim that $d = 2d'$ works. To see this, denote $g_n\colon \mathcal A \to \mathcal A/I^n$ the projection, so that we get adjoint functors
\begin{align*}
	g_n^\natural\colon \End^L(\D(\hat{\mathcal A})) \rightleftarrows \End^L(\D(\mathcal A/I^n)) \noloc g_{n\natural}.
\end{align*}
(Here we implicitly use \cref{rslt:A-mod-I-modified-modules-equals-usual-modules}). The adic completeness of $\id$ implies that the natural map $\id \isoto \varprojlim_n g_{n\natural} \id$ is an isomorphism. We deduce the following identity of spectra:
\begin{align*}
	\Hom(\mathcal K^{d'}, \id) = \varprojlim_n \Hom(\mathcal K^{d'}, g_{n\natural} \id) = \varprojlim_n \Hom(g_n^\natural \mathcal K^{d'}, \id)
\end{align*}
The induced map $\Hom(\mathcal K^{d'}, \id) \to \Hom(g_n^\natural \mathcal K^{d'}, g_n^\natural \id)$ is the one coming from the functoriality of $g_n^\natural$. By \cite[Proposition A.4.17]{mann-mod-p-6-functors} the functor $g_n^\natural$ is naturally a monoidal functor, so that if we denote $\mathcal K_n := \fib(\id \to f_{n\natural} \id) \in \End^L(\D(\mathcal A/I^n)$ then $g_n^\natural \mathcal K^{d'} = \mathcal K_n^{d'}$. We deduce that the natural map $\mathcal K^{d'} \to \id$ gets sent to the natural map $\mathcal K_n^{d'} \to \id$ under $g_n^\natural$. The latter map is zero by choice of $d'$. Now (b) follows completely analogous to the argument in \cite[Proposition 2.7.2]{mann-mod-p-6-functors}.

With (b) at hand, the claimed descent of modules is now formal: By the same procedure as in \cite[Proposition 2.6.5]{mann-mod-p-6-functors} we deduce from (b) that $\id$ can be obtained from $f_\natural \id$ in a finite number of steps, each of which consists of a composition, a finite limit or a retract. Using \cref{rslt:pushout-of-adic-analytic-rings,rslt:steadiness-for-modified-modules}, we can thus apply the argument from \cite[Proposition 2.6.3]{mann-mod-p-6-functors} verbatim.
\end{proof}

We also need a way to pass to filtered colimits, similar to \cite[\S2.7]{mann-mod-p-6-functors}. This is fairly straightforward since adic completions are countable limits and therefore commute with $\omega_1$-filtered colimits (in the following result the superscript $\omega_1$ refers to $\omega_1$-compact objects):

\begin{lemma} \label{rslt:adicness-preserved-under-w1-filtered-colimits}
Let $(\mathcal A_i)_{j\in J}$ be an $\omega_1$-filtered diagram of adic analytic rings and adic transition maps. Then $\mathcal A := \varinjlim_j \mathcal A_j$ is an adic analytic ring and the natural functor
\begin{align*}
	\varinjlim_j \D(\hat{\mathcal A_j})^{\omega_1} \isoto \D(\hat{\mathcal A})^{\omega_1}
\end{align*}
is an equivalence of categories.
\end{lemma}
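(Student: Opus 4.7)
The plan has two parts: first, verify that $\mathcal{A}$ is itself an adic analytic ring; second, promote $\omega_1$-compact objects of $\D(\hat{\mathcal{A}})$ to visibility at a single finite stage of $J$.

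For the first part, choose $j_0 \in J$ and an ideal of definition $I_0 \subseteq \pi_0(\underline{\mathcal{A}_{j_0}})(\ast)$. Since the transition maps are adic, $I_0$ generates an ideal of definition $I_j$ in each $\underline{\mathcal{A}_j}$ for $j \ge j_0$, and hence a finitely generated ideal $I$ in $\pi_0(\underline{\mathcal{A}})(\ast)$ with $\underline{\mathcal{A}}/I = \varinjlim_{j \ge j_0} \underline{\mathcal{A}_j}/I_j$ discrete as a filtered colimit of discrete rings. Adic completeness then follows from the interchange
\[
  \varprojlim_n \underline{\mathcal{A}}/I^n \cong \varprojlim_n \varinjlim_{j \ge j_0} \underline{\mathcal{A}_j}/I_j^n \cong \varinjlim_{j \ge j_0} \varprojlim_n \underline{\mathcal{A}_j}/I_j^n \cong \varinjlim_{j \ge j_0} \underline{\mathcal{A}_j} = \underline{\mathcal{A}},
\]
valid because $J$ is $\omega_1$-filtered and each $\varprojlim_n$ is countable. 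Upgrading to the analytic ring structure on $\mathcal{A}$ is then routine since completeness conditions are preserved under filtered colimits.

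For essential surjectivity of the natural functor $\Phi\colon \varinjlim_j \D(\hat{\mathcal{A}_j})^{\omega_1} \to \D(\hat{\mathcal{A}})^{\omega_1}$, recall from \cref{def:modified-modules-over-adic-ring} that $\D(\hat{\mathcal{A}})^\omega$ is generated under finite colimits and retracts by objects $\alpha^\ast P$ with $P \in \D(\mathcal{A})^\omega$. The identification $\D(\mathcal{A})^\omega = \varinjlim_j \D(\mathcal{A}_j)^\omega$ in $\mathcal{C}at_\infty$ (standard for compactly generated module categories of analytic rings under filtered colimits) shows that each such $P$ is the base change of some compact $P_j \in \D(\mathcal{A}_j)^\omega$, and \cref{rslt:functoriality-of-modified-modules} then yields $\alpha^\ast P \cong \alpha^\ast_{\mathcal{A}_j}(P_j) \otimes_{\hat{\mathcal{A}_j}} \hat{\mathcal{A}}$. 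Thus every compact object of $\D(\hat{\mathcal{A}})$ lies in the essential image of $\Phi$. Since $\D(\hat{\mathcal{A}})^{\omega_1}$ is the closure of $\D(\hat{\mathcal{A}})^\omega$ under countable colimits and retracts, and any countable diagram of compact objects descends to a single $\mathcal{A}_j$ by $\omega_1$-filteredness of $J$, essential surjectivity follows.

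For fully faithfulness, given $M, N \in \D(\hat{\mathcal{A}_{j_0}})^{\omega_1}$ one must check
\[
  \varinjlim_{j \ge j_0} \Hom_{\hat{\mathcal{A}_j}}(M_j, N_j) \isoto \Hom_{\hat{\mathcal{A}}}(M_\infty, N_\infty),
\]
with $(-)_j$ and $(-)_\infty$ denoting the respective base changes. Both sides send countable colimits in $M$ to limits, so one reduces to $M$ compact; then each $M_j$ remains compact, \cref{rslt:steadiness-for-modified-modules} identifies the right-hand side with a mapping spectrum computed already in $\D(\hat{\mathcal{A}_{j_0}})$ after applying the right adjoint to base change, and the $\omega_1$-filtered/countable-limit interchange applied to the completion implicit in $N_\infty$ closes the argument. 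The main obstacle throughout is precisely this bookkeeping: completions are countable inverse limits that must be commuted past the $\omega_1$-filtered colimit over $J$, which is exactly what forces the transition from compact to $\omega_1$-compact objects. Once this exchange is applied consistently, the remainder reduces to formal properties of compactly generated stable $\infty$-categories together with the functoriality and base-change statements already established.
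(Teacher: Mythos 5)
Your argument follows the same route as the paper: verify adicness of $\mathcal A$ via the interchange of countable limits with $\omega_1$-filtered colimits, establish continuity of the completed compact generators along the colimit --- the paper's key isomorphism $\varinjlim_j(P\tensor_{\hat{\mathcal A_0}}\hat{\mathcal A_j})\isoto P\tensor_{\hat{\mathcal A_0}}\hat{\mathcal A}$, reduced to $\varinjlim_j\mathcal A_j[S]_{\hat I}\cong\mathcal A[S]_{\hat I}$ using exactly the uncompleted statement you call ``standard'' (the paper cites it from Mann's thesis) plus the fact that $I$-completion commutes with $\omega_1$-filtered colimits --- and then run the formal compact-to-$\omega_1$-compact bootstrap, which the paper simply defers to \cite[Lemma 2.7.4]{mann-mod-p-6-functors}. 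The only blemish is the appeal to \cref{rslt:steadiness-for-modified-modules} in the full-faithfulness step, where what is actually needed is just the base-change/forgetful adjunction together with that continuity isomorphism.
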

\begin{proof}
We can assume that $J$ has some initial element $0 \in J$. Let $I$ be some ideal of definition of $\underline{\mathcal A}_0$. Then each $\underline{\mathcal A}_j$ is $I$-adically complete and since countable limits commute with $\omega_1$-filtered colimits in $\D(\mathrm{Cond}(\mathrm{Ab}))$, we deduce that $\underline{\mathcal A} = \varinjlim_j \underline{\mathcal A}_j$ is $I$-adically complete. Clearly, $\underline{\mathcal{A}}$ is discrete mod $I$ as all transition maps are adic. This shows that $\mathcal A$ is indeed an adic analytic ring. By a similar argument we see that for any $\omega_1$-compact $P \in \D(\hat{\mathcal A_0})^{\omega_1}$ the natural map
\begin{align*}
	\varinjlim_j (P \tensor_{\hat{\mathcal A_0}} \hat{\mathcal A_j}) \isoto P \tensor_{\hat{\mathcal A_0}} \hat{\mathcal A}
\end{align*}
is an isomorphism in $\D(\hat{\mathcal A_0})$. Indeed, this claim is stable under colimits in $P$, so by \cite[Lemma A.2.1]{mann-mod-p-6-functors} we can w.l.o.g. assume that $P = \mathcal A_0[S]_{\hat I}$ for some profinite set $S$. Then the claim boils down to showing that the map $\varinjlim_j \mathcal A_j[S]_{\hat I} \isoto \mathcal A[S]_{\hat I}$ is an isomorphism. This is true without $I$-completions by \cite[Proposition 2.3.15.(ii)]{mann-mod-p-6-functors} and then follows for the $I$-completed version because $I$-completions commute with $\omega_1$-filtered colimits. With the above isomorphism at hand, the rest of the argument works exactly as in \cite[Lemma 2.7.4]{mann-mod-p-6-functors}.
\end{proof}

\begin{definition}
  \label{sec:complete-descent-1-definition-weakly-adically-descendable}
A map $f\colon \mathcal A \to \mathcal B$ of adic analytic rings is called \emph{weakly adically descendable} if it is an iterated $\omega_1$-filtered colimit of adically descendable maps of adic analytic rings along adic maps.\footnote{I.e., $f$ lies in the smallest subcategory of adic morphisms of adic analytic rings, which contains the adically descendable maps and is stable under $\omega_1$-filtered colimits.} If all the maps in this $\omega_1$-filtered colimit are descendable of index $\le d$, for some fixed $d \ge 0$, then we say that $f$ has \emph{index $\le d$}.
\end{definition}

\begin{proposition}
\begin{propenum}
	\item If $\mathcal A \to \mathcal B$ is weakly adically descendable then it is adic and modules descend along $\hat{\mathcal A} \to \hat{\mathcal B}$.

	\item Every adically descendable morphism of adic analytic rings is weakly adically descendable, and weakly adically descendable morphisms are stable under adic base-change and $\omega_1$-filtered colimits.

	\item \label{rslt:filtered-colim-of-adic-desc-maps-is-weakly-desc} A completed filtered colimit of adically descendable morphisms of index $\le d$ along adic maps of adic analytic rings is weakly adically descendable of index $\le 2d$.
\end{propenum}
\end{proposition}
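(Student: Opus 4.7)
I would argue by induction on the construction witnessing that $f\colon \mathcal{A}\to \mathcal{B}$ is weakly adically descendable. The base case (when $f$ is itself adically descendable) is \cref{rslt:descendability-of-adic-rings}. For the inductive step, write $f$ as an $\omega_1$-filtered colimit $\mathcal{B}=\varinjlim_j \mathcal{B}_j$ along adic maps of already-handled morphisms $\mathcal{A}\to \mathcal{B}_j$. Adicness of $f$ follows from \cref{rslt:adicness-preserved-under-w1-filtered-colimits}: the image of an ideal of definition $I\subseteq \pi_0\underline{\mathcal{A}}$ generates one in each $\underline{\mathcal{B}}_j$, hence in their $\omega_1$-filtered colimit $\underline{\mathcal{B}}$. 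For descent, form the Čech cosimplicial object with $\hat{\mathcal{B}}^{\hat\otimes_{\hat{\mathcal{A}}} n}$; by \cref{rslt:pushout-of-adic-analytic-rings} these tensor powers are the $\omega_1$-filtered colimits $\varinjlim_j \hat{\mathcal{B}}_j^{\hat\otimes_{\hat{\mathcal{A}}} n}$ in adic analytic rings. Applying the identification $\D(\hat{\mathcal{C}})^{\omega_1}=\varinjlim_j \D(\hat{\mathcal{C}}_j)^{\omega_1}$ from \cref{rslt:adicness-preserved-under-w1-filtered-colimits} levelwise and using that countable limits commute with $\omega_1$-filtered colimits, the totalization for $\mathcal{B}$ is the $\omega_1$-filtered colimit of the totalizations for the $\mathcal{B}_j$; by the inductive hypothesis each of the latter equals $\D(\hat{\mathcal{A}})^{\omega_1}$, and passing to $\Ind$ gives the desired descent.

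\textbf{Plan for (ii).} Every adically descendable map is trivially weakly adically descendable (take the constant $\omega_1$-filtered diagram). Stability under $\omega_1$-filtered colimits along adic maps is literally the closure property in \cref{sec:complete-descent-1-definition-weakly-adically-descendable}. For stability under adic base-change, induct on the colimit construction: \cref{rslt:pushout-of-adic-analytic-rings} ensures that a pushout of adic analytic rings along an adic map is again adic, and commutes with $\omega_1$-filtered colimits on the fiber, so we reduce to the case where $f$ is itself adically descendable. There \cite[Lemma 2.6.9]{mann-mod-p-6-functors} applied modulo each $I^n$ shows that descendability of index $\le d$ is preserved under base-change.

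\textbf{Plan for (iii).} The main obstacle is to bound the index, and here the plan is to reduce from a general filtered colimit to a sequential one and then invoke a standard telescope argument. Any filtered diagram is the $\omega_1$-filtered colimit of its countable filtered subdiagrams, each of which is cofinal with a sequence; by (ii) it thus suffices to show that a sequential colimit $\mathcal{A}_0\to \mathcal{A}_1\to\cdots\to \mathcal{A}_\infty$ of adically descendable maps of index $\le d$ is adically descendable of index $\le 2d$. By definition of adic descendability this reduces modulo each power $I^n$ of an ideal of definition of $\mathcal{A}_0$ to the analogous statement for ordinary descendable maps, which is the solid analogue of \cite[\S2.7]{mann-mod-p-6-functors}: one factor of $d$ comes from the descendability of the first transition $\mathcal{A}_0\to \mathcal{A}_1$, and the second factor arises from comparing the mapping telescope $\mathcal{A}_\infty$ to $\mathcal{A}_1$ via a shift, which is itself controlled by descendability of the subsequent transitions. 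This gives $\mathcal{A}_0\to \mathcal{A}_\infty$ adically descendable of index $\le 2d$, and re-assembling the $\omega_1$-filtered colimit yields weak adic descendability of the same index.
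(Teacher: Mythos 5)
Your plans for (i) and (ii) follow the same route as the paper, which itself only points to the argument of \cite[Proposition 2.7.5]{mann-mod-p-6-functors} together with \cref{rslt:descendability-of-adic-rings}, \cref{rslt:adicness-preserved-under-w1-filtered-colimits}, \cref{rslt:pushout-of-adic-analytic-rings} and \cite[Lemma 2.6.9]{mann-mod-p-6-functors}. Two small caveats on (i): in the inductive step you silently assume that all maps in the $\omega_1$-filtered colimit have the common source $\mathcal A$, whereas in \cref{sec:complete-descent-1-definition-weakly-adically-descendable} the sources vary; the paper explicitly flags that one first base-changes each $\mathcal A_j\to\mathcal B_j$ along $\mathcal A_j\to\mathcal A$ using \cref{rslt:pushout-of-adic-analytic-rings}, and for this one needs that descent is preserved under adic base change (\cref{sec:universal-descent-1-descent-implies-universal-descent-in-adic-case}, \cref{rslt:steadiness-for-modified-modules}). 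Also, ``pass to $\Ind$'' is not automatic: $\Ind_{\omega_1}$ does not commute with the cosimplicial limit on the nose, so one has to argue as in \cite[Proposition 2.7.5]{mann-mod-p-6-functors} at the level of mapping spectra ($\omega_1$-filtered colimits against the countable limit over $\Delta$) plus a generation statement, rather than literally exchanging $\Ind$ with $\varprojlim_\Delta$.

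For (iii) there is a genuine gap: you have misread the statement. The hypothesis is a filtered diagram \emph{in the arrow category}, i.e.\ a system of adically descendable morphisms $f_j\colon\mathcal A_j\to\mathcal B_j$ of index $\le d$ connected by adic transition maps, and the claim concerns the colimit morphism $\widehat{\varinjlim}\,\mathcal A_j\to\widehat{\varinjlim}\,\mathcal B_j$. Your reduction replaces this by a tower $\mathcal A_0\to\mathcal A_1\to\cdots$ whose \emph{transition} maps are descendable of index $\le d$, and asserts that $\mathcal A_0\to\mathcal A_\infty$ is then descendable of index $\le 2d$ by a telescope/shift argument. That is a different (and unavailable) statement: in the actual setup the transition maps $\mathcal A_n\to\mathcal A_{n+1}$ are only adic, and even if they were descendable, the index of a composite is controlled only multiplicatively, so an infinite tower with transitions of index $\le d$ gives no bounded index for $\mathcal A_0\to\mathcal A_\infty$; your ``one factor from the first transition, one from the telescope shift'' has no justification. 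The correct argument (the one behind \cite[Theorem 2.7.8.(iii)]{mann-mod-p-6-functors}, which the paper invokes) is: base-change each $f_j$ along $\mathcal A_j\to\mathcal A$ using \cref{rslt:pushout-of-adic-analytic-rings} and \cite[Lemma 2.6.9]{mann-mod-p-6-functors} (which preserves the index $\le d$ modulo each $I^n$), so that $\mathcal B$ becomes a (completed) filtered colimit of $\mathcal A$-algebras each adically descendable of index $\le d$; then modulo each $I^n$ apply the filtered-colimit-over-a-fixed-base statement \cite[Proposition 2.7.2]{mann-mod-p-6-functors}, which is where the factor $2$ genuinely comes from (incompatibility of the nullhomotopies along the colimit, repaired by doubling the tensor power), and finally assemble the $\omega_1$-filtered pieces using (ii).
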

\begin{proof}
Part (i) follows from \cref{rslt:descendability-of-adic-rings,rslt:adicness-preserved-under-w1-filtered-colimits} by the same argument as in \cite[Proposition 2.7.5]{mann-mod-p-6-functors} (use also \cref{rslt:pushout-of-adic-analytic-rings} in order to reduce to the case that all maps in the $\omega_1$-filtered colimit have the same source). In part (ii) the only non-trivial claim is the one about base-change, and this follows from \cref{rslt:pushout-of-adic-analytic-rings} and \cite[Lemma 2.6.9]{mann-mod-p-6-functors}. Part (iii) follows by the same argument as in \cite[Theorem 2.7.8.(iii)]{mann-mod-p-6-functors}.
\end{proof}

\subsection{Globalization for stably uniform adic spaces}
\label{sec:glob-stably-unif}

We finish our general discussion of the modified category $\D_{\hat\solid}(\mathcal{A})$ for an adic analytic ring $\mathcal{A}$ with a globalization to (classical) stably uniform analytic adic spaces. Let $(A,A^+)$ be a classical complete analytic Huber pair, and $X:=\Spa(A,A^+)$. Recall that $(A,A^+)$ is called uniform if $A^\circ$ is a ring of definition, and that $(A,A^+)$ is called stably uniform if for each rational open $U\subseteq X$ the analytic Huber pair $(\mathcal{O}_X(U),\mathcal{O}_X^+(U))$ is uniform, in which case $(A,A^+)$ is automatically sheafy (\cite[Definition 5.2.4., Theorem 5.2.5.]{scholze-berkeley-lectures}). In \cite[Theorem 4.1]{andreychev-condensed-huber-pairs} Andreychev has proven that if $(A,A^+)$ is sheafy (not necessarily uniform), then the functor $U\mapsto \D_\solid(\mathcal{O}_X(U),\mathcal{O}_X^+(U))$ on rational opens $U\subseteq X$ satisfies descent for the analytic topology on $X$.

\begin{definition}
  \label{sec:glob-stably-unif-1-definition-of-d-hat-solid-a-a+}
  \begin{defenum}
\item  For any classical complete analytic uniform Huber pair $(B,B^+)$ set
  \[
    \D_{\hat\solid}(B,B^+):=\mathrm{Mod}_{B}(\D_{\hat\solid}(B^\circ, B^+)).
  \]
  Here, we implicitly view $B,B^\circ$ as condensed static rings (although $B^+$ is only treated as a discrete ring). Note that we need uniformity to achieve that $B^\circ$ is an adic ring in the sense of \cref{sec:adic-rings-1-definition-adic-analytic-ring}.
\item Given a map $f\colon Y=\Spa(B,B^+)\to X=\Spa(A,A^+)$ of stably uniform analytic adic spaces, we let $f^\ast:=\widehat{(B^\circ,B^+)}_\solid\tensor_{\widehat{(B^\circ,B^+)}_\solid}(-)\colon \D_{\hat\solid}(X):=\D_{\hat\solid}(A,A^+)\to \D_{\hat\solid}(Y):=\D_{\hat\solid}(B,B^+)$, $f_\ast\colon \D_{\hat\solid}(Y)\to \D_{\hat\solid}(X)$ be the induced pair of adjoint functors. If we want to distinguish these functors two functors $f^\ast, f_\ast$ from their counterparts $f^\ast=(B,B^+)_\solid\tensor_{(A,A^+)_\solid}(-)\colon \D_\solid(X):=\D_{\solid}(A,A^+)\to \D_{\solid}(Y):=\D_{\solid}(B,B^+)$, we denote them by $\hat{f}^\ast, \hat{f}_\ast$.
  \item We let $\alpha_X^\ast\colon \D_{\solid}(X)=\mathrm{Mod}_A(\D_{\solid}(A^\circ,A^+))\to \D_{\hat\solid}(X)$ be the functor induced on module categories by the symmetric monoidal functor $\widehat{(A^\circ,A^+)_\solid}\otimes_{(A^\circ, A^+)_\solid}(-)$ from \cref{def:modified-modules-over-adic-ring}. We let $\alpha_{X,\ast}$ be the right adjoint of $\alpha_X^\ast$.
  \end{defenum}
  \end{definition}

  Let $\mathrm{Sym}$ be the $\infty$-category of cocomplete closed symmetric monoidal stable $\infty$-categories with morphisms given by cocontinuous, symmetric monoidal functors, cf.\ \cite[Definition 6.3]{condensed-complex-geometry}.
  We recall from \cite[Proposition 6.5]{condensed-complex-geometry} that a morphism $g^\ast\colon C\to D$ in $\mathrm{Sym}$ is called an open (resp.\ closed) immersion if $g^\ast$ has a fully faithful left adjoint $g_!$ (resp.\ colimit preserving fully faithful right adjoint $g_\ast$), which satisfies the projection formula. If $g^\ast$ is a closed immersion, then $g_\ast(1_D)\in C$ is an idempotent algebra, and $D\cong \mathrm{Mod}_{g_\ast(1_D)}(C)$. If $g^\ast$ is an open immersion, then the cofiber $R:=[g_!(1_D)\to 1_C]\in C$ is an idempotent algebra, and $D$ is isomorphic to the quotient $C/\mathrm{Mod}_{R}(C)$ (via $g^\ast$). We record the following stability of open/closed immersions under base change with respect to the Lurie tensor product.

  \begin{lemma}
    \label{sec:glob-stably-unif-2-closed-open-immersions-are-stable-under-base-change-for-lurie-tensor-product}
    Let $h^\ast \colon C\to C^\prime$, $g^\ast\colon C\to D$ be morphisms in $\mathrm{Sym}$. If $g^\ast$ is an open (resp.\ closed) immersion, the same is true for the base change $k^\ast\colon C^\prime\to D^\prime:=C^\prime\otimes_C D$.
  \end{lemma}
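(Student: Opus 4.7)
My plan is to reduce both cases to statements about idempotent algebras, which behave very well under the Lurie tensor product.

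First I would recall the characterization of open and closed immersions given in the paragraph preceding the lemma: $g^{\ast}$ is a closed immersion precisely when $D \simeq \mathrm{Mod}_{A}(C)$ with $A := g_{\ast}(1_{D})$ an idempotent algebra in $C$, and $g^{\ast}$ is an open immersion precisely when $D \simeq C/\mathrm{Mod}_{R}(C)$ for the idempotent algebra $R := \mathrm{cofib}(g_{!}(1_{D}) \to 1_{C})$. In both cases the identification is compatible with the given functor $g^{\ast}$. The key input I would cite is the standard formula for the Lurie tensor product with a module category: for any algebra $B \in \mathrm{Alg}(C)$ and any cocontinuous symmetric monoidal $h^{\ast}\colon C \to C'$ one has a canonical equivalence
\[
  C' \otimes_{C} \mathrm{Mod}_{B}(C) \simeq \mathrm{Mod}_{h^{\ast}B}(C'),
\]
compatible with the free module functors (see, e.g., \cite[\S4.8.4]{lurie-higher-algebra}).

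In the closed immersion case, apply this formula with $B = A$ to get $D' \simeq \mathrm{Mod}_{h^{\ast}A}(C')$. Since $h^{\ast}$ is symmetric monoidal it preserves idempotent algebras, so $h^{\ast}A$ is again idempotent in $C'$. Under the identification above, the functor $k^{\ast}$ is identified with the forgetful-module pullback $C' \to \mathrm{Mod}_{h^{\ast}A}(C')$, which by the characterization recalled above is a closed immersion. For the open immersion case, I would first apply the formula to $B = R$ to obtain $C' \otimes_{C} \mathrm{Mod}_{R}(C) \simeq \mathrm{Mod}_{h^{\ast}R}(C')$. Since Verdier quotients in $\mathrm{Sym}$ are computed by localizing at the kernel, and the tensor product $- \otimes_{C} (-)$ is exact in each variable and preserves cofiber sequences of symmetric monoidal categories (the open/closed decomposition is such a cofiber sequence by \cite[Proposition 6.5]{condensed-complex-geometry}), tensoring the localization sequence $\mathrm{Mod}_{R}(C) \to C \to D$ with $C'$ yields
\[
  \mathrm{Mod}_{h^{\ast}R}(C') \to C' \to D',
\]
with $D' = C'/\mathrm{Mod}_{h^{\ast}R}(C')$. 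Again $h^{\ast}R$ is an idempotent algebra (since $h^{\ast}$ is symmetric monoidal), so $k^{\ast}$ is an open immersion by the idempotent algebra criterion.

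The main technical point, and the only step that is not purely formal manipulation of adjunctions, is the preservation of the localization sequence under $C' \otimes_{C} -$. I expect to handle this by noting that the relevant cofiber sequence is controlled by the pair of complementary idempotent algebras $(g_{!}1_{D} \to 1_{C} \to R)$; since $h^{\ast}$ is symmetric monoidal and colimit-preserving, applying $h^{\ast}$ produces the complementary idempotent pair $(h^{\ast}g_{!}1_{D} \to 1_{C'} \to h^{\ast}R)$ in $C'$, and the corresponding open/closed decomposition of $C'$ reads off the desired identification of $D'$. Once this is in place, both parts of the lemma follow immediately from the idempotent algebra criterion.
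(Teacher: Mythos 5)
Your treatment of the closed case is correct and is exactly the paper's argument: $C'\otimes_C \mathrm{Mod}_A(C)\simeq \mathrm{Mod}_{h^\ast A}(C')$ and $h^\ast$ preserves idempotent algebras. The problem is the open case. The step you yourself flag as the main technical point — that $C'\otimes_C(-)$ carries the localization sequence $\mathrm{Mod}_R(C)\to C\to D$ to the localization sequence for $h^\ast R$ — is not justified by what you say. The Lurie tensor product is not ``exact'' in any sense that lets you feed it a Verdier/localization sequence blindly: it does not preserve fully faithful functors or limits in general, only colimits (and, being a $2$-functor, adjunctions, retracts, etc.). Your fallback — apply $h^\ast$ to the complementary idempotent pair and ``read off'' the identification of $D'$ from the resulting open/closed decomposition of $C'$ — is circular: producing the idempotent algebra $h^\ast R$ does give an open/closed decomposition of $C'$, but it does not by itself identify the open part of that decomposition with $D':=C'\otimes_C D$, and that identification is precisely what has to be proved. (One can rescue a version of your route, but only by proving that the decomposition is a cofiber sequence in $\Pr^L_C$, that relative tensor preserves such cofiber sequences, that the base-changed inclusion agrees with the canonical inclusion $\mathrm{Mod}_{h^\ast R}(C')\subset C'$, and by invoking the converse direction of the idempotent-algebra dictionary; none of this is in your sketch.)

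The paper's proof sidesteps all of this by transporting the adjunction rather than the sequence: since $g_!$ is colimit-preserving and $C$-linear (this is exactly the projection formula), the adjunction $(g_!\dashv g^\ast)$ lives in the $2$-category of $C$-linear presentable categories, and the base-change functor $C'\otimes_C(-)$ preserves such adjunctions, their unit equivalences, and ($C'$-linearity giving) the projection formula. Hence $k^\ast$ has a fully faithful left adjoint $k_!$ satisfying the projection formula and is an open immersion directly from the definition; the identification of $\ker(k^\ast)$ with $\mathrm{Mod}_{h^\ast R}(C')$ then follows as in \cite[Proposition 6.5]{condensed-complex-geometry}, because $h^\ast R=\mathrm{cofib}(k_!(1_{D'})\to 1_{C'})$ by exactness of $h^\ast$. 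Note that in the paper the preservation of the excision sequence is a \emph{consequence} of this argument (recorded in the remark following the lemma), not an input, which is the order of logic your proposal inverts.
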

  \begin{proof}
    For closed immersions this is clear as $\mathrm{Mod}_R(C)\otimes_C C^\prime\cong \mathrm{Mod}_{h^\ast(R)}(C^\prime)$ for any monoid object $R\in C$, and $h^\ast$ preserves idempotent algebras. Thus, assume that $g^\ast$ is an open immersion, and let $R:=[g_!(1_D)\to 1_C]\in C$ be the associated idempotent algebra. As the base change $C^\prime\otimes_C(-)$ preserves adjunctions between colimit preserving, $C$-linear functors, the base change $k_!$ of $g_!$ is a fully left adjoint of $k^\ast$. By $C^\prime$-linearity one checks that $k_!$ satisfies the projection formula. In particular, $k^\ast$ is an open immersion. Moreover, we note that as in the proof of \cite[Proposition 6.5]{condensed-complex-geometry} the projection formula for $k^\ast$ implies that the kernel of the functor $k^\ast$ is exactly the category of $h^\ast(R)$-modules in $C^\prime$ because $h^\ast(R)$ is the cofiber of $k_!(1_{D^\prime})\to 1_{C^\prime}$ by exactness of $h^\ast$. 
  \end{proof}

  \begin{remark}
    \label{sec:glob-stably-unif-3-tensoring-preserves-fiber-sequences}
    The last part of the proof of \cref{sec:glob-stably-unif-2-closed-open-immersions-are-stable-under-base-change-for-lurie-tensor-product} shows that base change preserves the ``excision sequence'' $D\xto{g_!} C\to \mathrm{Mod}_R(C)$ for an open immersion $g^\ast\colon C\to D$.
  \end{remark}

We now show that rational localizations yield open immersion in $\mathrm{Sym}$ for the completed solid category:
  
\begin{lemma}
  \label{sec:glob-stably-unif-1-rational-opens-still-define-localizations}
  Let $X=\Spa(A,A^+)$ be a stably uniform analytic adic space, and $j\colon U:=\Spa(B,B^+)\to X$ a rational open.
  Then $\hat{j}^\ast\colon \D_{\hat\solid}(X)\to \D_{\hat\solid}(U)$ is an open immersion in $\mathrm{Sym}$. In fact, $\hat{j}^\ast$ is the base change of $j^\ast\colon \D_{\hat\solid}(X)\to \D_{\hat\solid}(U)$ along the functor $\alpha_X^\ast\colon \D_{\solid}(X)\to \D_{\hat\solid}(X)$.
\end{lemma}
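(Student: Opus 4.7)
The plan is to establish both claims simultaneously by identifying $\hat j^*$ with the base change of $j^*$ along $\alpha_X^*$ in $\mathrm{Sym}$. Given this identification, the assertion that $\hat j^*$ is an open immersion follows immediately from Lemma \ref{sec:glob-stably-unif-2-closed-open-immersions-are-stable-under-base-change-for-lurie-tensor-product} combined with Andreychev's theorem \cite[Theorem 4.1]{andreychev-condensed-huber-pairs}, which ensures that $j^*\colon \D_\solid(X) \to \D_\solid(U)$ is a localization and hence an open immersion in $\mathrm{Sym}$ (with fully faithful left adjoint $j_!$ satisfying the projection formula).

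To obtain the base change identification, I first verify that the induced morphism of adic analytic rings $(A^\circ, A^+)_\solid \to (B^\circ, B^+)_\solid$ is adic and of $+$-finite type. Writing $B = A\langle f_1/g, \ldots, f_n/g \rangle$, adicness is immediate since a pseudo-uniformizer of $A$ remains a pseudo-uniformizer of $B$ (stable uniformity makes $A^\circ$ and $B^\circ$ the respective rings of definition). For the $+$-finite type condition, the discussion preceding Definition \ref{sec:adic-rings-1-definition-adic-analytic-ring} (on passage to the smallest integrally closed subring containing $S$ and all topologically nilpotent elements) yields $(B^\circ, B^+)_\solid \cong (B^\circ, A^+ \cup \{f_1/g, \ldots, f_n/g\})_\solid$ as analytic rings over $(A^\circ, A^+)_\solid$, with $S = \{f_i/g\}_i$.

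Proposition \ref{sec:defin-d_hats-1-modified-version-in-relative-+-finite-type-case}(ii) then provides an equivalence
\[
\D_{\hat\solid}(A^\circ, A^+) \otimes_{\D_\solid(A^\circ, A^+)} \D_\solid(B^\circ, B^+) \isoto \D_{\hat\solid}(B^\circ, B^+).
\]
Passing to $B$-modules by applying $\Mod_B(-)$ on both sides, and using $\D_\solid(X) = \Mod_A(\D_\solid(A^\circ, A^+))$, $\D_{\hat\solid}(X) = \Mod_A(\D_{\hat\solid}(A^\circ, A^+))$ and the analogous identities for $U$ (Definition \ref{sec:glob-stably-unif-1-definition-of-d-hat-solid-a-a+}), together with the compatibility of Lurie tensor products with passage to modules over a base, yields
\[
\D_{\hat\solid}(X) \otimes_{\D_\solid(X)} \D_\solid(U) \isoto \D_{\hat\solid}(U).
\]
By the functoriality of $\alpha$ and of the modified modules construction (Lemma \ref{rslt:functoriality-of-modified-modules}), together with the base-change diagram of Proposition \ref{sec:defin-d_hats-1-modified-version-in-relative-+-finite-type-case}(i), this equivalence intertwines the base change of $j^*$ along $\alpha_X^*$ with $\hat j^*$.

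The main obstacle is the bookkeeping required for the passage from statements at the level of the adic analytic rings $(A^\circ, A^+)_\solid$ (where Proposition \ref{sec:defin-d_hats-1-modified-version-in-relative-+-finite-type-case} directly applies) to statements at the level of module categories over the full rings $A$ and $B$, which requires commuting Lurie tensor products with the formation of $\Mod_A(-)$ and $\Mod_B(-)$. A secondary subtlety is the verification of the $+$-finite type condition for $(A^\circ, A^+)_\solid \to (B^\circ, B^+)_\solid$, which relies on the fact that the analytic ring structure on the $+$-side is insensitive to integral closure and to the inclusion of all topologically nilpotent elements.
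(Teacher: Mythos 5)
Your proposal is correct and shares the paper's skeleton (verify that $(A^\circ,A^+)_\solid\to(B^\circ,B^+)_\solid$ is adic and of $+$-finite type, apply \cref{sec:defin-d_hats-1-modified-version-in-relative-+-finite-type-case}, pass to module categories, conclude via \cref{sec:glob-stably-unif-2-closed-open-immersions-are-stable-under-base-change-for-lurie-tensor-product} together with Andreychev's theorem making $j^\ast$ an open immersion), but the passage to module categories is done by a genuinely different device. The paper tensors the equivalence $\D_{\hat\solid}(A^\circ,A^+)\otimes_{\D_\solid(A^\circ,A^+)}\D_\solid(B^\circ,B^+)\cong\D_{\hat\solid}(B^\circ,B^+)$ with $\Mod_A(\D_\solid(A^\circ,A^+))$ over $\D_\solid(A^\circ,A^+)$; this produces module categories over the algebra $A\otimes_{A^\circ}B^\circ$, and so the paper must prove the non-formal claim that $A\otimes_{A^\circ}B^\circ\to B$ is an isomorphism in $\D_\solid(A^\circ,A^+)$, which it does by invoking Andreychev's analytic descent theorem and reducing to the Tate case. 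Your route --- applying $\Mod_B(-)$ and using \cite[Theorem 4.8.4.6]{lurie-higher-algebra} twice (once for the algebra $A$ along $\alpha^\ast$, once for $B$ along $\D_\solid(B^\circ,B^+)\to\D_{\hat\solid}(A^\circ,A^+)\otimes_{\D_\solid(A^\circ,A^+)}\D_\solid(B^\circ,B^+)$) --- gives $\D_{\hat\solid}(X)\otimes_{\D_\solid(X)}\D_\solid(U)\cong\Mod_B\bigl(\D_{\hat\solid}(A^\circ,A^+)\otimes_{\D_\solid(A^\circ,A^+)}\D_\solid(B^\circ,B^+)\bigr)\cong\D_{\hat\solid}(U)$ directly, so the claim $A\otimes_{A^\circ}B^\circ\cong B$ is bypassed entirely; what it buys is the elimination of the only analytic-geometry input of the paper's bookkeeping, and what it costs is that the ``bookkeeping'' you defer is precisely where care is required: one must check that the $\D_\solid(A^\circ,A^+)$-action on $\D_\solid(U)$ obtained by restricting the $\D_\solid(X)$-action along the free-module functor agrees with the action through $\D_\solid(A^\circ,A^+)\to\D_\solid(B^\circ,B^+)\to\D_\solid(U)$ (this is transitivity of induced analytic ring structures), and that the resulting composite $\D_{\hat\solid}(X)\to\D_{\hat\solid}(U)$ is indeed $\hat j^\ast$, for which part (i) of \cref{sec:defin-d_hats-1-modified-version-in-relative-+-finite-type-case} and \cref{rslt:functoriality-of-modified-modules} suffice, as you indicate. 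Two minor points: your verification of $+$-finite type via insensitivity of the $+$-structure to integral closure and topologically nilpotent elements is exactly what the paper leaves implicit and is fine; for adicness, however, note that $A$ is only assumed analytic, not Tate, so a pseudo-uniformizer need not exist globally --- argue instead with a finitely generated ideal of definition of $A^\circ$, whose image generates an ideal of definition of $B^\circ$ by uniformity of $B$.
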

\cref{sec:glob-stably-unif-2-closed-open-immersions-are-stable-under-base-change-for-lurie-tensor-product} and the proof below shows that $\alpha^\ast_X\circ j_!\cong \hat{j}_{!}\circ \alpha^\ast_U $ for the left adjoints $j_!, \hat{j}_!$ of $j^\ast,\hat{j}^\ast$.
\begin{proof}
  This follows from \cref{sec:defin-d_hats-1-modified-version-in-relative-+-finite-type-case} and \cref{sec:glob-stably-unif-2-closed-open-immersions-are-stable-under-base-change-for-lurie-tensor-product}: the morphism $(A^\circ, A^+)_\solid\to (B^\circ, B^+)_\solid$ is of $+$-finite type, and therefore
  \begin{equation}
    \label{eq:1-tensor}
        \D_{\hat{\solid}}(A^\circ, A^+)\otimes_{\D_\solid(A^\circ,A^+)}\D_\solid(\mathcal{O}_X(U)^\circ, \mathcal{O}_X^+(U))\isoto \D_{\hat{\solid}}(\mathcal{O}_X(U)^\circ, \mathcal{O}_X^+(U)).
  \end{equation}
  We claim that the natural morphism $A\otimes_{A^\circ} B^\circ\to B$ is an isomorphism in $\D_\solid(A^\circ,A^+)$. Namely, by \cite[Theorem 1.6]{andreychev-condensed-huber-pairs} the claim is local on $X=\Spa(A,A^+)$, which reduces to the case that $A$ is Tate, where the claim is clear. Given this claim, we can conclude
  \[
    \D_{\hat{\solid}}(X)\otimes_{\D_\solid(X)}\D_\solid(U)\isoto \D_{\hat{\solid}}(U)
  \]
  by tensoring (\cref{eq:1-tensor}) over $\D_\solid(A^\circ, A^+)$ with $\mathrm{Mod}_A(\D_{\solid}((A^\circ, A^+))$. From here \cref{sec:glob-stably-unif-2-closed-open-immersions-are-stable-under-base-change-for-lurie-tensor-product} implies the claim.
\end{proof}

We now establish the following version of Andreychev's analytic descent theorem for the completed solid category:

\begin{theorem}
  \label{sec:glob-stably-unif-1-analytic-and-etale-descent-for-hat-solid-version}
  Let $(A,A^+)$ be a classical stably uniform analytic Huber pair, and $X:=\Spa(A,A^+)$.
  \begin{lemenum}
  \item The functor $U\mapsto \D_{\hat\solid}(\mathcal{O}_X(U),\mathcal{O}_X^+(U))$ on rational open subsets satisfies descent for the analytic topology.
    \item If for any rational open $U\subseteq X$ each finite, \'etale $\mathcal{O}_X(U)$-algebra is stably uniform (e.g., $A$ is sousperfectoid, \cite[Proposition 6.3.3.]{scholze-berkeley-lectures}), then the functor $V\mapsto \D_{\hat\solid}(\mathcal{O}_V(V),\mathcal{O}_V^+(V))$ on the category of \'etale maps $V\to X$ with $V$ affinoid, satisfies descent for the \'etale topology. 
  \end{lemenum}
\end{theorem}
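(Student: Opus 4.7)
The plan is to deduce both parts from Andreychev's analytic descent for $\D_\solid$ combined with the preceding \cref{sec:glob-stably-unif-1-rational-opens-still-define-localizations}, which identifies $\hat j^\ast$ as the base-change of $j^\ast$ along $\alpha_X^\ast$, together with the adic descendability theorem \cref{rslt:descendability-of-adic-rings} for the finite-\'etale piece of~(ii).

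For~(i), I would reformulate Andreychev's analytic descent for a finite rational cover $\mathcal U=\{U_i\to X\}_{i\in I}$ as the vanishing of a tensor product of excision idempotent algebras. Each rational localization $j_i^\ast\colon \D_\solid(X)\to \D_\solid(U_i)$ is an open immersion in $\mathrm{Sym}$, with associated idempotent algebra $R_i:=\cofib(j_{i,!}(1)\to 1)\in \D_\solid(X)$, and by the standard Clausen--Scholze dictionary, analytic descent for $\mathcal U$ (\cite[Theorem 4.1]{andreychev-condensed-huber-pairs}) is equivalent to the vanishing $\bigotimes_{i\in I}^{\D_\solid(X)} R_i=0$. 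By \cref{sec:glob-stably-unif-1-rational-opens-still-define-localizations} together with \cref{sec:glob-stably-unif-3-tensoring-preserves-fiber-sequences}, the excision algebras for the completed rational localizations $\hat j_i^\ast$ are $\hat R_i=\alpha_X^\ast R_i$, so symmetric monoidality of $\alpha_X^\ast$ yields
\[
  \bigotimes_{i\in I}^{\D_{\hat\solid}(X)}\hat R_i \;=\; \alpha_X^\ast\Bigl(\bigotimes_{i\in I}^{\D_\solid(X)} R_i\Bigr) \;=\; 0,
\]
giving analytic descent for $\D_{\hat\solid}$. The statement for the $A$-module categories $\D_{\hat\solid}(X)=\mathrm{Mod}_A(\D_{\hat\solid}(A^\circ,A^+))$ then follows formally, since base-change to $\mathrm{Mod}_A$ preserves the excision triangles by another application of \cref{sec:glob-stably-unif-3-tensoring-preserves-fiber-sequences}.

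For~(ii), I would use that \'etale covers of affinoid analytic adic spaces are generated by rational coverings and finite \'etale coverings, all of whose members remain stably uniform by the sousperfectoid hypothesis (\cite[Proposition 6.3.3]{scholze-berkeley-lectures}). Part~(i) handles the rational case, so it remains to treat a finite \'etale cover $(A,A^+)\to (B,B^+)$ of sousperfectoid Huber pairs. Here the induced map $(A^\circ,A^+)_\solid\to (B^\circ,B^+)_\solid$ is adic, and modulo each power $I^n$ of an ideal of definition it is faithfully flat and finitely presented—one reduces, via the sousperfectoid assumption, to the almost finite \'etale situation over a perfectoid base, where almost purity supplies the required flatness and finite presentation—hence adically descendable of uniformly bounded index by \cref{sec:complete-descent-1-example-for-adically-descendable-maps}. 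Invoking \cref{rslt:descendability-of-adic-rings} then gives descent for $\D_{\hat\solid}(A^\circ,A^+)\to \D_{\hat\solid}(B^\circ,B^+)$, and passage to $A$-modules (where the Huber-pair and ring-level \v{C}ech nerves coincide via \cref{rslt:pushout-of-adic-analytic-rings}) completes the argument.

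The main obstacle lies in the uniform adic-descendability bound of the finite \'etale step in~(ii): while finite \'etaleness is classical on the rational fibres, controlling the integral structure modulo all powers $I^n$ uniformly in $n$ requires reducing, via the sousperfectoid hypothesis, to an almost finite \'etale descent statement over a perfectoid base, where one has the uniform bound from almost mathematics. Once that input is secured, both parts of the theorem reduce to formal manipulations with \cref{rslt:descendability-of-adic-rings}, \cref{sec:glob-stably-unif-1-rational-opens-still-define-localizations}, and \cref{sec:glob-stably-unif-3-tensoring-preserves-fiber-sequences}.
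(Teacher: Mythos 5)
Your argument for part (i) is essentially the paper's: both use \cref{sec:glob-stably-unif-1-rational-opens-still-define-localizations} to see that the completed rational localizations are open immersions in $\mathrm{Sym}$ whose complementary idempotent algebras are pulled back along $\alpha_X^\ast$ from the solid case, deduce their joint vanishing from Andreychev's theorem, and then invoke the general gluing principle for finite covers by open immersions.

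Part (ii), however, has a genuine gap. You propose to prove descent for a finite \'etale cover $(A,A^+)\to(B,B^+)$ integrally, by showing that $(A^\circ,A^+)_\solid\to(B^\circ,B^+)_\solid$ is adically descendable of bounded index via $I$-complete faithful flatness and finite presentation mod $I^n$ (\cref{sec:complete-descent-1-example-for-adically-descendable-maps}) and then applying \cref{rslt:descendability-of-adic-rings}. But finite \'etaleness of $A\to B$ gives essentially no honest control of $B^\circ$ over $A^\circ$: even over a perfectoid base, almost purity only yields that $A^\circ\to B^\circ$ is \emph{almost} finite \'etale, so $A^\circ/\pi^n\to B^\circ/\pi^n$ is almost flat and almost finitely presented, not flat and finitely presented, and the theorem being proved is not an almost statement; over a merely sousperfectoid base the situation is worse, and ``reducing via the sousperfectoid hypothesis'' to the perfectoid case is itself a descent problem of the kind you are trying to establish. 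In addition, even granting integral descent, the terms of the \v{C}ech nerve of $(A^\circ,A^+)_\solid\to(B^\circ,B^+)_\solid$ are completed tensor products of the $\circ$-rings, which you would still have to compare with the Huber pairs $(B\otimes_A\cdots\otimes_A B)^\circ$ appearing in the \'etale \v{C}ech nerve of $\Spa(B,B^+)\to X$; this is not supplied by \cref{rslt:pushout-of-adic-analytic-rings} alone.

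The paper avoids all of this by never leaving the generic fibre: since $(B,B^+)_\solid=(B,A^+)_\solid$ for a finite \'etale map, descent along the cover is descent along the algebra map $A\to B$ inside $\D_{\hat\solid}(A,A^+)$, whose \v{C}ech nerve is just $B^{\otimes_A\bullet}$. Now $A(\ast)\to B(\ast)$ is descendable as a map of classical rings (finite \'etale covers are descendable, \cite[Corollary 3.33]{akhil-galois-group-of-stable-homotopy}), $B\cong A\otimes_{\underline{A(\ast)}}\underline{B(\ast)}$, and descendability is preserved under the symmetric monoidal functor $\D(A(\ast))\to\D_{\hat\solid}(A,A^+)$; hence $A\to B$ is descendable in $\D_{\hat\solid}(A,A^+)$ and (ii) follows from (i) with no adic, integral, or almost-mathematics input. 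If you want to salvage your route, you would have to replace the flatness claim by an argument of this kind; as written, the appeal to \cref{sec:complete-descent-1-example-for-adically-descendable-maps} is not justified.
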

Here, a morphism $V\to X$ is \'etale if it is a composition of open immersions and finite \'etale maps (in the sense of \cite[Definition 7.1]{scholze-perfectoid-spaces}, generalized to our situation), and the \'etale topology is defined by jointly surjective \'etale morphisms.
\begin{proof}
By definition of the \'etale topology, it suffices to show (i) and descent along finite \'etale maps $(A,A^+)\to (B,B^+)$. But by definition of the latter, $(B,B^+)_\solid=(B,A^+)_\solid$, which reduces to descent along the map $A\to B$ of algebras in $\D_{\hat\solid}(A,A^+)$. Now, $A(\ast)\to B(\ast)$ is descendable as a map of classical rings (\cite[Corollary 3.33]{akhil-galois-group-of-stable-homotopy}). As it is moreover finite \'etale, this implies $B\cong A\otimes_{\underline{A(\ast)}}\underline{B(\ast)}$, and thus $A\to B$ is descendable in $\D_{\hat\solid}(A,A^+)$ because the functor $\D(A(\ast))\to \D_{\hat\solid}(A,A^+)$ is symmetric monoidal. Hence, (ii) is implied by (i). In order to show (i) we first note that if $X=\bigcup\limits_{i=1}^n U_i$ is a finite open cover with $j_i\colon U_i\to X$ rational open, then by \cref{sec:glob-stably-unif-1-rational-opens-still-define-localizations} the functors $\hat{j}^\ast_i\colon \D_{\hat{\solid}}(X)\to \D_{\hat{\solid}}(U_i)$ are open immersions in $\mathrm{Sym}$. Moreover, the collection $\hat{j}^\ast_i,i=1,\ldots, n,$ is jointly conservative. Indeed, this statement is equivalent to $\hat{j}_{1,!}(1_{U_1})\otimes\ldots\otimes \hat{j}_{n,!}(1_{U_n})=0$ and thus is implied by $j_1(1_{U_1})\otimes\ldots \otimes j_n(1_{U_n})=0$ (\cite[Proposition 4.12.(v)]{andreychev-condensed-huber-pairs}) using that $\alpha^\ast$ commutes with the $!$-functors (\cref{sec:glob-stably-unif-1-rational-opens-still-define-localizations}). Thus, the assertion follows from the general descent theorem \cite[Proposition 5.5]{condensed-complex-geometry} (or \cite[Proposition 4.13]{andreychev-condensed-huber-pairs} resp.\ \cite[Proposition 10.5]{condensed-mathematics}).
\end{proof}

\Cref{sec:glob-stably-unif-1-analytic-and-etale-descent-for-hat-solid-version} allows us to give a reasonable definition of completed solid sheaves on every stably uniform analytic adic space:

\begin{definition}
  \label{sec:glob-stably-unif-1-modified-quasi-coherent-sheaves-for-general-stably-uniform-spaces}
  Let $X$ be a (classical) stably uniform analytic adic space. Then we set $\D_{\hat\solid}(X)$ as the inverse limit of $\D_{\hat\solid}(A,A^+)$ over all affinoid opens $\Spa(A,A^+)\subseteq X$, with $(A,A^+)$ a stably uniform analytic Huber pair. 
\end{definition}

\subsection{Universal descent}
\label{sec:universal-descent}

In this section, we show that descent of $\D(-)$ along a \textit{steady} morphism $\mathcal A \to \mathcal B$ of analytic rings automatically implies descent after any base change. We discuss variants of this as well, e.g., for $\D_{\hat\solid}(-)$.

If $f\colon \mathcal A\to \mathcal B$ is a morphism of analytic rings, we will denote by $f^\ast(-):=\mathcal{B}\otimes_{\mathcal A}(-)\colon \D(\mathcal A)\to \D(\mathcal B)$ the associated base change functor, and by $f_\ast$ its right adjoint. Note that $f_\ast$ is always conservative.

We recall that $f$ is steady if for any morphism $g\colon \mathcal A \to \mathcal A^\prime$ of analytic rings with base change $f^\prime\colon \mathcal A^\prime\to \mathcal B^\prime:=\mathcal A^\prime\otimes_{\mathcal A}\mathcal B, g^\prime\colon \mathcal B\to \mathcal B^\prime$, the natural transformation
\[
  f^\ast g_\ast\to g^\prime_\ast f^{\prime, \ast}
\]
of functors $\D(\mathcal A^\prime)\to \D(\mathcal B)$ is an isomorphism (\cite[Proposition 12.14]{scholze-analytic-spaces}).

\begin{lemma}
  \label{sec:universal-descent-1-descent-implies-universal-descent-for-analytic-rings}
  Assume that $f\colon \mathcal A \to \mathcal B$ is a steady morphism of analytic rings and that modules descend along $f$, i.e., the natural functor $\D(A)\to \varprojlim\limits_{[n]\in \Delta}\D(\mathcal B^{n/\mathcal{A}})$ is an equivalence, where $\mathcal B^{\bullet/\mathcal A}$ denotes the \v{C}ech nerve for $f$. Then for any morphism $g\colon \mathcal A\to \mathcal A^\prime$ of analytic rings modules descend along $f^\prime\colon \mathcal A^\prime\to \mathcal{B}^\prime:=\mathcal A^\prime\otimes_{\mathcal A}\mathcal B$.  
\end{lemma}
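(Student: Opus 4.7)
The plan is to base-change the descent equivalence for $f$ along $g$, using steadiness to handle the interaction of the Lurie tensor product with the totalization. First, I would observe that the Čech nerve of $f'$ is the base-change of the Čech nerve of $f$: from $\mathcal{B}' = \mathcal{A}' \otimes_{\mathcal{A}} \mathcal{B}$ and associativity of the tensor product one has $\mathcal{B}'^{n/\mathcal{A}'} \simeq \mathcal{A}' \otimes_{\mathcal{A}} \mathcal{B}^{n/\mathcal{A}}$ naturally in $[n]\in\Delta$, and since the functor $\mathcal{A}\mapsto\D(\mathcal{A})$ on analytic rings carries pushouts to Lurie tensor products in $\mathrm{Pr}^L$ (cf.\ \cite[Proposition 2.3.26]{mann-mod-p-6-functors}), this translates into a natural equivalence of cosimplicial objects
\[
\D(\mathcal{B}'^{\bullet/\mathcal{A}'}) \simeq \D(\mathcal{A}') \otimes_{\D(\mathcal{A})} \D(\mathcal{B}^{\bullet/\mathcal{A}}).
\]

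Second, applying $\D(\mathcal{A}')\otimes_{\D(\mathcal{A})}(-)$ to the descent equivalence $\D(\mathcal{A}) \simeq \Tot_{[n]}\D(\mathcal{B}^{n/\mathcal{A}})$ yields, after rewriting the target using the equivalence of the previous step, a natural comparison morphism $\D(\mathcal{A}') \to \Tot_{[n]}\D(\mathcal{B}'^{n/\mathcal{A}'})$. The problem reduces to showing that this morphism is an equivalence, i.e., to commuting the Lurie tensor product past the totalization. For this I would invoke Barr--Beck--Lurie: the descent hypothesis is equivalent to $f^*\colon \D(\mathcal{A}) \to \D(\mathcal{B})$ being comonadic, giving $\D(\mathcal{A})\simeq \mathrm{coMod}_{f^*f_*}(\D(\mathcal{B}))$. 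Steadiness of $f$ supplies the Beck--Chevalley isomorphism $g^* f_* \simeq f'_* g'^*$, which together with the automatic $g'^* f^* \simeq f'^* g^*$ yields the comonad-compatibility identity $g'^*\circ(f^*f_*) \simeq (f'^*f'_*)\circ g'^*$. This exactly says that the comonad $f^*f_*$ on $\D(\mathcal{B})$ base-changes to the comonad $f'^*f'_*$ on $\D(\mathcal{B}')$, and taking comodules gives an induced functor $\D(\mathcal{A}')\to \mathrm{coMod}_{f'^*f'_*}(\D(\mathcal{B}')) \simeq \Tot_{[n]}\D(\mathcal{B}'^{n/\mathcal{A}'})$ identifiable with the comparison map above.

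The main obstacle is verifying that this induced functor is an equivalence; by Barr--Beck--Lurie this is equivalent to $f'^*$ being comonadic, and the only nontrivial condition is conservativity of $f'^*$ (the preservation of split totalizations being automatic since they are absolute limits). A naive attempt---deducing conservativity of $f'^*$ from that of $f^*$ via the steadiness identity $f^* g_* \simeq g'_* f'^*$---fails because $g_*$ is not conservative in general. Instead, I would construct a quasi-inverse to the comparison map directly: given a compatible family in $\Tot_{[n]}\D(\mathcal{B}'^{n/\mathcal{A}'})$, the identification $\D(\mathcal{B}'^{n/\mathcal{A}'})\simeq \D(\mathcal{A}')\otimes_{\D(\mathcal{A})}\D(\mathcal{B}^{n/\mathcal{A}})$ presents it as an $\mathcal{A}'$-linear family of descent data for $f$, and the descent equivalence for $f$ (reconstruction of an object from its descent datum) together with the steadiness compatibility packages this into an object of $\D(\mathcal{A}') = \D(\mathcal{A}')\otimes_{\D(\mathcal{A})}\D(\mathcal{A})$, providing the sought-after inverse.
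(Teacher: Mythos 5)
Your reduction is set up correctly, and you put your finger on the right obstruction: everything comes down to showing that the comparison functor $\D(\mathcal A')\to\varprojlim_{[n]\in\Delta}\D(\mathcal B'^{\,n/\mathcal A'})$ is an equivalence, i.e.\ to commuting $\D(\mathcal A')\otimes_{\D(\mathcal A)}(-)$ past the totalization, equivalently (given the adjointability supplied by steadiness) to comonadicity of $f'^\ast$, whose only unclear condition is conservativity. (As a side remark, your parenthetical that split totalizations are absolute glosses over the fact that the Barr--Beck--Lurie condition concerns $f'^\ast$-split cosimplicial objects, which are split only after applying $f'^\ast$; but that is not the main problem.) The genuine gap is your last step. ``Constructing a quasi-inverse directly'' by regarding a point of $\varprojlim_{[n]}\bigl(\D(\mathcal A')\otimes_{\D(\mathcal A)}\D(\mathcal B^{\,n/\mathcal A})\bigr)$ as an ``$\mathcal A'$-linear family of descent data for $f$'' and then ``packaging'' it into an object of $\D(\mathcal A')=\D(\mathcal A')\otimes_{\D(\mathcal A)}\D(\mathcal A)$ is not an argument: descent for $f$ reconstructs objects of $\D(\mathcal A)$ from descent data in $\D(\mathcal B^{\bullet/\mathcal A})$, and producing from an object of the totalization of the base-changed diagram an object of the base-changed limit is exactly the tensor-versus-totalization commutation you set out to prove. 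It would follow if $\D(\mathcal A')$ were dualizable over $\D(\mathcal A)$, but no such hypothesis is available for an arbitrary $g$; as stated, your final paragraph restates the problem rather than solving it.

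The paper's proof avoids conservativity of $f'^\ast$ (and any commutation of the Lurie tensor product with limits) altogether: it compares the cosimplicial diagram $[n]\mapsto\D(\mathcal B'^{\,n/\mathcal A'})$ with the diagram $[n]\mapsto\D(\mathcal B^{\,n/\mathcal A})$ through the pushforward functors $g_{n,\ast}$ along $\mathcal B^{\,n/\mathcal A}\to\mathcal B'^{\,n/\mathcal A'}$. These are conservative (they are forgetful functors) and preserve limits, they assemble into a natural transformation of cosimplicial categories precisely because steadiness makes the relevant lax squares commute, and the cosimplicial structure maps admit compatible right adjoints ($\ast$-pushforwards). Lurie's criterion [HA, Corollary 5.2.2.37] then transfers the limit-diagram property from the $f$-diagram to the $f'$-diagram, which is the assertion. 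If you want to rescue your comonadic formulation, you would have to deduce conservativity of $f'^\ast$ (or the commutation of $\D(\mathcal A')\otimes_{\D(\mathcal A)}(-)$ with this totalization) from such a comparison with the $f$-diagram via the $g_{n,\ast}$ --- at which point you are essentially carrying out the paper's argument.
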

\begin{proof}
  This follows from \cite[Corollary 5.2.2.37.]{lurie-higher-algebra}. We supplement the details using the notation of \cite[Corollary 5.2.2.37]{lurie-higher-algebra}. Thus, we set $\mathcal{C}=\Delta$, $\chi\colon \Delta^{\vartriangleleft}\to \mathcal{C}at_\infty, [n]\mapsto \D((\mathcal{B}^\prime)^{n/\mathcal{A}^\prime})$ with $(\mathcal{B}^\prime)^{-1/\mathcal{A}^\prime}:=\mathcal{A}^\prime$, and similarly, $\chi^\prime([n]):=\D(\mathcal{B}^{n/\mathcal{A}})$. To construct the natural transformation $\rho\colon \chi\to \chi^\prime$, given componentwise by $g_{n,\ast}$ for $g_n\colon \mathcal{B}^{n/\mathcal{A}}\to (\mathcal B^\prime)^{n/\mathcal{A}^\prime}$ is the natural map, one can argue as follows: clearly, the $g^\ast_n$ yield a natural transformation $\sigma\colon \chi^\prime\to \chi$ (by naturality of $\D(-)$ as a functor on the $\infty$-category of analytic rings). As for each morphism $[n]\to [m]$ in $\Delta^{\vartriangleleft}$ the induced morphism $\mathcal{B}^{n/\mathcal{A}}\to \mathcal{B}^{m/\mathcal A}$ is steady (using \cite[Proposition 12.15]{scholze-analytic-spaces}), we can conclude\footnote{Let $E\to \Delta^\vartriangleleft, E^\prime\to \Delta^\vartriangleleft$ be the cocartesian fibrations classified by $\chi, \chi^\prime$. The natural transformation $\sigma$ unstraightens to a functor $\kappa\colon E^\prime\to E$, which preserves cocartesian arrows. Using \cite[7.3.2.6.]{lurie-higher-algebra} the functor $\kappa$ admits an adjoint $\lambda\colon E\to E^\prime$ (relative to $\Delta^\vartriangleleft$) as this is true over each $[n]\in \Delta^\vartriangleleft$. Now, steadiness of $f$ implies that $\lambda$ preserves cocartesian arrows, and thus defines a natural transformation $\rho\colon \chi\to \chi^\prime$. By construction of $\lambda$, $\rho_{[n]}\colon \chi([n])\to \chi^\prime([n])$ is given by the pushforward $g_{n,\ast}$ as desired.} that the $g_{n,\ast}$ assemble to a natural transformation $\rho\colon \chi\to \chi^\prime$. Now condition 1) in \cite[5.2.2.37]{lurie-higher-algebra} is the assumption that modules descend along $f$. Condition 2) is automatic as each $g_{n,\ast}$ is conservative. Condition 3) is clear as $\D(\mathcal{A}^\prime)$ has all limits and $g_\ast$ preserves these. Condition 4) is again automatic because for each morphism $\alpha\colon [n]\to [m]$ in $\Delta^{\vartriangleleft}$ the functors $\chi([n])\to \chi([m])$ resp.\ $\chi^\prime([n])\to \chi^\prime([m])$ admit right adjoints given by $\ast$-pushforward, and clearly these right adjoints commute with $\rho$ (which in component $[n]$ is given by $g_{n,\ast}$). Thus, \cite[5.2.2.37]{lurie-higher-algebra} is applicable, and yields exactly that modules descend along $f^\prime$. 
\end{proof}

Next assume that $f\colon \mathcal{A}\to \mathcal{B}$ is a morphism of adic analytic rings. By \cref{sec:complete-descent-2-steady-maps-of-adic-analytic-rings-are-exactly-the-adic-ones} we know that $f$ is steady if and only if $f$ is adic. We get the following analog of \cref{sec:universal-descent-1-descent-implies-universal-descent-for-analytic-rings} for $\D(\hat{\mathcal{A}})$.

\begin{lemma}
  \label{sec:universal-descent-1-descent-implies-universal-descent-in-adic-case}
  Let $f\colon \mathcal A\to \mathcal B$ be an adic morphism of adic analytic rings, such that the natural functor $\D(\hat{\mathcal{A}})\to \varprojlim\limits_{[n]\in \Delta}\D(\hat{\mathcal{B}^{n/\mathcal{A}}})$ is an equivalence. Then the same holds true after any base change $f^\prime\colon \mathcal{A}^\prime\to \mathcal{A}^\prime\otimes_{\mathcal{A}}\mathcal{B}$ of $f$ along an adic morphism $\mathcal{A}\to \mathcal{A}^\prime$ of adic analytic rings.
\end{lemma}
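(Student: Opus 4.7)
The plan is to apply Lurie's descent criterion \cite[Corollary 5.2.2.37]{lurie-higher-algebra} in direct analogy with the proof of \cref{sec:universal-descent-1-descent-implies-universal-descent-for-analytic-rings}, substituting Scholze's steadiness by the adic base-change isomorphism for modified module categories established in \cref{rslt:steadiness-for-modified-modules}. Writing $g\colon \mathcal{A}\to \mathcal{A}^\prime$ and $g_n\colon \mathcal{B}^{n/\mathcal{A}}\to (\mathcal{B}^\prime)^{n/\mathcal{A}^\prime}$ for the induced maps on \v{C}ech nerves, the first step is to invoke \cref{rslt:pushout-of-adic-analytic-rings} to conclude that every ring in sight is adic analytic, that every structure map is adic, and that $(\mathcal{B}^\prime)^{n/\mathcal{A}^\prime}\cong \mathcal{A}^\prime\otimes_{\mathcal{A}}\mathcal{B}^{n/\mathcal{A}}$.

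Next, I would form the two cosimplicial diagrams $\chi,\chi^\prime\colon \Delta^{\vartriangleleft}\to \infcatinf$ defined by $\chi([n]):=\D(\hat{(\mathcal{B}^\prime)^{n/\mathcal{A}^\prime}})$ and $\chi^\prime([n]):=\D(\hat{\mathcal{B}^{n/\mathcal{A}}})$, with initial values $\D(\hat{\mathcal{A}^\prime})$ and $\D(\hat{\mathcal{A}})$, and structure maps given by $\ast$-pullback. By \cref{rslt:functoriality-of-modified-modules} the pullback functors $\hat{g}_n^\ast$ assemble into a natural transformation $\sigma\colon \chi^\prime\to \chi$. The essential step, which I expect to be the only genuine technical obstacle, is the construction of a natural transformation $\rho\colon \chi\to \chi^\prime$ whose component at $[n]$ is the right adjoint $\hat{g}_{n,\ast}$. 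Following the footnote of the proof of \cref{sec:universal-descent-1-descent-implies-universal-descent-for-analytic-rings}, $\sigma$ unstraightens to a functor between cocartesian fibrations over $\Delta^{\vartriangleleft}$ which admits a fibrewise right adjoint by the adjoint functor theorem, and what remains is that this right adjoint itself preserves cocartesian edges. This is precisely the base-change isomorphism of \cref{rslt:steadiness-for-modified-modules} applied to every square in the cosimplicial structure (all squares being adic base changes by the previous paragraph); this is also the only place where the adicness hypothesis on $g$ is genuinely used.

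With $\rho$ constructed, the four conditions of \cite[Corollary 5.2.2.37]{lurie-higher-algebra} are routine: condition (1) is the hypothesis on $f$; condition (2) is the conservativity of each $\hat{g}_{n,\ast}$, which is immediate because the essential image of $\hat{g}_n^\ast$ contains a set of compact generators of $\D(\hat{(\mathcal{B}^\prime)^{n/\mathcal{A}^\prime}})$ by construction; condition (3) holds because $\D(\hat{\mathcal{A}^\prime})$ is presentable and the pushforwards preserve limits (\cref{rslt:properties-of-forgetful-functor-from-modified-modules} applied iteratively); and condition (4), the compatibility of $\rho$ with right adjoints to the cosimplicial structure maps, is again an instance of \cref{rslt:steadiness-for-modified-modules}. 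The conclusion of Lurie's theorem then yields the desired descent for $f^\prime$.
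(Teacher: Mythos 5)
Your proposal is correct and follows essentially the same route as the paper: transport the proof of the analytic-ring case to $\D(\hat{-})$, using \cref{rslt:pushout-of-adic-analytic-rings} for adicness of all terms of the \v{C}ech nerve and \cref{rslt:steadiness-for-modified-modules} to assemble the pushforwards $\hat g_{n,\ast}$ into a natural transformation $\rho$, and then apply \cite[Corollary 5.2.2.37]{lurie-higher-algebra}. Two small remarks: your conservativity argument via compact generators in the essential image of $\hat g_n^\ast$ (namely $\hat g_n^\ast\alpha^\ast(\mathcal B^{n/\mathcal A}[S])\cong \alpha^\ast((\mathcal B')^{n/\mathcal A'}[S])$) is a valid substitute for the paper's identity $\alpha_\ast\hat f_\ast\cong f_\ast\alpha_\ast$ combined with conservativity of $f_\ast$ and $\alpha_\ast$, while condition (4) is not an instance of \cref{rslt:steadiness-for-modified-modules} but merely the compatibility of $\ast$-pushforwards with composition along the commuting square of ring maps (the base-change result is only needed for the naturality of $\rho$, exactly where you invoke it).
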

Note that adicness of $\mathcal A\to \mathcal A^\prime$ implies by \cref{rslt:pushout-of-adic-analytic-rings} that all terms in the \v{C}ech nerves are adic analytic rings (and thus the assertion is well-defined). 
\begin{proof}
 Replacing $\D((-))$ by $\D(\hat{-})$ we can follow the steps in \cref{sec:universal-descent-1-descent-implies-universal-descent-for-analytic-rings}. Using \cref{rslt:steadiness-for-modified-modules} the existence of $\rho$ follows in the same way. The only statement to check is that $\hat{f}_\ast\colon \D(\hat{\mathcal{B}})\to \D(\hat{\mathcal{A}})$ is conservative for any morphism of adic analytic rings $f\colon \mathcal A\to \mathcal B$. As $\alpha_\ast \hat{f}_\ast\cong f_\ast \alpha_\ast$ (by \cref{rslt:functoriality-of-modified-modules} and passage to right adjoints) this follows from conservativity of $f_\ast$ and $\alpha_\ast$ (\cref{rslt:basic-properties-of-modified-modules}). 
\end{proof}

\begin{remark}
  \label{sec:universal-descent-1-universal-descent-for-stably-uniform-adic-spaces}
  Let $f\colon Y\to X$ be any morphism of stably uniform analytic adic spaces. Assume that all terms $Y^{n/X}$ of the \v{C}ech nerve of $f$ are again stably uniform (this is a serious assumption, cf.\ \cref{sec:universal-descent-2-product-of-q-p-cycl-with-itself-over-q-p}), and that $\D_{\hat\solid}$ satisfies descent for $f$. Then for any morphism $X^\prime\to X$ of stably uniform analytic adic spaces (such that the terms in the \v{C}ech nerve of $f^\prime\colon Y\times_X X^\prime \to X^\prime$ are again stably uniform) the modified quasi-coherent sheaves $\D_{\hat\solid}$ satisfy descent for $f^\prime$ (by reducing to the affinoid case and then using the same argument as in \cref{sec:universal-descent-1-descent-implies-universal-descent-for-analytic-rings}, \cref{sec:universal-descent-1-descent-implies-universal-descent-in-adic-case}). Note that the steadiness assumption is automatically satisfied in this case because $X$ is locally Tate.
\end{remark}

\begin{example}
  \label{sec:universal-descent-2-product-of-q-p-cycl-with-itself-over-q-p}
  Let $\Q_p^\cycl$ be the completion of $\Q_p(\mu_{p^\infty})$. We mention here the well-known example that the Huber ring $A:=\Q_p^\cycl\otimes_{\Q_p} \Q_p^\cycl$ (with $\otimes$ refering to the solid or equivalently the Banach space tensor product) is not uniform even though each of the factors is. In fact, the uniform completion of $A$ is the space $B:=C(\Gamma, \Q_p^\cycl)$ of continuous functions on $\Gamma=\Z_p^\cprod=\mathrm{Gal}(\Q_p^\cycl/\Q_p)$. The ring $A_0:=\Z_p^\cycl\otimes_{\Z_p}\Z_p^\cycl$ is a ring of definition of $A$ (by definition of the Banach space tensor product), and the ring $B_0:=C(\Gamma,\Z_p^\cycl)$ is a ring of definition of $B$. Moreover, $B_0=B^0$ because $\Z_p^\cycl$ is the ring of power bounded elements in $\Q_p^\cycl$. In particular, $B$ is uniform. There exists the natural map
  \[
    \Phi\colon A_0\to B_0, a\otimes b\mapsto (\gamma\mapsto a\gamma(b)).
 \]
 From the relation
 \[
   \sum\limits_{\zeta\in \mu_{p^n}} \zeta^x=
   \begin{cases}
     p^n,\ & x\in p^n\Z_p \\
     0,\ & \textrm{ otherwise }
   \end{cases}
 \]
 for $x\in \Z_p$ one can conclude that for $a\in \Z_p^\cprod$ the continuous function
 \[\Z_p^\cprod\to \Z_p^\cycl,\ x\mapsto \frac{1}{p^h}\sum\limits_{\zeta\in \mu_{p^n}} \zeta^{x-a}
 \]
 is the characteristic function $\chi_{a,n}$ of the clopen subset $a+p^n\Z_p$ of $\Z_p^\cprod=\Gamma$. We can conclude that $\Phi$ is injective with image exactly the closure of the subspace generated by the $p^n\chi_{a,n}$ for $a\in \Z_p^\cprod$ and $n\geq 0$ (cf.\ \cite[Proposition I.3.4]{colmez2010fonctions}).
 In particular, $\Phi$ extends to an injection $A\to B$ with dense image and this identifies $B$ as the uniform completion of $A$. Concretely, for $a\in \Z_p^\cprod$ and $n\geq 0$ the elements
 \[
   \frac{1}{p^n}\sum\limits_{\zeta\in \mu_{p^n}}\zeta^{-a}\otimes \zeta\in A
 \]
 generate $A^0\subseteq A$ as an $A_0$-module.
\end{example}

\section{Sheaves on diamonds} \label{sec:nuclear-sheaves}

Throughout this section we fix two primes $p$ and $\ell$, where we explicitly allow the case $\ell = p$. In the following we construct and analyze different categories of $\ell$-adic sheaves on diamonds and small v-stacks over $\Z_p$. Most importantly, we discuss a version of the nuclear $\Z_\ell$-sheaves defined in \cite{mann-nuclear-sheaves} in the case $\ell = p$. In fact, the basic definitions and results all work in the same way (similar to the fact that the basic results on étale $\Fld_p$-sheaves work the same as for $\Fld_\ell$), but the theory is lacking the required base-change result (for solid sheaves) to produce a good 6-functor formalism.\footnote{More precisely, \cite[Proposition VII.2.1]{fargues-scholze-geometrization} needs the assumption $\ell\neq p$.} Nevertheless, nuclear $\Z_p$-sheaves will play an important role in the proof of descent for affinoid perfectoid spaces, whose tilt admits a morphism of finite $\dimtrg$ to a totally disconnected perfectoid space, in \cref{sec:introduction-1-main-theorem-introduction}.

\subsection{Étale sheaves}

We fix a ring $\Lambda$ such that $\ell^n \Lambda = 0$ for some $n \ge 0$. Recall that in \cite[Definition 14.13]{etale-cohomology-of-diamonds} (and \cite[Lemma 17.1]{etale-cohomology-of-diamonds}) a $v$-sheaf of $\infty$-categories $Y\mapsto \D_\et(Y,\Lambda)$ on small $v$-stacks is defined for any ring $\Lambda$ such that $\D_\et(Y,\Lambda)\cong \D(Y_\et,\Lambda)$ if $Y$ is a strictly totally disconnected space (or more generally if $Y$ is a locally spatial diamond, which has finite $\ell$-cohomological dimension). By descent from the strictly totally disconnected case the category $D_\et(Y,\Lambda)$ is equipped with a natural $t$-structure. More precisely, an object $M\in \D_\et(Y,\Lambda)$ lies in $\D_{\geq 0}$ (or $\D_{\leq 0}$) if and only this is true after pullback to all strictly totally disconnected spaces over $Y$. If $Y$ is a locally spatial diamond such that $\D_\et(Y,\Lambda)\cong \D(Y_\et,\Lambda)$, then this recovers the usual $t$-structure on $\D(Y_\et,\Lambda)$.
For a morphism $f\colon Y'\to Y$ of small $v$-stacks the pullback functor $f^\ast\colon \D_\et(Y,\Lambda)\to \D_\et(Y',\Lambda)$ has a right adjoint $f_\ast\colon \D_\et(Y',\Lambda)\to \D_\et(Y,\Lambda)$. In general, this right adjoint $f_\ast$ is not easy to control. However, if $Y$ is a diamond, then by descent from the strictly totally disconnected case the functor $\D_\et(Y,\Lambda)\to \D(Y_\qproet, \Lambda)$ is fully faithful (\cite[Proposition 14.10]{etale-cohomology-of-diamonds}), and if $f\colon Y'\to Y$ is a qcqs morphism of diamonds, then $f_{\qproet,\ast}\colon \D(Y'_\qproet,\Lambda)\to \D(Y_\qproet,\Lambda)$ restricts to a functor $\D^+_\et(Y',\Lambda)\to \D^+_\et(Y,\Lambda)$ (by the argument of the proof of \cite[Corollary 16.8.(ii)]{etale-cohomology-of-diamonds} and \ref{sec:bound-cond-new-quasi-pro-etale-base-change} below). Thus, $f_{\qproet,\ast}=f_\ast$ on $\D^+_\et(Y',\Lambda)$.

In \cite[\S16]{etale-cohomology-of-diamonds} many base-change properties for étale sheaves on diamonds are discussed. In the following we need a slight generalization of \cite[Corollary~16.10]{etale-cohomology-of-diamonds} to the case of diamonds that are not necessarily locally spatial:

\begin{proposition} \label{sec:bound-cond-new-quasi-pro-etale-base-change}
  Let
  \[\begin{tikzcd}
    {X'} & {Y'} \\
    X & Y
    \arrow["{g'}", from=1-1, to=1-2]
    \arrow["f", from=1-2, to=2-2]
    \arrow["g", from=2-1, to=2-2]
    \arrow["{f'}"', from=1-1, to=2-1]
  \end{tikzcd}\]
  be a cartesian diagram of diamonds. Assume that $f$ is qcqs and that $g$ is quasi-pro-\'etale. Then for $M\in \D^+_\et(Y',\Lambda)$ the natural morphism
  \[
    g^\ast f_\ast M\to f'_\ast g'^\ast M
  \]
  is an isomorphism.
\end{proposition}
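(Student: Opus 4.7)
The plan is to reduce to \cite[Corollary 16.10]{etale-cohomology-of-diamonds}, which proves precisely this base-change statement under the additional assumption that $X$ and $Y$ are locally spatial. Thus the only task is to remove the local-spatiality hypothesis by a two-step descent argument, first on the $X$-side and then on the $Y$-side.

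In the first step I would choose a quasi-pro-étale cover $\pi \colon \widetilde X \to X$ by a strictly totally disconnected perfectoid space; such a $\widetilde X$ is in particular locally spatial. Whether the natural map $g^* f_* M \to f'_* g'^* M$ is an isomorphism in $\D(X_\qproet, \Lambda)$ can be checked after pullback along the quasi-pro-étale cover $\pi$. By functoriality of base change with respect to composition of horizontal arrows, this pullback identifies with the composite of the base-change map for the cartesian square of $f$ along $g \circ \pi$ with the one for the square of $f'$ along $\pi$. In both of these auxiliary squares the lower-left entry is the locally spatial $\widetilde X$. So it suffices to prove the proposition when $X$ is locally spatial.

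In the second step, assuming $X$ locally spatial, I would pick a quasi-pro-étale cover $h \colon \widetilde Y \to Y$ by a strictly totally disconnected perfectoid space and pull back the whole cartesian square along $h$ to obtain a new cartesian square
\[
\begin{tikzcd}
\widetilde X' \arrow[r] \arrow[d] & \widetilde Y' \arrow[d, "\tilde f"] \\
\widetilde X \arrow[r, "\tilde g"'] & \widetilde Y.
\end{tikzcd}
\]
Here $\widetilde Y$ is strictly totally disconnected and hence locally spatial, $\tilde f$ is qcqs as a base change of $f$, $\tilde g$ is quasi-pro-étale, and $\widetilde X := X \times_Y \widetilde Y$ is quasi-pro-étale over the strictly totally disconnected $\widetilde Y$, so it too is locally spatial. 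For this new square \cite[Corollary 16.10]{etale-cohomology-of-diamonds} applies directly and yields the base-change isomorphism. One then descends back to the original square along the v-cover $h$ using the sheaf property of $\D_\et(-, \Lambda)$ on small v-stacks.

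The main obstacle is justifying the v-descent in the second step: one has to verify that the two functors $g^* f_*$ and $f'_* g'^*$, applied to $M$, are compatible with pullback along $h$ and its pullback over $X$, so that their coincidence over $\widetilde Y$ forces their coincidence over $Y$. This needs to be done carefully so as to avoid circularity with the identity $f_{\qproet,*} = f_*$ on $\D^+_\et$ which, as noted right before the proposition, itself relies on this statement. A secondary technical point is to check that a quasi-pro-étale morphism onto a strictly totally disconnected perfectoid space automatically has a locally spatial source, which follows from the structure theory of quasi-pro-étale morphisms developed in \cite{etale-cohomology-of-diamonds}.
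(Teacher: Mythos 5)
Your first reduction (to the case where the source of the quasi-pro-\'etale map is locally spatial, via the composite-square compatibility of base-change maps and conservativity of pullback along a strictly totally disconnected cover of $X$) is sound and corresponds to the paper's ``formal reduction by quasi-pro-\'etale descent of $\D^+_\et$'' to $X$ and $Y$ strictly totally disconnected. The gap is in your second step. After pulling back the whole square along $h\colon \widetilde Y\to Y$, the corners $\widetilde Y'=Y'\times_Y\widetilde Y$ and $\widetilde X'$ are merely qcqs over a (locally) spatial base; a qcqs diamond over a strictly totally disconnected perfectoid space need \emph{not} be spatial, so \cite[Corollary 16.10]{etale-cohomology-of-diamonds} does not ``apply directly'' to the new square. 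If qcqs over a spatial base implied spatial, the whole proposition would be essentially immediate and no extension beyond the locally spatial case would be needed. Moreover, the final ``descend back along $h$'' step is circular in a way your closing caveat does not resolve: to identify $h^\ast f_\ast M$ (and its analogue over $X$) with the pushforward in the pulled-back square you need base change for $f_\ast$ along the quasi-pro-\'etale map $h$, which is precisely an instance of the proposition in the very case (source of the quasi-pro-\'etale map strictly totally disconnected, $Y'$ general) that this step is supposed to establish. Localizing on the base can never remove the non-spatiality of $Y'$.

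What is missing is the treatment of the $Y'$-corner itself, which is the only non-formal part of the paper's proof: after reducing to $X,Y$ strictly totally disconnected, one chooses a quasi-pro-\'etale surjection $Z\to Y'$ from a (qcqs) perfectoid space; all terms of its \v{C}ech nerve are then \emph{spatial} diamonds by \cite[Corollary 11.29]{etale-cohomology-of-diamonds}, and one writes $f_\ast M$ as the totalization of the pushforwards from the nerve. Since $M$ is bounded below and the pushforwards along the nerve maps have the required uniform coconnectivity, $g^\ast$ commutes with this totalization, and the statement reduces to the squares with spatial upper-right corner, where \cite[Corollary 16.10]{etale-cohomology-of-diamonds} (together with $\D^+_\et\cong\D^+((-)_\et)$ in the locally spatial case) applies; the details of this cosimplicial reduction are as in \cite[Proposition 3.3.7]{mann-mod-p-6-functors}. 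Without this resolution of $Y'$ and the accompanying boundedness argument, your proposal does not prove the statement.
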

\begin{proof}
  If $Y,X,Y'$ (and hence $X'$) are locally spatial diamond, then $\D^+_\et(-,\Lambda)\cong \D^+((-)_\et,\Lambda)$ by \cite[Proposition 14.15]{etale-cohomology-of-diamonds} and the assertion follows from \cite[Corollary 16.10]{etale-cohomology-of-diamonds}.
  Using quasi-pro-\'etale descent of $\D_\et^+$ we may formally reduce to the case that $Y,X$ are spatial diamonds (even strictly totally disconnected perfectoid spaces). Let $h\colon Z\to Y'$ be a quasi-pro-\'etale surjection from a perfectoid space $Z$. Then $h$ is qcqs and all terms in the \v{C}ech nerve for $h$ are spatial diamonds (using \cite[Corollary 11.29]{etale-cohomology-of-diamonds}). This formally reduces the assertion to the case that $Y'$ is as well spatial (more details on this reduction can be found in \cite[Proposition 3.3.7]{mann-mod-p-6-functors}). This finishes the proof.
\end{proof}

\begin{remark}
\Cref{sec:bound-cond-new-quasi-pro-etale-base-change} holds for \emph{any} ring $\Lambda$. The assumption $\ell^n \Lambda = 0$ is not necessary.
\end{remark}

The following is an analog of \cite[Theorem~19.2]{etale-cohomology-of-diamonds} that includes the case $\ell = p$. We make the additional assumption that $Y'$ and $Y$ are diamonds to circumvent the failure of strong base change results mod $p$. In the following recall the definition of $j_!$ from \cite[Definition~19.1]{etale-cohomology-of-diamonds}.

\begin{proposition} \label{sec:bound-cond-1-proper-base-change-mod-p}
  Let $f\colon Y^\prime\to Y$ be a proper morphism of diamonds, and let $j\colon U \injto Y$ be an open immersion with pullback $g\colon U':=Y'\cprod_Y U \to U$, $j'\colon U' \injto Y'$. Then for any $A\in \D^+_\et(U^\prime,\Lambda)$ the natural morphism $j_! g_\ast A \isoto f_\ast j'_! A$ is an isomorphism.
\end{proposition}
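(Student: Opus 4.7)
The plan is to verify the claimed isomorphism via the open-closed recollement on $Y$ associated to the open $U$ and its closed complement $Z := Y \setminus U$. First, since both sides $j_! g_\ast A$ and $f_\ast j'_! A$ transform compatibly under quasi-pro-\'etale base change on $Y$ (by \cref{sec:bound-cond-new-quasi-pro-etale-base-change} applied to the open immersions $j, j'$, which are \'etale hence quasi-pro-\'etale, and to the qcqs proper maps $f, g$), and since $\D^+_\et$ satisfies quasi-pro-\'etale descent on diamonds, we may reduce to the case where $Y$ is strictly totally disconnected. Denote by $i\colon Z \injto Y$ the closed complement, with cartesian base change $i'\colon Z' \injto Y'$ and restricted proper map $f_Z\colon Z' \to Z$. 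Since the pair $(j^\ast, i^\ast)$ is jointly conservative on $\D^+_\et(Y, \Lambda)$, it suffices to check that the natural map $j_! g_\ast A \to f_\ast j'_! A$ becomes an isomorphism after applying $j^\ast$ and after applying $i^\ast$.

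The $j^\ast$-part is immediate: the left-hand side becomes $j^\ast j_! g_\ast A = g_\ast A$, and by \cref{sec:bound-cond-new-quasi-pro-etale-base-change} the right-hand side becomes $j^\ast f_\ast j'_! A \cong g_\ast j'^\ast j'_! A = g_\ast A$, with the comparison map being the identity. The $i^\ast$-part, noting that $i^\ast j_! g_\ast A = 0$ by definition of $j_!$, reduces to the vanishing
\[
  i^\ast f_\ast j'_! A = 0.
\]
In the $\ell \ne p$ setting this would follow at once from unrestricted proper base change $i^\ast f_\ast \isoto f_{Z\ast} i'^\ast$ combined with $i'^\ast j'_! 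A = 0$, as in the proof of \cite[Theorem~19.2]{etale-cohomology-of-diamonds}; precisely this unrestricted proper base change fails for $\ell = p$, and the diamond hypothesis in the statement is what allows us to substitute \cref{sec:bound-cond-new-quasi-pro-etale-base-change} for it.

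To prove $i^\ast f_\ast j'_! A = 0$, the plan is to propagate the problem to stalks at points of $Z$. For each geometric point $\bar z$ of $Z$, the inclusion $Y_{\bar z} := \Spa(C, C^+) \to Y$ of the connected component of $\bar z$ in the strictly totally disconnected space $Y$ is pro-\'etale (hence quasi-pro-\'etale), so by \cref{sec:bound-cond-new-quasi-pro-etale-base-change} the pullback of $f_\ast j'_! A$ to $Y_{\bar z}$ equals $\tilde f_\ast(j'_! A|_{\tilde Y'})$ for the base-changed proper morphism $\tilde f\colon \tilde Y' = Y' \times_Y Y_{\bar z} \to Y_{\bar z}$. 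Since $\bar z \in Z$, the fiber $\tilde f^{-1}(\bar z)$ lies in the preimage of $Z$, where $j'_! A$ vanishes identically; a specialization argument exploiting the valuative structure of $Y_{\bar z} = \Spa(C, C^+)$ together with the properness of $\tilde f$ then translates this fiberwise vanishing into vanishing of the stalk at $\bar z$. The main obstacle is this closing step --- converting fiberwise vanishing of $j'_! A$ over $\bar z$ into stalkwise vanishing of its proper pushforward without access to full proper base change in the $\ell = p$ setting --- which is the genuine content of the proposition beyond \cref{sec:bound-cond-new-quasi-pro-etale-base-change}.
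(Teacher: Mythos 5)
Your first reduction (to $Y$ strictly totally disconnected via \cref{sec:bound-cond-new-quasi-pro-etale-base-change}) coincides with the paper's, and the recollement step correctly isolates the content of the proposition as the vanishing $i^\ast f_\ast j'_! A = 0$, equivalently the vanishing of the stalks of $f_\ast j'_! A$ over connected components $\Spa(C,C^+)$ of $Y$ lying outside $U$. But at exactly this point you stop: the ``specialization argument exploiting the valuative structure'' that you invoke is not supplied, and you yourself flag it as ``the genuine content of the proposition''. That is a genuine gap, not a routine verification. Converting the fiberwise vanishing of $j'_! A$ over $\bar z$ into vanishing of the stalk of the proper pushforward is precisely the proper-base-change-type statement one is trying to prove; reducing to the case $Y=\Spa(C,C^+)$ does not by itself make it any more accessible, since for $\ell=p$ one has no base change along the closed immersion $i$ and no control of $f_\ast$ on fibers.

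What is missing is a second reduction, this time on the source $Y'$, and this is where the hypothesis that $Y'$ is a diamond (which your sketch never exploits beyond applying \cref{sec:bound-cond-new-quasi-pro-etale-base-change}) actually enters. The paper follows the structure of \cite[Theorem 19.2]{etale-cohomology-of-diamonds}: choose a quasi-pro-\'etale hypercover of $Y'$ by strictly totally disconnected spaces (possible because $Y'$ is a diamond), use that $Y'_\qproet$ is replete and that $R\Gamma_\et(Y',B)\cong R\Gamma(Y'_\qproet,B)$ for $B\in \D^+_\et(Y',\Lambda)$ (by quasi-pro-\'etale descent from \cite[Proposition 14.10]{etale-cohomology-of-diamonds}) to get cohomological descent without the $\ell\neq p$ input \cite[Corollary 16.8.(ii)]{etale-cohomology-of-diamonds}, and thereby reduce to the case $Y'=\overline{X'}^{/Y}$ for $X'$ strictly totally disconnected over $Y$. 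For such relative compactifications the required vanishing is an explicit, essentially topological computation of sections of $j'_!$-extensions, namely \cite[Lemma 19.4]{etale-cohomology-of-diamonds}, whose proof is independent of the coefficient ring and hence valid for $\ell=p$. Your proposal would need either this descent-plus-explicit-computation route or a genuine substitute for it; as written, the closing step is an unproven assertion, so the argument does not yet constitute a proof.
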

\begin{proof}
  The proof of \cite[Theorem 19.2]{etale-cohomology-of-diamonds} goes through with a few changes: To avoid the use of \cite[Proposition 17.6]{etale-cohomology-of-diamonds}, take a quasi-pro-\'etale cover $X \to Y$ with $X$ strictly totally disconnected (this is possible as $Y$ is a diamond) and apply \cref{sec:bound-cond-new-quasi-pro-etale-base-change} to reduce to the case $Y$ strictly totally disconnected. We note that the reduction in \cite[Theorem 19.2]{etale-cohomology-of-diamonds} to the case that $Y'=\overline{X^\prime}^{/Y}$ for some strictly totally disconnected space $X'$ over $Y=X$ does not need \cite[Corollary 16.8.(ii)]{etale-cohomology-of-diamonds} (or something similar) if the $v$-hypercover is constructed using quasi-pro-\'etale surjections by strictly totally disconnected spaces (which is possible as $Y'$ is assumed to be a diamond): the quasi-pro-\'etale site of $Y^\prime$ is replete, and $R\Gamma_\et(Y',B)\cong R\Gamma(Y'_{\qproet}, B)$ for $B\in \D^+_\et(Y',\Lambda)$ (this follows by quasi-pro-\'etale descent from \cite[Proposition 14.10]{etale-cohomology-of-diamonds}). This implies the desired cohomological descent. The remaining steps in the proof, e.g., \cite[Lemma 19.4]{etale-cohomology-of-diamonds}, go through without change.
\end{proof}

The category of $\ell$-adic étale sheaves on a diamond $X$ behaves particularly well if $X$ is $\ell$-cohomologically bounded and the topology on $X$ is not too wild. Regarding the second point, \cite{etale-cohomology-of-diamonds} usually works with (locally) \emph{spatial} diamonds. For technical reasons we extend this definition slightly and work with \emph{prespatial} diamonds instead. The reader is invited to ignore these subtleties and assume all prespatial diamonds to be spatial in the following.

Let us recall that a diamond $Y$ is prespatial if there exists a spatial subdiamond $Y_0\subseteq Y$ such that for all perfectoid fields $K$ of characteristic $p$, the map $Y_0(K,\ri_K)\to Y(K,\ri_K)$ is a bijection (\cite[Definition 3.1]{mod-ell-stacky-6-functors}). We refer to \cite[Section 3.1]{mod-ell-stacky-6-functors} for general properties of prespatial diamonds, e.g., stability of prespatial diamonds under fiber products. With this definition at hand, we now make the following definition, similar to \cite[Definition 2.1]{mann-nuclear-sheaves}:

\begin{definition} \label{def:ell-bounded-prespatial-diamond}
A prespatial diamond $Y$ is called \emph{$\ell$-bounded} if there exists some $d \ge 0$ such that for every $M \in \D_\et(Y,\F{\ell})$ which is concentrated in degree $0$, the object
\begin{align*}
  \Gamma(Y, M) := \Hom_{\D_\et(Y,\F{\ell})}(\F{\ell}, M)\in \D(\F{\ell})
\end{align*}
is bounded to the right by $d$, i.e. lies in $\D_{\geq -d} = \D^{\leq d}$. 
\end{definition}

\begin{remark}
If $Y$ is a spatial diamond then $\D^+_\et(Y, \F\ell) = \D^+(Y_\et, \F\ell)$ and thus $\Gamma(Y, M)$ just denotes the usual étale cohomology.
\end{remark}

The following result helps detect $\ell$-bounded prespatial diamonds in some occations. In the next subsection we prove much more powerful $\ell$-boundedness results.

\begin{lemma} \label{sec:bound-cond-1-quasi-pro-etale-over-ell-bounded-implies-ell-bounded}
  Let $f\colon Y'\to Y$ be a qcqs quasi-pro-\'etale morphism of diamonds.
  \begin{lemenum}
    \item If $f$ is separated, then $f_\ast$ is $t$-exact.
    \item If $Y$ is an $\ell$-bounded prespatial diamond, then $Y'$ is $\ell$-bounded and prespatial. 
  \end{lemenum}
\end{lemma}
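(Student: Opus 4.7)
The plan is to reduce both parts to the case where $Y$ is strictly totally disconnected via quasi-pro-\'etale base change (\cref{sec:bound-cond-new-quasi-pro-etale-base-change}), after which the fibres of $f$ over geometric points of $Y$ become the decisive object of study.

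For (i), I would pick a quasi-pro-\'etale surjection $Z \to Y$ from a strictly totally disconnected perfectoid space and apply \cref{sec:bound-cond-new-quasi-pro-etale-base-change} to the resulting cartesian square to check $t$-exactness of $f_\ast$ locally on $Z$; this reduces to the case that $Y$ itself is strictly totally disconnected. Left $t$-exactness of $f_\ast$ is automatic as the right adjoint of the $t$-exact $f^\ast$, so only right $t$-exactness requires work. I would test this on stalks at geometric points $y \to Y$: using \cref{sec:bound-cond-new-quasi-pro-etale-base-change} once more, the stalk $(R^q f_\ast M)_y$ equals $H^q(Y'_y, M|_{Y'_y})$, with $Y'_y \to y$ separated, qcqs, and quasi-pro-\'etale, hence pro-\'etale by \cite[Proposition 11.24]{etale-cohomology-of-diamonds}. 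Separatedness plus pro-\'etaleness over a geometric point forces $Y'_y$ to be a profinite set of copies of $y$, which is itself strictly totally disconnected, so $(R^q f_\ast M)_y = 0$ for $q > 0$ and $M \in \D^\heartsuit$. This gives $t$-exactness on the heart and propagates to $\D^+$; the extension to the unbounded category $\D_\et(Y', \Lambda)$ uses separatedness to ensure that $f_\ast$ commutes with filtered colimits, so that $t$-exactness on the heart upgrades to $t$-exactness unbounded.

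For the prespatiality part of (ii), I would take a spatial subdiamond $Y_0 \subseteq Y$ witnessing prespatiality of $Y$ and set $Y'_0 := Y' \times_Y Y_0 \subseteq Y'$. Since $Y'_0 \to Y_0$ is qcqs quasi-pro-\'etale and spatial diamonds are stable under such morphisms (\cite[Section 11]{etale-cohomology-of-diamonds}), $Y'_0$ is spatial. For any perfectoid field $K/\F{p}$, the map $Y'_0(K, \ri_K) \to Y'(K, \ri_K)$ is the base change of the bijection $Y_0(K, \ri_K) \to Y(K, \ri_K)$, hence a bijection, proving prespatiality of $Y'$.

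For the $\ell$-boundedness of $Y'$, I would run the Leray identity $\Gamma(Y', M) = R\Gamma(Y, Rf_\ast M)$ for $M \in \D^\heartsuit_\et(Y', \F{\ell})$. The same stalk computation as in (i) applies (the argument only uses that $Y'_y$ is pro-\'etale over the geometric point $y$, which follows from \cite[Proposition 11.24]{etale-cohomology-of-diamonds} without invoking separatedness): $Rf_\ast M$ lies in $\D^\heartsuit$, and the $\ell$-bound $d$ of $Y$ then forces $\Gamma(Y', M) = R\Gamma(Y, f_\ast M) \in \D^{\leq d}$, so $Y'$ is $\ell$-bounded with the same bound $d$. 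The main technical obstacle I anticipate is the unbounded extension in (i): ensuring that $t$-exactness on $\D^\heartsuit$ propagates to all of $\D_\et(Y',\Lambda)$, where separatedness must be leveraged to control $f_\ast$ on arbitrary colimits.
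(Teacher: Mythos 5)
For part (i) and the prespatiality claim in part (ii) your route is essentially the paper's: reduce to a strictly totally disconnected base via \cref{sec:bound-cond-new-quasi-pro-etale-base-change}, and then use that a separated qcqs quasi-pro-\'etale map over such a base has exact pushforward; your stalk computation simply unfolds the external input \cite[Remark 21.14]{etale-cohomology-of-diamonds} that the paper cites, and your pullback of the spatial subdiamond is the paper's ``follows easily from \cite[Corollary 11.29]{etale-cohomology-of-diamonds}'' step. Two minor inaccuracies: over a higher-rank geometric point $\Spa(C,C^+)$ the fibre of a separated qcqs quasi-pro-\'etale map is strictly totally disconnected (affinoid pro-\'etale), not literally a profinite set of copies of the point; its \'etale covers still split, so the vanishing you need does hold. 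Also, the unbounded upgrade you worry about is not really at stake: the $f_\ast$ in question is the quasi-pro-\'etale pushforward, which agrees with $f_\ast$ on $\D^+_\et$, and right $t$-exactness on $\D^{\le 0}$ follows from the bounded case by writing $M=\varinjlim_n\tau_{\ge -n}M$.

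The genuine gap is in the $\ell$-boundedness part of (ii), where you drop separatedness and claim the stalk argument ``only uses that $Y'_y$ is pro-\'etale over the geometric point $y$'', citing \cite[Proposition 11.24]{etale-cohomology-of-diamonds}. That result gives pro-\'etaleness of the fibre but says nothing about separatedness: a qcqs quasi-pro-\'etale but non-separated $Y'$ has fibres $Y'_y$ that need not be strictly totally disconnected (they can be non-affinoid, non-separated), and none of the available inputs --- your part (i), \cite[Proposition 9.6]{etale-cohomology-of-diamonds}, or \cite[Remark 21.14]{etale-cohomology-of-diamonds} --- yields $H^{q}(Y'_y,M)=0$ for $q>0$ in that generality; your vanishing argument genuinely used that the fibre is strictly totally disconnected, which is exactly where separatedness enters (it is a hypothesis in (i) for a reason). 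So the Leray step ``$Rf_\ast M\in\D^\heartsuit$ for arbitrary qcqs quasi-pro-\'etale $f$'' is unjustified as written. The repair is the paper's reduction: quasi-pro-\'etale maps are locally separated and $Y'$ is qcqs, so cover $Y'$ by finitely many quasi-compact opens on which $f$ is separated (their pairwise intersections are again separated over $Y$), apply the separated case to each piece, and bound $\Gamma(Y',-)$ by the resulting finite \v{C}ech complex; the constant may grow with the number of opens, which is harmless since \cref{def:ell-bounded-prespatial-diamond} only requires some finite bound.
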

\begin{proof}
  Part (i) follows from \cref{sec:bound-cond-new-quasi-pro-etale-base-change} and \cite[Remark 21.14]{etale-cohomology-of-diamonds}. To prove (ii), note that it follows easily from \cite[Corollary~11.29]{etale-cohomology-of-diamonds} that $Y'$ is prespatial. To prove $\ell$-boundedness of $Y'$, we observe that $Y'$ is qcqs and $f$ is locally separated by definition of quasi-pro-étale maps, hence we can assume that $f$ is separated. But then the $t$-exactness of $f_\ast$ reduces $\ell$-boundedness of $Y'$ to $\ell$-boundedness of $Y$. 
\end{proof}

\subsection{Cohomologically bounded maps} \label{sec:bound-cond}

Fix a ring $\Lambda$ such that $\ell^n \Lambda = 0$ for some $n \ge 0$. The goal of this subsection is to provide a relative version of the definition of $\ell$-bounded prespatial diamonds from \cref{def:ell-bounded-prespatial-diamond}. This is more subtle than one might think at first, but we have found a definition that works well. In fact, the $p$-cohomological dimension of points in a diamond can increase after base change, and hence in the definition of a $p$-bounded morphism one needs to ask for bounds of these.

We briefly recall the $\ell$-adic cohomological dimension of points in a diamond. Let $Y$ be a quasi-separated diamond, e.g., a prespatial diamond (see the previous subsection). As is proven in \cite[Proposition 21.9]{etale-cohomology-of-diamonds} for each maximal point $y\in Y$ the ``residual'' subdiamond $Y_y\subseteq Y$ with $|Y_y|=\{y\}$ is of the form $Y_y=\Spd(C_y,\ri_{C_y})/G_y$ with $C_y$ an algebraically closed perfectoid field of characteristic $p$, and $G_y$ a profinite group acting continuously and faithfully on $C_y$. The pair $(C_y,G_y)$ is unique up to (non-unique) isomorphism, and $Y_y$ agrees with the stack quotient $[\Spd(C_y,\ri_{C_y})/G_y]$, i.e., the morphism $\Spd(C_y,\ri_{C_y})\times \underline{G_y}\to \Spd(C_y,\ri_{C_y})\times_{\Spd(\F{p}} \Spd(C_y,\ri_{C_y})$ is a monomorphism. The number $\mathrm{cd}_\ell(y):=\mathrm{cd}_\ell(G_y)$, with $\mathrm{cd}_\ell(G_y)$ the $\ell$-cohomological dimension of $G_y$, is called the \emph{$\ell$-adic cohomological dimension of $y$}. With this definition at hand, we can now come to the definition of $\ell$-bounded maps:

\begin{definition} \label{def:ell-bounded-map}
A morphism $f\colon Y'\to Y$ of small $v$-stacks is called $\ell$-bounded if
\begin{enumerate}[(i)]
  \item $f$ is locally separated and representable in prespatial diamonds (\cite[Definition 3.3]{mod-ell-stacky-6-functors}), in particular $f$ is qcqs,

  \item after pullback to every prespatial diamond, $f$ has finite $\dimtrg$ (cf. \cite[Definition~21.7]{etale-cohomology-of-diamonds}), and

  \item for all $X\to Y$ with $X$ a strictly totally disconnected space there exists some $d_X\geq 0$ such that for all maximal points $y'\in Y'\times_Y X$ the $\ell$-cohomological dimension $\mathrm{cd}_\ell(y')$ is bounded by $d_X$ (\cite[Definition 21.10]{etale-cohomology-of-diamonds}).
\end{enumerate}
\end{definition}

\begin{remarks}
  \label{sec:bound-cond-new-for-l-neq-p-cohom-dimension-not-necessary}
  \begin{remarksenum}
  \item If $\ell\neq p$, then the first two conditions in the definition of an $\ell$-bounded morphism imply the third by \cite[Proposition 21.16]{etale-cohomology-of-diamonds}.
  \item It is not known if the condition of having finite $\dimtrg$ can be checked $v$-locally on the source (cf.\ \cite[Section 21)]{etale-cohomology-of-diamonds}). However, it is stable under pullback by \cite[Remark 21.8]{etale-cohomology-of-diamonds}.
    \item Assume that $f\colon Y'\to Y$ is a morphism between spatial diamonds. By \cite[Lemma 21.6]{etale-cohomology-of-diamonds} the dimension of $f$ as a morphism of spectral spaces, i.e., the supremum of the Krull dimensions of the fibers of $f$ (\cite[Definition 21.1]{etale-cohomology-of-diamonds}), is bounded by $\dimtrg$. Indeed, this reduces to the case that $Y=\Spa(C,C^+)$, and then to the case that $Y'$ is a perfectoid space (as quasi-pro-\'etale surjections don't change the Krull dimension of spatial diamonds).
  \end{remarksenum}
\end{remarks}

It is a priori not clear that $\ell$-bounded maps are stable under composition. This is true and will be shown in \cref{sec:bound-cond-new-stability-properties-of-ell-bounded-maps} below, after having established the necessary prerequisites. Let us start our discussion of $\ell$-bounded maps with the following observation, telling us that $\ell$-bounded maps have finite $\ell$-cohomological dimension in a suitable sense. This can be seen as a version of \cite[Theorem 22.5]{etale-cohomology-of-diamonds} and \cite[Corollary 3.9]{mod-ell-stacky-6-functors}.

\begin{proposition} \label{sec:bound-cond-new-ell-bounded-map-has-locally-bounded-cohom-dimension}
  Let $f\colon Y'\to Y$ be an $\ell$-bounded morphism of small $v$-stacks. Then pushforward along $f$ has locally bounded cohomological dimension.

  More precisely: After pullback along any $X\to Y$ with $X$ a diamond, the pushforward $f'_\ast\colon \D_\et^+(X',\Lambda)\to \D_\et^+(X,\Lambda)$ along the base change $f'\colon X':=Y'\times_Y X\to X$ has finite cohomological dimension (with bound potentially depending on $X$).
\end{proposition}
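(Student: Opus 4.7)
The plan is to reduce, via quasi-pro-\'etale descent, to the case where $X$ is strictly totally disconnected, and then invoke a prespatial version of Scholze's cohomological bound \cite[Theorem~22.5]{etale-cohomology-of-diamonds}.

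Fix a diamond $X \to Y$ and choose a quasi-pro-\'etale surjection $p\colon \widetilde{X} \to X$ with $\widetilde{X}$ strictly totally disconnected. Form the cartesian square and denote the base-changes by $p'\colon \widetilde{X}' \to X'$ and $\widetilde{f}\colon \widetilde{X}' \to \widetilde{X}$. Because $f'$ is qcqs (condition (i)), \cref{sec:bound-cond-new-quasi-pro-etale-base-change} yields a natural isomorphism $p^\ast f'_\ast \isoto \widetilde{f}_\ast p'^\ast$ on $\D_\et^+$. Since the $t$-structure on $\D_\et^+$ is defined $v$-locally by pullback to strictly totally disconnected spaces, the pullback $p^\ast$ is $t$-exact and conservative on bounded-below complexes. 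Hence a cohomological bound for $\widetilde{f}_\ast$ implies the same bound for $f'_\ast$, and we may assume $X = \widetilde{X}$ is strictly totally disconnected.

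In this situation, $X' = Y'\times_Y X$ is a prespatial diamond by condition (i) and the stability of prespatiality under fiber products. Moreover $f'$ has finite $\dimtrg$ by (ii), and every maximal point $y' \in X'$ satisfies $\cd_\ell(y') \le d_X$ by (iii). The prespatial variant of \cite[Theorem~22.5]{etale-cohomology-of-diamonds}, as recorded in \cite[\S3]{mod-ell-stacky-6-functors}, then yields that $f'_\ast\colon \D_\et^+(X',\Lambda) \to \D_\et^+(X,\Lambda)$ has finite cohomological amplitude, with a bound depending only on $\dimtrg(f')$ and on $d_X$. Since these inputs depend only on $X$, this is precisely the claim.

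The main obstacle is the last step: upgrading Scholze's bound from the spatial to the prespatial setting. If one does not wish to cite this extension directly, the alternative is to choose a spatial subdiamond $X'_0 \subset X'$ with the same rank-$1$ points (and hence the same residual pairs $(C_{y'}, G_{y'})$), so that the $\cd_\ell$-bound on maximal points transfers, and then deduce the bound for $X'$ from the spatial bound on $X'_0$ by quasi-pro-\'etale descent through strictly totally disconnected hypercovers (as in the proof of \cref{sec:bound-cond-1-proper-base-change-mod-p}). Either way, the key mechanism is that $\dimtrg$-finiteness together with uniformly bounded $\ell$-cohomological dimension of maximal points controls the cohomological amplitude of the pushforward on $\D_\et^+$.
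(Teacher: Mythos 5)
Your opening reduction is the same as the paper's: use \cref{sec:bound-cond-new-quasi-pro-etale-base-change} and the $v$-local definition of the $t$-structure to pass to a strictly totally disconnected base. The gap is in the step you lean on afterwards. There is no ``prespatial variant of \cite[Theorem~22.5]{etale-cohomology-of-diamonds} recorded in \cite[\S3]{mod-ell-stacky-6-functors}'' that covers the case $\ell = p$: both \cite[Theorem~22.5]{etale-cohomology-of-diamonds} and \cite[Proposition~3.7, Corollary~3.9]{mod-ell-stacky-6-functors} are proved under the hypothesis that $\Lambda$ is killed by an integer prime to $p$, because their proofs use the base-change machinery of \cite[\S17, \S19]{etale-cohomology-of-diamonds} (in particular \cite[Theorem~19.2, Proposition~17.6]{etale-cohomology-of-diamonds}), which fails for $p$-torsion coefficients. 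Since for $\ell \neq p$ the statement is already contained in \cite[Corollary~3.9]{mod-ell-stacky-6-functors} (as the paper notes right after the proposition), the only genuine content here is $\ell = p$, and that is exactly the case your citation does not reach. The third condition in \cref{def:ell-bounded-map} gives you the bound $d_X$ on $\cd_p$ of maximal points, but by itself it does not ``control the cohomological amplitude of the pushforward'' without a substitute for the missing base-change inputs.

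What the paper actually does, and what your proposal is missing, is to rerun the argument of \cite[Theorem~22.5]{etale-cohomology-of-diamonds} with mod-$p$-safe ingredients: after reducing to $Y$ strictly totally disconnected and $X$ separated, one checks the bound on stalks (so $Y=\Spa(C,C^+)$ with $C$ algebraically closed), replaces $X$ by its relative compactification $\overline{X}^{/Y}$ using \cref{sec:bound-cond-1-quasi-pro-etale-over-ell-bounded-implies-ell-bounded} (which changes neither $\dimtrg$ nor the bound on $\cd_\ell$ of maximal points) so that $f$ becomes proper, and then substitutes the new proper base change statement \cref{sec:bound-cond-1-proper-base-change-mod-p} — valid for diamonds and all $\Lambda$, proved earlier in the paper precisely for this purpose — for \cite[Theorem~19.2]{etale-cohomology-of-diamonds}. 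This reduces to sheaves concentrated in degree $0$ vanishing away from the fiber over the closed point, where \cite[Proposition~21.11]{etale-cohomology-of-diamonds} gives the bound $d+\dimtrg(f)$ in the spatial case; only then does the prespatial case follow by the formal argument of \cite[Proposition~3.7]{mod-ell-stacky-6-functors}. Your fallback via a spatial subdiamond $X'_0\subset X'$ with the same rank-$1$ points plus descent has the same problem: it presupposes the spatial-case bound for $\ell=p$, which is not in the literature you cite and is exactly what the compactification-plus-\cref{sec:bound-cond-1-proper-base-change-mod-p} argument is needed to establish.
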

If $\ell\neq p$, then by \cite[Corollary 3.9]{mod-ell-stacky-6-functors} the pushforward $f_\ast$ has cohomological dimension bounded by $3\cdot \dimtrg(f)$. Note that in this case $f_\ast$ commutes with base change by \cite[Proposition 17.6]{etale-cohomology-of-diamonds} as $f$ is qcqs.
\begin{proof}
  We may assume that $Y$ is a diamond, and then by \cref{sec:bound-cond-new-quasi-pro-etale-base-change} that $Y$ is a strictly totally disconnected perfectoid space. We change notation and assume that $f\colon X\to Y$ is an $\ell$-bounded morphism with $Y$ a strictly totally disconnected perfectoid space. As $f$ is locally separated and quasi-compact, we may assume that $X$ is separated. Let $d\geq 1$ be such that $\mathrm{cd}_\ell(x)\leq d$ for all maximal points $x\in X$. We claim that $f_\ast\colon \D^+_\et(X,\Lambda)\to \D^+_\et(Y,\Lambda)=\D^+(Y_\et,\Lambda)$ has a cohomological amplitude bounded in terms of $d$ and $\dimtrg(f)$ (in fact $d+2\cdot \dimtrg(f)$). This statement can be checked on stalks and hence we may assume that $Y=\Spa(C,C^+)$ for some algebraically closed, perfectoid field of characteristic $p$ and $C^+\subseteq C$ an open and bounded valuation subring. Using that $X\to \overline{X}^{/Y}$ is quasi-pro-\'etale and separated, we may by \cref{sec:bound-cond-1-quasi-pro-etale-over-ell-bounded-implies-ell-bounded} replace $X$ by $\overline{X}^{/Y}$ and assume that $f$ is proper (note that these reductions did not change $\dimtrg(f)$ or the cohomological bound $d$ on the cohomological dimension of all maximal fields).
  As in \cite[Theorem 22.5.]{etale-cohomology-of-diamonds}, or \cite[Proposition 3.7]{mod-ell-stacky-6-functors}, we may assume that $A\in \D^+_\et(X,\Lambda)$ is concentrated in degree $0$ and $j^\ast A=0$, where $j\colon f^{-1}(U)\to X$ is the base change of the open immersion $U\to Y$ with complement the unique closed point of $Y$. More precisely, the use of \cite[Theorem 19.2]{etale-cohomology-of-diamonds} is replaced by \cref{sec:bound-cond-1-proper-base-change-mod-p}. If $X$ is a spatial diamond, then as in \cite[Theorem 22.5.]{etale-cohomology-of-diamonds} we may apply \cite[Proposition 21.11]{etale-cohomology-of-diamonds} and conclude that $R^if_\ast(A)=0$ for $i>d+\dimtrg(f)$.

  It remains to handle the more general case that $X$ is only \emph{pre}spatial. But note that the above argument proves the theorem in the general case that $f$ is proper and representable in spatial diamonds (with bound $d_X+\dimtrg(f)$ for $d_X$ a cohomological bound of the cohomological dimension of all maximal points in the base change to the strictly totally disconnected space $X$). Thus the argument of \cite[Proposition 3.7]{mod-ell-stacky-6-functors} applies without change (and yields the bound $d_X+2\cdot \dimtrg(f)$), proving the prespatial case as well.
\end{proof}

\begin{remark}
In the setting of \cref{sec:bound-cond-new-ell-bounded-map-has-locally-bounded-cohom-dimension}, if $\ell \ne p$ then $f$ has $\ell$-cohomological dimension $\le 3\dimtrg(f)$ by \cite[Proposition~3.7]{mod-ell-stacky-6-functors} and this constant is independent of base-change. However, in the case $\ell = p$ it is unclear to us if there is a $p$-cohomological bound on $f$ that works globally (i.e. after every base-change).

If $f\colon Y'=\Spd(C',\ri_{C'})/G\to Y=\Spd(C,\ri_C)$ is a morphism of prespatial diamonds with $C\subseteq C'$ algebraically closed perfectoid fields and $\mathrm{cd}_p G, \mathrm{dimtrg}(f)$ finite, then for $X=\Spd(D,\ri_D)\to Y$ with $D$ another algebraically closed perfectoid field, it is not clear to us if the cohomological dimension of all maximal points of $X\times_Y Y'$ can be bounded by $\mathrm{cd}_p G$. It can nevertheless be bounded by $\mathrm{cd}_pG+1$ (because fields in characteristic $p$ have $p$-cohomological dimension $\leq 1$ by \cite[Proposition 6.5.10]{cohomology-of-number-fields}). 
\end{remark}

The next result relates the notion of $\ell$-bounded maps to the notion of $\ell$-bounded prespatial diamonds from \cref{def:ell-bounded-prespatial-diamond}, providing the expected compatibility:

\begin{proposition} \label{sec:bound-cond-new-absolute-and-relative-notion-of-ell-boundedness}
  Let $f\colon Y'\to Y$ be a locally separated morphism of prespatial diamonds such that $\dimtrg(f) < \infty$ and $Y$ is $\ell$-bounded. Then $f$ is $\ell$-bounded if and only if $Y'$ is $\ell$-bounded.
\end{proposition}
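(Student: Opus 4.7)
The plan is to handle the two directions separately. For $(\Rightarrow)$, suppose $f$ is $\ell$-bounded. Given $M \in \D_\et(Y', \F{\ell})$ concentrated in degree zero, we compute $\Gamma(Y', M) = \Gamma(Y, f_\ast M)$ via adjunction. Since étale pullback is $t$-exact, $f_\ast$ is left $t$-exact, and by \cref{sec:bound-cond-new-ell-bounded-map-has-locally-bounded-cohom-dimension} applied with base $X := Y$, $f_\ast$ has finite cohomological amplitude on $\D^+$, so $f_\ast M$ lies in $\D^{[0, d_1]}_\et(Y, \F{\ell})$ for some $d_1$. Combining this with the $\ell$-boundedness of $Y$ via the hypercohomology spectral sequence yields an absolute bound on $\Gamma(Y', M)$, so $Y'$ is $\ell$-bounded.

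For $(\Leftarrow)$, suppose $Y'$ is $\ell$-bounded; we verify the three conditions of \cref{def:ell-bounded-map}. Condition (i) is immediate: $f$ is locally separated by hypothesis, representability in prespatial diamonds follows from the stability of prespatial diamonds under fibre products, and $f$ is then automatically qcqs. Condition (ii) on $\dimtrg$ is assumed. The real content lies in (iii): for any strictly totally disconnected $X \to Y$ and every maximal $x' \in X' := Y' \times_Y X$, we must bound $\mathrm{cd}_\ell(x')$ uniformly in $x'$.

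To verify (iii), we first observe that the $\ell$-boundedness of $Y'$ with bound $D'$ forces $\mathrm{cd}_\ell(\tilde y') \le D'$ for every maximal $\tilde y' \in Y'$: indeed, each residual subdiamond $Y'_{\tilde y'}$ is qcqs and quasi-pro-étale over $Y'$, hence $\ell$-bounded by \cref{sec:bound-cond-1-quasi-pro-etale-over-ell-bounded-implies-ell-bounded}, and since $Y'_{\tilde y'} = \Spd(C_{\tilde y'}, \ri_{C_{\tilde y'}})/G_{\tilde y'}$ has cohomology equal to $G_{\tilde y'}$-cohomology, one has $\mathrm{cd}_\ell(\tilde y') = \mathrm{cd}_\ell(G_{\tilde y'}) \le D'$. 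Next, given a maximal $x' \in X'$, the strict total disconnectedness of $X$ together with the lifting of generalisations along qcqs maps let us reduce to $X = \Spa(K, K^+)$ with $K$ algebraically closed perfectoid. Then $x'$ lies in $Y' \times_Y \Spa(K, K^+)$, whose residual subdiamond at $x'$ maps to the residual subdiamond $Y'_{\tilde y'}$ of some maximal $\tilde y' \in Y'$; the kernel of $G_{x'} \to G_{\tilde y'}$ is an absolute Galois group of a field of characteristic $p$, which has $\mathrm{cd}_\ell \le 1$ by \cite[Proposition 6.5.10]{cohomology-of-number-fields}, yielding $\mathrm{cd}_\ell(x') \le D' + 1$ uniformly. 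The hardest step is precisely this residue-field analysis in (iii); for $\ell \ne p$ it is automatic by \cref{sec:bound-cond-new-for-l-neq-p-cohom-dimension-not-necessary}, but for $\ell = p$ one relies crucially on the fact that characteristic-$p$ fields have $p$-cohomological dimension $\le 1$.
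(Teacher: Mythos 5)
Your forward direction ($f$ $\ell$-bounded $\Rightarrow$ $Y'$ $\ell$-bounded) and the opening of the converse (conditions (i), (ii); the bound $\mathrm{cd}_\ell(y')\le D'$ for maximal $y'\in Y'$ via $t$-exactness of $j_\ast$ for the residual subdiamond) agree with the paper. The divergence, and the gap, is in your verification of condition (iii). You assert that the kernel $H$ of the (conjugacy class of) homomorphism $G_{x'}\to G_{y'}$ "is an absolute Galois group of a field of characteristic $p$". This is exactly the nontrivial point, and nothing in your argument justifies it. A priori $G_{x'}$, and hence its closed subgroup $H$, is only a profinite group acting continuously and faithfully on the algebraically closed perfectoid field $C_{x'}$; such groups are strictly more general than absolute Galois groups (this is precisely why \cite[Proposition 21.9]{etale-cohomology-of-diamonds} and the paper's definitions are phrased in terms of arbitrary faithful actions, and why the remark after \cref{sec:bound-cond-new-ell-bounded-map-has-locally-bounded-cohom-dimension} is stated so cautiously). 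You cannot argue via "closed subgroups of Galois groups are Galois groups", since $G_{x'}$ itself is not known to be one. To see $H$ as a Galois group one must realize it as the decomposition group of a maximal point of an honest perfectoid space lying over $\Spd(C_{y'},\ri_{C_{y'}})$; this is where the paper invests its work: it forms $Z:=X\times_Y Y'_{y'}\supseteq X'_{x'}$ and the $G_{y'}$-torsor $Z':=Z\times_{Y'_{y'}}\Spd(C_{y'},\ri_{C_{y'}})\cong X\times_Y\Spd(C_{y'},\ri_{C_{y'}})$, uses that $Z'$ is an \emph{affinoid perfectoid space} over the strictly totally disconnected $X$ with $\dimtrg\le\dimtrg(f)$, applies \cref{sec:bound-cond-1-totally-disconnected-space-is-p-bounded} (for $\ell=p$) to bound the cohomology of $Z'$, and then descends through the torsor and the $t$-exact pushforward from $X'_{x'}$ to $Z$, obtaining a bound of the shape $d+3+\dimtrg(f)$ rather than your $D'+1$. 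Your proposal skips this geometric input entirely, and also never identifies the residual diamond of $Z'$ at the point above $x'$ with $\Spd(C_{x'})/H$, which would be needed even after knowing $Z'$ is perfectoid.

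Two smaller points. First, the cited fact that characteristic-$p$ fields have cohomological dimension $\le 1$ concerns $\mathrm{cd}_p$ only; your parenthetical "$\mathrm{cd}_\ell\le 1$" is false for $\ell\ne p$ (you do note that the case $\ell\ne p$ is covered by \cref{sec:bound-cond-new-for-l-neq-p-cohom-dimension-not-necessary}, so this is a matter of phrasing, but it should be made explicit that the kernel argument is only invoked for $\ell=p$). Second, if the missing input were supplied — i.e. if you prove that $Z'$ is affinoid perfectoid and that $H$ is the decomposition group of a maximal point of $Z'$ — then your Hochschild–Serre argument would in fact yield the sharper uniform bound $\mathrm{cd}_\ell(x')\le D'+1$, consistent with (and refining) the paper's remark in the geometric-point case; so the skeleton is salvageable, but as written the decisive step is asserted rather than proved.
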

\begin{proof}
  Assume that $Y'$ is $\ell$-bounded. By \cite[Lemma 3.4.(v)]{mod-ell-stacky-6-functors} $f$ is representable in prespatial diamonds. We have to check the third condition for an $\ell$-bounded map in \cref{def:ell-bounded-map}. Let $g\colon X\to Y$ be a morphism from a strictly totally disconnected space $X$ and let $X':=Y'\times_Y X$ with projections $f'\colon X'\to X$ and $g'\colon X'\to Y'$. Let $d\geq 0$ such that $H^i_\et(Y^\prime,\mathcal{F})=0$ for $i>d$ and $\mathcal{F}\in \D^+_\et(Y',\Lambda)$, which are concentrated in degree $0$. Let $x'\in X'$ be a maximal point, and $y':=g'(x')\in Y'$. Then $y'\in Y'$ is a maximal point, and hence $\{y'\}\subseteq |Y'|$ is the underlying space of a subdiamond $j\colon Y'_{y'}\to Y'$. As $Y'$ is quasi-separated, $Y'_{y'}$ is quasi-separated, and hence by \cite[Propsition 21.9]{etale-cohomology-of-diamonds} of the form $Y'_{y'}=\Spd(C_{y'},\ri_{C_{y'}})/G_{y'}$ for an algebraically closed perfectoid field $C_{y'}$ of characteristic $p$ and $G_{y'}$ a profinite group acting continuously and faithfully on $C_{y'}$. We note that $j$ is quasi-compact, separated and quasi-pro-\'etale (\cite[Corollary 10.6]{etale-cohomology-of-diamonds}), and hence $j_\ast\colon \D^+_\et(Y'_{y'},\Lambda)\to \D^+_\et(Y',\Lambda)$ has cohomological amplitude $0$, i.e., is $t$-exact, by \cite[Remark 21.14]{etale-cohomology-of-diamonds} (resp.\ \cref{sec:bound-cond-1-quasi-pro-etale-over-ell-bounded-implies-ell-bounded}). This implies that $\mathrm{cd}_\ell(G_{y'})\leq d$ because \'etale cohomology of $Y'_{y'}$ identifies with continuous cohomology of $G_{y'}$. Set $Z:=X\times_Y Y'_{y'}$ and note that $X'_{x'}\subseteq Z\subseteq X'$. Applying the same argument to $X'_{x'}\subseteq Z$ it suffices to see that $Z$ is $\ell$-bounded (with a bound depending only on $d$ and $\dimtrg$). Now, the affinoid perfectoid space $Z':=Z\times_{Y'_{y'}} \Spd(C_{y'},\ri_{y'})\cong X\times_{Y} \Spd(C_{y'},\ri_{y'})$ is a $G_{y'}$-torsor over $Z$, and hence it suffices to show that $Z'$ is $\ell$-bounded (with a bound depending only on $\dimtrg(f)$). Note that for $h\colon Z'\to X$ we have $\dimtrg(h)\leq \dimtrg(f)$. Hence, if $\ell\neq p$, we can apply \cite[Corollary~3.9]{mod-ell-stacky-6-functors} and the fact that $X$ is strictly totally disconnected to conclude that $Z'$ is $\ell$-bounded (with bound $3\dimtrg(h)\leq 3\dimtrg(f)$). If $\ell=p$, then we can apply \cref{sec:bound-cond-1-totally-disconnected-space-is-p-bounded} below and conclude that $Z'$ is $\ell$-bounded (with bound $3+\dimtrg(h)\leq 3+\dimtrg(f)$). This finishes the proof that $f$ is $\ell$-bounded.

  Assume conversely that $f$ is $\ell$-bounded. As $Y$ is $\ell$-bounded and $Y'$ prespatial, it suffices to show that $f_\ast\colon \D^+_\et(Y^\prime,\Lambda)\to \D^+_\et(Y,\Lambda)$ has finite cohomological dimension. This is implied by \cref{sec:bound-cond-new-quasi-pro-etale-base-change,sec:bound-cond-new-ell-bounded-map-has-locally-bounded-cohom-dimension}.
\end{proof}

The previous result made use of the following computation in the case $\ell = p$, which is of independent interest to us and is therefore extracted here:

\begin{lemma} \label{sec:bound-cond-1-totally-disconnected-space-is-p-bounded}
  Let $X$ be an affinoid perfectoid space which admits a map $f\colon X^\flat\to Z$ to a totally disconnected space $Z$ with $\dimtrg(f)<\infty$. Then $X$ is $p$-bounded and the constant $d$ in \cref{def:ell-bounded-prespatial-diamond} can be chosen to be $d = \dimtrg(f) + 3$.
\end{lemma}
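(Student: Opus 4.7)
My plan is to bound the \'etale cohomology of $X$ using the tilting equivalence combined with proper base change, splitting the problem into a piece over $Z$ and a piece along $f$. First, the tilting equivalence of \'etale sites $X_\et \cong X^\flat_\et$ gives $\D_\et(X, \F_p) \cong \D_\et(X^\flat, \F_p)$ compatibly with global sections, so I may assume I am working with a concentrated-in-degree-$0$ object $M\in \D_\et(X^\flat,\F_p)^\heartsuit$ and with the map $f\colon X^\flat\to Z$ in characteristic $p$. I then use the factorization
\[
  R\Gamma(X^\flat, M) = R\Gamma(Z, Rf_\ast M)
\]
and bound the cohomological amplitudes of $Rf_\ast$ and of $R\Gamma(Z, -)$ separately.

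For $R\Gamma(Z, -)$: the projection $Z \to \pi_0(Z)$ to the profinite set of connected components has fibers of the form $\Spa(C_z, C_z^+)$ with $C_z$ a perfectoid field of characteristic $p$, which by Artin--Schreier has $p$-cohomological dimension $\le 1$. Since $\pi_0(Z)$ is profinite (hence of cohomological dimension $0$), the functor $R\Gamma(Z, -)$ has amplitude $\le 1$ on $\F_p$-sheaves concentrated in degree $0$.

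For $Rf_\ast$: I pass to the relative compactification $\bar X := \overline{X^\flat}^{/Z}$. The canonical map $j\colon X^\flat\to \bar X$ is quasi-pro-\'etale and separated, so $j_\ast$ is $t$-exact by \cref{sec:bound-cond-1-quasi-pro-etale-over-ell-bounded-implies-ell-bounded}(i); it therefore suffices to bound the amplitude of the proper pushforward $R\bar f_\ast$ along $\bar f\colon \bar X \to Z$, where $\dimtrg(\bar f)\le \dimtrg(f)$. I check this at stalks at geometric points $\bar z \to Z$: combining proper base change \cref{sec:bound-cond-1-proper-base-change-mod-p} with quasi-pro-\'etale base change \cref{sec:bound-cond-new-quasi-pro-etale-base-change}, the stalk equals the \'etale cohomology of the fiber $\bar X \times_Z \bar z$, which is a proper spatial diamond over the strictly totally disconnected $\bar z$ with $\dimtrg\le \dimtrg(f)$ and whose maximal points all have $p$-cohomological dimension $\le 1$ (Artin--Schreier again, applied to residue fields of max points, which are perfectoid fields in characteristic $p$). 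Invoking \cite[Proposition~21.11]{etale-cohomology-of-diamonds} exactly as in the spatial-case step of the proof of \cref{sec:bound-cond-new-ell-bounded-map-has-locally-bounded-cohom-dimension} yields that $R\bar f_\ast$ has amplitude $\le \dimtrg(f) + 1$.

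Combining the two bounds gives $R\Gamma(X^\flat, M) \in \D^{\le \dimtrg(f) + 2} \subset \D^{\le \dimtrg(f) + 3}$, as required. The main technical obstacle I anticipate is verifying that the fiber $\bar X \times_Z \bar z$ is genuinely a spatial (not merely prespatial) diamond, so that the sharper spatial-case bound applies rather than the prespatial one (which would cost an extra $\dimtrg(f)$); this should follow because $X^\flat$ is affinoid perfectoid and $Z$ is totally disconnected. In any case, the slack of one in the stated bound $\dimtrg(f)+3$ leaves room for a slightly weaker amplitude estimate at an intermediate step if necessary.
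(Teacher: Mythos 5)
Your factorization $\Gamma(X,-)=\Gamma(Z,-)\circ f_\ast$, the passage to the relative compactification, and the stalkwise bound on the proper pushforward via \cite[Proposition~21.11]{etale-cohomology-of-diamonds} are essentially the paper's own route (the paper packages the second half as \cref{sec:bound-cond-new-ell-bounded-map-has-locally-bounded-cohom-dimension}, applied after observing that any map of affinoid perfectoid spaces in characteristic $p$ with finite $\dimtrg$ is $p$-bounded, and this does give the amplitude bound $1+\dimtrg(f)$ for $f_\ast$). Your worry about spatial versus prespatial fibers is also harmless, since fibre products of affinoid perfectoid spaces are affinoid perfectoid, hence spatial.

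The genuine gap is in your bound for $R\Gamma(Z,-)$. You assert that the connected components $\Spa(C_z,C_z^+)$ of a totally disconnected space have $p$-cohomological dimension $\le 1$ ``by Artin--Schreier''. Artin--Schreier(--Witt) bounds the Galois cohomology of the field $C_z$, i.e.\ the \'etale cohomology of $\Spa(C_z,\ri_{C_z})$; but when $C_z^+\subsetneq\ri_{C_z}$ the adic spectrum $\Spa(C_z,C_z^+)$ has a nontrivial chain of specializations of the rank-$1$ point, its \'etale topos is strictly larger than $\Spec(C_z)_\et$, and its cohomology is not just Galois cohomology of $C_z$. Controlling it is exactly the nontrivial content of the paper's proof: one reduces to $K^+$ of finite Krull dimension by a limit argument, and then argues by induction on the rank via excision at the closed point, using \cite[Lemma~21.13]{etale-cohomology-of-diamonds} and \cite[Proposition~21.15]{etale-cohomology-of-diamonds}; this yields vanishing of $H^i(\Spa(K,K^+)_\et,\mathcal F)$ only for $i\ge 3$, i.e.\ the bound $2$ (not $1$) for totally disconnected spaces, which is precisely why the final constant is $\dimtrg(f)+3$ rather than $\dimtrg(f)+2$. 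Your built-in slack of one would exactly absorb the corrected constant, but the bound of $2$ itself requires the d\'evissage just described, and your proposal neither proves it nor cites anything that does; as written, the step ``$R\Gamma(Z,-)$ has amplitude $\le 1$'' is unjustified.
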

\begin{proof}
  We may replace $X$ by $X^\flat$ and hence assume that $X$ is of characteristic $p$. We first deal with the case that $X=Z$ is totally disconnected. Let $\pi\colon X_\et\to \pi_0(X)_\et$ be the natural morphism of sites. As $\pi_0(X)_\et$ has cohomological dimension $0$ (see \cite[Lemma 0A3F]{stacks-project}), it suffices to show that $R^i\pi_\ast(\mathcal{F})=0$ for $i>2$ and any static $\mathbb{F}_p$-sheaf on $X_\et$. This can be checked on stalks, and hence we may assume that $X=\Spa(K,K^+)$ is a perfectoid affinoid field in characteristic $p$, i.e., $K$ is a perfectoid field of characteristic $p$ and $K^+\subseteq K$ an open and bounded valuation subring. We may write $(K,K^+)$ as a (completed) filtered colimit of perfectoid affinoid fields $(L,L^+)$ and therefore reduce (via \cite[Proposition 14.9]{etale-cohomology-of-diamonds}) to the case that $K^+$ has finite Krull dimension. Let $j\colon U\to X$ be the complement of the closed point. Then $U=\Spa(K,\widetilde{K}^+)$ for some valuation ring $K^+\subseteq \widetilde{K}^+$. By \cite[Lemma 21.13]{etale-cohomology-of-diamonds} the fiber $\mathcal{G}:=[\mathcal{F}\to Rj_\ast j^\ast\mathcal{F}]$ lies in $\D^{[0,1]}(X_\et,\mathbb{F}_p)$. By \cite[Proposition 21.15]{etale-cohomology-of-diamonds} we can conclude that $H^i(X_\et,\mathcal{G})=0$ for $i\geq 3$ because $\Spa(K,\mathcal{O}_K)$ has $p$-cohomological dimension $\leq 1$ (using $\Spa(K,\mathcal{O}_K)_\et\cong \Spec(K)_\et$ and \cite[Proposition 6.5.10]{cohomology-of-number-fields}). This implies that $H^i(X_\et,\mathcal{F})\cong H^i(U_\et,\mathcal{F}_{|U})$ for $i\geq 3$. By induction this reduces to the case $X=\Spa(K,\mathcal{O}_K)$, in which case $H^i(X_\et,\mathcal{F})=0$ for $i\geq 2$.

  Now assume that $X=\Spa(A,A^+)$ is a general affinoid perfectoid space of characteristic $p$ and that $f\colon X\to Z$ is a morphism to a totally disconnected space $Z=\Spa(R,R^+)$ with $\dimtrg(f)<\infty$. As we have proven that $Z$ is $p$-bounded, it suffices to show that $f_\ast\colon \D^+(X_\et,\F{p})\to \D^+(Z_\et,\F{p})$ has cohomological dimension bounded by $1+\dimtrg(f)$. Using again that fields of characteristic $p$ have cohomological dimension $\leq 1$ (\cite[Proposition 6.5.10]{cohomology-of-number-fields}) it is clear from the definition that $f$ is a $p$-bounded morphism (as is any morphism between \textit{affinoid} perfectoid spaces with finite $\dimtrg$). Thus, the assertion follows from \cref{sec:bound-cond-new-ell-bounded-map-has-locally-bounded-cohom-dimension} and going through the proof yields the bound $1+\dimtrg(f)$. This finishes the proof.  
\end{proof}

With the previous results at hand we can finally check that $\ell$-bounded maps are stable under composition:

\begin{lemma} \label{sec:bound-cond-new-stability-properties-of-ell-bounded-maps}
$\ell$-bounded morphisms of small $v$-stacks are stable under composition and base change.
\end{lemma}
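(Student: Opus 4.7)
The plan is to first establish the base change part (which I will reuse in the composition argument) and then treat composition by verifying the three conditions of \cref{def:ell-bounded-map} separately. For base change, given an $\ell$-bounded morphism $f \colon Y' \to Y$ and an arbitrary $g \colon Z \to Y$, I would observe that each of the three conditions of \cref{def:ell-bounded-map} is individually stable under pullback along $g$: (i) is standard, since locally separated morphisms and representability in prespatial diamonds are both stable under base change (cf.\ \cite[Lemma 3.4]{mod-ell-stacky-6-functors}); (ii) holds because any prespatial $W \to Z$ gives a prespatial $W \to Y$ upon composing with $g$, and the $\dimtrg$-bound is inherited from $f$; (iii) holds because every strictly totally disconnected $X \to Z$ yields a strictly totally disconnected $X \to Y$ via $g$, and $(Y' \times_Y Z) \times_Z X = Y' \times_Y X$, so the bound on $\cd_\ell$ at maximal points transfers directly from $f$.

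For composition, let $f \colon Y'' \to Y'$ and $g \colon Y' \to Y$ both be $\ell$-bounded. I would verify (i) from the stability of locally separated morphisms under composition combined with the stability of prespatial diamonds under fiber products: for any prespatial $W \to Y$, first $Y' \times_Y W$ is prespatial by (i) for $g$, and then $Y'' \times_Y W = Y'' \times_{Y'} (Y' \times_Y W)$ is prespatial by (i) for $f$ applied to this base. Condition (ii) is similar, using additivity of $\dimtrg$ under composition and applying (ii) for $f$ to the prespatial base $Y' \times_Y W$.

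The main content is condition (iii). Given a strictly totally disconnected $X \to Y$, I plan to bound $\cd_\ell$ at the maximal points of $Y'' \times_Y X$ by applying \cref{sec:bound-cond-new-absolute-and-relative-notion-of-ell-boundedness} to the morphism $Y'' \times_Y X \to X$, thereby reducing the question to an \emph{absolute} $\ell$-boundedness statement for the prespatial diamond $Y'' \times_Y X$. The hypotheses of that proposition are all available: $Y'' \times_Y X$ is prespatial and has finite $\dimtrg$ over $X$ by (i) and (ii) just established; $X$ itself is $\ell$-bounded (for $\ell = p$ by \cref{sec:bound-cond-1-totally-disconnected-space-is-p-bounded} with $Z = X$ and $f = \id$, and for $\ell \ne p$ since strictly totally disconnected spaces have $\cd_\ell = 0$); and $Y'' \times_Y X \to X$ is locally separated. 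To verify absolute $\ell$-boundedness of $Y'' \times_Y X$, I would factor as $Y'' \times_Y X \to Y' \times_Y X \to X$: both maps are $\ell$-bounded as base changes of $f$ and $g$ respectively (by the already-established base change part), so \cref{sec:bound-cond-new-ell-bounded-map-has-locally-bounded-cohom-dimension} provides each pushforward with a finite cohomological amplitude on $\D^+_\et$. Composing these amplitudes with the finite amplitude of $\Gamma(X, -)$ furnished by $\ell$-boundedness of $X$ yields a uniform right-bound on $\Gamma(Y'' \times_Y X, \mathcal F)$ for static $\mathcal F$, which is exactly absolute $\ell$-boundedness. The main obstacle is bookkeeping: \cref{sec:bound-cond-new-ell-bounded-map-has-locally-bounded-cohom-dimension} allows the amplitudes to depend on the chosen base, but the finiteness is uniform enough to feed into \cref{sec:bound-cond-new-absolute-and-relative-notion-of-ell-boundedness} and conclude.
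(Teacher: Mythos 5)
Your proposal is correct and follows essentially the same route as the paper: reduce condition (iii) to a strictly totally disconnected base, establish absolute $\ell$-boundedness of the fibre product up the tower, and convert back to the relative statement via \cref{sec:bound-cond-new-absolute-and-relative-notion-of-ell-boundedness}. The only cosmetic difference is that where the paper simply cites \cref{sec:bound-cond-new-absolute-and-relative-notion-of-ell-boundedness} twice to climb from $X$ to $Y'\times_Y X$ to $Y''\times_Y X$, you unfold that implication into its proof, composing the cohomological-amplitude bounds from \cref{sec:bound-cond-new-ell-bounded-map-has-locally-bounded-cohom-dimension} with $\Gamma(X,-)$ by hand, which is the same mechanism.
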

\begin{proof}
  Stability under base change is clear by definition. Stability under composition follows from \cref{sec:bound-cond-new-absolute-and-relative-notion-of-ell-boundedness}. More precisely, assume that $f\colon Y\to X, g\colon Z\to Y$ are $\ell$-bounded morphisms. We can assume that $X$, $Y$ and $Z$ are prespatial diamonds. Then $\dimtrg(f\circ g)\leq \dimtrg(f)+\dimtrg(g)$ by \cite[Lemma 21.3]{etale-cohomology-of-diamonds} and clearly $f\circ g$ is representable in prespatial diamonds. To check the third condition for an $\ell$-bounded morphism in \cref{def:ell-bounded-map} we may assume that $X$ is strictly totally disconnected, and hence $\ell$-bounded. Then by \cref{sec:bound-cond-new-absolute-and-relative-notion-of-ell-boundedness} $Y$ is $\ell$-bounded, which then implies by \cref{sec:bound-cond-new-absolute-and-relative-notion-of-ell-boundedness} that $Z$ is $\ell$-bounded. Applying \cref{sec:bound-cond-new-absolute-and-relative-notion-of-ell-boundedness} again shows that $f\circ g$ is $\ell$-bounded as desired.
\end{proof}

\subsection{\texorpdfstring{$\omega_1$}{ω\_1}-solid sheaves}
\label{sec:omeg-solid-sheav}

In the following we bring the main definitions and results from \cite{mann-nuclear-sheaves} to our situation of interest, allowing explicitly the case $\ell = p$. Recall the definition of $\ell$-bounded (pre)spatial diamonds from \cref{def:ell-bounded-prespatial-diamond}.

From now on we fix a static ring $\Lambda$, which is an adic and profinite $\Z_\ell$-algebra. Then $\Lambda\cong \varprojlim_{n}\Lambda/I^n$ for finite rings $\Lambda/I^n$ and a finitely generated ideal of definition $I\subseteq \Lambda$ containing $\ell$. We will abuse notation and write again $\Lambda$ for the pro-\'etale sheaf $\varprojlim_{n} \Lambda/I^n$ with inverse limit formed on the quasi-pro-\'etale site of a locally spatial diamond.

Before we can define nuclear $\Lambda$-sheaves on $\ell$-bounded spatial diamonds, we need to study solid and $\omega_1$-solid $\Lambda$-sheaves as considered in \cite[Section VII.1]{fargues-scholze-geometrization} and \cite[Section 2]{mann-nuclear-sheaves}. In the following, a quasi-pro-\'etale map $U\to X$ of spatial diamonds is called basic if it can be written as a cofiltered inverse limit of \'etale, quasi-compact and separated maps $U_i\to X$ (\cite[Definition 2.2]{mann-nuclear-sheaves}). We note that by \cite[Definition 10.1.(i)]{etale-cohomology-of-diamonds} quasi-pro-\'etale maps of diamonds are locally separated by definition, and hence the basic quasi-pro-\'etale maps form a basic for $X_\qproet$.

\begin{definition}
  \label{sec:omeg-solid-sheav-1-definition-solid-sheaves}
Let $X$ be a spatial diamond.
\begin{defenum}
	\item For every basic quasi-pro-étale map $U = \varprojlim_i U_i \to X$ (see \cite[Definition 2.2]{mann-nuclear-sheaves}) we denote $\Lambda_\solid[U] := \varprojlim_i \Lambda[U_i]\in \D(X_{\qproet},\Lambda)$.\footnote{The quasi-pro-\'etale site is defined in \cite[Definition 14.1.(ii)]{etale-cohomology-of-diamonds}.}

	\item A quasi-pro-étale sheaf $\mathcal M \in \D(X_\qproet, \Lambda)$ is called \emph{solid} if for all basic quasi-pro-étale $U \to X$ the natural map
	\begin{align*}
		\IHom(\Lambda_\solid[U], \mathcal M) \isoto \IHom(\Lambda[U], \mathcal M)
	\end{align*}
	is an isomorphism. We denote by $\D_\solid(X, \Lambda) \subset \D(X_\qproet, \Lambda)$ the full subcategory spanned by the solid sheaves.

	\item Assume $X$ is $\ell$-bounded. A solid sheaf $\mathcal M \in \D_\solid(X, \Lambda)$ is called \emph{$\omega_1$-solid} if for every $\omega_1$-filtered colimit $U = \varprojlim_i U_i$ of basic objects in $X_\qproet$ the natural map
	\begin{align*}
		\varinjlim_i \Gamma(U_i, \mathcal M) \isoto \Gamma(U, \mathcal M)
	\end{align*}
	is an isomorphism. We denote by $\D_\solid(X, \Lambda)_{\omega_1}$ the full subcategory spanned by the $\omega_1$-solid objects.
        \item An object $M\in \D(X_\qproet,\Lambda)$ is called complete if it is $I$-adically complete for some finitely generated ideal of definition $I\subseteq \Lambda$.
\end{defenum}
\end{definition}

\begin{proposition}
  \label{sec:omeg-solid-sheav-1-proposition-properties-of-solid-sheaves}
  Let $X$ be a spatial diamond.
  \begin{propenum}
  \item The category $\D_\solid(X, \Lambda)$ is stable under limits and colimits in $\D(X_\qproet, \Lambda)$ and contains all étale sheaves.
  \item $\D_\solid(X,\Lambda)$ is compactly generated and a collection of compact generators is given by $\Lambda_\solid[U]$ for w-contractible basic $U \in X_\qproet$. Moreover, for each basic quasi-pro-\'etale $U\to X$ the object $\Lambda_\solid[U]$ is static.
  \item The $t$-structure on $\D(X_\qproet, \Lambda)$ restricts to a $t$-structure on $\D_\solid(X, \Lambda)$.
    \item The composition $\D_\solid(X,\Lambda)\to \D(X_\qproet,\Lambda)\to \D(X_v,\Lambda)$ is fully faithful, and $X\mapsto \D_\solid(X,\Lambda)$ is a $v$-sheaf of $\infty$-categories.
  \item The inclusion $\D_\solid(X,\Lambda)\to \D(X_\qproet,\Lambda)$ admits a left adjoint $(-)_\solid$ such that $(\Lambda[U])_\solid=\Lambda_\solid[U]$ for $U\to X$ basic quasi-pro-\'etale.
  \item There exists a unique closed symmetric monoidal structure $-\otimes^\solid_\Lambda-$ on $\D_\solid(X,\Lambda)$ such that $(-)_\solid$ is symmetric monoidal. We have $\Lambda_\solid[U]\otimes^\solid_\Lambda \Lambda_\solid[U^\prime]\cong \Lambda_\solid[U\cprod_X U^\prime]$ for $U,U^\prime\to X$ basic quasi-pro-\'etale.
    \item If $M,N\in \D_\solid^-(X,\Lambda)$ are complete, then $M\otimes_\Lambda^\solid N$ is complete.
    \item For $M\in \D(X_\qproet,\Lambda)$ and $N\in \D_\solid(X,\Lambda)$ we have $\IHom_{\D(X_\qproet,\Lambda)}(M,N)\in \D_\solid(X,\Lambda)$.
  \item The natural functor $\D_\solid(X,\Lambda)\to \mathrm{Mod}_\Lambda(\D_\solid(X,\Z_\ell))$ is an equivalence.
  \end{propenum}
\end{proposition}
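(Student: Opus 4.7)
The plan is to adapt the treatment of solid \'etale sheaves in \cite{mann-nuclear-sheaves} and \cite[Section VII.1]{fargues-scholze-geometrization} to the present setting, which differs only in that we allow $\ell=p$ and take $\Lambda$ a general adic profinite $\Z_\ell$-algebra rather than just a finite $\Z_\ell$-module. Several items are formal and can be handled in parallel. First I would observe that solidness is preserved under limits (since the defining condition is an isomorphism of $\IHom$'s into the target, and $\IHom$ commutes with limits there) and under colimits (since the test objects $\Lambda_\solid[U]$ will be seen to be compact). That \'etale sheaves are solid reduces to $R\Gamma_\et(U,\mathcal F)=\varinjlim_i R\Gamma_\et(U_i,\mathcal F)$ for \'etale coefficients, which gives (i). The left adjoint in (v) then comes from the adjoint functor theorem, with its value on free objects determined tautologically by the universal property. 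The $v$-descent in (iv) follows by the argument of \cite[Section VII.1]{fargues-scholze-geometrization}, which does not use the hypothesis $\ell\ne p$. The symmetric monoidal structure (vi) is transported to the localization via the universal property of a monoidal Bousfield localization from \cite{lurie-higher-algebra}, once one computes $\Lambda_\solid[U]\otimes^\solid_\Lambda\Lambda_\solid[V]=\Lambda_\solid[U\times_X V]$ on generators; (viii) follows from (vi) by adjunction, using stability of solidness under colimits.

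For (ii), the crux is to show that $\Lambda_\solid[U]$ is static and compact whenever $U=\varprojlim_i U_i$ is w-contractible basic. Writing $\Lambda=\varprojlim_n \Lambda/I^n$ with $\Lambda/I^n$ finite, one has $\Lambda_\solid[U]=\varprojlim_n\varprojlim_i(\Lambda/I^n)[U_i]$, and w-contractibility of $U$ makes the pro-\'etale cohomology of $U$ with finite constant coefficients vanish in positive degrees, giving staticness of the inner limit; the Mittag-Leffler property of the surjective tower in $n$ then yields staticness of the whole. Compactness amounts to the statement that $\Hom(\Lambda_\solid[U],-)=R\Gamma(U,-)$ preserves filtered colimits on solid sheaves, which is again a consequence of w-contractibility by testing against a generating class. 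Item (iii) then follows formally from \cite[Proposition 1.4.4.11]{lurie-higher-algebra}, using that the generators just described are static and hence connective. For (ix), I would use that $\Lambda$ arises as an algebra object of $\D_\solid(X,\Z_\ell)$ via the tower $\{\Lambda/I^n\}$ and that the symmetric monoidality of $(-)_\solid$ identifies modules over $\Lambda$ inside the two categories.

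The main obstacle is (vii). Following the template of \cite[Proposition 2.12.10]{mann-mod-p-6-functors} and the proof of \cref{rslt:basic-properties-of-adically-complete-modules}, the strategy is to use the bar resolution for the adjunction $(-)_\solid \dashv \mathrm{forget}$ in order to write $M$ and $N$ as uniformly right-bounded geometric realizations of objects of the form $\Lambda_\solid[U]$ with $U$ w-contractible basic. These generators are themselves complete, since $\Lambda_\solid[U]=\varprojlim_n (\Lambda/I^n)_\solid[U]$ is a countable limit of complete objects, so the claim reduces to checking that $I$-adic completion commutes with the ensuing uniformly right-bounded geometric realization; right-boundedness controls this by the usual Postnikov argument. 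The genuine difficulty is commuting the countable limit defining completion with the $\omega_1$-filtered colimit implicit in the product $\Lambda_\solid[U]\otimes^\solid_\Lambda\Lambda_\solid[V]=\Lambda_\solid[U\times_X V]$; this interchange is made possible precisely by the right-boundedness hypothesis on both sides, whose necessity is illustrated by \cref{sec:adic-rings-1-example-boundedness-for-completeness-is-needed}.
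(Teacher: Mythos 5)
Your overall strategy --- redoing the arguments of \cite[Section VII.1]{fargues-scholze-geometrization} directly for a general adic profinite $\Lambda$ via the finite quotients $\Lambda/I^n$ --- is genuinely different from the paper's, which reduces everything to the case $\Lambda=\Z_\ell$: since $\Lambda$ is adic and profinite it is representable by a \emph{basic quasi-pro-\'etale} object of $X_\qproet$, so $\Lambda$ is a retract of $\Z_{\ell,\solid}[\Lambda]$, one gets $\Lambda_\solid[U]\cong\Z_{\ell,\solid}[U]\otimes^\solid\Lambda$ as a retract of $\Z_{\ell,\solid}[U\times_X\Lambda]$, and an object of $\D(X_\qproet,\Lambda)$ is solid if and only if its underlying $\Z_\ell$-sheaf is; all of (i)--(ix) then follow from the cited results of \cite{fargues-scholze-geometrization} and \cite[Proposition 2.8]{mann-nuclear-sheaves}. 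As written, your route has gaps at exactly the non-formal points. First, stability under colimits and compact generation are the hard content of \cite[Theorem VII.1.3]{fargues-scholze-geometrization}; your justification that solidness is stable under colimits ``since the test objects $\Lambda_\solid[U]$ will be seen to be compact'' is essentially circular: to check that a colimit of solid sheaves computed in $\D(X_\qproet,\Lambda)$ is again solid you would need $\IHom(\Lambda_\solid[U],-)$ \emph{and} $\IHom(\Lambda[U],-)$ to commute with the relevant (filtered) colimits in the ambient category, and this is not formal --- the sections of $\IHom(\Lambda[U],M)$ on a w-contractible $V$ are $R\Gamma(U\times_X V,M)$, and $U\times_X V$ has no a priori bound on its cohomological dimension (especially when $\ell=p$), so evaluation does not commute with filtered colimits. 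Second, your staticness argument for $\Lambda_\solid[U]$ rests on the interchange $\Lambda[U_i]=\varprojlim_n(\Lambda/I^n)[U_i]$, i.e.\ on $I$-completeness of the free sheaf $\Lambda[U_i]$, which is not justified, and on a Mittag--Leffler argument for the cofiltered limit over $i$, whose transition maps are not surjective; the paper obtains staticness for free from the retract description above together with the known staticness of $\Z_{\ell,\solid}[U\times_X\Lambda]$.

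Third, for (vii) your plan to resolve $M$ and $N$ by bar resolutions built from $\Lambda_\solid[U]$ with $U$ w-contractible does not establish completeness of $M\otimes^\solid_\Lambda N$: the terms of such resolutions are infinite direct sums of the $\Lambda_\solid[U]$, and infinite direct sums of complete objects are not complete, so completeness of the generators does not propagate along the realization; the needed argument replaces direct sums by completed direct sums, written as $\omega_1$-filtered colimits of products $\prod_k x^{\alpha_k}(\cdots)$, and then checks a limit/colimit interchange, as in the proof of \cref{rslt:basic-properties-of-adically-complete-modules} and in \cite[Proposition 2.8]{mann-nuclear-sheaves}. The paper organizes this step differently: a bar resolution over $\Z_\ell$ reduces $\ell$-adic completeness of $M\otimes^\solid_\Lambda N$ to the known case $\Lambda=\Z_\ell$, and the full $I$-adic statement is then reduced modulo $\ell$ to $\Lambda=\mathbb{F}_\ell[[x_1,\dots,x_r]]$ and finally to one variable, where the cited argument applies. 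Your treatment of (iii)--(vi) and (viii) is essentially fine, and (ix) is plausible as you sketch it, but in the paper it drops out immediately from the identification $\Lambda_\solid[U]\cong\Z_{\ell,\solid}[U]\otimes^\solid\Lambda$, which your approach never establishes.
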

In \cite[Defintion VII.1.17]{fargues-scholze-geometrization}, $\D_\solid(X,\Lambda)$ is defined as $\Lambda$-modules in $\D_\solid(X,\Z_\ell)$ for any solid $\Z_\ell$-algebra $\Lambda$. If $\Lambda$ is an adic and profinite $\Z_\ell$-algebra, then \cref{sec:omeg-solid-sheav-1-proposition-properties-of-solid-sheaves} shows that the neat characterization \cref{sec:omeg-solid-sheav-1-definition-solid-sheaves} holds.
\begin{proof}
  We first assume that $\Lambda=\mathbb{Z}_\ell$. We first note that our (a priori different) definition \cref{sec:omeg-solid-sheav-1-definition-solid-sheaves} agrees with \cite[Definition VII.1.10]{fargues-scholze-geometrization}. Indeed, if $M\in \D(X_\qproet,\Z_\ell)$ has solid cohomology objects in the sense of \cite[Definition VII.1.1]{fargues-scholze-geometrization} then by \cite[VII.1.12]{fargues-scholze-geometrization} $M$ is solid in the sense of \cref{sec:omeg-solid-sheav-1-definition-solid-sheaves}. Moreover, the full subcategory $\mathcal{C}\subseteq \D(X_\qproet,\Z_\ell)$ defined in \cite[Definition VII.1.10]{fargues-scholze-geometrization} is stable under all colimits and limits in $\D(X_\qproet,\Z_\ell)$ (by \cite[Theorem VII.1.3]{fargues-scholze-geometrization}) and contains $\Z_\ell[U]$ for any basic quasi-pro-\'etale $U\to X$. Let $N\in \D_\solid(X,\Z_\ell)$. Then $N=\varinjlim_i\Lambda[U_i]$ is a colimit in $\D(X,\Z_\ell)$ for basic quasi-pro-\'etale morphisms $U_i\to X$ with $U_i$ $w$-contractible, as these form a basis of $X_\qproet$. This implies that $N$ is a retract of $\varinjlim_i \Lambda_{\solid}[U_i]$, and thus contained in $\mathcal{C}$. This finishes the claim that $\mathcal{C}=\D_\solid(X,\Z_\ell)$.
  Now, the assertions follow from \cite[Theorem VII.1.3]{fargues-scholze-geometrization}, \cite[Proposition VII.1.13]{fargues-scholze-geometrization} and \cite[Proposition VII.1.14]{fargues-scholze-geometrization}, \cite[Proposition VII.1.8]{fargues-scholze-geometrization}, \cite[Proposition VII.1.11]{fargues-scholze-geometrization} and \cite[Proposition 2.8]{mann-nuclear-sheaves} (which does not use the assumption $\ell\neq p$). This finishes the case that $\Lambda=\Z_\ell$.
  
  Let now $\Lambda$ be a general adic and profinite $\Z_\ell$-algebra. We claim that the natural map
  \[
    \Z_{\ell,\solid}[U]\otimes^\solid\Lambda\to \Lambda_\solid[U]
  \]
  is an isomorphism for any basic quasi-pro-\'etale $U\to X$. Now, $\Lambda=U\to X$ is represented (as a sheaf on $X_\qproet$) by a basic quasi-pro-\'etale morphism over $X$ because $\Lambda$ is adic and profinite. In particular, we see that $\Lambda$ (being solid) is a retract of $\Z_{\ell,\solid}[\Lambda]$. We know that
  \[
    \Z_{\ell,\solid}[U]\otimes^\solid \Z_{\ell,\solid}[\Lambda]\cong \Z_{\ell,\solid}[U\cprod_X \Lambda],
  \]
  which implies the claim by the passage to a retract (and it implies that $\Lambda_\solid[U]$ is static).
  From here it is formal that an object $C\in \D(X_\qproet, \Lambda)$ is solid if and only if its underlying object $C_{|\Z_\ell}\in \D(X_\qproet,\Z_\ell)$ is solid. Indeed, if $C$ is solid, then by the same argument as before $C$ is a retract of a colimit of $\Lambda_\solid[U]$'s with $U\to X$ basic quasi-pro-\'etale, and each of the $\Lambda_\solid[U]$ is solid. Conversely, if $C_{|\Z_\ell}$ is solid, then
  \[
    \IHom_\Lambda(\Lambda_\solid[U],C)\cong \IHom_\Lambda(\Lambda\otimes^\solid \Z_{\ell,\solid}[U],C)\cong \IHom_{\Z_\ell}(\Z_{\ell,\solid}[U],C_{|\Z_\ell})\cong \Gamma(U,C)
  \]
  as desired.

  We can conclude that $\D_\solid(X,\Lambda)\subseteq \D(X_\qproet,\Lambda)$ is stable under all colimits and limits, and that it contains all (static) sheaves of $\Lambda$-modules, which are pulled back from $X_\et$. Similarly, the claim on the $t$-structure on $\D_\solid(X,\Lambda)$ follows as well as the existence of the desired left adjoint $(-)_\solid$. To get the symmetric monoidal structure $-\otimes^\solid_\Lambda-$ it suffices to show that if $C,D\in \D_\solid(X,\Lambda)$ then $\IHom_{\D(X_\qproet,\Lambda)}(C,D)$ is contained in $\D_\solid(X,\Lambda)$. By stability of $\D_\solid(X,\Lambda)$ under limits it suffices to handle the case that $C=\Lambda\otimes^\solid_{\Z_\ell}C^\prime$ for some $C^\prime\in \D_\solid(X,\mathbb{Z}_\ell)$. Then it follows from the same assertion for $\Lambda=\Z_\ell$ (implicitly proven in \cite[Proposition VII.1.14]{fargues-scholze-geometrization}).

  Assume that $M,N\in \D^-_\solid(X,\Lambda)$ are complete. From the case $\Lambda=\Z_\ell$ we can conclude that for each $n\geq 0$ the tensor product $M\otimes_{\Z_\ell}^\solid \Lambda^{\otimes n}\otimes_{\Z_\ell}N$ is $\ell$-adically complete. This implies that $M\otimes^\solid_\Lambda N$ is $\ell$-adically complete as the $\ell$-adic completion commutes with uniformly right-bounded geometric realizations. Using a similar argument for preservation of completedness under geometric realizations, we can reduce first to the case that $\Lambda=\mathbb{F}_\ell[[x_1,\ldots, x_r]]$ for some $r\geq 0$, and then to the case that $r=1$. For $\Lambda=\mathbb{F}_\ell[[x]]$ the same argument as in \cite[Proposition 2.8]{mann-nuclear-sheaves} applies. 

  The final claim $\D_\solid(X,\Lambda)\cong\mathrm{Mod}_\Lambda(\D_\solid(X,\Z_\ell))$ follows from the observation that both embed fully faithfully into $\D(X_\qproet, \Lambda)$ with generators $\Z_{\ell,\solid}[U]\otimes^\solid\Lambda\cong \Lambda_\solid[U]$.
  Now, we can conclude that $X\mapsto \D_\solid(X,\Lambda)$ is a $v$-sheaf of $\infty$-categories on spatial diamonds, e.g., using \cite[Corollary 5.2.2.37]{lurie-higher-algebra}.
\end{proof}

We now turn to the better behaved subcategory of $\omega_1$-solid sheaves, which was introduced (for $\ell\neq p$) in \cite{mann-nuclear-sheaves}. We note that the compact generators of $\D_\solid(X,\Lambda)_{\omega_1}$ are much simpler than the ones of $\D_\solid(X,\Lambda)$. In particular, $\Lambda\in \D_\solid(X,\Lambda)_{\omega_1}$ is compact, while in general $\Lambda\in \D_\solid(X,\Lambda)$ is not.

\begin{proposition}
  \label{sec:omeg-solid-sheav-2-properties-of-omega-1-solid-sheaves}
  Let $X$ be an $\ell$-bounded spatial diamond.
  \begin{propenum}
  \item The category $\D_\solid(X, \Lambda)_{\omega_1}$ is stable under colimits and countable limits in $\D_\solid(X, \Lambda)$ and contains all étale sheaves.
  \item $\D_\solid(X,\Lambda)_{\omega_1}$ is compactly generated and a collection of compact generators is given by $\Lambda_\solid[U]$ for sequential limits $U = \varprojlim_n U_n$ with all $U_n \to X$ being étale, quasicompact and separated.
  \item The $t$-structure on $\D_\solid(X, \Lambda)$ restricts to a $t$-structure on $\D_\solid(X, \Lambda)_{\omega_1}$.
  \item The tensor product $-\otimes^\solid_\Lambda-$ in $\D_\solid(X,\Lambda)$ restricts to a closed symmetric monoidal structure on $\D_\solid(X,\Lambda)_{\omega_1}$.
  \item If $M\in \D_\solid(X,\Lambda)_{\omega_1}$ is $\omega_1$-compact, and $N\in \D_{\solid}(X,\Lambda)_{\omega_1}$ compact, then $\IHom_{\D_\solid(X,\Lambda)_{\omega_1}}(M,N)=\IHom_{\D_\solid(X,\Lambda)}(M,N)$.
  \item $\D_\solid(X,\Lambda)_{\omega_1}\cong \mathrm{Mod}_\Lambda(\D_\solid(X,\Z_\ell)_{\omega_1})$.
  \end{propenum}
\end{proposition}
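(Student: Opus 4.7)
The plan is to imitate the arguments of \cite[\S2]{mann-nuclear-sheaves} (most notably Propositions 2.13, 2.14 and 2.15 there), and to check that the only place where the assumption $\ell \ne p$ was used can now be replaced by the hypothesis that $X$ is $\ell$-bounded in the sense of \cref{def:ell-bounded-prespatial-diamond}. The role of $\ell$-boundedness is to ensure, via \cref{sec:bound-cond-new-ell-bounded-map-has-locally-bounded-cohom-dimension} and the continuity of étale cohomology along cofiltered limits of qcqs basic pro-étale maps, that for fixed basic $U_i \to X$ the functor $\Gamma(U_i, -)$ has finite cohomological dimension on étale (and hence on solid) sheaves. Once this uniform bound is available, all the interchange arguments with sequential limits and $\omega_1$-filtered colimits carry over verbatim. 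Claim (vi) is an immediate formal consequence once (i)--(v) are known, since both sides embed fully faithfully in $\D(X_\qproet,\Lambda)$ with the same generators $\Lambda_\solid[U] \cong \Z_{\ell,\solid}[U] \tensor^\solid_{\Z_\ell} \Lambda$ by \cref{sec:omeg-solid-sheav-1-proposition-properties-of-solid-sheaves}.

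For (i), closure under arbitrary colimits is a formal colimit interchange. Closure under countable limits uses the $\varprojlim^1$-Milnor sequence together with the finite cohomological dimension bound: for fixed $U_i$, $\Gamma(U_i,-)$ commutes with sequential limits up to a universally bounded shift, so combining with $\omega_1$-filteredness both interchange. That étale sheaves are $\omega_1$-solid follows because for $U = \varprojlim_i U_i$ an $\omega_1$-filtered limit of basic qcqs maps, continuity of cohomology yields $\varinjlim_i \Gamma(U_i, \mathcal F) \isoto \Gamma(U, \mathcal F)$ for any bounded-below $\mathcal F \in \D^+_\et(X,\Lambda)$; the universal cohomological bound promotes this to arbitrary étale $\mathcal F$ by writing $\mathcal F = \varprojlim_n \tau_{\leq n}\mathcal F$.

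For (ii), each $\Lambda_\solid[U]$ with $U = \varprojlim_n U_n$ a sequential limit of étale, quasicompact, separated maps is compact in $\D_\solid(X,\Lambda)_{\omega_1}$: indeed, $\Hom(\Lambda_\solid[U], \mathcal M) = \varinjlim_n \Gamma(U_n, \mathcal M)$ for $\omega_1$-solid $\mathcal M$ by the $\omega_1$-continuity axiom. To show these generate, one writes any w-contractible basic $V = \varprojlim_j V_j$ in $X_\qproet$ as an $\omega_1$-filtered colimit in $\D_\solid(X,\Lambda)$ of the $\Lambda_\solid[V_S]$ ranging over countable cofinal subdiagrams $S \subset J$; each $V_S$ is sequential, and applying the sheafification functor $(-)_{\omega_1}$ gives the desired generation. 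For (iii), both $\D_{\ge0}$ and $\D_{\le0}$ of the ambient category are stable under countable limits and $\omega_1$-filtered colimits (the latter again using the cohomological bound), hence their intersection with $\omega_1$-solid is preserved by $\tau_{\geq 0}$ and $\tau_{\leq 0}$. For (iv), $\Lambda_\solid[U] \tensor_\Lambda^\solid \Lambda_\solid[V] \cong \Lambda_\solid[U \times_X V]$, and the fiber product of two sequential limits of étale quasicompact separated maps is again of this form, hence lies in the generating family of (ii); closedness then follows by restricting the internal Hom, and (v) is the standard compatibility of internal Hom with compact objects.

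The main obstacle is the unbounded case of (i), i.e.\ showing that an arbitrary (not necessarily bounded) étale sheaf is $\omega_1$-solid. Bounded-below continuity of cohomology along qcqs cofiltered limits is classical, but to pass to unbounded sheaves one genuinely needs the universal cohomological dimension bound of \cref{sec:bound-cond-new-ell-bounded-map-has-locally-bounded-cohom-dimension}; this is precisely the point where the case $\ell = p$ would fail without the hypothesis of $\ell$-boundedness, and where our work in \cref{sec:bound-cond} is essential.
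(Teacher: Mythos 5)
Your overall plan is close in spirit to the paper's, but note the paper takes a shortcut: for $\Lambda=\Z_\ell$ it simply cites the corresponding proposition of \cite{mann-nuclear-sheaves}, observing that its proof nowhere uses $\ell\neq p$ (the required cohomological bound is built into \cref{def:ell-bounded-prespatial-diamond} itself, rather than being imported from \cref{sec:bound-cond-new-ell-bounded-map-has-locally-bounded-cohom-dimension}), proves (v) by hand, and then deduces the case of a general adic profinite $\Lambda$ from $\Lambda=\Z_\ell$ via the module-category identification in (vi) --- the reverse of your order. The genuine problem with your write-up is the justification of compactness in (ii). You assert that $\Hom(\Lambda_\solid[U],\mathcal M)=\varinjlim_n\Gamma(U_n,\mathcal M)$ for a sequential limit $U=\varprojlim_n U_n$ and every $\omega_1$-solid $\mathcal M$, ``by the $\omega_1$-continuity axiom''. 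The axiom in \cref{sec:omeg-solid-sheav-1-definition-solid-sheaves} applies only to $\omega_1$-filtered limits, and an $\N$-indexed limit is precisely the kind of limit it does not cover; the asserted identity is false in general. For instance, taking $\mathcal M=\Lambda_\solid[U]$ it would force the solidification map $\Lambda[U]\to\Lambda_\solid[U]$ (which corresponds to $\id_{\Lambda_\solid[U]}$ under solidity of $\mathcal M$) to factor through some $\Lambda[U_n]$; this already fails for $X$ a geometric point and $U$ an infinite profinite set over $X$, since distinct Dirac measures would have to coincide. Were the identity true, every sequential $\Lambda_\solid[U]$ would corepresent a colimit of \'etale sections and passing to $\omega_1$-solid sheaves would add no new compact generators at all. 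Compactness of these objects therefore needs a different argument (commuting $\Gamma(U,-)$ with filtered colimits of $\omega_1$-solid sheaves, using the uniform bound from $\ell$-boundedness), and as written (ii) is not proved.

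Second, (v) is exactly the part of the proposition that is \emph{not} contained in the reference you are imitating, and ``the standard compatibility of internal Hom with compact objects'' is not an argument: the content is that $\IHom_{\D_\solid(X,\Lambda)}(M,N)$ is itself $\omega_1$-solid under the stated hypotheses. The paper proves this by a reduction: write the $\omega_1$-compact $M$ as a countable colimit of compact objects and use stability of $\omega_1$-solid sheaves under countable limits to reduce to $M=\Lambda_\solid[U]$ compact; then write the compact $N$ as a countable inverse limit of \'etale sheaves to reduce to $N$ \'etale, where $\omega_1$-solidity of the internal Hom is checked directly. Some argument of this shape must be supplied; without it, and without a corrected proof of (ii), your proposal does not yet establish the proposition.
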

\begin{proof}
  If $\Lambda = \Z_\ell$ then this - except for the claim on internal Hom's - is proven in \cite[Proposition 2.5]{mann-nuclear-sheaves} (note that the proof of that result uses nowhere that $\ell \ne p$). Thus, let $M\in \D_\solid(X,\Lambda)_{\omega_1}$ be $\omega_1$-compact, and let $N\in \D_{\solid}(X,\Lambda)_{\omega_1}$ be compact. Then $M$ is a countable colimit of compact objects, and by stability of $\D_\solid(X,\Lambda)_{\omega_1}$ under countable limits we may therefore assume that $M=\Lambda_\solid[U]$ is compact. As $N$ is compact, we may reduce to the case that it is a countable inverse limit of \'etale sheaves, and thus we may assume that $N$ is an \'etale sheaf. In this case, one checks directly that $\IHom_{\D_\solid(X,\Lambda)}(M,N)$ is $\omega_1$-solid. This finishes the case $\Lambda=\Z_\ell$.

  Back in the case that $\Lambda$ is a static adic and profinite $\Z_\ell$-algebra we observe that $\Lambda \in \D_\solid(X, \Z_\ell)_{\omega_1}$ and that an object $C\in \D_\solid(X,\Lambda)$ is $\omega_1$-solid if and only if $C_{|\mathbb{Z}_\ell}\in \D_\solid(X,\Z_\ell)$ is $\omega_1$-solid. Moreover, $\Lambda_\solid[U] = \Z_{\ell,\solid}[U] \tensor_{\Z_{\ell,\solid}} \Lambda$ by the proof of \cref{sec:omeg-solid-sheav-1-proposition-properties-of-solid-sheaves}. It follows that $\D_\solid(X, \Lambda)_{\omega_1}$ is the same as the category of $\Lambda$-modules in $\D_\solid(X, \Z_\ell)_{\omega_1}$ by \cref{sec:omeg-solid-sheav-1-proposition-properties-of-solid-sheaves}. All assertions now follow from the case $\Lambda=\Z_\ell$.
\end{proof}

We note the following stability of $(\omega_1-)$solid sheaves under pullback and pushforward along maps of spatial diamonds.

\begin{lemma}
  \label{sec:omeg-solid-sheav-2-pullback-and-pushforward-of-solid-sheaves}
  Let $f\colon X'\to X$ be a morphism spatial diamonds, with associated morphisms $f_\qproet\colon X'_\qproet\to X_\qproet, f_v\colon X'_v\to X_v$ on sites.
  \begin{lemenum}
  \item $f^\ast_\qproet\colon \D(X_\qproet,\Lambda)\to \D(X'_\qproet,\Lambda), f^\ast_v\colon \D(X_v,\Lambda)\to \D(X_v,\Lambda)$ restrict to the same $t$-exact, symmetric monoidal functor
    \[
      f^\ast\colon \D_\solid(X,\Lambda)\to \D_\solid(X',\Lambda), 
    \]
    which preserves $\omega_1$-solid objects (if $X',X$ are $\ell$-bounded).
  \item If $f$ is quasi-pro-\'etale, then $f_{\qproet,\ast}\colon \D(X'_\qproet,\Lambda)\to \D(X,\Lambda)$ restricts to a colimit-preserving functor
    \[
      f_\ast\colon \D_\solid(X',\Lambda)\to \D_\solid(X,\Lambda),
    \]
    which preserves $\omega_1$-solid sheaves (if $X',X$ are $\ell$-bounded).
  \item If $\ell\neq p$, then $f_{\qproet,\ast}\colon \D(X'_\qproet,\Lambda)\to \D(X_\qproet,\Lambda), f_{v,\ast}\colon \D(X'_v,\Lambda)\to \D(X_v,\Lambda)$ restrict to the same colimit-preserving functor
    \[
      f_{v,\ast}\colon \D_\solid(X',\Lambda)\to \D_\solid(X,\Lambda),
    \]
    which preserves $\omega_1$-solid sheaves (if $X',X$ are $\ell$-bounded).
  \end{lemenum}
\end{lemma}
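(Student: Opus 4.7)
The plan is to deduce all three statements from the v-sheaf property of $\D_\solid(-,\Lambda)$ established in \cref{sec:omeg-solid-sheav-1-proposition-properties-of-solid-sheaves} together with explicit computations on the compact generators $\Lambda_\solid[U]$.

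For (i), since $X \mapsto \D_\solid(X,\Lambda)$ is a v-sheaf, there is an abstract pullback functor $f^\ast$ coming with the sheaf structure. I would first identify this abstract $f^\ast$ with the restriction of $f^\ast_\qproet$ (and equally of $f^\ast_v$) by evaluating on compact generators: for any basic quasi-pro-\'etale $U = \varprojlim_i U_i \to X$, one has
\[
  f^\ast_\qproet \Lambda_\solid[U] \cong \Lambda_\solid[U \cprod_X X'],
\]
using stability of basic quasi-pro-\'etale maps under pullback and the commutation of the relevant cofiltered limits with $f^\ast_\qproet$. Symmetric monoidality of $f^\ast$ then follows from the characterization of $-\otimes^\solid_\Lambda-$ on generators in \cref{sec:omeg-solid-sheav-1-proposition-properties-of-solid-sheaves}, and $t$-exactness is inherited from the $t$-exactness of $f^\ast_\qproet$ on $\D(X_\qproet,\Lambda)$ because the $t$-structure on $\D_\solid$ is restricted from $\D((-)_\qproet,\Lambda)$. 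Preservation of $\omega_1$-solidity follows because $f^\ast$ preserves the sequential-limit form of the compact generators from \cref{sec:omeg-solid-sheav-2-properties-of-omega-1-solid-sheaves}.

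For (ii), if $f$ is quasi-pro-\'etale then quasi-pro-\'etale maps are stable under composition, so every basic quasi-pro-\'etale $U \to X'$ also defines a basic quasi-pro-\'etale $U \to X$. A direct computation on the site then shows that $f_{\qproet,\ast}\Lambda_\solid[U]$ equals $\Lambda_\solid[U]$ viewed over $X$; in particular it is solid, and $f_{\qproet,\ast}$ sends a set of compact generators of $\D_\solid(X',\Lambda)$ to compact objects of $\D_\solid(X,\Lambda)$, so the restricted $f_\ast$ preserves colimits. Running the same computation with the sequential-limit generators yields preservation of $\omega_1$-solidity.

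For (iii), in the case $\ell \neq p$, v-hyperdescent of $\D_\et(-,\Lambda)$ together with the classical base-change results of \cite{etale-cohomology-of-diamonds} imply that $f_{v,\ast}$ and $f_{\qproet,\ast}$ agree when restricted to \'etale sheaves and both commute with pullback along quasi-pro-\'etale maps. Using v-descent of $\D_\solid(-,\Lambda)$ and writing any solid sheaf as a colimit of objects $\Lambda_\solid[U] = \varprojlim_i \Lambda[U_i]$, one reduces the agreement of the two pushforwards to the \'etale case; this is essentially the content of \cite[Proposition VII.1.18]{fargues-scholze-geometrization}, adapted to adic profinite $\Z_\ell$-algebras via the identification $\D_\solid(X,\Lambda) = \Mod_\Lambda(\D_\solid(X,\Z_\ell))$ of \cref{sec:omeg-solid-sheav-1-proposition-properties-of-solid-sheaves}. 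The main obstacle lies in this last step: identifying $f_{v,\ast}$ with $f_{\qproet,\ast}$ requires the strong base-change and finite cohomological dimension properties which only hold for $\ell \neq p$; by contrast, (i) and (ii) are essentially formal once the computations on the compact generators $\Lambda_\solid[U]$ have been carried out.
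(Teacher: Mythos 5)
The central problem is in your part (ii). The computation ``$f_{\qproet,\ast}\Lambda_\solid[U]$ equals $\Lambda_\solid[U]$ viewed over $X$'' is false: the functor that sends the free (solid) sheaf on $U\in X'_\qproet$ to the free (solid) sheaf on the composite $U\to X'\to X$ is the \emph{left} adjoint of $f^\ast_\qproet$ (composition with $f$ on representables), not the pushforward. Already for a quasi-compact open immersion $j\colon U\injto X$ one has $j_{\qproet,\ast}\Lambda\neq \Lambda[U]$. Moreover, the inference ``$f_\ast$ sends compact generators to compact objects, hence preserves colimits'' runs the compactness criterion in the wrong direction: for the right adjoint $f_\ast$ to commute with filtered colimits one needs the \emph{left} adjoint $f^\ast$ to preserve compact objects. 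That route is genuinely delicate here, because when $\ell=p$ the objects $\Lambda_\solid[V]$ for general basic quasi-pro-étale $V$ need not be compact (only the w-contractible ones are listed as compact generators in \cref{sec:omeg-solid-sheav-1-proposition-properties-of-solid-sheaves}, since $R\Gamma(V,-)$ need not have finite cohomological dimension mod $p$), and pullbacks of w-contractible objects are not w-contractible. This is exactly why the paper does not argue via compact generators but instead deduces (ii), and the $f_{\qproet,\ast}$-case of (iii), from the quasi-pro-étale base change and boundedness results \cite[Cor.\ 16.8, 16.9, 16.10]{etale-cohomology-of-diamonds}, following the argument of \cite[Prop.\ VII.2.1]{fargues-scholze-geometrization}; the case $\ell\neq p$ in (iii) is then the cited FS result itself. (Your auxiliary claim that the composite $U\to X'\to X$ is again \emph{basic} quasi-pro-étale when $f$ is merely quasi-pro-étale would also need justification, but it is moot given the wrong identification of $f_\ast$.)

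In part (i) you also assert, rather than prove, the one non-formal point: that $f^\ast_\qproet$ commutes with the cofiltered limits defining $\Lambda_\solid[U]$, i.e.\ $f^\ast\Lambda_\solid[U]\cong\Lambda_\solid[U\cprod_X X']$. Pullback is a left adjoint and does not commute with limits in general; this base-change statement is precisely the content of \cite[Prop.\ VII.1.8]{fargues-scholze-geometrization}, which the paper cites for (i), and it cannot be waved through. There is also a mild circularity in invoking the v-sheaf property of $\D_\solid(-,\Lambda)$ from \cref{sec:omeg-solid-sheav-1-proposition-properties-of-solid-sheaves} to produce the pullback, since that functoriality is itself established via the same FS result. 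Your claims about $t$-exactness, symmetric monoidality on generators, and preservation of the sequential-limit generators of \cref{sec:omeg-solid-sheav-2-properties-of-omega-1-solid-sheaves} are fine once the base-change isomorphism is in place, and the preservation of $\omega_1$-solidity is handled in the paper by the argument of \cite[Prop.\ 2.6]{mann-nuclear-sheaves}; but as written, (ii) rests on a false identity and an inverted adjunction argument, so the proof does not go through.
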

\begin{proof}
The first assertion follows from \cite[Proposition VII.1.8]{fargues-scholze-geometrization} (and the proof of \cite[Proposition 2.6]{mann-nuclear-sheaves} for the preservation of $\omega_1$-solid sheaves). The third assertion (for $f_{v,\ast}$) is \cite[Proposition VII.2.1]{fargues-scholze-geometrization}. Using \cite[Corollary 16.8]{etale-cohomology-of-diamonds}, \cite[Corollary 16.9]{etale-cohomology-of-diamonds} and \cite[Corollary 16.10]{etale-cohomology-of-diamonds} the same argument implies the second assertion and the case $f_{\qproet,\ast}$ in the third assertion. The preservation of $\omega_1$-solid sheaves follows as in \cite[Proposition 2.6]{mann-nuclear-sheaves} (replacing the use of \cite[Corollary 16.8.(ii)]{etale-cohomology-of-diamonds} by \cite[Corollary 16.8.(i)]{etale-cohomology-of-diamonds} for the second part). 
\end{proof}

\begin{corollary}
  \label{sec:omeg-solid-sheav-2-solid-sheaves-are-hyper-complete-quasi-pro-etale-sheaf}
  \begin{corenum}
  \item The functors $X\mapsto \D_\solid(X,\Lambda)_{\omega_1}$, $X\mapsto \D_\solid(X,\Lambda)$ are hypercomplete quasi-pro-\'etale sheaves on the site of $\ell$-bounded spatial diamonds.\footnote{By \cref{sec:bound-cond-1-quasi-pro-etale-over-ell-bounded-implies-ell-bounded} being $\ell$-bounded ascents along under quasi-compact, separated quasi-pro-\'etale maps} 
  \item If $\ell\neq p$, then $X\mapsto \D_\solid(X,\Lambda)_{\omega_1}$, $X\mapsto \D_\solid(X,\Lambda)$ are hypercomplete $v$-sheaves on the site of $\ell$-bounded spatial diamonds.
  \end{corenum}
\end{corollary}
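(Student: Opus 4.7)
The plan is to apply Lurie's descent criterion, \cite[Corollary 5.2.2.37]{lurie-higher-algebra}, to reduce both claims to the known (hyper)descent of the ambient functor $X\mapsto \D(X_\qproet,\Lambda)$ in case (i), respectively $X\mapsto \D(X_v,\Lambda)$ in case (ii), using the stability properties collected in \cref{sec:omeg-solid-sheav-2-pullback-and-pushforward-of-solid-sheaves}.

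First I would verify that $X\mapsto \D(X_\qproet,\Lambda)$ is a hypercomplete quasi-pro-\'etale sheaf of $\infty$-categories on spatial diamonds; this follows from the fact that the quasi-pro-\'etale site of a spatial diamond is replete (cf.\ the discussion around \cite[Proposition~14.10]{etale-cohomology-of-diamonds}), which automatically upgrades sheaves to hypersheaves. For case (ii), the analogous statement for $X\mapsto \D(X_v,\Lambda)$ on $\ell$-bounded spatial diamonds uses that such diamonds have bounded $v$-cohomological dimension via \cite[Proposition~3.7]{mod-ell-stacky-6-functors} together with the $\ell$-boundedness hypothesis.

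Next, for a qpro\'et (resp.\ $v$-)hypercover $Y_\bullet\to X$ of $\ell$-bounded spatial diamonds, I would invoke \cite[Corollary 5.2.2.37]{lurie-higher-algebra} in the same spirit as in \cref{sec:universal-descent-1-descent-implies-universal-descent-for-analytic-rings}, applied to the cosimplicial diagrams $[n]\mapsto \D_\solid(Y_n,\Lambda)$ and $[n]\mapsto \D(Y_{n,\qproet},\Lambda)$ (resp.\ the $v$-variant), with natural transformation given by the fully faithful inclusions. Pullback preserves (the $\omega_1$-version of) solidness by \cref{sec:omeg-solid-sheav-2-pullback-and-pushforward-of-solid-sheaves}(i), while pushforward along the face maps preserves solidness by \cref{sec:omeg-solid-sheav-2-pullback-and-pushforward-of-solid-sheaves}(ii) in case (i) and by \cref{sec:omeg-solid-sheav-2-pullback-and-pushforward-of-solid-sheaves}(iii) in case (ii); this yields the required compatibility of the inclusions with the pushforward right adjoints. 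Combined with full faithfulness of the inclusion and stability of $\D_\solid$ under limits in $\D(-_\qproet,\Lambda)$ (\cref{sec:omeg-solid-sheav-1-proposition-properties-of-solid-sheaves}(i), \cref{sec:omeg-solid-sheav-2-properties-of-omega-1-solid-sheaves}(i)), this identifies $\D_\solid(X,\Lambda)$ with $\varprojlim_{[n]\in\Delta}\D_\solid(Y_n,\Lambda)$; the argument for $\omega_1$-solid objects is entirely parallel. That the terms $Y_n$ are themselves $\ell$-bounded follows from \cref{sec:bound-cond-1-quasi-pro-etale-over-ell-bounded-implies-ell-bounded} in case (i) and from \cref{sec:bound-cond-new-stability-properties-of-ell-bounded-maps} combined with \cref{sec:bound-cond-new-absolute-and-relative-notion-of-ell-boundedness} in case (ii), so $\D_\solid(-,\Lambda)_{\omega_1}$ is well-defined on every constituent of the hypercover.

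The main obstacle I anticipate is establishing hypercompleteness in case (ii): the $v$-topology is not a priori replete, so the passage from ordinary descent to hyperdescent is not automatic. The uniform bound on $v$-cohomological dimension ensured by $\ell$-boundedness (\cref{sec:bound-cond-new-ell-bounded-map-has-locally-bounded-cohom-dimension}) is precisely what saves the argument and is the ultimate reason for restricting to the $\ell$-bounded site. In case (i), by contrast, repleteness of the qpro\'et site renders the hypercomplete variant automatic once ordinary descent is established.
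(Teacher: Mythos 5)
Your route is genuinely different from the paper's: the paper disposes of $\D_\solid(-,\Lambda)$ by citing \cite[Proposition VII.1.8]{fargues-scholze-geometrization} and then obtains the $\omega_1$-statements by running the argument of \cite[Corollary 2.7]{mann-nuclear-sheaves} \emph{inside the solid category}, with \cref{sec:omeg-solid-sheav-2-pullback-and-pushforward-of-solid-sheaves} supplying the needed pushforward compatibilities; you instead propose to compare against the ambient categories $\D(X_\qproet,\Lambda)$ resp.\ $\D(X_v,\Lambda)$ via \cite[Corollary 5.2.2.37]{lurie-higher-algebra}. For part (i) your plan is a viable alternative: the quasi-pro-\'etale site of a spatial diamond is locally weakly contractible, so $\D(X_\qproet,\Lambda)$ identifies with (automatically hypercomplete) $\D(\Lambda)$-valued sheaves and $U\mapsto \D(U_\qproet,\Lambda)$ does satisfy quasi-pro-\'etale hyperdescent (your appeal to repleteness alone is too quick, but the conclusion holds); all structure maps of a quasi-pro-\'etale hypercover are quasi-pro-\'etale, so \cref{sec:omeg-solid-sheav-2-pullback-and-pushforward-of-solid-sheaves}(ii) gives the adjointability condition; and although $\D_\solid(X,\Lambda)_{\omega_1}$ is only stable under \emph{countable} limits (\cref{sec:omeg-solid-sheav-2-properties-of-omega-1-solid-sheaves}), this still suffices since totalizations are countable limits --- a point you should make explicit rather than calling the $\omega_1$-case ``entirely parallel''.

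Part (ii), however, has a genuine gap: your argument requires that $X\mapsto \D(X_v,\Lambda)$ be a hypercomplete $v$-sheaf on $\ell$-bounded spatial diamonds, and this is neither established by your citations nor plausible in that generality. \cite[Proposition 3.7]{mod-ell-stacky-6-functors} bounds the cohomological dimension of \emph{\'etale} pushforwards along proper maps for $\ell\neq p$; it says nothing about the cohomology of arbitrary $v$-sheaves of $\Lambda$-modules. The $v$-topos is not locally weakly contractible (strictly totally disconnected spaces are not weakly contractible for the $v$-topology, since non-quasi-pro-\'etale $v$-covers need not split), so $\D(X_v,\Lambda)$ need not be left-complete and $U\mapsto \D(U_v,\Lambda)$ need not satisfy $v$-hyperdescent; since (ii) is precisely the $v$-statement, this input cannot be waved through. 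The repair is to change the ambient category: take $\chi'$ to be $\D_\solid(-,\Lambda)$ itself, whose hypercomplete $v$-descent is exactly \cite[Proposition VII.1.8]{fargues-scholze-geometrization} (this is how the paper settles the non-$\omega_1$ case), and then run your Lurie-criterion argument (equivalently, the proof of \cite[Corollary 2.7]{mann-nuclear-sheaves}) for the inclusion $\D_\solid(-,\Lambda)_{\omega_1}\subseteq \D_\solid(-,\Lambda)$, using \cref{sec:omeg-solid-sheav-2-pullback-and-pushforward-of-solid-sheaves}(iii) (which is where $\ell\neq p$ enters) for the adjointability and the stability of $\omega_1$-solid objects under countable limits for the totalizations.
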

\begin{proof}
  The case for $\D_\solid(X,\Lambda)$ is clear by \cite[Proposition VII.1.8]{fargues-scholze-geometrization}.
  Thanks to \cref{sec:omeg-solid-sheav-2-pullback-and-pushforward-of-solid-sheaves} the proof of \cite[Corollary 2.7]{mann-nuclear-sheaves} implies both assertions for $\omega_1$-solid sheaves.
\end{proof}

\subsection{Overconvergent solid sheaves}
\label{sec:nucl-objects-overc}

As in \cref{sec:omeg-solid-sheav} we fix a prime $\ell$ and a static adic and profinite $\Z_\ell$-algebra $\Lambda$.

We now discuss the definition of overconvergent objects in $\D_\solid(X,\Lambda)$ following \cite[Section 6]{mann-nuclear-sheaves}, although with some differences. Most notably, we don't require that overconvergent sheaves are nuclear (in the sense of \cite[Definition 3.1]{mann-nuclear-sheaves}). Note that if $T$ is a profinite set, then we can consider its quasi-pro-\'etale site $T_\qproet$\footnote{For example, defined as the slice $\ast_\proet/T$ with $\ast_\proet$ the pro-\'etale site of a point, \cite[Section 4.3]{proetale-topology}.}, and the full subcategories $\D_\solid(T,\Lambda)_{\omega_1}\subseteq \D_\solid(T,\Lambda)\subseteq \D(T_\qproet,\Lambda)$ similarly defined as in \cref{sec:omeg-solid-sheav-1-definition-solid-sheaves}. These categories satisfy the same properties as in \cref{sec:omeg-solid-sheav-1-proposition-properties-of-solid-sheaves} and \cref{sec:omeg-solid-sheav-2-properties-of-omega-1-solid-sheaves} (this can be checked, e.g., by taking the product of $T$ with $\Spa(C,\mathcal{O}_C)$ for $C$ a perfectoid algebraically closed field in characteristic $p$). In particular, we can call a morphism $S\to T$ of profinite sets ``basic quasi-pro-\'etale'' if $S=\varprojlim_i S_i\to T$ with $S_i\to T$ a local isomorphism for $S_i$ a profinite set.

We recall that if $X$ is a spatial diamond, then there exists a natural morphism $\pi=\pi_X\colon X_\qproet\to \pi_0(X)_\qproet$ of sites such that $\pi^{-1}(T)=T\cprod_{\pi_0(X)} X$ (\cite[Definition 5.5]{mann-werner-simpson}).

\begin{lemma}
  \label{sec:nucl-objects-overc-2-morphism-to-pi-0}
  Let $X$ be a spatial diamond with morphism of sites $\pi\colon X_\qproet\to \pi_0(X)_\qproet$.
  \begin{lemenum}
  \item The functor $\pi^{-1}\colon \widetilde{\pi_0(X)_\qproet}\to \widetilde{X_\qproet}$ commutes with limits, and $\pi_\ast\circ \pi^{-1}=\mathrm{Id}_{\widetilde{\pi_0(X)_\qproet}}$. In fact, if $V\in X_{\qproet}$ is qcqs, then $\pi^{-1}(\mathcal{F})(V)=\mathcal{F}(\pi_0(V))$ for any sheaf of sets $\mathcal{F}$ on $\pi_0(X)$.
    \item The functor $\pi^{-1}\colon \widetilde{\pi_0(X)_\qproet}\to \widetilde{X_\qproet}$ admits a left adjoint $\pi_\natural\colon \widetilde{X_\qproet}\to \widetilde{\pi_0(X)_\qproet}$ such that $\pi_\natural(V\to X)=(\pi_0(V)\to \pi_0(X))$ if $V\in X_\qproet$ is qcqs.
  \end{lemenum}
\end{lemma}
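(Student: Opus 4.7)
The plan is to exploit a categorical adjunction between $\pi_0$ and the base-change functor $u\colon T \mapsto T\times_{\pi_0(X)}X$, and promote it to an adjunction of topoi.

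First I would verify the following assertion at the level of underlying categories: for any qcqs $V\in X_\qproet$ and any $T\in \pi_0(X)_\qproet$, there is a natural bijection
\[
  \Hom_{X_\qproet}(V, u(T)) \;=\; \Hom_{\pi_0(X)_\qproet}(\pi_0(V), T),
\]
because a map $V\to T\times_{\pi_0(X)} X$ over $X$ is the same datum as a map $V\to T$ of sheaves over $\pi_0(X)$, and any map from a qcqs spatial diamond $V$ to a profinite set $T$ factors uniquely through the profinite quotient $V\twoheadrightarrow \pi_0(V)$. This identifies $\pi_0$ with the left adjoint of $u$ on qcqs objects.

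For (i), I would feed this into the standard colimit formula for pullback along a morphism of sites: $\pi^{-1}\mathcal{F}(V)$ is the sheafification of $V\mapsto \colim_{(T,\,V\to u(T))}\mathcal{F}(T)$. For $V$ qcqs the displayed adjunction shows that the indexing category has an initial object $\pi_0(V)\xrightarrow{\mathrm{id}}\pi_0(V)$, so the presheaf-level colimit equals $\mathcal{F}(\pi_0(V))$. I would then check that $V\mapsto \mathcal{F}(\pi_0(V))$ already satisfies descent on the qcqs subsite (using that a qcqs quasi-pro-\'etale surjection $V'\to V$ induces a surjection $\pi_0(V')\to \pi_0(V)$ whose \v{C}ech nerve is $\pi_0$ of the original \v{C}ech nerve, via \cite[Lemma 11.27, Corollary 11.29]{etale-cohomology-of-diamonds}), and hence already is a sheaf on $X_\qproet$. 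Preservation of limits follows directly from this explicit description (limits in $\widetilde{\pi_0(X)_\qproet}$ are section-wise), and the identity $\pi_*\pi^{-1}=\mathrm{Id}$ is the computation $\pi_*\pi^{-1}\mathcal{F}(T)=\pi^{-1}\mathcal{F}(u(T))=\mathcal{F}(\pi_0(u(T)))=\mathcal{F}(T)$, using the canonical identification $\pi_0(T\times_{\pi_0(X)}X)\cong T$ for profinite $T$ over $\pi_0(X)$.

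For (ii), I would define $\pi_\natural$ on qcqs representables $V\to X$ by $\pi_\natural(V):=(\pi_0(V)\to \pi_0(X))$, extend by colimit preservation to all of $\widetilde{X_\qproet}$, and then verify the adjunction with $\pi^{-1}$ by reducing (via colimit preservation on both sides) to the case of qcqs representables. There the required identity
\[
  \Hom_{\pi_0(X)_\qproet}(\pi_0(V),\mathcal{F}) \;=\; \mathcal{F}(\pi_0(V)) \;=\; \pi^{-1}\mathcal{F}(V) \;=\; \Hom_{X_\qproet}(V,\pi^{-1}\mathcal{F})
\]
is part (i). The main obstacle is the sheaf-property step in the middle of (i) -- checking that $V\mapsto \mathcal{F}(\pi_0(V))$ satisfies quasi-pro-\'etale descent. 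This reduces to the well-behavedness of $\pi_0$ under qcqs quasi-pro-\'etale morphisms of spatial diamonds, which is exactly what the construction of the site morphism $\pi$ in \cite[Definition 5.5]{mann-werner-simpson} encodes.
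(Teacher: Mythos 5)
The overall architecture of your argument (the adjunction between $\pi_0$ and $T\mapsto T\times_{\pi_0(X)}X$ on qcqs objects, the initial object in the comma category computing the presheaf pullback, and the colimit-extension construction of $\pi_\natural$ in (ii)) is sound, and it differs from the paper only in that the paper simply quotes \cite[Lemmas 5.4, 5.7]{mann-werner-simpson} for exactly the statements you are trying to reprove. But your proposal has a genuine gap at the step you yourself flag as the main obstacle: the verification that $V\mapsto \mathcal{F}(\pi_0(V))$ is a quasi-pro-\'etale sheaf. Your justification -- that the \v{C}ech nerve of $\pi_0(V')\to\pi_0(V)$ is $\pi_0$ of the \v{C}ech nerve of $V'\to V$ -- is false in general, and the results you cite from \cite{etale-cohomology-of-diamonds} do not give it: they yield spatiality of the fiber products and surjectivity of $|V'\times_V V'|\to |V'|\times_{|V|}|V'|$, which is a statement about topological fiber products, not about $\pi_0$-fiber products. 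Concretely, the comparison map $c\colon \pi_0(V'\times_V V')\to \pi_0(V')\times_{\pi_0(V)}\pi_0(V')$ need not even be surjective: if $V$ is connected and $V'\to V$ is a cover one of whose members is disconnected, any two disjoint connected components $Z_1,Z_2$ of $V'$ give a point $(Z_1,Z_2)$ of the right-hand side whose preimage under $c$ consists of the components of $Z_1\times_V Z_2=Z_1\cap Z_2=\emptyset$. For instance, cover the closed adic unit disc $V=X$ over $\mathbb{C}_p$ by $W_1=\{|T(T-1)|\le |p|\}$ (two disjoint closed sub-discs $D_0,D_1$) and a connected rational $W_2$ completing the cover; then $(D_0,D_1)$ is not in the image of $c$. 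Note also that in the paper itself the compatibility of $\pi_0$ with fiber products is only invoked over \emph{strictly totally disconnected} bases, via a separate lemma of Mann--Werner, precisely because it is not available for general spatial diamonds.

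Because of this, the middle step of your argument does not close: knowing that a section of $\mathcal{F}(\pi_0(V'))$ is compatible on $\pi_0(V'\times_V V')$ only gives compatibility on the image of $c$, which is a priori weaker than the descent datum over $\pi_0(V')\times_{\pi_0(V)}\pi_0(V')$ that the sheaf property of $\mathcal{F}$ on profinite sets lets you use. (If you could show $c$ surjective you would be done, since $\mathcal{F}$ applied to a surjection of profinite sets is injective; but as above, surjectivity fails.) So this step needs a genuinely different argument -- e.g. the one encoded in \cite[Lemma 5.7]{mann-werner-simpson}, which is exactly what the paper cites -- or you must supply your own proof exploiting that the fibres of $|V|\to\pi_0(V)$ are connected together with a compactness/clopen-refinement argument; the one-line reduction via commuting $\pi_0$ with the \v{C}ech nerve does not work. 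The remaining parts of your proposal ((i) granting the formula, the identity $\pi_\ast\pi^{-1}=\mathrm{Id}$ via $\pi_0(T\times_{\pi_0(X)}X)\cong T$, and (ii) by colimit extension and reduction to representables) are fine once this point is repaired.
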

\begin{proof}
  The first assertion is \cite[Lemma 5.7]{mann-werner-simpson}, and the second is implied by \cite[Lemma 5.4]{mann-werner-simpson}. 
\end{proof}

In the strictly totally disconnected case stronger properties hold true (generalizing the results in \cite[Lemma~6.7]{mann-nuclear-sheaves}).

\begin{lemma}
  \label{sec:nucl-objects-overc-1-morphism-to-pi-0-in-std-case}
  Let $X$ be a strictly totally disconnected perfectoid space.
  \begin{lemenum}
  \item If $X$ is strictly totally disconnected, then $\pi^\ast\colon \D(\pi_0(X)_\qproet,\Lambda)\to \D(X_\qproet,\Lambda)$ is fully faithful, and preserves ($\omega_1$-)solid sheaves and all colimits and limits. Its restriction, $\D_\solid(\pi_0(X)_\qproet, \Lambda)\to \D_\solid(X_\qproet,\Lambda)$ is symmetric monoidal.
  \item The functor $\pi_\ast\colon \D(X_\qproet,\Lambda)\to \D(\pi_0(X)_\qproet,\Lambda)$ is $t$-exact and preserves ($\omega_1$-)solid sheaves.
  \item The functor $\pi^\ast\colon \D_\solid(\pi_0(X),\Lambda)\to \D_\solid(X,\Lambda)$ admits a left adjoint $\pi_\natural$, which is symmetric monoidal and preserves $\omega_1$-solid sheaves. Moreover, if $U\to X$ is basic quasi-pro-\'etale, then $\pi_\natural(\Lambda_\solid[U])\cong \Lambda_\solid[\pi_0(U)]$.
  \item For $M\in \D_\solid(X,\Lambda)$ and $N\in \D_\solid(\pi_0(X),\Lambda)$ the natural map
    \[
      \IHom_{\D_\solid(X,\Lambda)}(M,\pi^\ast N)\to \pi^\ast \IHom_{\D_\solid(\pi_0(X),\Lambda)}(\pi_\natural M,N)
    \]
    is an isomorphism. Similarly, with $\D_\solid(-,\Lambda)$ replaced by $\D_\solid(-,\Lambda)_{\omega_1}$.
  \end{lemenum}
\end{lemma}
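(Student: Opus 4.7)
The central technical input, which drives all four parts, is that for a strictly totally disconnected perfectoid space $X$, every qcqs basic quasi-pro-\'etale morphism $V \to X$ is pulled back from its image on $\pi_0$, i.e., $V \cong X \times_{\pi_0(X)} \pi_0(V)$. This is because qcqs \'etale maps to $X$ are disjoint unions of Zariski clopen subspaces, and this property is preserved under cofiltered limits with qcqs transition maps. As a consequence, $V$ itself is strictly totally disconnected, and the formation of $\pi_0$ commutes with fiber products over $X$ of such spaces.

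The plan is to prove (ii) first, then deduce (i), and finally (iii) and (iv). For (ii), the formula $(\pi_\ast M)(T) = R\Gamma(\pi^{-1}(T), M)$ for qcqs $T \in \pi_0(X)_\qproet$ from \cref{sec:nucl-objects-overc-2-morphism-to-pi-0} reduces $t$-exactness to the vanishing of higher pro-\'etale cohomology on the strictly totally disconnected space $\pi^{-1}(T)$ (which is $w$-contractible, making its pro-\'etale topos replete with exact global sections). Preservation of solidity is checked on compact generators: for basic $S = \varprojlim S_i \to T$ in $\pi_0(X)_\qproet$, the preimage $\pi^{-1}(S) \to X$ is basic quasi-pro-\'etale and $\pi_\ast \IHom(\Lambda_\solid[\pi^{-1}(S)], M) = \IHom(\Lambda_\solid[S], \pi_\ast M)$; similarly for $\omega_1$-solidity, using that preimages of sequential limits of \'etale qcqs maps remain sequential limits of the same kind.

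For (i), since $\pi^{-1}$ on sheaves of sets is already exact (it is a left adjoint, and commutes with limits by \cref{sec:nucl-objects-overc-2-morphism-to-pi-0}), the derived pullback $\pi^\ast$ is term-wise $\pi^{-1}$ and therefore preserves all colimits and limits. The identity $\pi_\ast \pi^\ast \cong \id$ obtained via (ii) then yields full faithfulness. Preservation of (${\omega_1}$-)solidity reduces to compact generators $\Lambda_\solid[T]$, whose pullbacks are $\Lambda_\solid[\pi^{-1}(T)]$. Symmetric monoidality on solid sheaves is immediate on generators via the fiber-product identity $\pi^{-1}(T) \times_X \pi^{-1}(T') = \pi^{-1}(T \times_{\pi_0(X)} T')$.

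For (iii), since $\pi^\ast$ preserves all colimits and limits (and both target and source are presentable), the adjoint functor theorem produces both adjoints; the left adjoint $\pi_\natural$ extends the set-level $\pi_\natural$ from \cref{sec:nucl-objects-overc-2-morphism-to-pi-0}. The formula $\pi_\natural(\Lambda_\solid[V]) \cong \Lambda_\solid[\pi_0(V)]$ follows by adjunction from the computation $\pi_\ast(\Lambda_\solid[\pi^{-1}(T)]) \cong \Lambda_\solid[T]$ established above. Symmetric monoidality of $\pi_\natural$ is then checked on these compact generators, where it follows from the key fiber-product identity $\pi_0(V \times_X V') = \pi_0(V) \times_{\pi_0(X)} \pi_0(V')$ valid in the strictly totally disconnected case; preservation of $\omega_1$-solidity is immediate since $\omega_1$-compact generators are sent to $\omega_1$-compact generators. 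For (iv), it suffices to evaluate both sides on a basic quasi-pro-\'etale $V \to X$. Using symmetric monoidality of $\pi_\natural$ and the adjunction, one computes
\[
  \Gamma(V, \IHom(M, \pi^\ast N)) = \Hom(\Lambda_\solid[\pi_0(V)] \otimes \pi_\natural M, N) = \Gamma(\pi_0(V), \IHom(\pi_\natural M, N)),
\]
while the right-hand side evaluates to the same expression using $\Gamma(V, \pi^\ast P) = \Gamma(\pi_0(V), P)$ (a direct consequence of $\pi_\ast \pi^\ast \cong \id$). The main technical hurdle is establishing the fiber-product identity on $\pi_0$ in the strictly totally disconnected case, on which the symmetric monoidality of $\pi_\natural$ (and hence part (iv)) ultimately rests.
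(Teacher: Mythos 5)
Your proposal has a genuine gap, and it sits exactly at the point the paper flags as subtle. The ``central technical input'' you rely on --- that every qcqs basic quasi-pro-\'etale $V \to X$ over a strictly totally disconnected $X$ satisfies $V \cong X \times_{\pi_0(X)} \pi_0(V)$, because qcqs \'etale maps are disjoint unions of \emph{clopen} subspaces --- is false. Quasi-compact opens of a strictly totally disconnected space are in general not clopen: take $X=\Spa(C,C^+)$ connected with $C^+\subsetneq \ri_C$ and $f\in\ri_C\setminus C^+$; then $U=\{|f|\le 1\}$ is a quasi-compact open with $\emptyset\ne U\ne X$, while $\pi_0(U)=\pi_0(X)=\ast$, so $X\times_{\pi_0(X)}\pi_0(U)=X\ne U$. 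The correct statement of this flavor concerns the relative compactification, $\overline{U}^{/X}\cong \pi_0(U)\times_{\pi_0(X)}X$, not $U$ itself, and the remark preceding \cref{sec:overc-solid-sheav-explicit-description-of-pi-0-for-etale-morphism} warns precisely that $\pi_0(U)\to\pi_0(X)$ is far from an open immersion even when $U\injto X$ is one.

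Parts (i), (ii) and (iv) of your plan are essentially the paper's argument and survive (with the small repair that exactness of $\pi_\ast$ should be checked on \emph{extremally disconnected} $T$, since only then is $\pi^{-1}(T)$ w-contractible; and (iv) is indeed formal from symmetric monoidality of $\pi_\natural$). The damage is concentrated in (iii). First, your derivation of $\pi_\natural(\Lambda_\solid[V])\cong\Lambda_\solid[\pi_0(V)]$ ``by adjunction from $\pi_\ast(\Lambda_\solid[\pi^{-1}(T)])\cong\Lambda_\solid[T]$'' only works if $V=\pi^{-1}(\pi_0(V))$, which fails; the correct (and easy) route is the formula $\pi^{-1}\mathcal F(V)=\mathcal F(\pi_0(V))$ of \cref{sec:nucl-objects-overc-2-morphism-to-pi-0}. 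Second, the identity $\pi_0(U\cprod_X V)=\pi_0(U)\cprod_{\pi_0(X)}\pi_0(V)$ is true but needs a genuine argument (the paper imports it from the literature); your clopen-decomposition justification does not establish it. Third, and most seriously, ``preservation of $\omega_1$-solidity is immediate'' is not: one must show that $\Lambda_\solid[\pi_0(U)]$ is $\omega_1$-solid on $\pi_0(X)$ for $U\to X$ \'etale, quasi-compact and separated, and since $\pi_0(U)\to\pi_0(X)$ is not a (countable limit of) local isomorphism(s) in any evident way, this is the hard content of the lemma. The paper isolates it in \cref{sec:overc-solid-sheav-explicit-description-of-pi-0-for-etale-morphism}, reducing to $U=\{|f|\le1\}$, introducing $W_n=\{|f|<|\pi^{-1/p^n}|\}$, and sandwiching $\pi_0(U)$ between quasi-compact opens $S_n\subseteq\pi_0(W_n)$ so that $\Lambda_\solid[\pi_0(U)]=\varprojlim_n\Lambda[S_n]$ is a sequential limit. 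Without an argument of this kind, part (iii) (and hence the $\omega_1$-solid half of (iv)) is unproven in your proposal.
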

\begin{proof}
  Note that \cref{sec:nucl-objects-overc-2-morphism-to-pi-0} implies that $\Lambda$ is pulled back from $\pi_0(X)_\qproet$, and thus $\pi^\ast=\pi^{-1}$ (on static sheaves of $\Lambda$-modules). Hence, for fully faithfulness in the first assertion it suffices (by \cref{sec:nucl-objects-overc-2-morphism-to-pi-0}) to see that $\pi_\ast$ is exact on static sheaves of $\Lambda$-modules. This can be checked on extremally disconnected profinite sets $T$ over $\pi_0(X)$. For such $T$ the pullback $Z:=\pi^{-1}(T)=T\cprod_{\pi_0(X)} X$ is $w$-contractible in $X_\qproet$, i.e., each pro-\'etale cover of $Z$ splits. Preservation of ($\omega_1$-)solid sheaves by $\pi^\ast$ follows from commutation of $\pi^{-1}$ with limits and colimits by checking on the generators of $\D_\solid(\pi_0(X),\Lambda)$ resp.\ $\D_\solid(\pi_0(X),\Lambda)_{\omega_1}$ provided by \cref{sec:omeg-solid-sheav-1-proposition-properties-of-solid-sheaves} resp.\ \cref{sec:omeg-solid-sheav-2-properties-of-omega-1-solid-sheaves} (translated to $\pi_0(X)$).

  That $\pi^\ast$ is symmetric monoidal can be checked on generators where it follows by \cref{sec:omeg-solid-sheav-1-proposition-properties-of-solid-sheaves} from the fact that
  \[
    (T\to \pi_0(X))\mapsto T\cprod_{\pi_0(X)}X
  \]
  commutes with fiber products. Here, $T$ is assumed to be profinite.

  For (ii) we have to check that $\pi_\ast$ preserves ($\omega_1$-)solid sheaves. Let $N\in \D_\solid(X,\Lambda)$ and $T\to \pi_0(X)$ profinite, written as $T=\varprojlim_i T_i$ with $T_i$ profinite and $T_i\to \pi_0(X)$ a local isomorphism. Set $\Lambda_\solid[T]:=\varprojlim_i \Lambda[T_i]$ on $\pi_0(X)_\qproet$. Note that $\pi^\ast(\Lambda_\solid[T])=\Lambda_\solid[\pi^{-1}(T)]$ as $\pi^{-1}$ preserves limits. Then we can conclude
    \begin{align*}
      & \IHom(\Lambda_\solid[T],\pi_\ast N) \\
      = & \pi_\ast(\IHom(\Lambda_\solid[\pi^{-1}T]),N)) \\
      = & \pi_\ast(\IHom(\Lambda[\pi^{-1}T],N)) \\
      = & \pi_\ast(\IHom(\pi^{-1}(\Lambda[T]),N)) \\
      = & \IHom(\Lambda[T],\pi_\ast(N))
    \end{align*}
  using that $N$ is solid and the formal identity $\pi^{-1}(\Lambda[T])\cong \Lambda[\pi^{-1}T]$. For for any cofiltered limit of maps $T_j\to \pi_0(X)$ with $T_j$ profinite we have
  \[
    \Gamma(\varprojlim_j T_j,\pi_\ast(N))\cong \Gamma(\pi^{-1}(\varprojlim_j T_j),N)\cong \Gamma(\varprojlim_j \pi^{-1} T_j,N) 
  \]
  as $\pi^{-1}$ preserves limits. From here it follows that $\pi_\ast$ preserves $\omega_1$-solid objects.

  The existence of $\pi_\natural$ follows as $\pi^{-1}$ preserves limits (or from \cite[Lemma 5.4]{mann-werner-simpson}). It follows formally from the formula $\pi^{-1}\mathcal{F}(V)=\mathcal{F}(\pi_0(V))$ that $\pi_\natural(\Lambda_\solid[U])=\Lambda_\solid[\pi_0(U)]$ for $U\to X$ basic quasi-pro-\'etale. Using \cite[Lemma 5.8]{mann-nuclear-sheaves} we can conclude that $\pi_0(U\cprod_X V)=\pi_0(U)\cprod_{\pi_0(X)}\pi_0(V)$ for $U,V$ basic quasi-pro-\'etale over $X$. This implies that $\pi_\natural$ is symmetric monoidal by \cref{sec:omeg-solid-sheav-1-proposition-properties-of-solid-sheaves}. We need to see that $\pi_\natural$ preserves $\omega_1$-solid sheaves. The explicit formula for $\pi_\natural\Lambda_\solid[U]$ and the fact that $\pi_0(\varprojlim_{i}U_i)\cong \varprojlim_{i}\pi_0(U_i)$ for \'etale, separated maps $U_i\to X$ in a cofiltered system $\{U_i\}_{i}$ shows that it suffices to show that $\pi_\natural$ maps $\Lambda_\solid[U]$ to an $\omega_1$-solid sheaf of $U\to X$ is \'etale quasi-compact and separated. This proven in \cref{sec:overc-solid-sheav-explicit-description-of-pi-0-for-etale-morphism}.
  The assertion (iv) follows formally from symmetric monoidality of $\pi_\natural$. In fact, the left hand side is right adjoint as a functor in $N$ to $\pi_\natural(M\otimes^\solid_\Lambda (-))$ while the right hand side is right adjoint to $\pi_\natural(M)\otimes^\solid_\Lambda\pi_\natural(-)$.
\end{proof}

The proof of \cref{sec:nucl-objects-overc-1-morphism-to-pi-0-in-std-case} made use of following result about $\omega_1$-solidness of $\pi_\natural\Lambda[U]$. Be aware that even if $U \injto X$ is an open immersion, the map $\pi_0(U) \to \pi_0(X)$ is in general quite far from being an open immersion itself, which makes the $\omega_1$-solidness of $\pi_\natural\Lambda[U] = \Lambda_\solid[\pi_0(U)]$ more subtle than one might think at first.

\begin{lemma}
  \label{sec:overc-solid-sheav-explicit-description-of-pi-0-for-etale-morphism}
  Let $j\colon U\to X$ be a quasi-compact, separated, \'etale morphism to a strictly totally disconnected space $X$. Then $\pi_\natural\Lambda[U]=\Lambda_\solid[\pi_0(U)]\in \D_\solid(\pi_0(X),\Lambda)$ is $\omega_1$-solid.
\end{lemma}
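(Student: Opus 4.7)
My plan is to reduce to the case of a quasi-compact open immersion, present the resulting profinite set $\pi_0(U)\to\pi_0(X)$ as an $\omega_1$-filtered cofiltered limit of \'etale local isomorphisms, and then invoke stability of $\omega_1$-solidness under the appropriate limits.

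First, I would reduce to the case where $j\colon U\hookrightarrow X$ is a quasi-compact open immersion. Since $X$ is strictly totally disconnected, any quasi-compact separated \'etale $U\to X$ decomposes, possibly after passing to a clopen refinement, as a finite disjoint union of quasi-compact open immersions $V_k\hookrightarrow X$. Using that $\pi_\natural$ is symmetric monoidal (in particular, preserves finite colimits) and that $\D_\solid(\pi_0(X),\Lambda)_{\omega_1}$ is closed under finite colimits by \cref{sec:omeg-solid-sheav-2-properties-of-omega-1-solid-sheaves}(i), this reduces the statement to the case of a quasi-compact open immersion.

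Second, given such a $j\colon U\hookrightarrow X$, I would analyze the map of profinite sets $\pi_0(U)\to\pi_0(X)$. Since $U$ is finitely presented as a qc open in the spectral space $|X|$ (being a finite union of rational opens), one can present
\[
\pi_0(U)=\varprojlim_{i\in I}T_i
\]
as a cofiltered limit of profinite sets $T_i\to\pi_0(X)$ each of which is a local isomorphism (i.e., basic quasi-pro-\'etale in $\pi_0(X)_\qproet$). Then by definition $\Lambda_\solid[\pi_0(U)]=\varprojlim_i\Lambda[T_i]$ on $\pi_0(X)_\qproet$, and each $\Lambda[T_i]$ is an \'etale sheaf on $\pi_0(X)_\et$, hence $\omega_1$-solid by \cref{sec:omeg-solid-sheav-2-properties-of-omega-1-solid-sheaves}(i).

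Third, to pass from such a presentation to the desired $\omega_1$-solidness of the limit, I would test against a basic $\omega_1$-filtered cofiltered limit $T=\varprojlim_\alpha T_\alpha$ in $\pi_0(X)_\qproet$ and compute
\[
\Gamma(T,\Lambda_\solid[\pi_0(U)])=\varprojlim_i\Gamma(T,\Lambda[T_i])=\varprojlim_i\varinjlim_\alpha\Gamma(T_\alpha,\Lambda[T_i]),
\]
where the second equality uses the $\omega_1$-solidness of each $\Lambda[T_i]$. The goal is to show this agrees with $\varinjlim_\alpha\varprojlim_i\Gamma(T_\alpha,\Lambda[T_i])$. This interchange is the heart of the matter and is handled by exploiting that each $T_i$, as a profinite set, is itself a countable (or $\omega_1$-filtered) limit of finite sets; continuous maps into a finite set factor through a finite stage of $T_\alpha$, and the $\omega_1$-filteredness of the test diagram then gives a common factorization.

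The main obstacle is precisely this interchange of $\varinjlim$ and $\varprojlim$ in the sectionwise computation, which requires careful cardinality bookkeeping on the cofiltered system presenting $\pi_0(U)$ and on the internal profinite structure of the $T_i$. Modulo this, the rest of the argument is a formal consequence of properties already established in \cref{sec:omeg-solid-sheav-1-proposition-properties-of-solid-sheaves} and \cref{sec:omeg-solid-sheav-2-properties-of-omega-1-solid-sheaves}, together with the explicit description of $\pi_\natural$ on representables from \cref{sec:nucl-objects-overc-1-morphism-to-pi-0-in-std-case}.
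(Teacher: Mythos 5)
The reduction to a quasi-compact open immersion in your first step is essentially the paper's (though your claim that $U$ becomes a finite \emph{disjoint} union of open immersions after a clopen refinement is stronger than what is needed or justified; the paper only covers $U$ by finitely many quasi-compact opens mapping isomorphically to opens of $X$ and uses stability of $\omega_1$-solid objects under finite colimits). The genuine gap is in your second and third steps. You present $\pi_0(U)\subseteq\pi_0(X)$ as the cofiltered limit of its clopen neighbourhoods $T_i$ and then must commute $\varprojlim_i$ with an $\omega_1$-filtered colimit $\varinjlim_\alpha$. But $\omega_1$-filtered colimits commute only with \emph{countable} limits, and the system of clopen neighbourhoods of $\pi_0(U)$ in $\pi_0(X)$ is in general uncountable; your proposed mechanism (each $T_i$ is a limit of finite sets, and maps to finite sets factor through finite stages of $T_\alpha$) concerns the internal structure of a single $T_i$ and says nothing about interchanging the \emph{outer}, uncountable limit over $i$ with the colimit over $\alpha$. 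This is exactly the subtlety the paper flags right before the lemma: $\pi_0(U)\to\pi_0(X)$ is far from being open. If the interchange were formal, your argument would show that $\Lambda_\solid[S]$ is $\omega_1$-solid for an arbitrary closed subset $S$ of a profinite set, which is not expected to hold; some geometric input about $U$ is unavoidable, so the step as sketched would fail.

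What the paper does instead is precisely to manufacture a \emph{countable} cofinal system. After using \cite[Lemma 7.6]{etale-cohomology-of-diamonds} and compactness in the constructible topology to reduce to $U=\{|f|\le 1\}$ for a single $f\in A$, it chooses a pseudo-uniformizer $\pi$ and sets $W_n=\{|f|<|\pi^{-1/p^n}|\}$, and then proves, using that the connected components of the strictly totally disconnected space $X$ are chains of specializations, that $\pi_0(U)=\bigcap_{n}\pi_0(W_n)$ and that $\pi_0(U)$ lies in the interior of each $\pi_0(W_n)$ inside $\pi_0(X)$. This yields a nested sequence of quasi-compact open (hence clopen) subsets $S_n\subseteq\pi_0(X)$ with $\pi_0(U)\subseteq S_{n+1}\subseteq S_n\subseteq \pi_0(W_n)$, so that $\Lambda_\solid[\pi_0(U)]=\varprojlim_{n\in\N}\Lambda[S_n]$ is a \emph{sequential} limit of $\omega_1$-solid objects and the claim follows from \cref{sec:omeg-solid-sheav-2-properties-of-omega-1-solid-sheaves}. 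To repair your proposal you would have to produce such a countable cofinal family of clopen neighbourhoods of $\pi_0(U)$; that is the missing idea, and it is exactly where the pseudo-uniformizer and the explicit description of $U$ enter the paper's argument.
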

\begin{proof}
  Write $X=\Spa(A,A^+)$. As $X$ is strictly totally disconnected, each point $u\in U$ has a quasi-compact open neighborhood $V_u$, such that the map $V_u\to X$ is an open immersion. Indeed, this follows from the definition of an \'etale morphism (\cite[Definition 6.2]{etale-cohomology-of-diamonds}) using that each finite \'etale map onto some quasi-compact open in $X$ will split. Hence, without loss of generality $j\colon U\to X$ can be assumed to be a quasi-compact open immersion (using that $\omega_1$-solid sheaves are stable under finite colimits). By \cite[Lemma 7.6]{etale-cohomology-of-diamonds} $U$ is the intersection of subsets $\{|f|\leq 1\}$ for varying $f\in A$. As $U\subseteq X$ is quasi-compact open, $X\setminus U$ is quasi-compact in the constructible topology. As any $\{|f|\leq 1\}$ for $f\in A$ is closed in the constructible topology, this implies that $U$ is the finite intersection of subsets $U_1=\{|f_1|\leq 1\},\ldots, U_r=\{|f_r|\leq 1\}$ with $f_1,\ldots, f_r\in A$. By \cite[Lemma 5.8]{mann-werner-simpson} $\pi_0(U)=\pi_0(U_1)\cprod_{\pi_0(X)}\ldots \cprod_{\pi_0(X)} \pi_0(U_r)$ and thus \cref{sec:omeg-solid-sheav-1-proposition-properties-of-solid-sheaves}, \cref{sec:omeg-solid-sheav-2-properties-of-omega-1-solid-sheaves} imply that we may assume that $U=U_1=\{|f|\leq 1\}$ for $f:=f_1$. Let $\pi\in A$ be a pseudo-uniformizer and set
  \[
    W_n:=\{|f|<|\pi^{-1/p^n}|\}
  \]
  for $n\in \mathbb{N}$. Each $W_n$ is a spectral space, being closed in the constructible topology (\cite[0902]{stacks-project}), and hence $\pi_0(W_n)$ is a profinite set. We claim:
  \begin{enumerate}
  \item[1)] $\pi_0(U), \pi_0(W_n), n\in \mathbb{N},$ are subspaces of $\pi_0(X)$,
    \item[2)] $\bigcap\limits_{n\in \mathbb{N}}W_n$ is the closure of $U$,
    \item[3)] $\pi_0(U)=\bigcap\limits_{n\in \mathbb{N}}\pi_0(W_n)$,
      \item[4)] for each $n\in \mathbb{N}$ the subspace $\pi_0(U)$ lies in the interior of $\pi_0(W_n)\subseteq \pi_0(X)$.
      \end{enumerate}
      The claims imply that $\Lambda_\solid[\pi_0(U)]$ is $\omega_1$-solid. Indeed, we may inductively construct quasi-compact open subsets $S_n\subseteq \pi_0(X)$ with $\pi_0(U)\subseteq S_n\subseteq \pi_0(W_n)$, and $S_{n+1}\to S_n$. Then $\Lambda_\solid[\pi_0(U)]=\varprojlim\limits_{n\in \mathbb{N}}\Lambda[S_n]$ is $\omega_1$-solid by \cref{sec:omeg-solid-sheav-2-properties-of-omega-1-solid-sheaves}.

      To show 1) it suffices (because the $\pi_0$ are profinite and the maps continuous) to show that $\pi_0(U)\to \pi_0(X), \pi_0(W_n)\to \pi_0(X)$ are injective. Now, $U$ is open, and thus stable under generalization while the $W_n$ are closed, hence stable under specializations. This implies that if $Z\subseteq X$ is a connected component, then $Z\cap U$, $Z\cap W_n$ are connected (if non-empty) because $Z$ is a linearly ordered chain of points. This in turn implies injectivity.

      As each $W_n$ is closed, and $U\subseteq W_n$ we see $\overline{U}\subseteq \bigcap\limits_{n\in \mathbb{N}}W_n$. The reverse implication may be checked after intersection with each connected component $Z$ of $X$ (because $\overline{U}$ is the set of specializations of $U$ by \cite[0903]{stacks-project}). If $Z\cap U\neq \empty$, then $\overline{U}\cap Z=Z$ and similarly for the intersection with the $W_n$'s. Let $z$ be the generic point of $Z$ and assume that $Z\cap \overline{U}=\emptyset$ or equivalently $Z\cap U=\emptyset$. Thus, $|f(z)|>1$ and as $z$ is a rank $1$ valuation this implies that $z\notin \{|f|\leq |\pi^{-1/p^{n}}|\}$ for some $n\in \mathbb{N}$. This implies that $Z\cap W_{n}=\emptyset$ as the set $\{|f|\leq |\pi^{-1/p^n}|\}$ is stable under generalizations. Thus $Z\cap \bigcap\limits_{n\in \mathbb{N}}W_n=\emptyset$ as well.

      Assertion 3) follows from 2) because $\pi_0(U)=\pi_0(\overline{U})=\bigcap\limits_{n\in \mathbb{N}}\pi_0(W_n)$.

      We are left with assertion 4). Fix a connected component $Z\subseteq X$ with $Z\cap U\neq \emptyset$, i.e., the point $z\in \pi_0(X)$ defined by $Z$ lies in $\pi_0(U)$. We can write $Z=\bigcap\limits_{i\in I} f^{-1}(T_i)$ with $T_i\subseteq \pi_0(X)$ running through the quasi-compact open neighborhoods of $z$ and $f\colon |X|\to \pi_0(X)$ the natural quotient map. Note that the $f^{-1}(T_i)$ are closed and open in $X$. Moreover, let us fix $n\in \mathbb{N}$. We know that $Z\subseteq W_n$ as $\overline{U}\subseteq W_n$ and the generic point of $Z$ lies in $U$. Furthermore, $X\setminus W_n=\{|\pi^{-1/p^n}|\leq |f|\}$ is a rational open subset of $X$. In particular, $X\setminus W_n$ is compact in the constructible topology. As $X\setminus W_n \cap \bigcap\limits_{i\in I}f^{-1}(T_i)=X\setminus W_n\cap Z=\emptyset$, we can conclude that there exists some $i_0$ such that $X\setminus W_n\cap f^{-1}(T_{i_0})=\emptyset$, i.e., $f^{-1}(T_{i_0})\subseteq W_n$. This implies that $z\in T_{i_0}\subseteq \pi_0(W_n)$. In particular, $z$ lies in the interior of $\pi_0(W_n)$. This finishes the proof.   
\end{proof}

With the previous results at hand, we can now introduce a well-behaved (see \cref{sec:nucl-objects-overc-1-overconvergent-sheaves-on-std-spaces-satisfy-quasi-pro-etale-descent}) notion of overconvergent sheaves on general spatial diamonds:

\begin{definition}
  \label{sec:nucl-objects-overc-1-overconvergent-sheaves}
  \begin{defenum}
  \item Let $X$ be a strictly totally disconnected space with morphism of sites $\pi\colon X_\qproet\to \pi_0(X)_\qproet$. An object $M\in \D_\solid(X,\Lambda)$ is called overconvergent if $M\cong \pi^\ast N$ for some $N\in \D_\solid(\pi_0(X)_\proet,\Lambda)$.
  \item Let $X$ be a spatial diamond. An object $M\in \D_\solid(X,\Lambda)$ is called overconvergent if $f^\ast M$ is overconvergent for any quasi-pro-\'etale morphism $f\colon X'\to X$ with $X'$ strictly totally disconnected.
  \end{defenum}
  We let $\D_\solid(X,\Lambda)^\oc\subseteq \D_\solid(X,\Lambda)$ (resp.\ $\D_\solid(X,\Lambda)^\oc_{\omega_1}\subseteq \D_\solid(X,\Lambda)_{\omega_1}$) denote the full subcategories spanned by the overconvergent objects (resp.\ the overconvergent and $\omega_1$-solid objects).
\end{definition}

\begin{lemma}
  \label{sec:nucl-objects-overc-1-overconvergent-sheaves-on-std-spaces-satisfy-quasi-pro-etale-descent}
  Let $f\colon X'\to X$ be a morphism of strictly totally disconnected spaces.
  \begin{lemenum}
  \item If $M\in \D_\solid(X,\Lambda)$ is overconvergent, then so is $f^\ast M$.
  \item Conversely, if $f$ is a quasi-pro-\'etale cover and $f^\ast M$ is overconvergent, then so is $M$.
  \end{lemenum}
\end{lemma}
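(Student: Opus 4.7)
The functoriality $g := \pi_0(f)\colon \pi_0(X')\to \pi_0(X)$ fits into a commutative square of sites
\[\begin{tikzcd}
X'_{\qproet} \arrow[r, "f"] \arrow[d, "\pi_{X'}"'] & X_{\qproet} \arrow[d, "\pi_X"] \\
\pi_0(X')_{\qproet} \arrow[r, "g"'] & \pi_0(X)_{\qproet},
\end{tikzcd}\]
giving the tautological identity $f^\ast \pi_X^\ast \cong \pi_{X'}^\ast g^\ast$ on solid sheaves. Thus if $M = \pi_X^\ast N$, then $f^\ast M = \pi_{X'}^\ast(g^\ast N)$ is in the image of $\pi_{X'}^\ast$, i.e.\ overconvergent.

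\textbf{Part (ii).} The plan is to combine conservativity of $f^\ast$ with a base-change identity for the \emph{left} adjoint $\pi_\natural$ (the right adjoint $\pi_\ast$ does not satisfy base change here, so $\pi_\natural$ from \cref{sec:nucl-objects-overc-1-morphism-to-pi-0-in-std-case}(iii) is essential). First, since $f$ is a quasi-pro-\'etale cover and $\D_\solid(-,\Lambda)$ is a quasi-pro-\'etale sheaf on $\ell$-bounded spatial diamonds by \cref{sec:omeg-solid-sheav-2-solid-sheaves-are-hyper-complete-quasi-pro-etale-sheaf} (note that strictly totally disconnected spaces are $\ell$-bounded by \cref{sec:bound-cond-1-totally-disconnected-space-is-p-bounded} and \cref{sec:bound-cond-1-quasi-pro-etale-over-ell-bounded-implies-ell-bounded}), the pullback $f^\ast\colon \D_\solid(X,\Lambda)\to \D_\solid(X',\Lambda)$ is conservative. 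Second, I claim the Beck--Chevalley map
\[ g^\ast\, \pi_{X,\natural} \;\longrightarrow\; \pi_{X',\natural}\, f^\ast \]
is an isomorphism of functors $\D_\solid(X,\Lambda)\to \D_\solid(\pi_0(X'),\Lambda)$. Both sides preserve colimits, so it suffices to check on the compact generators $\Lambda_\solid[U]$ with $U\to X$ basic quasi-pro-\'etale. Using symmetric monoidality of $f^\ast$ and $g^\ast$ together with the explicit formula $\pi_\natural(\Lambda_\solid[V])=\Lambda_\solid[\pi_0(V)]$ of \cref{sec:nucl-objects-overc-1-morphism-to-pi-0-in-std-case}(iii) (applicable since $U\times_X X'$ is basic quasi-pro-\'etale over $X'$, $X'\to X$ itself being basic quasi-pro-\'etale because $X'$ is strictly totally disconnected), the claim reduces to the fiber product identity
\[ \pi_0(U\times_X X') \;=\; \pi_0(U)\times_{\pi_0(X)}\pi_0(X'), \]
which is \cite[Lemma 5.8]{mann-werner-simpson} as cited in the proof of \cref{sec:nucl-objects-overc-1-morphism-to-pi-0-in-std-case}.

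Given these two inputs, the conclusion is formal. By full faithfulness of $\pi_X^\ast$ (\cref{sec:nucl-objects-overc-1-morphism-to-pi-0-in-std-case}(i)), an object $M\in \D_\solid(X,\Lambda)$ is overconvergent if and only if the unit $\eta_M\colon M\to \pi_X^\ast \pi_{X,\natural} M$ of the adjunction $\pi_{X,\natural}\dashv \pi_X^\ast$ is an isomorphism, and likewise for $X'$. Applying $f^\ast$ to $\eta_M$, the commutative square above together with the base change identity yield
\[ f^\ast(\pi_X^\ast \pi_{X,\natural} M) \;=\; \pi_{X'}^\ast g^\ast \pi_{X,\natural} M \;=\; \pi_{X'}^\ast \pi_{X',\natural} f^\ast M, \]
so that $f^\ast \eta_M$ identifies with the unit $\eta_{f^\ast M}$. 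If $f^\ast M$ is overconvergent, $\eta_{f^\ast M}$ is an isomorphism, hence so is $f^\ast \eta_M$; by conservativity of $f^\ast$, $\eta_M$ itself is an isomorphism, so $M$ is overconvergent. The only substantive step is the base-change identity above; once the $\pi_0$ fiber product identity is in hand, the argument is a pure adjunction manipulation.
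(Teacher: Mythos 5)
Your proof is correct, and part (i) coincides with the paper's (naturality of $\pi$ in $X$). For part (ii), however, you take a genuinely different route. The paper argues by pure descent: since $X\mapsto \pi_0(X)$ sends quasi-pro-\'etale (even $v$-) covers to quasi-pro-\'etale covers of profinite sets and is compatible with fiber products (\cite[Lemma 5.8]{mann-werner-simpson}), the target of $\pi^\ast$, i.e.\ $X\mapsto \D_\solid(\pi_0(X),\Lambda)$, is itself a hypercomplete quasi-pro-\'etale sheaf on strictly totally disconnected spaces by \cref{sec:omeg-solid-sheav-2-solid-sheaves-are-hyper-complete-quasi-pro-etale-sheaf}, and overconvergence then descends because both source and target of $\pi^\ast$ satisfy descent along the \v{C}ech nerve of $f$. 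You instead use only conservativity of $f^\ast$ together with the Beck--Chevalley isomorphism $\pi_{X',\natural}f^\ast\cong \pi_0(f)^\ast\pi_{X,\natural}$, checked on the compact generators $\Lambda_\solid[U]$ via the same $\pi_0$ fiber-product identity; this is in effect a direct proof (in the strictly totally disconnected case) of the base-change statement the paper establishes just afterwards in \cref{sec:nucl-objects-overc-1}, there deduced in adjoint form from \cite[Corollary 5.9]{mann-werner-simpson}. Both proofs hinge on the same input from \cite{mann-werner-simpson}; yours trades the full sheaf property of $\D_\solid(\pi_0(-),\Lambda)$ for a more hands-on computation, and has the advantage of isolating the (re-usable) compatibility of $(-)_\oc$ with pullback, exploiting correctly that it is the \emph{left} adjoint $\pi_\natural$, not $\pi_\ast$, that base-changes. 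Three small touch-ups: the canonical Beck--Chevalley map goes $\pi_{X',\natural}f^\ast\to \pi_0(f)^\ast\pi_{X,\natural}$ (direction is immaterial once it is an isomorphism); the parenthetical claim that $X'\to X$ is basic quasi-pro-\'etale is unnecessary, since you only need $U\times_X X'\to X'$ to be basic, which holds by base change; and the identity $f^\ast\Lambda_\solid[U]\cong\Lambda_\solid[U\times_X X']$ is not a consequence of symmetric monoidality alone --- it follows from the adjunction with $f_{\qproet,\ast}$, using that $f$ is quasi-pro-\'etale so that $f_{\qproet,\ast}$ preserves solid sheaves (\cref{sec:omeg-solid-sheav-2-pullback-and-pushforward-of-solid-sheaves}).
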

\begin{proof}
  The first claim follows by naturality in $X$ of the morphism of sites $\pi\colon X_\qproet\to \pi_0(X)_\qproet$. For the converse we note that (using \cite[Lemma 5.8]{mann-werner-simpson}) it suffices to see that $X\mapsto \D_\solid(\pi_0(X),\Lambda)$ is a hypercomplete quasi-pro-\'etale sheaf on strictly totally disconnected spaces. Now \cref{sec:omeg-solid-sheav-2-solid-sheaves-are-hyper-complete-quasi-pro-etale-sheaf} implies that $T\mapsto \D_\solid(T,\Lambda)$ is a quasi-pro-\'etale sheaf on profinite sets, in other words a condensed sheaf of $\infty$-categories on $\ast_\qproet$. As $X\mapsto \pi_0(X)$ sends quasi-pro-\'etle covers, even $v$-covers, to quasi-pro-\'etale covers the claim follows.
\end{proof}

The next result summarizes the very good properties of the full subcategory of overconvergent objects.

\begin{lemma}
  \label{sec:nucl-objects-overc-1-properties-of-overconvergent-objects}
  Let $X$ be an $\ell$-bounded spatial diamond.
  \begin{lemenum}
  \item Overconvergent objects satisfy quasi-pro-\'etale hyperdescent on the big site of $\ell$-bounded spatial diamonds.
  \item The inclusion $\D_\solid(X,\Lambda)^\oc\subseteq \D_\solid(X,\Lambda)$ admits a symmetric monoidal left adjoint $M\mapsto M_\oc$ and a right adjoint $M\mapsto M^\oc$. In particular, $\D_\solid(X,\Lambda)^\oc$ is stable under all colimits and limits in $\D_\solid(X,\Lambda)$.
  \item The inclusion $\D_\solid(X,\Lambda)^\oc_{\omega_1}\subseteq \D_\solid(X,\Lambda)_{\omega_1}$ admits a symmetric monoidal left adjoint $M\mapsto M_\oc$ and a right adjoint $M\mapsto M^\oc$. In particular, $\D_\solid(X,\Lambda)^\oc_{\omega_1}$ is stable under all colimits and limits in $\D_\solid(X,\Lambda)_{\omega_1}$.
  \item If $M, N\in \D_\solid(X,\Lambda)$ and $N$ is overconvergent, then $\IHom_{\D_\solid(X,\Lambda)}(M,N)$ is overconvergent and naturally isomorphic to $\IHom_{\D_\solid(X,\Lambda)}(M_\oc,N)$. Similarly, with $\D_\solid(X,\Lambda)$ replaced by $\D_\solid(X,\Lambda)_{\omega_1}$.
    \item If $U=\varprojlim\limits_{i\in I}U_i\to X$ with $U_i\to X$ \'etale, quasi-compact and separated, then the natural map $\Lambda_\solid[U]\to \Lambda_\solid[\overline{U}^{/X}]$ induces an isomorphism $\Lambda_\solid[U]_\oc\cong \Lambda_\solid[\overline{U}^{/X}]$. In particular, the functor $\D_\solid(X,\Lambda)\to \D_\solid(X,\Lambda),\ M\mapsto M_\oc$ preserves compact objects. 
    \item The functor $\D_\solid(X,\Lambda)\to \D_\solid(X,\Lambda),\ M\mapsto M^\oc$ preserves colimits and right bounded objects.
  \end{lemenum}
\end{lemma}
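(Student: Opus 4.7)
The plan is to establish all six parts by first working on a strictly totally disconnected perfectoid space $X$, where the morphism of sites $\pi = \pi_X\colon X_\qproet \to \pi_0(X)_\qproet$ together with its triple of adjoints $\pi_\natural \dashv \pi^\ast \dashv \pi_\ast$ from \cref{sec:nucl-objects-overc-1-morphism-to-pi-0-in-std-case} gives explicit formulas, and then to globalize via quasi-pro-\'etale hyperdescent.

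For (i), since $\D_\solid(-,\Lambda)$ is a hypercomplete quasi-pro-\'etale sheaf on $\ell$-bounded spatial diamonds by \cref{sec:omeg-solid-sheav-2-solid-sheaves-are-hyper-complete-quasi-pro-etale-sheaf}, hyperdescent for overconvergent objects reduces to checking that overconvergence descends along a single quasi-pro-\'etale cover $X' \to X$ with $X'$ strictly totally disconnected, which is exactly \cref{sec:nucl-objects-overc-1-overconvergent-sheaves-on-std-spaces-satisfy-quasi-pro-etale-descent}. For (ii) and (iii), on strictly totally disconnected $X$ the functor $\pi^\ast$ identifies $\D_\solid(\pi_0(X),\Lambda)$ with $\D_\solid(X,\Lambda)^\oc$ by \cref{sec:nucl-objects-overc-1-morphism-to-pi-0-in-std-case}(i), so the desired adjoints are given by $(-)_\oc := \pi^\ast \pi_\natural$ and $(-)^\oc := \pi^\ast \pi_\ast$, and symmetric monoidality of $(-)_\oc$ follows from that of $\pi^\ast$ and $\pi_\natural$. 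For general $\ell$-bounded spatial $X$, overconvergent objects are closed under all colimits and limits (which descends from the strictly totally disconnected case via (i)), so by the adjoint functor theorem both adjoints exist, and the symmetric monoidal structure on $(-)_\oc$ is characterized by its behaviour under pullback. The $\omega_1$-solid variant is identical since $\pi^\ast$, $\pi_\natural$, $\pi_\ast$ all preserve $\omega_1$-solidness.

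For (iv), after reducing via (i) to the strictly totally disconnected case and writing $N = \pi^\ast N'$, \cref{sec:nucl-objects-overc-1-morphism-to-pi-0-in-std-case}(iv) gives $\IHom(M, N) = \pi^\ast \IHom(\pi_\natural M, N')$, which is visibly overconvergent; the identification with $\IHom(M_\oc, N)$ follows from $\pi_\natural(M_\oc) = \pi_\natural \pi^\ast \pi_\natural M = \pi_\natural M$. For (v), on strictly totally disconnected $X$ the proof of \cref{sec:overc-solid-sheav-explicit-description-of-pi-0-for-etale-morphism} (specifically claim 2 there) identifies $\overline{U}^{/X}$ with $\pi^{-1}(\pi_0(U))$ at the level of underlying spaces, so
\[
  \Lambda_\solid[U]_\oc = \pi^\ast \pi_\natural \Lambda_\solid[U] = \pi^\ast \Lambda_\solid[\pi_0(U)] = \Lambda_\solid[\overline{U}^{/X}]
\]
by \cref{sec:nucl-objects-overc-1-morphism-to-pi-0-in-std-case}(iii); the general case follows by descent along a strictly totally disconnected cover using compatibility of the relative compactification with quasi-pro-\'etale base change. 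Preservation of compact objects by $(-)_\oc$ is then immediate since $\overline{U}^{/X}$ remains basic quasi-pro-\'etale. For (vi), on strictly totally disconnected $X$ the inclusion $\pi^\ast$ sends compact generators $\Lambda_\solid[T]$ of $\D_\solid(\pi_0(X),\Lambda)$ to $\Lambda_\solid[\pi^{-1}(T)]$, which are basic quasi-pro-\'etale $w$-contractible and hence compact; therefore $\pi_\ast$ commutes with filtered colimits, and together with its $t$-exactness (\cref{sec:nucl-objects-overc-1-morphism-to-pi-0-in-std-case}(ii)) one obtains commutation with all colimits as well as preservation of right-bounded objects for $(-)^\oc = \pi^\ast \pi_\ast$. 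These properties descend.

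The main obstacle is the geometric input needed for (v): verifying that $\overline{U}^{/X} = \pi^{-1}(\pi_0(U))$ on strictly totally disconnected $X$ and that this identification is compatible with quasi-pro-\'etale base change, so that the formula globalizes. The other descent arguments rely essentially on \cref{sec:nucl-objects-overc-1-morphism-to-pi-0-in-std-case} and are fairly routine once the strictly totally disconnected case is in hand.
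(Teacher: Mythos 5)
Your strategy for (i)--(v) is essentially the paper's own: reduce to a strictly totally disconnected space via the adjoints $\pi_\natural\dashv\pi^\ast\dashv\pi_\ast$ of \cref{sec:nucl-objects-overc-1-morphism-to-pi-0-in-std-case}, where $(-)_\oc=\pi^\ast\pi_\natural$ and $(-)^\oc=\pi^\ast\pi_\ast$, use the adjoint functor theorem together with (i) for general $X$, and identify $\overline{U}^{/X}\cong\pi_0(U)\times_{\pi_0(X)}X$ for (v). The globalization you flag as the ``main obstacle'' -- compatibility of $(-)_\oc$ (and of the map in (v)) with quasi-pro-\'etale pullback -- is indeed the missing input; in the paper it is the separate \cref{sec:nucl-objects-overc-1}, proved by passing to right adjoints and invoking \cite[Corollary 5.9]{mann-werner-simpson}, so deferring it is acceptable.

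There is, however, a genuine gap in your part (vi): the closing claim that colimit-preservation and preservation of right-bounded objects by $(-)^\oc$ ``descend'' from the strictly totally disconnected case does not work, because $(-)^\oc$ does \emph{not} commute with quasi-pro-\'etale pullback -- the paper states this explicitly right after the lemma, even for strictly totally disconnected spaces. Knowing that $(f^\ast M)^\oc$ is right bounded, or that $(-)^\oc$ preserves colimits on a cover, gives no information about $f^\ast(M^\oc)$, so the local statements cannot be transported to $X$ this way. The paper argues globally instead: for colimits, part (v) shows $(-)_\oc$ preserves compact objects, hence the inclusion $\D_\solid(X,\Lambda)^\oc\subseteq\D_\solid(X,\Lambda)$ preserves compacts and its right adjoint $(-)^\oc$ commutes with (filtered, hence all) colimits -- your own (v) already supplies this input, so no descent is needed; for right-boundedness, one uses $\ell$-boundedness of $X$ to find $a\in\Z$ with $\Hom(N,M)=0$ for every compact $N$ of the standard form concentrated to the right of $a$, and then $\Hom(N_\oc,M^\oc)=\Hom(N_\oc,M)=0$ together with the explicit description of $N_\oc$ from (v) shows $M^\oc$ is right bounded. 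The $t$-exactness of $\pi_\ast$ on strictly totally disconnected spaces, which your argument relies on, does not by itself globalize; you should replace the descent step in (vi) by arguments of this global type.
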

\begin{proof}
  Assertion (i) follows from \cref{sec:nucl-objects-overc-1-overconvergent-sheaves-on-std-spaces-satisfy-quasi-pro-etale-descent} and \cref{sec:omeg-solid-sheav-2-solid-sheaves-are-hyper-complete-quasi-pro-etale-sheaf}.
  For assertion (ii) it suffices by the adjoint functor theorem to show that $\D_\solid(X,\Lambda)^\oc$ is stable under all colimits and limits in $\D_\solid(X,\Lambda)$. By \cref{sec:omeg-solid-sheav-1-proposition-properties-of-solid-sheaves} and (i) we may then assume that $X$ is a strictly totally disconnected perfectoid space. Then the assertion follows from \cref{sec:nucl-objects-overc-1-morphism-to-pi-0-in-std-case}. We note that in this case we have $M_\oc=\pi^\ast\pi_\natural(M)$ and $M^\oc=\pi^\ast\pi_\ast(M)$ for $M\in \D_\solid(X,\Lambda)$. As in \cref{sec:nucl-objects-overc-1-morphism-to-pi-0-in-std-case} symmetric monoidality of $(-)_\oc$ implies assertion (iv).

  For assertion (iii) we may again reduce to the case that $X$ is strictly totally disconnected. Indeed, for the existence of $(-)^\oc$ this follows by stability of overconvergent sheaves under colimits (which can be checked on a strictly totally disconnected cover), and the existence of $(-)_\oc$ can be descended from the strictly totally disconnected case as there $(-)_\oc$ commutes with pullback as will be proved in \cref{sec:nucl-objects-overc-1}.
  Then the assertion follows again from \cref{sec:nucl-objects-overc-1-morphism-to-pi-0-in-std-case} as $\pi_\natural, \pi^\ast, \pi_\ast$ all preserve $\omega_1$-solid sheaves.

  Let us note that if $U\to X$ is quasi-pro-\'etale, then $\overline{U}^{/X}\to X=\overline{X}^{/X}$ is quasi-pro-\'etale by \cite[Corollary 18.8.(vii)]{etale-cohomology-of-diamonds}. If $X$ is strictly totally disconnected and $U$ qcqs, then the natural map $\overline{U}^{/X}\to \pi_0(U)\cprod_{\pi_0(X)}X$ is an isomorphism as it is qcqs and bijective if $X$ is connected. To check (v) we may (by \cref{sec:nucl-objects-overc-1} below) reduce to the case that $X$ is strictly totally disconnected. Indeed, in this case $\Lambda_\solid[\overline{U}^{/X}]\cong \Lambda_\solid[\pi_0(U)\cprod_{\pi_0(X)}X]$ is overconvergent, which implies the same for a general $\ell$-bounded spatial diamond. This produces the natural map $\Lambda_\solid[U]_\oc\to \Lambda[\overline{U}^{/X}]$, which then can be checked to be an isomorphism again under the assumption that $X$ is strictly totally disconnected. Here, it follows from \cref{sec:nucl-objects-overc-1-morphism-to-pi-0-in-std-case} as $(-)_\oc\cong \pi^\ast \pi_\natural(-)$.
  It follows from the explicit description that the functor $(-)_\oc$ maps compact objects in $\D_\solid(X,\Lambda)$ (resp.\ $\D_\solid(X,\Lambda)_{\omega_1}$) to compact objects in $\D_\solid(X,\Lambda)$ (resp.\ $\D_\solid(X,\Lambda)_{\omega_1}$). This implies that the inclusion of overconvergent into solid sheaves preserves compact objects, which implies that the right adjoint $(-)^\oc$ (for solid or $\omega_1$-solid objects) preserves colimits. Now pick a right bounded object $M\in \D_\solid(X,\Lambda)_{\omega_1}$. As $X$ is $\ell$-bounded can conclude that there exists some $a\in \Z$ such that for each compact object $N\in \D_\solid(X,\Lambda)_{\omega_1}$, which is supported to the right of $a$ and has terms given by $\Lambda_\solid[U]$ for $U=\varprojlim\limits_{n} U_n\to X$ with $U_n$ \'etale quasi-compact and separated, we have $\Hom(N,M)=0$.
  This implies
  \[
    \Hom(N_\oc,M)=\Hom(N_\oc,M^\oc)=0
  \]
  for any such $N$ by the explict description of $(-)_\oc$ in (v). This shows that $M^\oc$ is again right bounded. 
\end{proof}

The left adjoint $(-)_\oc$ commutes with base change (while the right adjoint $(-)^\oc$ does not, even on strictly totally disconnected spaces).

\begin{lemma}
  \label{sec:nucl-objects-overc-1}
  Let $f\colon X'\to X$ be a morphism of $\ell$-bounded spatial diamonds.
  \begin{lemenum}
    \item For $M\in \D_\solid(X,\Lambda)$ the natural map
  \[
    (f^\ast M)_\oc\to f^\ast(M_\oc)
  \]
  is an isomorphism.
  \item If $f$ is quasi-pro-\'etale, then $f_\ast(\D_\solid(X',\Lambda)^\oc)\subseteq \D_\solid(X,\Lambda)^\oc$ and $f_\ast(\D_\solid(X',\Lambda)^\oc_{\omega_1})\subseteq \D_\solid(X,\Lambda)^\oc_{\omega_1}$
  \end{lemenum}
\end{lemma}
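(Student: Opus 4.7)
The plan is to derive (ii) from (i) by an abstract adjunction argument, and to prove (i) by reducing to the case of strictly totally disconnected spaces, where an explicit computation with $\pi_\natural$ finishes things.

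For (ii), observe first that once (i) is known, $f^\ast$ preserves overconvergence (apply (i) to an overconvergent $M=M_\oc$). Then for $N\in \D_\solid(X',\Lambda)^\oc$ and $P\in \D_\solid(X,\Lambda)$ the adjunction chain
\[
  \Hom(P_\oc, f_\ast N) = \Hom(f^\ast P_\oc, N) = \Hom((f^\ast P)_\oc, N) = \Hom(f^\ast P, N) = \Hom(P, f_\ast N)
\]
uses (i) for the second equality and overconvergence of $N$ for the third. The composite is precomposition with the unit $P\to P_\oc$, and the fact that it is an isomorphism for all $P$ implies that $f_\ast N$ is overconvergent by the standard criterion for being local with respect to a reflective localization. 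The $\omega_1$-solid case follows by the same reasoning, using the reflective structure from \cref{sec:nucl-objects-overc-1-properties-of-overconvergent-objects}.

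For (i), the first task is to check that the target $f^\ast M_\oc$ is overconvergent on $X'$. Since $(-)_\oc$ and $f^\ast$ preserve colimits, this reduces to compact generators $M=\Lambda_\solid[U]$, where $f^\ast \Lambda_\solid[\overline{U}^{/X}] = \Lambda_\solid[\overline{U}^{/X}\cprod_X X']$ is overconvergent because $\overline{U}^{/X}\cprod_X X'\to X'$ is partially proper as a base change of the partially proper map $\overline{U}^{/X}\to X$. The natural map $(f^\ast M)_\oc\to f^\ast M_\oc$ is then produced by the universal property of $(-)_\oc$, and to show it is an isomorphism both sides commute with colimits in $M$, so we may again reduce to $M=\Lambda_\solid[U]$.

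Next I would use quasi-pro-\'etale hyperdescent of $\D^\oc_\solid\subseteq \D_\solid$ on $X'$ (\cref{sec:nucl-objects-overc-1-properties-of-overconvergent-objects}) to check the assertion after pullback along a strictly totally disconnected cover $Y'\to X'$, chosen so that the composite $Y'\to X'\to X$ factors through a strictly totally disconnected cover $Y\to X$. This reduces the problem to the analogous statement for the morphism $Y'\to Y$ of strictly totally disconnected spaces, where \cref{sec:nucl-objects-overc-1-morphism-to-pi-0-in-std-case} gives $(-)_\oc = \pi^\ast\pi_\natural$, so the claim becomes the base-change identity $\pi_{Y'\natural}f^\ast \cong \pi_0(f)^\ast\pi_{Y\natural}$; on compact generators $\Lambda_\solid[U]$ both sides reduce to $\Lambda_\solid[\pi_0(U)\cprod_{\pi_0(Y)}\pi_0(Y')]$ by \cite[Lemma 5.8]{mann-werner-simpson}.

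The main obstacle I expect is the descent step above: it requires checking that the formula $\Lambda_\solid[U]_\oc = \Lambda_\solid[\overline{U}^{/X}]$ is compatible with quasi-pro-\'etale base change, i.e.\ that $\overline{U\cprod_X X'}^{/X'}\cong \overline{U}^{/X}\cprod_X X'$ for the quasi-pro-\'etale covers $Y'\to X'$ and $Y\to X$ involved. This should follow by writing the relative compactification as a cofiltered limit of \'etale compactifications and checking base change separately for each term, but it involves some careful bookkeeping with the Huber--Scholze construction of $\overline{U}^{/X}$.
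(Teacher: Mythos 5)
Your proposal is correct in outline and, for part (i), follows essentially the same route as the paper: reduce by quasi-pro-\'etale hyperdescent to strictly totally disconnected spaces, where $(-)_\oc=\pi^\ast\pi_\natural$, and verify the base-change identity $\pi_{X',\natural}f^\ast\cong \pi_0(f)^\ast\pi_{X,\natural}$. The difference lies in how that identity is verified: the paper passes to right adjoints and quotes the formula $f_\ast\pi_{X'}^\ast N\cong \pi_X^\ast\pi_0(f)_\ast N$ from \cite[Corollary 5.9]{mann-werner-simpson}, while you check it on compact generators, which requires $\pi_0(U\cprod_X X')\cong\pi_0(U)\cprod_{\pi_0(X)}\pi_0(X')$ for $U$ basic quasi-pro-\'etale and $f$ an \emph{arbitrary} morphism of strictly totally disconnected spaces --- a mild extension of how \cite[Lemma 5.8]{mann-werner-simpson} is used elsewhere in the paper (there both legs are quasi-pro-\'etale over the same base), so this step deserves its own short argument or the right-adjoint detour. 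Your announced ``main obstacle'' is not one: canonical compactifications do commute with base change, and in any case once you are over a strictly totally disconnected base the overconvergence of $f^\ast(M_\oc)$ is immediate from $f^\ast\pi_X^\ast\cong\pi_{X'}^\ast\pi_0(f)^\ast$, so the detour through $\Lambda_\solid[\overline{U}^{/X}]$ (whose identification with $\Lambda_\solid[U]_\oc$ in \cref{sec:nucl-objects-overc-1-properties-of-overconvergent-objects} is itself proved using the present lemma, so it must be used with care to avoid circularity) can be dropped; note also that your hyperdescent reduction implicitly commutes $(-)_\oc$ with pullback along the covering maps, the same bootstrapping the paper performs. For (ii) your route is genuinely different and cleaner: the paper deduces overconvergence of $f_\ast N$ from the explicit identity $f_\ast\pi_{X'}^\ast\cong\pi_X^\ast\pi_0(f)_\ast$ on strictly totally disconnected spaces and then descends using base change for $f_\ast$, whereas you obtain it purely formally from (i) by the adjunction/locality argument; this is valid, the $\omega_1$-statement then follows since $f_\ast$ preserves $\omega_1$-solid objects by \cref{sec:omeg-solid-sheav-2-pullback-and-pushforward-of-solid-sheaves}, and your argument has the advantage of isolating exactly where quasi-pro-\'etaleness enters, namely in having $f_\ast$ defined on solid (and $\omega_1$-solid) sheaves at all.
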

\begin{proof}
  By quasi-pro-\'etale hyperdescent it suffices to treat the case that $X,X'$ are strictly totally disconnected. Let $\pi_{X}\colon X_\qproet\to \pi_0(X)_\qproet, \pi_{X'}\colon X'_\qproet\to \pi_0(X')_\qproet$ be the morphisms of sites. It suffices to check that the natural map $\pi_{X',\natural}f^\ast M\to \pi_0(f)^\ast \pi_{X,\natural}(M)$ is an isomorphism for $M\in \D_\solid(X,\Lambda)$. Passing to right adjoints, it suffices to see that for $N\in \D_\solid(\pi_0(X'),\Lambda)$
  \[
    f_\ast\pi^\ast_{X'}N\cong \pi^\ast_X \pi_0(f)_\ast(N)
  \]
  via the natural map. This follows from \cite[Corollary 5.9]{mann-werner-simpson}.
  This formula (together with \cref{sec:omeg-solid-sheav-2-pullback-and-pushforward-of-solid-sheaves}) imply the second assertion as well if $X$ is strictly totally disconnected. The case for general $X$ follows by quasi-pro-\'etale descent (for which $f_\ast$ satisfies base change by \cref{sec:omeg-solid-sheav-2-pullback-and-pushforward-of-solid-sheaves}).
\end{proof}

\subsection{Nuclear sheaves}
\label{sec:abstr-nucl-sheav}

In this section we will study nuclear objects in $\D_\solid(X,\Lambda)_{\omega_1}$ and show that they are overconvergent. Critically, we will moreover show in \cref{sec:nucl-objects-overc-1-d-nuc-generated-by-complete-objects} that the full subcategory of nuclear objects is generated under colimits by right bounded complete objects. In order to achieve this, we need a better understanding of the compact objects of $\D_\solid(X,\Lambda)_{\omega_1}^\oc$.

Let us recall from \cite[Lecture VIII]{condensed-complex-geometry} that a morphism $f\colon P\to Q$ in a closed symmetric monoidal $\infty$-category $\mathcal{C}$ with compact unit, is called of trace class if it lies in the image of the natural morphism
\[
  \pi_0((\IHom(P,1)\otimes Q)(\ast))\to \pi_0(\Hom(P,Q)).
\]
Here, $\otimes, \IHom, 1$ refer to the tensor product, internal Hom, unit in $\mathcal{C}$, while $(-)(\ast):=\Hom(1,-)$.
We will use critically the following property of trace class maps:
If $P_0\to P_1\to \ldots$ is a sequential diagram with all morphisms $P_n\to P_{n+1}$ of trace class, then for any $Q\in \mathcal{C}$ the natural map
\[
  \varprojlim\limits_{n} (\IHom(P_n,1)\otimes Q)(\ast))\to \varprojlim\limits_n \Hom(P_n,Q)
\]
is an isomorphism. Indeed, each witness in $(\IHom(P_n,1)\otimes P_{n+1})(\ast)$ for $P_n\to P_{n+1}$ being trace class yields a factorization
\[
  \Hom(P_{n+1},Q)\to (\IHom(P_{n},1)\otimes Q)(\ast)\to \Hom(P_n,Q).
\]

Following \cite[Definition 8.5]{condensed-complex-geometry} we make the following definition.

\begin{definition}
  \label{sec:overc-solid-sheav-basic-nuclear-and-conuclear}
  Let $X$ be an $\ell$-bounded spatial diamond and $M\in \D_\solid(X,\Lambda)_{\omega_1}$.
  \begin{defenum}
  \item $M$ is called basic nuclear if $M=\varinjlim (M_0\to M_1\to \ldots)$ with all $M_n\to M_{n+1}$ of trace class.
    \item $M$ is called nuclear if for all compact objects $P\in \D_\solid(X,\Lambda)_{\omega_1}$, the map
  \[
    (\IHom_{\D_\solid(X,\Lambda)_{\omega_1}}(P,\Lambda)\otimes^\solid_\Lambda M)(\ast)\to \Hom_{\D_\solid(X,\Lambda)_{\omega_1}}(P,M)
  \]
  is an equivalence.
  \end{defenum}
   We let $\D_\nuc(X,\Lambda)\subseteq \D_\solid(X,\Lambda)_{\omega_1}$ be the full subcategory of nuclear objects.
\end{definition}

\begin{remark}
  \label{sec:omeg-solid-sheav-1}
  We warn the reader that the abstract notion of nuclearity from \cite[Lecture VIII]{condensed-complex-geometry} that we employ here is different from the more subtle ``geometric nuclearity'' in \cite[Definition 3.1]{mann-nuclear-sheaves} as noted in \cite[Warning 3.3]{mann-nuclear-sheaves}. In \cref{sec:nucl-objects-overc-3-overconvergent-and-nuclear-objects} below we will prove that the abstract notion of nuclearity singles out the \textit{overconvergent} nuclear sheaves from \cite[Definition 6.8]{mann-nuclear-sheaves} (thereby providing details to \cite[Warning 3.3]{mann-nuclear-sheaves}).
\end{remark}

The basic observation for the overconvergence of the nuclear objects in $\D_{\solid}(X,\Lambda)_{\omega_1}$ is the overconvergence of $\IHom_{\D_\solid(X,\Lambda)_{\omega_1}}(\Lambda_\solid[U],\Lambda)$.

\begin{lemma}
  \label{sec:nucl-objects-overc-2-nuclear-objects-are-overconvergent}
  Let $X$ be an $\ell$-bounded spatial diamond.
  \begin{lemenum}
  \item Each trace class map $P\to Q$ in $\D_\solid(X,\Lambda)_{\omega_1}$ factors as $P\to P_\oc\to Q$ with $P_\oc\to Q$ of trace class.
  \item Each nuclear object in $\D_\solid(X,\Lambda)_{\omega_1}$ is overconvergent.
  \end{lemenum}
\end{lemma}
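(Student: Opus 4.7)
The plan for (i) is to exploit that the unit object $\Lambda$ is overconvergent, because on any strictly totally disconnected cover it agrees with $\pi^\ast\Lambda$ for the constant sheaf $\Lambda$ on $\pi_0$. Granting this, \cref{sec:nucl-objects-overc-1-properties-of-overconvergent-objects}(iv) yields that $\IHom(P,\Lambda)$ is overconvergent and that the natural morphism $\IHom(P_\oc,\Lambda) \to \IHom(P,\Lambda)$ is an isomorphism. A trace-class witness $w \in \pi_0((\IHom(P,\Lambda) \otimes^\solid_\Lambda Q)(\ast))$ for $f\colon P\to Q$ can thus equally be viewed as a witness in $\pi_0((\IHom(P_\oc,\Lambda) \otimes^\solid_\Lambda Q)(\ast))$, producing a trace-class map $g\colon P_\oc \to Q$. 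By naturality of the evaluation $P \otimes \IHom(P,\Lambda) \to \Lambda$ along the unit $P \to P_\oc$, the composition $P \to P_\oc \xrightarrow{g} Q$ recovers $f$, giving the desired factorization.

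For (ii), the strategy is to reduce to the basic nuclear case via \cite[Theorem 8.6.(2)]{condensed-complex-geometry} and then interleave with overconvergentifications using (i). Since overconvergent objects are stable under colimits by \cref{sec:nucl-objects-overc-1-properties-of-overconvergent-objects}(ii), we may assume $M = \varinjlim_n M_n$ is basic nuclear with trace-class transitions $M_n \to M_{n+1}$. Applying (i) to each transition, we obtain an interleaved tower
\[
  M_0 \to (M_0)_\oc \to M_1 \to (M_1)_\oc \to M_2 \to \cdots,
\]
whose colimit is $M$ by cofinality of the subsystem $(M_n)_n$, but which equally computes $\varinjlim_n (M_n)_\oc$ by cofinality of the subsystem $((M_n)_\oc)_n$. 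Hence $M$ is a colimit of overconvergent objects and therefore overconvergent.

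The main technical check is the naturality assertion in (i): one must verify that the witness $w$ transports under $\IHom(P,\Lambda) \cong \IHom(P_\oc,\Lambda)$ in such a way that the resulting trace-class map $P_\oc \to Q$ fits into the claimed triangle with $P \to P_\oc$. This reduces to the statement that the evaluation $P \otimes \IHom(P,\Lambda) \to \Lambda$ factors, up to the canonical isomorphism $\IHom(P,\Lambda) \cong \IHom(P_\oc,\Lambda)$, as $P \otimes \IHom(P_\oc,\Lambda) \to P_\oc \otimes \IHom(P_\oc,\Lambda) \to \Lambda$, which is a formal consequence of the $\otimes$--$\IHom$ adjunction and the defining property of the unit $P \to P_\oc$.
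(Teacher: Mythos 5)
Your proposal is correct and follows essentially the same route as the paper: overconvergence of $\Lambda$ plus the identification $\IHom(P,\Lambda)\cong\IHom(P_\oc,\Lambda)$ (via symmetric monoidality of $(-)_\oc$, i.e.\ \cref{sec:nucl-objects-overc-1-properties-of-overconvergent-objects}) gives the factorization in (i), and (ii) follows by writing a nuclear object as a colimit of basic nuclear ones and interleaving with overconvergentifications, using stability of overconvergent objects under colimits. You merely spell out the naturality of the trace-class witness and the cofinality argument, which the paper leaves implicit.
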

In particular, $\D_\nuc(X,\Lambda)_{\omega_1}$ is equivalent to the category $\mathrm{Nuc}(\D_\solid(X,\Lambda)_{\omega_1}^\oc)$ of nuclear objects in $\D_\solid(X,\Lambda)_{\omega_1}^\oc$. 
\begin{proof}
  By \cite[Theorem 8.6]{condensed-complex-geometry} each nuclear object is a colimit of basic nuclear object. By \cref{sec:nucl-objects-overc-1-properties-of-overconvergent-objects} the subcategory of overconvergent objects is stable under limits and colimits. Hence, it suffices to prove the first assertion to show overconvergence. Let $P\to Q$ be a trace class morphism in $\D_\solid(X,\Lambda)_{\omega_1}$. As $\Lambda$ is overconvergent, we can conclude from \cref{sec:nucl-objects-overc-1-properties-of-overconvergent-objects} that
  \[
    \IHom(P,\Lambda)\cong \IHom(P_\oc,\Lambda)
  \]
  as $(-)_\oc$ is symmetric monoidal. This implies the desired factorization.

\end{proof}

In the strictly totally disconnected case, we can be more explicit and identify nuclear sheaves with nuclear modules over the ring of continuous functions:

\begin{lemma}
  \label{sec:nucl-objects-overc-1-nuclear-objects-are-pulled-back-from-pi-0}
  Let $X$ be a strictly totally disconnected perfectoid space and let $T$ be a profinite set.
  \begin{lemenum}
  \item The functors $\pi_\ast, \pi^\ast$ induce a symmetric monoidal equivalence $\D_\nuc(X,\Lambda)\cong \D_\nuc(\pi_0(X),\Lambda)$.
  \item Let $\alpha\colon T_\qproet\to \ast_\qproet$ be the natural morphism of sites. Then $\alpha_\ast$ induces an equivalence
    \[
      \D_\nuc(T_\qproet,\Lambda)\cong \D_\nuc(C(T,\Lambda)),
    \]
    where $C(T,\Lambda)\cong \alpha_\ast(\Lambda)$ is the nuclear $\Lambda$-algebra of continuous functions $T\to \Lambda$.
  \end{lemenum}
  In particular, $\D_\nuc(X,\Lambda)\cong \D_\nuc(C(X,\Lambda))$.
\end{lemma}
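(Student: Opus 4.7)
The plan is to derive each part from structural results already established in this section, using the general principle that nuclearity (in the sense of \cite[Lecture VIII]{condensed-complex-geometry}) is an intrinsic property of a closed symmetric monoidal compactly generated stable $\infty$-category with compact unit, and is therefore preserved under symmetric monoidal equivalences of such categories.

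For part (i), by the final assertion of \cref{sec:nucl-objects-overc-2-nuclear-objects-are-overconvergent} we already have $\D_\nuc(X,\Lambda) = \mathrm{Nuc}(\D_\solid(X,\Lambda)_{\omega_1}^\oc)$, so it suffices to identify the overconvergent $\omega_1$-solid category on $X$ with its counterpart on $\pi_0(X)$ in a symmetric monoidal way. Combining the symmetric monoidality of $\pi^\ast$ and $\pi_\natural$ with the preservation of $\omega_1$-solid sheaves established in \cref{sec:nucl-objects-overc-1-morphism-to-pi-0-in-std-case}, together with the fully faithfulness of $\pi^\ast$ (whose essential image is by definition the overconvergent subcategory), the adjoint pair $\pi_\natural \dashv \pi^\ast$ restricts to a symmetric monoidal equivalence $\D_\solid(\pi_0(X),\Lambda)_{\omega_1} \isoto \D_\solid(X,\Lambda)_{\omega_1}^\oc$. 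Passing to nuclear objects yields (i).

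For (ii), I plan to apply the same scheme to the morphism of sites $\alpha \colon T_\qproet \to \ast_\qproet$. First I would compute that $\alpha_\ast \Lambda \cong C(T,\Lambda)$ as condensed $\Lambda$-algebras by evaluating sections: for $S$ profinite one has $\alpha_\ast\Lambda(S) = \Lambda(S \times T) = C(S\times T,\Lambda) = C(T,\Lambda)(S)$. Via the lax monoidal structure of $\alpha_\ast$, this upgrades the pushforward to a symmetric monoidal functor $\alpha_\ast \colon \D_\solid(T_\qproet,\Lambda) \to \D_\solid(C(T,\Lambda))$. The plan is to show this is an equivalence: on a compact generator $\Lambda_\solid[S \to T]$ it takes the value $C(S,\Lambda)$ viewed as a $C(T,\Lambda)$-module along $S\to T$, and using that fiber products of basic quasi-pro-\'etale objects over $T$ correspond to relative tensor products over $C(T,\Lambda)$, one checks that the induced functor on compact objects is fully faithful symmetric monoidal and hits a family of compact generators on the right, hence extends to an equivalence on $\Ind$-completions. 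Restricting to nuclear objects gives (ii).

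The main obstacle will be the technical verification in part (ii) that $\alpha_\ast$ induces a symmetric monoidal equivalence at the full solid level; the monoidality check in particular requires comparing the solid tensor product on $T_\qproet$ with the relative tensor product over $C(T,\Lambda)$. Once this equivalence is in place, the passage to nuclear objects is automatic by the invariance principle mentioned at the outset. Finally, the \emph{in particular} assertion follows by combining (i) and (ii) applied to $T := \pi_0(X)$, together with the identification $C(X,\Lambda) \cong C(\pi_0(X),\Lambda)$ which holds because $\Lambda$ is totally disconnected and the fibers of $X \to \pi_0(X)$ are connected.
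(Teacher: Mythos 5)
Your part (i) is essentially the paper's argument in different packaging: the paper verifies directly that the unit is an isomorphism (via $\pi_\ast\pi^\ast=\mathrm{id}$ from \cref{sec:nucl-objects-overc-2-morphism-to-pi-0}) and that the counit is an isomorphism on nuclear objects because nuclear objects are overconvergent (\cref{sec:nucl-objects-overc-2-nuclear-objects-are-overconvergent}); your identification of $\D_\solid(X,\Lambda)_{\omega_1}^\oc$ with $\D_\solid(\pi_0(X),\Lambda)_{\omega_1}$ uses exactly the same inputs and is fine.

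Part (ii), however, has a genuine gap at its central step. You propose to show that $\alpha_\ast$ induces a symmetric monoidal equivalence at the full solid level, $\D_\solid(T_\qproet,\Lambda)\simeq \mathrm{Mod}_{C(T,\Lambda)}(\D_\solid(\Lambda))$, by checking full faithfulness on compact generators and that their images generate, and then to ``restrict to nuclear objects''. This equivalence is false for infinite $T$, and the paper points this out immediately after the lemma: for a non-open point $t\in T$ the object $\Lambda_\solid[\{t\}]$ is compact in $\D_\solid(T,\Lambda)$ (the point is a cofiltered limit of clopen subsets), but its pushforward is $\Lambda$ viewed as a $C(T,\Lambda)$-module via evaluation at $t$, which is not finitely presented and hence not compact in $\mathrm{Mod}_{C(T,\Lambda)}(\D_\solid(\Lambda))$. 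So the images of your compact generators (in your notation, $C(S,\Lambda)$ for $S\to T$ basic quasi-pro-\'etale) are in general not compact on the target, the $\Ind$-extension argument does not produce an equivalence, and no such equivalence exists from which nuclear objects could be carved out. The actual content of (ii) is the essential surjectivity at the nuclear level: one must show that every (abstractly) nuclear object of $\D_\solid(T,\Lambda)_{\omega_1}$ lies in the image of $\alpha^\ast$, i.e.\ is a colimit of objects coming from $\D_\nuc(C(T,\Lambda))$. The paper achieves this by comparing the abstract notion of nuclearity used here with the geometric notion of \cite[Definition 3.1]{mann-nuclear-sheaves}, which coincide for profinite $T$ by \cite[Remark 3.10]{mann-nuclear-sheaves}, and then invoking \cite[Lemma 6.7]{mann-nuclear-sheaves}; your proposal contains no substitute for this step. (A smaller point: lax monoidality of $\alpha_\ast$ does not by itself give symmetric monoidality, and even the full faithfulness $\alpha_\ast\alpha^\ast\cong\mathrm{id}$ on nuclear $C(T,\Lambda)$-modules uses the nuclearity criterion \cref{sec:defin-d_hats-2-complete-and-discrete-mod-i-implies-nuclear}, not a purely formal argument.)
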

We recall that here ``nuclear'' refers to \cref{sec:overc-solid-sheav-basic-nuclear-and-conuclear} (and not \cite[Definition 3.1]{mann-nuclear-sheaves}). However, for $\pi_0(X)$ both notions agree (by \cref{sec:nucl-objects-overc-1-nuclear-objects-are-pulled-back-from-pi-0} and \cite[Lemma 6.7.(iv)]{mann-nuclear-sheaves} or by \cite[Remark 3.10]{mann-nuclear-sheaves} if $\Lambda=\Z_\ell$).
\begin{proof}
  As $\pi^\ast$ is symmetric monoidal, $\pi^\ast$ preserves nuclear objects. Moreover, the unit $M\to \pi_\ast\pi^\ast M$ is an isomorphism for any $M\in \D_\nuc(\pi_0(X),\Lambda)$ by \cref{sec:nucl-objects-overc-2-morphism-to-pi-0}. Let $N\in \D_\nuc(X,\Lambda)$ be nuclear. We need to see that the counit $\pi^\ast \pi_\ast N\to N$ is an isomorphism. This follows from \cref{sec:nucl-objects-overc-2-nuclear-objects-are-overconvergent}.

    Let us proof the second assertion. Set $A:=C(T,\Lambda)$. As $A$ is a nuclear $\Lambda$-algebra, e.g., by \cref{sec:defin-d_hats-2-complete-and-discrete-mod-i-implies-nuclear}, $\D_\nuc(A)$ is equivalent to $\mathrm{Mod}_A(\D_\nuc(\Lambda))$, cf.\ \cite[Corollary 8.20]{condensed-complex-geometry}.
    The existence of a left adjoint $\alpha^\ast\colon \D_\solid(\ast_\qproet,\Lambda)_{\omega_1}\to \D_\solid(T,\Lambda)_{\omega_1}$ to $\alpha_\ast$ follows as in the proof of \cite[Lemma 6.7.(iv)]{mann-nuclear-sheaves}. Similarly, it follows (using \cref{sec:defin-d_hats-2-complete-and-discrete-mod-i-implies-nuclear}) that $\alpha^\ast$ preserves nuclearity, and that $\alpha_\ast \alpha^\ast M\to M$ is an isomorphism for $M\in \D_\nuc(A)$. It remains to see that $\alpha^\ast$ is essentially surjective. Here, we a priori have to offer a different proof as in \cite[Lemma 6.7.(iv)]{mann-nuclear-sheaves} as we use a different notion of nuclearity. However, \cite[Remark 3.10]{mann-nuclear-sheaves} shows that both notions agree in this case as $T$ is profinite. Thus, the assertion follows from \cite[Lemma 6.7]{mann-nuclear-sheaves}.
\end{proof}

We note that $\alpha_\ast\colon \D_\solid(T,\Lambda)\to \mathrm{Mod}_{C(T,\Lambda)}\D_\solid(\Lambda)$ is in general not an equivalence (if $T$ is not finite), even if $\Lambda=\mathbb{F}_\ell$. Indeed, for each $t\in T$ the morphism $\{t\}\to T$ is pro-\'etale, and thus $\Lambda[\{t\}]\in \D_\solid(T,\Lambda)$ is compact. But if $\Lambda$ is discrete, then $\alpha_\ast(\Lambda[\{t\}])\cong \Lambda$ viewed as a $C(T,\Lambda)$-module via evaluation at $t$. If $t\in T$ is not an open point, then this implies that $\alpha_\ast(\Lambda[\{t\}])$ is not a (discrete) finitely presented $C(T,\Lambda)$-module, and hence not compact in $\mathrm{Mod}_{C(T,\Lambda)}\D_\solid(\Lambda)$. Nuclear objects satisfy quasi-pro-\'etale descent:

\begin{lemma}
  \label{sec:nucl-objects-overc-3-overconvergent-and-nuclear-objects}
  The functor $X\mapsto \D_\nuc(X,\Lambda)$ is a hypercomplete quasi-pro-\'etale sheaf on the big quasi-pro-\'etale site of $\ell$-bounded spatial diamonds.
\end{lemma}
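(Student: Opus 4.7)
The plan is to reduce hyperdescent of $\D_\nuc(-,\Lambda)$ to the already-established hyperdescent of $\D_\solid(-,\Lambda)_{\omega_1}$ from \cref{sec:omeg-solid-sheav-2-solid-sheaves-are-hyper-complete-quasi-pro-etale-sheaf}. Since $\D_\nuc(X,\Lambda)\subseteq \D_\solid(X,\Lambda)_{\omega_1}$ is a full subcategory closed under colimits (via \cite[Theorem~8.6]{condensed-complex-geometry}), it suffices to verify two statements for every quasi-pro-\'etale morphism $f\colon Y\to X$ of $\ell$-bounded spatial diamonds: (a) $f^\ast$ preserves nuclear objects, and (b) if $f$ is a cover then nuclearity descends, in the sense that $M\in \D_\solid(X,\Lambda)_{\omega_1}$ is nuclear as soon as $f_\bullet^\ast M$ is nuclear on every term of the \v{C}ech nerve $Y_\bullet\to X$.

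Part (a) is essentially formal. By \cref{sec:omeg-solid-sheav-2-pullback-and-pushforward-of-solid-sheaves} and \cref{sec:omeg-solid-sheav-2-properties-of-omega-1-solid-sheaves}, $f^\ast$ is a symmetric monoidal, colimit-preserving functor that sends $\Lambda$ to $\Lambda$ and carries the compact generators $\Lambda_\solid[U]$ to objects of the same type over $Y$ (namely $\Lambda_\solid[U\times_X Y]$), so it preserves compact objects. Combined with the intrinsic definition of trace-class maps via the canonical map $(\IHom(P,\Lambda)\otimes Q)(\ast)\to \Hom(P,Q)$, this implies that $f^\ast$ preserves trace-class maps, and hence basic nuclear objects in the sense of \cref{sec:overc-solid-sheav-basic-nuclear-and-conuclear}; preservation of all nuclear objects then follows by colimit-preservation together with \cite[Theorem~8.6]{condensed-complex-geometry}.

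For (b) my intended strategy is to reduce to the strictly totally disconnected case. Using \cref{sec:bound-cond-1-quasi-pro-etale-over-ell-bounded-implies-ell-bounded} one may iteratively replace terms of $Y_\bullet$ by strictly totally disconnected covers and assume that both $X$ and all $X_i$ in the hypercover are strictly totally disconnected. On such spaces \cref{sec:nucl-objects-overc-1-nuclear-objects-are-pulled-back-from-pi-0} identifies $\D_\nuc(X_i,\Lambda)\cong \D_\nuc(C(X_i,\Lambda))$ with nuclear modules over the adic profinite $\Lambda$-algebra $C(X_i,\Lambda)$, and the problem becomes effective qproet descent of nuclear modules along the cosimplicial diagram $C(Y_\bullet,\Lambda)$. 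Combining hyperdescent of the ambient $\omega_1$-solid categories with the fact (from \cref{sec:nucl-objects-overc-2-nuclear-objects-are-overconvergent}) that nuclear objects are overconvergent, together with the explicit description of compact generators via $\pi^\ast$ from $\pi_0$ supplied by \cref{sec:nucl-objects-overc-1-morphism-to-pi-0-in-std-case}, one can rewrite the nuclearity criterion so that it is testable after pullback to a strictly totally disconnected cover.

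The hard part will be this last step, namely checking that the nuclearity condition really descends. Concretely, this amounts to showing that the nuclearization functor $(-)^\nuc\colon \D_\solid(X,\Lambda)_{\omega_1}\to \D_\nuc(X,\Lambda)$ commutes with the symmetric monoidal pullback between strictly totally disconnected spaces, so that the unit $M^\nuc\to M$ becomes an isomorphism after pullback to $Y_\bullet$ and hence is already an isomorphism by the conservativity implied by descent of $\D_\solid(-,\Lambda)_{\omega_1}$. The obstruction is that the compact generators $\Lambda_\solid[U]$ appearing in the nuclearity criterion are not a priori dualizable, so the interchange between $f^\ast$ and the relevant $\IHom(P,\Lambda)$ is not automatic; I would handle this by pushing everything to $\pi_0$ via \cref{sec:nucl-objects-overc-1-morphism-to-pi-0-in-std-case} and using the rigidity of $\D_\nuc(\Lambda)$ established in \cite[Lecture VIII]{condensed-complex-geometry}, which gives the desired base-change compatibility for nuclear modules over nuclear adic profinite $\Lambda$-algebras.
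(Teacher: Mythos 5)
Part (a) of your proposal is fine and agrees with the paper: symmetric monoidality and colimit-preservation of quasi-pro-\'etale pullback already preserve trace-class maps, hence basic nuclear objects and (by \cite[Theorem 8.6]{condensed-complex-geometry}) all nuclear objects; you do not even need preservation of compacts for this. The problem is part (b), which is exactly the substance of the lemma and which your plan does not actually prove. Your reduction ``replace $X$ and all terms of the hypercover by strictly totally disconnected spaces'' is circular: passing from a general $\ell$-bounded base $X$ to a strictly totally disconnected cover $X'\to X$ requires knowing that nuclearity of $M$ can be tested after pullback along $X'\to X$, which is precisely an instance of (b). The tools you propose to close this -- the identification $\D_\nuc(-,\Lambda)\cong\D_\nuc(C(\pi_0(-),\Lambda))$ from \cref{sec:nucl-objects-overc-1-nuclear-objects-are-pulled-back-from-pi-0} and rigidity of $\D_\nuc(\Lambda)$ -- are only available when the base is strictly totally disconnected, so they cannot supply the missing base-change statement (commutation of $(-)^\nuc$ with pullback from a general base), and the colimit-preservation of $(-)^\nuc$ that would make such an argument run is only established in the paper \emph{after} this lemma (\cref{sec:nucl-objects-overc-1-d-nuc-generated-by-complete-objects}), via results whose proofs use the lemma itself.

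The paper avoids all of this with a short direct computation that you should compare with: given a hypercover $f_\bullet\colon X_\bullet\to X$ with each $f_n^\ast M$ nuclear and $P\in\D_\solid(X,\Lambda)_{\omega_1}$ compact, one writes
$\Hom(P,M)\cong\varprojlim_n\Hom(f_n^\ast P,f_n^\ast M)$ using hyperdescent of the ambient category (\cref{sec:omeg-solid-sheav-2-solid-sheaves-are-hyper-complete-quasi-pro-etale-sheaf}), applies nuclearity of $f_n^\ast M$ termwise, and then uses that $f_n$ is quasi-pro-\'etale -- so $f_n^\ast$ is restriction along a slice of the site and, via \cref{sec:omeg-solid-sheav-2-properties-of-omega-1-solid-sheaves}, satisfies $f_n^\ast\IHom_X(P,\Lambda)\otimes^\solid_\Lambda f_n^\ast M\cong f_n^\ast(\IHom_X(P,\Lambda)\otimes^\solid_\Lambda M)$ -- to reassemble the limit into $\Hom(\Lambda,\IHom(P,\Lambda)\otimes^\solid_\Lambda M)$, i.e.\ the nuclearity criterion for $M$ itself. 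No nuclearization functor, no reduction to strictly totally disconnected base, and no rigidity input is needed; the only geometric fact used is the slice/compactness compatibility of quasi-pro-\'etale pullback. To salvage your approach you would have to prove the pullback-compatibility of the nuclearity criterion over a general $\ell$-bounded base directly -- which is in effect the paper's computation -- so the detour through $\pi_0$ and $C(T,\Lambda)$-modules buys you nothing here.
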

\begin{proof}
  This is formally implied by \cref{sec:omeg-solid-sheav-2-solid-sheaves-are-hyper-complete-quasi-pro-etale-sheaf}: Pullback of nuclear objects are nuclear (by symmetric monoidality of pullbacks) and if $f_\bullet\colon X_\bullet\to X$ is a quasi-pro-\'etale hypercover and $M\in \D_\solid(X,\Lambda)_{\omega_1}$ with $f^\ast_n M$ nuclear for each $n\in \Delta$, then for $P\in \D_\solid(X,\Lambda)_{\omega_1}$ compact we have
  \begin{align*}
    \Hom_{\D_\solid(X,\Lambda)_{\omega_1}}(P,M) = & \varprojlim\limits_{n\in \Delta} \Hom_{\D_\solid(X_n,\Lambda)_{\omega_1}}(f^\ast_nP,f^\ast_nM) \\
    = & \varprojlim\limits_{n\in \Delta} \Hom_{\D_\solid(X_n,\Lambda)_{\omega_1}}(\Lambda,\IHom_{X'_n}(f^\ast_nP,\Lambda)\otimes^\solid_\Lambda f^\ast_n M)\\
    = & \varprojlim\limits_{n\in \Delta} \Hom_{\D_\solid(X_n,\Lambda)_{\omega_1}}(\Lambda,f^\ast_n(\IHom_{X}(P,\Lambda)\otimes^\solid_\Lambda M))\\
    = & \Hom_{\D_\solid(X,\Lambda)_{\omega_1}}(\Lambda, \IHom(P,\Lambda)\otimes^\solid_\Lambda M)
  \end{align*}
  using (in this order) the description of Hom-spaces in limits of categories, nuclearity of all the $f_n^\ast M$, the fact that $f^\ast_n$ is the restriction for some slice (plus \cref{sec:omeg-solid-sheav-2-properties-of-omega-1-solid-sheaves} to control the $\IHom$ using that $f^\ast_nP$ is compact), and again the description of Hom-spaces in limits of categories.
\end{proof}

The next result analyzes the behavior of completions in $\D_\nuc(X,\Lambda)$. More precisely, fix an ideal of definition $I\subseteq \Lambda$. In the following, we adhere to the conventions in \cref{sec:notat-conv} and in particular implicitly fix a finite set of generators of $I$; morever, we use the terminology ``complete'' instead of ``$I$-adically complete'' whenever convenient. Note that a priori completeness in $\D_\nuc(X,\Lambda)$ and $\D_\solid(X,\Lambda)_{\omega_1}$ may be different, as $\D_\nuc(X,\Lambda)$ is not stable under limits in $\D_\solid(X,\Lambda)_{\omega_1}$. However, the next result shows that in fact, the two notions of completeness coincide:

\begin{lemma}
  \label{sec:nucl-objects-overc-1-completions-of-nuclear-objects-are-nuclear}
  Let $X$ be an $\ell$-bounded spatial diamond and $M\in \D_\nuc(X,\Lambda)$.
  \begin{lemenum}
    \item The $I$-adic completion $\widehat{M}$ of $M$ in $\D_\solid(X,\Lambda)_{\omega_1}$ is automatically nuclear.
    \item $M$ is $I$-adically complete in $\D_\nuc(X,\Lambda)$ if and only if it is so in $\D_\solid(X,\Lambda)_{\omega_1}$.
  \end{lemenum}
\end{lemma}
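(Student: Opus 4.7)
The plan is to prove (i) and then deduce (ii) formally. For (ii), the key point is that the inclusion $\D_\nuc(X,\Lambda) \hookrightarrow \D_\solid(X,\Lambda)_{\omega_1}$ is colimit-preserving and, as a stable subcategory closed under finite colimits, is also closed under finite limits, so finite limits in the two categories agree. Granted (i), the ambient $x$-completion $M^{\wedge_x}$ of a nuclear $M$ is nuclear for each generator $x$ of $I$, and hence $T(x,M) := \fib(M \to M^{\wedge_x})$, being a finite limit of nuclear objects, is itself nuclear. Therefore $T(x,M)$ coincides with its nuclearization, so vanishing of $T(x,M)$ (i.e.\ the $x$-completeness of $M$) is equivalent in the two categories.

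For (i), I would first observe that $\widehat M = \varprojlim_n M/I^n$ is overconvergent: each $M/I^n$ is a finite Koszul colimit of copies of $M$, hence nuclear and thus overconvergent (\cref{sec:nucl-objects-overc-2-nuclear-objects-are-overconvergent}), and overconvergent objects are stable under all limits (\cref{sec:nucl-objects-overc-1-properties-of-overconvergent-objects}). By quasi-pro-étale hyperdescent of nuclearity (\cref{sec:nucl-objects-overc-3-overconvergent-and-nuclear-objects}) the question is local, so upon verifying that ambient $I$-adic completion of nuclear sheaves commutes with quasi-pro-étale pullback, I reduce to the case where $X$ is strictly totally disconnected. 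This compatibility holds since pullback is symmetric monoidal (hence preserves each $M/I^n$) and, on overconvergent sheaves, which by \cref{sec:nucl-objects-overc-1-morphism-to-pi-0-in-std-case} are pulled back from $\pi_0$ via $\pi^*$, the limit-preservation of $\pi^*$ yields the desired commutation with $\varprojlim_n$.

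In the strictly totally disconnected case, I would use the symmetric monoidal equivalence $\D_\nuc(X,\Lambda) \cong \D_\nuc(A)$ for $A := C(X,\Lambda)$ from \cref{sec:nucl-objects-overc-1-nuclear-objects-are-pulled-back-from-pi-0} to transport the problem to the adic analytic ring $A$. By \cref{sec:defin-d_hats-1-examples-for-nuclear-modules}(ii) I can write $M = \varinjlim_j N_j$ with each $N_j$ adically complete and $N_j/I$ discrete, so $M/I = \varinjlim_j N_j/I$ is discrete as a filtered colimit of discrete objects. Since multiplication by each generator of $I$ is an equivalence on the divisible part $T_I M := \fib(M \to \widehat M)$, its Koszul quotient $T_I M / I$ vanishes; hence $\widehat M / I \cong M/I$ is discrete. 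Combined with the completeness of $\widehat M$ by construction, \cref{sec:defin-d_hats-1-examples-for-nuclear-modules}(i) then implies that $\widehat M$ is nuclear, establishing (i).

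The hard part will be the bookkeeping in the two reductions above: checking that ambient $I$-adic completion in $\D_\solid(X,\Lambda)_{\omega_1}$ commutes with quasi-pro-étale pullback on overconvergent sheaves, and that under the strictly totally disconnected equivalence $\D_\nuc(X,\Lambda) \cong \D_\nuc(A)$ the ambient completion corresponds to the ambient completion in $\D(A)$, so that the results of \cref{sec:defin-d_hats} become applicable. Both compatibilities ultimately rest on the fact that the completion of nuclear (equivalently, overconvergent) sheaves on a strictly totally disconnected space can be computed by applying $\pi^*$ to the corresponding computation on the profinite set $\pi_0(X)$.
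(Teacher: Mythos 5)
Part (i) of your proposal is essentially the paper's proof: reduce via quasi-pro-étale hyperdescent of nuclearity and compatibility of completion with pullback to the strictly totally disconnected case, pass to $\pi_0(X)$ and $\D_\nuc(C(X,\Lambda))$, and apply the criterion of \cref{sec:defin-d_hats-2-complete-and-discrete-mod-i-implies-nuclear} to $\widehat M$, using $\widehat M/I\simeq M/I$ together with the discreteness of $M/I$ supplied by \cref{sec:defin-d_hats-1-examples-for-nuclear-modules}. Two small repairs, though: the commutation of completion with pullback should be deduced from the fact that quasi-pro-étale pullback preserves limits (this is what the paper uses), not from the $\pi^\ast$-description of overconvergent sheaves, which is only available once you are already on a strictly totally disconnected space; and your intermediate claim that every generator of $I$ acts invertibly on $\fib(M\to\widehat M)$ is false when $I$ is not principal (for $\Lambda=\Z_p[[\pi]]$, $I=(p,\pi)$ and $M=\Lambda[1/\pi]$ one has $\widehat M=0$, so the fiber is $M$ and $p$ is not invertible on it); the conclusion $\widehat M/I\simeq M/I$ is nevertheless correct, as one sees by killing the generators one at a time.

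The genuine gap is in (ii). Your deduction rests on the claim that for each single generator $x$ of $I$ the ambient $x$-completion $M^{\wedge_x}$ of a nuclear $M$ is nuclear, which you assert is ``granted (i)''. It is not: (i) concerns the full $I$-adic completion, which is $I$-adically complete and therefore falls under \cref{sec:defin-d_hats-2-complete-and-discrete-mod-i-implies-nuclear}, whereas a partial completion $M^{\wedge_x}$ is in general neither $I$-adically complete nor discrete mod $x$ (e.g.\ $(\Lambda[1/\pi])^{\wedge_p}$ for $\Lambda=\Z_p[[\pi]]$ and the nuclear module $M=\Lambda[1/\pi]$ is $p$-complete but not $\pi$-complete, and its reduction mod $p$ is $\F{p}((\pi))$, which is not discrete). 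So neither (i) nor the mechanism behind it applies, nuclearity of such partial completions is established nowhere in the paper, and it is doubtful in general; as stated, your step is unjustified. The repair is to argue with the full completion, which is exactly what the paper does: limits in $\D_\nuc(X,\Lambda)$ are computed by applying the nuclearization, i.e.\ the right adjoint of the inclusion, to the ambient limit, so by (i) the $I$-adic completion of $M$ computed inside $\D_\nuc(X,\Lambda)$ is again $\widehat M$; since $I$-completeness in either category is equivalent to the unit $M\to\widehat M$ being an equivalence, and $\D_\nuc(X,\Lambda)\subset\D_\solid(X,\Lambda)_{\omega_1}$ is a full subcategory, the two notions of completeness coincide.
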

\begin{proof}
  We start with (i).
  By \cref{sec:nucl-objects-overc-3-overconvergent-and-nuclear-objects} and \cref{sec:omeg-solid-sheav-2-pullback-and-pushforward-of-solid-sheaves}, specifically the commutation of pullbacks with limits, the claim is quasi-pro-\'etale local on $X$ and hence we may assume that $X$ is strictly totally disconnected. By \cref{sec:nucl-objects-overc-2-morphism-to-pi-0} and \cref{sec:nucl-objects-overc-1-nuclear-objects-are-pulled-back-from-pi-0} the claim reduces to the case that $X$ is a profinite set (realized as a qcqs perfectoid space over some $\Spa(C,\mathcal{O}_C)$ with $C$ algebraically closed and perfectoid). More precisely, we use that $\pi^\ast\colon \D_{\solid}(\pi_0(X),\Lambda)_{\omega_1}\to \D_\solid(X,\Lambda)_{\omega_1}$ commutes with limits to see that $\widehat{M}=\pi^\ast\pi_\ast(\widehat{M})$ is the pullback along $\pi^\ast$ of the completion of $\pi_\ast(M)$. Using that $\D_\nuc(X,\Lambda)=\D_\nuc(C(X,\Lambda))$ by \cref{sec:nucl-objects-overc-2-morphism-to-pi-0}, it suffices by \cref{sec:defin-d_hats-2-complete-and-discrete-mod-i-implies-nuclear} to see that $\widehat{M}$ is discrete modulo $I$. But $M/I\cong \widehat{M}/I$, and the claim follows.

  Now, (ii) is a consequence of (i): Namely, if $\widehat{M}$ denotes the $I$-adic completion of $M$ in $\D_\solid(X,\Lambda)_{\omega_1}$ then it is also the $I$-adic completion of $M$ in $\D_\nuc(X,\Lambda)$ because $\widehat{M}$ is nuclear by (i) and the nuclearization $\D_\solid(X,\Lambda)_{\omega_1}\to \D_\nuc(X,\Lambda)$, i.e. the right adjoint to the inclusion, preserves limits.
\end{proof}

We now move to a proof of the assertion that the category $\D_\nuc(X,\Lambda)$ is generated by complete objects for any $\ell$-bounded diamond $X$. For this assertion we will use some general properties of dualizable and rigid categories. Let $\Pr^L_\Sp$ be the $\infty$-category of presentable, stable $\infty$-categories (with colimit preserving functors). We recall that a stable $\infty$-category $\mathcal{C}\in \Pr^L_{\Sp}$ is dualizable if and only if $\mathcal{C}$ is a retract (in $\Pr^L_\Sp$) of a compactly generated stable $\infty$-category $\mathcal{D}\in \Pr^L_\Sp$ (see \cite{clausen2023_efimov_k-theory}), and this happens if and only if $\mathcal{C}$ is generated under colimits by objects of the form $X=\varinjlim_n(X_0\to X_1\to \ldots)$ with each morphism $X_n\to X_{n+1}$ compact. Here, a morphism $X\to Y$ in $\mathcal{C}$ is called compact if for any filtered diagram $Z_j,j\in J,$ in $\mathcal{C}$ with $\varinjlim_j Z_j\cong 0$, the morphism $\varinjlim_j \pi_0(\Hom_{\mathcal{C}}(Y,Z_j))\to \varinjlim_j \pi_0(\Hom_{\mathcal{C}}(X,Z_j))$ is zero. 
A presentably symmetric monoidal stable $\infty$-category $\mathcal{C}\in \mathrm{CAlg}(\Pr^L_\Sp)$ is rigid if and only if $\mathcal{C}$ is dualizable and the compact morphisms are exactly the trace-class morphisms (see \cite{clausen2023_efimov_k-theory}). We note the following abstract property guaranteeing rigidity. Essentially the argument is taken from \cite[Satz 3.17]{andreychev2023k}.\footnote{We thank Gregory Andreychev and Peter Scholze for related discussions.}

\begin{lemma}
  \label{sec:nuclear-sheaves-1-general-assertion-when-nuclear-rigid}
  Let $\mathcal{C}$ be a compactly generated closed symmetric monoidal $\infty$-category. Assume that $1\in \mathcal{C}$ is compact. Then:
  \begin{lemenum}
  \item Each trace-class morphism in $\mathcal{C}$ is compact.
  \item Assume that for each compact object $P\in \mathcal{C}$ the internal dual $P^\vee=\IHom(P,1)$ is nuclear. Then $\mathrm{Nuc}(\mathcal{C})$ is rigid. Moreover, the nuclearization functor $(-)^\nuc\colon \mathcal{C}\to \mathrm{Nuc}(\mathcal{C})$, right adjoint to the inclusion, commutes with colimits, and agrees with the trace functor $X\mapsto X^\tr:=\varinjlim_{1\to P\otimes X,\ P \textrm{ compact}} P^\vee$.
  \end{lemenum}
\end{lemma}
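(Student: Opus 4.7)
The plan is first to dispatch (i) with a direct argument exploiting only compactness of the unit, and then to handle (ii) by constructing $X^\tr$ concretely, identifying it with the nuclearization, and reading off colimit preservation and dualizability from this explicit description; rigidity of $\mathrm{Nuc}(\mathcal C)$ will then follow from the Clausen--Efimov framework once part (i) has established that trace class morphisms are compact.

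For (i), let $f\colon X\to Y$ be witnessed as trace class by $\tilde f\colon 1\to X^\vee\otimes Y$. For any $g\colon Y\to Z$ the composite $g\circ f$ is the image of $(X^\vee\otimes g)\circ\tilde f\colon 1\to X^\vee\otimes Z$ under the canonical map $(X^\vee\otimes Z)(\ast)\to \Hom(X,Z)$, so precomposition with $f$ factors as
\[
\Hom(Y,Z)\longrightarrow (X^\vee\otimes Z)(\ast)\longrightarrow \Hom(X,Z).
\]
Given a filtered diagram $(Z_j)_j$ with $\varinjlim_j Z_j=0$, the middle term becomes $(X^\vee\otimes\varinjlim_j Z_j)(\ast)=0$ after passing to filtered colimits, since $\otimes$ commutes with colimits in each variable and $(-)(\ast)=\Hom(1,-)$ commutes with filtered colimits by compactness of $1$. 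Taking $\pi_0$ then shows $f$ is compact.

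For (ii), first observe that $\mathrm{Nuc}(\mathcal C)\subset\mathcal C$ is closed under all colimits (formal) and under tensor products, since trace class maps are stable under tensor and hence basic nuclears are, and this extends to arbitrary nuclears by \cite[Theorem~8.6]{condensed-complex-geometry}. Now for $X\in\mathcal C$ the indexing category of pairs $(P,\alpha\colon 1\to P\otimes X)$ with $P$ compact is essentially filtered (using finite direct sums of compacts together with compactness of $1$), and each $P^\vee$ is nuclear by hypothesis, so $X^\tr$ is nuclear and carries a canonical map $X^\tr\to X$. I claim this realizes the nuclearization, i.e.\ $\Hom(N,X^\tr)\isoto\Hom(N,X)$ for any nuclear $N$. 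By \cite[Theorem~8.6]{condensed-complex-geometry} it suffices to treat basic nuclear $N=\varinjlim_n P_n$ with $P_n\to P_{n+1}$ trace class. The point is that by trace classness each transition map in the tower $\Hom(P_\bullet,X)$ factors as $\Hom(P_{n+1},X)\to (P_n^\vee\otimes X)(\ast)\to \Hom(P_n,X)$, and the middle terms pair with the structure of $X^\tr$ to produce, via a diagram chase, an inverse to the natural map $\Hom(N,X^\tr)\to\Hom(N,X)$.

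Colimit preservation of $(-)^\nuc=(-)^\tr$ is then immediate from the explicit formula: for $X=\varinjlim_k X_k$, compactness of $1$ shows any $1\to P\otimes X$ factors through some $P\otimes X_k$, giving cofinality of the indexing categories. For dualizability of $\mathrm{Nuc}(\mathcal C)$, every nuclear object is a colimit of basic nuclears, which are sequential colimits of trace class---hence, by (i), compact---morphisms, so the criterion recalled just above the lemma applies. The coincidence of compact and trace class morphisms in $\mathrm{Nuc}(\mathcal C)$, which is the remaining content of rigidity, follows from (i) in one direction and in the other from the identity $\IHom_{\mathrm{Nuc}(\mathcal C)}(M,1)=(M^\vee)^\nuc$ combined with the trace formula for $(-)^\nuc$, following \cite[Satz 3.17]{andreychev2023k}. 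The main obstacle, hidden in the diagram chase above, is the careful verification that the trace class factorizations at each level $n$ are compatible with the colimit defining $X^\tr$; this requires a cofinality argument whose template is the one used by Clausen--Efimov in \cite{clausen2023_efimov_k-theory}.
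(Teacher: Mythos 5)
Your part (i) is correct, and is in fact a little more direct than the paper's argument (which factors a trace-class map through a compact object using compactness of $1$): your observation that precomposition with $f$ factors through $(X^\vee\otimes Z)(\ast)$, naturally in $Z$, together with compactness of $1$, does the job. The overall architecture of your part (ii) also matches the paper's: identify the nuclearization with the trace functor, read off colimit preservation, and obtain rigidity from the Clausen--Efimov characterization plus an Andreychev-style argument.

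However, the pivotal step of (ii) is exactly the one you leave unproved. Defining $X^\tr$ by the colimit formula, everything hinges on the identification $\Hom(P,X^\tr)\simeq(P^\vee\otimes X)(\ast)$ for $P$ compact (equivalently, the universal property of $X^\tr\to X$ against nuclear objects): granted this, the basic-nuclear case is the short chain $\Hom(N,X^\tr)=\varprojlim_n\Hom(P_n,X^\tr)\simeq\varprojlim_n(P_n^\vee\otimes X)(\ast)\simeq\varprojlim_n\Hom(P_n,X)\simeq\Hom(N,X)$, which is precisely the paper's computation. You explicitly defer this verification to an unspecified ``diagram chase''/``cofinality argument''; the paper avoids the issue by constructing $X^\tr$ in the opposite order, first as the object corepresenting $P\mapsto(P^\vee\otimes X)(\ast)$ on compacts (following \cite[Satz 3.16]{andreychev2023k}) and only then deriving the colimit formula by reindexing, so this is genuine content and not bookkeeping -- without it neither $(-)^\nuc\cong(-)^\tr$ nor colimit preservation of $(-)^\nuc$ is established. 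Two further soft spots: your dualizability argument feeds basic nuclear objects $\varinjlim_n P_n$ into the ``sequential colimits along compact morphisms'' criterion for $\mathrm{Nuc}(\mathcal C)$, but the stages $P_n$ are compact objects of $\mathcal C$ that need not lie in $\mathrm{Nuc}(\mathcal C)$, so the criterion cannot be applied inside $\mathrm{Nuc}(\mathcal C)$ as stated; the paper instead exhibits $\mathrm{Nuc}(\mathcal C)$ as a retract of the compactly generated $\mathcal C$ in $\Pr^L_\Sp$ via the colimit-preserving trace functor, an argument available to you once the previous point is settled. Finally, the compact-implies-trace-class half of rigidity is only gestured at by citation: one needs colimit preservation of $(-)^\nuc$ to see that a compact morphism in $\mathrm{Nuc}(\mathcal C)$ is compact in $\mathcal C$, hence factors through a compact $P$, and then nuclearity of $P^\vee$ to push the trace-class witness into $\IHom^\nuc(X,1)$; as written, your appeal to the identity $\IHom^\nuc(M,1)=(M^\vee)^\nuc$ does not substitute for these steps.
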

\begin{proof}
  Let $X\to Y$ be a trace-class morphism in $\mathcal{C}$ with trace-class witness $f\in \Hom_{\mathcal{C}}(1,X^\vee\otimes Y)$. Writing $Y=\varinjlim_i P_i$ with $P_i$ compact and using that $1\in \mathcal{C}$ is compact, there exists some $P_i\in \mathcal{C}$ and a factorization $X\to P_i\to Y$ with $X\to P_i$ is trace-class. In particular, the morphism $X\to Y$ is compact.

  For the second assertion, we first discuss the trace functor $X\mapsto X^{\tr}$. 
  As before \cite[Satz 3.16]{andreychev2023k}, one can construct a functor $\mathcal C \to \mathcal C$, $X\mapsto X^\tr$, which is uniquely determined by requiring that (naturally) $\Hom(P,X^\tr)\cong (P^\vee\otimes X)(\ast)$ for $P\in \mathcal{C}$ compact. Now, the argument of \cite[Satz 3.16]{andreychev2023k} shows that $X^\tr\cong \varinjlim_{1\to P\otimes X,\ P \textrm{ compact}} P^\vee$ for any $X\in \mathcal{C}$. Indeed, by definition of $X^\tr$ for any compact object $P\in \mathcal{C}$ we have a natural isomorphism $\Hom(P,X^\tr)\cong (P^\vee\otimes X)(\ast)$. Writing $P^\vee$ as a colimit of compact objects $Q$ with a morphism $Q\to P^\vee$, we can conclude that $\Hom(P,X^\tr)\cong \varinjlim_{Q \textrm{ compact}, Q\to P^\vee} (Q\otimes X)(\ast)$. Thus,
  \begin{align*}
    X^\tr \cong & \varinjlim\limits_{P \textrm{ compact}, P\to X^\tr} P \\
    \cong & \varinjlim\limits_{P,Q \textrm{ compact}, 1\ \to Q\otimes X,\ Q\to P^\vee} P \\
    \cong & \varinjlim\limits_{P,Q \textrm{ compact}, 1\ \to Q\otimes X,\ P\to Q^\vee} P.
  \end{align*}
  Taking first the colimit over $P$ this identifies with $\varinjlim_{Q \textrm{ compact}, 1\to Q\otimes X} Q^\vee$ as claimed.

  Having checked the formula for the trace functor, the remaining assertions follow. Namely, if $X\in \mathcal{C}$ is nuclear, then by definition the natural morphism $X^\tr\to X$ is an isomorphism. Moreover for any $X\in \mathcal{C}$ the object $X^\tr$ is nuclear by assumption and the above description of $X^\tr$ (we recall that nuclear objects are stable under all colimits). Clearly, the functor $X\mapsto X^\tr$ commutes with colimits. Thus the trace functor realizes $\mathrm{Nuc}(\mathcal{C})$ as a retract in $\Pr^L_{\Sp}$ of the compactly generated category $\mathcal{C}$. Hence, $\mathcal{C}$ is dualizable. We claim that $(-)^\nuc\cong (-)^\tr$. For this it suffices to see that for any basic nuclear object $X=\varinjlim_n X_n\in \mathcal{C}$ with each $X_n\to X_{n+1}$ a trace class morphism between compact objects and any $Y\in \mathcal{C}$, the natural map
  \[
    \Hom(X,Y^\tr)\to \Hom(X,Y)
  \]
  is an isomorphism. This is true as
  \[
    \Hom(X,Y^\tr)=\varprojlim\limits_{n\in \mathbb{N}} \Hom(X_n,Y^\tr)\cong \varprojlim\limits_{n\in \mathbb{N}} (X_n^\vee\otimes Y)(\ast)\cong \varprojlim\limits_{n\in \mathbb{N}} \Hom(X_n,Y)\cong \Hom(X,Y)
  \]
  using the definition of $Y^\tr$ and that each $X_n\to X_{n+1}$ is a trace-class morphism between compact objects.
  Knowing that $\mathrm{Nuc}(\mathrm{C})$ is dualizable and that $(-)^\nuc$ commutes with colimits (as it identifies with $(-)^\tr$) formally imply that $\mathrm{Nuc}(\mathcal{C})$ is rigid. Namely, by \cite{clausen2023_efimov_k-theory} (and (i)) it suffices to see that each compact morphism $f\colon X\to Y$ in $\mathrm{Nuc}(\mathcal{C})$ is trace class. As $(-)^\nuc$ commutes with colimits, $f$ is compact in $\mathcal{C}$, which implies that there exists a compact object $P\in \mathcal{C}$ and a factorization $X\to P\to Y$ of $f$. As $Y$ is nuclear, the morphism $P\to Y$ is automatically of trace class in $\mathcal{C}$. Let $1\to P^\vee\otimes Y$ be a witness for $P\to Y$, and let $\IHom^\nuc(X,1)$ the dual of $X$ in $\mathrm{Nuc}(\mathcal{C})$. As $P^\vee$ is nuclear the natural morphism $P^\vee\to \IHom(X,1)$ factors over $\IHom^\nuc(X,1)$. This implies that the morphism $f\colon X\to Y$ is trace class in $\mathrm{Nuc}(\mathcal{C})$. This finishes the proof.  
\end{proof}

Rigid categories have favourable properties for their module categories.

\begin{lemma}
  \label{sec:nuclear-sheaves-1-good-properties-of-modules-over-rigid-categories}
  Let $\mathcal{C}\in \CAlg(\Pr^L_\Sp)$ be a presentably stable symmetric monoidal $\infty$-category. Assume that $\mathcal{C}$ is rigid.
  \begin{lemenum}
  \item A $\mathcal{C}$-module $\mathcal{M}$ in $\Pr^L_\Sp$, i.e., an object of $\Pr^L_{\mathcal{C}}$, is dualizable in the symmetric monoidal category $(\Pr^L_{\mathcal{C}},\otimes_{\mathcal{C}})$ if and only it is dualizable in $(\Pr^L_{\Sp},\otimes)$.
  \item Let $F\colon \mathcal{M}\to \mathcal{N}$ be a morphism in $\Pr^L_{\mathcal{C}}$, and assume that the right adjoint $G\colon \mathcal{N}\to \mathcal{M}$ commutes with colimits. Then for $A\in \mathcal{C}$ and $N\in \mathcal{N}$ the natural morphism\footnote{This morphism is adjoint to $F(A\otimes G(N))\cong A\otimes F\circ G(N)\to A\otimes N$, with the first isomorphism using $\mathcal{C}$-linearity of $F$.}
    \[
      A\otimes G(N)\to G(A\otimes N),
    \]
    is an isomorphism. In other words, $G$ is $\mathcal{C}$-linear.
  \item Any morphism $F\colon \mathcal{C}\to \mathcal{R}$ in $\CAlg(\Pr^L_\Sp)$, where the symmetric monoidal unit $1_{\mathcal{R}}\in \mathcal{R}$ is compact, automatically has a colimit-preserving right adjoint $G$, i.e., is strongly continuous.
    \item If $A\in \CAlg(\mathcal{C})$, then $\mathrm{Mod}_A(\mathcal{C})$ is rigid.
  \end{lemenum}
\end{lemma}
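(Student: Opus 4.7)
The plan is to handle (iii), (ii), and (iv) by direct arguments from the definition of rigidity, and to treat (i) last as it is the main obstacle.

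For (iii), observe that a symmetric monoidal functor sends dualizable objects to dualizable objects; hence $F$ maps the compact generators of $\mathcal{C}$ (which are dualizable by rigidity) to dualizable objects of $\mathcal{R}$. Compactness of $1_\mathcal{R}$ combined with the identity $\Hom_\mathcal{R}(Q,-) \cong \Hom_\mathcal{R}(1_\mathcal{R}, Q^\vee \otimes -)$ shows that every dualizable $Q \in \mathcal{R}$ is itself compact. Thus $F$ preserves compact objects, equivalently $G$ commutes with filtered colimits. Since $G$ is exact (being a right adjoint between stable categories), it preserves all colimits.

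For (ii), note that both functors $A \mapsto A \otimes G(N)$ and $A \mapsto G(A \otimes N)$ preserve colimits in $A$: the second because $G$ preserves colimits by hypothesis and $A \otimes -$ preserves colimits in each variable. Rigidity implies that $\mathcal{C}$ is generated under colimits by its dualizable objects, so it suffices to check the isomorphism for dualizable $A$. For such $A$, the functor $A^\vee \otimes -$ is simultaneously a left and a right adjoint to $A \otimes -$ on both $\mathcal{M}$ and $\mathcal{N}$, and the natural map $A \otimes G(N) \to G(A \otimes N)$ is precisely the mate (under these adjunctions) of the $\mathcal{C}$-linearity isomorphism $F(A^\vee \otimes -) \cong A^\vee \otimes F(-)$, which holds because $F$ is a morphism in $\Pr^L_\mathcal{C}$.

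For (iv), compact generators of $\Mod_A(\mathcal{C})$ are retracts of $A \otimes P$ with $P$ a compact generator of $\mathcal{C}$; since $P$ is $\mathcal{C}$-dualizable, $A \otimes P$ is $A$-dualizable with $A$-dual $A \otimes P^\vee$. Rigidity of $\mathcal{C}$ implies that $1_\mathcal{C}$ is compact, hence the unit $A$ of $\Mod_A(\mathcal{C})$ is compact. This exhibits $\Mod_A(\mathcal{C})$ as compactly generated by dualizable objects. That every compact morphism in $\Mod_A(\mathcal{C})$ is trace class can then be verified on these generators by transporting trace-class witnesses along the symmetric monoidal base change $- \otimes_\mathcal{C} A \colon \mathcal{C} \to \Mod_A(\mathcal{C})$.

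The main obstacle is (i). One direction is quick: if $\mathcal{M}$ is dualizable in $(\Pr^L_\mathcal{C}, \otimes_\mathcal{C})$, then since rigid $\mathcal{C}$ is itself dualizable in $(\Pr^L_\Sp, \otimes)$, one combines the two pieces of duality data to exhibit $\mathcal{M}$ as dualizable in $(\Pr^L_\Sp, \otimes)$. The converse direction — that every $\mathcal{C}$-module dualizable in $\Pr^L_\Sp$ is automatically dualizable in $\Pr^L_\mathcal{C}$ — is the deep content and uses (ii) essentially: the $\Sp$-linear evaluation and coevaluation witnessing dualizability can be upgraded to $\mathcal{C}$-linear maps precisely because, in the rigid setting, right adjoints of colimit-preserving $\mathcal{C}$-linear functors are automatically $\mathcal{C}$-linear. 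This is the theorem on rigid base categories due to Gaitsgory–Rozenblyum and refined by Clausen–Scholze, which is the result we invoke here.
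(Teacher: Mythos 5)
Your parts (ii), (iii) and (iv) rest on a reading of ``rigid'' that is not the one used in this paper. Here rigidity (following \cite{clausen2023_efimov_k-theory} and Gaitsgory--Rozenblyum) means: $\mathcal{C}$ is dualizable as a presentable stable category and the compact morphisms coincide with the trace-class morphisms. This does \emph{not} imply that $\mathcal{C}$ is compactly generated, nor that it is generated under colimits by dualizable objects; it only gives generation by sequential colimits along trace-class (equivalently compact) morphisms. Your proof of (iii) begins with ``the compact generators of $\mathcal{C}$, which are dualizable by rigidity'', your proof of (ii) reduces to dualizable $A$ on the grounds that ``rigidity implies $\mathcal{C}$ is generated under colimits by its dualizable objects'', and your (iv) is built on compact generators of $\Mod_A(\mathcal{C})$ of the form $A\otimes P$. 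Precisely in the cases where this lemma is applied, e.g.\ $\mathcal{C}=\D_\nuc(\Z_p[[\pi]])$ (\cref{sec:mathc-valu-geom-remark-rigidity-of-d-nuc-r}), the compact objects coincide with the dualizable ones (perfect complexes) and do not generate: the category is dualizable but not compactly generated, and an object such as $C(S,\Z_p[[\pi]])$ for $S$ an infinite profinite set lies outside the colimit closure of the dualizables. So the reduction steps in (ii) and (iii) fail, and (iv) as written is not a proof; only your part (i) is fine, since there you invoke the same external input (\cite[Proposition 9.4.4]{gaitsgory-rozenblyum-vol1}) that the paper cites, together with the easy direction.

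The intended arguments avoid any compact-generation assumption. For (iii) one writes a generator of $\mathcal{C}$ as a basic nuclear object $X=\varinjlim_n X_n$ with trace-class transition maps and uses the factorization $\Hom(X_{n+1},Q)\to (X_n^\vee\otimes Q)(\ast)\to \Hom(X_n,Q)$, together with compactness of the units of $\mathcal{C}$ and $\mathcal{R}$, to commute $\Hom(X,-)$ past infinite direct sums on both sides of the adjunction $F\dashv G$. For (iv) one observes that $A\otimes(-)$ has a colimit-preserving right adjoint (the forgetful functor) and is symmetric monoidal, hence preserves compact and trace-class morphisms, so generation of $\mathcal{C}$ by colimits along trace-class maps transfers directly to $\Mod_A(\mathcal{C})$. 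Part (ii) is Gaitsgory--Rozenblyum's Lemma 9.3.6, whose proof exploits the rigid structure itself (the $\mathcal{C}\otimes\mathcal{C}$-linear right adjoint to the multiplication) rather than a supply of dualizable objects; your mate argument for dualizable $A$ is correct as far as it goes, but it does not extend to general $A$ by a colimit argument, because the dualizables need not generate.
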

\begin{proof}
  Statement (i) is proven in \cite[Proposition 9.4.4]{gaitsgory-rozenblyum-vol1} (for an equivalent characterization of rigid categories). Statement (ii) is \cite[Lemma 9.3.6]{gaitsgory-rozenblyum-vol1}. Let us prove (iii). Let $Z_i\in \mathcal{R},i\in I$, be a collection of objects. We have to see that for any $X\in \mathcal{C}$ the natural morphism
  \[
    \Hom_{\mathcal{C}}(X,\bigoplus\limits_{i\in I} G(Z_i))\to \Hom_{\mathcal{C}}(X,G(\bigoplus\limits_{i\in I}Z_i))
  \]
  is an isomorphism. We may assume that $X=\varinjlim_n X_n$ is basic nuclear with $X_n\to X_{n+1}$ of trace class. Then
  \[
    \Hom_{\mathcal{C}}(X,\bigoplus\limits_{i\in I} G(Z_i))\cong \varprojlim\limits_{n\in \mathbb{N}} (X_n^\vee\otimes \bigoplus\limits_{i\in I} G(Z_i))(\ast)\cong \varprojlim\limits_{n\in \N} \bigoplus\limits_{i\in I}(X^\vee_n\otimes G(Z_i))(\ast)\cong \varprojlim\limits_{n\in \N} \bigoplus\limits_{i\in I} \Hom(X_n,G(Z_i))
  \]
  using that $X_n\to X_{n+1}$ is of trace class and $1\in \mathcal{C}$ compact (as $\mathrm{Id}_1$ is of trace class, hence compact). On the other hand,
  \[
    \Hom(X,G(\bigoplus\limits_{i\in I}Z_i))\cong \Hom(F(X),\bigoplus\limits_{i\in I}Z_i)\cong \varprojlim\limits_{n\in \mathbb{N}} (F(X_n)^\vee\otimes \bigoplus\limits_{i\in I}Z_i) \cong \varprojlim\limits_{n\in \N} \bigoplus\limits_{i\in I} \Hom(F(X_n),Z_i)
  \]
  using that $F(X_n)\to F(X_{n+1})$ is of trace class and $1\in \mathcal{R}$ compact. This implies the assertion as $F$ is left adjoint to $G$.

  Assume that $A\in \CAlg(\mathcal{C})$. As the right adjoint of the functor $A\otimes (-)\colon \mathcal{C}\to \mathrm{Mod}_A(\mathcal{C})$, i.e., the forgetful functor, preserves colimits, the functor $A\otimes(-)$ preserves compact morphisms. As $\mathcal{C}$ is dualizable, this implies that $\mathrm{Mod}_A(\mathcal{C})$ is generated by objects, which are colimits along compact morphisms. This implies that $\mathcal{C}$ is dualizable. In fact, $\mathcal{C}$ is generated by colimits along trace-class maps, which by symmetric monoidality of the functor $A\otimes (-)$ implies the same for $\mathrm{Mod}_A(\mathcal{C})$, i.e., rigidity of $\mathrm{Mod}_A(\mathcal{C})$. 
\end{proof}

\begin{example}
  \label{sec:nuclear-sheaves-1-example-for-rigid-categories}
  \cref{sec:nuclear-sheaves-1-general-assertion-when-nuclear-rigid} and \cref{sec:defin-d_hats-2-complete-and-discrete-mod-i-implies-nuclear} imply that for any adic analytic ring $(A,A^+)$ in the sense of \cref{sec:adic-rings-1-definition-adic-analytic-ring} the category $\D_\nuc(A,A^+)$ is rigid. We note that $\D_\nuc(A,A^+)$ does not depend on $A^+$. 
\end{example}

We now apply these general assertions on rigid categories to nuclear sheaves on $\ell$-bounded spatial diamonds in order to show that they are generated under colimits by complete objects. In fact, we have:

\begin{proposition}
  \label{sec:nucl-objects-overc-1-d-nuc-generated-by-complete-objects}
  Let $X$ be an $\ell$-bounded spatial diamond. Then $\D_\nuc(X,\Lambda)$ is rigid and generated by complete objects which are right-bounded in $\D_\solid(X,\Lambda)_{\omega_1}$. Moreover, the nuclearization functor
  \begin{align*}
    (-)^\nuc\colon \D_\solid(X,\Lambda)_{\omega_1} \to \D_\nuc(X,\Lambda),
  \end{align*}
  right adjoint to the inclusion, preserves all small colimits.
\end{proposition}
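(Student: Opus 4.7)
My plan is to apply the abstract criterion of \cref{sec:nuclear-sheaves-1-general-assertion-when-nuclear-rigid} to the compactly generated closed symmetric monoidal $\infty$-category $\mathcal{C} := \D_\solid(X,\Lambda)_{\omega_1}^\oc$. By \cref{sec:nucl-objects-overc-2-nuclear-objects-are-overconvergent}, every nuclear object of $\D_\solid(X,\Lambda)_{\omega_1}$ is automatically overconvergent, so $\mathrm{Nuc}(\mathcal{C}) = \D_\nuc(X,\Lambda)$. The unit $\Lambda \in \mathcal{C}$ is compact, being a compact generator of the ambient category $\D_\solid(X,\Lambda)_{\omega_1}$. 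The essential point to verify is the remaining hypothesis of \cref{sec:nuclear-sheaves-1-general-assertion-when-nuclear-rigid}(ii): for every compact $P \in \mathcal{C}$, the dual $P^\vee := \IHom_{\mathcal{C}}(P,\Lambda)$ is nuclear, and moreover complete and right-bounded in $\D_\solid(X,\Lambda)_{\omega_1}$.

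By \cref{sec:nucl-objects-overc-1-properties-of-overconvergent-objects}(v) it suffices to treat $P = \Lambda_\solid[\overline{U}^{/X}]$ for $U = \varprojlim_n U_n \to X$ with each $U_n$ étale, qcqs, and separated. I first reduce to the case that $X$ is strictly totally disconnected via quasi-pro-\'etale hyperdescent: nuclearity descends by \cref{sec:nucl-objects-overc-3-overconvergent-and-nuclear-objects}, $I$-adic completeness descends using compatibility of pullback with countable limits (combined with \cref{sec:nucl-objects-overc-1-completions-of-nuclear-objects-are-nuclear}), and right-boundedness descends by the $t$-exactness of pullback from \cref{sec:omeg-solid-sheav-2-pullback-and-pushforward-of-solid-sheaves}(i). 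In the strictly totally disconnected case, \cref{sec:nucl-objects-overc-1-morphism-to-pi-0-in-std-case}(i), (iii), (iv) furnishes a symmetric-monoidal equivalence $\pi_X^\ast \colon \D_\solid(\pi_0(X),\Lambda)_{\omega_1} \isoto \mathcal{C}$ which commutes with $\IHom$-formation, reducing everything to the case $X = T$ a profinite set.

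In this profinite case, $P$ has the form $\Lambda_\solid[S]$ for a qcqs map of profinite sets $S \to T$, and its dual is naturally identified with the module of continuous functions $P^\vee = C(S,\Lambda) = \varprojlim_n C(S,\Lambda/I^n)$. This module is static, $I$-adically complete, and discrete modulo $I$ (since $\Lambda/I$ is finite and $C(S,\Lambda/I)$ is a filtered colimit of finite free discrete $\Lambda/I$-modules). Using \cref{sec:nucl-objects-overc-1-nuclear-objects-are-pulled-back-from-pi-0} to identify $\D_\nuc(T,\Lambda) = \D_\nuc(C(T,\Lambda))$ with nuclear modules over the adic analytic ring $C(T,\Lambda)$, the criterion \cref{sec:defin-d_hats-2-complete-and-discrete-mod-i-implies-nuclear} confirms that $C(S,\Lambda)$ is nuclear. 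Transporting back through $\pi_X^\ast$ and quasi-pro-\'etale descent yields the claimed properties of $P^\vee$ on the original $X$.

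With this input, \cref{sec:nuclear-sheaves-1-general-assertion-when-nuclear-rigid}(ii) immediately produces the rigidity of $\D_\nuc(X,\Lambda)$ and identifies the nuclearization with the colimit-preserving trace functor $M \mapsto \varinjlim_{\Lambda \to Q \otimes M,\, Q \in \mathcal{C}^\omega} Q^\vee$; in particular $(-)^\nuc$ preserves all small colimits. Moreover, each $P^\nuc$ is thereby written as a filtered colimit of right-bounded complete nuclear objects $Q^\vee$; since the compact $P$ generate $\mathcal{C}$ under colimits and $(-)^\nuc$ commutes with colimits, the $P^\nuc$ generate $\D_\nuc(X,\Lambda)$ under colimits, whence so do the right-bounded complete nuclear objects. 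The main obstacle is the explicit identification of $P^\vee$ with $C(S,\Lambda)$ in the profinite case together with the careful commutation of $\IHom$-formation with both the quasi-pro-\'etale descent and the passage along $\pi_X$; once that is in hand, rigidity, colimit-preservation of $(-)^\nuc$, and the generation statement all follow uniformly from the abstract rigidity framework of \cref{sec:nuclear-sheaves-1-general-assertion-when-nuclear-rigid}.
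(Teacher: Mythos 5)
Your overall skeleton matches the paper's proof: apply \cref{sec:nuclear-sheaves-1-general-assertion-when-nuclear-rigid}, reduce the nuclearity of duals of compact generators to the strictly totally disconnected, and then profinite, case via \cref{sec:nucl-objects-overc-3-overconvergent-and-nuclear-objects} and \cref{sec:nucl-objects-overc-1-nuclear-objects-are-pulled-back-from-pi-0}, and conclude with the criterion \cref{sec:defin-d_hats-2-complete-and-discrete-mod-i-implies-nuclear}. However, the step you yourself flag as ``the main obstacle'' is a genuine gap rather than a routine verification: your reduction needs that the formation of $P^\vee=\IHom(P,\Lambda)$ commutes with quasi-pro-\'etale pullback and with the passage along $\pi_X$ (so that what you identify with $C(S,\Lambda)$ downstairs really is the pullback of $P^\vee$), as well as the identification $\IHom(\Lambda_\solid[S],\Lambda)\cong C(S,\Lambda)$ itself; none of this is available in the paper, and such base-change statements for internal Homs require separate arguments (e.g.\ a projection formula for a left adjoint of pullback, or \cref{sec:nucl-objects-overc-1-morphism-to-pi-0-in-std-case}). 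The paper's proof is arranged to avoid exactly this: it records, on $X$ itself, only the abstract properties of $P^\vee$ --- right-bounded, complete, overconvergent, discrete mod $I$ --- observes that these properties of the fixed object are preserved under pullback, and proves the local implication ``right-bounded $+$ complete $+$ overconvergent $+$ discrete mod $I$ $\Rightarrow$ nuclear'' using \cref{sec:nucl-objects-overc-3-overconvergent-and-nuclear-objects}, \cref{sec:nucl-objects-overc-1-completions-of-nuclear-objects-are-nuclear} and \cref{sec:nucl-objects-overc-1-nuclear-objects-are-pulled-back-from-pi-0}; no explicit computation of $P^\vee$ and no compatibility of $\IHom$ with pullback is needed. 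So either supply the $\IHom$-base-change and duality computations, or replace that part of your argument by the paper's softer one.

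A smaller point: you apply the rigidity criterion to $\mathcal{C}=\D_\solid(X,\Lambda)_{\omega_1}^\oc$, whereas the statement concerns the nuclearization $(-)^\nuc\colon \D_\solid(X,\Lambda)_{\omega_1}\to\D_\nuc(X,\Lambda)$ from the full $\omega_1$-solid category. Your argument only gives colimit-preservation of the nuclearization of the overconvergent subcategory; to get the stated claim you must factor the right adjoint through $(-)^\oc$ and invoke \cref{sec:nucl-objects-overc-1-properties-of-overconvergent-objects} (the right adjoint $(-)^\oc$ preserves colimits), a step your write-up omits. You should also justify that the internal dual computed in $\mathcal{C}$ agrees with $\IHom_{\D_\solid(X,\Lambda)_{\omega_1}}(P,\Lambda)$ (this follows from \cref{sec:nucl-objects-overc-1-properties-of-overconvergent-objects}, since $\Lambda$ is overconvergent), so that nuclearity in $\mathcal{C}$ matches the notion in the statement; the paper sidesteps both issues by applying the criterion directly to $\D_\solid(X,\Lambda)_{\omega_1}$ and using \cref{sec:nucl-objects-overc-2-nuclear-objects-are-overconvergent} only to know that the resulting nuclear objects are overconvergent.
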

\begin{proof}
  By \cref{sec:nuclear-sheaves-1-general-assertion-when-nuclear-rigid} it suffices to see that for each compact object $P\in \D_\solid(X,\Lambda)_{\omega_1}$ the object $P^\vee$ is right-bounded and nuclear. In fact, we may assume that $P=\Lambda_\solid[U]$ for some basic quasi-pro-\'etale $U\to X$. We note that $P^\vee$ is right-bounded, complete, overconvergent and discrete modulo some ideal of definition $I\subseteq \Lambda$. We claim that these properties imply nuclearity. By \cref{sec:nucl-objects-overc-1-completions-of-nuclear-objects-are-nuclear} and \cref{sec:nucl-objects-overc-3-overconvergent-and-nuclear-objects} we can reduce this claim to the case that $X$ is a strictly totally disconnected perfectoid space (because the pullback will again be right-bounded, complete, overconvergent and discrete mod $I$). By \cref{sec:nucl-objects-overc-1-nuclear-objects-are-pulled-back-from-pi-0} the assertion then reduces to \cref{sec:defin-d_hats-1-examples-for-nuclear-modules}.
\end{proof}

\begin{remark}
A convenient source for trace-class maps in $\D_\et(X,\Lambda)$ is the following. Let $X$ be an $\ell$-bounded spatial diamond and $f\colon U\to V, g\colon V\to X$ \'etale, quasi-compact and separated maps. Assume that $\Lambda$ is discrete (but potentially derived). If $f\colon U\to V$ extends to a map $h\colon \overline{U}^{/X}\to V$, then $\Lambda[U]\to \Lambda[V]$ is trace class in $\D_\solid(X,\Lambda)_{\omega_1}$.
\end{remark}

\subsection{\texorpdfstring{$\mathcal{C}$}{C}-valued nuclear sheaves}
\label{sec:mathc-valu-nucl}

In order to prove strong descent results for $\ri^+$-modules on perfectoid spaces, we need a slightly exotic version of nuclear $\Lambda$-modules, where we introduce an additional ``algebraic'' solid structure. In fact, the following definitions and results work more generally for certain $\D_\nuc(\Lambda)$-linear categories $\mathcal{C}$.

We continue with the notation from \cref{sec:nucl-objects-overc}, in particular $\Lambda$ is an adic and profinite $\Z_\ell$-algebra.

\begin{remark}
  \label{sec:mathc-valu-geom-remark-rigidity-of-d-nuc-r}
  The category $\Pr^L_{\D_\nuc(\Lambda)}$ of presentable $\D_\nuc(\Lambda)$-linear categories has some favourable properties due to the rigidity of $\D_\nuc(\Lambda)$ as a presentably, symmetric monoidal $\infty$-category (see \cref{sec:nuclear-sheaves-1-good-properties-of-modules-over-rigid-categories,sec:nuclear-sheaves-1-example-for-rigid-categories}).
\end{remark}

The $\D_\nuc(\Lambda)$-modules that we will interested in have particular properties. In the following we use the short hand notation ``complete'' instead of ``$I$-adically complete for some ideal of definition $I\subseteq \Lambda$''.

\begin{definition} \label{def:nicely-generated-nuc-linear-category}
We say that a $\D_\nuc(\Lambda)$-linear presentable category $\mathcal C$ is \emph{nicely generated} if there is a small family of objects $(P_i)_{i \in I}$ in $\mathcal C$ satisfying the following properties:
\begin{defenum}
	\item For every bounded complete $M \in \D_\nuc(\Lambda)$ and all $i \in I$ the object $M \tensor P_i \in \mathcal C$ is complete.

	\item For every $i \in I$ the functor $\Hom^{\D_\nuc(\Lambda)}(P_i, -)\colon \mathcal C \to \D_\nuc(\Lambda)$ preserves all small colimits.

	\item The family of functors $\Hom^{\D_\nuc(\Lambda)}(P_i, -)$ from (ii) is conservative.
\end{defenum}
\end{definition}

Here, $\Hom^{\D_\nuc(\Lambda)}(X,-)$ for $X\in \mathcal{C}$ is the functor right adjoint to the functor $(-)\otimes_{\Lambda}X\colon \D_\nuc(\Lambda)\to \mathcal{C}, M\mapsto M\otimes_{\Lambda} X$. In particular,
\begin{equation}
  \label{eq:1:absolute-vs-internal-hom}
  \Hom_{\D_\nuc(\Lambda)}(\Lambda, \Hom^{\D_\nuc(\Lambda)}(X,Y))\cong \Hom_{\mathcal{C}}(X,Y)  
\end{equation}
for $X,Y\in \mathcal{C}$.

\begin{lemma} \label{rslt:properties-of-nicely-generated-nuc-linear-categories}
  Let $\mathcal C$ be a nicely generated $\D_\nuc(\Lambda)$-linear presentable category.
  \begin{lemenum}
  \item The category $\mathcal C$ is compactly generated, and dualizable as an object in $\Pr^L_{\D_\nuc(\Lambda)}$.
    \item For every compact $P \in \mathcal C$ the functor $\Hom^{\D_\nuc(\Lambda)}(P, -)\colon \mathcal C \to \D_\nuc(\Lambda)$ is $\D_\nuc(\Lambda)$-linear.
  \end{lemenum}
  
\end{lemma}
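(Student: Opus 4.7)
The plan is to first prove (i) by exhibiting explicit compact generators, then deduce (ii) via the rigidity of $\D_\nuc(\Lambda)$.

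For (i), I would first check that each $P_i$ is compact in $\mathcal{C}$. Property (ii) of \cref{def:nicely-generated-nuc-linear-category} says that the right adjoint $\Hom^{\D_\nuc(\Lambda)}(P_i,-)$ of $(-)\otimes P_i\colon \D_\nuc(\Lambda)\to \mathcal C$ preserves colimits. Applying $\Hom_{\D_\nuc(\Lambda)}(\Lambda,-)$ and using \cref{eq:1:absolute-vs-internal-hom} together with compactness of the unit $\Lambda\in \D_\nuc(\Lambda)$ (which holds since $\D_\nuc(\Lambda)$ is rigid by \cref{sec:nuclear-sheaves-1-example-for-rigid-categories}), one sees that $\Hom_{\mathcal C}(P_i,-)$ preserves colimits, so $P_i$ is compact. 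More generally, $(-)\otimes P_i$ preserves compact objects (being left adjoint to a colimit-preserving functor), so $Q\otimes P_i$ is compact in $\mathcal C$ for every compact $Q\in \D_\nuc(\Lambda)$.

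Next I claim these objects form a generating family. Suppose $M\in \mathcal C$ satisfies $\Hom_{\mathcal C}(Q\otimes P_i,M)=0$ for all compact $Q$ and all $i$; by adjunction this reads $\Hom_{\D_\nuc(\Lambda)}(Q,\Hom^{\D_\nuc(\Lambda)}(P_i,M))=0$. As compact objects generate $\D_\nuc(\Lambda)$ under colimits, this forces $\Hom^{\D_\nuc(\Lambda)}(P_i,M)=0$ for every $i$, and then property (iii) yields $M=0$. Hence $\mathcal C$ is compactly generated, and thus dualizable in $\Pr^L_\Sp$ by the general theory recalled before \cref{sec:nuclear-sheaves-1-general-assertion-when-nuclear-rigid}; equivalently, by \cref{sec:nuclear-sheaves-1-good-properties-of-modules-over-rigid-categories}(i) and rigidity of $\D_\nuc(\Lambda)$, it is dualizable in $\Pr^L_{\D_\nuc(\Lambda)}$.

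For (ii), by \cref{sec:nuclear-sheaves-1-good-properties-of-modules-over-rigid-categories}(ii) applied to the $\D_\nuc(\Lambda)$-linear adjunction $(-)\otimes P\dashv \Hom^{\D_\nuc(\Lambda)}(P,-)$, it suffices to verify that $\Hom^{\D_\nuc(\Lambda)}(P,-)$ preserves all small colimits. This property is stable under finite colimits and retracts in the variable $P$, so by the generator structure established above I reduce to the case $P=Q\otimes P_i$ with $Q\in \D_\nuc(\Lambda)$ compact and $i\in I$. In that case, the tensor–Hom adjunction inside $\D_\nuc(\Lambda)$ (combined with $\D_\nuc(\Lambda)$-linearity of the adjunction) gives a natural isomorphism
\begin{equation*}
  \Hom^{\D_\nuc(\Lambda)}(Q\otimes P_i,-)\cong Q^\vee\otimes \Hom^{\D_\nuc(\Lambda)}(P_i,-),
\end{equation*}
where $Q^\vee$ exists because $Q$ is compact, hence dualizable, in the rigid category $\D_\nuc(\Lambda)$. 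Property (ii) of \cref{def:nicely-generated-nuc-linear-category} together with the fact that $Q^\vee\otimes (-)$ preserves colimits on $\D_\nuc(\Lambda)$ then gives the desired colimit preservation.

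I do not expect a real obstacle; the main subtlety is bookkeeping between the ordinary Hom and the $\D_\nuc(\Lambda)$-enriched Hom, which is mediated by compactness of the unit $\Lambda$. The heavy lifting is done by the rigidity of $\D_\nuc(\Lambda)$, which allows us to freely swap the properties ``$\D_\nuc(\Lambda)$-linear'' and ``colimit-preserving'' on right adjoints via \cref{sec:nuclear-sheaves-1-good-properties-of-modules-over-rigid-categories}.
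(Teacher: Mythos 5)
Your proposal is correct and follows essentially the same route as the paper: compactness of the $P_i$ via compactness of the unit $\Lambda$, compact generation, dualizability in $\Pr^L_\Sp$ and hence in $\Pr^L_{\D_\nuc(\Lambda)}$ by rigidity, and for (ii) colimit-preservation of $\Hom^{\D_\nuc(\Lambda)}(P,-)$ combined with \cref{sec:nuclear-sheaves-1-good-properties-of-modules-over-rigid-categories}. You merely make explicit two steps the paper leaves implicit (generation via the objects $Q\otimes P_i$, and the thick-subcategory reduction with the formula $\Hom^{\D_\nuc(\Lambda)}(Q\otimes P_i,-)\cong Q^\vee\otimes\Hom^{\D_\nuc(\Lambda)}(P_i,-)$), both of which are fine.
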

\begin{proof}  
  As $\Lambda\in \D_\nuc(\Lambda)$ is compact, each $P_i$ from \cref{def:nicely-generated-nuc-linear-category} is compact, hence $\mathcal C$ is compactly generated. By \cite[Proposition D.7.2.3]{lurie-spectral-algebraic-geometry} the category $\mathcal{C}$ is therefore dualizable in $\Pr^L_\Sp$, and hence in $\Pr^L_{\D_\nuc(\Lambda)}$ by \cref{sec:mathc-valu-geom-remark-rigidity-of-d-nuc-r}. If $P\in \mathcal{C}$ is compact, then the $\D_\nuc(\Lambda)$-linear functor $\D_\nuc(\Lambda)\to \mathcal{C},\ A\mapsto A\otimes P$ has the colimit-preserving right adjoint $\Hom^{\D_\nuc(\Lambda)}(P,-)\colon \mathcal{C}\to \D_\nuc(\Lambda)$, which is therefore $\D_\nuc(\Lambda)$-linear by \cref{sec:mathc-valu-geom-remark-rigidity-of-d-nuc-r}.
\end{proof}

Recall that in \cref{sec:overc-solid-sheav-basic-nuclear-and-conuclear} we have introduced the $\D_\nuc(\Lambda)$-linear category $\D_\nuc(X,\Lambda)$ for an $\ell$-bounded spatial diamond $X$. Using this category, we now give the following definition.

\begin{definition} \label{def:C-valued-oc-nuc-sheaves}
Let $\mathcal C$ be a nicely generated $\D_\nuc(\Lambda)$-linear presentable category. Then for every $\ell$-bounded spatial diamond $X$ we denote
\begin{align*}
	\D_\nuc(X, \mathcal C):= \D_\nuc(X, \Lambda) \tensor_{\D_\nuc(\Lambda)} \mathcal C
\end{align*}
and call it the \emph{category of $\mathcal C$-valued nuclear (or nuclear overconvergent) sheaves on $X$}.
\end{definition}

\begin{proposition} \label{rslt:properties-of-C-valued-oc-nuc-sheaves}
Let $\mathcal C$ be a nicely generated $\D_\nuc(\Lambda)$-linear presentable category. Then:
\begin{propenum}
	\item The assignment $X \mapsto \D_\nuc(X, \mathcal C)$ defines a hypercomplete sheaf on the big quasi-pro-étale site of $\ell$-bounded spatial diamonds.
	\item For every $\ell$-bounded diamond $X$ the category $\D_\nuc(X, \mathcal C)$ is generated under colimits by $\pi$-adically complete objects.

	\item If $X$ is a strictly totally disconnected space then $\D_\nuc(X, \mathcal C)$ naturally identifies with the category of $C(\pi_0(X),\Lambda)$-modules in $\mathcal C$.
\end{propenum}
\end{proposition}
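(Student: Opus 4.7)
The three parts are best handled in the order (iii), (i), (ii), since the concrete description in (iii) feeds into the subsequent arguments. For (iii), \cref{sec:nucl-objects-overc-1-nuclear-objects-are-pulled-back-from-pi-0} gives $\D_\nuc(X,\Lambda) \cong \D_\nuc(C(\pi_0(X),\Lambda))$ when $X$ is strictly totally disconnected, and since $C(\pi_0(X),\Lambda)$ is bounded and $I$-adically complete, hence nuclear over $\Lambda$ by \cref{sec:defin-d_hats-2-complete-and-discrete-mod-i-implies-nuclear}, we further identify $\D_\nuc(C(\pi_0(X),\Lambda)) \cong \mathrm{Mod}_{C(\pi_0(X),\Lambda)}(\D_\nuc(\Lambda))$. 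Tensoring both sides with $\mathcal C$ over $\D_\nuc(\Lambda)$ and commuting module categories with base change then yields the asserted identification $\D_\nuc(X,\mathcal C) \cong \mathrm{Mod}_{C(\pi_0(X),\Lambda)}(\mathcal C)$.

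For (i), the key input is that $\D_\nuc(\Lambda)$ is rigid (\cref{sec:nuclear-sheaves-1-example-for-rigid-categories}) and $\mathcal C$ is dualizable in $\Pr^L_{\D_\nuc(\Lambda)}$ by \cref{rslt:properties-of-nicely-generated-nuc-linear-categories}. Consequently, the functor $-\otimes_{\D_\nuc(\Lambda)} \mathcal C$ admits both a left and a right adjoint, each realized by $-\otimes_{\D_\nuc(\Lambda)} \mathcal C^\vee$, and therefore preserves all small limits in $\Pr^L_{\D_\nuc(\Lambda)}$. Combining this limit-preservation with the hypercomplete quasi-pro-\'etale descent of $X \mapsto \D_\nuc(X,\Lambda)$ from \cref{sec:nucl-objects-overc-3-overconvergent-and-nuclear-objects} immediately yields the desired hyperdescent for $X \mapsto \D_\nuc(X,\mathcal C)$.

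For (ii), the category $\D_\nuc(X,\mathcal C) = \D_\nuc(X,\Lambda)\otimes_{\D_\nuc(\Lambda)}\mathcal C$ is generated under colimits by simple tensors $Q\boxtimes P_i$, where $Q$ runs over the bounded complete generators $\Lambda_\solid[U]^\vee$ of $\D_\nuc(X,\Lambda)$ produced in the proof of \cref{sec:nucl-objects-overc-1-d-nuc-generated-by-complete-objects} and $P_i$ over the nicely generating family of $\mathcal C$. It suffices to exhibit each such $Q\boxtimes P_i$ as $I$-adically complete. Choose a quasi-pro-\'etale hypercover $X_\bullet \to X$ with each $X_n$ strictly totally disconnected; hyperdescent from (i) then gives $Q\boxtimes P_i \isoto \mathrm{Tot}_{[n]\in\Delta}\, p_{n,*}p_n^*(Q\boxtimes P_i)$. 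On each $X_n$, (iii) identifies the pullback with an object of $\mathrm{Mod}_{C(\pi_0(X_n),\Lambda)}(\mathcal C)$ built by finite colimits and retracts from free modules $C(\pi_0(X_n),\Lambda)\otimes_\Lambda P_i$, which are $I$-adically complete in $\mathcal C$ by condition (a) of \cref{def:nicely-generated-nuc-linear-category} (since $C(\pi_0(X_n),\Lambda)$ is bounded and $I$-adically complete in $\D_\nuc(\Lambda)$). Since $I$-adic completeness is preserved by finite colimits, retracts, the right adjoints $p_{n,*}$, and the totalization limit, it follows that $Q\boxtimes P_i$ is $I$-adically complete on $X$.

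The main technical point lies in (ii), namely the identification on each strictly totally disconnected $X_n$ of the pullback $p_n^*(Q\boxtimes P_i)$ with a compact $C(\pi_0(X_n),\Lambda)$-module built from the nicely generating family of $\mathcal C$. This relies on the interplay between the rigidity of $\D_\nuc(\Lambda)$ (which underpins the hyperdescent of (i) and the compatibility of the Lurie tensor product with base change) and the concrete identification provided by (iii), together with the observation that $C(\pi_0(X_n),\Lambda)$ is bounded and complete so that condition (a) of nicely generated applies.
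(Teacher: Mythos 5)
Your treatment of (iii) and (i) is essentially the paper's own argument: identify $\D_\nuc(X,\Lambda)\cong \mathrm{Mod}_{C(\pi_0(X),\Lambda)}(\D_\nuc(\Lambda))$ on strictly totally disconnected $X$ and commute $\mathrm{Mod}$ with the base change $-\otimes_{\D_\nuc(\Lambda)}\mathcal C$, respectively use dualizability of $\mathcal C$ over the rigid category $\D_\nuc(\Lambda)$ to pass hyperdescent of $\D_\nuc(-,\Lambda)$ through the tensor product (the paper only adds the remark that the descent diagram is a limit diagram in $\Pr^L_{\D_\nuc(\Lambda)}$ because all transition functors are $\D_\nuc(\Lambda)$-linear). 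The overall strategy for (ii) — reduce to generators $Q\boxtimes P_i$ with $Q$ complete and right-bounded, and check completeness after pullback along a hypercover by strictly totally disconnected spaces, since pushforwards and totalizations preserve completeness — is also the paper's.

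However, the local step of your (ii) contains a genuine gap: you claim that on each strictly totally disconnected $X_n$ the pullback $p_n^*(Q\boxtimes P_i)$ is built by \emph{finite} colimits and retracts from the free modules $C(\pi_0(X_n),\Lambda)\otimes_\Lambda P_i$. This amounts to saying that $p_n^*Q$ is a perfect $C(\pi_0(X_n),\Lambda)$-module, which is false in general: already for $U=X\times S$ with $S$ an infinite profinite set, $\Lambda_\solid[U]^\vee$ restricts to something of the shape $C(S\times\pi_0(X_n),\Lambda)$, an infinite-rank $C(\pi_0(X_n),\Lambda)$-module that is not a retract of a finite colimit of copies of $C(\pi_0(X_n),\Lambda)\otimes_\Lambda P_i$. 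The conclusion you want is nevertheless true, and the repair is the paper's argument: completeness in $\mathrm{Mod}_{C(\pi_0(X_n),\Lambda)}(\mathcal C)$ can be checked after the conservative, limit-preserving forgetful functor to $\mathcal C$, where $p_n^*(Q\boxtimes P_i)$ becomes $\lvert p_n^*Q\rvert\otimes P_i$ with $\lvert p_n^*Q\rvert\in\D_\nuc(\Lambda)$ the underlying object of $p_n^*Q$; since $p_n^*Q$ is right-bounded and complete, so is $\lvert p_n^*Q\rvert$, and condition (a) of \cref{def:nicely-generated-nuc-linear-category} applies directly to it, with no perfectness over $C(\pi_0(X_n),\Lambda)$ needed. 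With that substitution your argument closes.
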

\begin{proof}
  Using \cite[Theorem 4.8.4.6]{lurie-higher-algebra} claim (iii) follows from the fact that
  \[
    \D_\nuc(X, \Lambda) \cong  \D_\nuc(C(\pi_0(X),\Lambda))\cong \mathrm{Mod}_{C(\pi_0(X),\Lambda)}(\D_\nuc(\Lambda))
  \]
  if $X$ is strictly totally disconnected (see \cref{sec:nucl-objects-overc-1-nuclear-objects-are-pulled-back-from-pi-0} and its proof).

  Let us prove claim (i) and take a quasi-pro-\'etale hypercover $X_\bullet\to X$ of $\ell$-bounded spatial diamonds. We need to see that the natural functor
  \[
    	F\colon \D_\nuc(X, \mathcal C) \to \varprojlim_{n\in\Delta} \D_\nuc(X_n, \mathcal C)
      \]
      is an equivalence. By \cref{sec:nucl-objects-overc-3-overconvergent-and-nuclear-objects} this holds if $\mathcal{C}=\D_\nuc(\Lambda)$, and then in $\Pr^L_{\D_\nuc(\Lambda)}$, i.e., all transition functors are $\D_\nuc(\Lambda)$-linear (as they are induced by pullback functors for $\D_\nuc(-,\Lambda)$). Therefore the claim follows by dualizability of $\mathcal{C}$ in $\Pr^L_{\D_\nuc(\Lambda)}$ as established in \cref{rslt:properties-of-nicely-generated-nuc-linear-categories}. Indeed, dualizability implies that $\mathcal{C}\otimes_{\D_\nuc(\Lambda)}-$ commutes with limits in $\Pr^L_{\D_\nuc(\Lambda)}$.

      It remains to prove (ii), so let $X$ be a given $\ell$-bounded diamond. Then $\D_\nuc(X, \mathcal C)$ is generated under colimits by objects $\mathcal M \boxtimes P$, where $\mathcal M \in \D_\nuc(X, \Lambda)$ and $P \in \mathcal C$ is compact (and hence also complete by our assumption that $\mathcal{C}$ is nicely generated). By \cref{sec:nucl-objects-overc-1-d-nuc-generated-by-complete-objects} we may assume that $M$ is complete, and additionally bounded above.
      
      It is therefore enough to show that each of these $\mathcal M \boxtimes P$ is complete. By (i) we can check this after pullback to any quasi-pro-étale hypercover (using that limits of complete objects are again complete), so we can assume that $X$ is strictly totally disconnected. Then the claim follows from (iii) and the fact that complete objects in $\mathcal C$ are stable under the action of complete objects in $\D_\nuc(\Lambda)$ by assumption. More precisely, completeness of $M\boxtimes P$ can be checked after applying the forgetful functor to $\mathcal{C}$, and there $M\boxtimes P$ is given by $|M|\otimes P$ with $|M|\in \D_\nuc(\Lambda)$ the underlying object of $M\in \D_\nuc(X,\Lambda)\cong \D_\nuc(C(\pi_0(X),\Lambda))$ and $\otimes$ the given $\D_\nuc(\Lambda)$-action on $\mathcal{C}$.
\end{proof}

\section{\texorpdfstring{$\D_{\hat\solid}^a(\ri_X^+)$}{D\textasciicircum a\_sld(O\textasciicircum+\_X)} for perfectoid spaces}
\label{sec:d_hats-perf-spac}

In this section we will prove the first part of \cref{sec:introduction-1-main-theorem-introduction}, i.e., $v$-descent of almost $\ri^+$-sheaves on perfectoid spaces for our modified version of $+$-modules from \cref{def:modified-modules-over-adic-ring}.

\subsection{Definition of \texorpdfstring{$\D_{\hat\solid}^a(A^+)$}{D\textasciicircum a\_sld(A\textasciicircum+)}}
\label{sec:defin-d_hats-a+}

As in \cite[Definition 3.1.1]{mann-mod-p-6-functors} we denote by $\AffPerfd$ the category of affinoid perfectoid spaces over $\Spf\Z_p$. We note that each object $(A,A^+)\in \AffPerfd$ is classical and static and hence we will often not distinguish between $(A,A^+)$ and the classical Tate-Huber pair $(A(\ast),A^+(\ast))$.

\begin{lemma}
  \label{sec:defin-d_hats-1-idempotent-algebra-defining-almost-mathematics}
  Let $(A,A^+)\in \AffPerfd$, and let $A^{\circ\circ}\subseteq A^+$ be the ideal of topologically nilpotent elements. Then $A^+/A^{\circ\circ}$ is an idempotent algebra in the classical derived category $\D(A^+)=\D(A^+(\ast))$ of $A^+$-modules.
\end{lemma}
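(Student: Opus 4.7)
The plan is to reduce the claim to the vanishing of a single derived tensor product, then compute this tensor product explicitly using the perfectoid structure.

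First, $R := A^+/A^{\circ\circ}$ is idempotent in $\D(A^+)$ iff the multiplication $R \otimes_{A^+}^{\mathbb L} R \to R$ is an equivalence; tensoring the cofiber sequence $A^{\circ\circ} \to A^+ \to R$ with $R$, this reduces to proving
\[
  A^{\circ\circ} \otimes_{A^+}^{\mathbb L} R \simeq 0.
\]

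The key input is the description of $A^{\circ\circ}$ as a filtered union of principal ideals. Fix a pseudo-uniformizer $\pi \in A^+$ together with a compatible system $\pi^{1/p^n} \in A^+$ of $p$-power roots, which exists by perfectoidness. One has $A^{\circ\circ} = \bigcup_{n \ge 0} \pi^{1/p^n} A^+$: for any $a \in A^{\circ\circ}$ there is $N$ with $a^N \in \pi A^+$, and choosing $p^n > N$ the spectral estimate $\abs{a(x)} \le \abs{\pi(x)}^{1/N} \le \abs{\pi^{1/p^n}(x)}$ on $\Spa(A,A^+)$ forces $a/\pi^{1/p^n} \in A^+$. Since $\pi$ is a unit in $A$, so is each $\pi^{1/p^n}$, hence multiplication by $\pi^{1/p^n}$ is injective on $A \supseteq A^+$ and $\pi^{1/p^n} A^+ \cong A^+$ as an $A^+$-module.

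Under these trivializations, the inclusion $\pi^{1/p^n} A^+ \hookrightarrow \pi^{1/p^{n+1}} A^+$ corresponds to multiplication by $\pi^{(p-1)/p^{n+1}} \in A^{\circ\circ}$. In particular $A^{\circ\circ}$ is a filtered colimit of free $A^+$-modules and therefore flat, so
\[
  A^{\circ\circ} \otimes_{A^+}^{\mathbb L} R \simeq \varinjlim_n R,
\]
with transition maps given by multiplication by $\pi^{(p-1)/p^{n+1}}$. These elements lie in $A^{\circ\circ}$, hence act as zero on $R$, so the filtered colimit vanishes. This finishes the proof.

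The main substantive step is the identification $A^{\circ\circ} = \bigcup_n \pi^{1/p^n} A^+$, which is where perfectoidness enters essentially through the existence of $p$-power roots of a pseudo-uniformizer; once this is in hand, the flatness of $A^{\circ\circ}$ and the vanishing of the transition maps are immediate, and the rest is formal. Without the perfectoid hypothesis the analogous statement fails, which reflects the well-known fact that $\mathfrak m = A^{\circ\circ}$ serves as a basic setup for almost mathematics precisely because of the idempotence exhibited here.
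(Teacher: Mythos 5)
Your proof is correct and follows essentially the same route as the paper, whose one-line argument invokes exactly the two facts you establish: that $A^{\circ\circ}$ is a filtered colimit of principal ideals generated by non-zero divisors (hence flat, giving $A^{\circ\circ}\otimes^{\mathbb L}_{A^+}R\simeq\varinjlim_n R$) and that the transition elements lie in $A^{\circ\circ}$, i.e.\ $A^{\circ\circ}\cdot A^{\circ\circ}=A^{\circ\circ}$, so the colimit vanishes. You merely make these inputs explicit via the system $\pi^{1/p^n}A^+$, which is a fine (and standard) way to verify them.
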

\begin{proof}
  This follows from the fact that $A^{\circ\circ}$ is a filtered colimit of principal ideals generated by non-zero divisors, and that $A^{\circ\circ}\cdot A^{\circ\circ}=A^{\circ\circ}$.
\end{proof}

\begin{definition}
  \label{sec:defin-d_hats-2-definition-of-almost-plus-category}
  Let $(A,A^+)\in \AffPerfd$. Then we define $\D_{\hat\solid}^a(A^+):=\D_{\hat\solid}((A^+,A^+))\otimes_{\D(A^+)} \D^a(A^+)$, where $\D(A^+)\to \D^a(A^+)$ is the open immersion defined by the idempotent algebra $A^+/A^{\circ\circ}$ from \cref{sec:defin-d_hats-1-idempotent-algebra-defining-almost-mathematics}.
\end{definition}

\begin{remark}
By \cref{sec:glob-stably-unif-2-closed-open-immersions-are-stable-under-base-change-for-lurie-tensor-product} we have $\D_{\hat\solid}^a(A^+)=\D_{\hat\solid}((A^+,A^+))/\mathrm{Mod}_{A^+/A^{\circ\circ}}(\D_{\hat\solid}((A^+,A^+)))$.
\end{remark}

\begin{remark}
  \label{sec:defin-d_hats-1-remark-on-role-of-a-+}
  We note that $A^+\cong A^\circ$ in $\D^a_{\hat\solid}(A^+)$ as $A^{\circ\circ}\subseteq A^+$. Thus, $\D_{\hat\solid}((A^\circ, A^+)), \D_{\hat\solid}((A^+,A^+)$ become isomorphic after $(-)\otimes_{\D(A)}\D^a(A)$. However, in general $\D^a_{\hat\solid}(A^+)$ and $\D^a_{\hat\solid}(A^\circ)$ are different. 
\end{remark}

\begin{remark}
  \label{sec:defin-d_hats-2-comparison-with-almost-definition-of-lucas-thesis}
  Assume that $\pi\in A\cap A^+$ is a pseudo-uniformizer. Then $\mathrm{Mod}_{A^+/\pi}(\D^a_{\hat\solid}(A^+))\cong \D^a_{\solid}(A^+/\pi)$ is naturally equivalent to the category defined in \cite[Definition 3.1.2]{mann-mod-p-6-functors} because $\mathrm{Mod}_{A^+/\pi}(\D^a_{\hat\solid}(A^+))\cong \mathrm{Mod}_{A^+/\pi}(\D_{\solid}(A^+))\otimes_{\D(A^+)}\D^a(A^+)\cong \D^a_\solid(A^+/\pi)$ using \cref{rslt:A-mod-I-modified-modules-equals-usual-modules} in the first isomorphism.
\end{remark}

The following special property is the basic reason why (almost) $v$-descent results for $+$-modules on perfectoid spaces are possible while they fail drastically for non-perfectoid spaces.

\begin{lemma} \label{rslt:base-change-for-affinoid-perfectoid}
Let
\begin{center}\begin{tikzcd}
  Y' \arrow[r] \arrow[d] & Y \arrow[d]\\
  X' \arrow[r] & X
\end{tikzcd}\end{center}
be a cartesian diagram in $\AffPerfd$. Write $X = \Spa(A, A^+)$, $X' = \Spa(A', A'^+)$, $Y = \Spa(B, B^+)$ and $Y' = \Spa(B', B'^+)$. Then the natural morphism of analytic rings
\begin{align*}
  (A'^+)_\solid^a \tensor_{(A^+)_\solid^a} (B^+)_\solid^a \isoto (B'^+)_\solid^a.
\end{align*}
is an isomorphism.
\end{lemma}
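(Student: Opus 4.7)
The plan is to reduce the claim to the standard perfectoid almost base-change statement together with the pushout formula for adic analytic rings. Let $\pi \in A$ be a pseudo-uniformizer; it remains a pseudo-uniformizer in $A'$, $B$ and $B'$ under the induced maps, so all arrows in the diagram are adic in the sense of \cref{sec:adic-rings-1-definition-adic-analytic-ring}.

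First, I would apply \cref{rslt:pushout-of-adic-analytic-rings} to compute the pushout of adic analytic rings before almostification:
\[
  (A'^+)_\solid \tensor_{(A^+)_\solid} (B^+)_\solid = (A'^+ \hat\tensor_{A^+} B^+,\ A'^+ \tensor_{A^+} B^+)_\solid,
\]
where $\hat\tensor$ denotes $\pi$-adic completion. Since the functor $\D^a(-)$ is defined by an open immersion in $\mathrm{Sym}$ (namely, by quotienting out the idempotent algebra $A^+/A^{\circ\circ}$), base change in the Lurie tensor product preserves such immersions by \cref{sec:glob-stably-unif-2-closed-open-immersions-are-stable-under-base-change-for-lurie-tensor-product}; hence almostification commutes with the pushout and I obtain
\[
  (A'^+)_\solid^a \tensor_{(A^+)_\solid^a} (B^+)_\solid^a = (A'^+ \hat\tensor_{A^+} B^+,\ A'^+ \tensor_{A^+} B^+)_\solid^a.
\]

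Second, I invoke the classical almost perfectoid base change theorem: for a cartesian diagram of affinoid perfectoid spaces as above, the natural map $(A'^+ \hat\tensor_{A^+} B^+)^a \isoto (B'^+)^a$ is an isomorphism of $\pi$-adically complete almost rings. This is established by tilting to characteristic $p$, where it reduces to the corresponding (almost) faithfully flat base-change statement after perfection. Finally, the analytic ring structure $(R, R^+)_\solid$ depends only on $R$ and on the integral closure of $R^+$ together with all topologically nilpotent elements inside $R$ (by \cite[Appendix to lecture XII]{scholze-analytic-spaces} and \cite[Proposition 3.32]{andreychev-condensed-huber-pairs}). In the almost category the topologically nilpotent part is invisible and the ``$+$''-data on both sides becomes identified; this upgrades the almost ring isomorphism to an isomorphism of almost analytic rings, yielding the claim.

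The main obstacle is the second step: the perfectoid almost base-change statement is genuinely a theorem of perfectoid geometry, and one has to be careful that the asserted almost isomorphism is $\pi$-adically complete on both sides (the completed tensor product on the left, the perfectoid $+$-ring on the right). Modulo that input, and once one verifies by reduction mod $\pi$ via \cref{rslt:A-mod-I-modified-modules-equals-usual-modules} that the analytic ring structures on both sides are determined by the same data in the almost world, the remaining work is formal bookkeeping about analytic rings.
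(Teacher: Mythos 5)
Your proposal is correct and follows essentially the paper's own route: reduce via the pushout formula for adic analytic rings (\cref{rslt:pushout-of-adic-analytic-rings}), use that $B'^+$ is the completed integral closure of the image of $A'^+\tensor_{A^+}B^+$ and that the solid structure is insensitive to integral closure and topological nilpotents, and then invoke the perfectoid almost base-change for the $+$-rings. The paper packages that last input slightly differently — since both sides are $\pi$-adically complete, the almost isomorphism $A'^{+a}\hat\tensor_{A^{+a}}B^{+a}\cong B'^{+a}$ is checked modulo $\pi$, where it is exactly \cite[Lemma 3.1.6]{mann-mod-p-6-functors} — rather than citing the completed statement wholesale as you do.
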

Here, we view $(A^+)^a_\solid$ etc.\ as analytic rings over the almost setup $(A^+,A^{\circ\circ})$ following \cite[Section 2.3]{mann-mod-p-6-functors}. More concretely, the assertion means that the natural functor $\D((B')^+_\solid)\to \D((A'^+)_\solid\otimes_{A^+_\solid} B^+_\solid)$ becomes an equivalence after $(-)\otimes_{\D(A^+)}\D^a(A^+)$.
\begin{proof}
Since $B'^+$ is the (completed) integral closure of the image of $\pi_0(A'^+ \tensor_{A^+} B^+)$ in $\pi_0(A' \tensor_A B)$, by \cref{rslt:pushout-of-adic-analytic-rings} it is enough to show that the map
\begin{align*}
  A'^{+a} \hat\tensor_{A^{+a}} B^{+a} \isoto B'^{+a}
\end{align*}
is an isomorphism of almost rings. But both sides of this claimed isomorphism are $\pi$-adically complete for any pseudouniformizer $\pi$ of $A$, hence the claim can be checked modulo $\pi$, where it follows from \cite[Lemma 3.1.6]{mann-mod-p-6-functors}.
\end{proof}

As a corollary from the proof we see that the derived (completed) tensor product $A'^{+a}\otimes_{A^{+a}} B^{+a}$ is in fact static, i.e., concentrated in degre $0$. This statement also holds before passing to the almost category by reduction to the case of perfect rings.

\subsection{Descent on totally disconnected spaces}
\label{sec:desc-totally-disc}

In this section, we prove the first part of \cref{sec:introduction-1-main-theorem-introduction}, i.e., the existence of the $v$-sheaf $X\mapsto \D^a_{\hat\solid}(\mathcal{O}_X)$ on perfectoid spaces. More precisely, in this section we prove the following theorem.

\begin{theorem} \label{rslt:v-hyperdescent-for-O+-modules}
There is a unique hypercomplete v-sheaf of categories
\begin{align*}
  (\Perfd)^\opp \to \catcat, \qquad X \mapsto \D^a_{\hat\solid}(\ri^+_X)
\end{align*}
such that for every $X = \Spa(A, A^+)$ which admits a quasi-pro-étale map to a totally disconnected space we have
\begin{align*}
  \D^a_{\hat\solid}(\ri^+_X) = \D^a_{\hat\solid}(A^+).
\end{align*}
\end{theorem}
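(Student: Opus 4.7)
The plan is first to define the candidate functor on the subcategory $\mathcal{P} \subset \Perfd$ of affinoid perfectoid spaces $X = \Spa(A, A^+)$ whose tilt admits a map of finite $\dimtrg$ to a totally disconnected space, via $X \mapsto \D^a_{\hat\solid}(A^+)$ with pullback from \cref{sec:adic-rings-2-definition-completion-and-pullback-functors}. By \cref{rslt:base-change-for-affinoid-perfectoid}, fiber products within $\mathcal{P}$ agree with the base-changed almost analytic $+$-rings, so $\mathcal{P}$ is closed under the Čech nerves of $v$-covers by strictly totally disconnected perfectoid spaces. Since every perfectoid space admits such a $v$-cover, the full theorem will follow once we establish $v$-hyperdescent on $\mathcal{P}$: the unique hypercomplete $v$-sheaf with the stated values is then obtained by right Kan extension along $\mathcal{P}^\opp \hookrightarrow \Perfd^\opp$, with uniqueness following from a standard basis argument.

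To establish $v$-hyperdescent on $\mathcal{P}$, consider a $v$-cover $f\colon Y = \Spa(B,B^+) \to X = \Spa(A,A^+)$ in $\mathcal{P}$. By \cref{rslt:descendability-of-adic-rings} together with the universal-descent assertion \cref{sec:universal-descent-1-descent-implies-universal-descent-in-adic-case} (which propagates Čech descent across the faces of a hypercover, each of which remains inside $\mathcal{P}$), it suffices to check that the induced map $(A^+)^a_\solid \to (B^+)^a_\solid$ of adic analytic rings is adic and adically descendable with an index bound uniform in $k$ on the reductions $(A^+)^a/\pi^k \to (B^+)^a/\pi^k$. Adicness is clear for any pseudo-uniformizer $\pi \in A$. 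For descendability modulo $\pi$, after first replacing $X$ by a strictly totally disconnected $v$-cover inside $\mathcal{P}$, we invoke Mann's theorem \cref{sec:introduction-1-mod-p-theorem-of-lucas}: together with the equivalence $\mathrm{Mod}_{A^+/\pi}(\D^a_{\hat\solid}(A^+)) \cong \D^a_\solid(A^+/\pi)$, it gives descendability of the mod-$\pi$ reduction with some index $d$ depending only on the $+$-boundedness data of $X$.

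The crux, and the main obstacle, is promoting this mod-$\pi$ descendability to a uniform-in-$k$ bound for the mod-$\pi^k$ reductions. The cleanest route is via \cref{sec:complete-descent-1-example-for-adically-descendable-maps}: once one extracts from Mann's proof in the strictly totally disconnected setting that the map is $\pi$-completely almost faithfully flat and $\pi$-completely almost finitely presented in the appropriate derived sense, the example yields adic descendability of index $\leq 2$ directly. Failing that, one can bootstrap inductively along the short exact sequences $0 \to \pi^k(B^+)^a/\pi^{k+1} \to (B^+)^a/\pi^{k+1} \to (B^+)^a/\pi^k \to 0$: the graded pieces are $(A^+/\pi)^a$-modules, so their descendability is governed by the mod-$\pi$ bound $d$, which yields a uniform index $\leq 2d$ valid for all $k$.

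Once this uniform adic descendability is in hand, \cref{rslt:descendability-of-adic-rings} furnishes Čech descent for $\D^a_{\hat\solid}(-)$, and iteration across hypercover faces delivers full hyperdescent on $\mathcal{P}$; right Kan extension then produces the desired hypercomplete $v$-sheaf on all of $\Perfd$. It is worth noting that working with $\D^a_{\hat\solid}$ rather than $\D^a_\solid$ is essential at precisely this step: only the modified category has $\pi$-adically complete compact generators (compare \cref{rslt:basic-properties-of-modified-modules} and \cref{rslt:A-mod-I-modified-modules-equals-usual-modules}), and this is exactly what makes the mod-$\pi^k$ tower control the unbounded category via \cref{rslt:descendability-of-adic-rings} — a strategy that, as emphasized in the introduction to \cref{sec:d_hats-adic-rings}, has no analogue for $\D^a_\solid$.
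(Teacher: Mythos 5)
Your proposal has a genuine gap at exactly the point where the paper has to work hardest: the uniform-in-$n$ descendability modulo $\pi^n$ for a $v$-cover of totally disconnected spaces. You propose to get the mod-$\pi$ input from \cref{sec:introduction-1-mod-p-theorem-of-lucas}, but that theorem is a statement about descent of the \emph{categories} $\D^a_\solid(\ri^+/\pi)$, not a descendability statement (with an index) for the ring map $A^+/\pi \to B^+/\pi$; descent of module categories is strictly weaker and carries no index. Your two ways of upgrading it both fail. The map $A^+ \to B^+$ (or $A^+\to B^+_\wl$) for a $v$-cover of totally disconnected spaces is \emph{not} $\pi$-completely (almost) finitely presented in any sense, so \cref{sec:complete-descent-1-example-for-adically-descendable-maps} does not apply to it directly; and the dévissage along $0 \to \pi^k B^+/\pi^{k+1} \to B^+/\pi^{k+1} \to B^+/\pi^k \to 0$ does not produce descendability of the ring maps $A^+/\pi^k \to B^+/\pi^k$ with a uniform index, because descendability is a property of the unit-fiber ideal of the ring map, not something that filters along a $\pi$-adic filtration of modules; the paper explicitly flags this obstruction (uncontrolled $\pi$-torsion of the approximating algebras) in the proof of \cref{rslt:v-cover-of-td-spaces-is-weakly-descendable}. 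What the paper actually does there is the real content you are missing: using \cref{sec:desc-totally-disc-1-finite-type-over-totally-disconnected-implies-flat-finitely-presented-mod-pi} it spreads out flatness on connected components to finitely presented $A^+$-algebras $A_j$, organizes them into categories $J_{\pi^n}$ and an auxiliary $\omega_1$-filtered category $J$ so that flatness holds modulo \emph{all} $\pi^n$ simultaneously, and thereby exhibits $A^+ \to B^+_\wl$ (only for the $w$-localized cover, and only for $X$, $Y$ totally disconnected) as an $\omega_1$-filtered colimit of adically descendable maps of index $\le 4$; this is then fed into \cref{rslt:descendability-of-adic-rings} via \cref{rslt:filtered-colim-of-adic-desc-maps-is-weakly-desc}.

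Two further problems of scope and structure. First, you run the descent argument on the class of affinoid perfectoids of finite $\dimtrg$ over a totally disconnected space with values $\D^a_{\hat\solid}(A^+)$; if that worked, it would already prove \cref{sec:main-theor-d_hats-main-descent-theorem}, which in the paper requires the entire nuclear-sheaf apparatus of \cref{sec:nuclear-sheaves} and the Riemann--Zariski analysis of \cref{sec:suppl-discr-adic,sec:desc-relat-comp}. The theorem at hand only asserts the identification for $X$ quasi-pro-étale over a totally disconnected space, and accordingly the paper's descendability analysis is confined to $v$-covers of totally disconnected spaces, after which the sheaf is built by the formal extension argument of \cite[Proposition 3.1.9]{mann-mod-p-6-functors} (together with \cref{rslt:base-change-for-affinoid-perfectoid} and the base-change stability from \cref{rslt:v-cover-of-td-spaces-is-weakly-descendable}). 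Second, hyperdescent is not obtained by ``iterating Čech descent across hypercover faces''; passing from $v$-descent to $v$-hyperdescent requires a boundedness input, which the paper supplies via finite Tor dimension (\cref{rslt:tor-dim-of-adic-maps}) and the étale descent statement \cref{rslt:perfectoid-etale-descent-of-integral-sheaves}, following the Postnikov-style argument of \cite[Theorem 3.1.27]{mann-mod-p-6-functors}.
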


We recall that a qcqs perfectoid space $X$ is totally disconnected if each connected component of $X$ has the form $\Spa(K,K^+)$ for some perfectoid field $K$ with open and bounded valuation subring $K^+\subseteq K$ (\cite[Lemma 7.3]{etale-cohomology-of-diamonds}), in which case $X=\Spa(R,R^+)$ is necessarily affinoid, and for any pseudo-uniformizer $\pi\in R$ we have $(|X|,\mathcal{O}^+_X/\pi)\cong \Spec(R^+/\pi)$ as locally ringed spaces (\cite[Lemma 3.6.1]{mann-mod-p-6-functors}).

Totally disconnected perfectoid spaces enjoy the following surprising flatness property (\cite[Lemma 3.1.7]{mann-mod-p-6-functors}).

\begin{lemma}
  \label{sec:desc-totally-disc-1-finite-type-over-totally-disconnected-implies-flat-finitely-presented-mod-pi}
  Let $X=\Spa(A,A^+)$ be a totally disconnected perfectoid space, and $\pi\in A$ a uniformizer. Let $A^\prime$ be a $\pi$-torsion free $A^+$-algebra of finite type. Then for every connected component $x=\Spec((A^+/\pi)_x)$ of $\Spec(A^+/\pi)$ the map $(A^+/\pi)_x\to A^\prime\otimes_{A^+} (A^+/\pi)_x$ is flat and finitely presented.
\end{lemma}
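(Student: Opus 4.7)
The statement is cited as \cite[Lemma 3.1.7]{mann-mod-p-6-functors}, so the proof is really the one in Mann's thesis. Were I to reconstruct it, I would first translate the setup via the homeomorphism $(|X|, \ri_X^+/\pi) \cong \Spec(A^+/\pi)$ of \cite[Lemma 3.6.1]{mann-mod-p-6-functors}: the connected component $x \in \Spec(A^+/\pi)$ corresponds to a connected component of $|X|$, which since $X$ is totally disconnected has the form $\Spa(K_x, K_x^+)$ for a perfectoid field $K_x$ with open bounded valuation subring $V := K_x^+$. Thus $(A^+/\pi)_x = V/\pi$. Moreover, the map $A^+ \to V$ is a filtered colimit of localizations of $A^+$ at the idempotents cutting out the clopen neighborhoods of $x$ in $\pi_0(X)$ (lifted from $A^+/\pi$ to $A^+$); in particular it is flat.

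By flatness of $A^+ \to V$, the base change $B := A' \tensor_{A^+} V$ is a finite-type, $\pi$-torsion-free $V$-algebra, and what we must show becomes that $B/\pi$ is flat and of finite presentation over $V/\pi$. The plan is to first upgrade $B$ from being of finite type to being of finite presentation and flat as a $V$-algebra, and then reduce modulo $\pi$. Here one invokes the Raynaud--Gruson theorem that a finite-type flat algebra over a valuation ring is automatically finitely presented; applied to $B$, this packages flatness and finite presentation together, and reduction modulo $\pi$ preserves both (flatness since tensor preserves Lazard's colimit-of-frees description, finite presentation in the obvious way).

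The substantive step is therefore the flatness of $B$ over $V$. Since $V$ is a valuation ring, flatness is equivalent to absence of torsion for \emph{all} nonzero elements of $V$, whereas the hypothesis gives only $\pi$-torsion-freeness. To bridge this gap I would exploit that $B$ is finite type: any torsion would be visible on a finitely generated submodule, and after localizing $V$ to a suitable rank-$1$ subvaluation (where every ideal is cofinal with some power of $\pi$), $\pi$-torsion-freeness upgrades to full torsion-freeness, hence to flatness. Descending back to $V$ via the usual patching of flatness across localizations of valuation rings then gives flatness of $B$ over $V$.

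The main obstacle is precisely this rank reduction in the absence of Noetherianity: one has to be careful that the localization-and-descent step does not require unjustified finite presentation on intermediate objects, and that the $\pi$-torsion-freeness is preserved along the reductions. Once this subtle flatness argument is in place, invoking Raynaud--Gruson and reducing mod $\pi$ are straightforward.
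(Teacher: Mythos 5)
Your overall skeleton agrees with what the paper intends: the paper's own ``proof'' is literally the citation to \cite[Lemma 3.1.7]{mann-mod-p-6-functors}, and the expected argument is the one you outline --- identify $(A^+/\pi)_x$ with $K^+/\pi$ for the connected component $\Spa(K,K^+)$ of $X$, show that the base change of $A'$ to the valuation ring $V=K^+$ is flat, deduce finite presentation from Raynaud--Gruson, and reduce modulo $\pi$.

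However, the step you yourself flag as the substantive one is precisely where your proposal does not work. The mechanism ``localize $V$ to a suitable rank-$1$ subvaluation, upgrade $\pi$-torsion-freeness to torsion-freeness there, then descend flatness back to $V$ by patching across localizations'' is not a valid argument: flatness over a higher-rank valuation ring cannot be detected after localizing at its rank-$1$ core (torsion supported at intermediate primes becomes invisible), and there is no patching principle of the kind you invoke. Fortunately no rank reduction is needed at all: since $K^+$ is open in $K$ it contains $K^{\circ\circ}$, and the topology of $K$ is defined by a rank-$1$ valuation, so for every nonzero $v\in K^+$ one has $\pi^n/v\in K^{\circ\circ}\subseteq K^+$ for $n\gg 0$; hence every nonzero element of $K^+$ divides a power of $\pi$, and $\pi$-torsion-freeness of \emph{any} $K^+$-module already implies torsion-freeness, i.e.\ flatness --- no finite-type input and no descent step. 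A second, smaller inaccuracy: your $\pi$-torsion-freeness of $B=A'\otimes_{A^+}V$ rests on ``$A^+\to V$ is a filtered colimit of localizations, hence flat'', but $K^+$ is the $\pi$-adically \emph{completed} colimit of the $\ri^+_X(W)$ over clopen neighbourhoods $W$ of the component; only the uncompleted colimit is obviously flat over $A^+$. Since only the reduction mod $\pi$ matters, this can be repaired (e.g.\ by spreading divisibility by powers of $\pi$ from the component to a clopen neighbourhood, or by arranging the argument around the uncompleted colimit), but as stated it is an unjustified step rather than a formality.
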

\begin{proof}
  This is \cite[Lemma 3.1.7]{mann-mod-p-6-functors}.
\end{proof}

This result strengthens \cite[Proposition 7.23]{etale-cohomology-of-diamonds} and is critical for \cref{sec:introduction-1-mod-p-theorem-of-lucas} as it implies descent for $\mathcal{O}^+/\pi$-modules for a $v$-cover $f\colon Y\to X$ with $X$ totally disconnected (\cite[Lemma 3.1.8]{mann-mod-p-6-functors}). The proof of loc.\ cit.\ uses critically that the (pre)sheaf $\Spa(R,R^+)\mapsto R^+/\pi$ on affinoid perfectoid spaces sends cofiltered inverse limits to filtered colimits, roughly to reduce to a single connected component of $X$ where \cref{sec:desc-totally-disc-1-finite-type-over-totally-disconnected-implies-flat-finitely-presented-mod-pi} applies. This argument does not work for the sheaf $\mathcal{O}^+$. Instead, we roughly spread out the flatness mod $\pi$ on a connected component (provided by \cref{sec:desc-totally-disc-1-finite-type-over-totally-disconnected-implies-flat-finitely-presented-mod-pi}) to some neighborhood, at least up to replacing $Y$ by some disjoint union of some finite open cover. For the precise assertion below, we denote by $Y^\wl \in \AffPerfd$ the w-localization of any $Y \in \AffPerfd$, as defined in \cite[Proposition 7.12]{etale-cohomology-of-diamonds}.

\begin{proposition} \label{rslt:v-cover-of-td-spaces-is-weakly-descendable}
Let $Y=\Spa(B,B^+) \to X=\Spa(A,A^+)$ be a surjective map of totally disconnected spaces in $\AffPerfd$. Then the map $Y^\wl \to X$ is $+$-weakly adically descendable of index $\le 4$, i.e., if $Y^\wl=\Spa(B_\wl,B_\wl^+)$ the map $(A^+)_\solid\to (B^+_\wl)_\solid$ is weakly adically descendable of index $\leq 4$ in the sense of \cref{sec:complete-descent-1-definition-weakly-adically-descendable}. 
\end{proposition}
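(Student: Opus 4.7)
The plan is to combine the $+$-adic descent criterion \cref{rslt:descendability-of-adic-rings} with the mod-$\pi$ descent theory for totally disconnected perfectoid spaces from \cite[\S3.1]{mann-mod-p-6-functors}, and then to upgrade to the ``weakly'' statement using the natural $\omega_1$-filtered colimit presentation of the w-localization together with \cref{rslt:filtered-colim-of-adic-desc-maps-is-weakly-desc}. Fix a pseudo-uniformizer $\pi \in A$; since all spaces are perfectoid, $\pi$ remains a pseudo-uniformizer in $B$ and $B_\wl$, so the map $A^+ \to B^+_\wl$ is adic.

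Step one is the mod-$\pi$ case: I will show $A^+/\pi \to B^+_\wl/\pi$ is faithfully flat, hence descendable of index $\leq 2$ by \cref{sec:complete-descent-1-example-for-adically-descendable-maps} (or the classical Grothendieck fact used in \cite[Lemma 2.10.6]{mann-mod-p-6-functors}). The key input here is \cref{sec:desc-totally-disc-1-finite-type-over-totally-disconnected-implies-flat-finitely-presented-mod-pi}: for any $\pi$-torsion-free finitely presented $A^+$-algebra, restriction to a connected component of $\Spec A^+/\pi$ is flat and finitely presented. Writing $B^+_\wl$ as a filtered colimit of such algebras and using that $Y^\wl \to X$ is surjective on points (and in particular componentwise surjective, as $\pi_0(Y^\wl)$ is extremally disconnected and $Y^\wl \to X$ surjective), one deduces faithful flatness. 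Step two is the bootstrap to all $n$: since $\pi$ is a non-zerodivisor in both $A^+$ and $B^+_\wl$, the map $A^+/\pi^{n+1}\to B^+_\wl/\pi^{n+1}$ fits into a square-zero extension over $A^+/\pi^n \to B^+_\wl/\pi^n$ whose kernel is base-changed from the $n=1$ case. A standard Postnikov/Koszul argument (descendability is stable under square-zero extensions, with index at worst doubling) propagates the bound, giving uniform descendability of index $\leq 4$ for all $n$, hence $+$-adic descendability of $A^+ \to B^+_\wl$ of index $\leq 4$ by \cref{rslt:descendability-of-adic-rings}.

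Step three is the upgrade from adic to weakly adic. The w-localization is by construction an $\omega_1$-filtered inverse limit $Y^\wl = \varprojlim_i Y_i$ of affinoid perfectoid spaces built from $Y$ by iterating rational opens and disjoint unions, so $B^+_\wl$ is an $\omega_1$-filtered colimit of $B^+$-algebras $B^+_i$ along adic maps. At each finite stage, applying Steps one and two to $A^+ \to B^+_i$ gives an adically descendable map of index $\leq 4$; \cref{rslt:filtered-colim-of-adic-desc-maps-is-weakly-desc} then assembles these into a weakly adically descendable map of index $\leq 4$, which is the claim. The main obstacle is Step one: \emph{a priori} one only has mod-$\pi$ descent in the abstract sense of \cite[Lemma 3.1.8]{mann-mod-p-6-functors}, and extracting a concrete index bound (independent of $n$ and of the stage $i$) requires the componentwise flatness provided by \cref{sec:desc-totally-disc-1-finite-type-over-totally-disconnected-implies-flat-finitely-presented-mod-pi}, together with a quasicompactness/connected-component argument to glue the local fpqc-descendability to a global bound — this is precisely why passing to $Y^\wl$ (whose $\pi_0$ is nicely behaved) is essential rather than working with $Y$ directly.
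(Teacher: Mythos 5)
Your proposal has a genuine gap at its core: you pass from ``$A^+/\pi \to B^+_\wl/\pi$ is faithfully flat'' to ``descendable of index $\le 2$''. The criterion you invoke (\cref{sec:complete-descent-1-example-for-adically-descendable-maps}, i.e.\ \cite[Lemma 2.10.6]{mann-mod-p-6-functors}) requires the map to be faithfully flat \emph{and finitely presented}; descendability in the sense of Mathew, with a bounded index, simply does not follow from faithful flatness alone, and $A^+/\pi \to B^+_\wl/\pi$ (likewise $A^+/\pi \to B_i^+/\pi$ for the stages of the w-localization tower) is a huge colimit and nowhere near finitely presented. The same objection kills your Step three: the stages $A^+ \to B^+_i$ have exactly the same lack of finiteness as $B^+_\wl$ itself, so ``applying Steps one and two to $A^+ \to B^+_i$'' does not produce adically descendable maps of bounded index, and \cref{rslt:filtered-colim-of-adic-desc-maps-is-weakly-desc} has nothing to feed on. The actual content of the paper's proof is precisely the missing finiteness: one approximates $B^+_\wl$ by \emph{finitely presented} $A^+$-algebras $A_j$ whose underived reductions $\pi_0(A_j/\pi^n)$ are flat, via the categories $J_{\pi^n}$ and the $\omega_1$-filtered category $J$ of compatible systems. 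Establishing that these approximations exhaust $B^+_\wl$ (claims (a) and (b) in the paper) is where \cref{sec:desc-totally-disc-1-finite-type-over-totally-disconnected-implies-flat-finitely-presented-mod-pi} and \cite[Lemma 3.6.1]{mann-mod-p-6-functors} are really used, and it forces one to refine the tower by finite open covers of the $Y_i$ — this, not the niceness of $\pi_0(Y^\wl)$, is why the statement is about $Y^\wl$ rather than $Y$. None of this spreading-out/kernel-killing mechanism appears in your proposal.

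Two further points. First, your bootstrap ``descendability is stable under square-zero extensions, with index at worst doubling'' is not a lemma available here and is not how the uniform bound is obtained; in the paper the index $4$ arises because each $(A^+/\pi^n)_\solid \to (A_{j(k)}/\pi^n)_\solid$ (for $k\ge n$) factors through a genuinely fppf map $\pi_0(A_{j(k)}/\pi^n)$, giving index $\le 2$, and the passage to the filtered colimit doubles this via \cite[Proposition 2.7.2]{mann-mod-p-6-functors}. Second, even granting flatness mod $\pi$, the $\pi$-torsion of the finitely presented approximations $A_j$ prevents flatness of $\pi_0(A_j/\pi)$ from propagating to $\pi_0(A_j/\pi^n)$; this is exactly why the paper introduces $J_n=J_{\pi^n}$ and the diagonal category $J$, a step your argument has no analogue of. Finally, note that the statement to prove is that $(A^+)_\solid\to(B^+_\wl)_\solid$ is \emph{weakly} adically descendable — i.e.\ an $\omega_1$-filtered colimit of adically descendable maps — not that it is adically descendable itself; your proof attempts the latter, stronger claim by a route (flat descent without finite presentation) that is not available.
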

\begin{proof}
  We need to show that the map $A^+ \to B_\wl^+$ of adic rings is an $\omega_1$-filtered colimit of adically descendable maps of index $\le 4$. Let $I$ be the category of factorizations $Y^\wl \to Y_i \to Y$, where $Y_i$ is a finite disjoint union of qcqs open subsets of $Y$. Then by \cite[Lemma 7.13]{etale-cohomology-of-diamonds} $I$ is cofiltered and $Y^\wl = \varprojlim_{i\in I} Y_i$. We denote $Y_i = \Spa(B_i, B^+_i)$, so that $B_\wl^+$ is the completed filtered colimit of the $B^+_i$.

In the following discussion, all rings and modules will be discrete and static unless stated otherwise. For some fixed pseudouniformizer $\pi \in A$ we let $J_\pi$ be the following category: An object of $J_\pi$ is a factorization $A^+ \to A_j \to B_\wl^+$ of $A^+$-algebras such that the map $A^+ \to A_j$ is a finitely presented map of classical rings which is flat modulo $\pi$ (here we take the \emph{underived} quotient $A_j/\pi$!) and such that the map $A_j \to B_\wl^+$ factors over some $B^+_i$. A morphism in $J_\pi$ is a morphism $\alpha\colon A_j \to A_{j'}$ over $A^+$ such that there is some $i \in I$ with the property that both $A_j \to B^+_\wl$ and $A_{j'} \to B^+_\wl$ factor over $B^+_i$ and these factorizations are compatible with $\alpha$. We now claim that $J_\pi$ has the following crucial property:
\begin{itemize}
  \item[(a)] Let $A_j \in J_\pi$. Pick some $i$ such that $A_j \to B^+_\wl$ factors over $B^+_i$ and some $a \in A_j$ which is sent to $0$ via $A_j \to B^+_i$. Then there is some morphism $\alpha\colon A_j \to A_{j'}$ in $J_\pi$ such that $\alpha(a) = 0$.
\end{itemize}
We now prove this claim, so fix $A_j$, $i$ and $a$ as in the claim. Let $A'_j \subset B^+_i$ be the image of $A_j$ and fix some connected component $x \in \pi_0(X)$. Equivalently $x$ is a connected component of $\Spec A^+/\pi$, i.e. of the form $x = \Spec (A^+/\pi)_x$, as $X$ is totally disconnected. For any $A^+$-algebra $A'$ we denote $(A'/\pi)_x := A' \tensor_{A^+} (A^+/\pi)_x$. Then by \cref{sec:desc-totally-disc-1-finite-type-over-totally-disconnected-implies-flat-finitely-presented-mod-pi} the map $(A^+/\pi)_x \to (A'_j/\pi)_x$ is flat and finitely presented for $A^\prime_j$ as above and in particular the map $\beta_{x,\pi}\colon (A_j/\pi)_x \to (A'_j/\pi)_x$ is finitely presented (\cite[Tag 00F4]{stacks-project}). Pick some generators $\overline a_{1,x}, \dots, \overline a_{n,x}$ of $\ker(\beta_{x,\pi})$. They automatically extend over some (pullback of some) clopen neighbourhood of $x$ in $\Spec A^+/\pi$ and hence to all of $A_j/\pi$ by extending them by $0$ on the (pullback of the) clopen complement of that neighbourhood. Moreover, we can guarantee that these extensions $\overline a_1, \dots, \overline a_n \in A_j/\pi$ lie in the kernel of the map $A_j/\pi \to A'_j/\pi$ by shrinking the clopen neighbourhood of $x$ if necessary. Since the map $\beta\colon A_j \to A'_j$ is surjective, we can find lifts $a_1, \dots, a_n \in A_j$ of $\overline a_1, \dots, \overline a_n$ which lie in the kernel of $\beta$. We can assume that $a$ is one of the $a_k$'s as by assumption $a$ maps to $0$ in $A_j^\prime\subseteq B_i^+$. Now
denote
\begin{align*}
  A'_{j,x} := A_j/(a_1, \dots, a_n).
\end{align*}
Then $A'_{j,x}$ is a finitely presented $A^+$-algebra with a map to $A'_j$ such that $(A'_{j,x}/\pi)_x = (A'_j/\pi)_x$ and such that $a = 0$ in $A'_{j,x}$. By \cite[Tag 00RC]{stacks-project} the locus $U \subset \Spec A'_{j,x}/\pi$ which is flat over $A^+/\pi$ is an open subset and by construction we have $\Spec (A'_{j,x}/\pi)_x \subset U$. We can thus find elements $\overline f_1, \dots \overline f_m \in A'_{j,x}/\pi$ such that each localization $(A'_{j,x}/\pi)_{\overline f_k}$ is flat over $A^+/\pi$ and such that the associated schemes cover $\Spec (A'_{j,x}/\pi)_x$. By the proof of \cite[Lemma 3.6.1]{mann-mod-p-6-functors} the elements $\overline f_1, \dots, \overline f_m$, more precisely their images in $B_i^+/\pi$, induce qcqs open, even rational, subsets $V_{1,x}, \dots V_{m,x} \subset Y_i$ such that $\overline f_k$ is a unit in $\mathcal{O}^+_{Y_i}(V_{k,x})$ for $k=1, \ldots, m$ (here we use that $Y$ and hence $Y_i$ is totally disconnected in order to apply \cite[Lemma 3.6.1]{mann-mod-p-6-functors}). Pick any lifts $f_1, \dots, f_m \in A'_{j,x}$ and denote
\begin{align*}
  A_{j,x} := \prod_{k=1}^m (A'_{j,x})_{f_k}, \qquad Y_{i,x} := \bigdunion_{k=1}^m V_{k,x}.
\end{align*}
Writing $Y_{i,x} = \Spa(B_{i,x}, B^+_{i,x})$ we get a map $A_{j,x} \to B^+_{i,x}$ of $A^+$-algebras which is compatible with the natural maps $A_j \to A_{j,x}$ and $B^+_i\to B^+_{i,x}$. Moreover, by construction the subsets $Y_{k,x}$ cover the fiber of $Y_i$ over $x$ (because the distinguished opens $D(\overline f_1), \ldots, D(\overline f_m)$ cover $\Spec(A'_{j,x}/\pi)_x)$, and hence their pullbacks cover $\Spec(B^+_i/\pi)$). Since $Y_i$ is qcqs, we can find finitely many connected components $x_1, \dots, x_{\ell}$ of $X$ such that the map $Y_{i'} := Y_{i,x_1} \dunion \dots \dunion Y_{i,x_{\ell}} \surjto Y_i$ is a cover, so that $Y_{i'} \in I$. Let
\begin{align*}
  A_{j'} := \prod_{k=1}^\ell A_{j,x_k}.
\end{align*}
Then $A_{j'} \in J_\pi$ and the natural map $\alpha\colon A_j \to A_{j'}$ satisfies the claim (a). This proves (a).

We note that we did not use that $A_j/\pi$ is flat over $A^+/\pi$ in the proof, only flatness of $A^\prime_j/\pi$, which was supplied by \cref{sec:desc-totally-disc-1-finite-type-over-totally-disconnected-implies-flat-finitely-presented-mod-pi} as the $A^\prime_j$ was a finitely generated, $\pi$-torsion free $A^+$-algebra. Replacing in the argument $\pi$ by $\pi^n, n\geq 1,$ we see that we can even guarantee that $A_{j^\prime}/\pi^n$ is flat over $A^+/\pi^n$. 

From now on, we work with derived condensed rings again. With (a) at hand, we can now prove the following claim:
\begin{itemize}
  \item[(b)] The category $J_\pi$ is filtered and $B^+_\wl$ is the (derived) $\pi$-adically completed colimit of the system $(\hat A_j)_{j \in J_\pi}$, where $\hat A_j$ denotes the (derived) $\pi$-adic completion of $A_j$.
\end{itemize}
Let us first show that $J_\pi$ is filtered. Clearly, $J_\pi$ is non-empty as $A^+\in J_\pi$. Given $A_j, A_{j'} \in J_\pi$, pick any $i \in I$ such that both $A_j \to B^+_\wl$ and $A_{j'} \to B^+_\wl$ factor over $B^+_i$. Then the static, non-condensed tensor product $A_{j''} := A_j \tensor_{A^+} A_{j'}$ lies in $J_\pi$ and the map $A_{j''} \to B^+_\wl$ factors over $B^+_i$. In particular the natural maps $A_j \to A_{j''}$ and $A_{j'} \to A_{j''}$ lie in $J_\pi$. Now suppose we have two maps $\alpha_1, \alpha_2\colon A_j \to A_{j'}$ in $J_\pi$. By repeatedly applying (a) to the images of polynomial generators of $A_j$ under $\alpha_1 - \alpha_2$ we can construct some $A_{j''} \in J_\pi$ with a map $A_{j'} \to A_{j''}$ such that $\alpha_1$ and $\alpha_2$ become equal on $A_{j''}$. This proves that $J_\pi$ is indeed filtered.

Now let $B'$ be the (derived) $\pi$-adic completion of $\varinjlim_{j\in J_\pi} \hat A_j$. There is a natural map $B' \to B^+_\wl$ of $\pi$-adic rings which we need to show to be an isomorphism. This can be checked modulo $\pi$, so we need so show that the natural map
\begin{align*}
  \varinjlim_j \hat A_j/\pi = \varinjlim_j A_j/\pi \to B^+_\wl/\pi = \varinjlim_i B^+_i/\pi
\end{align*}
is an isomorphism of discrete rings (here the tensor product $A_j/\pi$ is a priori derived!). For this it is enough to show the stronger claim that the natural map $\varinjlim_j A_j \to \varinjlim_i B^+_i$ of classical, non-condensed rings is an isomorphism of static discrete rings. This map is clearly surjective, because every $b \in B^+_i$ is hit by some $A_j \to B^+_i$, e.g., for $A_j = A^+[x]$ and $x\mapsto b$. The map is injective by claim (a). This proves the desired isomorphism and thus also claim (b).

We have now shown that the map $A^+ \to B^+_\wl$ is the $\pi$-completed filtered colimit of $\pi$-complete finitely presented $A^+$-algebras whose underived reduction modulo $\pi$ (i.e. $\pi_0(A_j/\pi)$) is flat over $A^+/\pi$. Since we only talk about underived reductions modulo $\pi$, we cannot immediately conclude the desired descendability statement: Since we have no control over the $\pi$-torsion of $A_j \in J_\pi$, we cannot deduce from the flatness of $\pi_0(A_j/\pi)$ that also $\pi_0(A_j/\pi^n)$ is flat. We therefore need the following additional argument.

From now on we denote $J_n := J_{\pi^n}$ for some fixed pseudouniformizer $\pi \in A$. Then $J_{n+1} \subset J_n$ is a full subcategory for every $n \ge 1$ and as we noted after the proof of (a) this subcategory is even cofinal. Now let $J$ be the following category: The objects of $J$ are the functors $j\colon \N \to J_1$ such that $j(n) \in J_n$ for all $n \ge 1$. For $j, j' \in J$ we define
\begin{align*}
  \Hom_J(j, j') = \varinjlim_{n\in\N} \Hom(\restrict j{\N_{\ge n}}, \restrict{j'}{\N_{\ge n}}).
\end{align*}
To every $j \in J$ we associate the $A^+$-algebra $A_j := \varinjlim_n A_{j(n)}$. It follows from (a) that $J$ is $\omega_1$-filtered\footnote{Given any countable diagram $j_k, k\in K,$ in $J$, we can write the category $K$ as a filtered union of finite subgraphs $K_n, n\in \mathbb{N},$ and set $j^\prime(n)\in J_n$ as an object with a map from each $j_k(l), k\in K_n$ and $l\leq n$ such that these morphisms to $j^\prime(n)$ equalize all morphisms $j_k(l)\to j_{k^\prime}(l)$ induced by a morphism in $K_n$. Inductively, we can even construct compatible morphisms $j^\prime(n)\to j^\prime(n+1)$, and set $j_k\to j^\prime$ for $k\in K_n$ as the morphism, which for $m\geq n$ is the chosen morphism $j_k(m)\to j^\prime(m)$. } and it follows as in (b) that $B^+_\wl$ is the $\pi$-completed colimit of $(\hat A_j)_{j\in J}$. Thus, to finish the proof it is now enough to show the following claim:
\begin{itemize}
  \item[(c)] For every $j \in J$ the map $(A^+)_\solid \to (\hat A_j)_\solid$ is adically descendable of index $\le 4$.
\end{itemize}
To prove (c), fix $j \in J$ and $n \ge 1$. We need to show that the map $(A^+/\pi^n)_\solid \to (A_j/\pi^n)_\solid$ is descendable of index $\le 4$. This map is the filtered colimit of the maps $(A^+/\pi^n)_\solid \to (A_{j(k)}/\pi^n)_\solid$, so by \cite[Proposition 2.7.2]{mann-mod-p-6-functors} it is enough to show that for $k \gg 0$ the map $(A^+/\pi^n)_\solid \to (A_{j(k)}/\pi^n)_\solid$ is descendable of index $\le 2$. We claim that this works for $k \ge n$. Indeed, we have the factorization
\begin{align*}
  \alpha\colon (A^+/\pi^n)_\solid \to (A_{j(k)}/\pi^n)_\solid \to \pi_0(A_{j(k)}/\pi^n)_\solid,
\end{align*}
and by definition of $J_n$ the composed map $\alpha$ is flat and finitely presented. The map $\alpha$ is even surjective on spectra, and thus an fppf cover, because its composition with $\pi_0(A_{j(k)}/\pi^n)\to B^+_\wl/\pi^n$ is $A^+/\pi^n\to B^+_\wl/\pi^n$, which is surjective on spectra (as $Y^\wl\to X$ is a $v$-cover of totally disconnected spaces). Thus by \cite[Lemma 2.10.6]{mann-mod-p-6-functors} the map $\alpha$ is descendable of index $\le 2$, so by \cite[Lemma 2.6.10.(ii)]{mann-mod-p-6-functors} the same is true for the first map $(A^+/\pi^n)_\solid\to (A_{j(k)}/\pi^n)_\solid$, as desired.
\end{proof}

\begin{proof}[Proof of \cref{rslt:v-hyperdescent-for-O+-modules}]
  It is sufficient to construct $X\mapsto \D^a_{\hat\solid}(\mathcal{O}^+_X)$ for $X\in \AffPerfd$ as it will formally extend via analytic descent.
  We define the presheaf $X=\Spa(A,A^+)\mapsto F(X):=\D_{\hat\solid}^a(A^+)$ on $\Perfd^\aff$.
We first observe that for every v-cover $f\colon Y=\Spa(B,B^+) \to X=\Spa(A,A^+)$ of totally disconnected spaces, the presheaf $F$ satisfies descent along $f$. Indeed, by \cref{rslt:v-cover-of-td-spaces-is-weakly-descendable} and \cref{rslt:base-change-for-affinoid-perfectoid} the presheaf $F$ satisfies descent along the map $Y^\wl \to X$, even after any base change $X^\prime\to X$ with $X^\prime$ any affinoid, not necessarily totally disconnected, perfectoid space.\footnote{More precisely, we as well use that descent of $\D_{\hat\solid}(-)$ for $(A^+)_\solid\to (B^+)_\solid$ implies descent of $\D_{\hat\solid}^a(-)$ by embedding the almost category (via $j_!$) into $\D_{\hat\solid}(-)$ and using \cref{sec:glob-stably-unif-3-tensoring-preserves-fiber-sequences} to see that this embedding is compatible with pullback. } Hence, by \cite[Lemma 3.1.2.(3)]{enhanced-six-operations} the same is true for $f\colon Y\to X$. Using \cref{rslt:base-change-for-affinoid-perfectoid} we can argue as in \cite[Proposition 3.1.9]{mann-mod-p-6-functors} to show that there is a (necessarily unique) quasi-pro-étale sheaf $X \mapsto \D^a_{\hat\solid}(\ri^+_X)$ on $\AffPerfd$ which has the desired form on those spaces which are quasi-pro-étale over some totally disconnected space. We recall the argument for the convenience of the reader: first, the presheaf $F$ defines a quasi-pro-\'etale sheaf on the pro-\'etale site\footnote{We note that a qcqs perfectoid space $Y$ with a quasi-pro-\'etale map $Y\to X$ to a strictly totally disconnected perfectoid space $X$ is itself again strictly totally disconnected. This need not be true if $X=\Spa(K,K^+)$ is connected and totally disconnected as the valuation ring $K^+$ need not be henselian.} of \textit{strictly} totally disconnected perfectoid spaces, i.e., those totally disconnected spaces whose connected components are of the form $\Spa(C,C^+)$ with $C$ algebraically closed and $C^+\subseteq C$ an open and bounded valuation subring. Formally, this quasi-pro-\'etale sheaf extends to the quasi-pro-\'etale sheaf $X\mapsto \D^a_{\hat\solid}(\ri^+_X)$ on $\Perfd^\aff$. Assume that $X=\Spa(A,A^+)$ is affinoid perfectoid with a quasi-pro-\'etale map $X\to Z$ to a totally disconnected space $Z$. If $Y\to Z$ is a quasi-pro-\'etale cover with $Y$ strictly totally disconnected, then $Y\to Z$ satisfies universal descent for $F$ as we have seen above. In particular, $F$ satisfies descent for the cover $Y^\prime:=Y\cprod_Z X\to X$. As the terms of the \v{C}ech nerve for $Y^\prime\to X$ are strictly totally disconnected, we can therefore conclude that $F(X)=\D^a_{\hat\solid}(\mathcal{O}^+_X)$ as claimed.

It remains to show that the sheaf $X \mapsto \D^a_{\hat\solid}(\ri^+_X)$ satisfies v-hyperdescent. This can be done in the same way as in the proof of \cite[Theorem 3.1.27]{mann-mod-p-6-functors}: One first shows the analog of \cite[Lemma 3.1.23]{mann-mod-p-6-functors} for adic coefficients, where the two crucial inputs are \cite[Lemma 3.1.22]{mann-mod-p-6-functors} (which can be replaced by \cref{rslt:perfectoid-etale-descent-of-integral-sheaves}) and the finite Tor dimension (which can be lifted to the adic level by \cref{rslt:tor-dim-of-adic-maps}). Then the adic version of \cite[Corollary 3.1.24]{mann-mod-p-6-functors} is a formal corollary and consequently we deduce the adic version of \cite[Proposition 3.1.25]{mann-mod-p-6-functors}; here the only non-formal input is the fact that $+$-modules, i.e., $\D^+_{\hat\solid}(A^+)$ for $X=\Spa(A,A^+)$, descend along v-covers of totally disconnected spaces, as shown above. In other words, we have now shown that $X \mapsto \D^a_{\hat\solid}(\ri^+_X)$ is a v-sheaf. To get v-hyperdescent it remains to prove the adic version of \cite[Proposition 3.1.26]{mann-mod-p-6-functors}. But here the only non-formal input is the finite Tor dimension, which can again be deduced from the mod-$\pi$ version via \cref{rslt:tor-dim-of-adic-maps}.
\end{proof}

The following result was used in the proof of \cref{rslt:v-hyperdescent-for-O+-modules} above. It is an interesting statement on its own, so we extract it here:

\begin{lemma} \label{rslt:perfectoid-etale-descent-of-integral-sheaves}
Let $f\colon Y \to X$ be an étale cover in $\AffPerfd$. Then $+$-modules descend along $j$, i.e., the functor $X=\Spa(A,A^+)\mapsto \D^a_{\hat\solid}(A^+)$ is an \'etale sheaf on $\AffPerfd$.
\end{lemma}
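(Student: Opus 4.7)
The plan is to reduce the \'etale sheaf property to an adic descendability statement on almost adic analytic rings, and then to verify the latter using Scholze's almost purity theorem together with the formal machinery developed in \cref{sec:complete-descent}.

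First, by \cref{rslt:base-change-for-affinoid-perfectoid} the \v{C}ech nerve of an \'etale cover $f\colon Y = \Spa(B, B^+) \to X = \Spa(A, A^+)$ in $\AffPerfd$ matches, at the level of almost adic analytic rings, the \v{C}ech nerve of the map $(A^+, A^+)^a_\solid \to (B^+, B^+)^a_\solid$. The entire formalism of \cref{sec:d_hats-adic-rings}--\cref{sec:complete-descent} carries over verbatim to the almost setting through the open immersion $\D(\mathcal A) \to \D^a(\mathcal A)$ (cf.\ \cref{sec:glob-stably-unif-2-closed-open-immersions-are-stable-under-base-change-for-lurie-tensor-product}, which ensures compatibility of almost localization with tensor products of adic analytic rings), so in particular the almost analogue of \cref{rslt:descendability-of-adic-rings} holds. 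It thus suffices to show that, for a pseudo-uniformizer $\pi\in A$, the map $A^{+a}/\pi^n \to B^{+a}/\pi^n$ is descendable of index bounded uniformly in $n \ge 1$.

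For this, I invoke the almost version of \cref{sec:complete-descent-1-example-for-adically-descendable-maps}: it is enough to check that $A^{+a} \to B^{+a}$ is $\pi$-completely almost faithfully flat and $\pi$-completely almost finitely presented. Both properties follow from the standard description of \'etale maps of perfectoid spaces as local compositions of rational open immersions and finite \'etale morphisms, combined with Scholze's almost purity theorem: finite \'etale covers of $X$ induce almost finite \'etale extensions on $+a$-structure sheaves, while rational open immersions are handled by explicit computation using the perfectoid structure $\mathcal O_Y^{+a} \cong A^{+a}\langle (f_i/g)^{1/p^\infty} \rangle^\wedge$ for rational subsets $U = \{|f_i| \le |g| \neq 0\}$. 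Almost faithfulness of the flatness is then precisely the covering hypothesis on $f$. Applying the almost version of \cref{sec:complete-descent-1-example-for-adically-descendable-maps} yields descendability of index $\le 2$, uniform in $n$, and concludes the proof.

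The main obstacle will be verifying the almost finitely presented property cleanly at the $+a$-level in the rational open case; the key observation is that after passing to the almost world, the adjunction of $p^\infty$-roots in perfectoid rational localizations makes the integral structure sheaf behave like a finitely presented quotient of $A^{+a}\langle T_i\rangle$, and reduction modulo any power of $\pi$ preserves this property. An alternative, should this verification be delicate, is to first prove descent directly for rational open covers (via \cref{sec:glob-stably-unif-1-analytic-and-etale-descent-for-hat-solid-version} adapted to the $+a$-setting) and for finite \'etale covers separately (via almost purity giving a projective $A^{+a}$-module structure on $B^{+a}$), and then glue using the fact that \'etale covers in $\AffPerfd$ locally refine to compositions of these two cases.
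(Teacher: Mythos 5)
Your main route has a genuine gap at the rational-open step. The criterion you want to invoke (the almost analogue of \cref{sec:complete-descent-1-example-for-adically-descendable-maps}: $\pi$-completely almost faithfully flat and almost finitely presented $\Rightarrow$ adically descendable of index $\le 2$) is simply not available for the analytic constituents of an \'etale map, because rational localizations are \emph{not} almost flat on $+$-rings modulo $\pi$. Concretely, take $X$ the perfectoid closed unit disc over $\Cpx_p$, say $A=\Cpx_p\langle T^{1/p^\infty}\rangle$, and $U=\{|T|\le |p|\}$, so that $\ri_X^+(U)$ is (up to completion) $\ri_{\Cpx_p}\langle (T/p)^{1/p^\infty}\rangle$. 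Modulo $p$ the image of $T$ is $p\cdot(T/p)\equiv 0$, so multiplication by $T$ on $\ri_X^+(U)/p$ has kernel the unit ideal, while $T$ is a nonzerodivisor in $A^+/p$; hence $\mathrm{Tor}_1^{A^+/p}(A^+/(p,T),\ri^+_X(U)/p)$ is not almost zero. So the ``explicit computation'' you appeal to cannot produce $\pi$-complete almost (faithful) flatness, and the uniform-in-$n$ descendability of $A^{+a}/\pi^n\to B^{+a}/\pi^n$ is not reachable this way (the finite \'etale part via almost purity is fine in spirit, but it is not where the difficulty lies). Your fallback also begs the question: \cref{sec:glob-stably-unif-1-analytic-and-etale-descent-for-hat-solid-version} is a statement about the generic-fiber categories $\mathrm{Mod}_B(\D_{\hat\solid}(B^\circ,B^+))$, proved via rational opens being open immersions in $\mathrm{Sym}$ -- a phenomenon of the Tate ring, not of $A^+$ -- and an ``adaptation to the $+a$-setting'' is precisely the content of the lemma, not something one can cite.

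For comparison, the paper's proof avoids any flatness of $\ri^+$: it splits the problem using the fiber sequence $A^+\to A\to \varinjlim_n A^+/\pi^n$. After inverting $\pi$ one is in the generic-fiber setting where \cref{sec:glob-stably-unif-1-analytic-and-etale-descent-for-hat-solid-version} applies (rational covers via the open-immersion formalism, finite \'etale via descendability of $A(\ast)\to B(\ast)$). The integral part is reduced by induction to $A^+/\pi$, then (choosing $p\mid\pi$ and tilting) to characteristic $p$, where the \'etale cover is approximated by one defined over a base weakly of perfectly finite type; there the mod-$\pi$ descent is supplied by \cite[Theorem 3.1.17]{mann-mod-p-6-functors}, and it is propagated to $Y\to X$ by the universal-descent mechanism of \cref{sec:universal-descent-1-descent-implies-universal-descent-for-analytic-rings}, whose only nontrivial input is conservativity of the almost pushforward. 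If you want to salvage your strategy, you would have to replace the flatness criterion by an argument of this kind (or by a direct proof of uniform descendability mod $\pi^n$ for rational covers, which is not known to follow from any fppf-type statement).
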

\begin{proof}
  After inverting a pseudo-uniformizer $\pi\in A$, i.e., for the functor $X=\Spa(A,A^+)\mapsto \mathrm{Mod}_A(\D^a_{\hat\solid}(A^+))=\mathrm{Mod}_A(\D_{\hat\solid}(A^+))$ this follows from \cref{sec:glob-stably-unif-1-analytic-and-etale-descent-for-hat-solid-version}. Using the fiber sequence $A^+\to A\to A/A^+=\varinjlim\limits_{n} A^+/\pi^n$ reduces therefore to the descent for the functor $X=\Spa(A,A^+)\mapsto \mathrm{Mod}_{A^+/\pi^n}(\D^a_{\hat\solid}(A^+))=\mathrm{Mod}_{A^+/\pi^n}(\D^a_{\solid}(A^+))$. Moreover, using induction and the fiber sequence $A^+/\pi^{n-1}\to A^+/\pi^n\to A^+/\pi$ reduces to the case $n=1$. As $\pi$ was arbitrary, we may even assume that $p|\pi$, and then (by tilting) that $X=\Spa(A,A^+), Y=\Spa(B,B^+)$ are perfectoid spaces of characteristic $p$. As $f\colon Y\to X$ is \'etale, there exists by \cite[Proposition 6.4.(iv)]{etale-cohomology-of-diamonds} some \'etale morphism $f_0\colon Y_0=\Spa(B_0,B_0^+\to X_0=\Spa(A_0,A_0^+)$ of affinoid perfectoid spaces, which are weakly of perfectly finite type over $\Spa(\mathbb{F}_p((\pi^{1/p^\infty})))$ (in the sense of \cite[Definition 3.1.13.]{mann-mod-p-6-functors}), and morphisms $g\colon X\to X_0, Y\to Y_0$ such that $Y\cong Y_0\cprod_{X_0} X$. By \cite[Theorem 3.1.17]{mann-mod-p-6-functors} the natural functor
  \[
    \D^a_{\solid}(A_0^+/\pi)\to \varprojlim\limits_{[n]\in \Delta}\D^a_\solid((B_0^{n/A_0})^+/\pi)
  \]
  is an equivalence (here $(B_0^{n/A_0})^+$ denotes the $+$-ring for the $n$-th stage of the \v{C}ech nerve of $Y_0\to X_0$). Using the arguments from \cref{sec:universal-descent-1-descent-implies-universal-descent-for-analytic-rings}, we see that this implies that the functor
  \[
    \D^a_{\solid}(A^+/\pi)\to \varprojlim\limits_{[n]\in \Delta}\D^a_\solid((B^{n/A})^+/\pi)
  \]
  is an equivalence as well. More precisely, the critical statement to check is that for any morphism $g\colon X=\Spa(A,A^+)\to X_0=\Spa(A_0,A_0^+)$ of affinoid perfectoid spaces over $\Spa(\mathbb{F}_p((\pi^{1/p^\infty})))$ the pushforward $g_\ast^a\colon \D^a_\solid(A^+/\pi)\to \D^a_\solid(A^+_0/\pi)$ on almost categories is conservative. This however follows formally from conservativity of $g_\ast\colon \D_\solid(A^+/\pi)\to \D_\solid(A^+_0/\pi)$ by embedding the almost categories via $\ast$-pushforward, i.e., the functors of almost elements over $A^+/\pi$ resp.\ $A^+_0/\pi$, into $\D_\solid(A^+/\pi)$ resp.\ $\D_\solid(A^+_0/\pi)$. This finishes the proof.
\end{proof}

\begin{remark}
  \label{sec:desc-totally-disc-1-more-general-version-for-descent-on-totally-disconnected-spaces}
The same argument as in \cref{rslt:v-hyperdescent-for-O+-modules} can be used to show the following: Let $X \to S$ be a map in $\AffPerfd$. Then there is a unique hypercomplete v-sheaf
\begin{align*}
  (\Perfd_{/S})^\opp \to \mathcal{C}at_\infty, \qquad T \mapsto \D^a_{\hat\solid}(\ri^+_{X_T})
\end{align*}
such that if $T$ admits a quasi-pro-étale map to some totally disconnected space then $\D^a_{\hat\solid}(\ri^+_{X_T}) = \D^a_{\hat\solid}(B^+)$, where $X \cprod_S T = \Spa(B, B^+)$. Indeed, \cref{rslt:v-cover-of-td-spaces-is-weakly-descendable} implies descent after base change.
\end{remark}

We note that logically we did not use \cref{sec:introduction-1-mod-p-theorem-of-lucas} in the proof of \cref{rslt:v-hyperdescent-for-O+-modules}, only in its disguise through the key ingredient \cref{sec:desc-totally-disc-1-finite-type-over-totally-disconnected-implies-flat-finitely-presented-mod-pi} and the \'etale descent from \cite[Lemma 3.1.22]{mann-mod-p-6-functors}.

\begin{remark}
  \label{relation-to-lucas-mod-pi-version}
  Let $X=\Spa(A,A^+)\in \AffPerfd$ with pseudo-uniformizer $\pi\in A$. It follows from \cref{rslt:v-hyperdescent-for-O+-modules} and \cref{rslt:A-mod-I-modified-modules-equals-usual-modules} that
  \[
    \mathrm{Mod}_{A^+/\pi}(\D^a_{\hat\solid}(\mathcal{O}^+))\cong \D^a_{\solid}(\mathcal{O}^+_X/\pi),
  \]
  with the right hand side defined in \cite[Definition 3.1.3]{mann-mod-p-6-functors}. Indeed, both sides satisfy $v$-descent and if $X$ is totally disconnected we can apply \cref{rslt:A-mod-I-modified-modules-equals-usual-modules}.
\end{remark}

Even though the present paper focuses on understanding the category $\D^a_{\hat\solid}(\ri^+_X)$ for (affinoid) perfectoid spaces, it is sometimes useful to extend the definition to general small v-stacks. Let us therefore introduce the following notation:

\begin{definition} \label{def:D-ri-on-untilted-small-vstacks}
\begin{defenum}
  \item We denote by
  \begin{align*}
    \vStacks^\sharp := \vStacks_{/\Spd(\Z_p)}
  \end{align*}
  the category of \emph{untilted small v-stacks}, i.e. small v-stacks $X$ (in the sense of \cite[Definition~12.4]{etale-cohomology-of-diamonds}) together with a map $X \to \Spd(\Z_p)$. Note that if $X$ is representable, i.e. given by a perfectoid space in characteristic $p$, then a map $X \to \Spd(\Z_p)$ is exactly given by the choice of an untilt of $X$. In particular we can identify $\vStacks^\sharp$ with the category of small v-stacks on $\Perfd$.

  \item We denote by
  \begin{align*}
    \D^a_{\hat\solid}(\ri^+_{(-)})\colon (\vStacks^\sharp)^\opp \to \catcat, \qquad X \mapsto \D^a_{\hat\solid}(\ri^+_X)
  \end{align*}
  the unique hypercomplete v-sheaf of categories that restricts to the one from \cref{rslt:v-hyperdescent-for-O+-modules} on perfectoid spaces.

  \item For every map $f\colon Y \to X$ in $\vStacks^\sharp$ we denote
  \begin{align*}
    f^\ast\colon \D_{\hat\solid}^a(\mathcal{O}^+_X) \rightleftarrows \D^a_{\hat\solid}(\mathcal{O}^+_Y) \noloc f_\ast
  \end{align*}
  the adjunction of pullback and pushforward functors, where $f^*$ is defined as the restriction map of the sheaf $\D^a_{\hat\solid}(\ri^+_{(-)})$.\footnote{It follows by \cite[Proposition 5.5.3.13]{lurie-higher-topos-theory} that $\D^a_{\hat\solid}(\mathcal{O}^+_Z)$ is presentable for any $Z\in \Perfd$. By reduction to the totally disconnected case it follows that $f^\ast$ preserves colimits, and hence admits a right adjoint.}

  \item If $X=\Spa(A,A^+)$ is an affinoid perfectoid space, we denote by
  \begin{align*}
    \widetilde{(-)}\colon \D^a_{\hat\solid}(A^+)\to \D^a_{\hat\solid}(\mathcal{O}^+_X)
  \end{align*}
  the natural functor, and by $\Gamma(X,-)$ its right adjoint.
\end{defenum}
\end{definition}

The major goal of this paper is to find a large class of affinoid perfectoid spaces $X = \Spa(A, A^+)$ for which we have $\D^a_{\hat\solid}(\mathcal O^+_X) = \D^a_{\hat\solid}(A^+)$. The next two results provide useful abstract criteria for when this identification is valid. The first result allows one to argue via descent of modules from a totally disconnected covers, while the second result allows a reduction of the descent modulo $\pi$ if one has a good enought understanding of $\pi$-complete objects.

\begin{lemma}
  \label{sec:bound-cond-1-criterion-for-tilde-to-be-an-equivalence}
  Let $X=\Spa(A,A^+)\in \AffPerfd$. Assume that there exists a quasi-pro-\'etale cover $Y\to X$ with $Y=\Spa(B,B^+)$ totally disconnected such that almost modules (in the sense of \cref{sec:defin-d_hats-2-definition-of-almost-plus-category}) descent along $(A^+)_{\solid}\to (B^+)_\solid$. Then the natural functor
  \[
    \widetilde{(-)}\colon \D^a_{\hat\solid}(A^+)\to \D^a_{\hat\solid}(\mathcal{O}^+_X)
  \]
  is an equivalence.
\end{lemma}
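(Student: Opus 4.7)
The plan is to combine the $v$-hyperdescent of \cref{rslt:v-hyperdescent-for-O+-modules} with the hypothesised descent along $(A^+)_\solid\to (B^+)_\solid$, after identifying the two resulting \v{C}ech nerves in the almost setting. First, I would set up the \v{C}ech nerve $Y_n := Y^{n/X} = \Spa(B_n, B_n^+)$ of $f\colon Y\to X$ in $\AffPerfd$. Since $f$ is quasi-pro-\'etale, every projection $Y_n \to Y$ is a composition of base changes of $f$ and is hence itself quasi-pro-\'etale; in particular, each $Y_n$ admits a quasi-pro-\'etale map to the totally disconnected space $Y$. Thus \cref{rslt:v-hyperdescent-for-O+-modules} identifies $\D^a_{\hat\solid}(\ri^+_{Y_n})\cong \D^a_{\hat\solid}(B_n^+)$ for each $n$, and $v$-hyperdescent along $Y\to X$ yields
\[
  \D^a_{\hat\solid}(\ri^+_X) \cong \varprojlim_{[n]\in\Delta}\D^a_{\hat\solid}(B_n^+).
\]

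On the algebraic side, the base-change formula \cref{rslt:base-change-for-affinoid-perfectoid} identifies $(B_n^+)_\solid^a$ with the $n$-fold tensor product of $(B^+)_\solid^a$ over $(A^+)_\solid^a$ in the almost setting, and by \cref{rslt:pushout-of-adic-analytic-rings} these pushouts remain adic. Consequently, $[n]\mapsto (B_n^+)_\solid^a$ is exactly the \v{C}ech nerve of $(A^+)_\solid^a\to (B^+)_\solid^a$ in adic analytic rings modulo the almost setup, and the hypothesised descent then reads
\[
  \D^a_{\hat\solid}(A^+)\cong \varprojlim_{[n]\in\Delta}\D^a_{\hat\solid}(B_n^+).
\]

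Combining these two equivalences provides the desired equivalence $\D^a_{\hat\solid}(A^+)\cong \D^a_{\hat\solid}(\ri^+_X)$. It then remains to check that this composite equivalence agrees with $\widetilde{(-)}$: both functors are induced by the same compatible family of functors $\D^a_{\hat\solid}(A^+)\to \D^a_{\hat\solid}(B_n^+)$ coming from the adic morphisms $(A^+)_\solid^a \to (B_n^+)_\solid^a$, so this is a naturality check. The main technical subtlety I anticipate is verifying that the restriction functors appearing in the $v$-descent presentation of \cref{rslt:v-hyperdescent-for-O+-modules} are indeed those coming from the underlying adic morphisms of almost analytic rings; this is built into the construction of $\widetilde{(-)}$ as the natural comparison map, so should amount to a compatibility of pullbacks across the two descent presentations rather than any conceptual obstruction.
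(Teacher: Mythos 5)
Your proposal is correct and follows essentially the same route as the paper's own proof: pass to the \v{C}ech nerve $Y_\bullet\to X$, use the $v$-(hyper)descent of $\D^a_{\hat\solid}(\ri^+_{(-)})$ together with \cref{rslt:v-hyperdescent-for-O+-modules} (each $Y_n$ being quasi-pro-\'etale over the totally disconnected $Y$) to identify $\D^a_{\hat\solid}(\ri^+_X)\cong\varprojlim_{[n]}\D^a_{\hat\solid}(B_n^+)$, and then invoke the hypothesised descent along $(A^+)_\solid\to(B^+)_\solid$, with \cref{rslt:base-change-for-affinoid-perfectoid} identifying the geometric and algebraic \v{C}ech nerves. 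The paper's argument is just a more terse version of the same three-step chain of equivalences.
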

\begin{proof}
  Let $Y_\bullet\to X$ be the \v{C}ech nerve of $Y\to X$ and write $Y_n=\Spa(B_n,B_n^+)$. Note that each $Y_n, n\geq 0$, is quasi-pro-\'etale over the totally disconnected space $Y=Y_0$. Then
  \[
    \D^a_{\hat\solid}(\mathcal{O}^+_X) \cong \varprojlim\limits_{[n]\in \Delta} \D^a_{\hat\solid}(\mathcal{O}^+_{Y_n})\cong \varprojlim\limits_{[n]\in \Delta} \D^a_{\hat\solid}(B_n^+)\cong \D^a_{\hat\solid}(A^+),
  \]
  using \cref{rslt:v-hyperdescent-for-O+-modules} in the second isomorphism and that almost modules descent along $(A^+)_\solid\to (B^+)_\solid$ in the third.
\end{proof}

\begin{lemma} \label{sec:bound-cond-1-criterion-for-sheafification-being-an-equivalence}
  Let $X=\Spa(A,A^+)\in \AffPerfd$ with pseudouniformizer $\pi \in A$. Then the functor
  $\widetilde{(-)}\colon \D^a_{\hat\solid}(A^+)\to \D^a_{\hat\solid}(\mathcal{O}^+_X)$ is an equivalence if and only if the following conditions are satisfied:
  \begin{enumerate}[(a)]
    \item The functor $\widetilde{(-)}\colon \D^a_\solid(A^+/\pi) \isoto \D^a_\solid(\mathcal O^+_X/\pi)$ is an equivalence.

    \item $\Gamma(X,-)\colon \D^a_{\hat\solid}(\mathcal{O}^+_X)\to \D^a_{\hat\solid}(A^+)$ preserves colimits.

    \item $\D^a_{\hat\solid}(\mathcal{O}^+_X)$ is generated under colimits by $\pi$-complete objects $\mathcal{N}$ with $\Gamma(X,\mathcal{N})$ bounded to the right.
  \end{enumerate}
\end{lemma}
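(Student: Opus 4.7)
The ``only if'' direction is immediate: applying $\mathrm{Mod}_{A^+/\pi}$ to the equivalence $\widetilde{(-)}$ and invoking \cref{rslt:A-mod-I-modified-modules-equals-usual-modules} together with \cref{relation-to-lucas-mod-pi-version} gives (a); $\Gamma(X,-)$ is a quasi-inverse and so preserves all colimits, which is (b); and the compact generators of $\D^a_{\hat\solid}(A^+)$, which are $\pi$-complete and right-bounded by \cref{rslt:basic-properties-of-modified-modules} and \cref{rslt:basic-properties-of-adically-complete-modules}, transport across the equivalence to suitable generators of the target, giving (c).

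For the ``if'' direction, assume (a), (b), (c); the goal is to show that the unit $u_M\colon M \to \Gamma(X, \widetilde M)$ and the counit $c_{\mathcal N}\colon \widetilde{\Gamma(X, \mathcal N)} \to \mathcal N$ are both equivalences. The key observation is that both $\widetilde{(-)}$ and $\Gamma(X,-)$ commute with the reduction functors $(-)/\pi^n$: for $\widetilde{(-)}$ by symmetric monoidality, for $\Gamma(X,-)$ by (b), since $(-)/\pi^n$ is the cofiber of multiplication by $\pi^n$, a finite colimit. Combined with (a) and an induction along the fiber sequence $\mathcal N/\pi \xto{\pi^{n-1}} \mathcal N/\pi^n \to \mathcal N/\pi^{n-1}$, we obtain that both $u_M$ and $c_{\mathcal N}$ are equivalences after reduction modulo $\pi^n$ for every $n \ge 1$.

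Both $u_M$ and $c_{\mathcal N}$ are natural transformations between colimit-preserving functors in their respective arguments: the nontrivial sides use that $\widetilde{(-)}$ is a left adjoint and that $\Gamma(X,-)$ preserves colimits by (b). Hence it suffices to verify $u_M$ on the compact generators of $\D^a_{\hat\solid}(A^+)$, which are $\pi$-complete and right-bounded by construction, and to verify $c_{\mathcal N}$ on the generators provided by (c). In the latter case $\Gamma(X, \mathcal N)$ is itself $\pi$-complete because $\Gamma(X,-)$ preserves limits. In both cases, the question reduces to showing that $\widetilde M$ is $\pi$-complete whenever $M \in \D^a_{\hat\solid}(A^+)$ is right-bounded and $\pi$-complete: granting this, both sides of each map are $\pi$-complete, and the mod-$\pi^n$ equivalences above yield the conclusion by passing to the inverse limit over $n$.

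I expect the $\pi$-completeness of $\widetilde M$ to be the principal obstacle. The plan is to argue via $v$-descent: choose a $v$-hypercover $Y_\bullet \to X$ with each $Y_n = \Spa(B_n, B_n^+)$ (strictly) totally disconnected, so that by \cref{rslt:v-hyperdescent-for-O+-modules} the restriction is computed as the completed almost base change $\widetilde M|_{Y_n} = M \mathbin{\widehat{\otimes}}^a_{(A^+)^a_{\hat\solid}} (B_n^+)^a_{\hat\solid}$. By \cref{rslt:adic-base-change-preserves-adic-completeness}, applied in the almost setting, each $\widetilde M|_{Y_n}$ is right-bounded and $\pi$-complete in $\D^a_{\hat\solid}(B_n^+)$, and the same result shows that the pullback functors appearing in the \v{C}ech descent diagram preserve this $\pi$-completeness on right-bounded objects. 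Consequently the $\pi$-completion of $\widetilde M$ in $\D^a_{\hat\solid}(\mathcal O^+_X) = \varprojlim_{\Delta} \D^a_{\hat\solid}(B_n^+)$ is computed level-wise and already equals $\widetilde M$ itself, so $\widetilde M$ is $\pi$-complete, completing the argument.
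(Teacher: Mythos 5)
Your proposal is correct and follows essentially the same route as the paper: necessity is immediate, and for sufficiency one checks the unit and counit on $\pi$-complete right-bounded generators, proves $\widetilde M$ is $\pi$-complete by descending to (strictly) totally disconnected terms of a $v$-(hyper)cover where $\widetilde{(-)}$ is an equivalence and invoking \cref{rslt:adic-base-change-preserves-adic-completeness}, and then reduces modulo $\pi$ (the paper checks only mod $\pi$, you run an induction over $\pi^n$ --- both work) using (a). The only loose phrase is that the completion in $\varprojlim_\Delta \D^a_{\hat\solid}(B_n^+)$ is ``computed level-wise''; the clean justification is that $\widetilde M\cong\varprojlim_n f_{n,\ast}f_n^\ast\widetilde M$ with each $f_{n,\ast}f_n^\ast\widetilde M$ complete and complete objects closed under limits, which is the same point the paper glosses with ``can be checked on a $v$-cover''.
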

\begin{proof}
  The necessity is clear (as $\D^a_{\hat\solid}(A^+)$ is generated by $\pi$-complete objects), so let us check the converse. We need to show that the adjoint functors
\begin{align*}
  \widetilde{(-)}\colon \D^a_{\hat\solid}(A^+) \rightleftarrows \D^a_{\hat\solid}(\ri^+_X) \noloc \Gamma(X, -)
\end{align*}
are inverse to each other. We first show that $\widetilde{(-)}$ is fully faithful, i.e. that for every $M \in \D^a_{\hat\solid}(A^+)$ we have $\Gamma(X, \widetilde M) = M$. Since $\Gamma(X, -)$ preserves colimits, we can assume that $M=P^a$ for a compact generator $P\in \D_{\hat\solid}(A^+)$, in particular $M$ is $\pi$-complete and bounded to the right. We claim that $\widetilde{M}\in \D^a_{\hat\solid}(\mathcal{O}^+_X)$ is $\pi$-complete. This can be checked on a $v$-cover $Y\to X$ with $Y$ totally disconnected. By naturality of $\widetilde{(-)}$ and \cref{rslt:basic-properties-of-adically-complete-modules}, this reduces to the case that $Y=X$, where $\widetilde{(-)}$ is an equivalence. Thus $\Gamma(X, \widetilde M)$ and $ M$ are $\pi$-complete and hence the identity $\Gamma(X, \widetilde M) = M$ can be checked modulo $\pi$, where it is true by (a).

It remains to show that $\widetilde{(-)}$ is essentially surjective. Since by assumption $\D^a_{\hat\solid}(\ri^+_X)$ is generated under colimits by $\pi$-complete objects, it is enough to show that every $\pi$-complete object lies in the essential image of $\widetilde{(-)}$. Given a $\pi$-complete $\mathcal N \in \D^a_{\hat\solid}(\ri^+_X)$ we need to see that the counit $\widetilde{\Gamma(X, \mathcal N)} \isoto \mathcal N$ is an isomorphism. By assumption we may even assume that $\Gamma(X,\mathcal{N})$ is bounded to the right. Then by the same argument as above both sides of the claimed isomorphism are $\pi$-complete, so we can check the claimed isomorphism modulo $\pi$. Then it reduces again to (a).
\end{proof}

\subsection{Families of Riemann--Zariski spaces}
\label{sec:suppl-discr-adic}

In this subsection we provide some general results on families of Riemann--Zariski spaces which will enter in the proof of the descent on relative compactifications of totally disconnceted spaces in the next subsection (see \cref{rslt:descendability-for-relative-compactification-of-td-spaces} below). Unless stated otherwise, each Huber pair is classical in this section. A related discussion can be found in \cite{temkin2011relative}, but we need to ensure that the relative Riemann--Zariski space is a cofiltered inverse limits of \textit{projective} schemes (and not merely of proper schemes) in order to ensure the uniform bound on descendability in \cref{rslt:descendability-of-open-cover-of-ZR-space-family}.

\begin{lemma} \label{rslt:Zariski-lemma-for-connected-components}
Let $A^+ \to A$ be a map of classical, discrete rings such that each connected component of $X := \Spa(A, A^+)$ is of the form $\Spa(K, A_x^+)$ for some field $K$ and some subring $A_x^+ \subset K$. Let $I$ be the category of factorizations $\Spec A \to Y_i \to \Spec A^+$ such that $Y_i$ is a reduced projective $A^+$-scheme and the map $\Spec A \to Y_i$ is dominant. Then:
\begin{lemenum}
  \item $I$ is cofiltered and for every $i \in I$ the map $\Spec A \to Y_i$  factors uniquely over a map $X \to Y_i$ over $\Spec(A^+)$. The induced map $\abs X \to \abs{Y_i}$ is closed.

  \item The map $\abs X \isoto \varprojlim_i \abs{Y_i}$ is a homeomorphism of topological spaces.
\end{lemenum}
\end{lemma}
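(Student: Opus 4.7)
The plan is to treat this as a relative Riemann--Zariski theorem, adapting Zariski's classical result that the Riemann--Zariski space of a field is the inverse limit of its projective models.

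For part (i), I would first show that $I$ is cofiltered. Given $Y_i, Y_j \in I$, the product $Y_i \cprod_{\Spec A^+} Y_j$ is projective over $A^+$ via the Segre embedding, and the scheme-theoretic image of $\Spec A \to Y_i \cprod_{\Spec A^+} Y_j$ equipped with its reduced structure provides a common refinement in $I$. Two morphisms $f, g\colon Y_i \to Y_j$ in $I$ necessarily agree after composition with the dominant map $\Spec A \to Y_i$, so their equalizer is a closed subscheme of $Y_i$ (using that $Y_j$ is separated) containing a dense subscheme; as $Y_i$ is reduced this forces the equalizer to be all of $Y_i$, and hence $I$ is in fact a cofiltered poset. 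For the existence of $X \to Y_i$ I would apply the valuative criterion of properness: a point $x \in X$ corresponds to a valuation $v_x$ on $A$ with residue field $\kappa(x)$ and valuation ring $\ri_{v_x} \subset \kappa(x)$, and the condition $v_x|_{A^+} \le 1$ ensures that the induced map $A^+ \to \kappa(x)$ factors through $\ri_{v_x}$. Properness of $Y_i \to \Spec A^+$ then yields a unique lift $\Spec \ri_{v_x} \to Y_i$ of $\Spec \kappa(x) \to \Spec A \to Y_i$, and the image of the closed point defines the image of $x$ in $\abs{Y_i}$. Uniqueness of the factorization follows from uniqueness in the valuative criterion, and continuity is verified on an affine open cover of $Y_i$, where each preimage is the open locus on which $v_x$ has certain finite values on a finite set of sections.

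For closedness of $\abs{X} \to \abs{Y_i}$, I would invoke the standard characterization that a continuous map between spectral spaces is closed if and only if it is quasi-compact and specialization-preserving. Quasi-compactness is automatic since $\abs{X}$ is itself quasi-compact. Specialization-lifting is obtained by composition of valuations: any specialization in $\abs{Y_i}$ amounts to enlarging the local ring at the image point, and one can dominate $\ri_{v_x}$ by a larger valuation ring of $\kappa(x)$ (or in fact of an extension thereof) to produce the desired specialization in $\abs{X}$.

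For part (ii), the maps from (i) assemble into a continuous map $\varphi\colon \abs{X} \to \varprojlim_i \abs{Y_i}$. Since each component $\abs{X} \to \abs{Y_i}$ is closed and continuous between quasi-compact spaces, standard limit arguments reduce the homeomorphism claim to showing that $\varphi$ is bijective. Injectivity follows from a separation argument: if $v_x \neq v_{x'}$ they differ on some ratio of elements of $A$, and a suitable blow-up of an ideal of $A^+$ projectivized and reduced produces an object $Y_i \in I$ on which the two points have distinct images. For surjectivity, a compatible family $(y_i)_i$ yields a filtered system of local rings $\ri_{Y_i, y_i}$ with local transition maps (locality being forced by dominance), and the colimit $R := \varinjlim_i \ri_{Y_i, y_i}$ is a local ring receiving a map from $A^+$ and generically from $A$. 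Here the hypothesis that each connected component of $X$ is of Riemann--Zariski form $\Spa(K, A_x^+)$ becomes essential: it lets us reduce on each connected component to the classical field-theoretic setting, where Zariski's theorem identifies $R$ with a valuation ring of $K$, producing the required point of $\Spa(A, A^+)$ mapping to $(y_i)$.

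The main obstacle will be the surjectivity step in (ii), i.e.\ constructing a valuation on $A$ out of a compatible family of points in the $Y_i$. The hypothesis on connected components of $X$ is precisely what permits a reduction to the classical Riemann--Zariski theorem for fields, and checking that the local reductions on different components assemble into a single well-defined point of $X$ is the technical heart of the argument.
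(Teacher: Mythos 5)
Your overall strategy is the same as the paper's (cofilteredness via scheme-theoretic images in fibre products, the map $X \to Y_i$ and its closedness via the valuative criterion plus quasi-compact-and-specializing, and bijectivity of $\abs X \to \varprojlim_i \abs{Y_i}$ by separating models and by dominating a colimit of local rings by a valuation ring), but there is a genuine gap in the separation (injectivity) step. First, you never treat the case that the two points lie in \emph{different} connected components of $X$; the paper handles this by noting that the finite reduced $A^+$-subalgebras of the integral closure $\widetilde{A^+}$ of the image of $A^+$ in $A$ give objects of $I$ whose $\pi_0$'s recover $\pi_0(X)$, so two points with the same image in every $Y_i$ automatically lie on one component $\Spa(K,A^+_x)$. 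Second, and more seriously, your "suitable blow-up of an ideal of $A^+$" cannot work as stated: the datum separating the two valuations is an element $\overline a$ of $V_y\setminus V_{y'}$ inside the residue field $K$ of a single component, and it need not come from $A^+$, nor even from $A$ a priori. The whole technical point of this relative statement is that the model must be defined globally over $\Spec A^+$ and receive a dominant map from all of $\Spec A$. The paper's key move is to lift $\overline a$ to a \emph{unit} $a\in A$ (using that $K$ is the filtered colimit of $\mathcal{O}_X$ on clopen neighbourhoods of the component) and then take the scheme-theoretic image of the map $\Spec A \to \mathbb{P}^1_{A^+}$ given by the pair $\{1,a\}$, checking on the explicit charts $\Spec A^+[a]$ and $\Spec A^+[1/a]$ that the two points are separated. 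Without some such globalization argument your injectivity paragraph does not go through.

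The same globalization issue infects your surjectivity step in a milder way: you assert that "Zariski's theorem identifies $R=\varinjlim_i\mathcal{O}_{Y_i,y_i}$ with a valuation ring of $K$", but the classical argument for that needs, for each $a\in K^\times$, a model in the index category on which $a$ or $a^{-1}$ is regular at the given point, and in $I$ such models exist only after the lifting argument above. Fortunately you do not need this stronger claim: it suffices (as the paper does) that $R$ is a local subring of $K$ containing $A^+_x$, hence dominated by \emph{some} valuation ring $V\subset K$, and then $V$ defines a point of $X$ whose image in each $Y_i$ is the unique point whose local ring $V$ dominates, namely $y_i$. Also note that your worry about "assembling local reductions on different components" is a red herring: a single compatible family $(y_i)_i$ already determines one connected component (again via the finite reduced models inside $\widetilde{A^+}$), which is exactly the same ingredient you are missing in injectivity. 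Your pointwise construction of $X\to Y_i$ via the valuative criterion (instead of the paper's appeal to $X$ being the relative compactification of $\Spec A\to\Spec A^+$) is fine, provided you do verify continuity and quasi-compactness of the map on the explicit affine charts.
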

We note that under the assumptions of this lemma the natural map $\Spa(A,A)\to \Spec(A)$ is an isomorphism of locally ringed spaces, which makes condition (i) well-defined.
\begin{proof}
Let us first show that $I$ is cofiltered. Given any $i, i' \in I$ there can be at most one map $Y_i \to Y_{i'}$ in $I$. Indeed, this follows immediately from the fact that the map $\Spec A \to Y_i$ is an epimorphism (in general dominant morphisms are epimorphisms in the category of reduced separated schemes, as one easily checks by a standard diagonal argument). Thus to prove that $I$ is cofiltered we only need to check that there is some $i'' \in I$ such that there are maps $Y_{i''} \to Y_i$ and $Y_{i''} \to Y_{i'}$. But note that we can take $Y_{i''}$ to be the scheme-theoretic image of the natural map $\Spec A \to Y_i \cprod_{\Spec A^+} Y_{i'}$.

Now fix some $i \in I$. Note that $X$ is the relative compactification of the map $\Spec A \to \Spec A^+$ (see \cite[Example 2.9.27]{mann-mod-p-6-functors}), so since $Y_i \to \Spec A^+$ is proper (and thus its own relative compactification) we deduce that the map $\Spec A \to Y_i$ extends canonically to a map $X \to Y_i$ over $\Spec(A^+)$. The uniqueness of the extension can be checked on connected components, where it follows from the valuative criterion for properness. We now check that the morphism $X\to Y_i$ is closed. As $A$ is discrete, the morphism $X\to Y_i$ is a quasi-compact morphism of spectral spaces. Because it is as well specializing (by the valuative criterion for properness) this implies that the morphism $X\to Y_i$ is closed. Indeed, by quasi-compactness of $X\to Y_i$ the image of a closed subsets is pro-constructible and stable under specialization, hence closed (\cite[Lemma 0903]{stacks-project}). This proves (i).

It remains to prove (ii). By (i) we know that the map $\abs X \to \varprojlim_i \abs{Y_i}$ is closed and continuous, so it is enough to show that this map is bijective. Let us first check that it is injective, so let $y, y' \in \abs X$ be two given points which get mapped to the same point in each $\abs{Y_i}$. Let $\widetilde{A^+}$ be the integral closure of the image of $A^+$ in $A$. Then $\pi_0(\Spec(\widetilde{A^+}))=\pi_0(\Spec(A))$ and writing $\widetilde{A^+}$ as a filtered colimit of finite, reduced $A^+$-algebras, we see that $y$ and $y'$ lie in the same connected component $x = \Spa(K, A^+_x)$ of $X$ and hence correspond to valuation rings $V_y, V_{y'} \subset K$ containing $A^+_x$. Assuming $y \ne y'$ we can w.l.o.g. assume that there is some $\overline a \in V_y \setminus V_{y'}$. Let $a \in A$ be a lift of $\overline a$, which we can assume to be a unit (note that $K$ is the filtered colimit of the ring of functions of clopen neighbourhoods of $\Spec K \subset \Spec A$). Then the pair $\{ 1, a \}$ determines a map $f\colon \Spec A \to \mathbb P^1_{A^+}$ and we let $Y_a$ be the scheme theoretic image of $f$. Then $Y_a \in I$ and we claim that the induced map $X \to Y_a$ separates $y$ and $y'$. To see this, we can assume that the map $A^+ \to A$ is injective, because otherwise the surjection from $A^+$ to its image in $A$ induces a closed immersion on schemes and in particular an injective map. Now $Y_a$ can be described explicitly: It is obtained by gluing $\Spec A^+[a]$ and $\Spec A^+[1/a]$ along $\Spec A^+[a, 1/a]$. Moreover, the map $X \to Y_a$ sends a valuation ring $V \subset K$ to the unique point on $Y_a$ such that $V$ dominates the associated local ring. Thus $V_{y'}$ is sent to a point in $\Spec A^+[1/a]$ which does not lie in $\Spec A^+[a, 1/a]$, whereas $V_y$ is sent to a point in $\Spec A^+[a]$.

We now prove surjectivity of the map $\abs X \to \varprojlim_i \abs{Y_i}$, so let any $(y_i)_i \in \varprojlim_i \abs{Y_i}$ be given. Replacing $A^+$ by $\widetilde{A^+}$ we may assume that $\pi_0(\Spec(A))=\pi_0(\Spec(A^+))$. Then all $y_i$ live over the same connected component of $X$, so by base-changing to this connected component we can assume that $X$ is connected, i.e. of the form $X = \Spa(K, A^+)$ for a field $K$. For each $i$ the dominant map $\Spec K \to Y_i$ induces an injection $\ri_{Y_i,y_i} \subset K$. Let $R \subset K$ be the (filtered) union of all $\ri_{Y_i,y_i}$. Then $R$ is a local ring and hence dominated by some valuation ring $V \subset K$. This valuation ring corresponds to a point $y \in \abs X$ which maps to $(y_i)_i$ (by uniqueness), as desired.
\end{proof}

The next lemma yields the crucial uniform descendability bound for \cref{rslt:descendability-for-relative-compactification-of-td-spaces}. It is the analog of \cite[Proposition 2.10.10]{mann-mod-p-6-functors} for families of Riemann--Zariski spaces.

\begin{lemma} \label{rslt:descendability-of-open-cover-of-ZR-space-family}
Let $A^+ \to A$ be a map of classical, discrete rings and $X = \Spa(A, A^+)$ the associated discrete adic space. Assume that:
\begin{enumerate}[(a)]
\item Every connected component of $\Spec A$ is of the form $\Spec K'$ for some field $K'$ and every connected component of $\Spec A^+$ is of the form $\Spec V$ for some valuation ring $V$.
\item There is an integer $d \ge 0$ such that for every map of connected components $\Spec K' \to \Spec V$ induced by $A^+ \to A$, the associated map $V \injto K'$ is injective and the transcendence degree of $K'$ over the fraction field of $V$ is $\le d$.
\end{enumerate}
Let $U = \bigdunion_{j=1}^n U_j \surjto X$ be an open covering by quasicompact open subsets $U_j \subset X$. Then $U \to X$ is descendable of index bounded by a constant only depending on $d$.
\end{lemma}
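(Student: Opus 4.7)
The approach is to reduce the problem to the descendability of a Zariski cover of a projective $A^+$-scheme of bounded relative dimension, using the presentation of $X$ as a cofiltered limit of such schemes from \cref{rslt:Zariski-lemma-for-connected-components}.

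\emph{Spreading out.} By \cref{rslt:Zariski-lemma-for-connected-components}, $|X| = \varprojlim_i |Y_i|$ as a cofiltered limit of spectral spaces along continuous closed maps, with each $Y_i$ a reduced projective $A^+$-scheme and $X \to Y_i$ surjective. By the standard theory of cofiltered limits of spectral spaces, each quasi-compact open $U_j \subset X$ is the preimage of a quasi-compact open in some $Y_{i_j}$; after passing to a single cofinal index, I may assume all $U_j$ arise from quasi-compact opens $V_j \subset Y_i$ for one fixed $i$, and the surjectivity of $X \to Y_i$ forces the $V_j$ to cover $Y_i$. Writing $V := \bigsqcup_j V_j$, the map $U \to X$ is the base change of $V \to Y_i$ along $X \to Y_i$. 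Since descendability and its index are stable under base change, it suffices to bound the descendability index of $V \to Y_i$ in terms of $d$ only.

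\emph{Dimension bound.} Next, I would show that $Y_i \to \Spec A^+$ has fibers of dimension $\le d$. Over a fixed connected component $\Spec V$ of $\Spec A^+$, the base change $Y_i \times_{A^+} V$ is the scheme-theoretic image in $\mathbb{P}^N_V$ of the disjoint union of the components $\Spec K'$ of $\Spec A$ lying over $\Spec V$, where by hypothesis $V \injto K'$ and $\operatorname{trdeg}(K'/\operatorname{Frac}(V)) \le d$. Each such closure lives generically in the generic fiber of $\mathbb{P}^N_V \to \Spec V$, hence is $V$-flat (being torsion-free over a valuation ring) of generic relative dimension $\le d$; by flatness, every fiber has dimension $\le d$. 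Hence $Y_i \to \Spec A^+$ has fiber dimension uniformly bounded by $d$.

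\emph{Descendability of covers of bounded relative dimension.} The remaining point is the following relative version of \cite[Proposition 2.10.10]{mann-mod-p-6-functors}: any finite Zariski cover $V \to Y$ of a projective $S$-scheme $\pi\colon Y \to S$ of relative dimension $\le d$ is descendable with index depending only on $d$. The key input is Grothendieck vanishing on fibers, which gives that $R\pi_*$ has cohomological amplitude $\le d$ on quasi-coherent sheaves. Composing with the pushforward along $V \to Y$ (itself a Zariski open immersion, so also of bounded relative cohomological dimension), one bounds the cohomological amplitude of $\operatorname{fib}(\mathcal{O}_Y \to R p_* \mathcal{O}_V)$ independently of the number of opens, and then shows that its tensor powers are null after a number of steps controlled by $d$.

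\emph{Main obstacle.} The serious step is the third one: a naive \v{C}ech-style argument for an $n$-fold cover only gives a bound $\le n$, which is useless here. The whole point is to trade the size of the cover for the cohomological dimension of the fibers, so that the descendability index becomes uniform in $n$ and depends only on $d$. This is precisely the technical content of the analog of \cite[Proposition 2.10.10]{mann-mod-p-6-functors}, and I expect to need to carefully combine the \v{C}ech spectral sequence with the amplitude bound on $R\pi_*$, following the pattern of that proof but now with projective fibers in place of affine ones.
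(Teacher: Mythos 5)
Your first step (spreading the cover out to some $Y_i$ via \cref{rslt:Zariski-lemma-for-connected-components} and base-change stability of descendability) is exactly the paper's reduction. After that, however, the proposal has a genuine gap precisely at the step you yourself flag as the ``main obstacle'': you never prove that a finite Zariski cover of a projective $A^+$-scheme with fibers of dimension $\le d$ is descendable of index depending only on $d$, and the mechanism you sketch (bounding the cohomological amplitude of $R\pi_*$ via Grothendieck vanishing and feeding this into a \v{C}ech argument) is not the one that works here, nor is it clear it can be made to work: descendability of bounded index requires producing a null-homotopy of a bounded tensor power of $\operatorname{fib}(\mathcal O_Y \to Rp_*\mathcal O_V)$ mapping to $\mathcal O_Y$, and an amplitude bound on $R\pi_*$ does not by itself produce such null-homotopies. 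The paper trades the size of the cover for $d$ differently: using the fiber-dimension bound it shows that $Y_i$ can be covered by $d+1$ \emph{affine} open subschemes --- by choosing, over each connected component $\Spec V$ of $\Spec A^+$, homogeneous forms $f_0,\dots,f_d$ whose vanishing loci miss the special fiber $Y_{x,s}$, spreading these to a clopen neighbourhood of $x$ in $\Spec A^+$, and concluding by compactness of $\pi_0(\Spec A^+)$ together with a product trick --- and then invokes \cite[Corollary 2.10.7]{mann-mod-p-6-functors}, which bounds the descendability index of any finite open cover in terms of the number of affines needed to cover the scheme. Some such input (or a genuinely new argument replacing it) is indispensable; as written, your step 3 is a restatement of the problem.

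There is also a secondary gap in your dimension bound. You assert that $Y_i\times_{A^+}V$ is the scheme-theoretic image of the disjoint union of the components $\Spec K'$ of $\Spec A$ lying over $\Spec V$, but base change does not commute with scheme-theoretic images, there may be infinitely many such components (and a union of infinitely many closed subsets of dimension $\le d$ can have closure of larger dimension), and it is not even immediate that the fiber of $\Spec A$ dominates $Y_i\times_{A^+}V$. The paper handles this by a compactness argument producing \emph{finitely many} components $y_1,\dots,y_n$ over $x$ whose union dominates $Y_x$ (using that $|\Spec A|$ is profinite and that points of $\Spec A$ over $x$ land in the generic fiber, by the injectivity hypothesis $V\hookrightarrow K'$), and only then applies flatness over $V$ and \cite[Lemma 0B2J]{stacks-project} to each integral closure, concluding that the \emph{special} fiber $Y_{x,s}$ has dimension $\le d$. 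Your flatness/torsion-free idea for the individual closures is the right one, but the reduction to finitely many of them must be supplied.
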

\begin{proof}
Let $I$ be as in \cref{rslt:Zariski-lemma-for-connected-components}. Then by \cref{rslt:Zariski-lemma-for-connected-components} we have $\abs X = \varprojlim_i \abs{Y_i}$ and all the maps $\abs X \to \abs{Y_i}$ are closed, which together implies that there is some $i \in I$ and an open covering $Y_i = \bigunion_{j=1}^m W_{ij}$ by qcqs open subsets $W_{ij} \subset Y_i$ such that $U \to X$ is the base-change of the map $W_i := \bigdunion_{j=1}^m W_{ij} \surjto Y_i$ along the map $X \to Y_i$. By \cite[Lemma 2.6.9]{mann-mod-p-6-functors} it is thus enough to show that the map $W_i \to Y_i$ is descendable of bounded index.

To simplify notation let us write $Y := Y_i$ and $W := W_i$. By \cite[Corollary 2.10.7]{mann-mod-p-6-functors} it is enough to show that $Y$ can be covered by $d+1$ affine open subschemes. The map $\pi_0(\Spec A) \to \pi_0(\Spec A^+)$ is a map of compact Hausdorff spaces and in particular has closed image. This image thus corresponds to an affine subscheme $\Spec A'^+ \subset \Spec A^+$ (the limit over all clopen neighbourhoods of the set-theoretic image inside $\Spec A^+$) by definition of $I$ we know that $Y$ is supported over $\Spec A'^+$. We can thus replace $A^+$ by $A'^+$ in order to assume that the map $\pi_0(\Spec A) \to \pi_0(\Spec A^+)$ is surjective. Now pick a connected component $x = \Spec V \in \pi_0(\Spec A^+)$ and let $Y_x$ be the fiber of $Y$ over $x$. Denote by $\eta$ and $s$ the open and closed point of $V$ and let $Y_{x,\eta}$ and $Y_{x,s}$ be the the generic and special fiber of $Y_x$, respectively. We claim:
\begin{itemize}
  \item[($*$)] $Y_{x,s}$ has dimension $\le d$. 
\end{itemize}
To see this, we first claim that there are finitely many connected components $y_1 = \Spec K_1'$, \dots, $y_n = \Spec K_n'$ over $x$ such that the map $y_1 \dunion \dots \dunion y_n \to Y_x$ is dominant. To see this, fix any irreducible component $Z \subset Y_{x,\eta}$. For every (qcqs) open subset $W \subset Z$, let $S_W \subset \Spec A$ be the preimage of $W$ under the map $\Spec A \to Y$. Then $S_W$ is a quasicompact open subset and since the topological space of $\Spec A$ is profinite it follows that $S_W$ is closed. We claim that the intersection of all $S_W$, for $W$ ranging through non-empty subsets of $Z$, is non-empty. Otherwise, by compactness there would be some $W_1, \dots, W_m \subset Z$ such that $S_{W_1} \isect \dots \isect S_{W_m} = \emptyset$. Since $Z$ is irreducible, $W := W_1 \isect \dots \isect W_m$ is still a non-empty open subset of $Z$ and we deduce $S_W = \emptyset$; but this contradicts the fact that $\Spec A \to Y$ is dominant. Pick any $y$ in the intersection of all $S_W$; then the map $y \to Z$ is dominant. Repeating this procedure for all irreducible components of $Y_{x,\eta}$, we find $y_1, \dots, y_n \subset \Spec A$ such that the map $y_* := y_1 \dunion \dots \dunion y_n \to Y_{x,\eta}$ is dominant. By the assumption that the induced maps $V \injto K'$ are injective (see (b)) we see that every point of $Y_x$ that is hit by some $y \in \Spec A$ necessarily lies in $Y_{x,\eta}$. Consequently, since the map $\Spec A \to Y$ is dominant, we deduce that $y_* \to Y_x$ must be dominant.

With $y_1, \dots, y_n$ as in the previous paragraph, let $Z_1, \dots, Z_n \subset Y_x$ be the scheme-theoretic images of these points under the map $\Spec A \to Y$. Then $Y_x = Z_1 \union \dots \union Z_n$, so to prove claim ($*$) it is enough to show that each $Z_{i,s}$ has dimension $\le d$. Since $Z_i$ is integral, by \cite[Lemma 0B2J]{stacks-project} it is enough to show that the generic fiber $Z_{i,\eta}$ has dimension $\le d$. But this follows immediately from the fact that $Z_i$ is a projective $K$-variety and there is a dominant map $\Spec K'_i \to Z_{i,\eta}$ such that $K'_i$ has transcendence degree $\le d$ over $K$. This finishes the proof of claim ($*$).

With claim ($*$) proved, we now proceed with the proof that $Y$ is covered by $d+1$ affine open subschemes. Let $\kappa = \kappa(s)$ be the residue field of $s$. By the proof of \cite[Lemma 2.10.9]{mann-mod-p-6-functors} we can find some $n \ge 0$ such that $Y \subset \mathbb P^n_{A^+}$ and homogeneous polynomials $\overline f_0, \dots, \overline f_d \in \kappa[t_1, \dots, t_n]$ such that the intersection of the vanishing loci of the $\overline f_j$'s in $\mathbb P^n_\kappa$ is disjoint from $Y_{x,s}$. We can assume that each $\overline f_j$ has a coefficient $1$ and we can then lift each $\overline f_j$ to a homogeneous polynomial $f_j \in A^+[t_1, \dots, t_n]$, each of which has one coefficient $1$ (first lift the coefficients to $V$ and then use that $V$ is the colimit of the functions on clopen neighbourhoods of $x \subset \Spec A^+$). Let $H_j \subset \mathbb P^n_{A^+}$ be the zero locus of $f_j$. As in the proof of \cite[Lemma 2.10.9]{mann-mod-p-6-functors} each $\mathbb P^n_{A^+} \setminus H_j$ is affine. Now let $Y_j := Y \isect (\mathbb P^n_{A^+} \setminus H_j)$. This is a closed subscheme of the affine scheme $\mathbb P^n_{A^+} \setminus H_j$ and hence affine. Let $Y' := \bigunion_{j=0}^d Y_j$. This is an open subscheme of $Y$ and by the argument in \cite[Lemma 2.10.9]{mann-mod-p-6-functors} it contains $Y_x$.

Since the map $Y \to \Spec A^+$ is closed, the image $Z' \subset \Spec A^+$ of $Y \setminus Y'$ under this map is closed. By construction we have $x \not\in Z'$. By the argument in \cite[Lemma 7.5]{etale-cohomology-of-diamonds} there exists a clopen neighborhood $U \subseteq \Spec A^+$ of $x$ contained in $\Spec( A^+\setminus Z'$. Hence $Y_U = Y \cprod_{\Spec A^+} U$ is covered by the affine opens $(Y_0)_U, \dots, (Y_d)_U$.

The above proof shows that for every $x \in \pi_0(X)$ there is a clopen neighbourhood $U_x \subset \Spec A^+$ of $x$ such that $Y_U$ can be covered by $d + 1$ affine open subsets. By passing to a finite disjoint cover of $\Spec A^+$ by $U_x$'s we can deduce that $Y$ can be covered be $d + 1$ affine open subsets, as desired.
\end{proof}

In \cref{rslt:descendability-for-relative-compactification-of-td-spaces} we used as well the following isomorphism of topological spaces (similar to \cite[Lemma 3.6.1]{mann-mod-p-6-functors}). Note that for any Tate-Huber pair $(A,A^+)$ there exists a natural specialization map
\[
 \mathrm{sp}=\mathrm{sp}_{(A,A^+)}\colon |\Spa(A,A^+)|\to |\Spa(A^{\circ}/A^{\circ\circ},A^+/A^{\circ\circ})|,
\]
which is uniquely determined by naturality and the requirement that if $(A,A^+)=(K,K^+)$ for a non-archimedean valued field $K$ and an open and bounded valuation subring $K^+\subseteq K$, then $\mathrm{sp}_{(K,K^+)}$ sends a point $x\in \Spa(K,K^+)$ corresponding to a prime ideal $\mathfrak{p}_x$ containing $K^{\circ\circ}$ to the valuation on $\mathcal{O}_K/K^{\circ\circ}$ corresponding to the prime ideal $\mathfrak{p}_x/K^{\circ\circ}$. 

\begin{lemma} \label{rslt:homeo-to-reduction-of-family-of-ZR-spaces}
Let $\Spa(A, A^+)$ be an affinoid Tate adic space such that every connected component is of the form $\Spa(K, A^+_K)$, where $K$ is a non-archimedean field and $A^+_K \subset K$ is a subring. Then the specialiation map
\begin{align*}
  \mathrm{sp}\colon |\Spa(A, A^+)| \to |\Spa(A^\circ/A^{\circ\circ}, A^+/A^{\circ\circ})|
\end{align*}
is a homeomorphism.
\end{lemma}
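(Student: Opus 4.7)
The specialization map $\mathrm{sp}$ is continuous by construction (compatibility with rational localizations). My plan is to prove it is a homeomorphism by reducing to a fiberwise statement over the profinite set $\pi_0(X)$, where $X := \Spa(A,A^+)$, and then gluing.

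First, I will arrange a compatible $\pi_0(X)$-fibration on both sides. The left-hand side decomposes by hypothesis as $X = \bigsqcup_{x \in \pi_0(X)} \Spa(K_x, A^+_x)$. For the right-hand side $Y := \Spa(A^\circ/A^{\circ\circ}, A^+/A^{\circ\circ})$, the input I will use is that $A^+$ is $\pi$-adically complete for any pseudo-uniformizer $\pi$ of $A$ and that $A^{\circ\circ}$ is a topologically nilpotent ideal of $A^+$; this makes $(A^+, A^{\circ\circ})$ a Henselian pair, so idempotents lift uniquely, giving $\pi_0(\Spec(A^+/A^{\circ\circ})) = \pi_0(\Spec A^+)$. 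Under the hypothesis of the lemma the latter coincides with $\pi_0(X)$ (idempotents of $A^+$ equal idempotents of $A$, which parametrize the connected components of $X$). Consequently $Y$ fibers over $\pi_0(X)$ with fiber over $x$ equal to $\Spa(k_x, A^+_x/K_x^{\circ\circ})$, where $k_x := K_x^\circ/K_x^{\circ\circ}$ is the residue field of $K_x$; the map $\mathrm{sp}$ is compatible with both fibrations by naturality.

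On each fiber, $\mathrm{sp}$ becomes the classical specialization $\Spa(K, A^+_K) \to \Spa(k, A^+_K/K^{\circ\circ})$ for a non-archimedean field $K$. Continuous valuation rings $V \subseteq K$ dominating $A^+_K$ automatically satisfy $K^{\circ\circ} \subseteq \mathfrak{m}_V$, which forces $V \subseteq K^\circ$ with $K^{\circ\circ}$ a prime ideal of $V$; hence $V \mapsto V/K^{\circ\circ}$ is a bijection onto valuation subrings of $k$ containing $A^+_K/K^{\circ\circ}$, with inverse given by the preimage under $K^\circ \surjto k$. Rational subsets match: a basic rational subset on the target given by $(\bar f_i, \bar g)$ lifts tautologically to the analogous subset on the source using any lifts $f_i, g \in K^\circ$; conversely, every rational subset on the source can, after scaling by a suitable power of a pseudo-uniformizer, be described by data in $K^\circ$ and thus descends. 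This identifies the fiberwise specialization as a homeomorphism.

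\emph{Main obstacle.} The most delicate step will be the global gluing: verifying that rational subsets on $Y$ defined by global elements $\bar f, \bar g \in A^\circ/A^{\circ\circ}$---which need not respect the $\pi_0(X)$-decomposition---pull back under $\mathrm{sp}$ to rational subsets of $X$ defined by the corresponding lifts in $A^\circ$. Once the fiberwise bijection is in hand, I expect this to follow by exploiting that $\mathrm{sp}$ is a continuous, quasi-compact map of spectral spaces whose restriction to every fiber over the profinite base $\pi_0(X)$ is a spectral homeomorphism, combined with the observation that every rational subset of $X$ admits a description using elements of $A^\circ$ (after rescaling). The Henselian property of $(A^+, A^{\circ\circ})$ ensures that this descent does not create or destroy rational subsets modulo $A^{\circ\circ}$, so the fiberwise identifications assemble into the desired global homeomorphism.
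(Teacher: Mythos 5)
Your fiberwise analysis is sound and is essentially the bijectivity half of the paper's own proof: identifying $\pi_0$ of both sides via idempotents, computing the component of the target over $c=\Spa(K,A^+_K)$ as $\Spa(K^\circ/K^{\circ\circ},\,A^+_K/K^{\circ\circ})$, and matching points via $V\mapsto V/K^{\circ\circ}$. (Even here you should justify the fiber computation by the colimit of $\ri^\circ_X(W)/\ri^{\circ\circ}_X(W)$ over clopen neighbourhoods $W$ of $c$, rather than "by naturality", but that is minor.) The genuine gap is the topological comparison, which you yourself flag as the "main obstacle" and then only gesture at. Two concrete problems. First, continuity of $\mathrm{sp}$ is not "by construction": the target $\overline X=\Spa(A^\circ/A^{\circ\circ},A^+/A^{\circ\circ})$ is the adic spectrum of a \emph{discrete} Huber pair, so for every $\bar g\in A^\circ/A^{\circ\circ}$ the set $\{|\bar g|\neq 0\}$ is a rational, hence open, subset of $\overline X$; its preimage under $\mathrm{sp}$ is the union of those connected components $c$ on which the image of $g$ in $K_c^\circ/K_c^{\circ\circ}$ is nonzero, and it is not at all formal that this set of components is open in $\pi_0(X)$ (a priori it is only closed, being the non-vanishing locus of $|g|$ at the maximal points). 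So continuity itself already contains the hard content. Second, the mechanism you invoke for the gluing, Henselianity of $(A^+,A^{\circ\circ})$, only yields idempotent lifting, i.e. the identification of $\pi_0$'s; it says nothing about how opens of $\overline X$ cut out by \emph{global} elements pull back, and a fiberwise homeomorphism over a profinite base does not assemble into a global one without such control.

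What closes this gap in the paper is the separate statement \cref{rslt:topology-of-adic-space-made-of-ZR-spaces}: under the standing hypothesis the topology of $\overline X$ (and of $X$) has a basis of sets $\{|\bar f|\le 1\}\cap\overline W$ with $f\in A^\circ$ a global element and $\overline W$ clopen. Its proof is not formal: one solves the problem on a single connected component (your field case) and then \emph{spreads out} the defining function to a clopen neighbourhood, using that clopen neighbourhoods of a component are cofinal among all neighbourhoods together with a compactness argument in the constructible topology and the extension-by-$0$/$1$ trick. Granting this basis, $\mathrm{sp}^{-1}(\{|\bar f|\le 1\}\cap\overline W)=\{|f|\le 1\}\cap W$ is quasi-compact open, so $\mathrm{sp}$ is a continuous, quasi-compact, specializing bijection of spectral spaces and hence a homeomorphism by \cite[Lemma 2.5]{etale-cohomology-of-diamonds}. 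Your proposal needs an argument of this kind; as written, the "rescale by a pseudo-uniformizer" remark only works on one field component at a time and does not globalize, so the assembly step remains unproven.
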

\begin{proof}
  Without loss of generality $A^+\subseteq A$ is an open and integrally closed subring in $A^\circ$. We denote $X = \Spa(A, A^+)$, $\overline A = A^\circ/A^{\circ\circ}$, $\overline A^+ = A^+/A^{\circ\circ}$ and $\overline X = \Spa(\overline A, \overline A^+)$. Then the claim is that the specialization map induces a homeomorphism $\abs X \isom \abs{\overline X}$.
  
  First note that there is a canonical homeomorphism $\pi_0(X) = \pi_0(\overline X)$, as both can be derived from the idempotent elements in $A^+$.
  Fix a connected component $c = \Spa(K, A^+_K)$ of $X$ and let $\overline c$ be the corresponding connected component of $\overline X$. Let us write $\overline c = \Spa(\overline K, \overline A^+_K)$. Then $K^\circ$ is the completed filtered colimit of $\ri^\circ_X(W)$ for clopen neighbourhoods $W \subset X$ of $c$ as the clopen neighborhoods of $c$ are cofinal among all neighborhoods of $c$ (by the argument in \cite[Lemma 7.5]{etale-cohomology-of-diamonds}). Similarly, $\overline K$ is the filtered colimit of $\ri_{\overline X}(\overline W)$ for clopen neighbourhoods $\overline W \subset \overline X$ of $\overline c$. By the homeomorphism of spaces of connected components, the $W$ correspond to $\overline W$ and under this correspondence we have $\ri_{\overline X}(\overline W) = \ri^\circ_X(W)/\ri^{\circ\circ}_X(W)$. It follows that $\overline K = K^\circ/K^{\circ\circ}$, which is a discrete field. 

With $c$ and $\overline c$ as above, we note that the points of $c$ are precisely the open and bounded valuation rings $V$ of $K$ such that $A^+_K \subset V$. Similarly, the points of $\overline c$ are the valuation rings $\overline V$ of $\overline K$ such that $\overline A^+_K \subset \overline V$. These two sets agree as each open and bounded valuation ring $V$ of $K$ must contain $K^{\circ\circ}$. As $c$ was arbitrary (and $\pi_0(X)=\pi_0(\overline X)$) this shows that the specialization map is bijective, and as well specializing. It remains to show that this bijection is a homeomorphism. For this it suffices to see that the specialization map is continuous and quasi-compact, e.g., using \cite[Lemma 2.5]{etale-cohomology-of-diamonds}. Note that $\overline X$ (and $X$) satisfy the assumption of \cref{rslt:topology-of-adic-space-made-of-ZR-spaces}. Hence the topology of $X$ has a basis given by $U_{\overline f,\overline W} = \{ |\overline{ f}| \le 1 \} \isect W$ for $f \in A^\circ$ (with reduction $\overline{f}\in A^{\circ}/A^{\circ\circ}$) and clopen $\overline{W} \subset \overline{X}$. Now, $\mathrm{sp}^{-1}(\{|\overline{f}|\leq 1\})=\{|f|\leq 1\}$ as can be checked by reducing to the case that $(A,A^+)$ is an affinoid field, and $\mathrm{sp}^{-1}(\overline{W})=W$ where $W\subseteq X$ is the clopen subset corresponding to $\overline{W}$ under the homeomorphism $\pi_0(X)\cong \pi_0(\overline X)$. In particular, we can conclude that $\mathrm{sp}^{-1}(U_{\overline{f},\overline{W}})$ is open and quasi-compact. This finishes the proof.
\end{proof}

The proof of \cref{rslt:homeo-to-reduction-of-family-of-ZR-spaces} made use of the following slightly technical result about the topology on families of Zariski--Riemann spaces:

\begin{lemma} \label{rslt:topology-of-adic-space-made-of-ZR-spaces}
Let $X = \Spa(A, A^+)$ be an affinoid adic space such that every connected component is of the form $\Spa(K, A^+_K)$, where $K$ is a field and $A^+_K \subset K$ is a subring. Then $\abs X$ has a basis of open subsets given by $\{ \abs f \le 1 \} \isect W$ for varying $f \in A^\circ$ and clopen subsets $W \subset X$.
\end{lemma}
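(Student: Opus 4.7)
The plan is to reduce to a single rational subset and then exploit the profinite structure of $\pi_0(X)$. First I will use that finite intersections of sets of the form $\{|h|\le 1\}\cap W$ are again of this form, so that a basis is obtained by refining an arbitrary rational subset around each of its points. Concretely, it suffices to show: for every rational $U=R(f/g)=\{|f|\le |g|,\ |g|\ne 0\}$ and every $x\in U$, there exist $h\in A^\circ$ and a clopen $W\subseteq X$ with $x\in \{|h|\le 1\}\cap W\subseteq U$. The key geometric input is that under the hypothesis of the lemma, $\pi_0(X)$ is a profinite space and clopens of $X$ correspond to idempotents of $A$ via $e\mapsto \{v:v(e)=1\}$.

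Let $c=\Spa(K,A^+_K)$ be the connected component of $x$. Since $|g|_x\ne 0$ and $K$ is a field, $g$ is either zero on all of $c$ or nonzero on all of $c$; here it is nonzero. The open subset $\{|g|\ne 0\}\subseteq X$ is therefore saturated with respect to connected components, so its image in $\pi_0(X)$ is open. By profiniteness of $\pi_0(X)$, this image contains a clopen neighborhood $W_0$ of $[c]$; I will take $W:=\pi^{-1}(W_0)\subseteq X$, a clopen with $c\subseteq W\subseteq\{|g|\ne 0\}$, and let $e_W\in A$ be the corresponding idempotent. A coprimality argument then shows that $g$ is a unit in $\ri_X(W)=e_W A$: the ideal $(g,1-e_W)$ contains $1$, since any prime $\mathfrak p\subseteq A$ with $1-e_W\in\mathfrak p$ corresponds to a component lying inside $W$ and hence satisfies $g\notin\mathfrak p$ by construction.

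With $W$ and $g^{-1}\in e_W A$ in hand, I will set $h:=e_W\cdot (f/g)\in e_WA\subseteq A$, the extension by zero of $f/g$ outside $W$. For $v\in W$ one computes $|h|_v=|f/g|_v$, while for $v\notin W$ one has $|h|_v=0$, giving $\{|h|\le 1\}\cap W=U\cap W$, which contains $x$ and is contained in $U$. It remains to check $h\in A^\circ$. In the discrete case — which is the only case needed for the application to $\overline X=\Spa(A^\circ/A^{\circ\circ},A^+/A^{\circ\circ})$ in the proof of \cref{rslt:homeo-to-reduction-of-family-of-ZR-spaces} — we have $A^\circ=A$, so this is automatic. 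In the Tate case under the hypothesis, the components are single points (as $A^+_K$ is then forced to be maximal among valuation subrings compatible with the topology), so the topology of $X$ coincides with that of the profinite $\pi_0(X)$ and the statement reduces to the trivial fact that clopens form a basis. The main obstacle, and the step where the hypothesis really intervenes, is the assertion that $\{|g|\ne 0\}$ is saturated by components and hence descends to an open subset of the profinite space $\pi_0(X)$; this is precisely what allows the construction of the clopen $W$ and, via the idempotent $e_W$, the globalization of $f/g$ to an element of $A^\circ$.
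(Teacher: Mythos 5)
Your core construction is sound in the discrete case and takes a genuinely different route from the paper: instead of proving the connected case first, spreading an element of the component's ring out to a clopen neighbourhood via the filtered colimit $K=\varinjlim_{c\subset W}\mathcal{O}_X(W)$ and then shrinking $W$ by a compactness argument in the constructible topology (which is what the paper does), you observe that $\{|g|\neq 0\}$ is a union of connected components, use the quotient map to the profinite space $\pi_0(X)$ to find a clopen $W\subseteq\{|g|\neq 0\}$ around the component of $x$, invert $g$ on $W$ via the idempotent $e_W$, and take the extension by zero $h=e_W\cdot f/g$, for which indeed $\{|h|\le 1\}\cap W=U\cap W$. In the discrete case the unit claim for $g$ on $W$ is correct (any prime in the clopen of $\Spec A$ corresponding to $W$ carries a trivial valuation lying in $W\subseteq\{|g|\neq 0\}$), and $h\in A^\circ=A$ is automatic; this is a clean alternative to the paper's spreading-out argument for rational subsets with a single numerator.

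However, there are two genuine gaps. First, your opening claim that the sets $\{|h|\le 1\}\cap W$ are stable under finite intersections is false, and with it your reduction of a general rational subset $\{|f_1|\le|g|,\dots,|f_n|\le|g|\neq 0\}$ to the case $n=1$: on the connected space $X_0=\Spa(k(x,y),k)$ (a Riemann--Zariski space of transcendence degree $2$, exactly the kind of component arising in \cref{rslt:descendability-for-relative-compactification-of-td-spaces}), the rational subset $\{|x|\le 1\}\cap\{|y|\le 1\}$ contains the trivial valuation but contains no nonempty set of the form $\{|h|\le 1\}$ at all, since such a containment would put $x,y$ in the integral closure of $k[h]$ in $k(x,y)$, contradicting $\dim k[h,x,y]=2$. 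So this step cannot be repaired as stated; what your construction actually proves (taking $h_i=e_W f_i/g$ for each $i$ with the same $W$) is the variant of the lemma with finitely many $f_i$'s — which, to be fair, is also all that the paper's own argument produces in the connected case and all that the application in \cref{rslt:homeo-to-reduction-of-family-of-ZR-spaces} uses, but you should state and prove that version rather than assert closure under intersections. Second, your dismissal of the Tate case is incorrect: a component $\Spa(K,A^+_K)$ of a Tate space satisfying the hypothesis is typically not a single point (e.g.\ $K^+\subsetneq\mathcal{O}_K$ gives a connected chain of specializations, and the Tate space $X$ in \cref{rslt:homeo-to-reduction-of-family-of-ZR-spaces} satisfies the hypothesis with such components), so the topology does not coincide with that of $\pi_0(X)$ there; moreover in that case your $h=e_Wf/g$ is only known to satisfy $|h|\le 1$ on $U\cap W$ and need not be powerbounded, and the unit argument via primes of $A$ is no longer justified since the supports of points of $W$ need not exhaust the corresponding primes. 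The paper avoids this by producing $f\in A^+$ directly, spreading out an element of $\mathcal{O}^+$ of the component; restricting to the discrete case suffices for the paper's applications but not for the lemma as stated.
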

\begin{proof}
Suppose first that $X$ is connected, so that $A$ is a field. A basis of open subsets of $X$ is given by the rational subsets, i.e. subsets of the form $\{ \abs{f_1} \le \abs{g} \ne 0, \dots, \abs{f_n} \le \abs{g} \ne 0 \} \subset X$ for certain $f_1, \dots, f_n, g \in A$. We can assume that $g \ne 0$, so that $g$ is invertible. Then the above subset agrees with $\{ \abs{\frac{f_1}g} \le 1, \dots \abs{\frac{f_n}g} \le 1 \}$ and hence is an intersection of subsets of the form $\{ \abs f \le 1 \}$ for varying $f \in A$. Note that if $f$ is not in $A^0$ (i.e. is not powerbounded) then $\{ \abs f \le 1 \} = \emptyset$. Indeed, $A$ is either discrete or non-archimedean. If $A$ is discrete, then $A=A^\circ$, and if $A$ is non-archimedean, then $\{\abs f\leq 1\}\neq \emptyset$ implies that the point given by the non-trivial rank $1$ valuation on $A$ lies in $\{\abs \leq 1\}$, which in turn yields $f\in A^\circ$. This finishes the proof in the case that $X$ is connected.

Let now $X$ be general. We argue as in the proof of \cite[Lemma 3.6.1]{mann-mod-p-6-functors}. Fix some open subset $U \subset X$. Pick a point $x \in U$ and let $c = \Spa(K, A^+_K) \in \pi_0(X)$ denote the connected component of $X$ containing $x$. By what we have shown above, there is some $\overline f \in A^+_K$ such that $x \in \{ \abs{\overline f} \le 1 \} \subset U \isect c$. Note that $c$ is the cofiltered limit of its clopen neighbourhoods in $X$, which implies that $A^+_K$ is the completed filtered colimit of $\ri^+_X(V)$ for the clopen neighbourhoods $W \subset X$ of $x$. Therefore, after potentially modifying $\overline f$ by some topologically nilpotent element in $A^+_K$ (which does not affect $\{ \abs{\overline f} \le 1 \}$) we can extend $\overline f$ to some $f_W \in \ri^+_X(W)$ for some $W$ as before. Then
\begin{align*}
  \bigisect_{c \subset W' \subset W} (W' \isect X \setminus U \isect \{ \abs{f_W} \le 1 \}) = \emptyset,
\end{align*}
where $W'$ ranges through all clopen neighbourhoods of $c$ contained in $W$. Each term in the above intersection is closed in the construcible topology on $X$, so by compactness of the constructible topology, already a finite intersection must be empty. This implies that one of the terms must be empty, i.e. there is some clopen neighbourhood $c \subset W' \subset W$ such that $W' \isect X \setminus U \isect \{ \abs{f_W} \le 1 \} = \emptyset$. Now let $f \in A^+$ be the element which restricts to $f_W$ on $W'$ and to $0$ outside of $W'$. Let furthermore $f' \in A^+$ be the elment which is $0$ on $W'$ and $1$ outside of $W'$. Then $\{ \abs f \le 1 \} \isect W' \subset U$. Note that the left-hand side contains $x$ by construction, so we have constructed an open neighbourhood of $x$ inside $U$ which belongs to the claimed basis for the topology. This finishes the proof.
\end{proof}

\subsection{Descent for relative compactifications of totally disconnected spaces}
\label{sec:desc-relat-comp}

In this section we want to identify $\D^a_{\hat\solid}(\mathcal{O}^+_X)$ for certain relative compactifications of totally disconnected spaces. The precise statement is the following.

\begin{theorem} \label{rslt:compute-Dqcohri-on-rel-compactification-of-td-space}
Let $X \in \AffPerfd$ whose tilt admits a map $f\colon X^\flat \to Z$ to some totally disconnected space $Z$ with $\dimtrg f < \infty$. Let $X' \to X$ be a quasi-pro-étale map, where $X'$ is a totally disconnected space. Then for $Y := \overline X'^{/X} = \Spa(B, B^+)$ the natural functor
\begin{align*}
  \D^a_{\hat\solid}(B^+) \to \D^a_{\hat\solid}(\ri^+_Y)
\end{align*}
is an equivalence.
\end{theorem}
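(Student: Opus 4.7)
The plan is to verify the hypothesis of \cref{sec:bound-cond-1-criterion-for-tilde-to-be-an-equivalence} using the $w$-localization $Y_1 := Y^\wl \to Y$, which is a qcqs quasi-pro-\'etale $v$-cover by a totally disconnected space. Writing $Y_1 = \Spa(B_1, B_1^+)$, it suffices to show that almost modules descend along the adic morphism $(B^+)_\solid \to (B_1^+)_\solid$.

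By the almost version of \cref{rslt:descendability-of-adic-rings}, applied to this adic morphism with a pseudouniformizer $\pi \in B$, the descent question reduces to showing that the maps $(B^+/\pi^n)^a \to (B_1^+/\pi^n)^a$ are descendable with an index bound independent of $n$. Handling higher powers of $\pi$ is a matter of lifting the descendability data (the ``Beck--Chevalley'' splitting witnessing descendability of a given index) from the quotient modulo $\pi$ to the quotient modulo $\pi^n$ along a nilpotent extension; this bootstrapping preserves the index bound.

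For the case $n = 1$, the specialization homeomorphism \cref{rslt:homeo-to-reduction-of-family-of-ZR-spaces} identifies $|Y|$ with the underlying topological space of the family of Riemann--Zariski spaces $|\Spa(B^\circ/B^{\circ\circ}, B^+/B^{\circ\circ})|$, and similarly for $Y_1$. Writing $Y_1 = \varprojlim_j Y_j$ with each $Y_j \to Y$ a finite disjoint union of qcqs open subsets of $Y$ (as in the proof of \cref{rslt:v-cover-of-td-spaces-is-weakly-descendable}), and invoking \cref{rslt:filtered-colim-of-adic-desc-maps-is-weakly-desc}, the problem reduces to a uniform bound on the index of descendability of each $Y_j \to Y$. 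Viewed modulo $\pi$ through the Riemann--Zariski identification, $Y_j \to Y$ is precisely a finite open cover of the type addressed by \cref{rslt:descendability-of-open-cover-of-ZR-space-family}. The uniform transcendence degree bound required by that lemma is supplied by $\dimtrg f < \infty$: since $Y \to X$ is a cofiltered limit of compactifications of \'etale maps and hence has $\dimtrg 0$, the composition $Y^\flat \to X^\flat \to Z$ still has finite $\dimtrg$, which in turn bounds the transcendence degrees of the relevant residue fields at connected components.

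The main obstacle will be to precisely match the mod-$\pi$ descent problem for the adic analytic rings $(B^+)^a_\solid \to (B_1^+)^a_\solid$ with the open-cover descent for families of Riemann--Zariski spaces handled by \cref{rslt:descendability-of-open-cover-of-ZR-space-family}, and to carry out the nilpotent lifting of the index bound from $n=1$ to all $n$. Concretely, one must identify $\mathcal O^+_Y(U_{jk})/\pi$ for qcqs opens $U_{jk}\subset Y$ with the appropriate global sections of the structure sheaf on the Riemann--Zariski family, and verify uniformly across all connected components that the transcendence degrees of the residue fields are bounded by $\dimtrg f$.
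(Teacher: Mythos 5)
Your overall strategy (apply \cref{sec:bound-cond-1-criterion-for-tilde-to-be-an-equivalence} to a pro-\'etale cover of $Y$ by a totally disconnected space and prove uniform adic descendability via the Riemann--Zariski machinery, with the bound coming from $\dimtrg f$) is the right engine, but the central step fails as stated: \cref{rslt:descendability-of-open-cover-of-ZR-space-family} does not apply to the reduction of $Y$ itself. That lemma requires the connected components of the spectrum of the chosen ``plus'' ring to be spectra of \emph{valuation rings}, and for the pair $(B^\circ/B^{\circ\circ},\,B^+/B^{\circ\circ})$ attached to $Y=\overline{X'}^{/X}$ this is false in general, because $X$ is not totally disconnected: the components of $Y$ are $\Spa(K,\widetilde K^+)$ with $\widetilde K^+$ the integral closure of (the image of) $A^+$ plus the topologically nilpotent elements, which is an intersection of many valuation rings -- these components are themselves Riemann--Zariski-type spaces, which is exactly the phenomenon the lemma is designed around, not a case it covers. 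The valuation-ring hypothesis is supplied only by a \emph{totally disconnected base}, and this is why the paper does not argue on $Y$ directly: it introduces $Y'$, the untilt of $\overline{(X')^\flat}^{/Z}$ (compactification over the totally disconnected $Z$, so that the plus ring of $Z$ furnishes the valuation components), proves in \cref{rslt:descendability-for-relative-compactification-of-td-spaces} that $Y'$ admits a pro-\'etale cover by a totally disconnected space of \emph{universal} almost $+$-descent (weak adic descendability of index bounded in terms of $d$, hence stable under base change), and then pulls this back along $Y\hookrightarrow Y'$, which is an intersection of rational opens. Your proposal is missing this detour, and also the accompanying fact that the covers you feed into the lemma must arise by base change from covers over the totally disconnected base (or from the glued projective models), rather than being arbitrary open covers of $Y$.

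A second, smaller gap is the claimed ``nilpotent lifting'' of descendability data from mod $\pi$ to mod $\pi^n$ with the same index. No such index-preserving bootstrapping is available (composition-type lemmas only give bounds growing with $n$, which is useless for adic descendability), and the paper explicitly warns in the proof of \cref{rslt:v-cover-of-td-spaces-is-weakly-descendable} that one cannot deduce the mod-$\pi^n$ statement from the mod-$\pi$ one without extra work. The correct fix is also simpler: every power $\pi^n$ is again a pseudouniformizer, and the geometric argument (covering a glued projective model by $d+1$ affines, then \cite[Corollary 2.10.7]{mann-mod-p-6-functors}) is run modulo an \emph{arbitrary} pseudouniformizer, giving the same constant $c(d)$ for every $n$; combined with \cref{rslt:homeo-to-reduction-of-family-of-ZR-spaces} and \cref{rslt:filtered-colim-of-adic-desc-maps-is-weakly-desc} this is how the paper obtains uniformity, with no deformation of descendability witnesses needed.
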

\begin{proof}
  Set $Z':=(X')^\flat$. We let $X'\to Y'$ be the unique map of perfectoid spaces whose tilt is the map $Z'\to \overline{Z'}^{/Z}$ (note that $\mathcal{O}(X')=\mathcal{O}(Y')$ as this holds for the tilt). We note that there exists a natural inclusion $Y\to Y'$ whose tilt is the natural map $\overline{Z'}^{/X}\to \overline{Z'}^{/Z}$. In fact, $Y$ is an intersection of rational open subsets in $Y'$ as this holds on the tilt. By \cref{rslt:descendability-for-relative-compactification-of-td-spaces} there is a pro-étale cover $W' \to Y'$ which is of universal almost $+$-descent, where $W'$ is a totally disconnected space, i.e., almost $+$-modules (in the sense of \cref{sec:defin-d_hats-2-definition-of-almost-plus-category}) descend along $W'\to Y'$ and all its base changes to affinoid perfectoid spaces. Then $W := Y \cprod_{Y'} W \to Y$ is a pro-étale cover with $+$-descent for $Y$, and $W$ is still totally disconnected (as it is an intersection of rational open subsets in the totally disconnected space $W'$). By \cref{sec:bound-cond-1-criterion-for-tilde-to-be-an-equivalence} this finishes the proof.
\end{proof}

In the next proposition, an untilt of a perfectoid space $Z$ of characteristic $p$ is a perfectoid space $X$ together with an identification $X^\flat\cong Z$. \cref{rslt:descendability-for-relative-compactification-of-td-spaces} is similar to \cite[Proposition 3.1.11]{mann-mod-p-6-functors}. Again the essential point is to show that open covers of relative compactifications are descendable \textit{with uniformly bounded index}. This will as in \cite[Proposition 3.1.11]{mann-mod-p-6-functors} be reduced to open covers of (families of) Riemann--Zariski spaces. The necessary results on these kind of adic spaces were discussed in \cref{sec:suppl-discr-adic}.

\begin{proposition}
  \label{rslt:descendability-for-relative-compactification-of-td-spaces}
  Let $f\colon Z' \to Z$ be a map of totally disconnected spaces in $\AffPerfd_{\mathbb{F}_p}$ with $\dimtrg f < \infty$. Let $X$ be an untilt of $\overline Z'^{/Z}$. Then there is a pro-étale cover $Y \to X$ by a totally disconnected space $Y$ such that almost $+$-modules descend along $Y\times_XX'\to X'$ for any morphism $X'\to X$ of affinoid perfectoid spaces.
\end{proposition}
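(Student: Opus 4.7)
The plan is to take $Y := X^\wl$, the $w$-localization of $X$, which by \cite[Proposition~7.12]{etale-cohomology-of-diamonds} is a pro-étale cover of $X$ by a totally disconnected affinoid perfectoid space. Writing $X = \Spa(A,A^+)$ and $Y = \Spa(B_\wl, B_\wl^+)$, the strategy is to show that the map $(A^+)_\solid^a \to (B_\wl^+)_\solid^a$ is $+$-weakly adically descendable of index bounded only in terms of $d := \dimtrg f$. Once this is established, descent along $Y \to X$ in $\D^a_{\hat\solid}$ follows from \cref{rslt:descendability-of-adic-rings}, and universality after any base change $X' \to X$ of affinoid perfectoids follows from \cref{sec:universal-descent-1-descent-implies-universal-descent-in-adic-case} combined with \cref{rslt:base-change-for-affinoid-perfectoid}, using that every morphism of affinoid perfectoids is adic.

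By \cite[Lemma~7.13]{etale-cohomology-of-diamonds} one can write $Y = \varprojlim_i Y_i$ as a cofiltered limit with each $Y_i \to X$ a finite disjoint union of qcqs rational open subsets covering $X$. By \cref{rslt:filtered-colim-of-adic-desc-maps-is-weakly-desc} it suffices to show that each $Y_i \to X$ is adically descendable of index $\le c(d)$ at the almost $+$-level, uniformly in $i$. The geometric input is the reduction $\overline X := \Spa(\overline A, \overline A^+)$ with $\overline A := A^\circ/A^{\circ\circ}$ and $\overline A^+ := A^+/A^{\circ\circ}$: by \cref{rslt:homeo-to-reduction-of-family-of-ZR-spaces} the specialization map is a homeomorphism $|X| \isoto |\overline X|$, matching rational open covers of $X$ with open covers of $\overline X$. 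Because $X^\flat = \overline{Z'}^{/Z}$ with $Z, Z'$ totally disconnected and $\dimtrg f = d$, the pair $(\overline A, \overline A^+)$ satisfies the hypotheses of \cref{rslt:descendability-of-open-cover-of-ZR-space-family}: connected components of $\Spec \overline A$ are residue fields at components of $Z'$, those of $\Spec \overline A^+$ are valuation subrings of these fields dominating the residue valuation ring pulled back from $Z$, and the relative transcendence degrees are bounded by $d$. Thus the corresponding discrete map $\overline A \to \overline B_i$ for $\overline Y_i \to \overline X$ is descendable of index $\le c(d)$.

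The main obstacle is transferring this descendability from the discrete reduction to the almost $+$-level. The cleanest approach is to read off from the proof of \cref{rslt:descendability-of-open-cover-of-ZR-space-family} that the cover refines to $\le d+1$ affine opens on a projective $\overline{A}^+$-scheme approximating $\overline X$; pulling these opens back along the homeomorphism $|X| \isoto |\overline X|$ produces $\le d+1$ rational opens of $X$ whose disjoint union refines $Y_i \to X$. A cover of an affinoid perfectoid space by $d+1$ rational opens is then adically descendable of index $\le 2(d+1)$ at the almost $+$-level by the perfectoid analogue of \cite[Corollary~2.10.7]{mann-mod-p-6-functors}, which follows from \cref{rslt:A-mod-I-modified-modules-equals-usual-modules} by reducing to each truncation $A^+/\pi^n$ and invoking stability of descendability under the nilpotent thickenings $\overline A^+ \twoheadleftarrow A^+/\pi \twoheadleftarrow A^+/\pi^n$ (cf.\ \cite[Lemma~2.6.9]{mann-mod-p-6-functors}), with the index bound independent of $n$. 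The hardest point is to set up this nilpotent-thickening invariance correctly in the almost, $\omega_1$-filtered condensed setting; once this is done, the uniform bound $c(d) = 2(d+1)$ propagates directly to any base change $X' \to X$ because the ZR-family structure of $\overline X$ descends through the reduction of $X'$ and \cref{rslt:base-change-for-affinoid-perfectoid} identifies the pushed-forward almost $+$-analytic ring structures.
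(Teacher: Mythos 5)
Your setup (taking $Y = X^\wl$, writing $Y = \varprojlim_i Y_i$ with $Y_i$ a disjoint union of rational opens, reducing via \cref{rslt:filtered-colim-of-adic-desc-maps-is-weakly-desc} to a uniform adic-descendability bound for each $Y_i \to X$, and using \cref{rslt:homeo-to-reduction-of-family-of-ZR-spaces} together with \cref{rslt:descendability-of-open-cover-of-ZR-space-family} on the reduction $\overline X$) matches the paper. But the step where you transfer descendability from the reduction $\overline X$ to the almost $+$-level mod $\pi$ has a genuine gap, and it is exactly the hard point of the argument. Your ``perfectoid analogue of \cite[Corollary 2.10.7]{mann-mod-p-6-functors}'' --- that a cover of an affinoid perfectoid space by $d+1$ rational opens is adically descendable at the (almost) $+$-level with index $\le 2(d+1)$ --- is not available: $\ri^+(U)/\pi^n$ is not a localization of $A^+/\pi^n$, descendability of rational covers at the $+$-level is precisely what is being established here for these special spaces, and if such a general statement held most of this subsection (and the analogous arguments in \cite{mann-mod-p-6-functors}) would be unnecessary. (Also, the preimages in $X$ of the $d+1$ affine opens of the projective model are only quasi-compact opens, not rational opens, but that is a secondary issue.)

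Your fallback via ``nilpotent-thickening invariance'' also fails: \cite[Lemma 2.6.9]{mann-mod-p-6-functors} gives stability of descendability under base change, i.e.\ it lets you pass \emph{from} $A^+/\pi^n$ \emph{down} to the quotient $\overline A^+ = A^+/A^{\circ\circ}$, whereas you need the opposite direction, lifting descendability from the quotient back to $A^+/\pi^n$ with a uniform index; moreover the kernel of $A^\circ/\pi \to A^\circ/A^{\circ\circ}$ is only locally nilpotent, so even an index-controlled nilpotent-lifting statement would not give a bound independent of $n$ and $\pi$. The paper's mechanism is different: from the proof of \cref{rslt:descendability-of-open-cover-of-ZR-space-family} one extracts a reduced projective $\overline B^+$-scheme $\overline S$ with a dominant affine map from $\Spec \overline B'$, covered by $d+1$ affine opens, such that $\overline U \to \overline X$ is pulled back from an open cover of $\overline S$; one then \emph{glues} $\Spec(A^\circ/\pi)$ with $\overline S$ along the surjection $A^\circ/\pi \twoheadrightarrow \overline B'$ with locally nilpotent kernel, using the pushout of schemes from \cite[Lemma 07RT]{stacks-project}, obtaining a scheme $S$ still covered by $d+1$ affine opens, and exhibits $U_\pi \to X_\pi = \Spa(A^\circ/\pi, A^+/\pi)$ as a base change of that affine cover along $X_\pi \to S$. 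Then \cite[Corollary 2.10.7]{mann-mod-p-6-functors} applied to $S$, plus base-change stability, gives the bound depending only on $d$, uniformly in the pseudouniformizer (which handles all $\pi^n$ at once). This gluing construction is the missing idea in your proposal; without it the uniform index bound at the mod-$\pi$ level does not follow.
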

\begin{proof}
Let $Y := X^\wl$ be the w-localization of $X$, which by \cite[Lemma 7.13]{etale-cohomology-of-diamonds} is a totally disconnected space of the form $Y = \varprojlim_i Y_i \surjto X$ such that each $Y_i$ is a disjoint union of rational open subsets of $X$. We will show that the map $Y \to X$ has the claimed descent properties and similarly any base change. Write $X=\Spa(A,A^+)$, $Y=\Spa(C,C^+)$. It suffices to check that the map $(A^\circ,A^+)_\solid\to (C^\circ,C^+)_\solid$ is weakly adically descendable of an index $\le c(d)$, where $c(d)$ is a constant only depending on $d:=\dimtrg f$. Indeed, then almost $+$-modules will descent universally along $Y\to X$ because $\D_{\hat\solid}(A^\circ,A^+)$ and $\D_{\hat\solid}(A^+)$ define the same almost category using $(A^\circ)^a=(A^+)^a$ (and the similar statement for $\D_{\hat\solid}(C^\circ,C^+)$ and $\D_{\hat\solid}(C^+)$). By \cref{rslt:filtered-colim-of-adic-desc-maps-is-weakly-desc} it is enough to show that for each map $Y_i=\Spa(C_i,C_i^+) \to X$ the map $(A^\circ,A^+)_\solid \to (C_i^\circ,C_i^+)_\solid$ of adic analytic rings is adically descendable with an index $\le c(d)$. By definition of adic descendability it is enough to prove this statement modulo $\pi$ for every pseudouniformizer $\pi$ on $X$. Now we can argue as in \cite[Proposition 3.1.11]{mann-mod-p-6-functors}, using \cref{rslt:descendability-of-open-cover-of-ZR-space-family} in place of \cite[Proposition 2.10.10]{mann-mod-p-6-functors} in order to get descendability (and not just fs-descendability) of the covering. In the following we provide the details.

We are given a map $U:=Y_i = \bigdunion_{j=1}^n U_j \to X$ for a cover of $X$ by qcqs open subsets $U_j \subset X$. Write $X = \Spa(A, A^+)$, $Z = \Spa(B, B^+)$ and $Z' = \Spa(B', B'^+)$, where we note that $B' = A^\flat$ and that $A^{\flat+}$ is the completed integral closure of $B^+ + B'^{\circ\circ}$ in $B'^+$. Let $\overline B' := B'^\circ/B'^{\circ\circ}$, $\overline B^+ := B^+/B^{\circ\circ}$ and $\overline X := \Spa(\overline B', \overline B^+)$. Note that $\overline B' = A^{\flat\circ}/A^{\flat\circ\circ}$, that $A^{\flat+}/A^{\flat\circ\circ}$ is the integral closure of the image of $\overline B^+$ in $\overline B'$ and that $X^\flat$ satisfies the hypothesis of \cref{rslt:homeo-to-reduction-of-family-of-ZR-spaces}. Hence by \cref{rslt:homeo-to-reduction-of-family-of-ZR-spaces} we obtain a canonical homeomorphism $\abs X = \abs{X^\flat} \isom \abs{\overline X}$. In particular, the open cover $X = \bigunion_j U_j$ corresponds to an open cover $\overline X = \bigunion_j \overline U_j$ and we define $\overline U := \bigdunion_j \overline U_j$.

We now claim that $\overline X$ satisfies the condition of \cref{rslt:descendability-of-open-cover-of-ZR-space-family}. Indeed, the connected components of $Z'$ are of the form $\Spa(K', K'^+)$ for a perfectoid field $K'$, hence the connected components of $\Spec \overline B'$ are of the form $\Spec(\ri_{K'}/\mm_{K'})$ (we note for later that this implies that $|\Spec(\overline B')|$ is a profinite set). Similarly, if a connected component of $Z$ is of the form $\Spa(K, K^+)$ then the corresponding connected component of $\Spec \overline B^+$ is of the form $K^+/\mm_K$, which is a valuation ring. This implies that the condition (a) of \cref{rslt:descendability-of-open-cover-of-ZR-space-family} is satisfied. Similarly, condition (b) is satisfied using the same $d$ (\cite[VI.10.3.Corollaire 1]{bourbaki2007algebre}). Altogether we deduce that the map $\overline U \to \overline X$ is descendable of index bounded by a constant only depending on $d$.
More precisely, by the proof of \cref{rslt:descendability-of-open-cover-of-ZR-space-family} there is a reduced projective $\overline B^+$-scheme $\overline S$ together with a dominant map $\Spec \overline B' \to \overline S$ such that the map $\overline U \to \overline X$ comes via base-change from an open covering of $\overline S$ along the map $\overline X \to \overline S$. Moreover, $\overline S$ can be covered by $d + 1$ open affine subschemes $\overline W_i = \Spec \overline R_i \subset \overline S$. As $\overline S$ is separated, the morphism $\Spec \overline B'\to \overline{S}$ is affine. Note that we have a surjective map $A^\circ/\pi \surjto A^\circ/A^{\circ\circ} = A^{\flat\circ}/A^{\flat\circ\circ} = \overline B'$ with locally nilpotent kernel. By \cite[Lemma 07RT]{stacks-project} we can therefore form the pushout $S:=\Spec(A^\circ/\pi)\cup_{\Spec(\overline B')}\overline{S}$ in the category of schemes. By the construction in \cite[Lemma 07RT]{stacks-project}, the scheme $S$ is covered by the $d+1$ open affine subschemes $\Spec(A'_i)\cup_{\Spec(\overline B'_i)} \overline W_i$, where $\overline{Z}_i=\Spec(\overline B'_i)\subseteq \Spec(\overline B')$ is the affine open preimage of $W_i$ along $\Spec(\overline B')\to \overline{S}$, and $\Spec(A'_i)\subseteq \Spec(A^\circ/\pi)$ is the unique affine open subscheme with underlying topological space $\overline Z_i$.

The morphism $\overline X \to \overline S$ lifts to a morphism $X_\pi := \Spa(A^\circ/\pi, A^+/\pi) \to S$ which is the same map on underlying topological spaces. Now the given open covering $U=\Spa(C_i,C_i^+) \to X$ induces an open covering $U_\pi:=\Spa(C_i^\circ/\pi,C_i^+/\pi)\to X_\pi$ (using that \cref{rslt:homeo-to-reduction-of-family-of-ZR-spaces} applies to $U$ as well), and $U_\pi\to X_\pi$ reduces to the covering $\overline U \to \overline X$. Thus the covering $U_\pi\to X_\pi$ is an open covering of discrete adic spaces which comes via base-change from an open covering of $S$ (as can be checked after base change to $\overline{X}$). Since $S$ is covered by $d + 1$ open affine subsets, the map $U_\pi \to X_\pi$ is therefore descendable of index bounded by a constant only depending on $d$ (\cite[Corollary 2.10.7]{mann-mod-p-6-functors}). This finishes the proof.
\end{proof}

\subsection{Cohomological boundedness conditions} \label{sec:compare-p-bounded-and-+-bounded}

In \cref{sec:bound-cond} we introduced the notion of $p$-bounded morphisms of small v-stacks. In \cite[\S3.5]{mann-mod-p-6-functors} defines a related notion (also called $p$-bounded in loc. cit.) and proves strong descent results along such maps. In the present subsection we compare both notions of boundedness and in particular show that they are essentially equivalent up to the generality in which they are defined. By combining this observation with \cref{sec:bound-cond-1-totally-disconnected-space-is-p-bounded} and \cite[Theorem~3.5.21]{mann-mod-p-6-functors}, we get a powerful descent result for $\D^a_\solid(A^+/\pi)$ on affinoid perfectoid spaces, which will enter the proof of our main result. In order to distinguish both boundedness conditions, let us rename the one from \cite[\S3.5]{mann-mod-p-6-functors}:

\begin{definition} \label{sec:bound-cond-new--1-definition-plus-bounded}
A map $f\colon Y^\prime\to Y$ of small $v$-stacks is called $+$-bounded if it is $p$-bounded in the sense of \cite[Definition 3.5.5.(b)]{mann-mod-p-6-functors}.
\end{definition}

\begin{example}
  \label{sec:bound-cond-1-example-for-+-bounded-maps}
  Let $f\colon Y'\to Y$ be a morphism of affinoid perfectoid spaces in characteristic $p$. If $f$ is quasi-pro-\'etale or weakly of perfectly finite type, then $f$ is $+$-bounded (see \cite[Lemma 3.5.10.(v)]{mann-mod-p-6-functors} and \cite[Lemma 3.5.13]{mann-mod-p-6-functors}). 
\end{example}

We now come directly to the promised comparison result of $p$-bounded and $+$-bounded morphisms:

\begin{proposition} \label{sec:main-theor-new-texorpdfs-+-bounded-equals-p-bounded}
  Let $f\colon Y\to X$ be a morphism of small $v$-stacks which is locally separated and representable in prespatial diamonds. Then $f$ is $+$-bounded if and only if it is $p$-bounded.
\end{proposition}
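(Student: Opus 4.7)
The plan is to split the biconditional into its two implications and to exploit that both notions presuppose the same underlying geometric data on $f$: namely, $f$ is locally separated, representable in prespatial diamonds, and has finite $\dimtrg$ after pullback to any prespatial diamond. Indeed, one may check from \cite[Definition 3.5.5.(b)]{mann-mod-p-6-functors} that $+$-boundedness already bakes in these geometric prerequisites. Consequently the problem reduces to matching up the two cohomological conditions: for $p$-boundedness a pointwise bound on $\cd_p$ of maximal points after base change to a strictly totally disconnected space (\cref{def:ell-bounded-map}.(iii)), versus for $+$-boundedness a uniform bound on the cohomological dimension of $f_\ast$ mod $p$ that is stable under base change.

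For the direction $p$-bounded $\Rightarrow$ $+$-bounded, the main input will be \cref{sec:bound-cond-new-ell-bounded-map-has-locally-bounded-cohom-dimension}: after any base change $X\to Y$ with $X$ a diamond, the pushforward $f'_\ast\colon \D^+_\et(Y',\F{p})\to \D^+_\et(X,\F{p})$ has finite cohomological dimension, with an explicit bound in terms of $\dimtrg(f)$ and the pointwise bound $d_X$. This is precisely the cohomological amplitude condition required by Mann's formulation of $+$-boundedness, and the translation is essentially formal using v-descent to reduce to strictly totally disconnected $X$.

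For the converse, suppose $f$ is $+$-bounded. Since finite $\dimtrg$ is shared by both notions, I need only verify that for any $X\to Y$ with $X$ strictly totally disconnected and every maximal point $y'$ of $Y' := Y\cprod_Y X$, the cohomological dimension $\cd_p(y')$ is uniformly bounded in terms of $X$. After base change I may assume $X$ itself is strictly totally disconnected, which is in particular $p$-bounded by \cref{sec:bound-cond-1-totally-disconnected-space-is-p-bounded}. The residual subdiamond $Y'_{y'}\cong [\Spd(C_{y'},\ri_{C_{y'}})/G_{y'}]$ includes into $Y'$ via a quasi-compact separated quasi-pro-étale map $j$, and by \cref{sec:bound-cond-1-quasi-pro-etale-over-ell-bounded-implies-ell-bounded}.(i) the pushforward $j_\ast$ is $t$-exact on bounded-below mod-$p$ sheaves. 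Since étale cohomology of $Y'_{y'}$ computes continuous $G_{y'}$-cohomology, composing with the (uniform) cohomological bound on $f'_\ast$ provided by $+$-boundedness forces $\cd_p(G_{y'})$ to be bounded by a constant depending only on $X$, as needed. The main obstacle I anticipate is a careful unpacking of Mann's formulation of $+$-boundedness to extract a \emph{uniform} cohomological amplitude that survives arbitrary base change to strictly totally disconnected targets; once that is in place, both directions follow from the reductions above.
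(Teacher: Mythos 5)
There is a genuine gap, and it sits at the heart of both directions: you treat Mann's $+$-boundedness as if it were a condition on mod-$p$ \emph{étale} cohomology (``the cohomological amplitude condition required by Mann's formulation''), but by \cite[Definition 3.5.5.(b)]{mann-mod-p-6-functors} it is a condition on the cohomological dimension of the pushforward on \emph{almost solid $\ri^+_Y/\pi$-modules}, i.e.\ on $f_\ast\colon \D^a_\solid(\ri^+_Y/\pi,\Z)\to \D^a_\solid(A^+/\pi,\Z)$. The whole content of the proposition is the comparison between these two cohomology theories, and that comparison is not ``essentially formal using v-descent''. For $p$-bounded $\Rightarrow$ $+$-bounded, \cref{sec:bound-cond-new-ell-bounded-map-has-locally-bounded-cohom-dimension} only gives you finite cohomological dimension for $\D^+_\et(-,\F{p})$; to upgrade this to the $\ri^{+a}/\pi$-module statement the paper identifies $\D^a_\solid(\ri^+_Y/\pi,\Z)\cong \Mod_{\ri^+/\pi}\bigl(\D_\et(Y,\F{p})^\oc\otimes_{\D(\F{p})}\D^a_\solid(A^+/\pi,\F{p})\bigr)$, constructs compatible $t$-structures on these tensor-product categories, and checks $t$-exactness of the relevant comparison functors before reducing to the étale bound; none of this is supplied (or even flagged) in your sketch.

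In the converse direction the same conflation appears: $+$-boundedness does \emph{not} hand you a ``uniform cohomological bound on $f'_\ast$'' for étale $\F{p}$-sheaves, so your final step (bounding $\cd_p(G_{y'})$ by composing $j_\ast$ with that bound) has no input to compose with. The paper bridges the gap using \cite[Theorem 3.9.23]{mann-mod-p-6-functors}: the overconvergent étale sheaf $A=j_\ast\mathcal F$ embeds fully faithfully, via $-\otimes\ri^{+a}_Y/\pi$ with its $\varphi$-structure, into $\varphi$-modules in $\D^a_\solid(\ri^+_Y/\pi)$, compatibly with $\D^{\le 0}$; only then does the $+$-bounded hypothesis (finite cohomological dimension of $\Hom_{\D^a_\solid(\ri^+_Y/\pi)}(\ri^+_Y/\pi,-)$, via \cite[Lemmas 3.5.7.(i), 3.5.9]{mann-mod-p-6-functors}) yield the bound on $\cd_p(y)$. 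Your reductions (base change to strictly totally disconnected $X$, passing to the relative compactification, $t$-exactness of $j_\ast$ from \cref{sec:bound-cond-1-quasi-pro-etale-over-ell-bounded-implies-ell-bounded}) do match the paper's, but without the Riemann--Hilbert-type embedding in one direction and the categorical decomposition with its $t$-structure analysis in the other, the argument does not close.
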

\begin{proof}
  Let us first assume that $f$ is $+$-bounded. We verify that $f$ satisfies the conditions of $p$-bounded maps in \cref{def:ell-bounded-map}.  Condition (i) is true by assumption on $f$ and condition (ii), i.e. $\dimtrg(f) < \infty$ after pullback to every prespatial diamond, is part of the definition of $+$-bounded maps. It remains to prove condition (iii). By stability of $+$-bounded morphisms under base change (\cite[Lemma 3.5.10.(iii)]{mann-mod-p-6-functors}) we may assume that $X$ is a strictly totally disconnected space and by \cref{sec:bound-cond-new-absolute-and-relative-notion-of-ell-boundedness} it is enough to show that $Y$ is $p$-bounded. Since $f$ is locally separated and $Y$ is qcqs, we can reduce to the case that $f$ is separated. Using \cref{sec:bound-cond-1-quasi-pro-etale-over-ell-bounded-implies-ell-bounded}, \cite[Proposition~3.6]{mod-ell-stacky-6-functors} and \cite[Lemma~3.5.10.(iv)]{mann-mod-p-6-functors} we can further pass to the relative compactification of $Y$ over $X$ in order to assume that $f$ is proper.

  Now let $y\in Y$ be a maximal point, and $j\colon Y_y\to Y$ the induced subdiamond. We need to show that $\cd_p(y)$ is bounded by some constant $d$ independent of $y$. For this it suffices to see that for every static étale sheaf $\mathcal F$ on $Y_y$ the sheaf cohomology of $\mathcal F$ is bounded to the right by $d$. Denoting $A := j_\ast(\mathcal F)$ (which is static by \cref{sec:bound-cond-1-quasi-pro-etale-over-ell-bounded-implies-ell-bounded}), we can reformulate the problem to showing that $\Hom_{\D_\et(Y,\F{p})}(\F{p}, A) \in \D(\F{p})$ is bounded to the right by $d$. But $A$ is an overconvergent étale sheaf and by \cite[Theorem 3.9.23]{mann-mod-p-6-functors} the category $\D_\et(Y,\F{p})^\oc$ of overconvergent étale sheaves on $Y$ embeds fully faithful into $\D^a_\solid(\ri^+_Y/\pi)^\varphi$ of $\varphi$-modules in $\D^a_\solid(\ri^+_Y/\pi)$ (here $\pi\in \mathcal{O}_X(X)$ is a pseudo-uniformizer). As this functor is given by tensoring with $\mathcal{O}^{+a}_Y/\pi$ it preserves containment in $\D^{\leq 0}$. As $Y$ is $+$-bounded and proper over $X$, we can conclude that $\Hom_{\D^a_\solid(\ri^+_Y/\pi)}(\ri^+_Y/\pi,-)$ has finite cohomological dimension by \cite[Lemmas~3.5.7.(i),~3.5.9]{mann-mod-p-6-functors}. This implies the same claim for $\varphi$-modules, and hence the assertion.

  We now prove the converse implication, so assume that $f$ is $p$-bounded. In order to show that $f$ is $+$-bounded, we can by \cite[Lemmas~3.5.10.(i),(ii)]{mann-mod-p-6-functors} assume assume that $X = \Spa(A, A^+)$ is a strictly totally disconnected space, which is furthermore of characteristic $p$. We may also reduce to the case that $Y$ is separated (as $f$ is locally separated and $+$-boundedness can be checked locally on the source by \cite[Lemma~3.5.10.(i)]{mann-mod-p-6-functors}). By \cref{sec:funct-da_h--1-compactification-of-prespatial-diamond} and \cite[Lemma 3.5.10.(iv)]{mann-mod-p-6-functors} we can first replace $Y\to X$ by its canonical compactification, and then reduce to the case that $Y$ is spatial and separated.
  We have to see now that for a pseudo-uniformizer $\pi\in A$, the pushforward
  \begin{align*}
    f_\ast\colon \D^a_\solid(\ri^+_{Y}/\pi,\Z)\to \D^a_\solid(A^+/\pi,\Z)=\D^a(A^+/\pi)\otimes_{\D(\Z)}\D_\solid(\Z)
  \end{align*}
  has finite cohomological dimension. Here, $\D^a_\solid(\ri^+_{Y}/\pi,\Z)$ refers to the category defined in \cite[Definition 3.5.2]{mann-mod-p-6-functors}. In particular, the implicit $t$-structures are the natural one on $\D^a_\solid(A^+/\pi,\Z)$ and the one on $\D^a_\solid(\ri^+_{Y}/\pi,\Z)$ defined by descent from the totally disconnected case, i.e., $M\in \D^a_\solid(\ri^+_{Y}/\pi,\Z)$ lies in $\D_{\geq 0}$ (resp.\ $\D_{\leq 0}$) if and only if this holds after pullback to any totally disconnected perfectoid space over $Y$. There is a natural equivalence
  \begin{align*}
    \D^a_\solid(\ri^+_Y/\pi,\Z) \cong \D_\et(Y,(\ri^+/\pi,\F{p})_\solid)^\oc := \Mod_{\ri^+/\pi}(\D_\et(Y,\F{p})^\oc \tensor_{\D(\F{p})} \D^a_\solid(A^+/\pi,\F{p})),
  \end{align*}
  generalizing \cite[Proposition 3.3.16]{mann-mod-p-6-functors} to not necessarily discrete objects. Indeed, this follows by descent from the case of totally disconnected $Y$ (see the proof of \cref{rslt:comparison-of-oc-nuc-sheaves-to-O+-modules-on-p-bd-perfd} below for a very similar argument for $\D^a_\solid(A^+)$ in place of $\D^a_{\solid}(A^+/\pi,\Z)$ and use that $\D_\nuc(Y,\F{p}) \cong \D_\et(Y,\F{p})^\oc$ as can be checked by descent from the strictly totally disconnected case).

  Using transport of structure along the above equivalence, we obtain a $t$-structure on $\D_\et(Y,(\ri^+/\pi,\F{p})_\solid)^\oc$. We claim that this $t$-structure arises by restricting a $t$-structure on
  \[
    \D_\et(Y,(A^+/\pi,\F{p})_\solid)^\oc:=\D_\et(Y,\F{p})^\oc\otimes_{\D(\F{p})} \D^a_\solid(A^+/\pi)
  \]
  to $\ri^+/\pi$-module objects. Namely, if $Y$ is strictly totally disconnected, then one checks easily that
  \[
    \D_\et(Y,(A^+/\pi,\F{p})_\solid)^\oc \cong \Mod_{C(\pi_0(Y),\F{p})\otimes_{\F{p}}A^{+a}/\pi}(\D^a_\solid(A^+/\pi))
  \]
  (see \cref{rslt:A+-linear-sheaves-on-std-space} below for details in a slightly different version of that statement). Setting the $\D_{\geq 0}$-part (resp.\ the $\D_{\leq 0}$-part) to be the one of objects whose underlying object in $\D^a_\solid(A^+/\pi)$ lies in $\D_{\geq 0}$ (resp.\ $\D_{\leq 0}$) defines a $t$-structure on $\D_\et(Y,(A^+/\pi,\F{p})_\solid)^\oc$ for which each pullback functor for a morphism $Y'\to Y$ of strictly totally disconnected spaces is $t$-exact (as $C(\pi_0(Y),\F{p})\to C(\pi_0(Y'),\F{p})$ is flat). If $Y$ is not assumed to be a strictly totally disconnected space, we therefore obtain a $t$-structure on $\D_\et(Y,(A^+/\pi,\F{p})_\solid)$ by descent, such that the pullback functors are $t$-exact. Clearly, this $t$-structure induces the given one for $\ri^{+}/\pi$-module objects. Altogether we reduce the $+$-boundedness of $f$ to showing that the functor
  \[
    \widetilde{f}_\ast\colon \D_\et(Y,(A^+/\pi,\F{p})_\solid)^\oc\to \D^a_\solid(A^+/\pi,\F{p}),
  \]
  which is right adjoint to the natural symmetric monoidal functor
  \[
    f^\ast\otimes \D^a_\solid(A^+/\pi)\colon \D^a_\solid(A^+/\pi,\F{p})\to \D_\et(Y,(A^+/\pi,\F{p})_\solid)^\oc,
  \]
  has finite cohomological dimension for the $t$-structures on both sides. In fact, we may even restrict along $\D_\solid(\F{p})\to \D^a_\solid(A^+/\pi,\F{p})$, compatibly with the $t$-structure. Namely, $\widetilde{f}_\ast$ is the base change along $\D_\solid(\F{p})\to \D^a_\solid(A^+/\pi,\F{p})$ of the functor
  \[
    \widetilde{f}_{\ast,\F{p}}\colon \D_\et(Y,{\F{p}}_\solid)^\oc:=\D_\et(Y,\F{p})^\oc\otimes_{\D(\F{p})} \D_\solid(\F{p})\to \D_\solid(\F{p}),
  \]
  which is right adjoint to the natural symmetric monoidal functor $\D_\solid(\F{p})\to \D_\et(Y,{\F{p}}_\solid)^\oc$. Moreover, $\D_\et(Y,{\F{p}}_\solid)^\oc$ acquires again a $t$-structure (by descent from the strictly totally disconnected case) such that it suffices to show that $\widetilde{f}_{\ast,\F{p}}$ has finite cohomological dimension for this $t$-structure.
  It suffices to see that there exists some $d\geq 0$ such that for any ($\kappa$-small) profinite set $S$ the composition
  \[
    \Gamma(S,-)\circ \widetilde{f}_{\ast,\F{p}}\colon \D_\et(Y,{\F{p}}_\solid)^\oc\to \D(\F{p})
  \]
  has cohomological dimension bounded by $d$. Indeed, the functors $\Gamma(S,-)\cong \Hom({\F{p}}_\solid[S],-)$ for $S$ a profinite set are $t$-exact and a conservative family of functors on $\D_\solid(\F{p})$. The diagram
\[\begin{tikzcd}
  {\D_\et(Y,{\F{p}}_\solid)^\oc} & {\D_\solid(\F{p})} \\
  {\D_\et(Y,\F{p})} & {\D(\F{p})}
  \arrow["{\Gamma(S,-)}", from=1-2, to=2-2]
  \arrow["{f_\ast}", from=2-1, to=2-2]
  \arrow["{\widetilde{f}_{\ast,\F{p}}}", from=1-1, to=1-2]
  \arrow["{G:=\mathrm{Id}_{\D_\et(Y,\F{p})^\oc}\otimes \Gamma(S,-)}"', from=1-1, to=2-1]
      \end{tikzcd}\]
    commutes because the diagram of their left adjoints commutes (the left adjoints $(-)\otimes_{\F{p}}{\F{p}}_\solid[S]$ respectively pullback along $f$ act on different tensor factors). Note that we used that $\Gamma(S,-)$ commutes with colimits and is $\D(\F{p})$-linear to write the right adjoint of the pullback functor $\D_\et(Y,\F{p})\to \D_\et(Y,{\F{p}}_\solid)^\oc$ as $G=\mathrm{Id}_{\D_\et(Y,\F{p})^\oc}\otimes \Gamma(S,-)$.

    We claim that the functor $G$ is $t$-exact for the given $t$-structure on the source and the natural one on the target. The functor $G$ commutes with pullbacks in $Y$ (as pullback along some $Y'\to Y$ and $\Gamma(S,-)$ act on different tensor factors), which reduces the $t$-exactness claim to the case that $Y$ is strictly totally disconnected. By construction, $t$-exactness can then be checked after applying the functor
    \[
      \Gamma(Y,-)\otimes \mathrm{Id}_{\D_\solid(\F{p})}\colon \D_\et(Y,{\F{p}}_\solid)^\oc\to \D_\solid(\F{p}),
    \]
    which commutes with $\Gamma(S,-)$ and $G$. Indeed, $\D_\et(Y,{\F{p}}_\solid)^\oc\cong \mathrm{Mod}_{C(Y,\F{p})}(\D_\solid(\F{p}))$ with $t$-structure induced by the one on $\D_\solid(\F{p})$, and similarly for $\D_\et(Y,\F{p})$. Thus, the claimed exactness of $G$ reduces to the $t$-exactness of $\Gamma(S,-)\colon \D_\solid(\F{p})\to \D(\F{p})$, which is true.
    Given the $t$-exactness of $G$ we can finish the proof in the general case, i.e., $Y$ is not assumed to be strictly totally disconnected anymore. Then the pushforward $f_\ast\colon \D_\et(Y,\F{p})\to \D(\F{p})$ has cohomological dimension bounded by some $d\geq 0$ because $Y$ is $p$-bounded (by \cref{sec:bound-cond-new-absolute-and-relative-notion-of-ell-boundedness}). Thus, $f_\ast\circ G\cong \Gamma(S,-)\circ \widetilde{f}_{\ast,\F{p}}$ has cohomological dimension bounded by $d$, which is independent of $S$, as desired.
\end{proof}

The previous proof used the following lemma to reduce the prespatial case to the spatial case. We extract it for reusability.

\begin{lemma} \label{sec:funct-da_h--1-compactification-of-prespatial-diamond}
  Let $X$ be a prespatial diamond, and let $X_0\subseteq X$ be a spatial subdiamond with the property that $X_0(K,\ri_K)=X(K,\ri_K)$ for all perfectoid fields $K$ of characteristic $p$. Assume that $X\to Z$ is a quasi-compact separated morphism to a small $v$-sheaf. Then the natural morphism $h\colon \overline{X_0}^{/Z}\to \overline{X}^{/Z}$ is an isomorphism.
\end{lemma}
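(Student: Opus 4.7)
The plan is to verify that $h$ is an isomorphism of $v$-sheaves on $\Perfd_{\F{p}}$ by evaluating on $(C,C^+)$-points, where $C$ ranges over algebraically closed perfectoid fields of characteristic $p$ and $C^+\subseteq C$ ranges over open and bounded valuation subrings. Such points detect isomorphisms: both $\overline{X_0}^{/Z}$ and $\overline{X}^{/Z}$ are small $v$-sheaves, and every affinoid perfectoid space admits a $v$-cover by its $w$-localization whose connected components have this form (cf.\ \cite[Lemma 7.13]{etale-cohomology-of-diamonds}).

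The key tool is the universal property of the canonical compactification \cite[Proposition~18.6]{etale-cohomology-of-diamonds}: for any quasi-compact separated morphism $Y\to W$ of small $v$-stacks, a map $\Spa(C,C^+)\to \overline{Y}^{/W}$ is the same datum as a map $\Spa(C,C^+)\to W$ together with a lift to $Y$ of its restriction to the uniformization $\Spa(C,\ri_C)\subseteq \Spa(C,C^+)$. Applied to both $X_0\to Z$ and $X\to Z$, this yields
\begin{align*}
\overline{X_0}^{/Z}(C,C^+) &= X_0(C,\ri_C) \times_{Z(C,\ri_C)} Z(C,C^+),\\
\overline{X}^{/Z}(C,C^+) &= X(C,\ri_C) \times_{Z(C,\ri_C)} Z(C,C^+),
\end{align*}
and the map $h(C,C^+)$ is induced by the inclusion $X_0(C,\ri_C)\hookrightarrow X(C,\ri_C)$. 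By the defining property of the spatial subdiamond $X_0\subseteq X$ the latter inclusion is a bijection, so $h$ is a bijection on all $(C,C^+)$-points and hence an isomorphism of $v$-sheaves.

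The main subtlety will lie in extracting the displayed fiber product presentation for $\overline{(-)}^{/Z}(C,C^+)$ from the universal property of the canonical compactification and in verifying that $X_0\to Z$ is itself quasi-compact separated so that $\overline{X_0}^{/Z}$ is well-defined (here one uses that $X_0$ is spatial, hence qcqs, and that $X_0\hookrightarrow X$ is a monomorphism). Once these points are settled, the hypothesis of prespatiality does all the remaining work and no further geometric input is required.
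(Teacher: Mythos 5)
Your core computation is exactly the paper's: describe the $(K,K^+)$-points of both compactifications as $X_0(K,\ri_K)\times_{Z(K,\ri_K)}Z(K,K^+)$ resp.\ $X(K,\ri_K)\times_{Z(K,\ri_K)}Z(K,K^+)$ via \cite[Proposition 18.6]{etale-cohomology-of-diamonds}, and conclude from the defining property of the spatial subdiamond $X_0$. The gap is in your reduction to points. Bijectivity on $(C,C^+)$-points does \emph{not} detect isomorphisms for arbitrary maps of small $v$-sheaves, and the $w$-localization heuristic you offer does not repair this: for an infinite profinite set $S$ and an algebraically closed perfectoid field $C$ of characteristic $p$, the natural map $\bigsqcup_{s\in S}\Spa(C,\ri_C)\to \underline{S}\times\Spa(C,\ri_C)$ is bijective on $(K,K^+)$-points for every perfectoid field $K$ (any such test space is connected, so maps to $\underline{S}$ are constant), yet it is not an isomorphism, since the source is not quasi-compact and the identity of the target does not lift. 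The valid criterion, \cite[Lemma 12.5]{etale-cohomology-of-diamonds}, requires the map in question to be quasi-compact and quasi-separated, and this is precisely the hypothesis your proposal never verifies for $h$: you only check that $X_0\to Z$ is quasi-compact separated so that $\overline{X_0}^{/Z}$ is defined, which is a different (and easier) point.

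The missing step is short and is what the paper does: since $X\to Z$ and $X_0\to X$ (hence $X_0\to Z$) are quasi-compact and separated, both $\overline{X_0}^{/Z}$ and $\overline{X}^{/Z}$ are proper over $Z$ by \cite[Proposition 18.7]{etale-cohomology-of-diamonds}, so $h$ is proper, in particular qcqs, and \cite[Lemma 12.5]{etale-cohomology-of-diamonds} then legitimately reduces the claim to bijectivity on $(K,K^+)$-points, where your fiber-product computation and the hypothesis $X_0(K,\ri_K)=X(K,\ri_K)$ finish the argument. With this one correction your argument coincides with the paper's proof.
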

\begin{proof}
  As $X\to Z$ and $X_0\to X$ are quasi-compact and separated, the morphism $h$ is well-defined and proper (\cite[Proposition 18.7]{etale-cohomology-of-diamonds}). In particular, $h$ is qcqs and hence we may apply \cite[Lemma 12.5]{etale-cohomology-of-diamonds}. Hence, it suffices to check that for any perfectoid field $\Spa(K,K^+)$ over $Z$, where $K^+\subseteq K$ is an open and bounded valuation subring, the map $\overline{X_0}^{/Z}(K,K^+)\to \overline{X}^{/Z}(K,K^+)$
  is bijective. But by definition of canonical compactifications (\cite[Proposition 18.6]{etale-cohomology-of-diamonds}), we get
  \begin{align*}
    \overline{X_0}^{/Z}(K,K^+) &= X_0(K,\ri_K)\times_{Z(K,\ri_K)} Z(K,K^+)\\
    &\cong X(K,\ri_K)\times_{Z(K,\ri_K)}Z(K,K^+)\\
    &=\overline{X}^{/Z}(K,K^+)
  \end{align*}
  by our assumption.
\end{proof}

\subsection{Main theorem} \label{sec:d_hats-on-+-bounded-spaces}

In this section we will prove the remaining assertion of \cref{sec:introduction-1-main-theorem-introduction}, i.e., that if $X=\Spa(A,A^+)\in \AffPerfd$ admits a map of finite $\dimtrg$ to a totally disconnected space, then
\begin{align*}
  \D^a_{\hat\solid}(A^+) \isoto \D^a_{\hat\solid}(\mathcal{O}^+_X)
\end{align*}
is an equivalence. For this we will use the criterion of \cref{sec:bound-cond-1-criterion-for-sheafification-being-an-equivalence} by using the (overconvergent) nuclear $\mathcal C$-valued sheaves from \cref{sec:mathc-valu-nucl}. The strategy can roughly be described as realizing the category $\D^a_{\hat\solid}(\mathcal{O}^+_X)$ as a category of $\mathcal{O}^+$-modules in $\D_\nuc(X,\Lambda)$, where $\Lambda=\Z_p[[\pi]]$ with $\pi$ mapping to a pseudo-uniformizer in $A^+$.

In order to implement the strategy, we need to first introduce the following variant for nuclear $\mathcal{C}$-valued sheaves. In the following we recall the definition of $p$-bounded (pre)spatial diamonds from \cref{def:ell-bounded-prespatial-diamond}.

\begin{definition}
  \label{sec:main-theor-d_hats-nuclear-almost-sheaves}
Let $X = \Spa(A, A^+)$ be an affinoid perfectoid space with pseudouniformizer $\pi$. Then for every $p$-bounded spatial diamond $Y$ we denote
\begin{align*}
	\D_\nuc(Y, (A^+)^a_{\hat\solid}) := \D_\nuc(Y, \Z_p[[\pi]]) \tensor_{\D_\nuc(\Z_p[[\pi]])} \D^a_{\hat\solid}(A^+),
\end{align*}
with $\D_\nuc(Y,\Z_p[[\pi]])$ as defined in \cref{sec:overc-solid-sheav-basic-nuclear-and-conuclear}.
\end{definition}

Note that the category $\D^a_{\hat\solid}(A^+)$ is not compactly generated, and hence not a nicely generated $\D_\nuc(\Z_p[[\pi]])$-linear category in the sense of \cref{def:nicely-generated-nuc-linear-category}. Nevertheless we get the following assertions analogous to \cref{rslt:properties-of-C-valued-oc-nuc-sheaves}:

\begin{lemma} \label{rslt:properties-of-A+-linear-oc-nuc-sheaves}
Let $X = \Spa(A, A^+)$ be an affinoid perfectoid space with pseudouniformizer $\pi$. Then:
\begin{lemenum}
	\item \label{rslt:descent-for-A+-linear-oc-nuc-sheaves} The assignment $Y \mapsto \D_\nuc(Y, (A^+)^a_{\hat\solid})$ defines a hypercomplete sheaf of categories on the big quasi-pro-étale site of $p$-bounded spatial diamonds.

	\item \label{rslt:A+-linear-oc-nuc-sheaf-generated-by-pi-complete-obj} For every $p$-bounded spatial diamond $Y$ the category $\D_\nuc(Y, (A^+)^a_{\hat\solid})$ is generated under colimits by complete objects.

	\item \label{rslt:A+-linear-sheaves-on-std-space} If $Y$ is a strictly totally disconnected space then $\D_\nuc(Y, (A^+)^a_{\hat\solid}) = \D^a_{\hat\solid}(C(\pi_0(Y), A^+))$.
\end{lemenum}
\end{lemma}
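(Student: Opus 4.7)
The strategy will mirror that of \cref{rslt:properties-of-C-valued-oc-nuc-sheaves}, with the twist that $\D^a_{\hat\solid}(A^+)$ is not \emph{a priori} a nicely generated $\D_\nuc(\Z_p[[\pi]])$-linear category in the sense of \cref{def:nicely-generated-nuc-linear-category}, so the required properties must be verified directly using the specific structure of $\D^a_{\hat\solid}(A^+)$. The first move is to dispose of (iii), since both (i) and (ii) will be reduced to it via descent. For $Y$ strictly totally disconnected with $T := \pi_0(Y)$, \cref{sec:nucl-objects-overc-1-nuclear-objects-are-pulled-back-from-pi-0} gives $\D_\nuc(Y,\Z_p[[\pi]]) \cong \D_\nuc(C(T,\Z_p[[\pi]]))$, and since $\D_\nuc(\Z_p[[\pi]])$ is rigid by \cref{sec:nuclear-sheaves-1-example-for-rigid-categories} and $C(T,\Z_p[[\pi]])$ is nuclear over $\Z_p[[\pi]]$ (being complete with $C(T,\Z_p[[\pi]])/\pi = C(T,\F_p)$ discrete, cf.\ \cref{sec:defin-d_hats-2-complete-and-discrete-mod-i-implies-nuclear}), one has $\D_\nuc(C(T,\Z_p[[\pi]])) \cong \Mod_{C(T,\Z_p[[\pi]])}(\D_\nuc(\Z_p[[\pi]]))$. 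Tensoring up, the defining tensor product becomes $\Mod_{C(T,\Z_p[[\pi]])}(\D^a_{\hat\solid}(A^+))$. The remaining identification with $\D^a_{\hat\solid}(C(T,A^+))$ then follows from \cref{rslt:steadiness-for-modified-modules} applied to the adic pushout $A^+ \otimes^\solid_{\Z_p[[\pi]]} C(T,\Z_p[[\pi]]) \cong C(T,A^+)$ (cf.\ \cref{rslt:pushout-of-adic-analytic-rings}), combined with the fact that almost structures are preserved under base change by \cref{sec:glob-stably-unif-2-closed-open-immersions-are-stable-under-base-change-for-lurie-tensor-product}.

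For (i), recall from \cref{sec:nucl-objects-overc-3-overconvergent-and-nuclear-objects} that $Y \mapsto \D_\nuc(Y,\Z_p[[\pi]])$ is a hypercomplete quasi-pro-\'etale sheaf, and that all pullback functors in the limit diagram are $\D_\nuc(\Z_p[[\pi]])$-linear, so the descent in fact takes place in $\Pr^L_{\D_\nuc(\Z_p[[\pi]])}$. Since $\D_\nuc(\Z_p[[\pi]])$ is rigid, \cref{sec:nuclear-sheaves-1-good-properties-of-modules-over-rigid-categories}(i) implies that tensoring with a dualizable object commutes with limits in $\Pr^L_{\D_\nuc(\Z_p[[\pi]])}$. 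It therefore suffices to verify that $\D^a_{\hat\solid}(A^+)$ is dualizable as a $\D_\nuc(\Z_p[[\pi]])$-linear category, equivalently (again by \cref{sec:nuclear-sheaves-1-good-properties-of-modules-over-rigid-categories}(i)) as an object of $\Pr^L_\Sp$. This follows since $\D^a_{\hat\solid}(A^+)$ is compactly generated: both $\D_{\hat\solid}((A^+,A^+))$ and the localization to $\D^a(A^+)$ are compactly generated by construction.

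For (ii), $\D_\nuc(Y,(A^+)^a_{\hat\solid})$ is generated under colimits by external tensor products $\mathcal M \boxtimes N$ with $\mathcal M \in \D_\nuc(Y,\Z_p[[\pi]])$ and $N \in \D^a_{\hat\solid}(A^+)$ compact. By \cref{sec:nucl-objects-overc-1-d-nuc-generated-by-complete-objects} we may take $\mathcal M$ complete and right-bounded, and compact objects of $\D^a_{\hat\solid}(A^+)$ are automatically complete by \cref{rslt:basic-properties-of-adically-complete-modules,rslt:alpha-push-pull-of-compact-module-over-adic-ring}. Completeness of $\mathcal M \boxtimes N$ in $\D_\nuc(Y,(A^+)^a_{\hat\solid})$ can be verified after quasi-pro-\'etale descent to a strictly totally disconnected cover using part (i), and by (iii) the check reduces to showing that in $\D^a_{\hat\solid}(C(T,A^+))$ the object corresponding to $\mathcal M \boxtimes N$ is complete, which follows from \cref{rslt:complete-modules-stable-under-tensor} since $\mathcal M$ (and hence its pullback to $C(T,\Z_p[[\pi]])$-modules via \cref{sec:nucl-objects-overc-1-nuclear-objects-are-pulled-back-from-pi-0}) is complete and right-bounded, and $N$ is bounded and complete.

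The main obstacle is the identification $\Mod_{C(T,\Z_p[[\pi]])}(\D^a_{\hat\solid}(A^+)) \cong \D^a_{\hat\solid}(C(T,A^+))$ in (iii), which requires carefully matching the analytic ring structure on $C(T,A^+)$ with the module action coming from the abstract tensor product. A secondary technical point is verifying that $\D^a_{\hat\solid}(A^+)$ is genuinely compactly generated (as opposed to merely dualizable), which is needed in (i); this should follow from the fact that the defining localization $\D(A^+) \to \D^a(A^+)$ is generated by a compact idempotent algebra built from $A^+/A^{\circ\circ}$, combined with the construction of $\D_{\hat\solid}$ as an $\Ind$-completion of a small subcategory in \cref{def:modified-modules-over-adic-ring}.
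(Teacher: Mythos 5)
There is a genuine gap: your argument for both (i) and (ii) rests on the claim that $\D^a_{\hat\solid}(A^+)$ is compactly generated, and this is false --- the paper explicitly remarks, right before the lemma, that $\D^a_{\hat\solid}(A^+)$ is \emph{not} compactly generated and hence not nicely generated in the sense of \cref{def:nicely-generated-nuc-linear-category}. Your proposed justification, that the localization $\D(A^+)\to\D^a(A^+)$ is cut out by a ``compact idempotent algebra built from $A^+/A^{\circ\circ}$'', fails because $A^{\circ\circ}$ is a filtered colimit of principal ideals and is not finitely generated, so $A^+/A^{\circ\circ}$ is not a compact object; correspondingly the almost unit $(A^+)^a$ is not compact, since its Hom functor is computed by almost elements, i.e.\ by $\Hom(A^{\circ\circ},-)$, which does not commute with filtered colimits. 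This breaks your proof of (i), where dualizability of $\D^a_{\hat\solid}(A^+)$ over $\D_\nuc(\Z_p[[\pi]])$ is deduced from compact generation, and your proof of (ii), where the generators are taken to be $\mathcal M\boxtimes N$ with $N$ compact in $\D^a_{\hat\solid}(A^+)$ --- such $N$ do not exist in sufficient supply.

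The paper circumvents this by first working with the non-almost category $\D_{\hat\solid}(A^+)$: one verifies that it \emph{is} nicely generated, with generators $A^+_{\hat\solid}[S]$; the non-obvious condition is that $\Hom^{\D_\nuc(\Z_p[[\pi]])}(A^+_{\hat\solid}[S],-)$ preserves colimits, which is proved by factoring through the nuclearization functor and invoking \cref{sec:defin-d_hats-1-nuclearization} (a step absent from your proposal), while completeness of $M\tensor A^+_{\hat\solid}[S]$ uses \cref{rslt:adic-base-change-preserves-adic-completeness}. Then \cref{rslt:properties-of-C-valued-oc-nuc-sheaves} gives all three statements for $\D_\nuc(-,A^+_{\hat\solid})$, and one passes to the almost category: $\D^a_{\hat\solid}(A^+)$ is a retract of $\D_{\hat\solid}(A^+)$ via $(-)_!$ and $(-)^a$, hence dualizable over the rigid category $\D_\nuc(\Z_p[[\pi]])$, which gives (i); parts (ii) and (iii) follow by base-changing the open immersion $\D_{\hat\solid}(A^+)\to\D^a_{\hat\solid}(A^+)$ along $\D_\nuc(Y,\Z_p[[\pi]])\otimes_{\D_\nuc(\Z_p[[\pi]])}(-)$, using \cref{sec:glob-stably-unif-2-closed-open-immersions-are-stable-under-base-change-for-lurie-tensor-product}. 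Your treatment of (iii) is close in spirit (the obstacle you flag, identifying $\Mod_{C(\pi_0(Y),\Z_p[[\pi]])}(\D^a_{\hat\solid}(A^+))$ with $\D^a_{\hat\solid}(C(\pi_0(Y),A^+))$, is indeed genuine content, supplied in the paper via induced analytic ring structures), but (i) and (ii) must be rerouted as above --- for instance by taking as generators the almostifications of compact objects of $\D_{\hat\solid}(A^+)$, which are complete since $(-)^a$ preserves limits, and by deducing dualizability from the retract structure rather than from compact generation.
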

\begin{proof}
Note that $\D_{\hat\solid}(A^+)$ is a nicely generated $\D_\nuc(\Z_p[[\pi]])$-linear category. To prove this, let $(X_i)_{i \in I}$ be the family of compact generators $A^+_{\hat\solid}[S]$ for profinite sets $S$, i.e., $A^+_{\hat\solid}[S]=\alpha^\ast(A^+[S])$ with $\alpha^\ast$ as in \cref{rslt:basic-properties-of-modified-modules}. We show that they satisfy properties (i), (ii) and (iii) from \cref{def:nicely-generated-nuc-linear-category}. Part (iii) is obvious by (\cref{eq:1:absolute-vs-internal-hom}). For (ii) we note that the functor $\Hom(A^+_{\hat\solid}[S], -)\colon \D_{\hat\solid}(A^+) \to \D_\nuc(\Z_p[[\pi]])$ factors as the composition of functors
\begin{align*}
	\D_{\hat\solid}(A^+) \xto{\IHom(A^+_{\hat\solid}[S], -)} \D_{\hat\solid}(A^+) \xto{\mathrm{forget}} \D_\solid(\Z_p[[\pi]]) \xto{(-)_\nuc} \D_\nuc(\Z_p[[\pi]]).
\end{align*}
The first functor preserves colimits because compact objects in $\D_{\hat\solid}(A^+)$ are stable under tensor products. The second functor obviously preserves colimits, while \cref{sec:defin-d_hats-1-nuclearization} shows that the third functor preserves colimits. This proves that property (ii) from \cref{def:nicely-generated-nuc-linear-category} is satisfied. For property (i) we note that for every right bounded and complete $M \in \D_\nuc(\Z_p[[\pi]])$ we have
\begin{align*}
	M \tensor A^+_{\hat\solid}[S] = (M \tensor_{\Z_p[[\pi]]_\solid} \Z_p[[\pi]]_\solid[S]) \tensor_{\Z_p[[\pi]]_\solid} A^+_{\hat\solid}, 
\end{align*}
so we conclude by \cref{rslt:adic-base-change-preserves-adic-completeness}. We have finally shown that $\D_{\hat\solid}(A^+)$ is a nicely generated $\D_\nuc(\Z_p[[\pi]])$-linear category. Thus by \cref{rslt:properties-of-C-valued-oc-nuc-sheaves} all claims are true for $\D_\nuc(-, A^+_{\hat\solid}):=\D_\nuc(-,\Z_p[[\pi]])\otimes_{\D_\nuc(\Z_p[[\pi]])}\D_{\hat\solid}(A^+)$ in place of $\D_\nuc(-, (A^+)^a_{\hat\solid})$. The almostification functor $(-)^a\colon \D_{\hat\solid}(A^+)\to \D^a_{\hat\solid}(A^+)$ (with left adjoint $(-)_!$) realizes $\D^a_{\hat\solid}(A^+)$ as a retract of $\D_{\hat\solid}(A^+)$, which shows that $\D^a_{\hat\solid}(A^+)$ is dualizable as a $\D_\nuc(\Z_p[[\pi]])$-linear category (by \cref{sec:mathc-valu-geom-remark-rigidity-of-d-nuc-r}). This implies (i) by the argument in \cref{rslt:properties-of-C-valued-oc-nuc-sheaves}. Furthermore, part (ii) and (iii) follow from \cref{rslt:properties-of-C-valued-oc-nuc-sheaves} by \cref{sec:glob-stably-unif-2-closed-open-immersions-are-stable-under-base-change-for-lurie-tensor-product}.
\end{proof}

We note that $\D_\nuc(Y,(A^+)^a_{\hat\solid})$ is naturally a symmetric monoidal $\infty$-category for a $p$-bounded spatial diamond. Indeed, \cite[Theorem 8.6]{condensed-complex-geometry} implies that the category $\D_\nuc(Y,\Z_p[[\pi]])$ is stable under tensor products in $\D_\solid(Y,\Z_p[[\pi]])_{\omega_1}$.

\begin{definition}
  \label{sec:main-theor-d_hats-definition-of-weird-o-+-modules}
Let $X = \Spa(A, A^+)$ be an affinoid perfectoid space and $Y$ a $p$-bounded spatial diamond over $X^\flat$. By \cref{rslt:properties-of-A+-linear-oc-nuc-sheaves} and \cref{rslt:v-hyperdescent-for-O+-modules} there is a unique algebra object $\ri^{+a} \in \D_\nuc(Y, (A^+)^a_{\hat\solid})$ such that for every strictly totally disconnected space $Z$ over $Y$ with untilt $Z^\sharp = \Spa(B, B^+)$ over $X$, the pullback of $\ri^{+a}$ to $Z$ is the object
\begin{align*}
	B^{+a} \in \D^a_{\hat\solid}(C(\pi_0(Z), A^+)) \cong \D_\nuc(Z, (A^+)^a_{\hat\solid})
\end{align*}
with the last isomorphism supplied by \cref{rslt:A+-linear-sheaves-on-std-space}.
We denote
\begin{align*}
	\D_\nuc(Y, (\ri^+, A^+)^a_{\hat\solid}) := \Mod_{\ri^{+a}}(\D_\nuc(Y, (A^+)^a_{\hat\solid})).
\end{align*}
\end{definition}

The category $\D_\nuc(Y,(\ri^+,A^+)^a_{\hat\solid})$ is a precise version of the heuristic category of ``$\ri^+$-modules in $\D_\nuc(Y,\Z_p[[\pi]])$''. The extra factor $\D^a_{\hat\solid}(A^+)$ accomodates the almost mathematics.

\begin{lemma}
Let $X = \Spa(A, A^+)$ be a $p$-bounded affinoid perfectoid space with pseudouniformizer $\pi$. Then:
\begin{lemenum}
	\item The assignment $Y \mapsto \D_\nuc(Y, (\ri^+, A^+)^a_{\hat\solid})$ defines a hypercomplete quasi-pro-\'etale sheaf of categories on the big quasi-pro-\'etale site of $p$-bounded spatial diamonds over $X^\flat$. 

	\item \label{rslt:O+-linear-oc-nuc-sheaves-generated-by-pi-cplt} For every $p$-bounded spatial diamond $Y$ over $X^\flat$ the category $\D_\nuc(Y, (\ri^+, A^+)^a_{\hat\solid})$ is generated under colimits by complete objects. 

	\item \label{rslt:O+-linear-oc-nuc-sheaves-on-std-space} If $Y=\Spa(B,B^+)$ over $X$ is strictly totally disconnected, then
	\begin{align*}
		\D_\nuc(Y^\flat, (\ri^+, A^+)^a_{\hat\solid}) = \D^a_{\hat\solid}(B^+, A^+).
	\end{align*}

	\item \label{rslt:O+-linear-oc-nuc-sheaves-on-compactification} For every separated $p$-bounded spatial diamond $Y$ over $X^\flat$ we have
	\begin{align*}
		\D_\nuc(\overline Y^{/X^\flat}, (\ri^+, A^+)^a_{\hat\solid}) = \D_\nuc(Y, (\ri^+, A^+)^a_{\hat\solid}).
	\end{align*}
\end{lemenum}
\end{lemma}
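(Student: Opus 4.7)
The plan is to verify the four parts in order, using the preceding \cref{rslt:properties-of-A+-linear-oc-nuc-sheaves} together with the construction in \cref{sec:main-theor-d_hats-definition-of-weird-o-+-modules}. For (i), the algebra $\ri^{+a}$ is by construction compatible under pullback to strictly totally disconnected covers and therefore defines a coherent section of the sheaf of commutative algebra objects in $Y \mapsto \D_\nuc(Y, (A^+)^a_{\hat\solid})$; since taking modules over such a coherent section commutes with limits of presentable $\infty$-categories, descent for $\D_\nuc(Y, (\ri^+, A^+)^a_{\hat\solid})$ follows formally from \cref{rslt:descent-for-A+-linear-oc-nuc-sheaves}. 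For (ii), I would use the colimit-preserving free module functor $(-) \otimes \ri^{+a}$, whose essential image generates the target under colimits (it is left adjoint to the conservative forgetful functor), and reduce to showing that $\ri^{+a}$ itself is complete. Completeness of $\ri^{+a}$ can be checked after pullback to a strictly totally disconnected cover using \cref{rslt:A+-linear-sheaves-on-std-space}, and combined with the standard fact that tensoring a bounded complete object with a complete algebra preserves completeness, this finishes (ii) via \cref{rslt:A+-linear-oc-nuc-sheaf-generated-by-pi-complete-obj}.

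Part (iii) is essentially a direct unwinding of definitions. By \cref{rslt:A+-linear-sheaves-on-std-space} we have $\D_\nuc(Y^\flat, (A^+)^a_{\hat\solid}) \cong \D^a_{\hat\solid}(C(\pi_0(Y), A^+))$, and the algebra $\ri^{+a}$ identifies with $B^{+a}$ by the defining compatibility in \cref{sec:main-theor-d_hats-definition-of-weird-o-+-modules}. Hence one needs to identify $\Mod_{B^{+a}}(\D^a_{\hat\solid}(C(\pi_0(Y), A^+)))$ with $\D^a_{\hat\solid}(B^+, A^+)$. This should follow from \cref{sec:defin-d_hats-1-modified-version-in-relative-+-finite-type-case} after observing that $(B^+, A^+)_\solid \cong (B^+, C(\pi_0(Y), A^+))_\solid$, since both yield the same integrally closed subring of integral elements in $B^+$; thus the relevant map of adic analytic rings only changes the ambient ring while keeping the $+$-part fixed, which is precisely the situation handled by the cited proposition, and one then passes to almost categories by tensoring with $\D^a(B^+)$.

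Finally, part (iv) reduces, via the functoriality of $\ri^{+a}$ under pullback, to showing that the pullback functor $\D_\nuc(\overline{Y}^{/X^\flat}, (A^+)^a_{\hat\solid}) \to \D_\nuc(Y, (A^+)^a_{\hat\solid})$ is an equivalence. The key input is \cref{sec:nucl-objects-overc-2-nuclear-objects-are-overconvergent}: nuclear objects are overconvergent, and overconvergent sheaves do not distinguish between $Y$ and its relative compactification. More explicitly, after covering both by compatible strictly totally disconnected spaces the categories are described in terms of $\pi_0$, which is unchanged under compactification up to the overconvergent structure. The main obstacle will be verifying that $\ri^{+a}$ transports correctly under the compactification inclusion: this requires combining the descent of \cref{rslt:v-hyperdescent-for-O+-modules} with the computation of $\D^a_{\hat\solid}$ on relative compactifications of totally disconnected spaces provided by \cref{rslt:compute-Dqcohri-on-rel-compactification-of-td-space}, and carefully matching the algebra object on each side of the compactification inclusion.
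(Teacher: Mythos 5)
Parts (i)--(iii) of your proposal follow the paper's own proof essentially verbatim: (i) is descent for module categories over the compatible algebra section $\ri^{+a}$, deduced from \cref{rslt:descent-for-A+-linear-oc-nuc-sheaves}; (ii) reduces to the complete generators of \cref{rslt:A+-linear-oc-nuc-sheaf-generated-by-pi-complete-obj} tensored with $\ri^{+a}$, with completeness checked on a strictly totally disconnected cover; and (iii) is the identification of $\Mod_{B^{+a}}$ over $\D^a_{\hat\solid}(C(\pi_0(Y),A^+))$ with $\D^a_{\hat\solid}(B^+,A^+)$. Your routing of (iii) through the observation $(B^+,A^+)_\solid\cong(B^+,C(\pi_0(Y),A^+))_\solid$ plus \cref{sec:defin-d_hats-1-modified-version-in-relative-+-finite-type-case} is correct and is the same point the paper makes by saying that $A^+\to C(\pi_0(Y),A^+)$ is integral mod $\pi$ and hence induces the analytic ring structure; the two justifications are interchangeable.

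In (iv) your core idea (reduce to the equivalence $\D_\nuc(\overline Y^{/X^\flat},\Z_p[[\pi]])\cong\D_\nuc(Y,\Z_p[[\pi]])$, which holds because nuclear objects are overconvergent and, after passing to strictly totally disconnected covers, depend only on $\pi_0$, cf. \cref{sec:nucl-objects-overc-3-overconvergent-and-nuclear-objects} and \cref{sec:nucl-objects-overc-1-nuclear-objects-are-pulled-back-from-pi-0}) is exactly the paper's argument, but your proposed treatment of the ``main obstacle'' is off. You suggest matching the algebra objects by invoking \cref{rslt:compute-Dqcohri-on-rel-compactification-of-td-space}; that theorem is not applicable here, for two reasons: it requires $X^\flat$ to admit a map of finite $\dimtrg$ to a totally disconnected space, whereas the present lemma only assumes $X$ to be $p$-bounded (the converse implication is not available, cf. \cref{sec:bound-cond-1-totally-disconnected-space-is-p-bounded}), and it is a statement about $\D^a_{\hat\solid}(\ri^+_{(-)})$ rather than about the categories $\D_\nuc(-,(A^+)^a_{\hat\solid})$ in which the matching has to take place. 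In fact no such input is needed: since $Y\to\overline Y^{/X^\flat}$ is quasi-pro-\'etale and separated, every strictly totally disconnected space quasi-pro-\'etale over $Y$ is also quasi-pro-\'etale over $\overline Y^{/X^\flat}$, so the defining compatibility in \cref{sec:main-theor-d_hats-definition-of-weird-o-+-modules} (whose uniqueness already encodes \cref{rslt:v-hyperdescent-for-O+-modules} and \cref{rslt:properties-of-A+-linear-oc-nuc-sheaves}) forces the restriction of $\ri^{+a}_{\overline Y^{/X^\flat}}$ to $Y$ to be $\ri^{+a}_Y$; this is what the paper means by ``the overconvergence of $\ri^{+a}$''. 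With that replacement your (iv) coincides with the paper's proof; as written, however, the step you flag as the main obstacle would fail under the lemma's hypotheses.
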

\begin{proof}
First note that the $p$-boundedness of $X$ implies that every $Y \in X_\qproet^\flat$ is $p$-bounded by \cref{sec:bound-cond-1-quasi-pro-etale-over-ell-bounded-implies-ell-bounded}. 
Part (i) follows from \cref{rslt:descent-for-A+-linear-oc-nuc-sheaves} as in the proof of \cref{sec:universal-descent-1-descent-implies-universal-descent-for-analytic-rings}; we note that the pullback functors for $\D_\nuc(-, (\ri^+, A^+)^a_{\hat\solid})$ along quasi-pro-étale maps are the same as before on underlying objects in $\D_\nuc(-, (A^+)^a_{\hat\solid})$ (because $\ri^{+a}$ is preserved under these pullbacks by definition). Part (ii) follows from \cref{rslt:A+-linear-oc-nuc-sheaf-generated-by-pi-complete-obj} by noting that $\D_\nuc(Y, (\ri^+, A^+)^a_{\hat\solid})$ is generated under colimits by objects of the form $\ri^{+a} \tensor \mathcal M$ for $\mathcal M \in \D_\nuc(Y, (A^+)^a_{\hat\solid})$ running through a set of generator. Part (iii) follows from \cref{rslt:A+-linear-sheaves-on-std-space} and the definition of $\ri^{+a}$. Indeed,
\begin{align*}
  &\mathrm{Mod}_{B^{+a}}(\D^a_{\hat\solid}(C(\pi_0(Y), A^+)))\cong \mathrm{Mod}_{B^{+a}}(\mathrm{Mod}_{C(\pi_0(Y), A^+)}(\D^a_{\hat\solid}(A^+)))\cong \mathrm{Mod}_{B^{+a}}(\D^a_{\hat\solid}(A^+)) \\&\qquad\cong \D^a_{\hat\solid}(B^+,A^+)
\end{align*}
using in the first isomorphism that $A^+\to C(\pi_0(Z),A^+)$ is integral mod $\pi$ (and hence induces the analytic ring structure).
Part (iv) follows from $\D_\nuc(\overline Y^{/X^\flat}, \Z_p[[\pi]]) = \D_\nuc(Y, \Z_p[[\pi]])$ (and the overconvergence of $\ri^{+a}$). Indeed, the claim can (by \cref{sec:nucl-objects-overc-3-overconvergent-and-nuclear-objects} be checked in the case that $X$ is strictly totally disconnected and $Y$ qcqs. Then $\overline{Y}^{/X^\flat}, Y$ are strictly totally disconnected as they are quasi-pro-\'etale over $X$. Then the claim follows from \cref{sec:nucl-objects-overc-1-nuclear-objects-are-pulled-back-from-pi-0} as $\pi_0(\overline{Y}^{/X^\flat})=\pi_0(Y)$.
\end{proof}

The next corollary is the culmination of all of our results on nuclear sheaves and the descent results of $\ri^{+a}_X$-modules from \cref{sec:desc-relat-comp}. It lies at the heart of the proof of our main descent result. Note that in that corollary already the case $Y=X^\flat$ is interesting and non-trivial. We refer the reader to \cref{def:D-ri-on-untilted-small-vstacks} for the notation used in the corollary.

\begin{corollary} \label{rslt:comparison-of-oc-nuc-sheaves-to-O+-modules-on-p-bd-perfd}
Let $X = \Spa(A, A^+)$ be an affinoid perfectoid space with pseudouniformizer $\pi$ such that $X^\flat$ admits a map of finite $\dimtrg$ to some totally disconnected space. Let $Y$ be a separated $p$-bounded spatial diamond and let $f\colon Y\to X^\flat$ be a morphism of finite $\dimtrg$. Then there is a natural equivalence
\begin{align*}
	\D^a_{\hat\solid}(\ri^+_{(\overline{Y}^{/X^\flat})^\sharp}) = \D_\nuc(Y^{\sharp}, (\ri^+, A^+)^a_{\hat\solid}).
\end{align*}
In particular, the following is true:
\begin{corenum}
	\item The category $\D^a_{\hat\solid}(\ri^+_{(\overline{Y}^{/X^\flat})^\sharp})$ is generated under colimits by right bounded, complete objects. The same holds true for $\D^a_{\hat\solid}(\ri^+_X)$.
	\item The pushforward $\widetilde{f}_\ast \colon \D^a_{\hat\solid}(\ri^+_{(\overline{Y}^{/X^\flat})^\sharp}) \to \D^a_{\hat\solid}(A^+)$ preserves all small colimits.
\end{corenum}
\end{corollary}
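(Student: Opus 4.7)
The plan is to construct the claimed equivalence by descent, reducing to the case where $Y$ is strictly totally disconnected and then identifying both sides with $\D^a_{\hat\solid}(B^+, A^+)$, where $Y^\sharp = \Spa(B, B^+)$. First I would view
\[
  \mathcal F(Y) := \D^a_{\hat\solid}(\ri^+_{(\overline Y^{/X^\flat})^\sharp}), \qquad \mathcal G(Y) := \D_\nuc(Y^\sharp, (\ri^+, A^+)^a_{\hat\solid})
\]
as contravariant functors on the category $\mathcal E_{X^\flat}$ of separated $p$-bounded spatial diamonds over $X^\flat$ with structure morphism of finite $\dimtrg$. Then $\mathcal G$ is a hypercomplete quasi-pro-\'etale sheaf by the preceding lemma. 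To promote $\mathcal F$ to a sheaf, I would use that canonical compactification over the fixed base $X^\flat$ commutes with fiber products (verified on $(K,K^+)$-points via $\overline W^{/X^\flat}(K,K^+) = W(K,\ri_K)\times_{X^\flat(K,\ri_K)}X^\flat(K,K^+)$), so a quasi-pro-\'etale cover $g\colon Z\to Y$ in $\mathcal E_{X^\flat}$ gives rise to a \v{C}ech nerve $n\mapsto (\overline{Z^{n/Y}}^{/X^\flat})^\sharp$ of the proper surjection $\bar g^\sharp\colon (\overline Z^{/X^\flat})^\sharp\to (\overline Y^{/X^\flat})^\sharp$, and then the $v$-hyperdescent of \cref{rslt:v-hyperdescent-for-O+-modules} supplies descent for $\mathcal F$.

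Next I would take a quasi-pro-\'etale cover $Z\to Y$ by a strictly totally disconnected perfectoid space; the terms $Z^{n/Y}$ of the \v{C}ech nerve remain strictly totally disconnected by \cite[Corollary 11.29]{etale-cohomology-of-diamonds}, so it suffices to treat the case where $Y$ itself is strictly totally disconnected. In that case \ref{rslt:O+-linear-oc-nuc-sheaves-on-std-space} gives $\mathcal G(Y) = \D^a_{\hat\solid}(B^+, A^+)$, and the equivalence $\mathcal G(Y)\isoto \mathcal F(Y)$ reduces by the criterion \cref{sec:bound-cond-1-criterion-for-tilde-to-be-an-equivalence} to producing a pro-\'etale cover of the affinoid perfectoid space $(\overline Y^{/X^\flat})^\sharp$ by a totally disconnected space along which almost $+$-modules descend universally. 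Such a cover I would obtain from \cref{rslt:descendability-for-relative-compactification-of-td-spaces} applied to the composition $Y\to X^\flat\to Z_0$, where $Z_0$ is the totally disconnected space supplied by the hypothesis on $X^\flat$: the universal property of compactification evaluated on affinoid fields yields the identity $\overline Y^{/X^\flat} = \overline Y^{/Z_0}\times_{\overline{X^\flat}^{/Z_0}}X^\flat$, so the cover of $(\overline Y^{/Z_0})^\sharp$ produced by that lemma base-changes along $(\overline Y^{/X^\flat})^\sharp \to (\overline Y^{/Z_0})^\sharp$ to the required cover of $(\overline Y^{/X^\flat})^\sharp$, with its universal $+$-descent property preserved essentially by the formulation of \cref{rslt:descendability-for-relative-compactification-of-td-spaces}.

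The two consequences then follow from the equivalence. For (i), \ref{rslt:O+-linear-oc-nuc-sheaves-generated-by-pi-cplt} gives that $\mathcal G(Y)$ is generated under colimits by complete objects, which can be chosen right-bounded since they arise from tensoring the right-bounded complete compact generators of $\D^a_{\hat\solid}(A^+)$ with complete nuclear sheaves on $Y^\sharp$; the case $Y = X^\flat$ gives the assertion for $\D^a_{\hat\solid}(\ri^+_X)$. For (ii), naturality of the equivalence in the structure morphism $f\colon Y\to X^\flat$ in $\mathcal E_{X^\flat}$ transports $\widetilde f_\ast$ to a functor $\mathcal G(Y)\to \mathcal G(X^\flat) = \D^a_{\hat\solid}(A^+)$ that factors as the $\ri^{+a}$-forgetful functor (colimit-preserving since colimits in modules over an algebra are computed underlyingly) followed by the pushforward in $\D_\nuc(-, (A^+)^a_{\hat\solid})$ along $Y\to X^\flat$; the latter is colimit-preserving by the $\D_\nuc(\Z_p[[\pi]])$-linearity and dualizability built into the definition of $\D_\nuc(-, (A^+)^a_{\hat\solid})$, which reduces colimit-preservation to the strictly totally disconnected case where it becomes the forgetful functor $\mathrm{Mod}_{C(\pi_0(Y^\sharp), A^+)}(\D^a_{\hat\solid}(A^+))\to \D^a_{\hat\solid}(A^+)$.

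The main obstacle I anticipate is the careful reconciliation of the two relative compactifications $\overline Y^{/X^\flat}$ and $\overline Y^{/Z_0}$ in the second paragraph: the identity $\overline Y^{/X^\flat} = \overline Y^{/Z_0}\times_{\overline{X^\flat}^{/Z_0}}X^\flat$ handles their relationship at the level of v-sheaves, but transferring the universal $+$-descent cover from one compactification to the other requires carefully tracing through the construction in the proof of \cref{rslt:descendability-for-relative-compactification-of-td-spaces} and verifying stability of the descendability bound under the relevant base change of affinoid perfectoid spaces. This is the crucial technical step, and is precisely where the hypotheses of finite $\dimtrg$ on both $X^\flat\to Z_0$ and $f\colon Y\to X^\flat$ come in.
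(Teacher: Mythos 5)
Your treatment of the main equivalence is essentially the paper's: both sides are quasi-pro-étale (hyper)sheaves, one reduces to strictly totally disconnected $Y$, and there the identification of $\D^a_{\hat\solid}(\ri^+_{(\overline Y^{/X^\flat})^\sharp})$ with $\D^a_{\hat\solid}(B^+,A^+)$ comes from \cref{sec:bound-cond-1-criterion-for-tilde-to-be-an-equivalence} together with \cref{rslt:descendability-for-relative-compactification-of-td-spaces}. The only difference is packaging: the paper replaces the base by $Z=\Spa(B,A^+)$ and cites \cref{rslt:compute-Dqcohri-on-rel-compactification-of-td-space}, whereas you apply \cref{rslt:descendability-for-relative-compactification-of-td-spaces} to the composite $Y\to X^\flat\to Z_0$ and base-change the cover along the pro-open immersion $\overline Y^{/X^\flat}=\overline Y^{/Z_0}\times_{\overline{X^\flat}^{/Z_0}}X^\flat\to \overline Y^{/Z_0}$; since the lemma gives \emph{universal} almost $+$-descent and total disconnectedness is preserved along an intersection of rational opens, this is exactly the argument inside the proof of \cref{rslt:compute-Dqcohri-on-rel-compactification-of-td-space}, so this part is fine (modulo making the identifications natural in $Y$ so they glue, which you gesture at and the paper handles by constructing $\alpha$ on strictly totally disconnected spaces first).

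The genuine gap is in part (ii). First, the identification $\mathcal G(X^\flat)=\D^a_{\hat\solid}(A^+)$ is not available: $\mathcal G(X^\flat)=\D_\nuc(X^\flat,(\ri^+,A^+)^a_{\hat\solid})$ is, by the very corollary being proved, $\D^a_{\hat\solid}(\ri^+_X)$, and its identification with $\D^a_{\hat\solid}(A^+)$ is \cref{sec:main-theor-d_hats-main-descent-theorem}, which is deduced \emph{from} this corollary — so as stated your argument is circular; the correct target of the transported $\widetilde f_\ast$ is obtained from the pushforward along $Y_\qproet\to\ast_\qproet$ (global sections), not along $Y\to X^\flat$. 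Second, and more seriously, colimit-preservation of this pushforward does not "reduce to the strictly totally disconnected case by descent/dualizability": descent expresses $\D_\nuc(Y,(A^+)^a_{\hat\solid})$ as a totalization over a hypercover, and the global sections functor is then a cosimplicial limit of pushforwards, which does not commute with colimits for formal reasons. The actual proof needs a non-formal input: $\rho_\ast\colon \D_\nuc(Y,\Z_p[[\pi]])\to\D_\nuc(\Z_p[[\pi]])$ preserves colimits and is $\D_\nuc(\Z_p[[\pi]])$-linear, which follows from rigidity of $\D_\nuc(\Z_p[[\pi]])$ (\cref{sec:mathc-valu-geom-remark-rigidity-of-d-nuc-r}) together with compactness of the unit of $\D_\nuc(Y,\Z_p[[\pi]])$ — i.e.\ the $p$-boundedness of $Y$ via \cref{sec:omeg-solid-sheav-2-properties-of-omega-1-solid-sheaves} — applied through \cref{sec:nuclear-sheaves-1-good-properties-of-modules-over-rigid-categories}; only then does tensoring with $\D^a_{\hat\solid}(A^+)$ give the right adjoint of $\D^a_{\hat\solid}(A^+)\to\D_\nuc(Y,(A^+)^a_{\hat\solid})$, and composing with the ($\ri^{+a}$-module) forgetful functor gives colimit-preservation of $\widetilde f_\ast$. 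Your proposal never invokes the compactness of the unit or the $p$-boundedness of $Y$ at this point, and without it the step fails.
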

\begin{proof}
  Note that the functor $Y\mapsto (\overline{Y}^{/X^\flat})^\sharp$ sends quasi-pro-\'etale covers to $v$-covers and preserves fiber products. Thus by \cref{rslt:v-hyperdescent-for-O+-modules} the functor $Y\mapsto \D^a_{\hat\solid}(\ri^+_{(\overline{Y}^{/X^\flat})^\sharp})$ satisfies quasi-pro-\'etale hyperdescent.
We first construct a natural map of hypercomplete quasi-pro-étale sheaves
\begin{align*}
	\alpha(-)\colon \D_\nuc(-, (\ri^+, A^+)^a_{\hat\solid}) \to \D^a_{\hat\solid}(\ri^+_{(\overline{(-)}^{/X^\flat})^\sharp})
\end{align*}
on the quasi-pro-\'etale site of $p$-bounded spatial diamonds over $X^\flat$. It is enough to construct this map on strictly totally disconnected spaces $Y$ with untilt $Y^\sharp=\Spa(B, B^+)$ over $X$, where by \cref{rslt:v-hyperdescent-for-O+-modules} and \cref{rslt:O+-linear-oc-nuc-sheaves-on-std-space} it boils down to the natural map $\alpha(Y)\colon \D^a_{\hat\solid}(B^+, A^+) \to \D^a_{\hat\solid}(\ri^{+}_{(\overline{Y}^{/X^\flat})^\sharp})$ (which is is compatible with pullback in $Y$).

In the following, we will identify quasi-pro-\'etale sheaves on perfectoid spaces over $X$ with quasi-pro-\'etale sheaves on perfectoid spaces over $X^\flat$ and view both sides as sheaves on perfectoid spaces over $X$. Now pick a quasi-pro-\'etale hypercover $Y^\prime_\bullet \to Y$ by strictly totally disconnected spaces $Y^\prime_n$ over $X$. Then each morphism $Y^\prime_n\to X$ has finite $\dimtrg$, and it suffices to see that $\alpha(Y^\prime_n)$ is an equivalence for any $n\in \Delta$. Thus, we may assume that $Y=\Spa(B,B^+)$ is strictly totally disconnected. Set $Z:=\Spa(B,A^+)$. Then we may replace $X$ by $Z$ as this does not change both sides of $\alpha$ (using \cref{rslt:O+-linear-oc-nuc-sheaves-on-compactification}) and $Z^\flat$ still admits a morphism of finite $\dimtrg$ to a totally disconnected space. Hence, we may assume that $Y\to X$ is quasi-pro-\'etale. We claim that $\alpha(Z)$ is an equivalence. By \cref{rslt:O+-linear-oc-nuc-sheaves-on-std-space,rslt:O+-linear-oc-nuc-sheaves-on-compactification} we have
\begin{align*}
	\D_\nuc(Z, (\ri^+, A^+)^a_{\hat\solid}) \cong \D_\nuc(Y,(\ri^+,A^+)^a_{\hat\solid})\cong \D^a_{\hat\solid}(B^+, A^+).
\end{align*}
On the other hand, letting $C^+$ be the minimal ring of integral elements in $B$ containing the image of $A^+$, we obtain from \cref{rslt:compute-Dqcohri-on-rel-compactification-of-td-space} that
\begin{align*}
	\D^a_{\hat\solid}(\ri^+_Z) = \D^a_{\hat\solid}(C^+) = \D^a_{\hat\solid}(B^+, A^+).
\end{align*}
By going through the natural identifications one checks that $\alpha(Z)$ is the obvious equivalence.

It remains to prove the ``in particular'' claims. Part (i) is an immediate consequence of \cref{rslt:O+-linear-oc-nuc-sheaves-generated-by-pi-cplt} (noting that the right boundedness holds for the constructed complete generators). The case $Y=X^\flat$ implies the case of $\D^a_{\hat\solid}(\ri^+_X)$, where we note that $X^\flat$ is $p$-bounded by \cref{sec:bound-cond-1-totally-disconnected-space-is-p-bounded}. It remains to prove (ii). Under the equivalence $\alpha(Y)$ the functor $\widetilde{f}^\ast$ can be written as the composition
\begin{align*}
	\D^a_{\hat\solid}(A^+) \to \D_\nuc(Y, (A^+)^a_{\hat\solid}) \xto{- \tensor \ri^{+a}} \D_\nuc(Y, (\ri^+, A^+)^a_{\hat\solid}),
\end{align*}
where the first functor is induced (by tensoring over $\D_\nuc(\Z_p[[\pi]])$ with $\D^a_{\hat\solid}(A^+)$) from the pullback $\rho^*\colon \D_\nuc(\Z_p[[\pi]]) \to \D_\nuc(Y, \Z_p[[\pi]])$ for the morphism of sites $\rho\colon Y_\qproet\to \ast_\qproet.$ Thus $\widetilde{f}_\ast$ can be written as the composition of the right adjoints of the above two functors, so it is enough to show that each of these right adjoints preserves all small colimits. The right adjoint of $- \tensor \ri^{+a}$ is the forgetful functor, which clearly preserves all small colimits. The right adjoint $G$ of the first functor can be obtained from the global sections functor $\rho_*\colon \D_\nuc(Y, \Z_p[[\pi]]) \to \D_\nuc(\Z_p[[\pi]])$ by tensoring with $\D^a_{\hat\solid}(A^+)$. Indeed, $\rho_\ast$ commutes with colimits and is $\D_\nuc(\Z_p[[\pi]])$-linear because $\D_\nuc(\Z_p[[\pi]])$ is rigid, the left adjoint $\rho^\ast$ is symmetric monoidal and the unit in $\D_\nuc(Y,\Z_p[[\pi]])$ is compact (\cref{sec:mathc-valu-geom-remark-rigidity-of-d-nuc-r}, \cref{sec:omeg-solid-sheav-2-properties-of-omega-1-solid-sheaves}). This implies that $\rho_*\otimes_{\D_\nuc(\Z_p[[\pi]])}\D^a_{\hat\solid}(A^+)$ defines the right adjoint $G$ of $\D^a_{\hat\solid}(A^+)$ by functoriality in the 2-category of $\D_\nuc(\Z_p[[\pi]])$-linear presentable categories.\footnote{For this argument, we don't need any theory of $(\infty,2)$-categories as the necessary natural transformation can be constructed directly by functoriality of $(-)\otimes_{\D_\nuc(\Z_p[[\pi]])}\D^a_{\hat\solid}(A^+)$.} This proves the claim.
\end{proof}

We can finally show the main descent result of this paper, generalizing \cite[Theorem 3.5.21]{mann-mod-p-6-functors} and finishing the proof of \cref{sec:introduction-1-main-theorem-introduction}:

\begin{theorem} \label{sec:main-theor-d_hats-main-descent-theorem}
Assume that $X = \Spa(A, A^+) \in \AffPerfd$ and that $X^\flat$ admits a map with finite $\dimtrg$ to some totally disconnected space. Then
\begin{align*}
	\D^a_{\hat\solid}(\ri^+_X) = \D^a_{\hat\solid}(A^+).
\end{align*}
\end{theorem}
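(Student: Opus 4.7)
The strategy is to apply the abstract criterion of \cref{sec:bound-cond-1-criterion-for-sheafification-being-an-equivalence} for the sheafification functor $\widetilde{(-)}\colon \D^a_{\hat\solid}(A^+) \to \D^a_{\hat\solid}(\ri^+_X)$ to be an equivalence. Fix a pseudo-uniformizer $\pi \in A \cap A^+$ and verify the three conditions (a), (b), (c) of that lemma.

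Condition (a), that $\widetilde{(-)}\colon \D^a_\solid(A^+/\pi) \isoto \D^a_\solid(\ri^+_X/\pi)$ is an equivalence, will be deduced from \cref{sec:introduction-1-mod-p-theorem-of-lucas}. First, by \cref{sec:bound-cond-1-totally-disconnected-space-is-p-bounded}, the hypothesis that $X^\flat$ admits a map of finite $\dimtrg$ to a totally disconnected space implies that $X^\flat$ is $p$-bounded. Since the structure morphism of $X$ is locally separated and representable in prespatial diamonds, \cref{sec:main-theor-new-texorpdfs-+-bounded-equals-p-bounded} upgrades $p$-boundedness to $+$-boundedness for $X$. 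Now \cref{sec:introduction-1-mod-p-theorem-of-lucas} identifies $\D^a_\solid(\ri^+_X/\pi)$ with $\D^a_\solid(A^+/\pi)$; combined with the compatibility noted in \cref{relation-to-lucas-mod-pi-version} that $\D^a_\solid(\ri^+_X/\pi) = \Mod_{A^+/\pi}(\D^a_{\hat\solid}(\ri^+_X))$, this is exactly condition (a).

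Conditions (b) and (c) will both be obtained from \cref{rslt:comparison-of-oc-nuc-sheaves-to-O+-modules-on-p-bd-perfd}, applied with $Y = X^\flat$ and $f = \id_{X^\flat}$. The hypotheses of the corollary hold: $X^\flat$ is an affinoid perfectoid space, hence a separated spatial diamond, and it is $p$-bounded as already noted; the identity morphism trivially has finite $\dimtrg$. Part (i) of the corollary states precisely that $\D^a_{\hat\solid}(\ri^+_X)$ is generated under colimits by right bounded, complete objects, yielding condition (c). Part (ii), upon noting that $(\overline{X^\flat}^{/X^\flat})^\sharp = X$ and that $\widetilde{f}_\ast$ then agrees with $\Gamma(X, -)$, states that $\Gamma(X, -)\colon \D^a_{\hat\solid}(\ri^+_X) \to \D^a_{\hat\solid}(A^+)$ preserves small colimits, yielding condition (b). With (a), (b), (c) verified, \cref{sec:bound-cond-1-criterion-for-sheafification-being-an-equivalence} completes the proof.

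The real content of the theorem has been packaged into the prior results; what remains is only plumbing. The main conceptual obstacle, already overcome above, was establishing \cref{rslt:comparison-of-oc-nuc-sheaves-to-O+-modules-on-p-bd-perfd}, which realizes $\D^a_{\hat\solid}(\ri^+_X)$ as a module category over $\ri^{+a}$ inside the category $\D_\nuc(X^\flat, (A^+)^a_{\hat\solid})$ of overconvergent nuclear sheaves; there the rigidity of $\D_\nuc(\Z_p[[\pi]])$ and the dualizability of $\D^a_{\hat\solid}(A^+)$ as a $\D_\nuc(\Z_p[[\pi]])$-linear category do the heavy lifting.
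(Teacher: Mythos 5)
Your proposal is essentially the paper's own proof: the paper likewise applies the criterion of \cref{sec:bound-cond-1-criterion-for-sheafification-being-an-equivalence}, obtains condition (a) from the mod-$\pi$ descent theorem of Mann via \cref{sec:bound-cond-1-totally-disconnected-space-is-p-bounded} and \cref{sec:main-theor-new-texorpdfs-+-bounded-equals-p-bounded}, and obtains (b) and (c) from \cref{rslt:comparison-of-oc-nuc-sheaves-to-O+-modules-on-p-bd-perfd} with $Y=X^\flat$. The only imprecision is your treatment of condition (c): the criterion requires generators $\mathcal N$ with $\Gamma(X,\mathcal N)$ bounded to the right, which is not literally what part (i) of the corollary asserts (it only gives right-boundedness of $\mathcal N$ itself); as the paper notes, one must go back into the proof of the corollary — where the generators are of the form $\ri^{+a}\otimes\mathcal M$ and the pushforward is identified explicitly — to see that $\Gamma(X,\mathcal N)$ is indeed right-bounded for these generators.
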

\begin{proof}
  We apply the criterion in \cref{sec:bound-cond-1-criterion-for-sheafification-being-an-equivalence}. Condition (a) of that criterion follows from \cite[Theorem~3.5.21]{mann-mod-p-6-functors} using \cref{sec:main-theor-new-texorpdfs-+-bounded-equals-p-bounded,sec:bound-cond-1-totally-disconnected-space-is-p-bounded}. Conditions (b) and (c) follow from \cref{rslt:comparison-of-oc-nuc-sheaves-to-O+-modules-on-p-bd-perfd} (and its proof to see the right boundedness of $\Gamma(X,\mathcal{N})$ for the constructed complete generators in \cref{rslt:comparison-of-oc-nuc-sheaves-to-O+-modules-on-p-bd-perfd}).  
\end{proof}

We finish this section by providing another application of \cref{rslt:comparison-of-oc-nuc-sheaves-to-O+-modules-on-p-bd-perfd} which will be important in \cite{anschuetz_mann_lebras_6_functors_for_solid_sheaves_on_fargues_fontaine_curves}. In the following, recall the notation from \cref{def:D-ri-on-untilted-small-vstacks}. We start with the following lemma:

\begin{lemma}
  \label{sec:main-theor-texorpdfs-base-change-for-quasi-pro-etale-maps-and-conservativity}
  Let $f\colon Y'\to Y$ be a morphism of untilted small $v$-stacks. Assume that $f$ is qcqs, quasi-pro-\'etale and locally separated. Then $f_\ast\colon \D^a_{\hat\solid}(\ri^+_{Y'}) \to \D^a_{\hat\solid}(\ri^+_Y)$ commutes with colimits and base change in $Y$. If $f$ is separated, then $f_\ast$ is conservative.
\end{lemma}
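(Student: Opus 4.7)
Plan: The approach is to reduce all three assertions to the case where $Y$ (and hence $Y'$) is totally disconnected, where everything can be computed explicitly in the almost module categories from \cref{rslt:v-hyperdescent-for-O+-modules}, and then to recover the general case by v-descent.

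In the totally disconnected case, say $Y = \Spa(A, A^+)$, the assumption that $f$ is qcqs and quasi-pro-\'etale forces $Y' = \Spa(B, B^+)$ to also be totally disconnected. Writing $\mathcal{A} := (A^+)^a_\solid$ and $\mathcal{B} := (B^+)^a_\solid$, we have $\D^a_{\hat\solid}(\ri^+_Y) = \D(\hat{\mathcal{A}})$ and $\D^a_{\hat\solid}(\ri^+_{Y'}) = \D(\hat{\mathcal{B}})$, and $f^\ast$ is identified with $(-) \otimes_{\hat{\mathcal{A}}} \hat{\mathcal{B}}$. Since this base-change functor sends the compact generators $(A^+[S])^a_\cpl$ to the corresponding generators $(B^+[S])^a_\cpl$ for profinite sets $S$, the right adjoint $f_\ast$ preserves colimits. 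Base change against any morphism $g \colon Y_0 \to Y$ with $Y_0 = \Spa(A_0, A_0^+)$ also totally disconnected follows from \cref{rslt:steadiness-for-modified-modules}, after using \cref{rslt:base-change-for-affinoid-perfectoid} to identify the completed almost tensor product $\hat{\mathcal{A}_0} \otimes_{\hat{\mathcal{A}}} \hat{\mathcal{B}}$ with $\hat{\mathcal{B}_0}$ where $\mathcal{B}_0 := (B_0^+)^a_\solid$. Finally, when $f$ is separated, $f_\ast$ becomes the restriction-of-scalars functor along the adic map $\hat{\mathcal{A}} \to \hat{\mathcal{B}}$, which is always conservative.

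To extend to general $Y$, I would invoke v-descent of $\D^a_{\hat\solid}(\ri^+_{(-)})$. All three statements can be checked v-locally (on $Y$ for colimit preservation and conservativity; on $Y_0$ for base change), and the hypotheses on $f$ (qcqs, quasi-pro-\'etale, locally separated, and separated when needed) are all stable under base change, so they transfer to the pulled-back situation. The main technical obstacle is a circular dependency in the base-change reduction: the v-descent argument implicitly uses base change itself to commute $f_\ast$ with pullbacks along the v-cover. The resolution is to first establish base change only in the fully totally disconnected-to-totally disconnected situation via \cref{rslt:steadiness-for-modified-modules} (which is a purely algebraic statement about adic analytic rings), and then to bootstrap to the general case: the natural transformation $g^\ast f_\ast \to f_{0,\ast} g'^\ast$ is between colimit-preserving functors (using the already-established totally disconnected colimit preservation), and its isomorphism-ness is v-local on $Y_0$, reducing after a further pullback of $Y$ by a totally disconnected v-cover to the base case. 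Once base change is in hand, colimit preservation and conservativity in the general case follow by direct v-descent arguments from the totally disconnected case.
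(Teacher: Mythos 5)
Your overall strategy coincides with the paper's proof: reduce by $v$-descent to a totally disconnected base, identify both sides with almost hat-solid module categories via \cref{rslt:v-hyperdescent-for-O+-modules}, deduce colimit preservation from the fact that $f^\ast$ preserves compact generators, get base change from \cref{rslt:steadiness-for-modified-modules} together with \cref{rslt:base-change-for-affinoid-perfectoid}, and obtain conservativity from the forgetful-functor/generator argument; your bootstrap resolving the apparent circularity in the base-change reduction is also how the paper's terse "we only have to check base changes to another strictly totally disconnected space" should be read.

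However, one step is wrong as stated and one is missing. It is \emph{not} true that a qcqs quasi-pro-\'etale $Y'\to Y$ with $Y$ merely totally disconnected is again totally disconnected (or even visibly affinoid perfectoid): the paper explicitly warns about this in a footnote to the proof of \cref{rslt:v-hyperdescent-for-O+-modules}, the issue being connected components $\Spa(K,K^+)$ with $K^+$ non-henselian. The correct reduction is to $Y$ \emph{strictly} totally disconnected, where \cite[Proposition 9.6]{etale-cohomology-of-diamonds} guarantees that a separated quasi-compact quasi-pro-\'etale $Y'$ is again strictly totally disconnected, hence affinoid, so that the identification $\D^a_{\hat\solid}(\ri^+_{Y'})\cong \D^a_{\hat\solid}(B^+)$ (and the description of $f_\ast$ as restriction along the adic map) applies; note that the identification of the category would survive as long as $Y'$ is affinoid perfectoid, but it is precisely this affinoidness that is not automatic over a non-strict totally disconnected base. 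Relatedly, $f$ is only assumed \emph{locally} separated for the colimit and base-change assertions, so before quoting that result you must first reduce to separated $f$; the paper does this by observing that, since $f$ is qcqs, both assertions are local on $Y'$, so one may pass to a finite cover of $Y'$ by separated pieces and express $f_\ast$ through the associated finite (\v{C}ech) limit. With these two adjustments your argument matches the paper's proof.
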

\begin{proof}
  As $f$ is qcqs, the commutation with colimits and the base change in $Y$ are local on $Y'$. In particular, we may assume that $f$ is separated.
  Moreover, we may by $v$-descent assume that $Y$ is a strictly totally disconnected perfectoid space (and we only have to check base changes to another strictly totally disconnected space). More precisely, compatibility with base change implies that conservativity can be checked $v$-locally. If now $Y=\Spa(A,A^+)$ is a strictly totally disconnected perfectoid space, then $Y'=\Spa(B,B^+)$ is as well by \cite[Proposition 9.6]{etale-cohomology-of-diamonds}. But then, $f_\ast\colon \D^a_{\hat\solid}(B^+)\to \D_{\hat\solid}^a(A^+)$ is just the forgetful functor. This implies commutation with colimits. Base change follows from steadiness of adic maps (see \cref{rslt:steadiness-for-modified-modules}). Conservativity of $f_\ast$ follows because by construction $f^\ast$ sends a set of generators of $\D^a_{\hat\solid}(A^+)$ to generators of $\D^a_{\hat\solid}(B^+)$, e.g., the almostifications of the compact objects in $\D_{\hat\solid}(A^+)$.   
\end{proof}

The promised application of \cref{rslt:comparison-of-oc-nuc-sheaves-to-O+-modules-on-p-bd-perfd} is now the following. In that result, we say that an object $M\in \D^a_{\hat\solid}(\ri^+_Y)$ is right-bounded if this is true after pullback to any strictly totally disconnected space over $Y$.

\begin{proposition}
  \label{sec:main-theor-texorpdfs-generation-by-complete-objects}
  Let $X = \Spa(A, A^+)$ be an affinoid perfectoid space with pseudouniformizer $\pi$ and assume that $X^\flat$ admits a map of finite $\dimtrg$ to some totally disconnected space. Then for every separated $p$-bounded morphism of small v-stacks $f\colon Y\to X$ the category $\D^a_{\hat\solid}(\ri^+_Y)$ is generated by right-bounded, $\pi$-complete objects, and the functor
  \begin{align*}
    f_\ast\colon \D^a_{\hat\solid}(\ri^+_Y)\to \D^a_{\hat\solid}(\ri^+_X)=\D^a_{\hat\solid}(A^+)
  \end{align*}
  commutes with colimits.
\end{proposition}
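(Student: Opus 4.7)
The plan is to reduce both claims to \cref{rslt:comparison-of-oc-nuc-sheaves-to-O+-modules-on-p-bd-perfd} (which handles the case of a compactified spatial diamond) via a quasi-pro-étale cover by a strictly totally disconnected perfectoid space, together with the descent control provided by \cref{sec:main-theor-texorpdfs-base-change-for-quasi-pro-etale-maps-and-conservativity}.

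First, since $f$ is separated and $p$-bounded, $Y$ is a qcqs separated prespatial diamond over $X$. I would choose a qcqs separated quasi-pro-étale cover $g\colon \tilde Y\to Y$ with $\tilde Y=\Spa(\tilde B,\tilde B^+)$ a strictly totally disconnected perfectoid space (e.g.\ a $w$-localization of an affinoid perfectoid quasi-pro-étale cover of $Y$). The composite $\tilde f:=f\circ g\colon \tilde Y\to X$ satisfies $\dimtrg(\tilde f)<\infty$ (since $g$ is quasi-pro-étale and $f$ is $p$-bounded), and $\tilde Y^\flat$ is itself totally disconnected, so \cref{sec:main-theor-d_hats-main-descent-theorem} applies and yields $\D^a_{\hat\solid}(\mathcal O^+_{\tilde Y})\cong \D^a_{\hat\solid}(\tilde B^+)$. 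This target category is generated under colimits by right-bounded $\pi$-complete objects (by \cref{rslt:basic-properties-of-modified-modules} and \cref{sec:defin-d_hats-1-examples-for-nuclear-modules}), and $\tilde f_\ast$ is the forgetful functor along $A^+\to \tilde B^+$, hence colimit-preserving.

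By \cref{sec:main-theor-texorpdfs-base-change-for-quasi-pro-etale-maps-and-conservativity} the pushforward $g_\ast$ is conservative, commutes with small colimits, and satisfies base change; as a right adjoint it also preserves $\pi$-completeness. The conservativity plus colimit-preservation imply via the (dual) Barr--Beck--Lurie theorem that $g^\ast\dashv g_\ast$ is comonadic, so $\D^a_{\hat\solid}(\mathcal O^+_Y)\cong \coMod_{g^\ast g_\ast}(\D^a_{\hat\solid}(\mathcal O^+_{\tilde Y}))$ and in particular the essential image of $g_\ast$ generates $\D^a_{\hat\solid}(\mathcal O^+_Y)$ under colimits. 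Combining with the first step, $\D^a_{\hat\solid}(\mathcal O^+_Y)$ is generated under colimits by objects $g_\ast M$ with $M$ right-bounded and $\pi$-complete, and each such $g_\ast M$ is right-bounded and $\pi$-complete. This settles the first assertion.

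For the colimit-preservation of $f_\ast$, I would invoke $v$-descent (\cref{rslt:v-hyperdescent-for-O+-modules}) along the Čech nerve $(\tilde Y_n)_{n\ge 0}$ of $g$: each $\tilde Y_n$ is strictly totally disconnected (qcqs quasi-pro-étale over the strictly totally disconnected $\tilde Y$) with finite $\dimtrg$ map to $X$, so $\D^a_{\hat\solid}(\mathcal O^+_Y)\cong\varprojlim_{[n]\in\Delta}\D^a_{\hat\solid}(\tilde B_n^+)$ and under this identification $f_\ast$ becomes $\varprojlim_{n}(\tilde f_n)_\ast$ with each $(\tilde f_n)_\ast$ a colimit-preserving forgetful functor. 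The remaining lim/colim interchange I would extract from the comonadic presentation of Step~2: since the comonad $g^\ast g_\ast$ preserves colimits (both factors do), the cofree reconstruction of an object from its underlying object in $\D^a_{\hat\solid}(\mathcal O^+_{\tilde Y})$ commutes with filtered colimits, and the composite with $\tilde f_\ast$ then computes $f_\ast$. The main obstacle is executing this interchange fully rigorously: if the abstract comonadic argument needs additional input, a concrete fallback is to verify directly that $f^\ast$ preserves compact objects by showing that $(\mathcal O^+_Y)^a_{\hat\solid}[S]$ is compact in $\D^a_{\hat\solid}(\mathcal O^+_Y)$, using that $g^\ast$ of it is compact in $\D^a_{\hat\solid}(\tilde B^+)$ (by \cref{sec:main-theor-d_hats-main-descent-theorem}) and transporting compactness back via the conservative colimit-preserving $g_\ast$.
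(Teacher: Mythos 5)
There are two genuine gaps here, and both stem from the same decision: descending from a totally disconnected cover $g\colon \tilde Y\to Y$ instead of mapping $Y$ into its compactification. For the generation claim, comonadicity of $g^\ast\dashv g_\ast$ does hold, but by $v$-descent (i.e.\ conservativity of $g^\ast$), not from conservativity and colimit-preservation of $g_\ast$ — those two properties give \emph{monadicity of $g_\ast$}, which expresses $\D^a_{\hat\solid}(\ri^+_{\tilde Y})$ in terms of the image of $g^\ast$, the wrong direction. And even granting comonadicity, it only writes every object of $\D^a_{\hat\solid}(\ri^+_Y)$ as a \emph{totalization} of the cobar construction with terms in the image of $g_\ast$, i.e.\ it gives generation under limits, not under colimits. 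The statement you actually need — that the essential image of the right adjoint $g_\ast$ generates under colimits — does not follow from Barr--Beck--Lurie in either form, and you offer no other argument. The correct formal mechanism is the opposite one: a conservative, colimit-preserving \emph{right} adjoint forces the image of the \emph{left} adjoint to generate (bar resolution). This is exactly what the paper exploits, but with $j\colon Y\to Z:=\overline{Y}^{/X}$ in place of your cover: $j_\ast$ is conservative and colimit-preserving by \cref{sec:main-theor-texorpdfs-base-change-for-quasi-pro-etale-maps-and-conservativity}, so $j^\ast$ carries generators of $\D^a_{\hat\solid}(\ri^+_Z)$ to generators, preserving right-boundedness and, by \cref{rslt:adic-base-change-preserves-adic-completeness}, $\pi$-completeness of right-bounded objects; the compactified case $Z=\overline{Z_0}^{/X}$ (with $Z_0$ spatial, via \cref{sec:funct-da_h--1-compactification-of-prespatial-diamond} and \cref{sec:bound-cond-new-absolute-and-relative-notion-of-ell-boundedness}) is then settled by \cref{rslt:comparison-of-oc-nuc-sheaves-to-O+-modules-on-p-bd-perfd}.

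The second gap is of the same nature. The descent identification $\D^a_{\hat\solid}(\ri^+_Y)\simeq\varprojlim_{[n]\in\Delta}\D^a_{\hat\solid}(\tilde B_n^+)$ presents $f_\ast$ as a totalization of the levelwise colimit-preserving functors $(\tilde f_n)_\ast$, and an infinite totalization does not commute with filtered colimits; the observation that the comonad $g^\ast g_\ast$ preserves colimits does not help, because the ``cofree reconstruction'' of an object from its pullback is again a totalization. Your fallback has the same defect: reflecting compactness of $f^\ast(A^+)^a_{\hat\solid}[S]$ back along $g$ from $\D^a_{\hat\solid}(\tilde B^+)$ would again require commuting the cobar totalization with filtered colimits, so it is essentially equivalent to the statement being proved. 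In the paper this step is genuinely non-formal: after replacing $Y$ by $Z$ (so that $f_\ast=\bar f_\ast\circ j_\ast$ with $j_\ast$ colimit-preserving), colimit-preservation of $\bar f_\ast$ comes from \cref{rslt:comparison-of-oc-nuc-sheaves-to-O+-modules-on-p-bd-perfd}, whose proof rests on identifying $\D^a_{\hat\solid}(\ri^+_{Z})$ with nuclear $\ri^{+a}$-module sheaves and on the rigidity of $\D_\nuc(\Z_p[[\pi]])$ (\cref{sec:mathc-valu-geom-remark-rigidity-of-d-nuc-r}), which guarantees that the relevant global sections functor preserves colimits. That analytic input is what your purely formal descent argument is missing, and without it neither half of the proposal goes through.
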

\begin{proof}
  Let $Z:=\overline{Y}^{/X}$ be the canonical compactification of $f$. Then the inclusion $j\colon Y\to Z$ is qcqs, quasi-pro-\'etale and separated. Thus, $j_\ast$ is conservative and commutes with colimits by \cref{sec:main-theor-texorpdfs-base-change-for-quasi-pro-etale-maps-and-conservativity}, hence $j^\ast$ sends a collection of generators to a collection of generators. Now, \cref{rslt:adic-base-change-preserves-adic-completeness} implies that $j^\ast$ preserves $\pi$-adic completedness of right-bounded objects. In particular, we may replace $Y$ by $Z$ (as $Z\to X$ is again separated and $+$-bounded). Let $Z_0\subseteq Z$ be a spatial subdiamond in the prespatial diamond $Z$, such that $Z_0(K,\ri_K)\cong Z(K,\ri_K)$ for any perfectoid field $K$. Then by \cref{sec:funct-da_h--1-compactification-of-prespatial-diamond} $Z=\overline{Z_0}^{/X}$. By \cref{sec:bound-cond-new-absolute-and-relative-notion-of-ell-boundedness} we can conclude that $Z$ (and hence $Z_0$) is $p$-bounded, and then we can apply \cref{rslt:comparison-of-oc-nuc-sheaves-to-O+-modules-on-p-bd-perfd} to $Z_0\to X$. This finishes the proof. 
\end{proof}

\bibliography{bibliography}
\addcontentsline{toc}{section}{References}

\end{document}